\documentclass[12pt,twoside]{book}
\usepackage{amsfonts}
\usepackage{amssymb}
\usepackage{amsthm}
\usepackage{amsmath}
\usepackage{newlfont}
\usepackage{t1enc}
\usepackage{bbm}
\usepackage{fontsmpl}
\usepackage{graphics}
\usepackage{graphicx}
\usepackage{color}
\usepackage{psfrag}
\usepackage{a4wide}
\usepackage{amssymb}
\usepackage{enumerate}
\usepackage{hyperref}

\setcounter{tocdepth}{1}

\theoremstyle{remark}

\theoremstyle{definition}
\newtheorem{teo}{Theorem}[chapter]

\newtheorem{ejem}[teo]{Examples}
\newtheorem{lema}[teo]{Lemma}
\newtheorem{prop}[teo]{Proposition}
\newtheorem{cor}[teo]{Corollary}
\newtheorem{obs}[teo]{Remark}
\newtheorem{defi}[teo]{Definition}
\newtheorem{assump}[teo]{Assumptions}
\newtheorem{notat}[teo]{Notation}
\newtheorem{cond}[teo]{Conditions}

\newcommand{\ve}{\varepsilon}
\newcommand{\R}{{\mathbb R}}
\newcommand{\C}{{\mathcal C}}
\newcommand{\Z}{{\mathbb{Z}}}

\newcommand{\N}{{\mathbb N}}

\newcommand{\F}{{\mathcal F}}
\newcommand{\G}{{\mathcal G}}

\newcommand{\E}{{\mathbb E}}

\newcommand{\B}{{\mathcal B}}

\newcommand{\0}{{\mathbf 0}}

\def\M{\mathcal M}
\def\A{\mathcal A}

\def\W{\mathcal W}
\def\F{\mathcal F}

\newcommand{\p}{\partial}
\newcommand{\eps}{\varepsilon}
\def\1{{\mathbf 1}}

\textheight 22cm \textwidth 18cm

\topmargin-2cm \vsize=29.5cm \hsize=21cm
\setlength{\textwidth}{16cm} \setlength{\textheight}{24cm}
\setlength{\oddsidemargin}{0.0cm}
\setlength{\evensidemargin}{0.0cm}

\def\R{\mathbb{R}}
\def\N{\mathbb{N}}
\def\E{\mathbb{E}}

\def\M{\mathcal M}
\def\A{\mathcal A}

\def\W{\mathcal W}
\def\F{\mathcal F}

\numberwithin{equation}{chapter}

\begin{document}

\parskip 5pt

\setcounter{chapter}{0}

\pagestyle{headings}

%%%%%%%%%%%%%%%%%%%%%% Caratula
\thispagestyle{empty}

\begin {center}

\includegraphics[scale=.3]{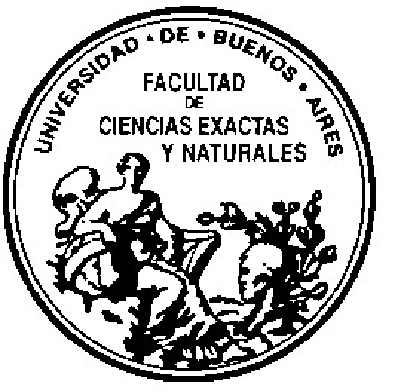}

\medskip
UNIVERSIDAD DE BUENOS AIRES

Facultad de Ciencias Exactas y Naturales

Departamento de Matem\'atica

\vspace{3cm}

\textbf{\large Metaestabilidad para una EDP con blow-up y la dinámica FFG en modelos diluidos}

\vspace{2cm}

Tesis presentada para optar al t\'\i tulo de Doctor de la Universidad de Buenos Aires en el \'area Ciencias Matem\'aticas

\vspace{2cm}

\textbf{Santiago Saglietti}

\end {center}

\vspace{1.5cm}

\noindent Director de tesis: Pablo Groisman

\noindent Consejero de estudios: Pablo Groisman

\vspace{1cm}

\noindent Buenos Aires, 2014

\noindent Fecha de defensa : 27 de Junio del 2014

%%%%%%%%%%%%%%%%%%%%%%%%%%%%%%%%%

\chapter*{}

\begin{center}
{\Large \bf Metaestabilidad para una EDP con blow-up y la dinámica FFG en modelos diluidos}
\end{center}

\bigskip

\bigskip

{\centerline {\bf Resumen}}

Esta tesis consiste de dos partes, en cada una estudiamos la estabilidad bajo pequeñas perturbaciones de ciertos modelos probabilísticos en diferentes contextos. En la primera parte, estudiamos pequeñas perturbaciones \textit{aleatorias} de un sistema dinámico determinístico y mostramos que las mismas son inestables, en el sentido de que los sistemas perturbados tienen un comportamiento cualitativo diferente al del sistema original. Más precisamente, dado $p > 1$ estudiamos soluciones de la ecuación en derivadas parciales estocástica
$$
\p_t U = \p^2_{xx} U + U|U|^{p-1} + \varepsilon \dot{W}
$$ con condiciones de frontera de Dirichlet homogéneas y mostramos que para $\varepsilon > 0$ pequeños éstas presentan una forma particular de inestabilidad conocida como \textit{metaestabilidad}.
En la segunda parte nos situamos dentro del contexto de la mecánica estadística, donde estudiamos la estabilidad de medidas de equilibrio en volumen infinito bajo ciertas perturbaciones \textit{determinísticas} en los parámetros del modelo. Más precisamente, mostramos que las medidas de Gibbs para una cierta clase general de sistemas son continuas con respecto a cambios en la interacción y/o en la densidad de partículas y, por lo tanto, estables bajo pequeñas perturbaciones de las mismas. También estudiamos bajo qué condiciones ciertas configuraciones típicas de estos sistemas permanecen estables en el límite de temperatura cero $T \to 0$. La herramienta principal que utilizamos para nuestro estudio es la realización de estas medidas de equilibrio como distribuciones invariantes de las dinámicas introducidas en \cite{FFG1}. Referimos al comienzo de cada una de las partes para una introducción de mayor profundidad sobre cada uno de los temas.

\bigskip

\bigskip

{\it Palabras claves:} ecuaciones en derivadas parciales estocásticas, metaestabilidad, blow-up, medidas de Gibbs, procesos estocásticos, redes de pérdida, Pirogov-Sinai.

\chapter*{}

\begin{center}
{\Large \bf Metastability for a PDE with blow-up and the FFG dynamics in diluted models}
\end{center}

\bigskip

\bigskip

{\centerline {\bf Abstract}}

This thesis consists of two separate parts: in each we study the stability under small perturbations of certain probability models in different contexts. In the first, we study small \textit{random} perturbations of a deterministic dynamical system and show that these are unstable, in the sense that the perturbed systems have a different qualitative behavior than that of the original system. More precisely, given $p > 1$ we study solutions to the stochastic partial differential equation
$$
\p_t U = \p^2_{xx} U + U|U|^{p-1} + \varepsilon \dot{W}
$$ with homogeneous Dirichlet boundary conditions and show that for small $\varepsilon > 0$ these present a rather particular form of unstability known as \textit{metastability}. In the second part we situate ourselves in the context of statistical mechanics, where we study the stability of equilibrium infinite-volume measures under small \textit{deterministic} perturbations in the parameters of the model. More precisely, we show that Gibbs measures for a general class of systems are continuous with respect to changes in the interaction and/or density of particles and, hence, stable under small perturbations of them. We also study under which conditions do certain typical configurations of these systems remain stable in the zero-temperature limit $T \to 0$. The main tool we use for our study is the realization of these equilibrium measures as invariant distributions of the dynamics introduced in \cite{FFG1}. \mbox{We refer} to the beginning of each part for a deeper introduction on each of the subjects.

\bigskip

\bigskip

{\it Key words}: stochastic partial differential equations, \mbox{metastability}, stochastic \mbox{processes,} blow-up, \mbox{Gibbs} measures, loss networks, Pirogov-Sinai.

\chapter*{Agradecimientos}

Un gran número de personas han contribuido, de alguna manera u otra, con la realización de este trabajo. Me gustaría agradecer:
\begin{enumerate}
\item [$\diamond$] A mi director, Pablo Groisman, por todo. Por su constante apoyo y \mbox{enorme ayuda} durante la elaboración de esta Tesis. Por su infinita paciencia y gran \mbox{predisposición.} Por estar siempre para atender mis inquietudes, y por recibirme todas y cada una de las veces con una sonrisa y la mejor onda. Por compartir conmigo su manera de concebir y hacer matemática, lo que ha tenido un gran impacto en mi formación como matemático y es, para mí, de un valor incalculable. Por su amistad. Por todo.
\item [$\diamond$] A Pablo Ferrari, por estar siempre dispuesto a darme una mano y a discutir sobre matemática conmigo. Considero realmente un privilegio haber tenido la oportunidad de pensar problemas juntos y entrar en contacto con su forma de ver la matemática. Son muchísimas las cosas que he aprendido de él en estos últimos cuatro años, y es por ello que le voy a estar siempre inmensamente agradecido.
\item [$\diamond$] A los jurados de esta Tesis: Pablo De Napoli, Mariela Sued y Aernout Van Enter. Por leerla y darme sus sugerencias, con todo el esfuerzo y tiempo que ello requiere.
\item [$\diamond$] A Nicolas Saintier, por su entusiasmo en mi trabajo y su colaboración en esta Tesis.
\item [$\diamond$] A Roberto Fernández y Siamak Taati, por la productiva estadía en Utrecht.
\item [$\diamond$] A Inés, Matt y Leo. Por creer siempre en mí y enseñarme algo nuevo todos los días.
\item [$\diamond$] A Maru, por las muchas tardes de clase, estudio, charlas, chismes y chocolate.
\item [$\diamond$] A mis hermanitos académicos: Anita, Nico, Nahuel, Sergio L., Sergio Y. y Julián.
Por todos los momentos compartidos, tanto de estudio como de amistad.
\item [$\diamond$] A Marto S., Pablo V., Caro N. y los chicos de la 2105 (los de ahora y los de antes).
\item [$\diamond$] A Adlivun, por los buenos momentos y la buena música.
\item [$\diamond$] A la (auténtica) banda del Gol, por ser los amigos incondicionales que son.
\item [$\diamond$] A Ale, por haber estado siempre, en las buenas y (sobre todo) en las malas.
\item [$\diamond$] A mi familia, por ser mi eterno sostén y apoyo.
\end{enumerate}
$$
\text{Gracias!}
$$

\chapter*{Errata}

We have spotted a mistake in the proof of the second assertion in Proposition A.2. In fact, this assertion is false in general. For simplicity, we give below a counterexample for the case in which the source $g \equiv 0$ (a similar counterexample can be devised with a little more effort for the original source term $g(u):=u|u|^{p-1}$). Indeed, if one takes $B:=\{u_k : k \in \N\}$
where
$$
u_k(x)=\sin(k\pi x)
$$ then the set $B$ is bounded but one can check that the analogous of the second assertion in Proposition A.2 for the heat equation is not satisfied, since each solution $U^{u_k}$ here is given by 
$$
U^{u_k}:= u_k(x) e^{-(k\pi)^2 t},
$$ so that 
$$
\|U^{u_k}(t,\cdot) - u_k \|_\infty = 1 - e^{-(k\pi)^2 t}
$$ which implies that for all $t > 0$
$$
\sup_{u \in B} \|U^{u_k}(t,\cdot) - u_k \|_\infty = 1.
$$ 

Unfortunately, we used Proposition A.2 many times throughout our analysis, which forces us to make several modifications to our proofs in order to circumvent this problem. In the following, we detail what type of problems arise by the lack of Proposition A.2 and how to fix them.

\section*{A weaker version of Proposition A.2}

Even though we cannot expect Proposition A.2 to be true in general, there is a weaker version of it which is always true. It is contained in the following proposition.

\begin{prop}\label{G.10}Given a bounded set $B \subseteq C_D([0,1])$ there exists
	$t_B > 0$ such that $\tau^u > t_B$ for every $u \in B$ and
	$$
	\sup_{u \in B, t \in [0,t_B]} \|U^u(t,\cdot)\|_\infty < +\infty.
	$$
\end{prop}

On many occasions in our original analysis we used Proposition A.2 to show that \eqref{3} below holds for certain specific functions $f$. It turns out that in many of these occasions one only needs Proposition \ref{G.10} to show this, and not the full extent of Proposition A.2. Thus, in these cases our original arguments go through without any (real) modifications. We refer to \cite{GSS} for a proof of Proposition \ref{G.10} and for further details on where it is needed. 

\section*{Proposition A.7 and the proof of Lemma 4.3}

The proof of Proposition A.7 relied heavily on Proposition A.2. Thus, Proposition A.7 must now be replaced by the following proposition.

\begin{prop}\label{G.20} Given $\delta > 0$ and $N \in \N$, for any $u \in C_D([0,1])$ let us define 
	$$
	\mathcal{H}^{(N),\delta,u}:= \inf \{ t \geq 1 : \| U^u(t,\cdot)\|_\infty > N \text{ or } d(U^u(t,\cdot) , z^{(n)}) < \delta \text{ for some }n \in \Z\}.
	$$ Then, for any bounded set $B \subseteq C_D([0,1])$ we have that 
	$$
	\sup_{u \in B} \mathcal{H}^{(N),\delta,u} < +\infty.
	$$ 
\end{prop}
The proof of this result can be carried out as in \cite[Proposition A.3]{B1} so we omit further details. This replacement affects the proof of Lemma 4.3 in the Thesis. Indeed, its proof should now go as follows:
\begin{itemize}
	\item [i.] Take $T_0 < 1$ and $r < \min\{\frac{1}{4},c\}$ sufficiently small together with $H > 0$ large enough so as to guarantee that:
	\begin{itemize}
		\item [$\bullet$] $\sup_{t \in [0,T_0]} \| U^u(t,\cdot) - U^v(t,\cdot) \|_\infty \leq 2 \|u-v\|_\infty$ for all $u,v \in B_{n_0+1}$,
		\item [$\bullet$] $\|U^u(T_0,\cdot)\|_{W^2_2([0,1])} \leq H$ for all $u \in B_{n_0+2}$,
		\item [$\bullet$] Any linear interpolation between elements $v,v' \in B_{n_0+2}$ with $W^2_2([0,1])$-norm bounded by $2H$ and at a distance smaller than $2r$ has rate less than $\frac{\delta}{9}$. 
	\end{itemize}
	\item [ii.] Given $u \in G$, construct $\varphi^u$ as follows:
	\begin{itemize}
		\item [$\bullet$] First, follow the deterministic flow until time $h^u:=\mathcal{H}^{(n_0+1),r,u} \geq 1$.
		\item [$\bullet$] If $\varphi^u(h^u,\cdot) \in \partial B_{n_0+1}$, stop the construction.  
		\item [$\bullet$] If instead $d(\varphi^u(h^u,\cdot),z^{(n)}) \leq r$ for some $z^{(n)} \in B_{n_0+1}$ different from $\mathbf{0}$, continue $\varphi^u$ by a linear interpolation between $v:=\varphi^u(h^u,\cdot)$ and $v':=(1+r)z^{(n)}$. Note that both $v$ and $v'$ lie inside $B_{n_0+2}$ and have $W^2_2([0,1])$-norm bounded by $2H$, since $v=U^{\tilde{u}}(T_0,\cdot)$ for $\tilde{u}:=U^u(h^u-T_0,\cdot) \in B_{n_0+1}$ and $z^{(n)}=U^{z^{(n)}}(T_0,\cdot)$. Afterwards, follow the construction as it is outlined in the Thesis.
		\item [$\bullet$] If $d(\varphi^u(h^u,\cdot),\mathbf{0}) \leq r$, follow the the rest of the construction as it is outlined in the Thesis.
	\end{itemize} 
\end{itemize}
Since we have that $\sup_{u \in G} h^u < +\infty$ by Proposition \ref{G.20}, with this minor modification now the rest of the proof can be carried out as in the Thesis.

\section*{Uniformity only over $\mathcal{D}$-compactifiable sets}

Our results are no longer valid uniformly over bounded sets away from the boundary (of the domain $\mathcal{D}$ in question, either $\mathcal{D}_{\mathbf{0}}$ or $\mathcal{D}_e$). Instead, we are only able to show them uniformly over ``$\mathcal{D}$-compactifiable'' sets, which is a particular class of subsets of $C_D([0,1])$ we introduce in the Main Results section of \cite{GSS}. Essentially, one could think of $\mathcal{D}$-compactifiable sets as small neighborhoods of compact sets which are contained in $\mathcal{D}$. The crucial property of these $\mathcal{D}$-compactifiable sets is the following. 

\begin{prop} If $\mathcal{D} \subseteq C_D([0,1])$ and $f : C_D([0,1]) \rightarrow \R \cup \{\pm \infty\}$ is a function satisfying:
	\begin{itemize}
		\item [i.] $f$ is finite and upper semicontinuous on $\mathcal{D}$,
		\item [ii.] for any bounded set $B \subseteq C_D([0,1])$ and every $t > 0$ sufficiently small (depending on $B$) there exists a constant $c_{t,B} > 0$ such that
		\begin{equation}\label{3}
		\sup_{u \in B} f(u) \leq c_{t,B} + \sup_{u \in B} f(U^u(t,\cdot)),
		\end{equation}
	\end{itemize}
	then 
	\begin{equation}\label{1}
	\sup_{u \in \mathcal{K}} f(u) <+\infty.
	\end{equation} for all $\mathcal{D}$-compactifiable sets $\mathcal{K}$. 
\end{prop}

Originally, we used Proposition A.2 in the Thesis to show that \eqref{1} was satisfied for all bounded sets $\mathcal{K} \subseteq \mathcal{D}$ at a positive distance from $\partial \mathcal{D}$. Now, we must restrict ourselves to considering only $\mathcal{D}$-compactifiable sets every time we need \eqref{1} to hold for such an $f$. We refer to \cite{GSS} for further details.

\section*{Minimizing the potential $S$ and quasipotential $V$}

On many occasions throughout our analysis, i.e. (3.4) and Section 4.3.1, we see ourselves considering some closed bounded domain $B \subseteq C_D([0,1])$ with $\min_{u \in B} S(u) < +\infty$ and wanting to show that for any $\delta > 0$ sufficiently small
\begin{equation}\label{2}
\inf \{ S(u) : u \in B - (m_S(B))_{(\delta)} \} > \min_{u \in B} S(u),
\end{equation} where $m_S(B):=\{ u \in B : S(u) = \min_{v \in B} S(v)\}$ is the set of minimizers of $S$ in $B$ and 
$$
(m_S(B))_{(\delta)} = \{ v \in C_D([0,1]) : d(v,m_S(B)) \leq \delta\}.
$$ In the Thesis we showed this to  be true with the aid of Proposition A.2. Now, we will do it with the help of the following result.

\begin{lema}\label{G.4} If $B' \subseteq C_D([0,1])$ is a bounded set which satisfies $\sup_{u \in B'} S(u) < +\infty$ then it has compact closure. 
\end{lema}

\begin{proof} It follows from the hypotheses on $B'$ that each $u \in B'$ is absolutely continuous and, furthermore, that
	$$
	\sup_{u \in B'} \|\partial_x u\|_{L^2} < +\infty.
	$$ From this, using Cauchy-Schwarz inequality we can show that $B'$ is also equicontinuous, so that by Arzela-Ascoli we deduce that $B'$ has compact closure.  
\end{proof}

Using Lemma \ref{G.4}, let us show that \eqref{2} holds. Indeed, if this were not the case then there would exist a sequence $(u_k)_{k \in \N} \subseteq B-(m_S(B))_{(\delta)}$ such that 
$$
\lim_{k \rightarrow +\infty} S(u_k) = \min_{u \in B} S(u) <+\infty.
$$ In particular, without loss of generality we can assume that $S(u_k) < +\infty$ for all $k \in \N$. Thus, by Lemma \ref{G.4} we have that the set $B':=\{ u_k : k \in \N\}$ has compact closure so that there exists a subsequence $(u_{k_j})_{j \in \N}$ which converges to some $u_\infty \in B$ (here we use that $B$ is closed). By the lower semicontinuity of $S$ we have that $S(u_\infty) \leq \min_{u \in B} S(u)$ which, since $u_\infty \in B$, implies that $u_\infty \in m_S(B)$. This contradicts the fact that $d(u_k,m_S(B)) > \delta$ for all $k \in \N$, from which we conclude that \eqref{2} must hold.

Similarly, on several occasions throughout the article we will need to consider closed bounded regions $B \subseteq C_D([0,1])$ with $V(\mathbf{0},B) < +\infty$ and show that, given $\delta_1 > 0$ small, there exists $\delta_2 > 0$ such that
$$
\inf \left\{ V(\mathbf{0},u) : u \in \left( B - \left(m_V(B)\right)_{(\delta_1)}\right)_{(\delta_2)}\right\} > V(\mathbf{0},B)
$$ where 
$$
m_V(B):=\left\{ u \in B : V(\mathbf{0},u) = \min_{v \in B} V(\mathbf{0},v)\right\}.
$$ This will follow from an argument similar to the one given above to show \eqref{2}, provided one can show the following lemma. 

\begin{lema} If $B' \subseteq C_D([0,1])$ is a bounded set which satisfies $\sup_{u \in B'} V(\mathbf{0},u) < +\infty$ then it has compact closure.
\end{lema}

\begin{proof} It follows from Proposition 3.6 in the Thesis that for any $u \in B'$
	$$
	2(S(u) -S(\mathbf{0})) \leq V(\mathbf{0},u),
	$$ so that $\sup_{u \in B'} S(u) <+\infty$ and thus the lemma now follows from Lemma \ref{G.4}.
\end{proof} 

\section*{The construction of the domain $G$}

The construction of the domain $G$ must also be corrected. We refer to \cite{GSS} for the correct construction, using the ideas from the previous section.

\section*{The lower bound for $\tau^u_\varepsilon$}

The proof of the convergence to zero of the second term in the right-hand side of (4.4) must also be corrected. This can be done by following the proof of \cite[Proposition 4.2]{B1}.

\tableofcontents

\part{Metastability for a PDE with blow-up}

\chapter*{Introducción a la Parte I}

Las ecuaciones diferenciales han probado ser de gran utilidad para modelar un amplio rango de fenómenos físicos, químicos y biológicos. Por ejemplo, una vasta clase de ecuaciones de evolución, conocidas como ecuaciones en derivadas parciales parabólicas \mbox{semilineales,} surgen naturalmente en el estudio de fenómenos tan diversos como la difusión de un fluido a través de un material poroso, el transporte en un semiconductor, las reacciones químicas acopladas con difusión espacial y la genética de poblaciones. En todos estos casos, la ecuación representa un modelo aproximado del fenómeno y por lo tanto es de interés entender cómo su descripción puede cambiar si es sujeta a pequeñas perturbaciones aleatorias. Nos interesa estudiar ecuaciones del tipo
\begin{equation}\label{intro0esp}
\p_t U = \p^2_{xx} U + f(U)
\end{equation} con condiciones de frontera de Dirichlet en $[0,1]$, donde $f: \R \rightarrow \R$ es una fuente localmente Lipschitz. Dependiendo del dato inicial, es posible que las soluciones de esta ecuación no se encuentren definidas para todo tiempo. Decimos entonces que estamos ante la presencia del fenómeno de \textit{blow-up} o \textit{explosión}, i.e. existe $\tau > 0$ tal que la solución $U$ se encuentra definida para todo tiempo $t < \tau$ y además satisface $\lim_{t \rightarrow \tau^-} \| U(t,\cdot)\|_\infty = +\infty$. Agregando una pequeña perturbación aleatoria al sistema se obtiene la ecuación en derivadas parciales estocástica
\begin{equation}\label{intro1esp}
\p_t U = \p^2_{xx} U + f(U) + \varepsilon \dot{W}
\end{equation} donde $\varepsilon > 0$ es un parámetro pequeño y $\dot{W}$ es ruido blanco espacio-temporal. Uno puede preguntarse entonces si existen diferencias cualitativas en comportamiento entre el sistema determinístico \eqref{intro0esp} y su perturbación estocástica. Para tiempos cortos ambos sistemas deberían comportarse de manera similar, ya que en este caso el ruido será típicamente de un orden mucho menor que los términos restantes en el miembro derecho de \eqref{intro1esp}. Sin embargo, debido a los incrementos independientes y normalmente distribuidos del ruido uno espera que, si es dado el tiempo suficiente, éste eventualmente alcanzará valores suficientemente grandes como para inducir un cambio de comportamiento significativo en \eqref{intro1esp}. Estamos interesados en entender qué cambios pueden ocurrir en el fenómeno de blow-up debido a esta situación y, más precisamente, cuáles son las propiedades asintóticas cuando $\varepsilon \rightarrow 0$ del tiempo de explosión de \eqref{intro1esp} para los diferentes datos iniciales. En particular, para sistemas como en \eqref{intro0esp}
con un único equilibrio estable $\phi$, uno espera el siguiente panorama:

\begin{enumerate}
\item [i.] Para datos iniciales en el dominio de atracción del equilibrio estable, el sistema estocástico es inmediatamente atraído hacia el equilibrio. Una vez cerca de éste, los términos en el miembro derecho de \eqref{intro0} se vuelven despreciables de manera tal que el proceso puede ser luego empujado lejos del equilibrio por acción del ruido. Estando lejos de $\phi$, el ruido vuelve a ser superado por los términos restantes en el miembro derecho de \eqref{intro1esp} y esto permite que el patrón anterior se repita: un gran número de intentos de escapar del equilibrio, seguidos de una fuerte atracción hacia el mismo.
\item [ii.] Eventualmente, luego de muchos intentos frustrados, el proceso logra escaparse del dominio de atracción de $\phi$ y alcanza el dominio de explosión, aquel conjunto de datos iniciales para los cuales la solución de \eqref{intro0esp} explota en tiempo finito. Como la probabilidad de un evento tal es muy baja, esperamos que este tiempo de escape sea exponencialmente grande. Más aún, debido al gran número de intentos que fueron necesarios, esperamos que este tiempo muestre escasa memoria del dato inicial.
\item [iii.] Una vez dentro del dominio de explosión, el sistema estocástico es forzado a explotar por la fuente $f$, que se convierte en el término dominante.
\end{enumerate}

Este tipo de fenómeno se conoce como \textit{metaestabilidad}: el sistema se comporta por un tiempo muy largo como si estuviera bajo equilibrio, para luego realizar una transición abrupta hacia el equilibrio real (en nuestro caso, hacia infinito). La descripción anterior fue probada rigurosamente en \cite{GOV} en el contexto finito-dimensional para sistemas del tipo
$$
\dot{U}=- \nabla S(U)
$$ donde $U$ es un potencial de doble pozo. En este contexto, el comportamiento metaestable es observado en la manera en que el sistema estocástico viaja desde cualquiera de los pozos hacia el otro. Luego, en \cite{MOS} y \cite{B1},
el problema análogo infinito-dimensional fue investigado, obteniendo resultados similares.

El enfoque general sugerido en \cite{GOV} para establecer el comportamiento metaestable en este tipo de sistemas es estudiar el escape de un dominio acotado $G$ satisfaciendo:

\begin{enumerate}
\item [$\bullet$] $G$ contiene al equilibrio estable $\phi$ y a los equilibrios inestables de mínima energía.
\item [$\bullet \bullet$] Existe una región $\p^*$ en la frontera de $G$ tal que:
\begin{enumerate}
\item [i.] El ``costo'' para el sistema de alcanzar $\p^*$ comenzando desde $\phi$ es el mismo que el costo de alcanzar cualquiera de los equilibrios inestables de mínima energía.
\item [ii.] Con probabilidad que tiende a uno cuando $\varepsilon \rightarrow 0^+$ el sistema estocástico comenzando en $\p^*$ alcanza el verdadero equilibrio antes de un tiempo acotado $\tau^*$ independiente de $\varepsilon$.
\end{enumerate}
\end{enumerate}

La construcción de este dominio para el potencial de doble pozo finito-dimensional fue llevada a cabo en \cite{GOV}. En el marco infinito-dimensional, sin embargo, este tipo de resultados fueron probados sin seguir estrictamente
el enfoque de \cite{GOV}: la pérdida de memoria asintótica fue lograda en \cite{MOS} sin acudir a ningún dominio auxiliar, mientras que el restante panorama fue establecido en \cite{B1} considerando un dominio
que tiene a los equilibrios inestables de mínima energía en su frontera y por lo tanto no cumple (ii). Un dominio de tales características no puede ser utilizado como sugiere \cite{GOV} para obtener la pérdida de memoria asintótica, pero es utilizado en \cite{B1} de todas maneras puesto a que dicha pérdida de memoria ya había sido probada en \cite{MOS} por otros métodos. Nosotros hemos decidido aferrarnos al enfoque general sugerido en \cite{GOV} para estudiar nuestro sistema ya que quizás éste sea el más sencillo de seguir y, además, ya que provee un único marco general sobre el cual se pueden probar todos los resultados que nos interesan. Más aún, para seguirlo deberemos introducir herramientas que son también útiles para estudiar otros tipos de problemas, como el escape de un dominio con un único equilibrio.

En nuestro trabajo también consideraremos ecuaciones de tipo gradiente, pero la situación en nuestro contexto es más delicada que en la del modelo del potencial de doble pozo. En efecto, la construcción del dominio $G$ dependerá en gran medida de la geometría del potencial asociado a la ecuación, la cual en general será más complicada que la dada por el potencial de doble pozo. Además, (ii) en la descripción del dominio $G$ dada arriba es usualmente una consecuencia directa de las estimaciones de grandes desvíos disponibles para el sistema estocástico. No obstante, la validez de estas estimaciones en todos los casos depende de un control apropiado sobre el crecimiento de las soluciones de \eqref{intro0esp}. Como estaremos enfocándonos específicamente en trayectorias que explotan en un tiempo finito, está claro que para esta última parte un nuevo enfoque será necesario en nuestro problema, uno que involucre un estudio cuidadoso del fenómeno de blow-up. Desafortunadamente, cuando se trata con perturbaciones de ecuaciones diferenciales con blow-up, entender cómo se modifica el comportamiento del tiempo de explosión o incluso mostrar la existencia del fenómeno de blow-up mismo no es para nada una tarea fácil en la mayoría de los casos. No existen resultados generales al respecto, ni siquiera para perturbaciones no aleatorias. Esta es la razón por la cual el enfoque usual a este tipo de problemas es considerar modelos particulares.

En esta primera parte estudiamos el comportamiento metaestable de la ecuación con blow-up
\begin{equation}\label{intro2}
\p_t U = \p^2_{xx} U  + U|U|^{p-1}
\end{equation} con condiciones de frontera de Dirichlet homogéneas en $[0,1]$, para un cierto parámetro \mbox{$p > 1$.} Hemos elegido esta ecuación particular ya que ha sido tomada como problema modelo por la comunidad de EDP,
dado que exhibe las principales características de interés que aparecen en la presencia de blow-up (ver por ejemplo los libros \cite{QS,SGKM} o las notas \cite{BB,GV}). También, trabajamos con una variable espacial unidimensional dado que no existen soluciones de \eqref{intro1esp} para dimensiones más altas en el sentido tradicional.

La Parte I está organizada de la siguiente manera. En el Capítulo 1 damos las definiciones necesarias y los resultados preliminares para ayudarnos a tratar nuestro problema, como también así detallamos los resultados principales que hemos obtenido. El Capítulo 2 se enfoca en el tiempo de explosión para el sistema estocástico para datos iniciales en el dominio de explosión. La construcción del dominio auxiliar $G$ en nuestro contexto es llevada a cabo en el Capítulo 3, mientras que estudiamos el escape de $G$ en el capítulo siguiente. En el Capítulo 5 establecemos el comportamiento metaestable para soluciones con datos iniciales en el dominio de atracción del equilibrio estable. En el Capítulo 6 estudiamos una variante finito-dimensional de nuestro problema original e investigamos qué resultados pueden obtenerse en este marco simplificado. Finalmente, incluimos al final un apéndice con algunos resultados auxiliares a ser utilizados durante nuestro análisis.

\chapter*{Introduction to Part I}

Differential equations have proven to be of great utility to model a wide range of \mbox{physical}, chemical and biological phenomena. For example, a broad class of evolution \mbox{equations}, known as semilinear parabolic partial differential equations, naturally arise in the study of \mbox{phenomena} as diverse as diffusion of a fluid through a porous material, transport in a semiconductor, coupled chemical reactions with spatial diffusion and population genetics. In all these cases, the equation represents an approximated model of the phenomenon and thus it is of interest to understand how its description might change if subject to small random perturbations. We are concerned with studying equations of the sort
\begin{equation}\label{intro0}
\p_t U = \p^2_{xx} U + f(U)
\end{equation} with homogeneous Dirichlet boundary conditions on $[0,1]$, where $f: \R \rightarrow \R$ is a locally Lipschitz source. Depending on the initial datum, it is possible that solutions to this equation are not defined for all times. We then say we are in the presence of a \textit{blow-up} phenomenon, i.e. there exists $\tau > 0$ such that the solution $U$ is defined for all times $t < \tau$ and verifies $\lim_{t \rightarrow \tau^-} \| U(t,\cdot)\|_\infty = +\infty$. Adding a small random perturbation to the system yields the stochastic partial differential equation
\begin{equation}\label{intro1}
\p_t U = \p^2_{xx} U + f(U) + \varepsilon \dot{W}
\end{equation} where $\varepsilon > 0$ is a small parameter and $\dot{W}$ is space-time white noise. One can then wonder if there are any qualitative differences in behavior between the deterministic system \eqref{intro0} and its stochastic perturbation. For short times both systems should behave similarly, since in this case the noise term will be typically of much smaller order than the remaining terms in the right hand side of \eqref{intro1}. However, due to the independent and normally distributed increments of the perturbation, one expects that when given enough time the noise term will eventually reach sufficiently large values so as to induce a significant change of behavior in \eqref{intro1}. We are interested in understanding what changes might occur in the blow-up phenomenon due to this situation and, more precisely, which are the asymptotic properties as $\varepsilon \rightarrow 0$ of the explosion time of \eqref{intro1} for the different \mbox{initial data.}
In particular, for systems as in \eqref{intro0} with a unique stable equilibrium $\phi$, one expects \mbox{the following scenario:}
\begin{enumerate}
\item [i.] For initial data in the domain of attraction of the stable equilibrium, the stochastic system is immediately attracted towards this equilibrium. Once near it, the terms in the right hand side of \eqref{intro0} become negligible and so the process is then pushed away from the equilibrium by noise. Being away from $\phi$, the noise becomes overpowered by the remaining terms in the right hand side of \eqref{intro1} and this allows for the previous pattern to repeat itself: a large number of attempts to escape from the \mbox{equilibrium,} followed by a strong attraction towards it.
\newpage
 \item [ii.] Eventually, after many frustrated attempts, the process succeeds in escaping the domain of attraction of $\phi$ and reaches the domain of explosion, i.e. the set of initial data for which \eqref{intro0} blows up in finite time. Since the probability of such an event is very small, we expect this escape time to be exponentially large.
Furthermore, due to the large number of attempts that are necessary, we expect this time to show little memory of the initial data.
\item [iii.] Once inside the domain of explosion, the stochastic system is forced to explode by the dominating source term $f$.
\end{enumerate}
This type of phenomenon is known as \textit{metastability}: the system behaves for a very long time as if it were under equilibrium, but then performs an abrupt transition towards the real equilibrium (in our case, towards infinity). The former description was proved rigorously in \cite{GOV} in the finite-dimensional setting for systems of the sort
$$
\dot{U}=- \nabla S(U)
$$ where $U$ is a double-well potential. In their context, metastable behavior is observed in the way in which the stochastic system travels from one of the wells to the other. Later, in \cite{MOS} and \cite{B1}, the analogous infinite-dimensional problem was investigated, obtaining similar results.

The general approach suggested in \cite{GOV} to establish metastable behavior in these kind of systems is to study the escape from a bounded domain $G$ satisfying the following:
\begin{enumerate}
\item [$\bullet$] $G$ contains the stable equilibrium $\phi$ and all the unstable equilibria of minimal energy.
\item [$\bullet \bullet$] There exists a region $\p^*$ in the boundary of $G$ such that:
\begin{enumerate}
\item [i.] The ``cost'' for the system to reach $\p^*$ starting from $\phi$ is the same as the cost to reach any of the unstable equilibria of minimal energy.
\item [ii.] With overwhelming probability as $\varepsilon \rightarrow 0^+$ the stochastic system \mbox{starting in $\p^*$} arrives at the real equilibrium before a bounded time $\tau^*$ independent of $\varepsilon$.
\end{enumerate}
\end{enumerate}

The construction of this domain for the finite-dimensional double-well potential was carried out in \cite{GOV}. In the infinite-dimensional setting, however, these type of results were proved without strictly following this approach: the asymptotic loss of memory was achieved in \cite{MOS} without resorting to any auxiliary domain, while the remaining parts of the picture were settled in \cite{B1} by considering a domain which has the unstable equilibria of minimal energy in its boundary and hence does not satisfy (ii). Such a domain cannot be used as suggested in \cite{GOV} to obtain the asymptotic loss of memory, but it is used nonetheless in \cite{B1} since this loss of memory had already been established in \cite{MOS} by different methods. We have decided to hold on to this general approach introduced in \cite{GOV} to study our system since it is perhaps the easiest one to follow and, also, since it provides with a unique general framework on which to prove all results of interest. Furthermore, in order to follow it we will need to introduce tools which are also useful for treating other type of problems, such as the escape from a domain with only one equilibrium.

\newpage
In our work we shall also consider gradient-type equations, but the situation in our context is more delicate than in the double-well potential model. Indeed, the construction of the domain $G$ will clearly rely on the geometry of the potential associated to the equation, which in general, will be more complicated than the one given by the double-well potential. Furthermore,
(ii) in the description of the domain $G$ above is usually a direct consequence of the large deviations estimates available for the stochastic system.
 The validity of these estimates always relies, however, on a proper control of the growth of solutions to \eqref{intro0}. Since we will be focusing specifically on trajectories which blow up in finite time, it is clear that for this last part a new approach is needed in our setting, one that involves a careful study of the blow-up phenomenon. Unfortunately, when dealing with perturbations of differential equations with blow-up, understanding
how the behavior of the blow-up time is modified or even showing existence of the blow-up phenomenon itself is by no means an easy task in most cases. There are no general results addressing this matter, not even for nonrandom perturbations. This is why the usual approach to this kind of problems is to consider particular models.

In this first part we study metastable behavior for the following equation with blow-up:
\begin{equation}\label{intro2}
\p_t U = \p^2_{xx} U  + U|U|^{p-1}
\end{equation} with homogeneous Dirichlet boundary conditions on $[0,1]$, for some fixed parameter $p > 1$. We chose this particular equation since it has been taken as a model problem for the PDE community as it exhibits some of the essential interesting features which appear in the presence of blow-up (see the books \cite{QS,SGKM} or the surveys \cite{BB,GV}). Also, we work with a one-dimensional space variable since there are no solutions to \eqref{intro1} for higher dimensions in the traditional sense.

Part I is organized as follows. In Chapter 1 we give the necessary definitions and preliminary results to help us address our problem, as well as detail the main results we have obtained. Chapter 2 focuses on the explosion time of the stochastic system for initial data in the domain of explosion. The construction of the auxiliary domain $G$ in our context is performed in Chapter 3, we study the escape from $G$ in the following chapter. In Chapter 5 we establish metastable behavior for solutions with initial data in the domain of attraction of the stable equilibrium. In Chapter 6 we study a finite-dimensional variant of our original problem and investigate which results can be obtained for this simplified setting. Finally, we include at the end an appendix with some auxiliary results to be used throughout our analysis.

\chapter{Preliminaries}

\section{The deterministic PDE}

Consider the partial differential equation
\begin{equation}\label{MainPDE}
\left\{\begin{array}{rll}
\p_t U &= \p^2_{xx}U + g(U) & \quad t>0 \,,\, 0<x<1 \\
U(t,0)& =0 & \quad t>0 \\
U(t,1) & = 0 & \quad t>0 \\
U(0,x) &=u(x) & \quad 0<x<1
\end{array}\right.
\end{equation}
where $g : \R \rightarrow \R$ is given by $g(u)=u|u|^{p-1}$ for a fixed $p > 1$ and $u$ belongs to the space of continuous functions defined on $[0,1]$ with homogeneous Dirichlet boundary conditions
$$
C_{D}([0,1])= \{ v \in C([0,1]) : v(0)=v(1)=0 \}.
$$
Equation \eqref{MainPDE} can be reformulated as
\begin{equation}\label{formalPDE}
\p_t U = - \frac{\partial S}{\partial \varphi} (U)
\end{equation} where the \textit{potential} $S$ is the functional on $C_D([0,1])$ given by
$$
S(v) = \left\{ \begin{array}{ll} \displaystyle{\int_0^1 \left[\frac{1}{2} \left(\frac{dv}{dx}\right)^2 - \frac{|v|^{p+1}}{p+1}\right]}
              & \text{ if $v \in H^1_0((0,1))$}
                   \\ \\ +\infty & \text{ otherwise.}\end{array}\right.
$$ Here $H^1_0((0,1))$ denotes the Sobolev space of square-integrable functions defined on $[0,1]$ with square-integrable weak derivative which vanish at the boundary $\{0,1\}$. Recall that $H^1_0((0,1))$ can be embedded into $C_D([0,1])$ so that the potential is indeed well defined.  We refer the reader to the Appendix for a review of some of the main properties of $S$ which shall be required throughout our work.

The formulation on \eqref{formalPDE} is interpreted as the validity of
$$
\int_0^1 \p_t U(t,x) \varphi(x)dx = \lim_{h \rightarrow 0} \frac{S(U + h\varphi) - S(U)}{h}
$$ for any $\varphi \in C^1([0,1])$ with $\varphi(0)=\varphi(1)=0$. It is known that for any $u \in C_D([0,1])$ there exists a unique solution $U^{u}$ to equation \eqref{MainPDE} defined on some maximal time interval $[0,\tau^{u})$ where $0 < \tau^{u} \leq +\infty$ is called the \textit{explosion time} of $U^u$ (see \cite{QS} for further details). In general this solution will belong to the space
$$
C_D([0,\tau^u) \times [0,1]) = \{ v \in C( [0,\tau^{u}) \times [0,1]) : v(\cdot,0)=v(\cdot,1) \equiv 0 \}.
$$ However, whenever we wish to make its initial datum $u$ explicit we will do so by saying that the solution belongs to the space
$$
C_{D_{u}}([0,\tau^{u}) \times [0,1]) = \{ v \in C( [0,\tau^{u}) \times [0,1]) : v(0,\cdot)=u \text{ and }v(\cdot,0)=v(\cdot,1) \equiv 0 \}.
$$ The origin $\mathbf{0} \in C_D([0,1])$ is the unique stable equilibrium of the system and is in fact asymptotically stable. It corresponds to the unique local minimum of the potential $S$. There is also a family of unstable equilibria of the system corresponding to the remaining critical points of the potential $S$, all of which are saddle points. Among these unstable equilibria there exists only one of them which is nonnegative, which we shall denote by $z$. It can be shown that this equilibrium $z$ is in fact strictly positive for $x \in (0,1)$, symmetric with respect to the axis $x=\frac{1}{2}$ (i.e. $z(x)=z(1-x)$ for every $x \in [0,1]$) and that is of both minimal potential and minimal norm among the unstable equilibria. More precisely, one has the following characterization of the unstable equilibria.

\begin{prop}\label{equilibrios} A function $w \in C_D([0,1]) $ is an equilibrium of the system \mbox{if and only if} there exists $n \in \Z$ such that $w = z^{(n)}$, where for each $n \in \N$ we define $z^{(n)} \in C_D([0,1])$ by the formula
$$
z^{(n)}(x)= \left\{ \begin{array}{rl} n^{\frac{2}{p-1}}z( nx - [nx] ) & \text{ if $[nx]$ is even} \\ \\ - n^{\frac{2}{p-1}}z( nx - [nx] )& \text{ if $[nx]$ is odd}\end{array}\right.
$$ and also define $z^{(-n)}:= - z^{(n)}$ and $z^{(0)}:= \mathbf{0}$. Furthermore, for each $n \in \Z$ we have
$$
\| z^{(n)} \|_\infty = |n|^{\frac{2}{p-1}}\|z\|_\infty \hspace{2cm}\text{ and }\hspace{2cm} S(z^{(n)}) = |n|^{2 \left(\frac{p+1}{p-1}\right)} S(z).
$$
\end{prop}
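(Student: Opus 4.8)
The plan is to reduce the stationary problem to a scalar ODE and exploit the scaling symmetry of the nonlinearity $g(u)=u|u|^{p-1}$ together with the already-recorded uniqueness of the nonnegative equilibrium $z$. An equilibrium is a classical solution $w\in C_D([0,1])$ of $w''+w|w|^{p-1}=0$ on $(0,1)$ with $w(0)=w(1)=0$; note $w\in C^2((0,1))$ since the right-hand side is continuous, and since $p>1$ the map $u\mapsto u|u|^{p-1}$ is $C^1$, so the classical existence/uniqueness theory for ODEs applies and $z$ is in particular \emph{the} unique positive equilibrium, symmetric about $x=\tfrac12$ with $z'(0)\neq 0$. I would first dispatch the easy ``if'' direction: a direct computation shows that, since $z''+z^p=0$ on $(0,1)$ with $z(0)=z(1)=0$, the rescaling $x\mapsto n^{2/(p-1)}z(nx)$ solves $v''+v^p=0$ on $(0,1/n)$ and vanishes at $0$ and $1/n$, and by oddness so does its negative. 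Gluing these bumps with alternating signs over the intervals $(k/n,(k+1)/n)$, $k=0,\dots,n-1$, produces exactly $z^{(n)}$: it is continuous and vanishes at $0,1$ by construction; it is $C^1$ across each interior node $k/n$ precisely because the symmetry $z(x)=z(1-x)$ forces $z'(1)=-z'(0)$, which makes the one-sided derivatives match; and then the equation gives $(z^{(n)})''=0$ at the nodes (since $z^{(n)}$ vanishes there), so $z^{(n)}\in C^2((0,1))$ solves the stationary equation. Finally $z^{(-n)}=-z^{(n)}$ is an equilibrium by oddness and $z^{(0)}=\mathbf{0}$ trivially.

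For the converse, let $w\not\equiv\mathbf{0}$ be an equilibrium. I would observe that the energy $E:=\tfrac12(w')^2+\tfrac{|w|^{p+1}}{p+1}$ is constant along $w$ (its derivative is $w'(w''+w|w|^{p-1})=0$) and that $E>0$, since $E=0$ would force $w\equiv\mathbf{0}$. Hence $w'\neq0$ at every zero of $w$ (including the endpoints), so the zeros are isolated; list them as $0=x_0<x_1<\cdots<x_m=1$ with $m\geq1$. On each $(x_k,x_{k+1})$ the function $w$ has constant sign and $|w|$ solves $v''+v^p=0$ there, vanishing at both endpoints. Now comes the key step: rescaling any positive solution of $v''+v^p=0$ on an interval of length $L$ to $[0,1]$ yields a positive solution on $(0,1)$ vanishing at $0,1$, which by uniqueness of the nonnegative equilibrium must be $z$; so the only positive bump of length $L$ is $y\mapsto L^{-2/(p-1)}z(y/L)$, with energy $\tfrac12(z'(0))^2 L^{-2(p+1)/(p-1)}$, a strictly decreasing — hence injective — function of $L$. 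Since all bumps of $w$ share the common energy $E$, they all have the same length, so $x_{k+1}-x_k$ is independent of $k$; writing $n:=m$ we get $x_k=k/n$ and $w(x)=\pm n^{2/(p-1)}z(nx-k)$ on $(k/n,(k+1)/n)$. Matching one-sided derivatives at the interior nodes (using again $z'(1)=-z'(0)\neq0$) forces the signs to alternate, whence $w=\pm z^{(n)}=z^{(\pm n)}$. Together with the first part this identifies $\{z^{(n)}:n\in\Z\}$ as exactly the set of equilibria.

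The two displayed identities then fall out by change of variables. On $(k/n,(k+1)/n)$ one has $|z^{(n)}(x)|=n^{2/(p-1)}z(nx-[nx])$, so taking suprema gives $\|z^{(n)}\|_\infty=n^{2/(p-1)}\|z\|_\infty$, and for $n<0$ this follows from $z^{(n)}=-z^{(|n|)}$. For the potential, split the integral defining $S(z^{(n)})$ over the $n$ subintervals; on each, substituting $y=nx-k$ and using $|(z^{(n)})'|=n^{2/(p-1)+1}|z'(\cdot)|$ turns the contribution into $n^{(p+3)/(p-1)}\big(\tfrac12\int_0^1(z')^2-\tfrac1{p+1}\int_0^1|z|^{p+1}\big)=n^{(p+3)/(p-1)}S(z)$ — the two exponents coincide exactly because $4/(p-1)+1=2(p+1)/(p-1)-1$ — and summing the $n$ pieces gives $S(z^{(n)})=n^{1+(p+3)/(p-1)}S(z)=n^{2(p+1)/(p-1)}S(z)$; the case $n<0$ again follows since $S$ is even. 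The one genuinely substantive point in the argument is the key step above, converting ``same energy'' into ``same length'': that is where the scaling symmetry of $g$ combined with the uniqueness of $z$ does the work, and everything else is bookkeeping.
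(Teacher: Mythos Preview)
Your proof is correct and takes a genuinely different route from the paper's. The paper does not use the conserved quantity $E=\tfrac12(w')^2+\tfrac{|w|^{p+1}}{p+1}$ at all: instead it decomposes $\{w>0\}$ and $\{w<0\}$ into open intervals, rescales each interval to $[0,1]$ to identify the restriction with $\pm z$, and then argues finiteness of the intervals from the bound $\|w\|_\infty\ge (b_k-a_k)^{-2/(p-1)}\|z\|_\infty$ (infinitely many intervals would force $\|w\|_\infty=\infty$). It then invokes Hopf's Lemma to get $z'(0^+)>0$, $z'(1^-)<0$, uses differentiability of $w$ to rule out adjacent same-sign intervals and flat zero stretches, and finally uses the symmetry $z'(0^+)=-z'(1^-)$ together with differentiability at the nodes to force all intervals to have equal length. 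Your energy method replaces several of these steps at once: $E>0$ gives $w'\neq0$ at every zero (yielding finitely many isolated zeros directly, and incidentally recovering $z'(0)\neq0$ without Hopf), and the explicit formula $E=\tfrac12(z'(0))^2 L^{-2(p+1)/(p-1)}$ converts ``common energy'' into ``common length'' in one stroke. What the paper's approach buys is a more PDE-flavored argument that makes the role of the maximum principle (via Hopf) explicit; what your approach buys is a shorter, self-contained ODE argument that needs no external input beyond the uniqueness of $z$ and local Lipschitz continuity of the nonlinearity.
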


\begin{proof} It is simple to verify that for each $n \in \Z$ the function $z^{(n)}$ is an equilibrium of the system and that each $z^{(n)}$ satisfies both $\| z^{(n)} \|_\infty = |n|^{\frac{2}{p-1}}\|z\|_\infty$ and $S(z^{(n)}) = |n|^{2 \left(\frac{p+1}{p-1}\right)} S(z)$. Therefore, we must only check that for any equilibrium of the system $w \in C_D([0,1]) - \{ \mathbf{0}\}$ there exists $n \in \N$ such that $w$ coincides with either $z^{(n)}$ or $-z^{(n)}$.

Thus, for a given equilibrium $w \in C_D([0,1]) - \{ \mathbf{0}\}$ let us define the sets
$$
G^+ = \{ x \in (0,1) : w(x) > 0 \} \hspace{2cm}\text{ and }\hspace{2cm}G^- = \{x \in (0,1) : w(x) < 0 \}.
$$ Since $w \neq \mathbf{0}$ at least one of these sets must be nonempty. On the other hand, if only one of them is nonempty then, since $z$ is the unique nonnegative equilibrium different from $\mathbf{0}$, we must have either $w=z$ or $w=-z$. Therefore, we may assume that both $G^+$ and $G^-$ are nonempty. Notice that since $G^+$ and $G^-$ are open sets we may write them as
$$
G^+ = \bigcup_{k \in \N} I^{+}_k \hspace{2cm}\text{ and }\hspace{2cm}G^- = \bigcup_{k \in \N} I^-_k
$$ where the unions are disjoint and each $I^{\pm}_k$ is a (possibly empty) open interval.

Our first task now will be to show that each union is in fact finite. For this purpose, let us take $k \in \N$ and suppose that we can write
$I^+_k = (a_k, b_k)$ for some $0\leq a_k < b_k \leq 1$. It is easy to check that $\tilde{w}_k : [0,1] \rightarrow \R$ given by
$$
\tilde{w}_k (x) = (b_k - a_k)^{\frac{2}{p-1}} w( a_k + (b_k-a_k) x)
$$ is a nonnegative equilibrium of the system different from $\mathbf{0}$ and thus it must be $\tilde{w}_k = z$. This, in particular, implies that $\| w \|_\infty \geq (b_k - a_k)^{- \frac{2}{p-1}} \| \tilde{w}_k \|_\infty = (b_k - a_k)^{- \frac{2}{p-1}} \| z \|_\infty$ from where we see that an infinite number of nonempty $I^+_k$ would contradict the fact that $\| w \|_\infty < +\infty$. Therefore, we conclude that $G^+$ is a finite union of open intervals and that, by an analogous argument, the same holds for $G^-$.

Now, by Hopf's Lemma (see \cite[p.~330]{E}) we obtain that $\partial_x z(0^+) > 0$ and $\partial_x z(1^-) < 0$. In particular, this tells us that for each $I_k^+$ we must have $d( I^+_k , G^+ - I^+_k ) > 0$, i.e. no two plus intervals lie next to each other, since that would contradict the differentiability of $w$. Furthermore, we must also have $d(I^+_k, G^-) = 0$, i.e. any plus interval lies next to a minus interval, since otherwise we would have a plus interval lying next to an interval in which $w$ is constantly zero, a fact which again contradicts the differentiability of $w$. Therefore, from all this we conclude that plus and minus intervals must be presented in alternating order, and that their closures must cover all of the interval $[0,1]$.

Finally, since $z$ is symmetric with respect to $x=\frac{1}{2}$ we obtain that $\partial_x z(0^+) = - \partial_x z(1^-)$. This implies that all intervals must have the same length, otherwise we would once again contradict the differentiability of $w$. Since the measures of the intervals must add up to one, we see that their length must be $l=\frac{1}{n}$ where $n$ denotes the total amount of intervals. This concludes the proof.
\end{proof}

Regarding the behavior of solutions to the equation \eqref{MainPDE} we have the following result, whose proof was given in \cite{CE1}.
\begin{teo}\label{descomp1} Let $U^u$ be the solution to equation \eqref{MainPDE} with initial datum $u \in C_D([0,1])$. Then one of these two possibilities must hold:
\begin{enumerate}
\item [i.] $\tau^{u} < +\infty$ and $U^u$ blows up as $t \rightarrow \tau^{u}$, i.e. $\lim_{t \rightarrow \tau^{u}} \|U^u(t,\cdot)\|_\infty = +\infty$
\item [ii.] $\tau^{u} = +\infty$ and $U^u$ converges (in the $\| \cdot \|_\infty$ norm) to a stationary solution as $t \rightarrow +\infty$, i.e. a critical point of the potential $S$.
\end{enumerate}
\end{teo}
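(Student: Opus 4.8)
The plan is to use that \eqref{MainPDE} is, through \eqref{formalPDE}, the gradient flow of the potential $S$, so that $S$ is a strict Lyapunov functional: along any solution one has $\frac{d}{dt}S(U^u(t,\cdot)) = -\int_0^1 (\p_t U^u(t,x))^2\,dx \leq 0$, using the equation and integration by parts, with equality at a given time only if $U^u(t,\cdot)$ is an equilibrium. First I would dispose of the easy half of the dichotomy. If $\tau^u < +\infty$, then $\lim_{t\to\tau^u}\|U^u(t,\cdot)\|_\infty = +\infty$: otherwise there would be $t_n \uparrow \tau^u$ with $\|U^u(t_n,\cdot)\|_\infty \leq M$, but the local well-posedness theory for \eqref{MainPDE} provides a lower bound $\delta(M)>0$ on the existence time depending only on the sup-norm of the initial datum, so $U^u$ would extend to $[t_n, t_n+\delta(M))$, and for $n$ large this contradicts the maximality of $\tau^u$. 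This settles case (i).

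Assume now $\tau^u = +\infty$; the heart of the matter is an a priori bound on the orbit. By parabolic smoothing $U^u(1,\cdot) \in H^1_0((0,1))$, so $S(U^u(t,\cdot)) \leq S(U^u(1,\cdot)) =: E_1 < +\infty$ for all $t \geq 1$. Setting $y(t) = \tfrac12\int_0^1 U^u(t,x)^2\,dx$, a direct computation from \eqref{MainPDE} (integrating by parts and inserting the definition of $S$) gives, for $t \geq 1$,
\[
y'(t) = -2S(U^u(t,\cdot)) + \tfrac{p-1}{p+1}\int_0^1 |U^u(t,x)|^{p+1}\,dx \;\geq\; -2E_1 + c\, y(t)^{(p+1)/2},
\]
with $c=c(p)>0$, where we used $S(U^u(t,\cdot)) \leq E_1$ together with Jensen's inequality on the probability space $([0,1],dx)$, namely $\int_0^1|U^u(t,x)|^{p+1}\,dx \geq (2y(t))^{(p+1)/2}$. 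Since $(p+1)/2 > 1$, once $y(t)$ rises above the threshold $y^{**} := (4E_1/c)^{2/(p+1)}$ one has $y'(t) \geq \tfrac{c}{2} y(t)^{(p+1)/2}$, whence $y$ — and with it the solution — would blow up in finite time, contradicting $\tau^u=+\infty$. Hence $\sup_{t\geq 1}\|U^u(t,\cdot)\|_{L^2} < +\infty$. From here I would bootstrap: feeding this bound into the variation-of-constants formula $U^u(t) = e^{t\p^2_{xx}}u + \int_0^t e^{(t-s)\p^2_{xx}}g(U^u(s))\,ds$ and using the smoothing estimates of the Dirichlet heat semigroup together with the one-dimensional embedding $H^1_0((0,1)) \hookrightarrow C_D([0,1])$, one upgrades the $L^2$ bound first to a uniform $L^\infty$ bound and then, iterating once more, to relative compactness of $\{U^u(t,\cdot): t\geq 1\}$ in $C_D([0,1])$, along with parabolic regularity bounds that make $t\mapsto \|\p_t U^u(t,\cdot)\|_{L^2}$ uniformly continuous on $[1,\infty)$.

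The conclusion then follows from the LaSalle invariance principle. Integrating the Lyapunov identity gives $\int_1^\infty \|\p_t U^u(t,\cdot)\|_{L^2}^2\,dt = E_1 - \lim_{t\to\infty}S(U^u(t,\cdot)) < +\infty$ (the limit exists by monotonicity and the a priori bound), and together with the uniform continuity just mentioned this forces $\|\p_t U^u(t,\cdot)\|_{L^2}\to 0$ as $t\to\infty$. Hence, passing to the limit in the equation, every element of the $\omega$-limit set $\omega(u) := \bigcap_{s\geq 1}\overline{\{U^u(t,\cdot):t\geq s\}}$ satisfies $\p^2_{xx}w + g(w) = 0$, i.e. $\omega(u)$ consists entirely of equilibria of \eqref{MainPDE}. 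Moreover $\omega(u)$ is a nested intersection of compact connected sets — the sets are connected since $[1,\infty)\ni t\mapsto U^u(t,\cdot)$ is continuous — hence nonempty, compact and connected. By Proposition \ref{equilibrios} the equilibrium set is $\{z^{(n)}:n\in\Z\}$, which is discrete in $C_D([0,1])$ (each $z^{(n)}$ is isolated, the norms being $|n|^{2/(p-1)}\|z\|_\infty$), so a connected subset of it is a single point. Therefore $\omega(u) = \{z^{(n)}\}$ for some $n\in\Z$, and since the orbit is precompact, $U^u(t,\cdot)\to z^{(n)}$ in $\|\cdot\|_\infty$ as $t\to+\infty$, that is, to a critical point of $S$. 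This is case (ii).

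The step I expect to be the main obstacle is the a priori bound on the orbit when $\tau^u=+\infty$, i.e. excluding ``grow-up'' of global solutions. The $L^2$ bound coming from the energy inequality is robust, but the bootstrap to an $L^\infty$/compactness bound is delicate for large $p$, where a crude Gagliardo--Nirenberg estimate of $\int_0^1|U^u|^{p+1}$ by the Dirichlet energy no longer controls the solution and the smoothing iteration must be carried out with care; this is presumably where the argument of \cite{CE1} does its real work. A minor additional point is that passing from ``$\omega(u)$ is a singleton'' to genuine convergence relies on the precompactness of the orbit established in the previous step.
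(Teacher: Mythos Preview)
The paper does not give its own proof of this theorem; it simply cites \cite{CE1}. Your outline is the standard and correct route: the Lyapunov identity for $S$, the differential inequality for $y(t)=\tfrac12\|U^u(t,\cdot)\|_{L^2}^2$ (which the thesis itself reuses in Lemmas~\ref{expestimate} and~\ref{compacidad}), and the LaSalle invariance principle combined with the discreteness of the equilibrium set from Proposition~\ref{equilibrios} to reduce the connected $\omega$-limit set to a singleton. Your self-assessment of the obstacle is also on point: the naive Gagliardo--Nirenberg bootstrap from the uniform $L^2$ bound to a uniform $L^\infty$ bound closes directly only for $p<5$ (this is exactly the restriction appearing in Lemma~\ref{compacidad} and the remark following it), and for general $p>1$ the boundedness of global orbits requires a more refined smoothing argument, which is where \cite{CE1} and \cite{QS} do the real work.
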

Theorem \ref{descomp1} is used to decompose the space $C_D([0,1])$ of initial data into three parts:
\begin{equation}\label{decomp12}
C_D([0,1]) = \mathcal{D}_{\mathbf{0}} \cup \mathcal{W} \cup \mathcal{D}_e
\end{equation} where $\mathcal{D}_{\mathbf{0}}$ denotes the stable manifold of the origin $\mathbf{0}$, $\mathcal{W}$ is the union of all stable manifolds of the unstable equilibria
and $\mathcal{D}_e$ constitutes the domain of explosion of the system, i.e. the set of all initial data for which the system explodes in finite time. It can be seen that both $\mathcal{D}_{\mathbf{0}}$ and $\mathcal{D}_e$ are open sets and that $\mathcal{W}$ is the common boundary separating them. The following proposition gives a useful characterization of the domain of explosion $\mathcal{D}_e$. Its proof is can be found on \cite[Theorem~17.6]{QS}.

\begin{prop}\label{caract} Let $U^u$ denote the solution to \eqref{MainPDE} with initial datum $u \in C_D([0,1])$. Then
$$
\mathcal{D}_e = \{ u \in C_D([0,1]) : S( U^u (t, \cdot) ) < 0 \text{ for some }0 \leq t < \tau^u \}.
$$ Furthermore, we have $\lim_{t \rightarrow (\tau^u)^-} S( U^u(t,\cdot) ) = -\infty$.
\end{prop}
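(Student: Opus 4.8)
The plan is to prove the two inclusions of the claimed identity separately, using only the gradient structure \eqref{formalPDE}, the explicit form of $S$, and Theorem~\ref{descomp1}; the ``Furthermore'' part will then fall out of the harder inclusion. Throughout write $U=U^u$, $\tau=\tau^u$ and $y(t):=\|U(t,\cdot)\|_{L^2}^2=\int_0^1 U(t,x)^2\,dx$ (recall that $U(t,\cdot)\in H^1_0((0,1))$ for every $t>0$ by parabolic smoothing, so $S(U(t,\cdot))$ is finite on $(0,\tau)$). Two computations are the whole engine. First, since \eqref{MainPDE} is the $L^2$-gradient flow of $S$, the potential is a strict Lyapunov function along solutions:
\begin{equation}\label{caract:lyap}
\frac{d}{dt}\,S(U(t,\cdot))=-\,\|\p_tU(t,\cdot)\|_{L^2}^2\ \le\ 0 .
\end{equation}
Second, multiplying \eqref{MainPDE} by $U$, integrating over $[0,1]$, using the homogeneous Dirichlet conditions and the identity $\int_0^1(\p_xU)^2\,dx=2S(U(t,\cdot))+\tfrac{2}{p+1}\int_0^1|U|^{p+1}\,dx$ coming from the definition of $S$, one gets
\begin{equation}\label{caract:en}
\tfrac12\,y'(t)=-2\,S(U(t,\cdot))+\frac{p-1}{p+1}\int_0^1|U(t,x)|^{p+1}\,dx .
\end{equation}

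For the inclusion $\{\,u:S(U^u(t,\cdot))<0\text{ for some }0\le t<\tau^u\,\}\subseteq\mathcal D_e$ I would use a Kaplan/Levine-type ODE comparison. Suppose $S(U(t_0,\cdot))=-\delta<0$. Then $U(t_0,\cdot)\not\equiv\0$, so $y(t_0)>0$, and by \eqref{caract:lyap} we have $S(U(t,\cdot))\le-\delta$ for all $t\in[t_0,\tau)$. Since $[0,1]$ carries a probability measure, Jensen's inequality applied to the convex map $s\mapsto s^{(p+1)/2}$ and to $U(t,\cdot)^2$ gives $\int_0^1|U|^{p+1}\,dx\ge y(t)^{(p+1)/2}$; feeding this and $S(U(t,\cdot))\le-\delta$ into \eqref{caract:en} yields
\begin{equation}\label{caract:ode}
y'(t)\ \ge\ \frac{2(p-1)}{p+1}\,y(t)^{\frac{p+1}{2}}\qquad (t_0\le t<\tau),\qquad y(t_0)>0 .
\end{equation}
Because the exponent $\tfrac{p+1}{2}$ exceeds $1$, integrating \eqref{caract:ode} shows $y$ must escape to $+\infty$ before a finite time $T^*$ depending only on $p$ and $y(t_0)$; as $\|v\|_\infty\ge\|v\|_{L^2}$ on $[0,1]$, this forces $\|U(t,\cdot)\|_\infty\to+\infty$, hence $\tau\le T^*<\infty$, and then Theorem~\ref{descomp1} gives $u\in\mathcal D_e$.

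For the reverse inclusion $\mathcal D_e\subseteq\{\,u:S(U^u(t,\cdot))<0\text{ for some }t\,\}$ together with the ``Furthermore'', let $u\in\mathcal D_e$, i.e.\ $\tau<\infty$. Integrating \eqref{caract:lyap}, $S(U(t,\cdot))=S(u)-\int_0^t\|\p_rU(r,\cdot)\|_{L^2}^2\,dr$ is non-increasing in $t$, so
$$
\lim_{t\to\tau^-}S(U(t,\cdot))=-\infty\quad\Longleftrightarrow\quad\int_0^\tau\|\p_tU(t,\cdot)\|_{L^2}^2\,dt=+\infty ,
$$
and the left-hand statement plainly also yields $S(U(t_0,\cdot))<0$ for some $t_0<\tau$. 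Thus everything reduces to showing $\int_0^\tau\|\p_tU\|_{L^2}^2\,dt=+\infty$. I would prove this by contradiction: if the integral were finite, then for $0\le s<t<\tau$ one gets $\|U(t,\cdot)-U(s,\cdot)\|_{L^2}\le\int_s^t\|\p_rU(r,\cdot)\|_{L^2}\,dr\le\sqrt{\tau}\,\big(\int_0^\tau\|\p_rU\|_{L^2}^2\,dr\big)^{1/2}$, whence $\sup_{0\le t<\tau}\|U(t,\cdot)\|_{L^2}<\infty$; combined with the energy bound from \eqref{caract:lyap}, such a uniform a priori bound on $U$ is incompatible with blow-up at the finite time $\tau$, by the a priori estimates for \eqref{MainPDE} in \cite[Theorem~17.6]{QS} (which, through the one-dimensional Sobolev embedding $H^1_0((0,1))\hookrightarrow C_D([0,1])$, promote it to a uniform $\|\cdot\|_\infty$-bound and allow the solution to be continued past $\tau$) --- a contradiction.

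I expect the real difficulty to sit entirely in that last step: turning a uniform bound on a low-order norm of the solution, together with bounded energy, into an obstruction to finite-time blow-up. The identities \eqref{caract:lyap}--\eqref{caract:ode} and the ODE comparison are elementary and self-contained, but the no-blow-up-under-bounded-energy statement genuinely requires the parabolic smoothing / a priori bound machinery for \eqref{MainPDE}, which is why I would quote it from \cite{QS} rather than attempt it by hand (a direct argument via Young's inequality in \eqref{caract:en} closes only for small $p$, e.g.\ $1<p<3$).
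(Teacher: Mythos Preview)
The paper does not supply its own proof of this proposition; it merely cites \cite[Theorem~17.6]{QS}. Your sketch is a reasonable reconstruction of the standard argument, and the easy inclusion (negative potential at some time forces finite-time blow-up via the ODE comparison \eqref{caract:ode}) is correct and self-contained.

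The gap is in the converse. You correctly reduce ``$S(U(t,\cdot))\not\to-\infty$'' to the pair of bounds ``$\sup_{t<\tau}\|U(t,\cdot)\|_{L^2}<\infty$ and $\inf_{t<\tau}S(U(t,\cdot))>-\infty$'', but then invoke \cite[Theorem~17.6]{QS} to conclude that these two bounds rule out blow-up. Since the paper cites that very theorem as the source of the proposition, this is circular: you are quoting the result you are trying to prove. For $1<p<5$ (not just $p<3$) you can close the loop by hand with the Gagliardo--Nirenberg interpolation used in Lemma~\ref{compacidad}: the bound $\|U\|_{L^{p+1}}^{p+1}\le C\|U\|_{L^2}^{(p+3)/2}\|\p_x U\|_{L^2}^{(p-1)/2}$ combined with the definition of $S$ and Young's inequality (the exponent $(p-1)/2<2$) forces $\|\p_x U(t,\cdot)\|_{L^2}$ to stay bounded, hence $\|U(t,\cdot)\|_\infty$ stays bounded by the one-dimensional Sobolev embedding, contradicting $\tau<\infty$. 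For $p\ge 5$ this interpolation no longer closes, and the implication ``bounded energy $\Rightarrow$ no blow-up'' is genuinely the hard part of \cite[Theorem~17.6]{QS} (requiring blow-up rate or rescaling arguments), not a consequence of the elementary identities \eqref{caract:lyap}--\eqref{caract:en} alone.
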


As a consequence of these results one can obtain a precise description of the domains $\mathcal{D}_{\mathbf{0}}$ and $\mathcal{D}_e$ in the region of nonnegative data. The following theorem can be found on \cite{CE2}.

\begin{teo}\label{descomp2} $\,$
\begin{enumerate}
\item [i.] Assume $u \in C_D([0,1])$ is nonnegative and such that $U^u$ is globally defined and converges to $z$ as $t \rightarrow +\infty$. Then for $v \in C_D([0,1])$ we have that
\begin{enumerate}
\item [$\bullet$] $\mathbf{0} \lneq v \lneq u \Longrightarrow U^v$ is globally defined and converges to $\mathbf{0}$ as $t \rightarrow +\infty$.
\item [$\bullet$] $u \lneq v \Longrightarrow U^v$ explodes in finite time.
\end{enumerate}
\item [ii.] For every nonnegative $u \in C_D([0,1])$ there exists $\lambda_c^u > 0$ such that for every $\lambda > 0$
\begin{enumerate}
\item [$\bullet$] $0 < \lambda < \lambda_c^u \Longrightarrow U^{\lambda u}$ is globally defined and converges to $\mathbf{0}$ as $t \rightarrow +\infty$.
\item [$\bullet$] $\lambda = \lambda_c^u \Longrightarrow U^{\lambda u}$ is globally defined and converges to $z$ as $t \rightarrow +\infty$.
\item [$\bullet$] $\lambda > \lambda_c^u \Longrightarrow U^{\lambda u}$ explodes in finite time.
\end{enumerate}
\end{enumerate}
\end{teo}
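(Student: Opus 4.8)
The plan is to reduce everything to two ingredients: the comparison principle for \eqref{MainPDE} (if $u_1\le u_2$ pointwise then $U^{u_1}\le U^{u_2}$ on their common interval of existence, valid since $g$ is locally Lipschitz), together with the elementary observation that, because $p>1$, rescaling a nonnegative solution up or down produces sub/super\-solutions: if $U$ solves \eqref{MainPDE} and $0<\kappa<1<K$, then on the set $\{U\ge 0\}$ one has $g(\kappa U)=\kappa^pU^p\le\kappa\,g(U)$ and $g(KU)=K^pU^p\ge K\,g(U)$, so $\kappa U$ is a supersolution and $KU$ a subsolution. Granting this, part (i) is proved directly and part (ii) is then deduced from part (i) together with Theorem \ref{descomp1} and Proposition \ref{equilibrios}.

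For part (i), first bullet, assume $\mathbf 0\lneq v\lneq u$. By comparison $\mathbf 0\le U^v(t,\cdot)\le U^u(t,\cdot)$ on the existence interval of $U^v$; since $U^u$ is global, $U^v$ stays bounded and hence is global too. To identify its limit one upgrades $U^v\le U^u$ to a \emph{strict} inequality with a gap: fixing any $t_1>0$, the difference $D=U^u-U^v\ge 0$ solves a linear parabolic equation $\p_tD=\p^2_{xx}D+c(t,x)D$ with coefficient bounded on $[0,t_1]\times[0,1]$ and $D(0,\cdot)=u-v\gneq\mathbf 0$; by the strong maximum principle and Hopf's lemma $D(t_1,\cdot)>0$ on $(0,1)$ with nonzero inward normal derivatives at $\{0,1\}$, and likewise $U^u(t_1,\cdot)>0$ on $(0,1)$ with nonzero inward derivatives (as $u\gneq\mathbf 0$). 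Hence the ratio $U^v(t_1,\cdot)/U^u(t_1,\cdot)$ extends continuously to $[0,1]$ with all values $<1$, so $U^v(t_1,\cdot)\le(1-\eta)U^u(t_1,\cdot)$ for some $\eta\in(0,1)$. Then $s\mapsto(1-\eta)U^u(t_1+s,\cdot)$ is a global supersolution lying above $U^v$ at $s=0$, so $U^v(t_1+s,\cdot)\le(1-\eta)U^u(t_1+s,\cdot)\to(1-\eta)z$. By Theorem \ref{descomp1}, $U^v$ converges to an equilibrium $w$ which is nonnegative and $\le(1-\eta)z\lneq z$; by Proposition \ref{equilibrios} the only nonnegative equilibria are $\mathbf 0$ and $z$, so $w=\mathbf 0$. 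The second bullet is symmetric: if $u\lneq v$ then $v\ge\mathbf 0$, and were $U^v$ global the same argument would give $U^v(t_1,\cdot)\ge(1+\eta)U^u(t_1,\cdot)$ for some $\eta>0$; comparing with the subsolution $s\mapsto(1+\eta)U^u(t_1+s,\cdot)$ forces the equilibrium limit $w$ of $U^v$ to satisfy $w\gneq z$ and $w\in\{\mathbf 0,z\}$ at once, which is impossible. Hence $U^v$ blows up in finite time.

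For part (ii), fix nonnegative $u\gneq\mathbf 0$ and set $A=\{\lambda>0:\lambda u\in\mathcal D_{\mathbf 0}\}$ and $B=\{\lambda>0:\lambda u\in\mathcal D_e\}$. Both are open (since $\mathcal D_{\mathbf 0}$, $\mathcal D_e$ are open and $\lambda\mapsto\lambda u$ is continuous) and disjoint. By comparison and $\lambda<\lambda'\Rightarrow\lambda u\lneq\lambda'u$, $A$ is downward closed and $B$ upward closed in $(0,\infty)$: if $\lambda'\in A$, $\lambda<\lambda'$ then $U^{\lambda u}\le U^{\lambda'u}\to\mathbf 0$, so $\lambda\in A$; if $\lambda\in B$, $\lambda'>\lambda$ then $U^{\lambda'u}\ge U^{\lambda u}$ blows up, so $\lambda'\in B$. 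Also $A\ne\emptyset$ because $\mathbf 0$ is asymptotically stable (a sup-norm ball around $\mathbf 0$ lies in $\mathcal D_{\mathbf 0}$, so small $\lambda$ work), and $B\ne\emptyset$ by Kaplan's method: testing \eqref{MainPDE} against the positive principal Dirichlet eigenfunction $\varphi_1$ of $-\p^2_{xx}$, normalized so that $\int_0^1\varphi_1=1$, and using Jensen's inequality (here $p>1$), $y(t)=\int_0^1U^{\lambda u}(t,\cdot)\varphi_1$ satisfies $y'\ge y^p-\pi^2 y$, which blows up in finite time once $\lambda$ is large. Thus $A=(0,\lambda_1)$, $B=(\lambda_2,\infty)$ with $0<\lambda_1\le\lambda_2<\infty$, and $(0,\infty)\setminus(A\cup B)=[\lambda_1,\lambda_2]$. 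For $\lambda\in[\lambda_1,\lambda_2]$, $U^{\lambda u}$ is global (as $\lambda\notin B$), so by Theorem \ref{descomp1} it converges to a nonnegative equilibrium, which by Proposition \ref{equilibrios} and $\lambda\notin A$ equals $z$. If $\lambda_1<\lambda_2$ we could pick $\lambda_1\le\mu<\mu'\le\lambda_2$ and apply part (i) (second bullet) with the datum $\mu u$ (nonnegative, global, converging to $z$) and $\mu'u\gneq\mu u$ to conclude $U^{\mu'u}$ blows up, contradicting $\mu'\notin B$. Hence $\lambda_1=\lambda_2=:\lambda_c^u>0$, and the three alternatives follow at once: $\lambda<\lambda_c^u$ gives $\lambda\in A$, $\lambda>\lambda_c^u$ gives $\lambda\in B$, and $\lambda=\lambda_c^u$ is the single point of $[\lambda_1,\lambda_2]$, where $U^{\lambda_c^u u}\to z$.

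The main obstacle is exactly the passage from the soft comparison $U^v\le U^u$ to the quantitative gap $U^v(t_1,\cdot)\le(1-\eta)U^u(t_1,\cdot)$ (respectively $\ge(1+\eta)U^u(t_1,\cdot)$): without the strict factor the shifted function $U^u(t_1+\cdot,\cdot)$ is only a solution, not a sub/super\-solution, and one cannot separate the limit of $U^v$ from $z$. Producing the gap forces one to use the strong maximum principle and, crucially, Hopf's lemma to control the ratio of the two solutions near the endpoints $x=0,1$, where both vanish; this is the one step requiring more than formal order arguments. Everything else — validity of comparison up to the blow-up time, the sub/super\-solution property of $\kappa U$ and $KU$, and the use of Theorem \ref{descomp1} and Proposition \ref{equilibrios} to pin down limits — is routine.
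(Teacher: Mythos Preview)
The paper does not supply its own proof of this theorem: it is stated without argument and attributed to \cite{CE2}. Your proof therefore cannot be compared to one in the paper, but it stands on its own and is essentially correct; indeed it is the standard route to such results (comparison principle plus the sub/super\-solution property of $\kappa U$, $KU$ for $p>1$, combined with the strong maximum principle and Hopf's lemma to produce the strict multiplicative gap). Two minor remarks: in part (ii) you tacitly assume $u\gneq\mathbf 0$, which is necessary (the statement is vacuous for $u=\mathbf 0$) and should be made explicit; and in the ``downward closed'' step for $A$ you should note that $0\le U^{\lambda u}\le U^{\lambda'u}\to\mathbf 0$ forces $U^{\lambda u}\to\mathbf 0$ by squeezing, not merely that $U^{\lambda u}$ is bounded. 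Neither point is a gap in the mathematics.
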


From this result we obtain the existence of an unstable manifold of the saddle point $z$ which is contained in the region of nonnegative initial data and shall be denoted by $\mathcal{W}^z_u$. It is $1$-dimensional, has nonempty intersection with both $\mathcal{D}_{\mathbf{0}}$ and $\mathcal{D}_e$ and joins $z$ with $\mathbf{0}$. By symmetry, a similar description also holds for the opposite unstable equilibrium $-z$. Figure \ref{fig1} depicts the decomposition \eqref{decomp12} together with the unstable manifolds $\mathcal{W}^{\pm z}_u$. \mbox{By exploiting the} structure of the remaining unstable equilibria given by Proposition \ref{equilibrios} one can verify for each of them the analogue of (ii) in Theorem \ref{descomp2}. This is detailed in the following proposition.

\psfrag{U1}{\vspace{-155pt}$U\equiv 0$}
\psfrag{U2}{\vspace{-45pt}$U \equiv \1$}
\psfrag{De}{\hspace{60pt}\vspace{30pt}$\mathcal{D}_e$}
\psfrag{D0}{$\mathcal{D}_0$}
\psfrag{Wu}{$\mathcal W_1^u$}
\psfrag{Ws}{$\mathcal W_1^s$}
\begin{figure}
	\centering
	\includegraphics[width=8cm]{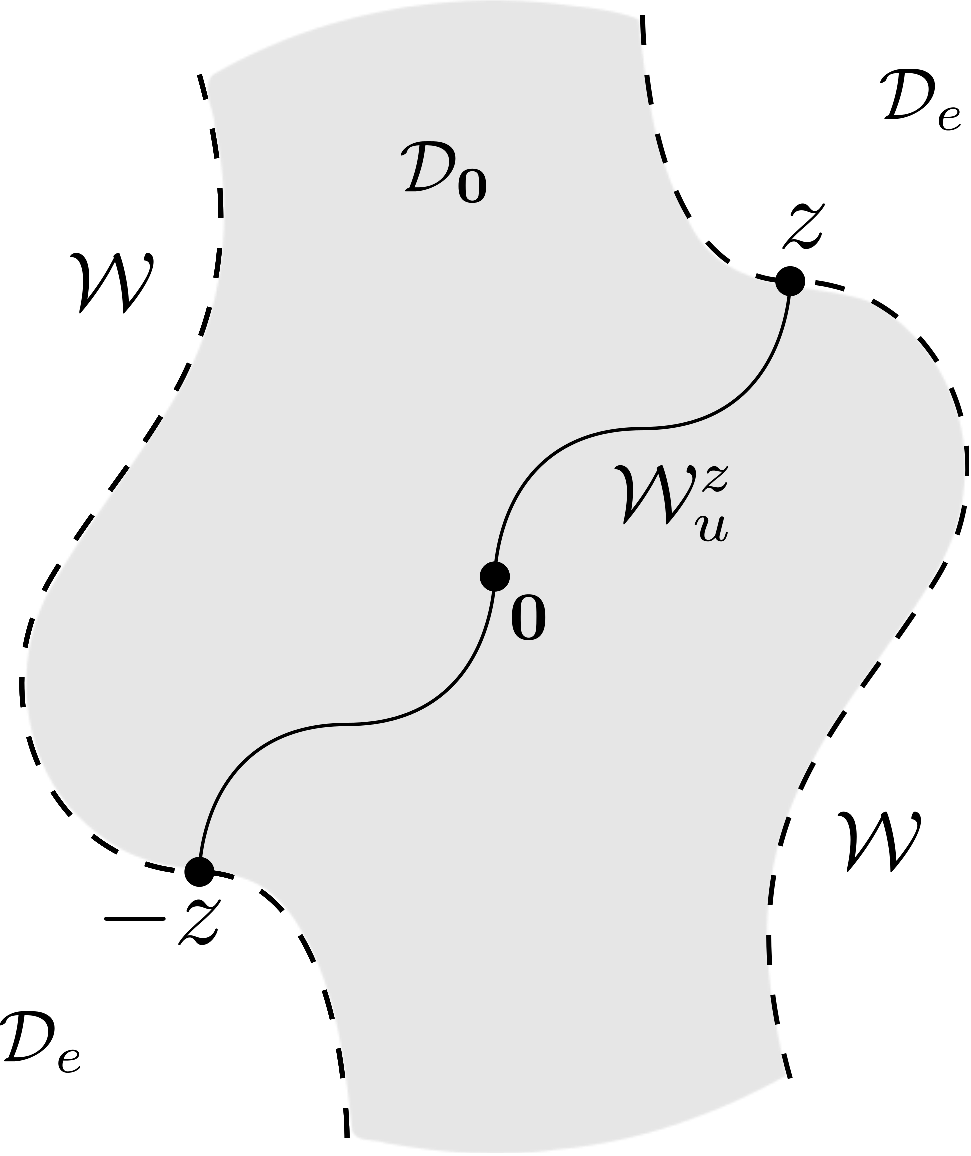}
	\caption{The phase diagram of equation \eqref{MainPDE}.}
	\label{fig1}
\end{figure}

\begin{prop}\label{descomp3} If $w \in C_D([0,1]) - \{\mathbf{0}\}$ is an equilibrium of the system then for every $\lambda > 0$ we have that
\begin{enumerate}
\item [$\bullet$] $0 < \lambda < 1 \Longrightarrow U^{\lambda w}$ is globally defined and converges to $\mathbf{0}$ as $t \rightarrow +\infty$.
\item [$\bullet$] $\lambda = 1 \Longrightarrow U^{\lambda w}$ is globally defined and satisfies $U^{\lambda w} \equiv w$.
\item [$\bullet$] $\lambda > 1 \Longrightarrow U^{\lambda w}$ explodes in finite time.
\end{enumerate}
\end{prop}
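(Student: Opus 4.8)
The plan is to reduce the statement to the already-settled behaviour of the nonnegative equilibrium $z$ (Theorem~\ref{descomp2}(i)) by exploiting two invariances of \eqref{MainPDE}, both stemming from the special form of the source $g(u)=u|u|^{p-1}$: a parabolic scaling and an odd reflection/periodization. By Proposition~\ref{equilibrios} every equilibrium $w\neq\mathbf 0$ equals $\pm z^{(n)}$ for some $n\in\N$, and since $g$ is odd, $-U^u$ solves \eqref{MainPDE} with datum $-u$; by uniqueness of solutions (see \cite{QS}) this gives $U^{-\lambda z^{(n)}}=-U^{\lambda z^{(n)}}$, so it suffices to treat $w=z^{(n)}$ with $n\geq 1$. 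The case $n=1$ (that is, $w=z$) is immediate from Theorem~\ref{descomp2}(i) applied with $u=z$: since $U^z\equiv z$ is globally defined and converges to $z$, and $z>0$ on $(0,1)$, we have $\mathbf 0\lneq\lambda z\lneq z$ for $0<\lambda<1$ and $z\lneq\lambda z$ for $\lambda>1$, while $\lambda=1$ is just the equilibrium identity from Proposition~\ref{equilibrios}. So the real work is to reduce $z^{(n)}$ with $n\geq 2$ to $z$.

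First I would record the scaling invariance: if $V$ solves $\p_t V=\p^2_{xx}V+g(V)$ with zero Dirichlet data on $[0,L]$ on its maximal interval $[0,\sigma)$, then $V_\mu(t,x):=\mu^{2/(p-1)}V(\mu^2 t,\mu x)$ solves the same equation with zero Dirichlet data on $[0,L/\mu]$ on $[0,\sigma/\mu^2)$; this is a one-line computation using $g(\mu^{2/(p-1)}s)=\mu^{2p/(p-1)}g(s)=\mu^{2/(p-1)}\mu^2 g(s)$. In particular $V_\mu$ blows up (at time $\sigma/\mu^2$) precisely when $V$ does, and $V_\mu(t,\cdot)\to\mathbf 0$ precisely when $V(t,\cdot)\to\mathbf 0$. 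Second, since $g$ is odd and $g(0)=0$, odd reflection across a Dirichlet node preserves solutions: if $\bar U$ solves the equation with zero boundary values on $[0,1/n]\times[0,\tau)$, then its odd $2/n$-periodic extension $\hat U$, restricted to $[0,1]$ --- explicitly $\hat U(t,x)=\bar U(t,x-k/n)$ for $x\in[k/n,(k+1)/n]$ with $k$ even and $\hat U(t,x)=-\bar U(t,(k+1)/n-x)$ for $k$ odd --- solves the equation with zero Dirichlet data on $[0,1]$ on the same interval $[0,\tau)$, blowing up iff $\bar U$ does and tending to $\mathbf 0$ iff $\bar U$ does. The only thing to verify here is that $\hat U$ is a genuine solution across each interior node $x=k/n$: there $\bar U$, hence $\hat U$, vanishes for all $t$, so $\hat U$ is $C^1$ in $x$ by the matching of one-sided derivatives and $C^2$ because the equation forces $\p^2_{xx}\hat U=\p_t\hat U-g(0)=0$ on both sides of the node.

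Now combine the two. Applying the scaling with $\mu=n$ to $V=U^{\lambda z}$ gives a solution $\bar U(t,x)=n^{2/(p-1)}U^{\lambda z}(n^2 t,nx)$ on $[0,1/n]$ with initial datum $x\mapsto\lambda n^{2/(p-1)}z(nx)$; reflecting $\bar U$ as above produces a solution $\hat U$ on $[0,1]$ whose initial datum equals, by the definition of $z^{(n)}$ in Proposition~\ref{equilibrios} together with the symmetry $z(x)=z(1-x)$ (which is exactly what identifies the odd-reflected bumps with the ``$[nx]$ odd'' branch of $z^{(n)}$), precisely $\lambda z^{(n)}$. By uniqueness $\hat U=U^{\lambda z^{(n)}}$, so $U^{\lambda z^{(n)}}$ inherits the trichotomy of $U^{\lambda z}$: globally defined and convergent to $\mathbf 0$ when $0<\lambda<1$, identically $z^{(n)}$ when $\lambda=1$, and blowing up in finite time (at $\tau^{\lambda z}/n^2$) when $\lambda>1$. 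Together with the reduction to $z$ and the sign symmetry above, this proves the proposition. The step I expect to be the main obstacle is the reflection argument: checking both that the periodized function really solves \eqref{MainPDE} across every interior node $k/n$ (which rests on the oddness of $g$ and the vanishing of the function and its second space-derivative there) and that after rescaling its initial value is precisely $\lambda z^{(n)}$ rather than some other reflected pattern (which rests on the symmetry of $z$ about $x=\tfrac12$); the scaling identity and the deduction of the trichotomy from Theorem~\ref{descomp2} are routine.
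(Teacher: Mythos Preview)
Your proposal is correct and follows essentially the same approach as the paper: the paper writes down the explicit formula expressing $U^{\lambda z^{(n)}}$ as the rescaled odd-periodic extension of $U^{\pm\lambda z}$ (exactly your $\hat U$), justifies it by the same symmetry considerations you invoke (scaling, oddness of $g$, symmetry of $z$, uniqueness), and then appeals to Theorem~\ref{descomp2}. Your write-up is in fact more explicit than the paper's about why the reflected function is a genuine solution across the interior nodes and why the initial datum matches $\lambda z^{(n)}$, but the argument is the same.
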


\begin{proof} Let us suppose that $w \equiv z^{(n)}$ for some $n \in \Z - \{0\}$. Then for any $\lambda > 0$ the solution to \eqref{MainPDE} with initial datum $\lambda w$ is given by the formula
$$
U^{\lambda w}(t,x) = \left\{ \begin{array}{rl} |n|^{\frac{2}{p-1}}U^{\text{sg}(n) \lambda z} (|n|^2t,|n|x - [|n|x] ) & \text{ if $[|n|x]$ is even} \\ \\ - |n|^{\frac{2}{p-1}}U^{\text{sg}(n) \lambda z}(|n|^2t,|n|x - [|n|x])& \text{ if $[|n|x]$ is odd}\end{array}\right.
$$ where $\text{sg}(n):=\frac{n}{|n|}$ and $U^{\pm \lambda z}$ is the solution to \eqref{MainPDE} with initial datum $\pm \lambda z$, respectively.

\noindent That is, $U^{\lambda w}$ is obtained from $U^{\lambda z}$ by performing the analogous procedure to the one explained in Proposition \ref{equilibrios} to obtain $w$ from $z$. Indeed, this follows from an argument similar in spirit to the one given for Proposition \ref{equilibrios} which exploits the facts that $U^{\lambda z}$ is symmetric, it verifies $U^{-\lambda z}=-U^{\lambda z}$ and also that it vanishes on the boundary of $[0,1]$. Having this formula for $U^{\lambda w}$, now the result follows at once from Theorem \ref{descomp2}.
\end{proof}

\section{Brownian sheet}

Throughout our work we consider perturbations of \eqref{MainPDE} given by additive white noise. This noise term can be regarded as the formal time derivative of a Brownian sheet process.
We say that a stochastic process $W=\{W(t,x) : (t,x) \in \R^+ \times [0,1]\}$ is a \textit{Brownian sheet} if it satisfies the following properties:
\begin{enumerate}
\item [i.] $W$ has continuous paths, i.e. $(t,x) \mapsto W{(t,x)}(\omega)$ is continuous for every $\omega \in \Omega$.
\item [ii.] $W$ is a centered Gaussian process with covariance structure given by
$$
\text{Cov}( W{(t,x)} , W{(s,y)} ) = (t \wedge s)(x \wedge y)
$$ for every $(t,x),(s,y) \in \R^+ \times [0,1]$.
\end{enumerate}
We refer to \cite{P,RY} for the construction of a such a process and a list of its basic properties, as well as the fundamentals of the theory of stochastic integration with respect to it.

\section{Definition of solution for the SPDE}

In this first part we study stochastic partial differential equations of the form
\begin{equation}\label{MainSPDE}
\left\{\begin{array}{rll}
\p_t X &= \p^2_{xx}X + f(X) + \varepsilon \dot{W}& \quad t>0 \,,\, 0<x<1 \\
X(t,0)&=X(t,1)=0 & \quad t>0 \\
X(0,x) &=u(x)
\end{array}\right.
\end{equation}where $\varepsilon > 0$ is some parameter, $u \in C_D([0,1])$ and $f: \R \rightarrow \R$ is a locally Lipschitz source. It is possible that such equations do not admit
strong solutions in the usual sense as these may not be globally defined but instead defined \textit{up to an explosion time}. In the following we review the usual definition of solution when the source is globally Lipschitz as well as formalize the idea of explosion and properly define the concept of solutions in the case of locally Lipschitz sources.

\subsection{Definition of strong solution for globally Lipschitz sources}

We begin by fixing a probability space $(\Omega,\F,P)$ in which we have defined a \mbox{Brownian sheet} $\{ W{(t,x)} : (t,x) \in \R^+ \times [0,1]\}$. For every $t \geq 0$ we define
$$
\G_t = \sigma( W{(s,x)} : 0 \leq s \leq t , x \in [0,1])
$$ and denote its augmentation by $\F_t$.\footnote{This means that $\F_t = \sigma( \G_t \cup \mathcal{N})$ where $\mathcal{N}$ denotes the class of all $P$-null sets of $\G_\infty = \sigma( \G_t : t \in \R^+)$.} The family $(\F_t)_{t \geq 0}$ constitutes a filtration on $(\Omega,\F)$.
A \textit{strong solution} of the equation \eqref{MainSPDE} on the probability space $(\Omega,\F,P)$ with \mbox{respect to} the Brownian sheet $W$
is a stochastic process
$$
X = \{ X{(t,x)} : (t,x) \in \R^+ \times [0,1]\}
$$ satisfying the following properties:

\begin{itemize}
\item [i.] $X$ has continuous paths taking values in $\R$.
\item [ii.] $X$ is adapted to the filtration $(\F_t)_{t \geq 0}$, i.e. for every $t \geq 0$ the mapping
$$
(\omega,x) \mapsto X{(t,x)}(\omega)
$$ is $\F_t \otimes \B([0,1])$-measurable.
\item [iii.] If $\Phi$ denotes the fundamental solution of the heat equation on the interval $[0,1]$ with homogeneous Dirichlet boundary conditions, which is given by the formula
$$
\Phi(t,x,y) = \frac{1}{\sqrt{4\pi t}} \sum_{n \in \Z} \left[ \exp\left( - \frac{(2n+y -x)^2}{4t} \right) - \exp\left( - \frac{(2n+y +x)^2}{4t} \right)\right],
$$ then $P$-almost surely we have
$$
\int_0^1 \int_{0}^{t} |\Phi(t -s,x,y) f(X(s,y))|dsdy  < +\infty \hspace{0,5cm} \,\forall\,\,\, 0\leq t < + \infty
$$
and
$$
X(t,x) = I_H(t,x)+ I_N(t,x)\hspace{0,5cm}\,\forall\,\,\, (t,x) \in \R^+ \times [0,1],
$$
where $I_H$ and $I_N$ are respectively defined by the formulas
$$
I_H(t,x) = \int_0^1 \Phi(t,x,y)u(y)dy
$$ and
$$
I_N(t,x) = \int_0^{t} \int_0^1 \Phi(t-s,x,y) \left(f(X(s,y))dyds + \varepsilon dW(s,y)\right).
$$
\end{itemize}
It is well known that if $f$ satisfies a global Lipschitz condition then for any initial datum $u \in C_D([0,1])$ there exists a unique strong solution to the equation \eqref{MainSPDE} on $(\Omega,\F,P)$. Furthermore, this strong solution satisfies the strong Markov property and also behaves as a weak solution in the sense described in the following lemma. See \cite{W} for details.

\begin{lema}\label{weaksol} If $X$ is a strong solution to \eqref{MainSPDE} with initial datum $u \in C_D([0,1])$ then for every $\varphi \in C^2((0,1)) \cap C_D([0,1])$ we have $P$-almost surely
$$
\int_0^1 X(t,x)\varphi(x)dx = I_H^\varphi(t) + I_N^\varphi(t) \hspace{0,5cm} \,\forall\,\,\, 0\leq t < + \infty
$$ where for each $t \geq 0$
$$
I_H^\varphi(t) = \int_0^1 u(x)\varphi(x)dx
$$ and
$$
I_N^\varphi(t) = \int_0^t\int_0^1 \left( \left(X(s,x)\varphi''(x) + f(X(s,x))\varphi(x)\right)dxds + \varepsilon \varphi(s,x)dW(s,x)\right).
$$
\end{lema}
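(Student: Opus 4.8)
The plan is to obtain the weak formulation directly from the mild (integral) formulation $X(t,x)=I_H(t,x)+I_N(t,x)$ that, by hypothesis, the strong solution satisfies; the idea is to substitute this formula into each term of the claimed identity and then reassemble everything using the mild formulation once more. The only properties of the Dirichlet heat kernel needed are: (a) symmetry, $\Phi(t,x,y)=\Phi(t,y,x)$; (b) the heat equation $\p_t\Phi(t,x,y)=\p^2_{xx}\Phi(t,x,y)$ for $t>0$, together with the fact that $\Phi(t,\cdot,y)$ vanishes at $\{0,1\}$; and (c) $\int_0^1\Phi(t,x,y)\varphi(x)\,dx\to\varphi(y)$ as $t\to 0^+$ for $\varphi\in C_D([0,1])$.

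Write $(P_t\psi)(y):=\int_0^1\Phi(t,x,y)\psi(x)\,dx$ and $\langle f,g\rangle:=\int_0^1 f(x)g(x)\,dx$. The first step is the elementary identity, valid for $\varphi\in C^2((0,1))\cap C_D([0,1])$,
\[
(P_t\varphi)(y)=\varphi(y)+\int_0^t (P_r\varphi'')(y)\,dr ,
\]
which follows from (c), from (b) via two integrations by parts (the boundary terms vanish because both $\varphi$ and $\Phi(r,\cdot,y)$ vanish at $\{0,1\}$, so that $(P_r\varphi'')(y)=\int_0^1\p^2_{xx}\Phi(r,x,y)\varphi(x)\,dx=\p_r(P_r\varphi)(y)$), and from the fundamental theorem of calculus in $r$. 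Next, pair the mild formula for $X(t,\cdot)$ against $\varphi$; using the symmetry (a) together with a deterministic Fubini theorem for the drift term and the stochastic Fubini theorem for the noise term, one obtains
\[
\langle X(t,\cdot),\varphi\rangle=\langle u,P_t\varphi\rangle+\int_0^t\langle f(X(s,\cdot)),P_{t-s}\varphi\rangle\,ds+\varepsilon\int_0^t\!\int_0^1 (P_{t-s}\varphi)(y)\,dW(s,y).
\]
Now insert the identity for $P_t\varphi$ and $P_{t-s}\varphi$ into these three terms. The ``$\varphi$-part'' gives precisely $\langle u,\varphi\rangle+\int_0^t\langle f(X(s,\cdot)),\varphi\rangle\,ds+\varepsilon\int_0^t\int_0^1\varphi(y)\,dW(s,y)$. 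In the ``$\varphi''$-part'', after interchanging the $dr$-integration with the $ds$-integration (Fubini) and with the $dW$-integration (stochastic Fubini), the coefficient of $dr$ at a fixed $r\in[0,t]$ equals
\[
\langle u,P_r\varphi''\rangle+\int_0^r\langle f(X(s,\cdot)),P_{r-s}\varphi''\rangle\,ds+\varepsilon\int_0^r\!\int_0^1 (P_{r-s}\varphi'')(y)\,dW(s,y),
\]
which, again by symmetry (a) and the mild formula for $X(r,\cdot)$ tested against $\varphi''$, is exactly $\langle X(r,\cdot),\varphi''\rangle$. Integrating in $r$ over $[0,t]$ and collecting terms yields the asserted identity.

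As for rigour, the two points that require care are the applications of Fubini's theorem. The deterministic interchange is licensed by the a.s.\ finiteness $\int_0^1\int_0^t|\Phi(t-s,x,y)f(X(s,y))|\,ds\,dy<+\infty$ that is part of the definition of the strong solution, together with the continuity (hence local boundedness) of the paths of $X$ and the standard Gaussian bounds on $\Phi$. The stochastic Fubini theorem for integrals against the Brownian sheet (see \cite{W}) applies once one verifies the relevant square-integrability, namely $\int_0^t\int_0^t\int_0^1 (P_{r-s}\varphi'')(y)^2\,\mathbf{1}_{\{s<r\}}\,dy\,ds\,dr<+\infty$ and its analogue with $\varphi$ in place of $\varphi''$; these follow from the Gaussian bounds on $\Phi$ and the boundedness of $\varphi$ (if one prefers not to assume $\varphi''\in L^1([0,1])$, one first proves the identity for a smooth mollification of $\varphi$ and then passes to the limit, all terms being continuous in $\varphi$ in a suitable sense). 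I expect this bookkeeping around the stochastic Fubini step, and the integration-by-parts identity for $P_t\varphi$ under the weak regularity of $\varphi$, to be the only mildly delicate points; the rest is a direct rearrangement of the mild formulation, and in particular neither the strong Markov property nor any deeper property of the solution is needed.
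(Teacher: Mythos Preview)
Your argument is correct and is the standard route: pair the mild formulation against $\varphi$, use the semigroup identity $P_t\varphi=\varphi+\int_0^t P_r\varphi''\,dr$ obtained from the heat equation plus integration by parts, and reassemble via deterministic and stochastic Fubini, recognising the $dr$-integrand as $\langle X(r,\cdot),\varphi''\rangle$ from the mild formula once more. The paper does not give its own proof of this lemma but simply refers to \cite{W}; what you have written is essentially the derivation one finds there, so there is nothing to compare.
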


\subsection{Solutions up to an explosion time}

Just as in the previous section we begin by fixing a probability space $(\Omega,\F,P)$ in which we have defined a \mbox{Brownian sheet} $\{ W{(t,x)} : (t,x) \in \R^+ \times [0,1]\}$ and consider its augmented generated filtration $(\F_t)_{t \geq 0}$. A \textit{solution up to an explosion time} of the equation \eqref{MainSPDE} on $(\Omega,\F,P)$ with respect to $W$
is a stochastic process $X = \{ X{(t,x)} : (t,x) \in \R^+ \times [0,1]\}$ satisfying the following properties:

\noindent \begin{itemize}
\item [i.] $X$ has continuous paths taking values in $\overline{\R}:=\R \cup \{\pm \infty\}$.
\item [ii.] $X$ is adapted to the filtration $(\F_t)_{t \geq 0}$.
\item [iii.] If we define $\tau^{(n)} := \inf\{t>0 : \|X{(t,\cdot)}\|_{\infty}=n\}$ then for every $n \in \N$ we have $P$-a.s.
$$
\int_0^1\int_{0}^{t\wedge{\tau^{(n)}}} |\Phi(t\wedge \tau^{(n)} -s,x,y)f(X(s,y))|dsdy  < +\infty \hspace{0,5cm} \,\forall\,\,\, 0\leq t < + \infty
$$
and
$$
X(t \wedge \tau^{(n)},x) = I_H^{(n)}(t,x)+ \eps I_N^{(n)}(t,x)\hspace{0,5cm}\,\forall\,\,\, (t,x) \in \R^+ \times [0,1],
$$
where
$$
I_H^{(n)}(t,x) = \int_0^1 \Phi(t \wedge \tau^{(n)},x,y)u(y)dy
$$ and
$$
I_N^{(n)}(t,x) =  \int_0^{t} \int_0^1 \mathbbm{1}_{\{s \leq \tau^{(n)}\}} \Phi(t\wedge \tau^{(n)} -s,x,y) \left( f(X(s,y))dyds + \varepsilon dW(s,y)\right)
$$ with $\Phi$ being the fundamental solution of the heat equation as before.
\end{itemize}

We call $\tau:=\lim_{n \rightarrow +\infty} \tau^{(n)}$ the {\em explosion time} for $X$. Let us notice that the assumption of \mbox{continuity of $X$} over $\overline{\R}$ implies that
\begin{enumerate}
\item [$\bullet$] $\tau = \inf \{ t > 0 : \|X{(t,\cdot)}\|_\infty =+\infty\}$
\item [$\bullet$] $\|X{(\tau^-,\cdot)}\|_\infty = \|X{(\tau,\cdot)}\|_\infty =+\infty\,\,\mbox{ on }\,\,\{ \tau < +\infty\}.$
\end{enumerate}
We stipulate that $X{(t,\cdot)}\equiv X(\tau,\cdot)$ for $t \geq \tau$ whenever $\tau < +\infty$ but we shall not assume that $\lim_{t \to +\infty} X{(t,\cdot)}$ exists if $\tau=+\infty$.
Furthermore, observe that since any initial datum $u \in C_D([0,1])$ verifies $\|u\|_\infty < +\infty$ we always have $P( \tau > 0) = 1$ and also that if $P(\tau = +\infty)=1$ then we are left with the usual definition of strong solution.

In can be shown that for $f \in C^1(\R)$ there exists a unique solution $X$ of \eqref{MainSPDE} up to an explosion time. Furthermore, if $f$ is globally Lipschitz then the solution is globally defined in the sense that $P(\tau = +\infty)=1$. Finally, it is possible to prove that this solution $X$ maintains the strong Markov property, i.e. if $\tilde \tau$ is a stopping time of $X$ then, conditional on $\tilde\tau<\tau$ and $X{(\tilde\tau,\cdot)}=w$, the future $\{ X{(t + \tilde \tau,\cdot)} \colon 0 < t<\tau-\tilde\tau\}$ is independent of the past $\{ X{(s,\cdot)} \colon 0 \leq s\le \tilde\tau \}$ and identical in law to the solution of \eqref{MainSPDE} with \mbox{initial datum $w$.} We refer to \cite{IM} for details.

\section{Freidlin-Wentzell estimates}\label{secLDP}

One of the main tools we shall use to study the solutions to \eqref{MainSPDE} are the large deviations estimates we briefly describe next. We refer to \cite{FJL,B1,SOW} for further details.

Let $X^{u,\ve}$ be the solution to the SPDE
\begin{equation}\label{MainSPDE2}
\left\{\begin{array}{rll}
\p_t X^{u,\ve} &= \p^2_{xx}X^{u,\ve} + f(X^{u,\ve}) + \varepsilon \dot{W}& \quad t>0 \,,\, 0<x<1 \\
X^{u,\ve}(t,0)&=X^{u,\ve}(t,1)=0 & \quad t>0 \\
X^{u,\ve}(0,x) &=u(x)
\end{array}\right.
\end{equation} where $u \in C_D([0,1])$ and $f: \R \to \R$ is bounded and satisfies a \textit{global} Lipschitz condition.
Let us also consider $X^{u}$ the unique solution to the deterministic equation
\begin{equation}\label{MainPDE2}
\left\{\begin{array}{rll}
\p_t X^u &= \p^2_{xx}X^u + f(X^u) & \quad t>0 \,,\, 0<x<1 \\
X^u(t,0)&=X^u(t,1)=0 & \quad t>0 \\
X^u(0,x) &=u(x).
\end{array}\right.
\end{equation}
Given $u \in C_D([0,1])$ and $T > 0$, we consider the metric space of continuous functions
$$
C_{D_u}([0,T] \times [0,1]) = \{ v \in C([0,T]\times[0,1]) : v(0,\cdot)=u \text{ and }v(\cdot,0)=v(\cdot,1)\equiv 0 \}
$$ with the distance $d_T$ induced by the supremum norm, i.e. for $v,w \in C_{D_u}([0,T]\times[0,1])$
$$
d_T(v,w) := \sup_{(t,x) \in [0,T]\times [0,1]} | v(t,x) - w(t,x) |,
$$ and define the rate function $I^u_T : C_{D_u}([0,T]\times [0,1]) \rightarrow [0,+\infty]$ by the formula
$$
I^u_T (\varphi) = \left\{ \begin{array}{ll} \displaystyle{\frac{1}{2} \int_0^T \int_0^1 |\p_t \varphi - \p_{xx} \varphi - f(\varphi)|^2} & \text{ if }\varphi \in W^{1,2}_2([0,T]\times[0,1]) \,,\,\varphi(0,\cdot) = u \\ \\ +\infty & \text{otherwise.}\end{array}\right.
$$
Here $W^{1,2}_2([0,T]\times[0,1])$ is the closure of $C^\infty([0,T] \times [0,1])$ with respect to the norm
$$
\| \varphi \|_{W^{1,2}_2} = \left( \int_0^T \int_0^1 \left[ |\varphi|^2 + |\p_t \varphi|^2 + |\p_x \varphi|^2 + |\p_{xx} \varphi|^2\right]\right)^\frac{1}{2},
$$ i.e. the Sobolev space of square-integrable functions defined on $[0,T]\times [0,1]$ with one square-integrable weak \mbox{time derivative} and two square-integrable weak space derivatives.

\begin{teo} The following estimates hold:
\begin{enumerate}
\item [i.] For any $\delta > 0$, $h > 0$ there exists $\varepsilon_0 > 0$ such that
\begin{equation}\label{LDP1}
P\left( d_T ( X^{u,\varepsilon}, \varphi ) < \delta \right) \geq e^{- \frac{ I^u_T(\varphi) + h }{\varepsilon^2}}
\end{equation} for all $0 < \varepsilon < \varepsilon_0$, $u \in C_D([0,1])$ and $\varphi \in C_{D_u}([0,T]\times [0,1])$.
\item [ii.] For any $\delta > 0$, $h > 0$, $s_0 > 0$ there exists $\varepsilon_0 > 0$ such that
\begin{equation}\label{LDP2}
\sup_{u \in C_D([0,1])} P\left( d_T ( X^{u,\varepsilon}, J^u_T(s)) \geq \delta  \right) \leq e^{-\frac{s-h}{\varepsilon^2}}
\end{equation} for all $0 < \varepsilon < \varepsilon_0$ and $0 < s \leq s_0$, where
$$
J^u_T(s) = \{ \varphi \in C_{D_u}([0,T]\times [0,1]) : I^u_T(\varphi) \leq s \}.
$$
\item [iii.] For any $\delta > 0$ there exist $\varepsilon_0 > 0$ and $C > 0$ such that
\begin{equation}
\label{grandes1}
\sup_{u \in C_D([0,1])} P\left( d_T \left( X^{u,\varepsilon},X^{u}\right) > \delta \right) \leq e^{-\frac{C}{\varepsilon^2}}
\end{equation}for all $0 < \varepsilon < \varepsilon_0$.
\end{enumerate}
\end{teo}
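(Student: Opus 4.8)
The plan is to run the classical Azencott--Freidlin--Wentzell scheme: establish the three estimates first for the Gaussian \emph{stochastic convolution}, and then transfer them through the deterministic solution map of \eqref{MainSPDE2}, which in the present (bounded, globally Lipschitz) situation is Lipschitz \emph{uniformly in the initial datum}. Set
$$
\mathcal Z^{\ve}(t,x) \;:=\; \ve \int_0^t\!\!\int_0^1 \Phi(t-s,x,y)\,dW(s,y),
$$
the mild solution of $\p_t Z = \p_{xx} Z + \ve\dot W$ with null initial datum and homogeneous Dirichlet conditions. Since $\Phi$ is the one-dimensional Dirichlet heat kernel, $\mathcal Z^{\ve}$ admits a continuous modification, so its law is a centered Gaussian Radon measure on $\mathcal C_T := \{v \in C([0,T]\times[0,1]) : v(0,\cdot)\equiv v(\cdot,0)\equiv v(\cdot,1)\equiv 0\}$, and $\mathcal Z^{\ve}=\ve\,\mathcal Z^{1}$ pathwise. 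Hence $(\mathcal Z^{\ve})_{\ve>0}$ obeys a large deviations principle on $\mathcal C_T$ with good rate function equal to the squared Cameron--Martin norm of $\mathcal Z^{1}$, namely (identifying that space through $L^2$-maximal parabolic regularity)
$$
\widetilde I_T(g) = \tfrac12 \int_0^T\!\!\int_0^1 |\p_t g - \p_{xx} g|^2 \ \text{ for } g\in W^{1,2}_2([0,T]\times[0,1])\cap\mathcal C_T, \qquad \widetilde I_T(g)=+\infty \ \text{ otherwise}.
$$
In particular $(\mathcal Z^{\ve})$ satisfies the analogues of \eqref{LDP1}--\eqref{grandes1} with $X^{u,\ve}$, $X^{u}$, $I^u_T$, $J^u_T(s)$ replaced by $\mathcal Z^{\ve}$, $\mathbf{0}$, $\widetilde I_T$, $\widetilde J_T(s):=\{\widetilde I_T\le s\}$: these are standard Gaussian estimates, and for the lower bound one uses the Cameron--Martin formula together with the symmetry identity $\mathbb E[\mathbf{1}_{\{\|\mathcal Z^{1}\|_\infty<R\}}\,\ell_g(\mathcal Z^{1})]=0$ (with $\ell_g$ the Paley--Wiener functional) to see that $P(\|\mathcal Z^{\ve}-g\|_\infty<\rho)\ge e^{-\widetilde I_T(g)/\ve^2}\,P(\|\mathcal Z^{1}\|_\infty<\rho/\ve)$, so the resulting threshold $\ve_0$ depends only on $\rho$ and $h$, \emph{not} on $g$ --- this is what yields the uniformity in $u$ below.

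Next, for $u\in C_D([0,1])$ and $g\in\mathcal C_T$ let $\Psi(u,g)$ be the unique $v\in C([0,T]\times[0,1])$ solving
$$
v(t,x) = \int_0^1 \Phi(t,x,y)u(y)\,dy + \int_0^t\!\!\int_0^1 \Phi(t-s,x,y)f(v(s,y))\,dyds + g(t,x);
$$
this is well posed because $f$ is bounded and globally Lipschitz with some constant $L$, and a Gronwall estimate (using $\sup_x\int_0^1\Phi(\tau,x,y)\,dy\le 1$) gives $d_T(\Psi(u,g_1),\Psi(u,g_2))\le e^{LT}\|g_1-g_2\|_\infty$, a Lipschitz bound \emph{independent of $u$}. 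By uniqueness of the mild solution (and since $\mathcal Z^\ve$ is exactly the noise term in that formulation), $X^{u,\ve}=\Psi(u,\mathcal Z^{\ve})$ almost surely and $X^{u}=\Psi(u,\mathbf{0})$. One then matches the rate functions: for $\varphi\in C_{D_u}([0,T]\times[0,1])$ put $g_\varphi := \varphi - I^u_H - \mathcal N[f(\varphi)]$, where $I^u_H(t,x)=\int_0^1\Phi(t,x,y)u(y)\,dy$ and $\mathcal N$ is $\Phi$-convolution; then $\varphi=\Psi(u,g_\varphi)$ and $g_{\Psi(u,g)}=g$, and since $w:=\varphi-g_\varphi$ solves $\p_t w=\p_{xx}w+f(\varphi)$ with $w(0,\cdot)=u$, one gets $\p_t\varphi-\p_{xx}\varphi-f(\varphi)=\p_t g_\varphi-\p_{xx}g_\varphi$, hence $I^u_T(\varphi)=\widetilde I_T(g_\varphi)$ once it is checked that $\varphi$ satisfies the Sobolev requirement in the definition of $I^u_T$ precisely when $g_\varphi$ satisfies the one in $\widetilde I_T$. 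Consequently $J^u_T(s)=\Psi(u,\widetilde J_T(s))$ and $\{X^{u}\}=\Psi(u,\widetilde J_T(0))$.

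The three estimates now follow by pushing the Gaussian ones through $\Psi(u,\cdot)$, with $C:=e^{LT}$. For \eqref{LDP1}, assuming $I^u_T(\varphi)<\infty$, the Lipschitz bound gives $\{\|\mathcal Z^{\ve}-g_\varphi\|_\infty<\delta/C\}\subseteq\{d_T(X^{u,\ve},\varphi)<\delta\}$, so the Gaussian lower bound applied to $g_\varphi$ --- whose threshold is uniform in $g_\varphi$, hence in $u$ and $\varphi$ --- yields \eqref{LDP1} using $\widetilde I_T(g_\varphi)=I^u_T(\varphi)$. For \eqref{LDP2}, $d_T(X^{u,\ve},J^u_T(s))=d_T(\Psi(u,\mathcal Z^{\ve}),\Psi(u,\widetilde J_T(s)))\le C\,d(\mathcal Z^{\ve},\widetilde J_T(s))$, so $\{d_T(X^{u,\ve},J^u_T(s))\ge\delta\}\subseteq\{d(\mathcal Z^{\ve},\widetilde J_T(s))\ge\delta/C\}$, and the Gaussian version of \eqref{LDP2} (whose right-hand side does not depend on $u$) gives the claim uniformly in $u$. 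For \eqref{grandes1}, $d_T(X^{u,\ve},X^{u})\le C\|\mathcal Z^{\ve}\|_\infty$, and since $\widetilde I_T$ is a good rate function vanishing only at $\mathbf{0}$ one has $c:=\inf\{\widetilde I_T(g):\|g\|_\infty\ge\delta/C\}>0$ (by lower semicontinuity and compactness of the sublevel sets); the Gaussian upper bound then gives $P(\|\mathcal Z^{\ve}\|_\infty\ge\delta/C)\le e^{-c/(2\ve^2)}$ for $\ve$ small, i.e. \eqref{grandes1} with constant $c/2$, again uniformly in $u$.

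The main obstacle is the rate-function identification: verifying that the shift $\varphi\mapsto g_\varphi$ respects the Sobolev classes entering $I^u_T$ and $\widetilde I_T$ (so that $I^u_T(\varphi)<\infty$ iff $\widetilde I_T(g_\varphi)<\infty$), since $I^u_H$ has only limited smoothness near $t=0$ when $u$ is merely continuous; this needs the parabolic regularity estimates for $\Phi$ alluded to above, and is where the precise statement of the rate function in the theorem (with $W^{1,2}_2$) gets pinned down. The second delicate ingredient, responsible for the \emph{uniformity in $u$}, is the interplay between the boundedness of $f$ --- which keeps the deterministic solutions $X^{u}$ uniformly bounded --- and the $u$-independent Lipschitz constant of $g\mapsto\Psi(u,g)$: together these are exactly what allow every $\ve_0$ and every constant appearing above to be chosen independently of the initial datum.
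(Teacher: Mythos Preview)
Your approach is essentially the same as the paper's: the paper also reduces everything to the Gaussian stochastic convolution (written $V^{\mathbf{0},\varepsilon}$ there, your $\mathcal Z^{\ve}$), transfers the estimates through the solution map via the uniform Lipschitz bound $d_T(X^{u,\ve},X^{u})\le e^{KT}d_T(V^{\mathbf{0},\ve},V^{\mathbf{0}})$, and derives (iii) from the goodness of the linear rate function $\tilde I_T$ and the fact that it vanishes only at $\mathbf{0}$. The only difference is one of exposition: the paper simply cites \cite{FJL} for (i)--(ii) and \cite{B1} for the uniformity in $u$, whereas you unpack the Azencott transfer argument explicitly; your ``main obstacle'' (matching the Sobolev classes in $I^u_T$ and $\widetilde I_T$) is precisely the content of those references.
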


The first and second estimates are equivalent to those obtained in \cite{FJL}, except for the uniformity in the initial datum. This uniformity can be obtained as in \cite{B1} by exploiting the fact that $f$ is bounded and Lipschitz. On the other hand, the last estimate is in fact implied by the second one. Indeed, if $V^{\mathbf{0},\varepsilon}$ and $V^{\mathbf{0}}$ respectively denote the solutions to \eqref{MainSPDE2} and \eqref{MainPDE2} with initial datum $\mathbf{0}$ and source term $f \equiv 0$, then (iii) is obtained from (ii) upon noticing that there exists $K > 0$ depending on $f$ such that for any $u \in C_D([0,1])$
\begin{equation}\label{LDP3}
d_T \left( X^{u,\varepsilon},X^{u}\right) \leq e^{KT} d_T ( V^{\mathbf{0},\varepsilon}, V^{\mathbf{0}} )
\end{equation} and that given $\delta > 0$ there exists $s_0 > 0$ such that
\begin{equation}\label{LDP4}
\{ d_T ( V^{\mathbf{0},\varepsilon}, V^{\mathbf{0}} ) > \delta \} \subseteq \left\{ d_T ( V^{\mathbf{0},\varepsilon}, \tilde{J}_T(s_0)) > \frac{\delta}{2}\right\}
\end{equation} where for $s \geq 0$ we set
$$
\tilde{J}^{\mathbf{0}}_T(s) = \{ \varphi \in C_{D_{\mathbf{0}}}([0,T]\times [0,1]) : \tilde{I}^{\mathbf{0}}_T(\varphi) \leq s \}
$$ and $\tilde{I}^{\mathbf{0}}_T$ is the rate function obtained by setting $u=\mathbf{0}$ and $f \equiv 0$ in the definition above.
The estimate in \eqref{LDP3} is obtained as in \cite{B1} whereas the inclusion in \eqref{LDP4} follows from the fact that the level sets $\tilde{J}^{\mathbf{0}}_T(s)$ are compact for all $s \geq 0$ and also that the rate function $\tilde{I}^{\mathbf{0}}_T$ vanishes only at $V^{\mathbf{0}}$.

\section{Truncations of the potential and localization}\label{trunca}

The large deviations estimates given on Section \ref{secLDP} demand a global Lipschitz condition on the source term $f$ which is unfortunately not satisfied for our model. Even though large
deviations estimates have been obtained for systems with locally Lipschitz sources (see for example \cite{FJL,A}), these always rely on some sort of a priori control on the growth of solutions. Hence, we cannot hope to obtain similar results for our system in the study of the explosion time. Nonetheless, the use of localization techniques will help us solve this problem and allow us to take advantage of the estimates on Section \ref{secLDP}.
\mbox{In the next lines} we give details about the localization procedure to be employed in the study of our system.

For every $n \in \N$ let $G^{(n)} : \R \longrightarrow \R$ be a smooth function such that
\begin{equation*}\label{gtruncada}
G^{(n)}(u) = \left\{\begin{array}{ll}
\frac{|u|^{p+1}}{p+1} &\,\,\text{if}\,\,|u| \leq n\\
0 &\,\,\text{if}\,\,|u| \geq 2n
\end{array}\right.
\end{equation*}
and consider the potential $S^{(n)}$ given by the formula
\begin{equation*}
S^{(n)}(v)= \left\{\begin{array}{ll}\displaystyle{\int_0^1 \left[\frac{1}{2} \left(\frac{dv}{dx}\right)^2 - G^{(n)}(v)\right]} & \text{ if }v \in H^1_0 ((0,1)) \\ \\ +\infty & \text{ otherwise.}\end{array}\right.
\end{equation*} For every $u \in C_D([0,1])$ there exists a unique solution $U^{(n),u}$ to the partial differential equation
\begin{equation*}
\p_t U = - \frac{\partial S^{(n)}}{\partial \varphi}(U)
\end{equation*} with initial datum $u$. Since the source $g_n:=\left(G^{(n)}\right)'$ is globally Lipschitz, this solution $U^{(n),u}$ is globally defined
and describes the same trajectory as the solution to \eqref{MainPDE} starting at $u$
until $\tau^{(n),u}$, the escape time from the ball
$$
B_n :=\{ v \in C_D([0,1]) : \| v \|_\infty \leq n \}.
$$
In the same way, for each $\varepsilon > 0$ there exists a
unique solution $U^{(n),u,\varepsilon}$ to the stochastic partial differential equation
\begin{equation}\label{eqtruncada}
\p_t U = - \frac{\partial S^{(n)}}{\partial \varphi}(U) + \varepsilon \dot{W}
\end{equation} with initial datum $u$ and it is globally defined. Moreover, since for $n \leq m$ the functions $G_n$ and $G_m$ coincide on $B_n$ by uniqueness of the solution we have that $U^{(n),u,\varepsilon}$ and $U^{(m),u,\varepsilon}$ coincide until the escape from $B_n$. Therefore, if we write
\begin{equation*}
\label{tiempostau}
\tau^{(n), u}_\varepsilon = \inf \{ t \geq 0 : \|U^{(n),u,\varepsilon}(t,\cdot)\|_\infty
\geq n \}, \qquad \tau_\varepsilon^u:=\lim_{n \rightarrow +\infty} \tau^{(n),u}_\varepsilon,
\end{equation*}
then for $t < \tau_\varepsilon^u$ we have that $U^{u,\varepsilon}(t) := \lim_{n \rightarrow +\infty} U^{(n),\,u,\,\varepsilon}(t)$ is well defined and constitutes the solution to \eqref{MainSPDE} until the explosion time $\tau^u_\varepsilon$ with initial datum $u$. Let us observe that for each $n \in \N$ this solution $U^{u,\,\varepsilon}$ coincides with $U^{(n),\,u,\,\varepsilon}$ until the escape from $B_n$. Furthermore, each $U^{(n),u,\varepsilon}$ is a positive recurrent Markov process which almost surely hits any open set in $C_D([0,1])$ in a finite time. Finally, since each $g_n$ is bounded and Lipschitz we have that for every $n \in \N$ the family $\left(U^{(n),u,\varepsilon}\right)_{\varepsilon > 0}$ satisfies the large deviations estimates given in Section \ref{secLDP}.
Hereafter, whenever we refer to the solution of \eqref{MainSPDE} we shall mean the solution constructed in this particular manner.

\section{Main results}

Our purpose in this first part of the thesis is to study the asymptotic behavior as $\varepsilon \rightarrow 0$ of $U^{u,\varepsilon}$, the solution to the equation \eqref{MainSPDE}, for the different initial data $u \in C_{D}([0,1])$. \mbox{We present throughout this section} the main results we have obtained in this regard. From now onwards we shall write $P_u$ to denote the law of the stochastic process $U^{u,\varepsilon}$. Whenever the initial datum is made clear in this way we shall often choose to drop the superscript $u$ from the remaining notation for simplicity purposes.

Our first result is concerned with the continuity of the explosion time for initial data in the domain of explosion $\mathcal{D}_e$. In this case one expects the stochastic and deterministic systems to both exhibit a similar behavior for $\varepsilon > 0$ sufficiently small, since then the noise will not be able to grow fast enough so as to overpower the quickly exploding source term.
We show this to be truly the case for $u \in \mathcal{D}_e$ such that \mbox{$U^u$ remains bounded from one side.}

\bigskip
\noindent \textbf{Theorem I}. Let $\mathcal{D}^*_e$ be the set of those $u \in \mathcal{D}_e$ such that $U^u$ explodes only through one side, i.e. $U^{u}$ remains bounded either from below or above until its explosion time $\tau^u$. Then given $\delta > 0$ and a bounded set $\mathcal{K} \subseteq \mathcal{D}_e^*$ at a positive distance from $\p \mathcal{D}^*_e$ there exists a constant $C > 0$ such that
$$
\sup_{u \in \mathcal{K}} P_u ( |\tau_\varepsilon - \tau| > \delta ) \leq e^{-\frac{C}{\varepsilon^2}}.
$$

The main differences in behavior between the stochastic and deterministic systems appear for initial data in $\mathcal{D}_\0$, where metastable behavior is observed. According to the characterization of metastability for stochastic processes given in the articles \cite{CGOV} and \cite{GOV}, metastable behavior is given by two facts: the time averages of the process remain stable until an abrupt transition occurs and then a different value is attained; furthermore, the time of this transition is unpredictable in the sense that, when suitably rescaled, it should have an exponential distribution. We manage to establish this description rigorously for our system whenever $1 < p < 5$, where $p$ is the parameter in the source term of \eqref{MainSPDE}. This rigorous description is contained in the remaining results.
We begin by defining for each $\varepsilon > 0$ the scaling coefficient
\begin{equation}\label{defibeta}
\beta_{\varepsilon}= \inf \{ t \geq 0 : P_{\mathbf{0}} ( \tau_\ve > t ) \leq e^{-1} \}
\end{equation} and show that the family $(\beta_\varepsilon)_{\varepsilon > 0}$ verifies $\lim_{\varepsilon \rightarrow 0} \varepsilon ^{2}\log\beta_{\varepsilon} = \Delta$, where $\Delta := 2(S(z) - S(\mathbf{0}))$. In fact, we shall prove the following stronger statement which details the asymptotic order of magnitude of $\tau^u_\varepsilon$ for initial data $u \in \mathcal{D}_\0$.

\bigskip
\noindent \textbf{Theorem II}. Given $\delta > 0$ and a bounded set $\mathcal{K} \subseteq \mathcal{D}_{\mathbf{0}}$ at a positive distance from $\p \mathcal{D}_{\mathbf{0}}$ we have
$$
\lim_{\varepsilon \rightarrow 0} \left[ \sup_{u \in \mathcal{K}} \left| P_u \left( e^{\frac{\Delta - \delta}{\varepsilon^2}} < \tau_\varepsilon < e^{\frac{\Delta + \delta}{\varepsilon^2}}\right)-1\right|\right]=0.
$$

Next we show the asymptotic loss of memory of $\tau^u_\varepsilon$ for initial data $u \in \mathcal{D}_\0$.

\bigskip
\noindent \textbf{Theorem III}. Given $\delta > 0$ and a bounded set $\mathcal{K} \subseteq \mathcal{D}_{\mathbf{0}}$ at a positive distance from $\p \mathcal{D}_{\mathbf{0}}$ we have for any $t > 0$
$$
\lim_{\varepsilon \rightarrow 0} \left[ \sup_{u \in \mathcal{K}} \left| P_{u} (\tau_{\varepsilon} > t\beta_{\varepsilon}) - e^{-t} \right| \right] = 0.
$$

Finally, we show the stability of time averages of continuous functions evaluated along paths of the process starting in $\mathcal{D}_{\mathbf{0}}$, i.e. they remain close to the value of the \mbox{function at $\mathbf{0}$.} These time averages are taken along intervals of length going to infinity and times may be taken as being almost (in a suitable scale) the explosion time. This tells us that, up until the explosion time, the system spends most of its time in a small \mbox{neighborhood of $\mathbf{0}$.}

\bigskip
\noindent \textbf{Theorem IV}. There exists a sequence $(R_\varepsilon)_{\varepsilon > 0}$ with $\lim_{\varepsilon \rightarrow 0} R_\varepsilon = +\infty$ and $\lim_{\varepsilon \rightarrow 0} \frac{R_\varepsilon}{\beta_\varepsilon} = 0$ such that given $\delta > 0$ for any bounded set $\mathcal{K} \subseteq \mathcal{D}_{\mathbf{0}}$ at a positive \mbox{distance from $\mathcal{W}$} we have
$$
\lim_{\varepsilon \rightarrow 0} \left[ \sup_{u \in \mathcal{K}} P_u \left( \sup_{0 \leq t \leq \tau_\varepsilon - 3R_\varepsilon}\left| \frac{1}{R_\varepsilon}\int_t^{t+R_\varepsilon} f(U^{\varepsilon}(s,\cdot))ds - f(\mathbf{0})\right| > \delta \right) \right] = 0
$$ for any bounded continuous function $f: C_D([0,1]) \rightarrow \R$.

Theorem I is proved in Chapter 2, the remaining results are proved in \mbox{Chapters 4 and 5.}
Perhaps the proof of Theorem I is where one can find the most differences with other works in the literature dealing with similar problems. In these works, the analogue of \mbox{Theorem I} can be obtained as a direct consequence of the large deviations estimates for the system. However, since in our case Theorem I particularly focuses on trajectories of the process as it escapes any bounded domain, the estimates on \mbox{Section \ref{secLDP}} will not be of any use for the proof. Thus, a new approach is needed, one which is different from previous approaches in the literature and does not rely on large deviations estimates. The remaining results were established in \cite{B1,MOS} for the tunneling time in an infinite-dimensional double-well potential model, i.e. the time the system takes to go from one well to the bottom of the other one. Our proofs are similar to the ones found in these references, although we have the additional difficulty of dealing with solutions which are not globally defined.

\section{Resumen del Capítulo 1}

En este primer capítulo introducimos las nociones y conceptos preliminares para poder estudiar nuestro problema. La EDP con blow-up que vamos a considerar es
$$
\left\{\begin{array}{rll}
\p_t U &= \p^2_{xx}U + g(U) & \quad t>0 \,,\, 0<x<1 \\
U(t,0)& =0 & \quad t>0 \\
U(t,1) & = 0 & \quad t>0 \\
U(0,x) &=u(x) & \quad 0<x<1
\end{array}\right.
$$
donde $g : \R \rightarrow \R$ viene dada por $g(u)=u|u|^{p-1}$ para $p > 1$ y $u$ pertenece al espacio
$$
C_{D}([0,1])= \{ v \in C([0,1]) : v(0)=v(1)=0 \}.
$$ Dicha ecuación puede reformularse como
$$
\p_t U = - \frac{\partial S}{\partial \varphi} (U)
$$ donde el potencial $S$ es el funcional en $C_D([0,1])$ dado por
$$
S(v) = \left\{ \begin{array}{ll} \displaystyle{\int_0^1 \left[\frac{1}{2} \left(\frac{dv}{dx}\right)^2 - \frac{|v|^{p+1}}{p+1}\right]}
              & \text{ si $v \in H^1_0((0,1))$}
                   \\ \\ +\infty & \text{ en caso contrario.}\end{array}\right.
$$ El origen $\mathbf{0} \in C_D([0,1])$ es el único equilibrio estable del sistema y es, de hecho, asintóticamente estable. Corresponde al único mínimo local del potencial $S$. Existe también una familia de equilibrios inestables del potencial $S$, todos ellos puntos de ensilladura. Entre estos equilibrios inestables existe un único equilibrio que es no negativo, $z$. Puede mostrarse que $z$ es de hecho estrictamente positivo en $(0,1)$ y tanto de mínima energía como norma entre los equilibrios inestables. Además, $C_D([0,1])$ puede descomponerse en tres partes:
$$
C_D([0,1]) = \mathcal{D}_{\mathbf{0}} \cup \mathcal{W} \cup \mathcal{D}_e
$$ donde $\mathcal{D}_{\mathbf{0}}$ denota la variedad estable del origen, $\mathcal{W}$ es la unión de todas las variedades estables de los equilibrios inestables y
$\mathcal{D}_e$ constituye el dominio de explosión del sistema, i.e. el conjunto de todos aquellos datos iniciales $u$ para los cuales el sistema explota en un tiempo finito $\tau^u$. Puede verse que tanto $\mathcal{D}_{\mathbf{0}}$ como $\mathcal{D}_e$ son conjuntos abiertos y que $\mathcal{W}$ es la frontera común que los separa. Además, existe una variedad inestable $\mathcal{W}^z_u$ del punto de ensilladura $z$ contenida en la región de datos no negativos. La misma es $1$-dimensional, tiene intersección no vacía tanto con $\mathcal{D}_{\mathbf{0}}$ como con $\mathcal{D}_e$ y une a $z$ con $\mathbf{0}$. Por simetría, una descripción análoga también vale para el equilibrio inestable opuesto $-z$. La Figura \ref{fig1} describe esta descomposición.

Las perturbaciones estocásticas que consideramos son de la forma
\begin{equation}\label{formalSPDEresumen}
\p_t U^{\ve} = - \nabla S + \varepsilon \dot{W}
\end{equation} donde $W$ es una sábana Browniana. Definimos formalmente el concepto de solución a una ecuación de este tipo, lo cual excede el marco tradicional ya que las mismas podrían no estar definidas globalmente sino hasta un tiempo de explosión $\tau_\varepsilon$ finito. Estudiamos además dos propiedades importantes de las soluciones a este tipo de ecuaciones: la propiedad fuerte de Markov y el principio de grandes desvíos para los sistemas truncados asociados.

Por último, terminamos el capítulo presentando los resultados que habremos de probar en los capítulos siguientes. Incluimos una breve descripción de los mismos aquí.

Nuestro primer resultado es con respecto a la continuidad del tiempo de explosión para datos iniciales en $\mathcal{D}_e$. En este caso uno espera que que los sistemas estocástico y determinístico exhiban ambos un comportamiento similar para $\varepsilon > 0$ suficientemente pequeño, ya que entonces el ruido no tendrá el tiempo suficiente como para crecer lo necesario para sobrepasar al término de la fuente que está explotando. Mostramos que esto es en efecto así para los casos en que $u \in \mathcal{D}_e$ es tal que la solución $U^u$ de \eqref{MainPDE} con dato inicial $u$ permanece acotada por un lado.

\noindent \textbf{Teorema I}. Sea $\mathcal{D}^*_e$ el conjunto de aquellos $u \in \mathcal{D}_e$ tales que $U^u$ explota sólo por un lado, i.e. $U^{u}$ permanece acotada ya sea inferior o superiormente hasta su tiempo de explosión $\tau^u$. Entonces dado $\delta > 0$ y un conjunto acotado $\mathcal{K} \subseteq \mathcal{D}_e^*$ a una distancia positiva de $\p \mathcal{D}^*_e$ existe $C > 0$ tal que
$$
\sup_{u \in \mathcal{K}} P ( |\tau_\varepsilon^u - \tau_0^u| > \delta ) \leq e^{-\frac{C}{\varepsilon^2}}.
$$ donde $\tau^u_\varepsilon$ denota el tiempo de explosión de $U^{u,\varepsilon}$, la solución de \eqref{formalSPDEresumen} con dato inicial $u$.

Las principales diferencias en comportamiento entre ambos sistemas surgen para datos iniciales en $\mathcal{D}_\0$, donde se presenta el fenómeno de metaestabilidad. De acuerdo con \cite{GOV}, el comportamiento metaestable viene dado por dos hechos: los promedios temporales del proceso permanecen estables hasta que ocurre una transición abrupta y luego un valor diferente se obtiene; más aún, el tiempo en que ocurre esta transición es impredecible en el sentido de que, bajo una normalización apropiada, debería tener una distribución exponencial. Logramos establecer esta descripción rigurosamente para nuestro sistema para los casos en que $1 < p < 5$, donde $p$ es el parámetro en el término no lineal de la fuente en \eqref{MainPDE}. Esta descripción rigurosa abarca los restantes resultados.

\noindent \textbf{Teorema II}. Dado $\delta > 0$ y un conjunto acotado $\mathcal{K} \subseteq \mathcal{D}_{\mathbf{0}}$ a una distancia positiva de $\p \mathcal{D}_{\mathbf{0}}$ tenemos
$$
\lim_{\varepsilon \rightarrow 0} \left[ \sup_{u \in \mathcal{K}} \left| P \left( e^{\frac{\Delta - \delta}{\varepsilon^2}} < \tau^u_\varepsilon < e^{\frac{\Delta + \delta}{\varepsilon^2}}\right)-1\right|\right]=0.
$$

\noindent \textbf{Teorema III}. Dado $\delta > 0$ y un conjunto acotado $\mathcal{K} \subseteq \mathcal{D}_{\mathbf{0}}$ a una distancia positiva de $\p \mathcal{D}_{\mathbf{0}}$ tenemos para cualquier $t > 0$
$$
\lim_{\varepsilon \rightarrow 0} \left[ \sup_{u \in \mathcal{K}} \left| P (\tau_{\varepsilon}^u > t\beta_{\varepsilon}) - e^{-t} \right| \right] = 0.
$$ donde para cada $\varepsilon > 0$ definimos el coeficiente de normalización $\beta_\varepsilon$ como
$$
\beta_{\varepsilon}= \inf \{ t \geq 0 : P_{\mathbf{0}} ( \tau_\ve > t ) \leq e^{-1} \}.
$$

\noindent \textbf{Teorema IV}. Existe una sucesión $(R_\varepsilon)_{\varepsilon > 0}$ con $\lim_{\varepsilon \rightarrow 0} R_\varepsilon = +\infty$ y $\lim_{\varepsilon \rightarrow 0} \frac{R_\varepsilon}{\beta_\varepsilon} = 0$ tal que dado $\delta > 0$ para cualquier conjunto acotado $\mathcal{K} \subseteq \mathcal{D}_{\mathbf{0}}$ a una distancia positiva de $\mathcal{W}$
$$
\lim_{\varepsilon \rightarrow 0} \left[ \sup_{u \in \mathcal{K}} P_u \left( \sup_{0 \leq t \leq \tau_\varepsilon - 3R_\varepsilon}\left| \frac{1}{R_\varepsilon}\int_t^{t+R_\varepsilon} f(U^{\varepsilon}(s,\cdot))ds - f(\mathbf{0})\right| > \delta \right) \right] = 0
$$ para cualquier función continua $f: C_D([0,1]) \rightarrow \R$.

\chapter{Asymptotic behavior of $\tau_\varepsilon^u$ for $u \in \mathcal{D}_e$}

In this chapter we investigate the continuity properties of the explosion time $\tau_\varepsilon^u$ for initial data in the domain of explosion $\mathcal{D}_e$.
Our purpose is to show that under suitable conditions on the initial datum $u \in \mathcal{D}_e$ the explosion time $\tau_\varepsilon^u$ of the stochastic system converges in probability to the deterministic explosion time $\tau^u$. To make these conditions more precise, let us consider the sets of initial data in $\mathcal{D}_e$ which explode only through $+\infty$ or $-\infty$, i.e.
$$
\mathcal{D}_e^+ = \left\{ u \in \mathcal{D}_e : \inf_{(t,x) \in [0,\tau^u) \times [0,1]}  U^u(t,x) > -\infty \right\}
$$
and
$$
\mathcal{D}_e^- = \left\{ u \in \mathcal{D}_e : \sup_{(t,x) \in [0,\tau^u) \times [0,1]} U^u(t,x) < +\infty \right\}.
$$ Notice that $\mathcal{D}_e^+$ and $\mathcal{D}_e^-$ are disjoint and also that they satisfy the relation $\mathcal{D}_e^- = - \mathcal{D}_e^+$. Furthermore, we shall see below that $\mathcal{D}_e^+$ is an open set. Let us write $\mathcal{D}_e^*:= \mathcal{D}_e^+ \cup \mathcal{D}_e^-$. The result we are to prove is the following.

\begin{teo}\label{contexp} For any bounded set $\mathcal{K} \subseteq \mathcal{D}_e^*$ at a positive distance from $\p \mathcal{D}_e^*$ and $\delta > 0$ there exists a constant $C > 0$ such that
$$
\sup_{u \in \mathcal{K}} P_u ( |\tau_\varepsilon - \tau| > \delta ) \leq e^{- \frac{C}{\varepsilon^2}}.
$$
\end{teo}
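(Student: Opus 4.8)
The plan is to reduce the statement about the genuine (locally Lipschitz) system to one about a truncated, globally Lipschitz system where the Freidlin--Wentzell estimates of Section~\ref{secLDP} apply, and to show that the deterministic solution $U^u$, once it has started on its explosion run, forces the stochastic solution $U^{u,\varepsilon}$ to explode within time $\delta$ with probability at least $1-e^{-C/\varepsilon^2}$, and also that it cannot explode much \emph{earlier} than $\tau^u$. The one-sided boundedness hypothesis ($u\in\mathcal D_e^*$, say $u\in\mathcal D_e^+$, so $U^u\ge -M$ for some $M<\infty$ up to $\tau^u$) is what makes the comparison work: on a one-sided-bounded trajectory the source $g(U)=U|U|^{p-1}$ only misbehaves in one direction, and one can dominate $U^{u,\varepsilon}$ from below by a globally Lipschitz problem whose solution is close to $U^u$ on compact time intervals, while the upper blow-up is driven purely by the deterministic source.

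\textbf{Step 1: Uniformity over $\mathcal K$ via the deterministic flow.} Since $\mathcal K$ is bounded and at positive distance from $\partial\mathcal D_e^*$, and since (as claimed in the text) $\mathcal D_e^+$ is open, the map $u\mapsto \tau^u$ is bounded on $\mathcal K$, say $\tau^u\le T_0$, and the trajectories $\{U^u(t,\cdot): 0\le t\le \tau^u-\delta/2,\ u\in\mathcal K\}$ stay in a fixed ball $B_N$ of $C_D([0,1])$; moreover $\inf_{u\in\mathcal K}\inf_{t,x}U^u(t,x)\ge -M$ for a uniform $M$. Fix a time $t_1=t_1(u)<\tau^u$ very close to $\tau^u$ (within $\delta/4$) at which $\|U^u(t_1,\cdot)\|_\infty$ is already enormous but the trajectory is still in some ball $B_{N'}$; here one uses Proposition~\ref{caract}, which gives $S(U^u(t,\cdot))\to-\infty$, to locate a slice deep inside $\mathcal D_e$ with very negative potential. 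By continuity of the deterministic flow this can be done with all constants uniform over $u\in\mathcal K$.

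\textbf{Step 2: Stay close up to $t_1$.} Work with the truncated system $U^{(N'),u,\varepsilon}$, which is globally defined, globally Lipschitz, and agrees with $U^{u,\varepsilon}$ until the escape from $B_{N'}$. Estimate \eqref{grandes1} gives $\varepsilon_0,C_1>0$ with
$$
\sup_{u\in\mathcal K}P_u\big(d_{t_1}(U^{(N'),u,\varepsilon},U^u)>\rho\big)\le e^{-C_1/\varepsilon^2}
$$
for $0<\varepsilon<\varepsilon_0$, where $\rho$ is chosen small enough that on the complementary event (a) the escape from $B_{N'}$ has not yet happened before $t_1$, so $U^{(N'),u,\varepsilon}=U^{u,\varepsilon}$ there, giving $\tau_\varepsilon^u>t_1>\tau^u-\delta$; and (b) the slice $U^{u,\varepsilon}(t_1,\cdot)$ lies in a small neighborhood of $U^u(t_1,\cdot)$, hence in $\mathcal D_e$, is still one-sided bounded below by $-2M$, and has $S(U^{u,\varepsilon}(t_1,\cdot))$ extremely negative.

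\textbf{Step 3: Forced explosion within $\delta/2$.} It remains to show that, started from such a slice $w$ (very negative potential, bounded below), the stochastic solution explodes before time $\delta/2$ except on a set of probability $e^{-C_2/\varepsilon^2}$. This is the heart of the proof and the main obstacle: the large-deviations estimates of Section~\ref{secLDP} are \emph{useless} here because they presuppose a global Lipschitz bound and cannot see genuine blow-up. The approach I would take is a direct comparison/energy argument. Test \eqref{MainSPDE} against a suitable positive weight (e.g. the first Dirichlet eigenfunction $\varphi_1$, or against $w$ itself) to get an It\^o equation for a scalar quantity like $a(t)=\int_0^1 X(t,x)\varphi_1(x)\,dx$ or for the energy $S(X(t,\cdot))$; using the one-sided bound (which by a comparison principle for the SPDE, propagated from $w$ and controlled on the noise side) keeps the nonlinear term of a definite sign, one derives a differential inequality of Riccati type $\dot a \gtrsim a^{\,p} - (\text{noise terms})$ valid until explosion. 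On the event that the stochastic integral term stays bounded by $1$ on $[0,\delta/2]$ — an event of probability $\ge 1-e^{-C_2/\varepsilon^2}$ by the exponential martingale / Freidlin--Wentzell tail for the \emph{truncated} drift-free equation, exactly as in \eqref{LDP3}--\eqref{LDP4} — the scalar inequality forces $a(t)$, hence $\|X(t,\cdot)\|_\infty$, to blow up before $\delta/2$, since $a(t_1)$ was taken huge. One must be careful that the comparison is run on each truncation level $B_n$ and that the resulting blow-up time is genuinely $\tau_\varepsilon^u=\lim_n\tau_\varepsilon^{(n),u}$; monotonicity in $n$ of the truncated solutions handles this.

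\textbf{Step 4: No early explosion.} Finally, one needs $\tau_\varepsilon^u>\tau^u-\delta$, which is already delivered by Step~2 (the process agrees with the globally-defined truncation up to $t_1>\tau^u-\delta$ on the good event), so the two-sided bound $|\tau_\varepsilon^u-\tau^u|\le\delta$ holds off an event of probability at most $e^{-C_1/\varepsilon^2}+e^{-C_2/\varepsilon^2}\le e^{-C/\varepsilon^2}$, uniformly in $u\in\mathcal K$. The delicate points I expect to spend the most effort on are: making the choice of $t_1$, $N'$, $\rho$ uniform over $\mathcal K$ (using openness of $\mathcal D_e^+$ and continuity of the flow, together with Theorem~\ref{descomp2} / Proposition~\ref{caract}); and rigorously turning "$S(X(t_1,\cdot))$ very negative and one-sided bounded" into a quantitative finite-time blow-up for the SPDE with a controlled noise contribution — essentially an SPDE analogue of the classical Kaplan/Levine eigenfunction argument for $\partial_t U=\partial_{xx}U+U|U|^{p-1}$.
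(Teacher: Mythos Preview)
Your overall architecture matches the paper's: split into $\tau_\varepsilon<\tau-\delta$ (handled exactly as in your Steps~2/4 via truncation and estimate~\eqref{grandes1}; this is Proposition~\ref{convergenciainferior0}) and $\tau_\varepsilon>\tau+\delta$ (the hard direction, your Step~3). The reduction to a slice with very negative potential, the uniformity over $\mathcal K$ via upper semicontinuity of the relevant quantities, and the role of the one-sided bound are all as in the paper.

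Where your Step~3 diverges is in technique, and there is a soft spot. The paper does \emph{not} run an It\^o/eigenfunction argument on the SPDE. Instead it subtracts the stochastic heat equation: with $V^{\mathbf 0,\varepsilon}$ solving $\partial_t V=\partial_{xx}V+\varepsilon\dot W$, $V(0)=\mathbf 0$, the process $Z^{u,\varepsilon}:=U^{u,\varepsilon}-V^{\mathbf 0,\varepsilon}$ satisfies a \emph{random PDE} with no stochastic integral, $\partial_t Z=\partial_{xx}Z+g(Z+V^{\mathbf 0,\varepsilon})$. On the event $\{\sup_{[0,\delta]}\|V^{\mathbf 0,\varepsilon}\|_\infty<h\}$ (probability $\ge 1-e^{-C/\varepsilon^2}$ by \eqref{grandes1}), a mean-value estimate shows $Z$ is a pathwise supersolution of a deterministic perturbed equation $\partial_t\underline Z=\partial_{xx}\underline Z+g(\underline Z)-h|\underline Z|^{p-1}-h$. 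The forced blow-up is then a purely deterministic Levine-type energy argument on $\phi(t)=\|\underline Z(t)\|_{L^2}^2$ (Lemma~\ref{expestimate}), and---crucially---the one-sided lower bound is obtained for $\underline Z^{(h)}$, not for $U^{u,\varepsilon}$, by comparing $\underline Z^{(h)}$ with the scalar ODE $\dot y=-|y|^p-|y|^{p-1}-1$ (Proposition~\ref{convsup}). Since $\underline Z^{(h)}$ then blows up through $+\infty$, the supersolution $Z$ does too.

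Your direct approach requires a one-sided lower bound on the \emph{stochastic} solution $U^{u,\varepsilon}$ to make the Jensen/Riccati step go through, and ``by a comparison principle for the SPDE, propagated from $w$ and controlled on the noise side'' is precisely the step that is not obvious: to compare, you need a subsolution driven by the \emph{same} noise, and the natural candidates (stochastic heat, or heat with constant negative source) are not subsolutions of $\partial_t U=\partial_{xx}U+g(U)+\varepsilon\dot W$ unless you already control their sign. The paper's subtraction trick is exactly what dissolves this circularity: after removing $V^{\mathbf 0,\varepsilon}$, all comparisons are deterministic and the lower bound comes from an ODE. Your eigenfunction route would become clean if you first performed this subtraction and tested $Z$ (not $U^\varepsilon$) against $\varphi_1$; then it is essentially equivalent to the paper's energy method. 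As written, Step~3 has the right intuition but the mechanism for the lower bound is the missing idea.
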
 We shall split the proof of Theorem \ref{contexp} into two parts: proving first a lower bound and then an upper bound for $\tau_\varepsilon$. The first one is a consequence of the continuity of solutions to \eqref{MainSPDE} with respect to $\ve$ on intervals where the deterministic solution \mbox{remains bounded.} The precise estimate is contained in the following proposition.

\begin{prop}\label{convergenciainferior0} For any bounded set $\mathcal{K} \subseteq \mathcal{D}_e$ and $\delta > 0$ there exists a constant $C > 0$ such that
\begin{equation}\label{convergenciainferior}
\sup_{u \in \mathcal{K}} P_u ( \tau_\varepsilon < \tau - \delta ) \leq e^{- \frac{C}{\varepsilon^2}}.
\end{equation}
\end{prop}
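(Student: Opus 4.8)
The plan is to show that, uniformly over a bounded set $\mathcal{K}\subseteq\mathcal{D}_e$, the stochastic solution $U^{u,\varepsilon}$ stays uniformly close to the deterministic solution $U^u$ on a time interval slightly shorter than the deterministic explosion time, and then to deduce that $U^{u,\varepsilon}$ must itself blow up before $\tau^u+$(something small)---in particular, that $\tau_\varepsilon\ge\tau-\delta$ with overwhelming probability. The key point is that the event $\{\tau_\varepsilon<\tau-\delta\}$ forces $U^{u,\varepsilon}$ to reach norm $+\infty$, hence in particular to exit a large ball $B_N$, on $[0,\tau-\delta]$, an interval on which $U^u$ remains bounded; so a good sup-norm comparison between the two solutions on this interval rules this out.

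First I would fix $\delta>0$ and use the characterization of $\mathcal{D}_e$ together with continuity of $t\mapsto S(U^u(t,\cdot))$ to choose, for each $u\in\mathcal{K}$, a time $t_u<\tau^u$ at which $S(U^u(t_u,\cdot))<0$; since $\mathcal{K}$ is bounded and at this stage one only needs an interval on which the deterministic flow is bounded, I would instead work directly on $[0,\tau^u-\delta]$ and set $M_u:=\sup_{0\le t\le \tau^u-\delta}\|U^u(t,\cdot)\|_\infty<+\infty$. The delicate preliminary issue is uniformity: one needs $\sup_{u\in\mathcal{K}}M_u=:M<+\infty$ and $\sup_{u\in\mathcal{K}}\tau^u<+\infty$. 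Boundedness of $\mathcal{K}$ together with continuous dependence of solutions to \eqref{MainPDE} on the initial datum (on compact time intervals staying away from blow-up) gives this; alternatively, one covers $\mathcal{K}$ by finitely many small balls and uses that $u\mapsto\tau^u$ is lower semicontinuous and $u\mapsto U^u$ is continuous into $C_{D_u}([0,T]\times[0,1])$ for $T<\tau^u$. Now pick $N:=M+1$ and work with the truncated equation \eqref{eqtruncada} at level $N$: as recalled in Section \ref{trunca}, $U^{(N),u,\varepsilon}$ is globally defined, coincides with $U^{u,\varepsilon}$ until the escape from $B_N$, and its source $g_N$ is bounded and globally Lipschitz, so the Freidlin--Wentzell estimate \eqref{grandes1} applies to it with a constant uniform in the initial datum.

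Next, on the event $\{\tau_\varepsilon<\tau-\delta\}\subseteq\{\tau_\varepsilon<\tau^u-\delta\}$ the solution $U^{u,\varepsilon}$ blows up before time $\tau^u-\delta$, hence in particular exits $B_N$ before $\tau^u-\delta$; but until that exit $U^{u,\varepsilon}$ equals $U^{(N),u,\varepsilon}$, so this forces $\sup_{0\le t\le \tau^u-\delta}\|U^{(N),u,\varepsilon}(t,\cdot)\|_\infty\ge N=M+1$, while $\sup_{0\le t\le\tau^u-\delta}\|U^{(N),u}(t,\cdot)\|_\infty=\sup_{0\le t\le\tau^u-\delta}\|U^u(t,\cdot)\|_\infty=M_u\le M$. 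Therefore
$$
\{\tau_\varepsilon<\tau-\delta\}\subseteq\Big\{\, d_{\,\tau^u-\delta}\big(U^{(N),u,\varepsilon},\,U^{(N),u}\big)\ \ge\ 1\,\Big\}.
$$
Since $\sup_{u\in\mathcal{K}}\tau^u\le T_0<\infty$, I would monotonically enlarge the time interval to $[0,T_0]$ (harmless, as it only increases $d_T$) and apply \eqref{grandes1} with $\delta$ there replaced by $1$ and $T$ by $T_0$: there are $\varepsilon_0>0$ and $C>0$, independent of $u$, with $\sup_{u\in C_D([0,1])}P_u\big(d_{T_0}(U^{(N),u,\varepsilon},U^{(N),u})>1\big)\le e^{-C/\varepsilon^2}$ for $0<\varepsilon<\varepsilon_0$. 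Combining the inclusion with this bound yields $\sup_{u\in\mathcal{K}}P_u(\tau_\varepsilon<\tau-\delta)\le e^{-C/\varepsilon^2}$ for small $\varepsilon$; adjusting $C$ downward (or requiring $\varepsilon$ small) absorbs the finitely many remaining $\varepsilon$, giving \eqref{convergenciainferior} as stated.

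The main obstacle I anticipate is precisely the uniformity in $u\in\mathcal{K}$: one must ensure both that the deterministic solutions $U^u$ are uniformly bounded on $[0,\tau^u-\delta]$ and that $\sup_{u\in\mathcal{K}}\tau^u<\infty$, so that a single truncation level $N$ and a single time horizon $T_0$ work for the whole family; once this is in hand, the large-deviations input \eqref{grandes1} is already uniform in the initial datum and the rest is a soft inclusion-of-events argument. A clean way to secure the uniformity is to note that $\mathcal{D}_e$ is open and $u\mapsto(\tau^u,U^u)$ is continuous on it in the appropriate topology, so that $\mathcal{K}\subseteq\mathcal{D}_e$ bounded (hence with compact closure still inside $\mathcal{D}_e$, or at least reducible to that case by the positive-distance hypothesis used later in Theorem \ref{contexp}) forces these suprema to be finite.
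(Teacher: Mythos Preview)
Your approach is essentially the same as the paper's: truncate at a uniform level $N=M+1$, use the inclusion $\{\tau_\varepsilon<\tau-\delta\}\subseteq\{d_{T_0}(U^{(N),u,\varepsilon},U^{(N),u})\ge 1\}$, and apply the uniform large-deviations estimate \eqref{grandes1}.

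The one point that remains loose is exactly the uniformity you flag as the main obstacle. Your suggested fixes --- covering $\mathcal{K}$ by finitely many balls, or arguing that a bounded $\mathcal{K}\subseteq\mathcal{D}_e$ has compact closure inside $\mathcal{D}_e$ --- do not work here: $C_D([0,1])$ is infinite-dimensional, so bounded sets are not precompact, and the proposition does not assume positive distance from $\partial\mathcal{D}_e$. The paper closes this by a different mechanism: the regularizing effect of the heat semigroup. By Propositions~\ref{G.1} and~\ref{A.2}, for any bounded $\mathcal{K}$ there is a small $t_0>0$ such that $\{U^u(t_0,\cdot):u\in\mathcal{K}\}$ lies in a genuinely compact set $\mathcal{K}'\subseteq\mathcal{D}_e$; since $u\mapsto M_u$ is upper semicontinuous and $u\mapsto\tau^u$ is continuous on $\mathcal{D}_e$ (Corollary~\ref{contdetexp}), both suprema are then finite on $\mathcal{K}'$, and the short initial segment $[0,t_0]$ is handled by the uniform bound in Proposition~\ref{G.1}. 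Once this compactification step is in place, the rest of your argument goes through verbatim.
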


\begin{proof} Let us observe that by Proposition \ref{G.2} we have that $\inf_{u \in \mathcal{K}} \tau^u > 0$ so that we may assume without loss of generality that $\tau^u > \delta$ for all $u \in \mathcal{K}$.
Now, for each $u \in \mathcal{D}_e$ let us define the quantity
$$
M_u := \sup_{0 \leq t \leq \max\{0, \tau^u - \delta \}} \|U^{u}(t,\cdot)\|_\infty.
$$ By resorting to Proposition \ref{G.2} once again, we obtain that the application $u \mapsto M_u$ is both upper semicontinuous and finite on $\mathcal{D}_e$ and hence, with the aid of Propositions \ref{G.1} and \ref{A.2}, we conclude that $M:= \sup_{u \in \mathcal{K}} M_u < +\infty$. Similarly, since the mapping $u \mapsto \tau^u$ is continuous and finite on $\mathcal{D}_e$ (see Corollary \ref{contdetexp} below for proof of this fact) we also obtain that $\mathcal{T}:= \sup_{u \in \mathcal{K}} \tau^u < +\infty$. Hence, for $u \in \mathcal{K}$ we get
$$
P_u (\tau^u_\varepsilon < \tau^u - \delta ) \leq P_u \left( d_{\tau^u - \delta}\left(U^{M_u+1,\varepsilon},U^{M_u+1}\right) > \frac{1}{2}\right) \leq P_u \left( d_{\mathcal{T} - \delta}\left(U^{M+1,\varepsilon},U^{M+1}\right) > \frac{1}{2}\right).
$$ By the estimate \eqref{grandes1} we conclude \eqref{convergenciainferior}.
\end{proof}

To establish the upper bound we consider for each $u \in \mathcal{D}_e^+$ the process
$$
Z^{u,\varepsilon} := U^{u,\varepsilon} - V^{\mathbf{0},\varepsilon}
$$ where $U^{u,\varepsilon}$ is the solution of \eqref{MainSPDE} with initial datum $u$ and $V^{\mathbf{0},\varepsilon}$ is the solution of \eqref{MainSPDE2} with source term $f \equiv 0$ and initial datum $\mathbf{0}$ constructed from the same Brownian sheet as $U^{u,\varepsilon}$. Let us observe that $Z^{u,\varepsilon}$ satisfies the random partial differential equation
\begin{equation}\label{randomPDE}
\left\{\begin{array}{rll}
\p_t Z^{u,\varepsilon} &= \p^2_{xx}Z^{u,\varepsilon} + g(Z^{u,\varepsilon} - V^{\mathbf{0},\varepsilon}) & \quad t>0 \,,\, 0<x<1 \\
Z^{u,\varepsilon}(t,0)&=Z^{u,\varepsilon}(t,1)=0 & \quad t>0 \\
Z^{u,\varepsilon}(0,x) &=u(x).
\end{array}\right.
\end{equation}Furthermore, since $V^{\mathbf{0},\varepsilon}$ is globally defined and remains bounded on finite time intervals, we have that $Z^{u,\varepsilon}$ and $U^{u,\varepsilon}$ share the same explosion time. Hence, to  obtain the desired upper bound on $\tau^u_\varepsilon$ we may study the behavior of $Z^{u,\varepsilon}$. The advantage of this approach is that, in general, the behavior of $Z^{u,\varepsilon}$ will be easier to understand than that of $U^{u,\varepsilon}$. Indeed, each realization of $Z^{u,\varepsilon}$ is the solution of a partial differential equation which one can handle by resorting to standard arguments in PDE theory.

Now, a straightforward calculation using the mean value theorem shows that whenever $\| V^{\mathbf{0},\varepsilon} \|_\infty < 1$ the process $Z^{u,\varepsilon}$ satisfies the inequality
\begin{equation}\label{eqcomparacion}
\p_t Z^{u,\varepsilon} \geq \p^2_{xx} Z^{u,\varepsilon} + g(Z^{u,\varepsilon}) - h|Z^{u,\varepsilon}|^{p-1} - h
\end{equation} where $h := p2^{p-1}\| V^{\mathbf{0},\varepsilon} \|_\infty > 0$. Therefore, in order to establish the upper bound on $\tau_\varepsilon^u$ one may consider for $h > 0$
the solution $\underline{Z}^{(h),u}$ to the equation
\begin{equation}\label{randomPDE2}
\left\{\begin{array}{rll}
\p_t \underline{Z}^{(h),u} &= \p^2_{xx}\underline{Z}^{(h),u} + g(\underline{Z}^{(h),u}) - h|\underline{Z}^{(h),u}|^{p-1} - h  & \quad t>0 \,,\, 0<x<1 \\
\underline{Z}^{(h),u}(t,0)&=\underline{Z}^{(h),u}(t,1)=0 & \quad t>0 \\
\underline{Z}^{(h),u}(0,x) &=u(x).
\end{array}\right.
\end{equation} and obtain a convenient upper bound for the explosion time of this new process valid for every $h$ sufficiently small. If we also manage to show that for $h$ suitably small the process $\underline{Z}^{(h),u}$ explodes through $+\infty$, then the fact that $Z^{u,\varepsilon}$ is a supersolution to \eqref{randomPDE2} will yield the desired upper bound on the explosion time of $Z^{u,\varepsilon}$, if $\| V^{\mathbf{0},\varepsilon} \|_\infty$ remains small enough. To show this, however, we will need to impose the additional condition that $u \in \mathcal{D}_e^+$. Lemma \ref{expestimate} below contains the proper estimate on $\underline{\tau}^{(h),u}$, the explosion time of $\underline{Z}^{(h),u}$.

\begin{defi} For each $h \geq 0$ we define the potential $\underline{S}^{(h)}$ on $C_D([0,1])$ associated to the equation \eqref{randomPDE2} by the formula
$$
\underline{S}^{(h)}(v)\left\{ \begin{array}{ll} \displaystyle{\int_0^1 \left[\frac{1}{2} \left(\frac{dv}{dx}\right)^2 - \frac{|v|^{p+1}}{p+1} + hg(v) + hv\right]} & \text{ if $v \in C_D \cap H^1_0([0,1])$ }\\ \\ +\infty & \text{ otherwise.}\end{array}\right.
$$ Notice that $\underline{S}^{(0)}$ coincides with our original potential $S$. Moreover, it is easy to check that for all $h \geq 0$ the potential $\underline{S}^{(h)}$ satisfies all properties established for $S$ in the Appendix.
\end{defi}

\begin{lema}\label{expestimate} Given $\delta > 0$ there exists $M > 0$ such that:
\begin{enumerate}
\item [i.] For every $0 \leq h < 1$ any $u \in C_D([0,1])$ such that $\underline{S}^{(h)}(u) \leq - \frac{M}{2}$ verifies $\underline{\tau}^{(h),u} < \frac{\delta}{2}$.
\item [ii.] Given $K > 0$ there exist constants $\rho_{M,K}, h_{M,K} > 0$ depending only on $M$ and $K$ such that any $u \in C_D([0,1])$ satisfying $S(u) \leq - M$ and $\| u\|_\infty \leq K$ verifies
$$
\sup_{v \in B_{\rho_{M,K}}(u)} \underline{\tau}^{(h),v} < \delta
$$ for all $0 \leq h < h_{M,K}$.
\end{enumerate}
\end{lema}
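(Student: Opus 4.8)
The plan is to reduce both parts to one quantitative blow-up estimate for the squared $L^2$-norm $\mathcal{N}(t):=\int_0^1 \underline{Z}^{(h),u}(t,x)^2\,dx$, exploiting the gradient structure of \eqref{randomPDE2}. Since $\underline{S}^{(h)}$ is the potential associated to \eqref{randomPDE2}, the map $t\mapsto\underline{S}^{(h)}(\underline{Z}^{(h),u}(t,\cdot))$ is nonincreasing along solutions; and multiplying \eqref{randomPDE2} by $\underline{Z}^{(h),u}$, integrating over $(0,1)$ and using the definition of $\underline{S}^{(h)}$ to eliminate $\int_0^1(\p_x\underline{Z}^{(h),u})^2$, one obtains, for $0<t<\underline{\tau}^{(h),u}$,
$$
\tfrac12\mathcal{N}'(t)=-2\,\underline{S}^{(h)}\big(\underline{Z}^{(h),u}(t,\cdot)\big)+\tfrac{p-1}{p+1}\int_0^1|\underline{Z}^{(h),u}(t,\cdot)|^{p+1}+h\int_0^1 g\big(\underline{Z}^{(h),u}(t,\cdot)\big)+h\int_0^1\underline{Z}^{(h),u}(t,\cdot).
$$

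For part (i) I would assume $\underline{S}^{(h)}(u)\le-M/2$ (which already forces $u\in H^1_0((0,1))$), so that by monotonicity $-2\,\underline{S}^{(h)}(\underline{Z}^{(h),u}(t,\cdot))\ge M$ for $0\le t<\underline{\tau}^{(h),u}$. Since $(0,1)$ has length one, $\|w\|_{L^q}\le\|w\|_{L^{p+1}}$ for $q\le p+1$, so for $h<1$ Young's inequality produces a constant $c_0=c_0(p)>0$ with
$$
h\int_0^1 g\big(\underline{Z}^{(h),u}(t,\cdot)\big)+h\int_0^1\underline{Z}^{(h),u}(t,\cdot)\ \ge\ -\tfrac{p-1}{2(p+1)}\int_0^1|\underline{Z}^{(h),u}(t,\cdot)|^{p+1}-c_0.
$$
Imposing $M\ge2c_0$ and using $\|w\|_{L^{p+1}}\ge\|w\|_{L^2}$ then gives the autonomous differential inequality $\mathcal{N}'(t)\ge M+\tfrac{p-1}{p+1}\mathcal{N}(t)^{(p+1)/2}$ on $[0,\underline{\tau}^{(h),u})$, with $\mathcal{N}(0)=\|u\|_{L^2}^2\ge0$. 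Comparing $\mathcal{N}$ with the solution of the ODE $y'=M+\tfrac{p-1}{p+1}y^{(p+1)/2}$, $y(0)=0$, whose blow-up time equals $C(p)\,M^{-(p-1)/(p+1)}$ for an explicit $C(p)>0$ (the defining integral being finite exactly because $(p+1)/2>1$, and tending to $0$ as $M\to\infty$), and noting $\mathcal{N}\le\|\underline{Z}^{(h),u}\|_\infty^2$, one concludes that $\underline{\tau}^{(h),u}$ is at most this ODE blow-up time. Choosing $M=M(\delta)$ large enough (and $\ge2c_0$) makes it $<\delta/2$.

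For part (ii), fix $K>0$ and take $M=M(\delta)$ as in (i). For $\|u\|_\infty\le K$ and $0\le h<h_1:=\tfrac{M}{4(K^p+K)}$ one has $\underline{S}^{(h)}(u)=S(u)+h\int_0^1(g(u)+u)\le -M+h(K^p+K)\le-\tfrac{3M}{4}$, so (i) already gives $\underline{\tau}^{(h),u}<\delta/2$. Since $\underline{S}^{(h)}$ shares all structural properties of $S$ (Appendix), the a priori lower bound on the explosion time (the analogue of Proposition \ref{G.2}) is uniform in $0\le h<1$: there is a fixed $t_0\in(0,\delta/8)$ with $\underline{\tau}^{(h),w}>t_0$ whenever $\|w\|_\infty\le K+1$ and $0\le h<1$. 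Put $w_0:=\underline{Z}^{(h),u}(t_0,\cdot)$; by parabolic regularization $w_0\in H^1_0((0,1))$, with $\underline{S}^{(h)}(w_0)\le\underline{S}^{(h)}(u)\le-\tfrac{3M}{4}$ and $\|w_0\|_\infty+\|w_0\|_{H^1_0}$ bounded in terms of $K$ only. For $v\in B_\rho(u)$ with $\rho<1$, continuous dependence of \eqref{randomPDE2} on the initial datum over $[0,t_0]$ — where both solutions remain bounded by a constant depending only on $K$ — together with the smoothing estimates for the mild formulation gives $\|\underline{Z}^{(h),v}(t_0,\cdot)-w_0\|_{H^1_0}\to0$ as $\rho\to0$, uniformly in $0\le h<1$ and in $\|u\|_\infty\le K$; and since $\underline{S}^{(h)}$ is continuous on bounded subsets of $H^1_0((0,1))$ uniformly in $0\le h<1$, there is $\rho_{M,K}>0$ with $\underline{S}^{(h)}(\underline{Z}^{(h),v}(t_0,\cdot))\le-M/2$ for every $v\in B_{\rho_{M,K}}(u)$ and $0\le h<h_{M,K}:=h_1$. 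Applying (i) to \eqref{randomPDE2} restarted at time $t_0$ from $\underline{Z}^{(h),v}(t_0,\cdot)$ yields $\underline{\tau}^{(h),v}-t_0<\delta/2$, hence $\sup_{v\in B_{\rho_{M,K}}(u)}\underline{\tau}^{(h),v}\le t_0+\delta/2<\delta$.

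The main obstacle is the transfer step in part (ii): going from control of $\underline{S}^{(h)}(u)$ to control of $\underline{S}^{(h)}(v)$ for a small perturbation $v$ of $u$ in $C_D([0,1])$. The potential contains the $H^1_0$-seminorm and is only lower semicontinuous — not continuous — in the sup norm (it may even jump to $+\infty$), so one cannot perturb $u$ directly, and must instead let the flow run for a fixed positive time $t_0$ (legitimate precisely because of the uniform lower bound on the explosion time) in order to reach a bounded subset of $H^1_0$ on which $\underline{S}^{(h)}$ is continuous. A secondary technical nuisance is that the nonlinearity $z\mapsto g(z)-h|z|^{p-1}-h$ is merely $(p-1)$-H\"older at the origin when $1<p<2$; this does no harm here, since the relevant solutions stay bounded on $[0,t_0]$ and the non-Lipschitz term carries the small prefactor $h$, so the usual Gronwall argument underlying continuous dependence still goes through.
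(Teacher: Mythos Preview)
Your proposal is correct and follows essentially the same route as the paper. For (i) the paper derives the very same differential inequality for $\phi(t)=\int_0^1(\underline{Z}^{(h),u})^2$ by multiplying the equation by $\underline{Z}^{(h),u}$, eliminating $\int(\p_x\underline{Z}^{(h),u})^2$ via $\underline{S}^{(h)}$, absorbing the $h$-terms into the $L^{p+1}$-term, and comparing with the ODE $\dot y=cM+c'y^{(p+1)/2}$; for (ii) the paper likewise first controls $\underline{S}^{(h)}(u)$ via $|\underline{S}^{(h)}(u)-S(u)|\le h(K^p+K)$, then lets the flow run a short time $t_u<\delta/2$ and invokes the Appendix Propositions (S.1 together with the Lyapunov property, which are exactly your ``smoothing to $H^1_0$ plus continuity of $\underline{S}^{(h)}$ on bounded subsets of $H^1_0$'') to push $\underline{S}^{(h)}(\underline{Z}^{(h),v}(t_u,\cdot))\le-M/2$, and finishes by restarting (i).
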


\begin{proof} Given $\delta > 0$ let us begin by showing that (i) holds for an appropriate choice of $M$. Thus, for fixed $M > 0$ and $0 \leq h < 1$, let $u \in C_D([0,1])$ be such that $\underline{S}^{(h)}(u) \leq - \frac{M}{2}$ and consider the application $\phi^{(h),u}: [0, \tau^{(h),u}) \rightarrow \R^+$ given by the formula
$$
\phi^{(h),u}(t) = \int_0^1 \left(\underline{Z}^{(h),u}(t,x)\right)^2dx.
$$ It is simple to verify that $\phi^{(h),u}$ is continuous and that for any $t_0 \in (0,\tau^{(h),u})$ it satisfies
\begin{equation}\label{eqpoten1}
\frac{d\phi^{(h),u}}{dt}(t_0) \geq - 4\underline{S}^{(h)}(u^{(h)}_{t_0}) + 2 \int_0^1 \left[ \left(\frac{p-1}{p+1}\right) |u^{(h)}_{t_0}|^{p+1} - h \left(\frac{p+2}{p}\right)|u^{(h)}_{t_0}|^p - h|u^{(h)}_{t_0}|\right]
\end{equation} where we write $u^{(h)}_{t_0}:=\underline{Z}^{(h),u}(t_0,\cdot)$ for convenience. Hölder's inequality reduces \eqref{eqpoten1} to
\begin{equation}\label{eqpoten2}
\frac{d\phi^{(h),u}}{dt}(t_0) \geq - 4\underline{S}^{(h)}(u^{(h)}_{t_0}) + 2\left[ \left(\frac{p-1}{p+1}\right) \|u^{(h)}_{t_0}\|_{L^{p+1}}^{p+1} - h\left(\frac{p+2}{p}\right)\|u^{(h)}_{t_0}\|_{L^{p+1}}^{p} - h\|u^{(h)}_{t_0}\|_{L^{p+1}}\right].
\end{equation} Observe that, by definition of $\underline{S}^{(h)}$ and the fact that the map $t \mapsto \underline{S}^{(h)}(u^{(h)}_t)$ is decreasing, we obtain the inequalities
$$
\frac{M}{2} \leq -\underline{S}^{(h)}(u^{(h)}_{t_0}) \leq \frac{1}{p+1}\|u^{(h)}_{t_0}\|_{L^{p+1}}^{p+1} + h\|u^{(h)}_{t_0}\|_{L^{p+1}}^p + h\|u^{(h)}_{t_0}\|_{L^{p+1}}
$$ from which we deduce that by taking $M$ sufficiently large one can force $\|u^{(h)}_{t_0}\|_{L^{p+1}}$ to be large enough so as to guarantee that
$$
\left(\frac{p-1}{p+1}\right) \|u^{(h)}_{t_0}\|_{L^{p+1}}^{p+1} - h \left(\frac{p+2}{p}\right)\|u^{(h)}_{t_0}\|_{L^{p+1}}^p - h\|u^{(h)}_{t_0}\|_{L^{p+1}} \geq \frac{1}{2}\left(\frac{p-1}{p+1}\right) \|u^{(h)}_{t_0}\|_{L^{p+1}}^{p+1}
$$ is satisfied for any $0 \leq h < 1$. Therefore, we see that if $M$ sufficiently large then for all $0 \leq h < 1$ the application $\phi^{(h),u}$ satisfies
\begin{equation}\label{eqpoten3}
\frac{d\phi^{(h),u}}{dt}(t_0) \geq 2M + \left(\frac{p-1}{p+1}\right) \left(\phi^{(h),u}(t_0)\right)^{\frac{p+1}{2}}
\end{equation} for every $t_0 \in (0,\tau^{(h),u})$, where to obtain \eqref{eqpoten3} we have once again used Hölder's inequality and the fact that the map $t \mapsto \underline{S}^{(h)}(u^{(h)}_t)$ is decreasing. Now, it is not hard to show that the solution $y$ of the ordinary differential equation
$$
\left\{\begin{array}{l} \dot{y} = 2M + \left(\frac{p-1}{p+1}\right) y^{\frac{p+1}{2}} \\ y(0) \geq 0 \end{array}\right.
$$ explodes before time $$
T = \frac{\delta}{4} + \frac{2^{\frac{p+1}{2}}(p+1)}{(p-1)^2(M\delta)^{\frac{p-1}{2}}}.
$$ Indeed, either $y$ explodes before time $\frac{\delta}{4}$ or $\tilde{y}:= y( \cdot + \frac{\delta}{4})$ satisfies
$$
\left\{\begin{array}{l} \dot{\tilde{y}} \geq \left(\frac{p-1}{p+1}\right) \tilde{y}^{\frac{p+1}{2}} \\ \tilde{y}(0) \geq \frac{M\delta}{2} \end{array}\right.
$$ which can be seen to explode before time
$$
\tilde{T}=\frac{2^{\frac{p+1}{2}}(p+1)}{(p-1)^2(M\delta)^{\frac{p-1}{2}}}
$$ by performing the standard integration method. If $M$ is taken sufficiently large then $T$ can be made strictly smaller than $\frac{\delta}{2}$ which, by \eqref{eqpoten3}, implies that $\tau^{(h),u} < \frac{\delta}{2}$ as desired.

Now let us show statement (ii). Given $K > 0$ let us take $M > 0$ as above and consider $u \in C_D([0,1])$ satisfying $S(u) \leq -M$ and $\|u\|_\infty \leq K$. Using Propositions \ref{S.1} and \ref{Lyapunov} adapted to the system \eqref{randomPDE2} we may find $\rho_{M,K} > 0$ sufficiently small so as to guarantee that for some small $0 < t_{u} < \frac{\delta}{2}$ any $v \in B_{\rho_{M,K}}(u)$ satisfies
$$
\underline{S}^{(h)}(\underline{Z}^{(h),v}(t_{u},\cdot)) \leq \underline{S}^{(h)}(u) +\frac{M}{4}
$$ for all $0 \leq h < 1$. Notice that this is possible since the constants appearing in Propositions \ref{S.1} adapted to this context are independent from $h$ provided that $h$ remains bounded. These constants still depend on $\| u \|_\infty$ though, so that the choice of $\rho_{M,K}$ will inevitably depend on both $M$ and $K$. Next, let us take $0 < h_{M,K} < 1$ so as to guarantee that $\underline{S}^{(h)}(u) \leq - \frac{3M}{4}$ for every $0 \leq h < h_{M,K}$. Notice that, since $\underline{S}^{(h)}(u) \leq S(u) + h(K^p + K),$ it is possible to choose $h_{M,K}$ depending only on $M$ and $K$. Thus, for any $v \in B_{\rho_{M,K}}(u)$ we obtain $\underline{S}^{(h)}(\underline{Z}^{(h),v}(t_{u},\cdot)) \leq - \frac{M}{2}$ which, by the choice of $M$, implies that $\tau^{(h),v} < t_u + \frac{\delta}{2} < \delta$. This concludes the proof.
\end{proof}

Let us observe that the system $\overline{Z}^{(0),u}$ coincides with $U^u$ for every $u \in C_D([0,1])$. Thus, by the previous lemma we obtain the following corollary.

\begin{cor}\label{contdetexp} The application $u \mapsto \tau^u$ is continuous on $\mathcal{D}_e$.
\end{cor}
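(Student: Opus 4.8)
The plan is to show that $u \mapsto \tau^u$ is both lower and upper semicontinuous on $\mathcal{D}_e$; the substance of the argument is entirely contained in Lemma \ref{expestimate} (applied at $h=0$) together with Proposition \ref{caract}, so what remains is bookkeeping with the deterministic flow. Observe first that for $h=0$ the equation \eqref{randomPDE2} reduces to \eqref{MainPDE}, so that $\underline{Z}^{(0),v} = U^v$ and $\underline{\tau}^{(0),v}=\tau^v$ for every $v \in C_D([0,1])$, while $\underline{S}^{(0)}=S$.

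For lower semicontinuity, fix $u_0 \in \mathcal{D}_e$ and any $T < \tau^{u_0}$. Then $U^{u_0}$ is bounded on $[0,T]\times[0,1]$, and since solutions of \eqref{MainPDE} are continued exactly as long as their $\|\cdot\|_\infty$-norm stays finite, continuous dependence of the deterministic flow on the initial datum over compact time intervals on which it remains bounded provides a neighborhood of $u_0$ on which $U^v$ is defined up to time $T$ and stays uniformly close to $U^{u_0}$; in particular $\tau^v > T$ there. Letting $T \uparrow \tau^{u_0}$ gives $\liminf_{v \to u_0}\tau^v \geq \tau^{u_0}$.

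For upper semicontinuity, fix $u_0 \in \mathcal{D}_e$ and $\delta > 0$, and let $M>0$ be the constant produced by Lemma \ref{expestimate} for this $\delta$. By Proposition \ref{caract} we have $S(U^{u_0}(t,\cdot)) \to -\infty$ as $t \to (\tau^{u_0})^-$, so we may fix a time $t_0 \in (0,\tau^{u_0})$ at which $w_0 := U^{u_0}(t_0,\cdot)$ satisfies $S(w_0) \leq -M$, and set $K := \|w_0\|_\infty < +\infty$. Lemma \ref{expestimate}(ii), applied with $h=0$, then yields a radius $\rho_{M,K}>0$ with $\sup_{w \in B_{\rho_{M,K}}(w_0)}\tau^w < \delta$. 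Using continuous dependence on the initial datum on $[0,t_0]$ (admissible since $t_0 < \tau^{u_0}$, by the first step), every $v$ in a small enough neighborhood of $u_0$ has $U^v(t_0,\cdot) \in B_{\rho_{M,K}}(w_0)$; by uniqueness of solutions to \eqref{MainPDE} the explosion time splits as $\tau^v = t_0 + \tau^{U^v(t_0,\cdot)} < t_0 + \delta < \tau^{u_0} + \delta$. Since $\delta$ was arbitrary, $\limsup_{v\to u_0}\tau^v \leq \tau^{u_0}$, and combined with the previous step this gives $\lim_{v\to u_0}\tau^v = \tau^{u_0}$.

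The only point needing a little care is the ``restart at time $t_0$'' step --- ensuring that $U^v$ actually reaches time $t_0$ for $v$ near $u_0$ and that its explosion time decomposes as claimed --- but both follow from uniqueness for \eqref{MainPDE} and the lower bound of the first step. There is no genuine obstacle here beyond Lemma \ref{expestimate}: the corollary is a direct consequence of that estimate together with the blow-up characterization of $\mathcal{D}_e$ in Proposition \ref{caract}.
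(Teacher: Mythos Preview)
Your proof is correct and follows essentially the same route as the paper's: lower semicontinuity via continuous dependence on the initial datum on compact time intervals (the paper cites Proposition \ref{G.2}), and upper semicontinuity by using Proposition \ref{caract} to reach a time $t_0$ where $S(U^{u_0}(t_0,\cdot))\leq -M$, invoking Lemma \ref{expestimate} at $h=0$ to get fast blow-up from a ball around $U^{u_0}(t_0,\cdot)$, and propagating this back via continuous dependence. The only cosmetic difference is that the paper phrases the ball estimate as coming from part (i) of Lemma \ref{expestimate} (combined implicitly with continuity) whereas you invoke part (ii) directly; the content is the same.
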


\begin{proof} Given $u \in \mathcal{D}_e$ and $\delta > 0$ we show that there exists $\rho > 0$ such that for all $v \in B_\rho(u)$ we have
$$
-\delta + \tau^u < \tau^v < \tau^u + \delta.
$$ To see this we first notice that by Proposition \ref{G.2} there exists $\rho_1 > 0$ such that $-\delta + \tau^u < \tau^v$ for any $v \in B_{\rho_1}(u)$. On the other hand, by (i) in Lemma \ref{expestimate} we may take $M, \tilde{\rho_2} > 0$ such that $\tau^{\tilde{v}} < \delta$ for any $\tilde{v} \in B_{\tilde{\rho_2}}(\tilde{u})$ with $\tilde{u} \in C_D([0,1])$ such that $S(\tilde{u}) \leq - M$. For this choice of $M$ by Proposition \ref{caract} we may find some $0 <t_M < t^u$ such that $S( U^u(t_M,\cdot) ) \leq -M$ and using Proposition \ref{G.2} we may take $\rho_2 > 0$ such that $U^v(t_M,\cdot) \in B_{\tilde{\rho_2}}(U^u(t_M,\cdot))$ for any $v \in B_{\rho_2}(u)$. This implies that $\tau^v < t_M + \delta < t^u + \delta$ for all $v \in B_{\rho_2}(u)$ and thus by taking $\rho = \min\{ \rho_1,\rho_2\}$ we obtain the result.
\end{proof}

The following two lemmas provide the necessary tools to obtain the uniformity in the upper bound claimed in Theorem \ref{contexp}.

\begin{lema}\label{lemacontsup} Given $M > 0$ and $u \in \mathcal{D}_e$ let us define the quantities
$$
\mathcal{T}_M^u = \inf\{ t \in [0,\tau^u) : S( U^u(t,\cdot) ) < - M \} \hspace{1cm}\text{ and }\hspace{1cm}\mathcal{R}_M^u = \sup_{0 \leq t \leq \mathcal{T}_M^u} \| U^u(t,\cdot)\|_\infty.
$$ Then the applications $u \mapsto \mathcal{T}_M^u$ and $u \mapsto \mathcal{R}_M^u$ are both upper semicontinuous on $\mathcal{D}_e$.
\end{lema}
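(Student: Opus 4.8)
The plan is to establish the two semicontinuity claims in turn, deducing the one for $\mathcal{R}_M$ from the one for $\mathcal{T}_M$. Throughout, recall that for $u\in\mathcal{D}_e$ Proposition~\ref{caract} gives $S(U^u(t,\cdot))\to-\infty$ as $t\to(\tau^u)^-$, while $t\mapsto S(U^u(t,\cdot))$ is nonincreasing (the equation is of gradient type, $cf.$ Proposition~\ref{Lyapunov}) and continuous on $(0,\tau^u)$; hence $\mathcal{T}_M^u\in[0,\tau^u)$ is finite, and $\mathcal{R}_M^u<+\infty$ since $[0,\mathcal{T}_M^u]$ is a compact subinterval of $[0,\tau^u)$. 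Also, since $\mathcal{D}_e$ is open, all initial data near $u$ lie in $\mathcal{D}_e$, so the quantities are well defined in a neighbourhood of $u$.

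For the upper semicontinuity of $u\mapsto\mathcal{T}_M^u$ at a given $u\in\mathcal{D}_e$, fix $\delta>0$; I must find $\rho>0$ with $\mathcal{T}_M^v<\mathcal{T}_M^u+\delta$ whenever $v\in B_\rho(u)$. First I choose $t_0>0$ with $t_0<\mathcal{T}_M^u+\frac{\delta}{2}$, $t_0<\tau^u$ and $S(U^u(t_0,\cdot))<-M$; such a $t_0$ exists by the definition of $\mathcal{T}_M^u$ as an infimum together with the monotonicity of $S$ along the trajectory. Write $p^*:=U^u(t_0,\cdot)$, $K:=\|p^*\|_\infty$ and $\gamma:=\frac{1}{2}\bigl(-M-S(p^*)\bigr)>0$, so that $S(p^*)+\gamma=\frac{1}{2}\bigl(S(p^*)-M\bigr)<-M$. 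Arguing exactly as in the proof of Lemma~\ref{expestimate}(ii) with $h=0$ — that is, invoking Propositions~\ref{S.1} and \ref{Lyapunov} — I obtain a radius $\rho_0>0$ and a time $0<t_1<\frac{\delta}{2}$ such that $S(U^w(t_1,\cdot))\le S(p^*)+\gamma<-M$ for every $w\in B_{\rho_0}(p^*)$. Finally, since $t_0<\tau^u$, Proposition~\ref{G.2} yields $\rho>0$ with $t_0<\tau^v$ and $U^v(t_0,\cdot)\in B_{\rho_0}(p^*)$ for all $v\in B_\rho(u)$; applying the previous estimate to $w=U^v(t_0,\cdot)$ and using that $s\mapsto U^v(t_0+s,\cdot)$ solves \eqref{MainPDE} with initial datum $U^v(t_0,\cdot)$, one gets $S(U^v(t_0+t_1,\cdot))<-M$, hence $\mathcal{T}_M^v\le t_0+t_1<\mathcal{T}_M^u+\delta$.

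For the upper semicontinuity of $u\mapsto\mathcal{R}_M^u$ I argue with $\limsup$s. Fix $u\in\mathcal{D}_e$ and $\delta>0$ small enough that $\mathcal{T}_M^u+\delta<\tau^u$. By the part just proved and by the lower semicontinuity of $v\mapsto\tau^v$ (Proposition~\ref{G.2}), for $v$ close enough to $u$ one has $\mathcal{T}_M^v<\mathcal{T}_M^u+\delta$ and $\tau^v>\mathcal{T}_M^u+\delta$; and, again by Proposition~\ref{G.2}, $\sup_{0\le t\le\mathcal{T}_M^u+\delta}\|U^v(t,\cdot)-U^u(t,\cdot)\|_\infty\to0$ as $v\to u$. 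Consequently, given $\eta>0$, for $v$ close enough to $u$,
$$
\mathcal{R}_M^v=\sup_{0\le t\le\mathcal{T}_M^v}\|U^v(t,\cdot)\|_\infty\le\sup_{0\le t\le\mathcal{T}_M^u+\delta}\|U^v(t,\cdot)\|_\infty\le\sup_{0\le t\le\mathcal{T}_M^u+\delta}\|U^u(t,\cdot)\|_\infty+\eta,
$$
so that $\limsup_{v\to u}\mathcal{R}_M^v\le\sup_{0\le t\le\mathcal{T}_M^u+\delta}\|U^u(t,\cdot)\|_\infty$. Letting $\delta\downarrow0$ and using the continuity of $t\mapsto\|U^u(t,\cdot)\|_\infty$ on $[0,\tau^u)$, the right-hand side decreases to $\mathcal{R}_M^u$, which gives $\limsup_{v\to u}\mathcal{R}_M^v\le\mathcal{R}_M^u$ and finishes the proof.

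The main obstacle is the first step of the argument for $\mathcal{T}_M$: mere continuity of the deterministic flow in the uniform norm (Proposition~\ref{G.2}) does not suffice to transfer the bound $S(U^u(t_0,\cdot))<-M$ to nearby trajectories, because the functional $S$ is only lower semicontinuous for uniform convergence (the Dirichlet energy term is), so it could jump upward in the limit. This is precisely why the short-time smoothing estimate furnished by Propositions~\ref{S.1} and \ref{Lyapunov} must be inserted: it controls $S(U^w(t_1,\cdot))$ in terms of the value of $S$ at the centre $p^*$ rather than at $w$, thereby bypassing the lower semicontinuity — the same device already used in the proof of Lemma~\ref{expestimate}. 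Once this point is isolated, the rest is a routine combination of semicontinuity, the semiflow property, and the continuity of $t\mapsto\|U^u(t,\cdot)\|_\infty$.
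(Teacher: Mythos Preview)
Your argument is correct and follows essentially the same approach as the paper, which also invokes Proposition~\ref{S.1} for the semicontinuity of $\mathcal{T}_M$ and Proposition~\ref{G.2} for that of $\mathcal{R}_M$. Your write-up is considerably more detailed than the paper's two-line proof (in particular you make explicit that the $\mathcal{R}_M$ part relies on the $\mathcal{T}_M$ part, and you correctly isolate why the smoothing estimate in Proposition~\ref{S.1} is needed to bypass the mere lower semicontinuity of $S$), but the underlying ideas coincide.
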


\begin{proof} We must see that the sets $\{ \mathcal{T}_M < \alpha\}$ and $\{ \mathcal{R}_M < \alpha \}$ are open in $\mathcal{D}_e$ for all $\alpha > 0$.
But the fact that $\{ \mathcal{T}_M < \alpha\}$ is open follows at once from Proposition \ref{S.1} and $\{ \mathcal{R}_M < \alpha\}$ is open by Proposition \ref{G.2}.
\end{proof}

\begin{lema}\label{lemacontsup2} For each $u \in \mathcal{D}_e^+$ let us define the quantity
$$
\mathcal{I}^u:= \inf_{(t,x) \in [0,\tau^u) \times [0,1]}  U^u(t,x).
$$ Then the application  $u \mapsto I^u$ is lower semicontinuous on $\mathcal{D}_e^+$.
\end{lema}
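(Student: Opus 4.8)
The plan is to prove the equivalent statement that for every $u\in\mathcal D_e^+$ and every $\eps\in(0,1)$ there is $\rho>0$ with $\mathcal I^v\geq\mathcal I^u-\eps$ for all $v\in B_\rho(u)$; this in particular forces $v\in\mathcal D_e^+$, so it simultaneously reproves that $\mathcal D_e^+$ is open. For $w\in\mathcal D_e$ write $m^w(t):=\min_{x\in[0,1]}U^w(t,x)$; since $U^w$ vanishes on $\{0,1\}$ we have $m^w(t)\leq 0$ and $\mathcal I^w=\inf_{0\leq t<\tau^w}m^w(t)$. Put $N:=-\mathcal I^u\in[0,+\infty)$. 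The basic tool is the comparison inequality $\tfrac{d^+}{dt}m^w(t)\geq g(m^w(t))$ for $0<t<\tau^w$, a standard consequence of the parabolic minimum principle: when $m^w(t)<0$ the spatial minimum is attained at an interior point $x_t$, where $\p^2_{xx}U^w(t,x_t)\geq 0$, hence $\p_tU^w(t,x_t)\geq g(U^w(t,x_t))=g(m^w(t))$, and this passes to $m^w$ through the Dini-derivative form of the comparison principle (legitimate since $U^w$ is smooth in the interior for $t>0$ and $m^w$ is locally Lipschitz, hence absolutely continuous). In particular, on any time interval of length $\ell$ on which $m^w\geq-(N+1)$ the function $m^w$ decreases by at most $(N+1)^p\ell$, because there $g(m^w)=-|m^w|^p\geq-(N+1)^p$.

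Next I fix the scales. Given $\eps\in(0,1)$, choose $\delta>0$ so small that $(N+1)^p\,\delta<\eps/2$, and let $M=M(\delta)>0$ be the constant from Lemma \ref{expestimate}(i) applied with $h=0$ (so $\underline Z^{(0),w}=U^w$ and $\underline S^{(0)}=S$): then $S(w)\leq -M$ forces $\tau^w<\delta$. Since $u\in\mathcal D_e$, Proposition \ref{caract} gives $\mathcal T^u_M:=\inf\{t:S(U^u(t,\cdot))<-M\}<\tau^u$, so we may fix $T_0$ with $\mathcal T^u_M<T_0<\tau^u$. By the upper semicontinuity of $w\mapsto\mathcal T^w_M$ on $\mathcal D_e$ (Lemma \ref{lemacontsup}) and the continuity of $w\mapsto\tau^w$ on $\mathcal D_e$ (Corollary \ref{contdetexp}), there is $\rho_1>0$ such that for $v\in B_{\rho_1}(u)$ one has both $\mathcal T^v_M<T_0$ and $\tau^v>T_0$; for such $v$ we get the covering $[0,\tau^v)=[0,T_0]\cup[\mathcal T^v_M,\tau^v)$, and by Lemma \ref{expestimate}(i) the ``dangerous'' piece $[\mathcal T^v_M,\tau^v)$ has length $<\delta$.

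Now I combine the two regimes. On $[0,T_0]$ the deterministic solution $U^u$ is bounded, so by continuous dependence on the initial datum (Proposition \ref{G.2}) there is $\rho_2\in(0,\rho_1]$ with $\sup_{[0,T_0]\times[0,1]}|U^v-U^u|<\eps/2$ for all $v\in B_{\rho_2}(u)$; hence $m^v(t)\geq m^u(t)-\eps/2\geq\mathcal I^u-\eps/2$ on $[0,T_0]$, in particular at $t=\mathcal T^v_M$. On $[\mathcal T^v_M,\tau^v)$ I run a bootstrap with the comparison estimate: let $t^\ast$ be the supremum of the times $t$ up to which $m^v>-(N+\eps)$ on $[\mathcal T^v_M,t]$; since $m^v(\mathcal T^v_M)\geq -N-\eps/2>-(N+\eps)$ and $m^v$ is continuous, $t^\ast>\mathcal T^v_M$, and on $[\mathcal T^v_M,t^\ast)$ we have $m^v\geq-(N+\eps)\geq-(N+1)$, so by the first paragraph $m^v(t)\geq m^v(\mathcal T^v_M)-(N+1)^p\delta>-N-\eps/2-\eps/2=-N-\eps$ there. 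This uniform strict gap and continuity force $t^\ast=\tau^v$, so $m^v>\mathcal I^u-\eps$ on all of $[\mathcal T^v_M,\tau^v)$. Together with the bound on $[0,T_0]$ this gives $\mathcal I^v\geq\mathcal I^u-\eps$ for every $v\in B_{\rho_2}(u)$, as desired.

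The place where the hypothesis genuinely bites — and what I expect to be the main obstacle — is the control of $U^v$ near its \emph{own} explosion time: there continuous dependence is useless because the reference solution $U^u$ is itself blowing up, so one truly needs the two deterministic facts that arbitrarily negative energy is reached (Proposition \ref{caract}) only ``just before'' explosion (Lemma \ref{expestimate}(i)) and that the spatial minimum cannot collapse over a short time (the ODE comparison), both of which are meaningful only because $u\in\mathcal D_e^+$ keeps $N=-\mathcal I^u$ finite. A secondary technical point to be careful about is the justification of $\tfrac{d^+}{dt}m^w\geq g(m^w)$ for the merely Lipschitz-in-$t$ map $m^w$, which is dealt with by the standard Dini-derivative parabolic comparison argument.
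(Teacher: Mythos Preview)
Your argument is correct and follows essentially the same strategy as the paper: split $[0,\tau^v)$ into a compact piece handled by continuous dependence (Proposition~\ref{G.2}) and a short tail of length controlled by Lemma~\ref{expestimate}, on which the spatial minimum is controlled by comparison with the ODE $\dot y=-|y|^p$. The only cosmetic differences are that the paper uses the fixed splitting time $\mathcal{T}^u_M$ and invokes Lemma~\ref{expestimate}(ii) on a ball around $U^u(\mathcal{T}^u_M,\cdot)$, whereas you use the $v$-dependent $\mathcal{T}^v_M$ together with the upper semicontinuity of $\mathcal{T}_M$ and Lemma~\ref{expestimate}(i); both routes lead to the same estimate.
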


\begin{proof} Notice that $\mathcal{I}^u \geq 0$ for any $u \in \mathcal{D}_e^+$ since $U^u(t,0)=U^u(t,1)=0$ for all $t \in [0,\tau^u)$. Therefore, it will suffice to show that the sets $\{ \alpha < \mathcal{I} \}$ are open in $\mathcal{D}^+_e$ for every $\alpha < 0$. With this purpose in mind, given $\alpha < 0$ and $u \in \mathcal{D}_e^+$ such that $\alpha < \mathcal{I}^u$, take $\beta_1,\beta_2 < 0$ such that $\alpha < \beta_1 < \beta_2 < \mathcal{I}^u$ and let $y$ be the solution to the ordinary differential equation
\begin{equation}\label{contsup2eq}
\left\{\begin{array}{l} \dot{y} = - |y|^p \\ y(0) = \beta_2.\end{array}\right.
\end{equation} Define $t_\beta := \inf \{ t \in [0,t_{max}^y) : y(t) < \beta_1 \}$, where $t_{max}^y$ denotes the explosion time of $y$. Notice that by the lower semicontinuity of $S$ for any $M > 0$ we have
$S( U^u( \mathcal{T}^u_M, \cdot) ) \leq -M$ and thus, by Lemma \ref{expestimate}, we may choose $M$ such that
\begin{equation}\label{contsupeq4}
\sup_{v \in B_\rho( U^u( \mathcal{T}^u_M,\cdot) )} \tau^v < t_\beta
\end{equation} for some small $\rho > 0$. Moreover, if $\rho < \mathcal{I}^u - \beta_2$ then every $v \in B_\rho( U^u( \mathcal{T}^u_M,\cdot) )$ satisfies $\inf_{x \in [0,1]} v(x) \geq \beta_2$ so that $U^v$ is in fact a supersolution to the equation \eqref{contsup2eq}. By \eqref{contsupeq4} this implies that $v \in \mathcal{D}_e^+$ and $\mathcal{I}^v \geq \beta_1 > \alpha$.
On the other hand, by Proposition \ref{G.2} we may take $\delta > 0$ sufficiently small so that for every $w \in B_\delta(u)$ we have $\mathcal{T}^u_M < \tau^w$ and
$$
\sup_{t \in [0,\mathcal{T}^u_M]} \| U^w(t,\cdot) - U^u(t,\cdot)\|_\infty < \rho.
$$ Combined with the previous argument, this yields the inclusion $B_\delta(u) \subseteq \mathcal{D}_e^+ \cap \{ \alpha < \mathcal{I} \}$. In particular, this shows that $\{ \alpha < \mathcal{I} \}$ is open and thus concludes the proof.
\end{proof}

\begin{obs} The preceding proof shows, in particular, that the set $\mathcal{D}_e^+$ is open.
\end{obs}
The conclusion of the proof of Theorem \ref{contexp} is contained in the next proposition.

\begin{prop}\label{convsup}
For any bounded set $\mathcal{K} \subseteq \mathcal{D}_e^*$ at a positive distance from $\p \mathcal{D}^*_e$ and $\delta > 0$ there exists a constant $C > 0$ such that
\begin{equation}\label{convergenciasuperior}
\sup_{u \in \mathcal{K}} P_u ( \tau_\varepsilon > \tau + \delta ) \leq e^{- \frac{C}{\varepsilon^2}}.
\end{equation}
\end{prop}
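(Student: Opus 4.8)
The plan is to produce, uniformly over $u \in \mathcal{K}$, an event of probability $\geq 1 - e^{-C/\ve^2}$ on which $\tau_\ve^u < \tau^u + \delta$. Because the law of the noise is invariant under $W \mapsto -W$ and $\mathcal{D}_e^- = -\mathcal{D}_e^+$, it suffices to treat the case $\mathcal{K} \subseteq \mathcal{D}_e^+$: one splits $\mathcal{K} = (\mathcal{K} \cap \mathcal{D}_e^+) \cup (\mathcal{K} \cap \mathcal{D}_e^-)$ and applies the result to $\mathcal{K} \cap \mathcal{D}_e^+$ and to $-(\mathcal{K} \cap \mathcal{D}_e^-)$, both bounded and at positive distance from $\p \mathcal{D}_e^+$. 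Since $\{\tau_\ve^u > \tau^u + \delta\}$ shrinks as $\delta$ decreases, we may also assume $\delta$ is smaller than a threshold depending only on $\mathcal{K}$, to be identified below. First I would fix the scale at which the deterministic trajectory is already committed to blowing up: using Lemma \ref{expestimate}(ii) pick $M > 0$ associated with this $\delta$, and recall from Proposition \ref{caract} that $S(U^u(t,\cdot)) \to -\infty$, so that the time $\mathcal{T}_M^u$ and height $\mathcal{R}_M^u$ of Lemma \ref{lemacontsup} are finite, with $S(U^u(\mathcal{T}_M^u,\cdot)) \le -M$ and $\|U^u(\mathcal{T}_M^u,\cdot)\|_\infty \le \mathcal{R}_M^u$. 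Since $u \mapsto \mathcal{T}_M^u$ and $u \mapsto \mathcal{R}_M^u$ are upper semicontinuous (Lemma \ref{lemacontsup}) and $u \mapsto \mathcal{I}^u$ is lower semicontinuous (Lemma \ref{lemacontsup2}), the appendix arguments already invoked in the proof of Proposition \ref{convergenciainferior0} (Propositions \ref{G.1} and \ref{A.2}) yield finite quantities
$$
\mathcal{T}_M := \sup_{u \in \mathcal{K}} \mathcal{T}_M^u < \infty, \qquad \mathcal{R}_M := \sup_{u \in \mathcal{K}} \mathcal{R}_M^u < \infty, \qquad \mathcal{I} := \inf_{u \in \mathcal{K}} \mathcal{I}^u > -\infty.
$$
Set $K := \mathcal{R}_M$ and let $\rho := \rho_{M,K}$, $h_0 := h_{M,K}$ be the constants from Lemma \ref{expestimate}(ii).

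Next I would build the good event. Fix $\eta > 0$ with $\eta < \rho/3$ and $p2^{p-1}\eta < \min\{1, h_0\}$, an integer $n > \mathcal{R}_M + \rho$, and put $T := \mathcal{T}_M + \delta$. Let
$$
A_u := \big\{\, d_{\mathcal{T}_M}(U^{(n),u,\ve}, U^{(n),u}) < \tfrac{\rho}{3} \,\big\} \cap \big\{\, \textstyle\sup_{0 \le t \le T} \|V^{\mathbf{0},\ve}(t,\cdot)\|_\infty < \eta \,\big\}.
$$
The first event is controlled uniformly in $u$ by \eqref{grandes1} applied to the truncated system (whose source $g_n$ is bounded and Lipschitz); the second by \eqref{grandes1} with $u = \mathbf{0}$ and $f \equiv 0$, since the deterministic counterpart of $V^{\mathbf{0},\ve}$ is then $\equiv \mathbf{0}$. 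Hence $\inf_{u \in \mathcal{K}} P_u(A_u) \ge 1 - e^{-C/\ve^2}$ for a suitable $C > 0$. On $A_u$: because $U^u = U^{(n),u}$ on $[0, \mathcal{T}_M^u]$ (both staying below norm $\mathcal{R}_M < n - \rho/3$), the process $U^{(n),u,\ve}$ does not reach norm $n$ before $\mathcal{T}_M^u$, so it agrees there with $U^{u,\ve}$, and $w := U^{u,\ve}(\mathcal{T}_M^u,\cdot)$ satisfies $\|w - U^u(\mathcal{T}_M^u,\cdot)\|_\infty < \rho/3$. Writing $v := Z^{u,\ve}(\mathcal{T}_M^u,\cdot) = w - V^{\mathbf{0},\ve}(\mathcal{T}_M^u,\cdot)$ with $Z^{u,\ve} = U^{u,\ve} - V^{\mathbf{0},\ve}$ as in \eqref{randomPDE}, we get $v \in B_\rho(U^u(\mathcal{T}_M^u,\cdot))$, so Lemma \ref{expestimate}(ii) gives $\underline{\tau}^{(h),v} < \delta$ for every $0 \le h < h_0$; moreover $v \ge \mathcal{I} - \rho/3 - \eta =: \beta$, a constant depending only on $\mathcal{K}$ (once $\rho,\eta$ are fixed), in particular not on $u$ or $\ve$.

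The remaining step is the comparison. Put $h := p2^{p-1}\eta < \min\{1, h_0\}$. On $A_u$ one has $\|V^{\mathbf{0},\ve}(t,\cdot)\|_\infty < \eta$ for all $t \le T$, so by \eqref{eqcomparacion} the shifted process $s \mapsto Z^{u,\ve}(\mathcal{T}_M^u + s, \cdot)$ is, while defined and for $s < \delta$, a supersolution of \eqref{randomPDE2} with parameter $h$ and initial datum $v$; the parabolic comparison principle gives $Z^{u,\ve}(\mathcal{T}_M^u + s, \cdot) \ge \underline{Z}^{(h),v}(s,\cdot)$ on $[0, \underline{\tau}^{(h),v})$. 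Comparing $\underline{Z}^{(h),v}$ from below with the spatially constant subsolution $\underline{y}$ solving $\dot{\underline{y}} = -|\underline{y}|^p - h|\underline{y}|^{p-1} - h$, $\underline{y}(0) = \beta$ — which stays negative and finite for time at least $t_\beta$, the blow-down time of the $h \le 1$ comparison ODE $\dot z = -|z|^p - |z|^{p-1} - 1$ from $z(0) = \beta$, a positive number depending only on $\mathcal{K}$ — gives $\underline{Z}^{(h),v}(s,\cdot) \ge \underline{y}(s) > -\infty$ for $s \in [0, t_\beta)$. Having taken $\delta < t_\beta$ at the outset, we conclude that $\underline{Z}^{(h),v}$ cannot blow down and hence explodes through $+\infty$ at $\underline{\tau}^{(h),v} < \delta$; by the comparison so does $Z^{u,\ve}$, and since $Z^{u,\ve}$ and $U^{u,\ve}$ share their explosion time, $\tau_\ve^u \le \mathcal{T}_M^u + \underline{\tau}^{(h),v} < \mathcal{T}_M^u + \delta < \tau^u + \delta$. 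Thus $A_u \subseteq \{\tau_\ve^u < \tau^u + \delta\}$, and \eqref{convergenciasuperior} follows.

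The main obstacle is the last step. The supersolution comparison only pushes $Z^{u,\ve}$ upward, so one must independently certify that $\underline{Z}^{(h),v}$ explodes through $+\infty$ rather than downward; this is exactly where the hypothesis $u \in \mathcal{D}_e^+$ enters, supplying the uniform lower bound $v \ge \beta$ that feeds the ODE subsolution, and one must check that the blow-down time of that ODE is bounded below uniformly in $h \le 1$ so that it can be kept above $\delta$ after the initial shrinking of $\delta$. A secondary difficulty, more bookkeeping than substance, is to arrange that every constant produced along the way ($M$, then $\mathcal{T}_M$, $\mathcal{R}_M$, $\mathcal{I}$, $\rho$, $h_0$, $\eta$, $n$) is uniform over $\mathcal{K}$ and that the time horizon $T$ stays finite, so that a single application of the large deviations estimate \eqref{grandes1} yields one exponent $C$.
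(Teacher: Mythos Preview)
Your proposal is correct and follows the same strategy as the paper: symmetry reduction to $\mathcal{D}_e^+$, the uniform constants $\mathcal{T}_M,\mathcal{R}_M,\mathcal{I}$ via the semicontinuity lemmas, large-deviations tracking up to $\mathcal{T}_M^u$, and then the chain $Z^{u,\ve}\ge \underline{Z}^{(h),v}\ge \underline{y}$ to certify that the explosion is through $+\infty$ before time $\delta$. The only organizational difference is that the paper invokes the strong Markov property at time $\mathcal{T}_M^u$ and splits into a tracking term plus $\sup_{v\in B_\rho(U^u(\mathcal{T}_M^u,\cdot))}P_v(\tau_\ve>\delta)$, whereas you work on a single good event $A_u$; this is cosmetic.

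On the circularity you flag: as written, your $t_\beta$ depends on $\beta$, hence on $\rho,\eta$, hence on $M$, hence on $\delta$, so ``$\delta<t_\beta$ at the outset'' is not yet justified. The paper avoids this by fixing first the ODE time $t_{\mathcal I}$ (the time for $\dot y=-|y|^p-|y|^{p-1}-1$, $y(0)=\mathcal I_{\mathcal K}-\tfrac12$, to reach $\mathcal I_{\mathcal K}-1$), a constant depending only on $\mathcal{K}$, and then applying Lemma~\ref{expestimate} with the threshold $\min\{\delta,t_{\mathcal I}\}$ rather than $\delta$; thus $\underline{\tau}^{(h),v}<t_{\mathcal I}$ forces the subsolution $\underline{Z}^{(h),v}$ to stay above $\mathcal I_{\mathcal K}-1$ until it explodes, with no constraint needed on $\delta$. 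Your route is salvageable in the same spirit: since one may always shrink $\rho_{M,K}$ (the conclusion of Lemma~\ref{expestimate}(ii) only improves), impose $\rho<1$ so that $\beta\ge \mathcal I-\tfrac23$ and hence $t_\beta$ is bounded below by the blow-down time from $\mathcal I-1$, which does depend only on $\mathcal{K}$.
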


\begin{proof} Since $\mathcal{D}_e^- = - \mathcal{D}_e^+$ and $U^{-u}= - U^u$ for $u \in C_D([0,1])$, without any loss of generality we may assume that $\mathcal{K}$ is contained in $\mathcal{D}_e^+$.
Let us begin by noticing that for any $M > 0$
$$
\mathcal{T}_M := \sup_{u \in \mathcal{K}} \mathcal{T}_M^u < +\infty  \hspace{1cm}\text{ and }\hspace{1cm}\mathcal{R}_M:= \sup_{u \in \mathcal{K}} \mathcal{R}^u_M < +\infty.
$$ Indeed, by Propositions \ref{G.1} and \ref{A.2} we may choose $t_0 > 0$ sufficiently small so that the orbits $\{ U^{u}(t,\cdot) : 0 \leq t \leq t_0 , u \in \mathcal{K} \}$ remain uniformly bounded and the family $\{ U^{u}(t_0,\cdot) : u \in \mathcal{K} \}$ is contained in a compact set $\mathcal{K}' \subseteq \mathcal{D}_e^+$ at a positive distance \mbox{from $\p \mathcal{D}^+_e$.} But then we have
$$
\mathcal{T}_M \leq t_0 + \sup_{u \in \mathcal{K}'} \mathcal{T}_M^u \hspace{1cm}\text{ and }\hspace{1cm}\mathcal{R}_M \leq \sup_{0 \leq t \leq t_0, u \in \mathcal{K}} \| U^u(t,\cdot) \|_\infty + \sup_{u \in \mathcal{K}'} \mathcal{R}^u_M $$ and both right hand sides are finite due to Lemma \ref{lemacontsup} and the fact that $\mathcal{T}_M^u$ and $\mathcal{R}_M$ are both finite for each $u \in \mathcal{D}_e$ by Proposition \ref{caract}. Similarly, by Lemma \ref{lemacontsup2} we also have
$$
\mathcal{I}_{\mathcal{K}}:= \inf_{u \in \mathcal{K}} \mathcal{I}^u > - \infty.
$$
Now, for each $u \in \mathcal{K}$ and $\varepsilon > 0$ by the Markov property we have for any $\rho > 0$
\begin{equation}\label{descompexp}
P_u ( \tau_\varepsilon > \tau + \delta ) \leq P( d_{\mathcal{T}_M}( U^{(\mathcal{R}_M+1),u,\varepsilon}, U^{(\mathcal{R}_M+1),u}) > \rho ) + \sup_{v \in B_{\rho}(U^u( \mathcal{T}^u_M, \cdot))} P_v ( \tau_\varepsilon > \delta).
\end{equation} The first term on the right hand side is taken care of by \eqref{grandes1} so that in order to show \eqref{convergenciasuperior} it only remains to deal with the second term by choosing $M$ and $\rho$ appropriately. The argument given to deal with this term is similar to that of the proof of \mbox{Lemma \ref{lemacontsup2}.} Let $y$ be the solution to the ordinary differential equation
\begin{equation}\label{convsupeq2}
\left\{\begin{array}{l} \dot{y} = - |y|^p - |y|^{p-1} - 1\\ y(0) = \mathcal{I}_{\mathcal{K}} - \frac{1}{2}.\end{array}\right.
\end{equation} Define $t_{\mathcal{I}} := \inf \{ t \in [0,t_{max}^y) : y(t) < \mathcal{I}_{\mathcal{K}} - 1 \}$, where $t_{max}^y$ denotes \mbox{the explosion time of $y$.}
By Lemma \ref{expestimate}, we may choose $M$ such that
\begin{equation}\label{convsupeq4}
\sup_{v \in B_{\rho_M}( U^u( \mathcal{T}^u_M,\cdot) )} \tau^{(h),v} < \min\{\delta, t_\mathcal{I}\}
\end{equation} for all $0 \leq h < h_M$, where $\rho_M > 0$ and $h_M > 0$ are suitable constants. The key observation here is that, since $\mathcal{R}_M < +\infty$, we may choose these constants so as not to depend on $u$ but rather on $M$ and $\mathcal{R}_M$ themselves. Moreover, if $\rho_M < \frac{1}{2}$ then every $v \in B_{\rho_M}( U^u( \mathcal{T}^u_M,\cdot) )$ satisfies $\inf_{x \in [0,1]} v(x) \geq \mathcal{I}_{\mathcal{K}} - \frac{1}{2}$ so that $\underline{Z}^{(h),v}$ is in fact a supersolution to the equation \eqref{convsupeq2} for all $0 \leq h < \min\{h_M,1\}$. By \eqref{convsupeq4} the former implies that $\underline{Z}^{(h),v}$ explodes through $+\infty$ and that it remains bounded from below by $\mathcal{I}_{\mathcal{K}} - 1$ until its explosion time which, by \eqref{convsupeq4}, is smaller than $\delta$. In particular, we see that if $\|V^{\mathbf{0},\varepsilon}\|_\infty < \min \{ 1, \frac{h_M}{p2^{p-1}}\}$ then $Z^{v,\varepsilon}$ explodes before $\underline{Z}^{(h),v}$ does, so that we
have that $\tau_\varepsilon < \delta$ under such conditions. Hence, we conclude that
$$
\sup_{v \in B_{\rho_M}(U^u( \mathcal{T}^u_M, \cdot))} P_v ( \tau_\varepsilon > \delta) \leq P \left( \sup_{t \in [0,\delta]} \|V^{\mathbf{0},\varepsilon}(t,\cdot) \|_\infty \leq \min \left\{ 1, \frac{h_M}{p2^{p-1}} \right\} \right)
$$ which, by recalling the estimate \eqref{grandes1}, gives the desired control on the second term in the right hand side of \eqref{descompexp}. Thus, by taking $\rho$ equal to $\rho_M$ in \eqref{descompexp}, we obtain the result.

\end{proof}

This last proposition in fact shows that for $\delta > 0$ and a given bounded set $\mathcal{K} \subseteq \mathcal{D}_e^*$ at a positive distance from $\p \mathcal{D}^*_e$ there exist constants $M,C > 0$ such that
$$
\sup_{u \in \mathcal{K}} P_u ( \tau_\varepsilon > \mathcal{T}_M^u + \delta ) \leq e^{- \frac{C}{\varepsilon^2}}.
$$ By exploiting the fact $\mathcal{T}_M < +\infty$ for every $M > 0$ we obtain the following useful corollary.

\begin{cor}\label{exploacot} For any bounded set $\mathcal{K} \subseteq \mathcal{D}_e^*$ at a positive distance from $\p \mathcal{D}_e^*$ there exist constants $\tau^*, C > 0$ such that
$$
\sup_{u \in \mathcal{K}} P_u ( \tau_\varepsilon > \tau^*) \leq e^{- \frac{C}{\varepsilon^2}}.
$$
\end{cor}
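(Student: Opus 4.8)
The plan is to read the statement off directly from the refinement of Proposition~\ref{convsup} recorded in the paragraph immediately preceding the corollary, combined with the finiteness of $\mathcal{T}_M:=\sup_{u\in\mathcal{K}}\mathcal{T}_M^u$ that was already established inside the proof of that proposition. No new estimate is needed; the argument is pure bookkeeping.

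Concretely, I would first fix the bounded set $\mathcal{K}\subseteq\mathcal{D}_e^*$ at positive distance from $\partial\mathcal{D}_e^*$ and choose any $\delta>0$, say $\delta=1$. The remark following the proof of Proposition~\ref{convsup} then supplies constants $M,C>0$, depending only on $\mathcal{K}$ (and on $\delta$), such that
$$
\sup_{u\in\mathcal{K}}P_u\bigl(\tau_\varepsilon>\mathcal{T}_M^u+\delta\bigr)\leq e^{-C/\varepsilon^2}.
$$
Next I would invoke $\mathcal{T}_M=\sup_{u\in\mathcal{K}}\mathcal{T}_M^u<+\infty$, which is proved within Proposition~\ref{convsup} by combining Proposition~\ref{caract} (which gives $\mathcal{T}_M^u<+\infty$ for each individual $u\in\mathcal{D}_e$), the upper semicontinuity of $u\mapsto\mathcal{T}_M^u$ from Lemma~\ref{lemacontsup}, and the continuity and compactness arguments of Propositions~\ref{G.1} and~\ref{A.2} used to replace $\mathcal{K}$ by a compact set $\mathcal{K}'\subseteq\mathcal{D}_e^+$ still at positive distance from the boundary. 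Setting $\tau^*:=\mathcal{T}_M+\delta$, the bound $\mathcal{T}_M^u\leq\mathcal{T}_M$ valid for every $u\in\mathcal{K}$ yields the inclusion of events $\{\tau_\varepsilon>\tau^*\}\subseteq\{\tau_\varepsilon>\mathcal{T}_M^u+\delta\}$, and hence
$$
\sup_{u\in\mathcal{K}}P_u(\tau_\varepsilon>\tau^*)\leq\sup_{u\in\mathcal{K}}P_u\bigl(\tau_\varepsilon>\mathcal{T}_M^u+\delta\bigr)\leq e^{-C/\varepsilon^2},
$$
which is precisely the claim.

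There is essentially no obstacle here, since the substantive content --- the supersolution comparison of $Z^{u,\varepsilon}$ against $\underline{Z}^{(h),v}$, the use of the Freidlin--Wentzell estimate~\eqref{grandes1} to bound the probability that $\|V^{\mathbf{0},\varepsilon}\|_\infty$ remains small, and the uniform finiteness of $\mathcal{T}_M$ and $\mathcal{R}_M$ over $\mathcal{K}$ --- has all been carried out in the proof of Proposition~\ref{convsup}. The only point worth double-checking is that $M$ and $C$ really depend only on $\mathcal{K}$ and $\delta$ and not on the individual $u$; this holds because, as noted in that proof, the comparison constants $\rho_M$ and $h_M$ can be chosen to depend on $\mathcal{R}_M$ rather than on $u$. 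With this in hand the corollary follows immediately.
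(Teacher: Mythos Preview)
Your proposal is correct and follows exactly the route the paper intends: the paper does not give a separate proof of this corollary but simply notes, in the paragraph preceding it, that Proposition~\ref{convsup} actually yields $\sup_{u\in\mathcal{K}}P_u(\tau_\varepsilon>\mathcal{T}_M^u+\delta)\leq e^{-C/\varepsilon^2}$ and that combining this with $\mathcal{T}_M<+\infty$ gives the result. Your write-up makes this implicit step explicit by setting $\tau^*=\mathcal{T}_M+\delta$ and using the event inclusion, which is precisely the intended argument.
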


\newpage
\section{Resumen del Capítulo 2}

En este capítulo investigamos la continuidad del tiempo de explosión $\tau_\varepsilon^u$ para datos iniciales en el dominio de $\mathcal{D}_e$. Mostramos que el tiempo de explosión $\tau_\varepsilon^u$ del sistema estocástico converge en probabilidad al tiempo de explosión determinístico $\tau^u$ uniformemente sobre compactos de $\mathcal{D}_e^*$, el conjunto de aquellos datos iniciales $u \in \mathcal{D}_e$ para los cuales la solución $U^{u}$ explota por un único lado, i.e. permanece acotada en alguna dirección (inferior o superiormente) hasta el tiempo de explosión. Más precisamente, tenemos el siguiente resultado.

\noindent \textbf{Teorema}. Para cualquier conjunto acotado $\mathcal{K} \subseteq \mathcal{D}_e^*$ a una distancia positiva de $\p \mathcal{D}_e^*$ y $\delta > 0$ existe una constante $C > 0$ tal que
$$
\sup_{u \in \mathcal{K}} P_u ( |\tau_\varepsilon - \tau| > \delta ) \leq e^{- \frac{C}{\varepsilon^2}}.
$$

Dividimos la demostración de este resultado en dos partes, las cotas \eqref{convergenciainferior} y \eqref{convergenciasuperior}.

La cota inferior \eqref{convergenciainferior} es una consecuencia directa de la estimación de grandes desvíos \eqref{grandes1}. En efecto, como la solución $U^u$ permanece acotada en $[0,\tau^u - \delta)$, la condición $\tau^u_\varepsilon < \tau^u - \delta$ implica que los sistemas estocástico y determinístico deben necesariamente separarse antes de tiempo $\tau^u - \delta$ y, por lo tanto, que lo mismo debe suceder para los sistemas truncados para los que se tiene \eqref{grandes1}.

La demostración de la cota superior \eqref{convergenciasuperior} consiste en estudiar el proceso $Z^{u,\varepsilon}$ definido como la solución del problema \eqref{randomPDE}. Dicho proceso posee el mismo tiempo de explosión que $U^{u,\varepsilon}$ pero es más sencillo de tratar debido a que cada realización del mismo resuelve una ecuación diferencial en derivadas parciales que puede estudiarse mediante técnicas usuales. Además, dado $0 < h < 1$ es posible mostrar que para cada realización $\omega$ en un conjunto $\Omega_h^\varepsilon$ con probabilidad que tiende a uno cuando $\varepsilon \rightarrow 0$ la trayectoria $Z^{u,\varepsilon}(\omega)$ es supersolución de \eqref{randomPDE2}.
A partir de esto, la estrategia que adoptamos para probar \eqref{convergenciasuperior} si $u \in D_e^+$ (ver \eqref{Deplus}) es mostrar que para un conjunto de realizaciones $\Omega^{u,\varepsilon} \subseteq \Omega_h^\varepsilon$ con probabilidad que tiende a uno (exponencialmente rápido en $\frac{1}{\varepsilon^2}$) cuando $\varepsilon \rightarrow 0$ suceden dos cosas:
\begin{enumerate}
\item [i.] La solución de \eqref{randomPDE2} explota antes de tiempo $\tau^u + \delta$.
\item [ii.] La solución de \eqref{randomPDE2} explota por $+\infty$, i.e. permanece acotada inferiormente hasta el tiempo de explosión.
\end{enumerate}
Se sigue de la descripción anterior que para toda realización en $\Omega^{u,\varepsilon}$ el tiempo de explosión de $Z^{u,\varepsilon}$ (y por lo tanto $\tau^u_\varepsilon$) es menor a $\tau^u + \delta$, lo cual \eqref{grandes1} implica \eqref{convergenciasuperior} a partir de \eqref{grandes1}. Para mostrar (i) utilizamos técnicas de ecuaciones similares a las que figuran en \cite{CE1} y (ii) se deduce del hecho de que $u \in \mathcal{D}_e^+$. Por simetría se obtienen los mismos resultados para $\mathcal{D}_e^-$. Por último, la uniformidad sobre compactos se obtiene a partir de los resultados en el Apéndice.

\chapter{Construction of an auxiliary domain}\label{dominioauxiliar}

As suggested in \cite{GOV}, to study the behavior of the explosion time for initial data in $\mathcal{D}_\mathbf{0}$ it is convenient to consider an auxiliary bounded domain $G$ satisfying the conditions stated in the Introduction. By doing so we can then reduce our problem to a \mbox{simpler one:} characterizing the escape from this domain. This is simpler because for it we may assume that the source term $g$ in \eqref{MainPDE} is Lipschitz, as the escape only depends on the behavior of the system while it remains inside a bounded region. It is then that the large deviations estimates of Section \ref{secLDP} can be applied. To succeed in the construction of such a domain, we must first understand the behavior of exploding trajectories in the stochastic system. This is the purpose behind the following results.

\begin{lema}\label{compacidad} If $p < 5$ then for any $a > 0$ the sets $\{ u \in \overline{\mathcal{D}_{\mathbf{0}}} : 0 \leq S(u) \leq a \}$ are bounded.
\end{lema}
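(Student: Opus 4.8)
The plan is to derive an a priori bound first on $\|u\|_{L^2}$ and then, using that, on the full $H^1_0$-norm (equivalently on $\|u\|_\infty$, since $H^1_0((0,1))\hookrightarrow C_D([0,1])$); the hypothesis $p<5$ is only needed in the second step. The point to keep in mind is that the constraint $S(u)\le a$ by itself cannot bound $\|u'\|_{L^2}$: in $S(v)=\tfrac12\|v'\|_{L^2}^2-\tfrac1{p+1}\|v\|_{L^{p+1}}^{p+1}$ the negative term is of higher order, so it can compensate an arbitrarily large gradient term while keeping $S$ bounded. The extra information that must be exploited is that $u\in\overline{\mathcal{D}_{\mathbf 0}}=\mathcal{D}_{\mathbf 0}\cup\mathcal{W}$ (recall $\mathcal{W}=\partial\mathcal{D}_{\mathbf 0}$ by \eqref{decomp12}), so that the solution $U^u$ of \eqref{MainPDE} is globally defined.

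First I would control $\|u\|_{L^2}$. For $u\in\overline{\mathcal{D}_{\mathbf 0}}$ with $S(u)\le a$, set $\phi(t):=\int_0^1 U^u(t,x)^2\,dx$. Multiplying \eqref{MainPDE} by $U^u$ and integrating by parts — this is exactly the computation behind \eqref{eqpoten1} with $h=0$ — one gets, for $t>0$,
$$
\phi'(t)=-4\,S(U^u(t,\cdot))+\frac{2(p-1)}{p+1}\int_0^1 |U^u(t,\cdot)|^{p+1}.
$$
Since \eqref{formalPDE} is a gradient flow for $S$, the map $t\mapsto S(U^u(t,\cdot))$ is nonincreasing, so $S(U^u(t,\cdot))\le S(u)\le a$; and since $[0,1]$ has unit length, $\int_0^1|U^u(t,\cdot)|^{p+1}\ge\phi(t)^{(p+1)/2}$. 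Hence $\phi'(t)\ge -4a+\tfrac{2(p-1)}{p+1}\phi(t)^{(p+1)/2}$. A routine ODE comparison then shows that if $\phi(0)=\|u\|_{L^2}^2$ exceeds a threshold $\rho_a^2$ depending only on $a$ and $p$, the right-hand side stays $\ge\tfrac{p-1}{p+1}\phi(t)^{(p+1)/2}>0$, so $\phi$ never returns below $\rho_a^2$ and blows up in finite time by the standard integration — contradicting global existence. Therefore $\|u\|_{L^2}\le\rho_a$ on the whole set.

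Second, I would upgrade this to an $H^1_0$-bound. From $S(u)\le a$ we have $\tfrac12\|u'\|_{L^2}^2\le a+\tfrac1{p+1}\|u\|_{L^{p+1}}^{p+1}$. Using $\|u\|_{L^{p+1}}^{p+1}\le\|u\|_\infty^{p-1}\|u\|_{L^2}^2$, the elementary one-dimensional estimate $\|u\|_\infty^2\le 2\|u\|_{L^2}\|u'\|_{L^2}$ (from $u(x)^2=2\int_0^x uu'$ and Cauchy--Schwarz, valid for $u\in H^1_0((0,1))$), and the bound $\|u\|_{L^2}\le\rho_a$ from the first step, one obtains
$$
\tfrac12\|u'\|_{L^2}^2\le a+C(a,p)\,\|u'\|_{L^2}^{(p-1)/2}.
$$
This is where $p<5$ is used: then $(p-1)/2<2$, so the left-hand side dominates the right-hand side for large $\|u'\|_{L^2}$, forcing $\|u'\|_{L^2}\le C'(a,p)$. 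Since $\|u\|_\infty\le\|u'\|_{L^2}$, this bounds $\|u\|_\infty$, and a fortiori the set $\{u\in\overline{\mathcal{D}_{\mathbf 0}}:0\le S(u)\le a\}$ (which is contained in $\{u\in\overline{\mathcal{D}_{\mathbf 0}}:S(u)\le a\}$).

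The step requiring the most care is the first one: one must make sure the $\phi$-blow-up argument is legitimate — smoothness of $U^u$ for $t>0$ so that the differential identity holds, monotonicity of $S$ along the flow, and the comparison argument keeping $\phi$ above the threshold. But the genuinely essential idea is simply the observation that the missing lower-order ($L^2$) bound comes for free from the fact that trajectories starting in $\overline{\mathcal{D}_{\mathbf 0}}$ do not blow up; once $\|u\|_{L^2}$ is controlled, the rest is routine interpolation, and $p<5$ appears precisely as the exponent condition making the interpolation inequality subcritical relative to the Dirichlet energy.
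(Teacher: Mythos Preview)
Your proof is correct and follows essentially the same route as the paper's: first bound $\|u\|_{L^2}$ via the blow-up dichotomy for $\phi(t)=\|U^u(t,\cdot)\|_{L^2}^2$ using the gradient-flow monotonicity of $S$, then interpolate with the one-dimensional Gagliardo--Nirenberg inequality $\|u\|_\infty^2\le C\|u\|_{L^2}\|u'\|_{L^2}$ to close an inequality for $\|u'\|_{L^2}$ that is subcritical precisely when $p<5$. The only cosmetic difference is that the paper first isolates a bound on $\|u\|_{L^{p+1}}^{p+1}$ and then on $\|u'\|_{L^2}$, whereas you go straight to $\|u'\|_{L^2}$; the underlying inequalities are identical.
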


\begin{proof} Let $a > 0$ and for $v \in \{ u \in \overline{\mathcal{D}_{\mathbf{0}}} : 0 \leq S(u) \leq a \}$ consider $\psi : \R_{\geq 0} \rightarrow \R_{\geq 0}$ given by the formula
$$
\psi(t):= \int_0^1 (U^v(t,\cdot))^2.
$$ A direct computation shows that for every $t_0 > 0$ the function $\psi$ satisfies
$$
\frac{d\psi(t_0)}{dt} = - 4 S( U^v(t,\cdot) ) + 2\left(\frac{p-1}{p+1}\right) \int_0^1 |U^v(t,\cdot)|^{p+1}.
$$ By Proposition \ref{Lyapunov} and Hölder's inequality we then obtain
$$
\frac{d\psi(t_0)}{dt} \geq - 4 a + 2\left(\frac{p-1}{p+1}\right) (\psi(t_0))^{\frac{p+1}{2}}
$$ which implies that $\psi(0) \leq B:= \left[2a\left(\frac{p+1}{p-1}\right)\right]^{\frac{2}{p+1}}$ since otherwise $\psi$ (and therefore $U^v$) would explode in finite time. Now, by the Gagliardo-Niremberg interpolation inequality (recall that $v$ is absolutely continuous since $S(v) < +\infty$)
$$
\| v \|_\infty^2 \leq C_{G-N} \| v \|_{L^2} \| \p_x v \|_{L^2},
$$ we obtain
$$
\int_0^1 |v|^{p+1} \leq \| v \|_{L^2}^2 \|v \|_{\infty}^{p-1} \leq C_{G-N} B^{\frac{p+3}{4}} \| \p_x v \|_{L^2}^{\frac{p-1}{2}} \leq C_{G-N} B^{\frac{p+3}{4}} (2a + \int_0^1 |v|^{p+1})^{\frac{p-1}{4}}
$$ which for $p < 5$ implies the bound
\begin{equation}\label{bound1}
\int_0^1 |v|^{p+1} \leq B':=\max \left\{ 2a , \left[C_{G-N} B^{\frac{p+3}{4}}2^{\frac{p-1}{4}}\right]^{\frac{4}{5-p}}\right\}.
\end{equation} Since $S(v) \leq a$ we see that \eqref{bound1} implies the bound $\| \p_x v \|_{L^2} \leq \sqrt{2B'}$ and therefore we conclude
$$
\| v \|_\infty \leq \sqrt{ C_{G-N} \sqrt{2BB'}}
$$ which shows that $\{ u \in \overline{\mathcal{D}_{\mathbf{0}}} : 0 \leq S(u) \leq a \}$ is bounded.
\end{proof}

\begin{obs} The proof of Lemma \ref{compacidad} is the only instance throughout our entire work in which the assumption $p < 5$ is used. If $p \geq 5$ is such that the sets $\{ u \in \overline{\mathcal{D}_{\mathbf{0}}} : 0 \leq S(u) \leq a \}$ remain bounded for every $a > 0$, then all of our results remain valid for this choice of $p$. As a matter of fact, we shall only require the weaker condition that there exists $\alpha > 0$ such that the set $\{ u \in \overline{\mathcal{D}_{\mathbf{0}}} : 0 \leq S(u) \leq S(z) + \alpha \}$ is bounded. However, determining the validity of this condition for arbitrary $p > 1$ does not seem to be an easy problem.
\end{obs}

\begin{prop}\label{dominio1} The potential $S$ satisfies $\displaystyle{\lim_{n \rightarrow +\infty} \left[ \inf_{u \in \partial B_n \cap \mathcal{D}_{\mathbf{0}}} S(u) \right] = +\infty.}$
\end{prop}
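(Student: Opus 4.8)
The plan is to recast the statement as a boundedness assertion and then quote Lemma~\ref{compacidad}. Fix $a>0$; since $a$ is arbitrary, it suffices to produce $N=N(a)$ with $\inf_{u\in\partial B_n\cap\mathcal{D}_{\mathbf{0}}}S(u)\ge a$ for all $n\ge N$ (with the harmless convention $\inf\emptyset=+\infty$, which covers the case of an empty intersection). Equivalently, I must show that the set $\{u\in\mathcal{D}_{\mathbf{0}}:S(u)\le a\}$ is bounded in $C_D([0,1])$: once $\|u\|_\infty\le R(a)$ on this set, any $u\in\partial B_n\cap\mathcal{D}_{\mathbf{0}}$ with $n>R(a)$ must satisfy $S(u)>a$.

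The first step is to observe that $S(u)\ge 0$ for every $u\in\mathcal{D}_{\mathbf{0}}$. Indeed, if $S(u)<0$ then taking $t=0$ in the characterization of Proposition~\ref{caract} gives $u\in\mathcal{D}_e$; but whenever $u\in\mathcal{D}_{\mathbf{0}}$ the solution $U^u$ is globally defined and converges to $\mathbf{0}$, so $u\notin\mathcal{D}_e$ by the decomposition \eqref{decomp12}, a contradiction. Hence
$$
\{u\in\mathcal{D}_{\mathbf{0}}:S(u)\le a\}\subseteq\{u\in\overline{\mathcal{D}_{\mathbf{0}}}:0\le S(u)\le a\}.
$$
Now I invoke Lemma~\ref{compacidad}: since $p<5$, the right-hand side is bounded, say contained in $B_{R(a)}$ for some $R(a)>0$. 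Therefore, for every $n>R(a)$ and every $u\in\partial B_n\cap\mathcal{D}_{\mathbf{0}}$ we have $\|u\|_\infty=n>R(a)$, so $u$ lies outside $\{u\in\overline{\mathcal{D}_{\mathbf{0}}}:0\le S(u)\le a\}$; combined with $S(u)\ge 0$ (which holds throughout $\mathcal{D}_{\mathbf{0}}$), this forces $S(u)>a$. Thus $\inf_{u\in\partial B_n\cap\mathcal{D}_{\mathbf{0}}}S(u)\ge a$ for all $n>R(a)$, and letting $a\to+\infty$ yields the claim.

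Since every ingredient is already in place, there is no real obstacle here; the only point that genuinely needs care is the nonnegativity of $S$ on $\mathcal{D}_{\mathbf{0}}$, since this is exactly what excludes the configurations of large norm and very negative potential that Lemma~\ref{compacidad} does not control.
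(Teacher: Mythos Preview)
Your proof is correct and follows essentially the same route as the paper: reduce to boundedness of $\{u\in\overline{\mathcal{D}_{\mathbf{0}}}:0\le S(u)\le a\}$ via Lemma~\ref{compacidad}, after observing that $S\ge 0$ on $\mathcal{D}_{\mathbf{0}}$. The only cosmetic difference is that you justify $S(u)\ge 0$ through Proposition~\ref{caract} (negative potential forces membership in $\mathcal{D}_e$), whereas the paper cites Proposition~\ref{Lyapunov} (monotone decrease of $S$ along the flow toward $\mathbf{0}$); both are equally valid.
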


\begin{proof} Given $M > 0$, by Lemma \ref{compacidad} we may take $N \in \N$ such that for every $n \geq N$ the set $\{ u \in \mathcal{D}_{\mathbf{0}} : 0 \leq S(u) \leq M \}$ is contained in $B_n^\circ$. Now, let us consider $u \in \partial B_n \cap \mathcal{D}_\mathbf{0}$. Since $u \in \mathcal{D}_\mathbf{0}$, we know that $S(u) \geq 0$ by Proposition \ref{Lyapunov}. Therefore, if $u \in \partial B_n$ for $n \geq N$, in particular we have that $u \notin \{ u \in \mathcal{D}_{\mathbf{0}} : 0 \leq S(u) \leq M \}$ and thus it must be $S(u) > M$. This proves the claim.
\end{proof}

\begin{defi} Given $T > 0$ and $\varphi \in C_{D}([0,T] \times [0,1])$ we define the \textit{rate} $I(\varphi)$ of $\varphi$ by the formula
$$
I(\varphi) := I^{(n),\varphi(0,\cdot)}_T(\varphi)
$$ for any $n \in \N$ larger than $\| \varphi \|_{\infty}$, where $I^{(n),\varphi(0,\cdot)}_T$ denotes the rate function associated to the system \eqref{MainSPDE2} with $f=g_n$. Notice that $I(\varphi)$ does not depend on the choice of $n$.
\end{defi}

\begin{defi} We say that a function $\varphi \in C_D([0,T]\times [0,1])$ is \textit{regular} if both derivatives $\p_t \varphi$ and $\p^2_{xx} \varphi$ exist and belong to $C_D ([0,T]\times [0,1])$.
\end{defi}

\begin{prop}\label{costo} Given $T > 0$ for any $\varphi \in C_{D} \cap W^{1,2}_2 ([0,T] \times [0,1])$ such that $\p^2_{xx}\varphi(0,\cdot)$ exists and belongs to $C_D([0,1])$ we have that
\begin{equation}\label{cotainftasa}
I(\varphi) \geq 2 \left[\sup_{0 \leq T' \leq T} \left( S(\varphi(T',\cdot))-S(\varphi(0,\cdot))\right)\right].
\end{equation}
\end{prop}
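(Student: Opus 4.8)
The plan is to establish, along the path $t\mapsto\varphi(t,\cdot)$, the pointwise bound
$$\frac{d}{dt}S(\varphi(t,\cdot))\le \frac14\left\|\p_t\varphi(t,\cdot)-\p^2_{xx}\varphi(t,\cdot)-g(\varphi(t,\cdot))\right\|_{L^2}^2\qquad\text{for a.e.\ }t,$$
and then integrate it. First, since $\|\varphi\|_\infty<n$ for every sufficiently large $n\in\N$, the truncated source $g_n$ coincides with $g(u)=u|u|^{p-1}$ all along the trajectory of $\varphi$, so by the definition of the rate function on the finite branch of $W^{1,2}_2$,
$$I(\varphi)=\frac12\int_0^T\!\!\int_0^1\left|\p_t\varphi-\p^2_{xx}\varphi-g(\varphi)\right|^2 .$$
Write $\zeta:=\p_t\varphi-\p^2_{xx}\varphi-g(\varphi)\in L^2([0,T]\times[0,1])$, so that $I(\varphi)=\tfrac12\|\zeta\|_{L^2([0,T]\times[0,1])}^2$ and $-\p^2_{xx}\varphi-g(\varphi)=\zeta-\p_t\varphi$.

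The heart of the argument is the chain rule for the potential along $\varphi$: that $t\mapsto S(\varphi(t,\cdot))$ is absolutely continuous on $[0,T]$ with, for a.e.\ $t$,
$$\frac{d}{dt}S(\varphi(t,\cdot))=\int_0^1\left(-\p^2_{xx}\varphi(t,x)-g(\varphi(t,x))\right)\p_t\varphi(t,x)\,dx=\langle\zeta(t,\cdot),\p_t\varphi(t,\cdot)\rangle_{L^2}-\|\p_t\varphi(t,\cdot)\|_{L^2}^2 .$$
Granting this, Cauchy--Schwarz together with the elementary bound $ab-a^2\le\tfrac14 b^2$ (with $a=\|\p_t\varphi(t,\cdot)\|_{L^2}$ and $b=\|\zeta(t,\cdot)\|_{L^2}$) give the pointwise bound above. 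Integrating over $[0,T']$ for an arbitrary $T'\in[0,T]$ and then enlarging the interval to $[0,T]$,
$$S(\varphi(T',\cdot))-S(\varphi(0,\cdot))\le\frac14\int_0^T\!\!\int_0^1|\zeta|^2=\frac12\,I(\varphi),$$
and taking the supremum over $T'\in[0,T]$ yields exactly \eqref{cotainftasa}.

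Thus everything reduces to the chain-rule step, which is where the regularity hypotheses on $\varphi$ enter. The contribution $\int_0^1\frac{|\varphi|^{p+1}}{p+1}$ is straightforward: by Fubini, for a.e.\ $x$ the map $t\mapsto\varphi(t,x)$ lies in $W^{1,2}(0,T)\subseteq\mathrm{AC}([0,T])$ with derivative $\p_t\varphi(t,x)$, and since $\varphi$ is bounded and continuous one gets that $t\mapsto\int_0^1\frac{|\varphi(t,x)|^{p+1}}{p+1}\,dx$ is absolutely continuous with derivative $\int_0^1 g(\varphi(t,x))\,\p_t\varphi(t,x)\,dx$. For the Dirichlet energy $\tfrac12\|\p_x\varphi(t,\cdot)\|_{L^2}^2$ one sets $w:=\p_x\varphi$: then $\varphi\in W^{1,2}_2$ gives $w\in L^2(0,T;H^1((0,1)))$ (its spatial derivative is $\p^2_{xx}\varphi\in L^2$), while $\p_t w=\p_x\p_t\varphi\in L^2(0,T;(H^1((0,1)))')$ acts by $\langle\p_t w,\psi\rangle=-\int_0^1\p_t\varphi\,\p_x\psi$, the boundary terms vanishing because $\p_t\varphi(t,0)=\p_t\varphi(t,1)=0$ for every $t$. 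The standard Lions--Magenes lemma for the Gelfand triple $H^1((0,1))\hookrightarrow L^2((0,1))\hookrightarrow(H^1((0,1)))'$ then yields $w\in C([0,T];L^2)$ and, for a.e.\ $t$, $\frac{d}{dt}\|\p_x\varphi(t,\cdot)\|_{L^2}^2=2\langle\p_t w,w\rangle=-2\int_0^1\p_t\varphi\,\p^2_{xx}\varphi\,dx$. Summing the two pieces gives the chain-rule identity; incidentally it also shows $\varphi\in C([0,T];H^1_0((0,1)))$, so that $S(\varphi(t,\cdot))$ is finite for every $t$ (the hypothesis $\p^2_{xx}\varphi(0,\cdot)\in C_D([0,1])$ just secures this a priori at $t=0$ and places $\varphi(0,\cdot)$ in the domain of the rate function).

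The genuinely delicate point is precisely this chain rule: $W^{1,2}_2$ does not control the mixed derivative $\p_x\p_t\varphi$ in $L^2$, so the time-regularity of the gradient part of $S$ must be extracted through the negative-order space $(H^1)'$ rather than through $L^2$ — which is exactly what the Lions--Magenes lemma provides. Once the absolute continuity of $t\mapsto S(\varphi(t,\cdot))$ and the a.e.\ formula for its derivative are secured, the remainder is the one-line convexity estimate above.
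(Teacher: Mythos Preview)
Your argument is correct. The algebraic core---bounding $\tfrac{d}{dt}S(\varphi(t,\cdot))$ by $\tfrac14\|\zeta(t,\cdot)\|_{L^2}^2$---is equivalent to the paper's: the paper writes the polarization identity $|\p_t\varphi-\p^2_{xx}\varphi-g(\varphi)|^2=|\p_t\varphi+\p^2_{xx}\varphi+g(\varphi)|^2-4(\p^2_{xx}\varphi+g(\varphi))\p_t\varphi$ and drops the first (nonnegative) term, which is just the completed-square version of your $ab-a^2\le\tfrac14b^2$. The genuine difference is in how the chain rule for $t\mapsto S(\varphi(t,\cdot))$ is justified. You attack it head-on for $\varphi\in W^{1,2}_2$ via the Lions--Magenes lemma applied to $w=\p_x\varphi$ in the triple $H^1\hookrightarrow L^2\hookrightarrow (H^1)'$; the paper instead proves the inequality only for \emph{regular} $\varphi$ (where the chain rule is elementary), and then passes to general $\varphi\in W^{1,2}_2$ by invoking a density result of Faris--Jona-Lasinio (regular paths with the same initial datum approximating $\varphi$ with convergent rate) together with the lower semicontinuity of $S$. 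Your route is more self-contained and explains why the extra hypothesis $\p^2_{xx}\varphi(0,\cdot)\in C_D$ is in fact not needed for the conclusion; the paper's route is softer in that differentiation is only carried out where it is transparent, but it imports the density theorem and the lower semicontinuity of $S$, and the initial-regularity hypothesis is there precisely so that the approximating regular paths can share the initial datum $\varphi(0,\cdot)$.
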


\begin{proof}Let us begin by assuming that $\varphi$ is regular and take $N \in \N$ larger than $\|\varphi\|_\infty$. Using the identity $(x-y)^2 = (x+y)^2 -4xy$ for $x,y \in \R$ and $0 \leq T' \leq T$ we obtain that
\begin{align*}
I(\varphi) = I^{(N),\varphi(0,\cdot)}_T(\varphi) & = \frac{1}{2} \int_0^T \int_0^1 |\p_t \varphi - \p_{xx}^2 \varphi - g_N(\varphi)|^2 \geq \frac{1}{2} \int_0^{T'} \int_0^1 |\p_t \varphi - \p_{xx}^2 \varphi - g_N(\varphi)|^2 \\
\\
& = \frac{1}{2} \int_0^{T'} \int_0^1 \left[|\p_t \varphi + \p_{xx}^2 \varphi + g_N(\varphi)|^2 - 2\left( \p_{xx}^2 \varphi + g_N(\varphi)\right)\p_t \varphi\right]\\
\\
& =  \frac{1}{2} \int_0^{T'} \left[ \left( \int_0^1 |\p_t \varphi + \p_{xx}^2 \varphi + g_N(\varphi)|^2\right) + 2 \frac{d S^{(N)}( \varphi(t,\cdot) )}{dt}\right]\\
\\
& \geq 2 \left(S^{(N)}(\varphi(T',\cdot)) - S^{(N)}(\varphi(0,\cdot))\right) =2\left( S(\varphi(T',\cdot))-S(\varphi(0,\cdot))\right)
\end{align*} where the last equality follows from the fact that both $S^{(N)}$ and $S$ coincide inside $B_N$. Taking supremum on $T'$ yields the result in this particular case.

Now, if $\varphi$ is not necessarily regular then by \cite[Theorem~6.9]{FJL} we may take a sequence $(\varphi_n)_{n \in \N}$ of regular functions converging to $\varphi$ on $C_{D_{\varphi(0,\cdot)}}([0,T]\times[0,1])$ and also such that $\lim_{n \rightarrow +\infty} I(\varphi_n) = I(\varphi)$. The result in the general case then follows from the validity of \eqref{cotainftasa} for regular functions and the lower semicontinuity of $S$.
\end{proof}

In order to properly interpret the content of Proposition \ref{costo} we need to introduce the concept of \textit{quasipotential} for our system. We do so in the following definitions.

\begin{defi} Given $u,v \in C_D([0,1])$ a \textit{path from $u$ to $v$} is a continuous function $\varphi \in C_{D}([0,T] \times [0,1])$ for some $T > 0$ such that $\varphi(0,\cdot)=u$ and $\varphi(T,\cdot)=v$.
\end{defi}

\begin{defi} Given $u,v \in C_D([0,1])$ we define the \textit{quasipotential} $V(u,v)$ \mbox{from $u$ to $v$} by the formula
$$
V(u,v)= \inf \{ I(\varphi) : \varphi \text{ path from $u$ to $v$}\}.
$$ Furthermore, given a subset $B \subseteq C_D([0,1])$ we define the quasipotential from $u$ to $B$ as
$$
V(u,B):= \inf \{ V(u,v) : v \in B \}.
$$ We refer the reader to the Appendix for a review of some of the main properties of $V$ which shall be required throughout our work.
\end{defi}

In a limiting sense, made rigorous through the large deviations principle established in Section \ref{secLDP}, the quasipotential $V(u,v)$ represents the energy cost for the stochastic system to travel from $u$ to (an arbitrarily small neighborhood of) $v$. In light of all these definitions we see that the energy cost for the stochastic system starting from $\mathbf{0}$ to explode in a finite time while remaining inside $\mathcal{D}_\mathbf{0}$ is infinite. Indeed, combining Propositions \ref{dominio1}, \ref{costo} and \ref{A.4} we see that $\lim_{n \rightarrow +\infty} V(\mathbf{0},\partial B_{n} \cap \mathcal{D}_{\mathbf{0}})=+\infty$ which implies that a path from $\mathbf{0}$ to infinity lying inside $\mathcal{D}_0$ should have, at least formally, an infinite rate. Thus, were the stochastic system starting from $\mathbf{0}$ to explode, it would have to do so by stepping outside $\mathcal{D}_{\mathbf{0}}$ and \mbox{crossing $\mathcal{W}$.} In view of Proposition \ref{costo}, the system will typically wish
to cross $\mathcal{W}$ through $\pm z$ since the energy cost for performing such a feat is the lowest there. Hence, if we wish the problem of escaping the domain $G$ to capture the essential characteristics of the explosion phenomenon in the stochastic system (at least when starting from $\mathbf{0}$) then it is important to guarantee that the escape from this domain occurs by passing through (an arbitrarily small neighborhood of) $\pm z$. Not only this, but we also require that once the system escapes this domain $G$ then it explodes with overwhelming probability in a quick fashion, i.e. before a certain time $\tau^*$ which does not depend on $\varepsilon$. More precisely, we wish to consider a bounded domain $G \subseteq C_D([0,1])$ verifying the following properties:
\begin{cond}\label{assumpg}$\,$
\begin{enumerate}
\item [i.] There exists $r_{\mathbf{0}}>0$ such that $B_{2r_\mathbf{0}} \subseteq \mathcal{D}_{\mathbf{0}} \cap G$.
\item [ii.] There exists $c > 0$ such that $B_c \subseteq B_{r_\mathbf{0}}$ and for all
$v \in B_c$ the solution $U^{v}$ to \eqref{MainPDE} with initial datum $v$ is globally defined and converges to $\mathbf{0}$ without
escaping $B_{r_\mathbf{0}}$.
%\item [iii.] $V(\mathbf{0}, \partial G) = \Delta$.
\item [iii.] There exists a closed subset $\p^{\pm z}$ of the boundary $\partial G$ which satisfies
\begin{enumerate}
\item [$\bullet$] $V(\mathbf{0},\partial G - \partial^{\pm z} ) >  V(\mathbf{0},\partial^{\pm z}) = V( \mathbf{0}, \pm z)$.
\item [$\bullet \bullet$] $\p^{\pm z}$ is contained in $\mathcal{D}_e^*$ and at a positive distance from its boundary.
\end{enumerate}
\end{enumerate}
\end{cond}

In principle, we have seen that such a domain is useful to study the behavior of the explosion time whenever the initial datum of the stochastic system is (close to) the origin. Nevertheless, as we shall later see, when starting inside $\mathcal{D}_\mathbf{0}$ the system will typically visit a small neighborhood of the origin before crossing $\mathcal{W}$ and thus such a choice of $G$ will also be suitable to study the explosion time for arbitrary initial data in $\mathcal{D}_\mathbf{0}$.

The construction of the domain $G$ is done as follows. Since $\mathcal{D}_{\mathbf{0}}$ is open we may choose $r_{\mathbf{0}} > 0$ such that $B_{3r_{\mathbf{0}}}$ is contained in $\mathcal{D}_{\mathbf{0}}$. Moreover, by the asymptotic stability of $\mathbf{0}$ we may choose $c > 0$ verifying (ii) in Conditions \ref{assumpg}. Now, given $\zeta_1 > 0$ by Lemma \ref{compacidad} we may take $n_0 \in \N$ such that $n_0 > 3r_{\mathbf{0}}$ and the set $\{ u \in \overline{\mathcal{D}_{\mathbf{0}}} : 0 \leq S(u) \leq S(z) + \zeta_1 \}$ is contained in the interior of the ball $B_{n_0-1}$. We then define the pre-domain $\tilde{G}$ as
\begin{equation}\label{predomain}
\tilde{G}:= B_{n_0} \cap \overline{\mathcal{D}_{\mathbf{0}}}.
\end{equation}
Notice that since both $B_{n_0}$ and $\overline{\mathcal{D}_\mathbf{0}}$ are closed sets we have that
$$
\partial \tilde{G} = \left( \mathcal{W} \cap B_{n_0}\right) \cup \left( \partial B_{n_0} \cap \mathcal{D}_\mathbf{0}\right)
$$ which, by the particular choice of $n_0$ and Proposition \ref{Lyapunov}, implies $\min_{u \in \p \tilde{G}} S(u) =S(z)$. By Propositions \ref{costo} and \ref{A.4} we thus obtain $V(\mathbf{0},\p \tilde{G}) \geq \Delta$. Next, if for $u \in C_D([0,1])$ we let $u^-$ denote the negative part of $u$, i.e. $u^- = \max \{ - u, 0 \}$, then since $z^- = \mathbf{0}$ we may find $\tilde{r}_z > 0$ such that $u^- \in \mathcal{D}_{\mathbf{0}}$ for any $u \in B_{\tilde{r}_z}(z)$.
Finally, if for $r > 0$ we write $B_{r}(\pm z) := B_{r}(z) \cup B_{r}(-z)$ and take $r_{z} > 0$ such that $r_z < \frac{\tilde{r}_z}{2}$, $B_{2r_z}(\pm z)$ is contained in the interior of $B_{n_0}$ and $z$ is the unique equilibrium point of the system lying inside $B_{r_z}(z)$, then we define our final domain $G$ as
$$
G= \tilde{G} \cup B_{r_z}(\pm z).
$$ Let us now check that this domain satisfies all the required conditions. We begin by noticing that (i) and (ii) in Conditions \ref{assumpg} are immediately satisfied by the \mbox{choice of $n_0$.}
Now, let us also observe that for any $r > 0$
\begin{equation}\label{lejosdelminimo}
\inf\{ S(u) : u \in \p \tilde{G} - B_{r}(\pm z)\} > S(z).
\end{equation} Indeed, if this were not the case then there would exist $(u_k)_{k \in \N} \subseteq \left[\mathcal{W}\cap B_{n_0} - B_{r}(\pm z)\right]$ such that $\lim_{k \rightarrow +\infty} S(u_k) = S(z)$. By Proposition \ref{G.1} we have that there exists $t_0 > 0$ sufficiently small satisfying
$$
\sup_{k \in \N} \left[\sup_{t \in [0,t_0]} \| U^{u_k}(t,\cdot) \|_\infty\right] < +\infty \hspace{1cm}\text{ and }\hspace{1cm} \inf_{k \in \N} \| U^{u_k}(t_0,\cdot) - (\pm z) \|_\infty > \frac{r}{2}
$$ and therefore by Proposition \ref{A.2} we may conclude that there exists a subsequence $(u_{k_j})_{j \in \N}$ such that $U^{u_{k_j}}(t_0,\cdot)$ converges to a limit $u_{\infty} \in C_D([0,1])$ as $j \rightarrow +\infty$. Since the potential $S$ is lower semicontinuous and $\mathcal{W}$ is both closed and invariant under the deterministic flow, by Proposition \ref{Lyapunov} we conclude that $u_\infty = \pm z$ which contradicts the fact that the sequence $(U^{u_{k_j}}(t_0,\cdot))_{j \in \N}$ is at a positive distance from these equilibriums. Hence, we obtain the validity of \eqref{lejosdelminimo}.
In particular, this implies that $V(\mathbf{0}, \p \tilde{G} - B_{r}(\pm z)) > \Delta$ for any $r > 0$. Let us then take $\zeta_2 > 0$ such that $\Delta + \zeta_2 < V(\mathbf{0},\p \tilde{G} - B_{\frac{r_z}{2}}(\pm z))$ and define
$$
\tilde{\p}^z:= \{ u \in \p B_{r_z}(z) \cap \overline{\mathcal{D}_e} : V(\mathbf{0},u) \leq \Delta + \zeta_2 \}.
$$
Notice that $d(\tilde{\p}^z, \mathcal{W}) > 0$. Indeed, if this were not the case we would have sequences $(u_k)_{k \in \N} \subseteq \mathcal{W}$ and $(v_k)_{k \in \N} \subseteq \tilde{\p}^z$ such that $\lim_{k \rightarrow +\infty} d(u_k,v_k)=0$. The growth estimates on the Appendix section then imply that there exists $t_1 > 0$ sufficiently small such that
$$
\lim_{k \rightarrow +\infty} d(U^{u_k}(t_1,\cdot),U^{v_k}(t_1,\cdot))=0\hspace{0.2cm}\text{and}\hspace{0.2cm} \frac{r_z}{2} < \inf_{k \in \N} d(U^{v_k}(t_1,\cdot),z) \leq \sup_{k \in \N} d(U^{v_k}(t_1,\cdot),z) <  2r_z.
$$ By Proposition \ref{A.2} we obtain that for some appropriate subsequence we have
$$
\lim_{j \rightarrow +\infty} U^{u_{k_j}}(t_1,\cdot) =\lim_{j \rightarrow +\infty} U^{v_{k_j}}(t_1,\cdot) = v_\infty.
$$ Observe that $v_\infty \in \mathcal{W} \cap B_{n_0} - B_{\frac{r_z}{2}}(\pm z)$ and thus that $v_\infty \in \p \tilde{G}- B_{\frac{r_z}{2}}(\pm z)$. Furthermore, by the lower semicontinuity of $V(\mathbf{0},\cdot)$ and the fact that the mapping $t \mapsto V(\mathbf{0},U^u(t,\cdot))$ is monotone decreasing for any $u \in C_D([0,1])$ (see the Appendix section for details), we obtain that $V(\mathbf{0},v_\infty) \leq \Delta + \zeta_2$ which, together with the previous observation, implies the contradiction $\Delta + \zeta_2 \geq V(\mathbf{0},\p \tilde{G} - B_{\frac{r_z}{2}}(\pm z))$. Hence, we see that $d(\tilde{\p}^z, \mathcal{W}) > 0$ and thus we may define
$$
\p^z = \left\{ u \in \p B_{r_z}(z) \cap \overline{\mathcal{D}_e} : d(u,\mathcal{W}) \geq \frac{d(\tilde{\p}^z,\mathcal{W})}{2} \right\}
$$ and set $\p^{\pm z}:= \p^z \cup (-\p^z)$. Since one can easily check that
$$
\p G = [ \p \tilde{G} - B_{r_z}(\pm z) ] \cup [\p B_{r_z}(\pm z) \cap \overline{D_e} ]
$$ we conclude that $V(\mathbf{0},\p G - \p^{\pm z}) \geq \Delta + \zeta_2$. On the other hand, by proceeding similarly to the proof of Lemma \ref{cotasuplema0} below, one can show that $V(\mathbf{0},\p^z)=V(\mathbf{0},\tilde{\p}^z)= V(\mathbf{0},\pm z)=\Delta$, from which one obtains that
$$
V(\mathbf{0},\p G - \p^{\pm z}) > V(\mathbf{0},\p^z) = V(\mathbf{0},\pm z).
$$ Furthermore, by the comparison principle and the choice of $\tilde{r}_z$ we have \mbox{$B_{\tilde{r}_z}(\pm z) \cap \mathcal{D}_e \subseteq \mathcal{D}_e^*$.} Therefore, since we clearly have $d(\p^{\pm z}, \mathcal{W}) > 0$ by definition of $\p^{\pm z}$, upon recalling that $\p^{\pm z} \subseteq \mathcal{D}_e$ and $r_z < \frac{\tilde{r}_z}{2}$ we see that $\p^{\pm z} \subseteq \mathcal{D}_e^+$ and $d(\p^{\pm z}, \p \mathcal{D}_e^*) \geq \min\{d(\p^{\pm z}, \mathcal{W}), \frac{\tilde{r}_z}{2}\} > 0$, so that condition (iii) also holds. See Figure \ref{fig2}.

\begin{figure}
	\centering
	\includegraphics[width=8cm]{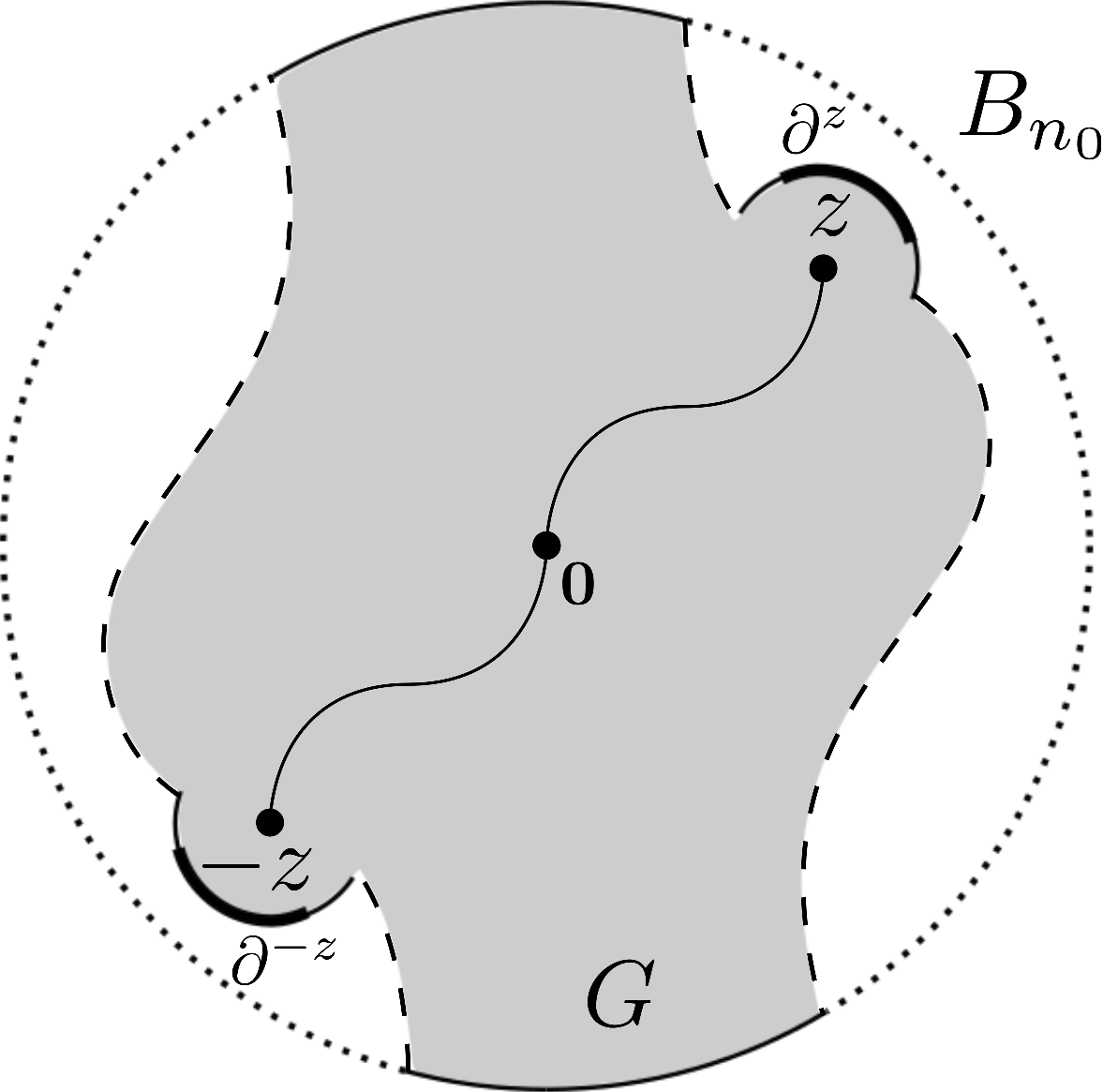}
	\caption{The auxiliary domain $G$}
	\label{fig2}
\end{figure}

\begin{obs}\label{obsequivG} Let us notice that, by Corollary \ref{exploacot}, ($\bullet \bullet$) in Conditions \ref{assumpg} implies that there exist constants $\tau^*,C > 0$ such that
$$
\sup_{u \in \p^{\pm z}} P_u (\tau_\varepsilon > \tau^* ) \leq e^{-\frac{C}{\varepsilon^2}}
$$ for all $\varepsilon > 0$ sufficiently small. Since ($\bullet$) guarantees that the escape from $G$ will typically take place through $\p^{\pm z}$, this tells us that both $\tau_\varepsilon$ and $\tau_\varepsilon(\p G)$ are asymptotically equivalent, so that it will suffice to study the escape from $G$ in order to establish each of our results.
\end{obs}

\newpage

\section{Resumen del Capítulo 3}

En este capítulo damos la construcción del dominio auxiliar acotado con las características discutidas en la Introducción. La razón por la cual llevamos a cabo tal construcción es porque nos permite reducir nuestro problema original al de estudiar cómo el sistema estocástico se escapa de dicho dominio. La ventaja de esto reside en que para estudiar este nuevo problema podemos asumir que la fuente es globalmente Lipschitz, dado que el escape depende únicamente del comportamiento del sistema mientras se encuentra en una región acotada. Es entonces que se pueden aplicar las estimaciones de grandes desvíos de la Sección \ref{secLDP}.

Para poder precisar qué condiciones debe cumplir nuestro dominio, definimos primero el quasipotencial $V$ siguiendo la Definición \ref{defipotentialV}. Dados $u,v \in C_D([0,1])$, el quasipotencial
$$
V(u,v)= \inf\{ I^u_T(\varphi) : \varphi \in C_D([0,T] \times [0,1]), \varphi(0,\cdot)= u, \varphi(T,\cdot) = v \}
$$ representa el costo para el sistema estocástico (en términos de las estimaciones de la Sección \ref{secLDP}) de ir desde $u$ hasta (un entorno arbitrariamente pequeño de) $v$. Asimismo, dado un conjunto $B \subseteq C_D([0,1])$, el quasipotencial
$$
V(u,B)= \inf \{ V(u,v) : v \in B \}
$$ representa el costo para el sistema de ir desde $u$ hasta $B$. Luego, el dominio $G$ de interés debe cumplir con las siguientes características:
\begin{enumerate}
\item [i.] Existe $r_{\mathbf{0}}>0$ tal que $B_{2r_\mathbf{0}} \subseteq \mathcal{D}_{\mathbf{0}} \cap G$.
\item [ii.] Existe $c > 0$ tal que $B_c \subseteq B_{r_\mathbf{0}}$ y para todo
$v \in B_c$ la solución $U^{v}$ de \eqref{MainPDE} con dato inicial $v$ está globalmente definida y converge a $\mathbf{0}$ sin escapar de $B_{r_\mathbf{0}}$.
\item [iii.] Existe un subconjunto cerrado $\p^{\pm z}$ de la frontera $\partial G$ que satisface
\begin{enumerate}
\item [$\bullet$] $V(\mathbf{0},\partial G - \partial^{\pm z} ) >  V(\mathbf{0},\partial^{\pm z}) = V( \mathbf{0}, \pm z)$.
\item [$\bullet \bullet$] $\p^{\pm z}$ está contenido en $\mathcal{D}_e^*$ y a una distancia positiva de su frontera.
\end{enumerate}
\end{enumerate}
La estabilidad asintótica del origen $\mathbf{0}$ garantiza que cualquier dominio que contenga a un entorno del origen va a satisfacer (i) y (ii). Por otro lado, mostramos que si el parámetro $p$ en la fuente satisface $1 < p < 5$ entonces existe un dominio acotado que además satisface (iii). Dicho dominio será la porción de $\mathcal{D}_0$ contenida en una bola de centro en $\mathbf{0}$ y radio apropiadamente grande, unida a entornos pequeños de los equilibrios inestables de mínima energía, $\pm z$ (ver Figura \ref{fig2}).
La principal dificultad a la hora de demostrar que el dominio así construido cumple con las características buscadas yace en la falta de compacidad del espacio $C_D([0,1])$. Sin embargo, nos fue posible lidiar con este problema apelando a algunos de los resultados contenidos en el Apéndice. Por último, la restricción $p < 5$ surge de limitaciones geométricas impuestas por el potencial $S$. Esperamos que una construcción similar sea posible aún en el caso $p \geq 5$, aunque no tenemos una prueba.

\chapter{The escape from $G$}\label{secescapedeg}

The behavior of the explosion time for initial data $u\in \mathcal{D}_{\mathbf{0}}$ is proved by
showing that, with overwhelming probability as $\varepsilon \to 0$, the
stochastic system describes the following path:
\begin{enumerate}
\item [(i)] The system enters the neighborhood of the origin $B_c$ before some finite time $T$ which does not depend on $\ve$.
\item [(ii)] Once inside $B_c$ the system remains in $G$ for a time of order
$e^{\frac{\Delta}{\ve^2}}$ and then escapes from $G$ through $\partial^{\pm z}$ since the
barrier imposed by the potential is the lowest there.
\item [(iii)] After escaping $G$ through $\partial^{\pm z}$ the system explodes
before some finite time $\tau^*$ which does not depend on $\ve$.
\end{enumerate}

The fact that the domain $G$ is bounded allows us to assume that the source term $g$ is
globally Lipschitz if we wish to study the behavior of our system while it
\mbox{remains in $G$.} Indeed, we may consider $n_0 \in \N$ from the definition of $G$ and study the behavior of the solution to \eqref{eqtruncada} for $n=n_0+1$ since
it coincides with our process until the \mbox{escape from $G$.} For this reason, in the following we shall often drop the superscript $(n_0+1)$ in the notation $U^{(n_0+1),u,\varepsilon}$ unless it is completely necessary.

Our aim in this section is to obtain a complete and precise understanding of (ii) in the description above, since by (iii) the explosion time will inherit all the \mbox{asymptotic properties} of the escape time from $G$. In particular, we are interested in the asymptotic magnitude and distribution of this escape time, as well as in a good understanding of which are the typical paths that lead the stochastic system outside of our bounded domain $G$.
\mbox{The problem of escaping} a bounded domain with these characteristics was first studied in \cite{GOV} for a finite-dimensional double-well potential, and later investigated in \cite{B1} for its infinite-dimensional analogue. The results we present in this chapter are an adaptation to our setting of the results featured in these references. Other references dealing with similar problems include \cite{MOS,B2,FW}.

Hereafter, $B_c$ will denote the neighborhood of the origin highlighted in Conditions \ref{assumpg}. Also, for a given closed set $\Gamma \subseteq C_D([0,1])$ we write
$$
\tau_\varepsilon^u (\Gamma) := \inf\{ t \geq 0 : U^{u,\varepsilon}(t,\cdot) \in \Gamma \}.
$$

\section{Asymptotic magnitude of $\tau_\varepsilon(\p G)$}

We begin our study of the escape from $G$ by studying its asymptotic magnitude as $\varepsilon \rightarrow 0$. The precise result we are to show is the following.

\begin{teo}\label{ecotsuplema1} Given $\delta > 0$ we have
$$
\lim_{\varepsilon \rightarrow 0 } \left[\sup_{u \in B_c} \left| P_{u}
\left( e^{\frac{\Delta - \delta}{\varepsilon^{2}}} < \tau_{\varepsilon}(\partial
G) < e^{\frac{\Delta + \delta}{\varepsilon^{2}}}\right)-1 \right|\right] = 0.
$$
\end{teo}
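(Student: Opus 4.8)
The plan is to prove Theorem \ref{ecotsuplema1} by establishing, uniformly over $u\in B_c$, a lower bound $P_u(\tau_\varepsilon(\partial G)>e^{(\Delta-\delta)/\varepsilon^2})\to 1$ and an upper bound $P_u(\tau_\varepsilon(\partial G)<e^{(\Delta+\delta)/\varepsilon^2})\to 1$; together they give the two-sided estimate. Throughout one works with the globally Lipschitz truncated dynamics $U^{(n_0+1),u,\varepsilon}$, which coincides with $U^{u,\varepsilon}$ until the escape from $G$, so that the Freidlin--Wentzell estimates of Section \ref{secLDP} apply, and one uses repeatedly the strong Markov property together with the quasipotential facts built into the construction of $G$, in particular that $V(\mathbf{0},\partial G)=\Delta$ with the infimum attained exactly on $\partial^{\pm z}$ (Conditions \ref{assumpg}(iii)).

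For the upper bound I would first produce, for any $\eta>0$, a path of some finite duration $T_1$ starting at $\mathbf{0}$, staying in $C_D([0,1])$, and ending at a point $w_\ast\notin\overline G$ just beyond $\partial^{\pm z}$, with rate $I(\varphi)<V(\mathbf{0},\pm z)+\eta=\Delta+\eta$: this is obtained from the definition of $V(\mathbf{0},\pm z)$ (the near-minimizing path being sped up to bounded duration using the corresponding property of $V$ from the Appendix) followed by a short push across $\partial G$ into $\mathcal D_e$, whose cost is $O(\eta)$. Prepending the solution of \eqref{MainPDE} started at $v\in B_c$, which by Conditions \ref{assumpg}(ii) stays in $B_{r_\mathbf{0}}\subseteq G$ and reaches $B_{c/2}$ within a time $T_0$ independent of $v$, gives a path from $v$ out of $\overline G$ of rate $<\Delta+2\eta$; by \eqref{LDP1}, $\inf_{v\in B_c}P_v(\tau_\varepsilon(\partial G)\le T_0+T_1)\ge e^{-(\Delta+3\eta)/\varepsilon^2}$ for $\varepsilon$ small. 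Combining this with the fact (from \eqref{grandes1} and Conditions \ref{assumpg}(ii)) that a non-escaping trajectory is driven back into $B_c$ within a further bounded time with probability tending to $1$, one organizes the time axis into macro-steps of bounded length in each of which, from any configuration in $\overline G$, the escape probability is at least $\tfrac12 e^{-(\Delta+3\eta)/\varepsilon^2}$; iterating $\sim e^{(\Delta+\delta)/\varepsilon^2}$ such macro-steps via the strong Markov property, a geometric-series bound forces $\sup_{u\in B_c}P_u(\tau_\varepsilon(\partial G)>e^{(\Delta+\delta)/\varepsilon^2})\to 0$ once $\eta<\delta/4$.

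For the lower bound I would fix a large $T$, cut $[0,e^{(\Delta-\delta)/\varepsilon^2}]$ into $\sim e^{(\Delta-\delta)/\varepsilon^2}/T$ windows, and control the escape probability window by window. The key input is that, after shrinking $c$ so that $\inf_{\|v\|_\infty=c}S(v)<\eta$, one has $V(\partial B_c,\partial G)\ge\Delta-\eta$ by concatenation and $V(\mathbf{0},\partial G)=\Delta$, and more generally that reaching a small neighborhood of $\partial G$ from $B_c$ costs at least $\Delta-2\eta$; hence, by \eqref{LDP2}, the conditional probability that a given excursion away from $B_c$ reaches $\partial G$ before returning to $B_c$ is at most $e^{-(\Delta-2\eta)/\varepsilon^2}$. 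A crossing-count estimate (continuity of the paths of $U^\varepsilon$ plus \eqref{LDP2}) shows that with overwhelming probability at most $e^{(\Delta-\delta/2)/\varepsilon^2}$ excursions occur before time $e^{(\Delta-\delta)/\varepsilon^2}$, and a union bound over these then yields $\sup_{u\in B_c}P_u(\tau_\varepsilon(\partial G)<e^{(\Delta-\delta)/\varepsilon^2})\to 0$ for $\eta<\delta/8$. This part is the adaptation to our setting of the arguments of \cite{GOV,B1}.

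The main obstacle, in both bounds, is the bookkeeping of excursions in the non-compact state space $C_D([0,1])$, which contains besides $\mathbf{0}$ and the minimal saddles $\pm z$ the higher unstable equilibria $z^{(n)}$ lying inside $B_{n_0}$: one must guarantee that after a failed escape attempt the process returns to $B_c$ in a time controlled uniformly in $\varepsilon$ and in the current configuration, that it cannot leave $G$ through $\partial G-\partial^{\pm z}$ (ensured by $V(\mathbf{0},\partial G-\partial^{\pm z})>\Delta$) nor linger cheaply near any $z^{(n)}$, and that, once it has left $G$ through $\partial^{\pm z}$, the trajectory, which is no longer globally defined, does not spoil the estimates. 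The first two points are handled via the compactness and growth estimates of the Appendix together with the saddle structure of Proposition \ref{descomp3} (which drives the deterministic flow off any neighborhood of a $z^{(n)}$ in bounded time, towards either $\mathbf{0}$ or $\mathcal D_e$), and the last point is exactly why $\partial^{\pm z}$ was placed inside $\mathcal D_e^*$ at a positive distance from its boundary, so that Corollary \ref{exploacot} and Remark \ref{obsequivG} apply.
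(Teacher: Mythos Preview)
Your overall strategy---split into an upper and a lower bound, each obtained by iterating a one-step estimate via the strong Markov property---is exactly the paper's, and your lower bound matches it closely: the paper formalizes the excursion decomposition through an embedded Markov chain $(Z_n^{u,\varepsilon})$ at the successive visits to $\partial B_{r_1}$ and $\partial B_{r_2}$, bounds the one-excursion escape probability via two borrowed lemmas (Lemmas~\ref{lemab1} and~\ref{lemab2}, the analogues of your ``by \eqref{LDP2}'' step and your ``cannot linger near $z^{(n)}$'' step), and controls the number of excursions by a law-of-large-numbers argument rather than a crossing count, but these are equivalent organizations of the same idea.

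The genuine gap is in your upper bound. You build escape paths only from $v\in B_c$ and then claim that ``a non-escaping trajectory is driven back into $B_c$ within a further bounded time with probability tending to~$1$'', invoking \eqref{grandes1} and Conditions~\ref{assumpg}(ii). But Conditions~\ref{assumpg}(ii) speaks only of initial data already in $B_c$; after a failed escape attempt the process may sit anywhere in $G$, in particular on or near $\mathcal{W}\cap G$, and from such points the deterministic flow converges to some $z^{(n)}$, not to $\mathbf{0}$. Proposition~\ref{descomp3} only covers the rays $\lambda z^{(n)}$, not a full neighborhood of $\mathcal{W}$, so it does not supply the missing return. The paper sidesteps this entirely by proving the stronger statement $\sup_{u\in G}P_u(\tau_\varepsilon(\partial G)>e^{(\Delta+\delta)/\varepsilon^2})\to 0$: Lemma~\ref{cotasuplema0} constructs, for \emph{every} $u\in G$, an explicit escape path $\varphi^u$ of rate $<\Delta+\tfrac{\delta}{3}$ and uniformly bounded duration $T^{(\delta)}$, by a careful case analysis according to whether $U^u(T_0,\cdot)$ lies near $\partial B_{n_0}$, near $\mathcal{W}$, in $\mathcal{D}_{\mathbf{0}}$ away from $\mathcal{W}$, or in $\mathcal{D}_e$ (the $\mathcal{W}$ case uses a short linear interpolation to $\mathcal{W}$, drift to a saddle, a second interpolation into $\mathcal{D}_e$, and drift out). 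The iteration is then the simple product bound \eqref{boundonguniform} with no return step needed. Your organization can be repaired, but making ``return to $B_c$ or escape'' uniform over $G$ requires essentially the same path construction the paper carries out in Lemma~\ref{cotasuplema0}.
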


We shall split the proof of this result into two parts over Sections \ref{secupperbound} and \ref{seclowerbound}, the first of them dealing with the upper bound and the second one with the lower bound.

\subsection{Upper bound on $\tau_\varepsilon(\partial G)$}\label{secupperbound}

Our first goal is to establish the upper bound for the escape time from $G$ contained in the following theorem.

\begin{teo}\label{cotsuplema1} For any $\delta > 0$ we have
\begin{equation}\label{cotsuplema1eq1}
\lim_{\varepsilon \rightarrow 0} \left[\sup_{u \in G} P_{u} \left( \tau_{\varepsilon}(\partial G) > e^{\frac{\Delta + \delta}{\varepsilon^{2}}}\right) \right] = 0.
\end{equation}
\end{teo}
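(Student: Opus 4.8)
The plan is to run the classical Freidlin--Wentzell scheme for an upper bound on an exit time: combine a uniform one-block lower bound on the probability of leaving $G$ with the strong Markov property, and conclude by a geometric estimate. Since the escape from $G$ depends only on what the system does inside $B_{n_0}$, throughout I would replace $U^{u,\varepsilon}$ by the globally Lipschitz truncation solving \eqref{eqtruncada} with $n=n_0+1$, which coincides with $U^{u,\varepsilon}$ up to $\tau_\varepsilon(\partial G)$, so that the large deviations estimates of Section \ref{secLDP} are at our disposal. The argument will rest on two ingredients. \emph{(A) Cheap escape from $B_c$:} there is $T_0>0$ such that for every $\eta>0$ there is $\varepsilon_0>0$ with $\inf_{u\in B_c}P_u(\tau_\varepsilon(\partial G)\le T_0)\ge e^{-(\Delta+\eta)/\varepsilon^2}$ for $0<\varepsilon<\varepsilon_0$. \emph{(B) Return or escape in sub-exponential time:} there are a function $T_1(\varepsilon)$ with $\varepsilon^2\log T_1(\varepsilon)\to 0$ and a constant $a>0$ such that, for $\varepsilon$ small, $\inf_{u\in\overline G}P_u\big(\tau_\varepsilon(\partial G)\le T_1(\varepsilon)\text{ or }U^{u,\varepsilon}(s,\cdot)\in B_c\text{ for some }s\le T_1(\varepsilon)\big)\ge a$.

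Granting (A) and (B), I would fix $\eta=\delta/2$, put $q_\varepsilon=e^{-(\Delta+\delta/2)/\varepsilon^2}$, and observe that in any time window of length $T_1(\varepsilon)+T_0$ the process leaves $G$ with probability at least $a\,q_\varepsilon$, uniformly over its position in $\overline G$ at the start of the window: by (B) it either leaves $G$ or reaches $B_c$ (without having left $G$) within $T_1(\varepsilon)$ with probability $\ge a$, and then by (A) and the strong Markov property at the first visit to $B_c$ it leaves $G$ within a further $T_0$ with probability $\ge q_\varepsilon$ (using $q_\varepsilon\le 1$). Applying the Markov property at the epochs $k(T_1(\varepsilon)+T_0)$ then gives $\sup_{u\in G}P_u\big(\tau_\varepsilon(\partial G)>N(T_1(\varepsilon)+T_0)\big)\le(1-a\,q_\varepsilon)^N\le e^{-a\,q_\varepsilon N}$, and with $N=N_\varepsilon:=\lceil e^{(\Delta+\delta)/\varepsilon^2}/(T_1(\varepsilon)+T_0)\rceil$ one has $N_\varepsilon(T_1(\varepsilon)+T_0)\ge e^{(\Delta+\delta)/\varepsilon^2}$ while $a\,q_\varepsilon N_\varepsilon\ge a\,e^{\delta/(2\varepsilon^2)}/(T_1(\varepsilon)+T_0)\to+\infty$, since $T_1(\varepsilon)+T_0=e^{o(1/\varepsilon^2)}$. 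This is exactly \eqref{cotsuplema1eq1}.

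For (A) I would use the construction of $G$ directly. Since $V(\mathbf 0,\partial^{\pm z})=\Delta$ by that construction and $\partial^{\pm z}$ sits in the interior of $\mathcal D_e$ at positive distance from $\mathcal W$, one can produce a path $\varphi$ of some finite duration $T_0$, starting at $\mathbf 0$, which first nearly realizes $V(\mathbf 0,\partial^{\pm z})$ and then follows the deterministic flow a little beyond a point of $\partial^{\pm z}$ --- where, lying in $\mathcal D_e$, the flow explodes and hence exits $B_{n_0}\supseteq G$ --- so that $\varphi$ ends strictly outside $\overline G$ with $I(\varphi)<\Delta+\eta/2$; this last cost bound is of the same type as those carried out in the construction of $G$ and in Lemma \ref{cotasuplema0}. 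Every path $d_{T_0}$-close to $\varphi$ also leaves $G$ before time $T_0$, so \eqref{LDP1} (with its uniformity in the initial datum) gives the asserted bound when the initial datum is $\mathbf 0$; the passage from a general $u\in B_c$ to $\mathbf 0$ is then absorbed using the asymptotic stability of $\mathbf 0$ and the continuity of the deterministic solution in the initial datum, after shrinking $c$ if necessary.

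The hard part will be (B). The difficulty is real: $C_D([0,1])$ is not locally compact and $\overline G$ meets the unstable manifold $\mathcal W$ --- in particular it contains the finitely many saddle equilibria $z^{(n)}$ with $|n|^{2/(p-1)}\|z\|_\infty\le n_0$ (finitely many by Proposition \ref{equilibrios}) --- so there is no uniform bounded time within which the deterministic flow from an arbitrary point of $\overline G$ reaches $B_c$ or leaves $G$, and the stochastic system started near such a saddle, or near $\mathcal W$, is driven off $\mathcal W$ only on a time scale of order $\log(1/\varepsilon)$. I would prove (B) as follows. First, using the boundedness of the sublevel sets of $S$ inside $\overline{\mathcal D_{\mathbf 0}}$ (Lemma \ref{compacidad}) together with the smoothing and continuity estimates of the Appendix --- exactly as in the construction of $G$ --- one reduces the problem to controlling the dynamics in fixed neighbourhoods of $\mathcal W$ and of the finitely many saddles contained in $\overline G$. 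Second, near each such saddle its stable manifold is transversally repelling, so that with probability bounded below the noise displaces the system off $\mathcal W$ along an unstable direction leading either into $\mathcal D_{\mathbf 0}$ --- hence, by Theorem \ref{descomp1} and Proposition \ref{descomp3}, to $\mathbf 0$, reaching $B_c$ in bounded further time --- or into $\mathcal D_e$, hence to explosion and out of $B_{n_0}$ in bounded further time. Third, over a time $t$ the noise pushes the system to distance of order $\varepsilon e^{\lambda t}$ from $\mathcal W$ (with $\lambda$ the relevant unstable rate), so a control time $T_1(\varepsilon)=C\log(1/\varepsilon)$ with $C$ large enough is sufficient for this mechanism to act, with probability bounded below, from any starting point in $\overline G$; and such a $T_1(\varepsilon)$ is negligible compared with $e^{\delta/(2\varepsilon^2)}$, which is all the geometric estimate needs.
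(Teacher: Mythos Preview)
Your overall scheme --- a uniform one-block escape estimate plus geometric iteration via the Markov property --- is the right architecture, and (A) together with the final iteration is fine. The gap is (B). Your justification there rests on an escape-from-saddle heuristic (``the noise pushes the system to distance of order $\varepsilon e^{\lambda t}$ from $\mathcal W$'') which, to be made rigorous in this infinite-dimensional SPDE setting, would require linearizing at each of the saddles $z^{(n)}\in\overline G$, controlling the nonlinear remainder uniformly over all starting points near $\mathcal W$, and proving a uniform lower bound on the probability of exiting a tube around $\mathcal W$ within time $C\log(1/\varepsilon)$. None of this is available from the tools developed in the paper, and it would be substantial independent work. As written, (B) is not established.

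The paper avoids this difficulty by a different and simpler route: it proves a \emph{single} one-block lower bound, uniform over all $u\in G$, with block length $T^{(\delta)}$ \emph{independent of $\varepsilon$} (Lemma~\ref{cotasuplema0}), by constructing for every $u\in G$ an explicit escape path $\varphi^u$ with $I(\varphi^u)<\Delta+\delta/3$ and then applying the LDP lower bound \eqref{LDP1}. The crucial point is how $\varphi^u$ is built when $u$ lies on or near $\mathcal W$: one follows the deterministic flow (cost zero) until reaching $B_r(z^{(n)})$ for some saddle --- this takes uniformly bounded time by Proposition~\ref{A.3}, since only finitely many equilibria lie in $B_{n_0+1}$ --- then performs a short linear interpolation to $(1+r)z^{(n)}\in\mathcal D_e$ (cost $<\delta/9$, using the $W^2_2$-bounds of Proposition~\ref{A.1}), after which the deterministic flow exits $G$ in bounded time. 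For $u\in\mathcal D_{\mathbf 0}$ away from $\mathcal W$ one first flows to $B_r(\mathbf 0)$, interpolates onto a fixed point $u^*$ of the unstable manifold $\mathcal W^z_u$, follows the \emph{reverse} deterministic flow up to near $z$ (cost $<\Delta$), and continues as above. Thus there is no separate ``return to $B_c$'' step, no $\log(1/\varepsilon)$ time scale, and no probabilistic saddle-escape argument: the whole one-block estimate is reduced to explicit path constructions. The geometric iteration then concludes exactly as in your last paragraph, but with fixed block length $T^{(\delta)}$.
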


The main idea behind the proof is to show that there exist paths escaping the domain $G$ with rates arbitrarily close to $\Delta$, so that the typical time one must wait for any of these paths to be described by the stochastic system is of lesser asymptotic order than $e^{\frac{\Delta + \delta}{\varepsilon^{2}}}$. The precise estimate is contained in the following lemma.

\begin{lema}\label{cotasuplema0} Given $\delta > 0$ there exists $T^{(\delta)} > 0$ such that for each $u \in G$ there exists a set of paths $\mathcal{E}_{u,T^{(\delta)}} \subseteq C_{D_u}([0,T^{(\delta)}] \times [0,1])$ satisfying
\begin{enumerate}
\item [i.] Every path in $\mathcal{E}_{u,T^{(\delta)}}$ escapes $G$ before time $T^{(\delta)}$.
\item [ii.] For any $\varepsilon > 0$ sufficiently small we have $\inf_{u \in G} P_u ( U^\varepsilon \in \mathcal{E}_{u,T^{(\delta)}} ) \geq  T^{(\delta)}e^{-\frac{\Delta + \frac{\delta}{2}}{\varepsilon^2}}.$
\end{enumerate}
\end{lema}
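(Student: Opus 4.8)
The plan is to deduce (ii) from the large--deviations lower bound \eqref{LDP1}, so the real content is the construction of the paths. For each $u\in G$ I will produce a path $\psi^u$ issuing from $u$, together with a time $t^{*}_u\le T^{(\delta)}$ with $T^{(\delta)}$ \emph{not depending on $u$}, such that $\psi^u$ lies in $W^{1,2}_2$ with rate $I(\psi^u)\le \Delta+\tfrac{\delta}{4}$, stays inside $B_{n_0+1}$ on $[0,t^{*}_u]$, and at time $t^{*}_u$ is at distance at least a fixed $\rho_0>0$ from $G$; one then extends $\psi^u$ past $t^{*}_u$ by the deterministic flow (slowing the last controlled piece beforehand so that $t^{*}_u$ lies within a fixed small amount of $T^{(\delta)}$, which keeps the extension from blowing up), and this does not change the rate since the flow has rate $0$. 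One sets
$$\mathcal{E}_{u,T^{(\delta)}}=\{\varphi\in C_{D_u}([0,T^{(\delta)}]\times[0,1]):\ \sup_{0\le s\le t^{*}_u}\|\varphi(s,\cdot)-\psi^u(s,\cdot)\|_\infty<\tfrac{\rho_0}{2}\},$$
every element of which leaves $G$ before $t^{*}_u\le T^{(\delta)}$, giving (i). For (ii) one applies \eqref{LDP1} on $[0,T^{(\delta)}]$ to the truncated process $U^{(n_0+1),u,\varepsilon}$ --- which coincides with $U^{u,\varepsilon}$ until the escape from $G$ and is driven by a globally Lipschitz source --- with accuracy $\rho_0/2$ and $h=\delta/8$, obtaining $\inf_{u\in G}P_u(U^\varepsilon\in\mathcal{E}_{u,T^{(\delta)}})\ge e^{-(\Delta+3\delta/8)/\varepsilon^2}\ge T^{(\delta)}e^{-(\Delta+\delta/2)/\varepsilon^2}$ for all sufficiently small $\varepsilon$.

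\textbf{The reference paths.} Each $\psi^u$ is a concatenation of zero--cost deterministic pieces with at most one controlled piece, chosen according to the location of $u$; fix small auxiliary constants $\gamma''>0$ and $\eta_0>0$. (a) If $u\in B_{r_z}(\pm z)$, say near $z$, prepend a short run of the flow (for regularity) and then a controlled push of cost $\le\delta/8$ carrying $u$ to the fixed point $(1+\eta_0)z\in\mathcal{D}_e$; the deterministic flow from there blows up by Theorem \ref{descomp2} and leaves $G$ through $\partial B_{r_z}(z)\cap\mathcal{D}_e$ in a fixed finite time. The push is kept below $\delta/8$ by taking $r_z$ small --- near the saddle $z$ the barrier separating $u$ from $\mathcal{D}_e$ is $O(d(u,\mathcal{W})^2)$ --- and performing it slowly. (b) If $u\notin B_{r_z}(\pm z)$ and $d(u,\mathcal{W})\ge\gamma''$, so $u\in\mathcal{D}_{\mathbf{0}}\cap B_{n_0}$, follow $U^u$ until it either leaves $G$ (then $\psi^u$ is this flow, rate $0$) or enters $B_c$; from $B_c$ append a finite truncation of the time--reversed heteroclinic running along $\mathcal{W}^z_u$ from $\mathbf{0}$ to $z$, which by the gradient structure has rate as close as desired to $2(S(z)-S(\mathbf{0}))=\Delta$ --- and cannot be cheaper, by Proposition \ref{costo} --- ending at $(1+\eta_0)z$, whence deterministic explosion out of $G$. (c) If $u\notin B_{r_z}(\pm z)$ and $d(u,\mathcal{W})<\gamma''$, follow $U^u$, which shadows the flow in the invariant set $\mathcal{W}$: this either leaves $B_{n_0}$ (hence $G$, rate $0$) or comes close to one of the \emph{finitely many} equilibria $z^{(k)}$ with $\|z^{(k)}\|_\infty\le n_0$, and from there a controlled push of cost $\le\delta/8$ lands on the explosion branch $\{\lambda z^{(k)}:\lambda>1\}$ of its unstable manifold (Proposition \ref{descomp3}), again followed by deterministic explosion. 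In every case the total rate is $\le\Delta+\delta/4$, and by construction the escape from $G$ occurs at norm below $n_0+1$ with a uniform margin $\rho_0$.

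\textbf{Uniformity of $T^{(\delta)}$.} The controlled pieces and the deterministic explosions have durations depending only on $\delta$ and on the fixed data $z,\mathcal{W}^z_u,r_z,c,\eta_0$, hence are uniformly bounded; the same must be shown for the initial deterministic phases, i.e.\ the time for $U^u$ to leave $G$ or enter $B_c$ in case (b), and the time for $U^u$ to leave $B_{n_0}$ or come near some $z^{(k)}$ in case (c). Granting these, $T^{(\delta)}$ is taken to be their sum.

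\textbf{The main obstacle.} The delicate point is precisely these uniform bounds, because $C_D([0,1])$ is not locally compact and a naive continuity argument over the bounded set $G$ fails. The remedy is the smoothing and growth estimates of the Appendix: after an arbitrarily short time $t_0$ the orbits $\{U^u(t_0,\cdot):u\in G\}$ that have not yet left $G$ lie in a \emph{compact} set; its intersection with $\{d(\cdot,\mathcal{W})\ge\gamma''\}$ is a compact subset of $\mathcal{D}_{\mathbf{0}}$ on which the time to reach $B_c$ is uniformly bounded (continuity plus compactness), while its intersection with a small tube around $\mathcal{W}$, together with the invariance of $\mathcal{W}$ and the convergence of every orbit on a compact subset of $\mathcal{W}$ to an equilibrium, yields the bound needed in case (c). These are the same semicontinuity/compactness arguments already used in Chapter~2 (cf.\ Lemmas \ref{lemacontsup}, \ref{lemacontsup2} and Corollary \ref{contdetexp}). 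A secondary nuisance, handled by fixing $r_z,\gamma'',\eta_0$ and the speeds of the controlled pushes in the right order, is keeping the combined cost of all controlled pieces below $\delta/4$ even though a trajectory starting extremely close to a saddle needs a positive rate to be forced out within a bounded time.
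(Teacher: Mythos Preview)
Your approach is the paper's: for each $u\in G$ build a path of rate at most $\Delta+\delta/3$ that leaves $G$ with a fixed margin, take $\mathcal{E}_{u,T^{(\delta)}}$ as a tube around it, and conclude via the uniform LDP lower bound for the truncated system. The case split and the use of the reversed heteroclinic along $\mathcal W_u^z$ to pay exactly $\Delta$ are the same.

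Two implementation choices differ. First, the paper prefaces the whole construction with a short run of the flow for a \emph{fixed} time $T_0$, so that every endpoint to be joined lies in a bounded set of $W^2_2$; it then performs every ``controlled push'' as an explicit linear interpolation with rate bounded by $C_{n_0,M}\|w-v\|_\infty$. This makes the cost estimates concrete and explains why the regularization step is needed. Second, and more substantively, in your case (c) you follow $U^u$ itself and rely on shadowing the flow on $\mathcal W$, whereas the paper spends one cheap interpolation to land \emph{on} $\mathcal W$ and follows the flow there. Orbits on $\mathcal W$ genuinely converge to some $z^{(n)}$, so Proposition~\ref{A.3} gives uniform time bounds directly (only finitely many equilibria lie in $B_{n_0+1}$); for $u\in\mathcal D_{\mathbf 0}$ near $\mathcal W$, however, $U^u\to\mathbf 0$, so your dichotomy ``either leaves $B_{n_0}$ or comes close to some $z^{(k)}$'' is not automatic---it needs a uniform convergence time on $\mathcal W$ \emph{plus} a shadowing estimate over that time, with $\gamma''$ chosen only afterwards. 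This can be made to work, but interpolating onto $\mathcal W$ avoids the issue. One minor point: since everything is done for the truncated system $U^{(n_0+1)}$, the extension of $\psi^u$ past $t^*_u$ by the (truncated) flow is globally defined and the ``slowing'' device you describe is unnecessary.
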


\begin{proof} We will prove the lemma with the aid of the large deviations principle established for our system. The idea of the proof is to show that for each $u \in G$ there exists a path $\varphi^u \in C_{D_u}([0,T^{(\delta)}]\times [0,1])$ starting at $u$ with rate less than $\Delta + \frac{\delta}{3}$ and such that not only does $\varphi^u$ itself escape from $G$ before time $T^{(\delta)}$, but also any path sufficiently close to $\varphi^u$ does so as well. We construct $\varphi^{u}$ explicitly for each $u \in G$. Each path $\varphi^{u}$ will consist of several pieces, each of which must either follow the trajectory described by the deterministic system (sometimes in the opposite direction) or be a linear interpolation between nearby elements of $C_D \cap W^2_2([0,1])$. In view of this last possibility, we first need to establish some control on the contribution of these linear interpolations to \mbox{the rate of $\varphi^u$.} Thus, let us consider a velocity one linear interpolation $s$ between two
points $u,w \in B_{n_0+1}$, i.e. $s: [0,\|w-v\|_\infty] \times [0,1] \to \R$ given by
$$
s(t,x)= u(x) + t \cdot \left(\frac{w(x)-u(x)}{\|w-u\|_\infty}\right)\text{ for $(t,x) \in [0,\|w-u\|_\infty] \times [0,1]$},
$$ and suppose that both $u$ and $w$ have $W^2_2([0,1])$-norm bounded by some constant $M > 0$. Then we have
$$
I(s) = \frac{1}{2}\int_0^{\|w-u\|_\infty} \int_0^1 | \p_t s - \p^2_{xx} s - g_{n_0}(s) |^2 \leq  \int_0^{\|w-u\|_\infty} \int_0^1 \left[ | \p_t s |^2 + |\p^2_{xx} s + g_{n_0}(s) |^2 \right].
$$
Since $\| \p_t s \|_\infty = 1$ by construction and $\p^2_{xx} s (t,\cdot) = \partial^2_{xx}u + t \cdot \left(\frac{\p^2_{xx}w-\p^2_{xx}u}{\|w-u\|_\infty}\right)$ we obtain that
$$
I(s) \leq C_{n_0,M} \|w-v\|_\infty
$$ where $C_{n_0,M} > 0$ is a constant depending only on $\| g_{n_0+1}\|_\infty$ and $M$.

Taking this into consideration, by using Propositions \ref{G.1}, \ref{G.2} and \ref{A.1} in the Appendix we may take a time $T_0 > 0$ sufficiently small such that
$$
\sup_{t \in [0,T_0]} \| U^u(t,\cdot) - u \|_\infty < \frac{1}{2} \hspace{1.2cm}\text{ and }\hspace{1cm} \sup_{t \in [0,T_0]} \| U^u(T_0,\cdot) - U^v(T_0,\cdot) \|_\infty \leq 2 \| u - v\|_\infty
$$ for any $u,v \in B_{n_0+1}$, a constant $H > 0$ such that $\| U^{u}(T_0,\cdot)\|_{W^2_2([0,1])} \leq H$ for any $u \in B_{n_0+1}$ and a distance $r > 0$ such that any linear interpolation between elements in $B_{n_0+1}$ with $W^2_2([0,1])$-norm bounded by $2H$ and at a distance smaller than $2r$ has rate less than $\frac{\delta}{9}$. We may assume that both $T_0$ and $r$ are sufficiently small, e.g. $T_0 < 1$ and $r < \min\{ \frac{1}{4}, c\}$. We then define the set
$$
\mathcal{W}_{(r^-)} = \{ u \in C_D([0,1]) : d(u, \mathcal{W}) < r\}.
$$ The construction of $\varphi^{u}$ is done as follows. The first step is to follow the deterministic flow for a time period of length $T_0$. The remainder of the construction will vary according to where $v_0:=U^{u}(T_0,\cdot)$ is located. We describe the different scenarios below.
\begin{itemize}
\item If $v_0 \in B_{n_0+r}^c$ then $\varphi^u$ has already escaped $G$ and reached a distance from $G$ which is greater than $r$.
The construction in this scenario ends here.
\item If $v_0 \in B_{n_0+r} \cap \mathcal{W}_{(r^-)}$ then:

\begin{enumerate}
\item [$\diamond$] We choose $v \in \mathcal{W}$ such that $d(v_0,v) < r$ and first let $\varphi^u$ follow once again the deterministic flow for a time period of length $T_0$ and afterwards describe the linear interpolation between the points $U^{v_0}(T_0, \cdot)$ and $v_1:= U^{v}(T_0,\cdot)$ in time $T_1^u = d(U^{v_0}(T_0, \cdot),v_1)$. Notice that $U^{v_0}(T_0, \cdot)$ and $v_1$ lie inside $B_{n_0 +1}$, are at a distance smaller than $2r$ from each other and both have $W^2_2([0,1])$-norm less than $H$ by the choice of $T_0$ and the fact that both $v_0$ and $v$ lie inside $B_{n_0 + \frac{1}{2}}$.
\item [$\diamond$] From there we let the path $\varphi^u$ follow the deterministic flow $U^{v_1}$ until the time $T_2^u = \inf \{ t \geq T_0 : d( U^{v_1}(t,\cdot), z^{(n)}) \leq r \}$ for some $n \in \Z - \{ 0 \}$.
\item [$\diamond$] If for some $t \in [0,T_2^u]$ we have $U^{v_1}(t,\cdot) \notin B_{n_0+r}$ then once again we have that $\varphi^u$ has already escaped $G$ and reached a distance from $G$ greater than $r$, in which case we end the construction here.
\item [$\diamond$] If this is not the case then we continue $\varphi^u$ by describing the linear interpolation between $v_2:=U^{v_1}(T_2^u,\cdot)$ and $v_3:=(1+r)z^{(n)}$ in time $T_3^u= d(v_2,v_3) \leq 2r$. Notice that $v_2$ and $v_3$ lie inside $B_{n_0 +1}$ and both have $W^2_2([0,1])$-norm less than $2H$ since we have that $v_2 = U^{U^{v_1}(T_2^u - T_0,\cdot)}(T_0,\cdot)$ and $U^{z^{(n)}}(T_0,\cdot)=z^{(n)}$.
\item [$\diamond$] Finally, we follow once again the deterministic flow $U^{v_3}$ until we reach a distance from $G$ greater than $r$ in a finite time $T^u_4$ which depends only on $r$ and $z^{(n)}$. Notice that this is possible due to the fact that $v_3 \in \mathcal{D}_e$ by Proposition \ref{descomp3}.
    \end{enumerate}
\item If $v_0 \in B_{n_0 + r} \cap \mathcal{D}_{\mathbf{0}} \cap \mathcal{W}_{(r^-)}^c$ then:
\begin{enumerate}
\item [$\diamond$] From there we let the path $\varphi^u$ follow the deterministic flow $U^{v_0}$ until the time $T_5^u = \inf \{ t \geq T_0 : U^{u}(t,\cdot) \in B_r\}$.
\item [$\diamond$] Next we fix $u^* \in W^z_u \cap \p B_r$ and consider $T^*= \inf \{ t \geq T_0 : U^{u^*}(t,\cdot) \in B_r \}$, a time which only depends on the choice of $u^*$. We then continue $\varphi^u$ by
describing the linear interpolation between $v_4:= U^{u}(T_5^u,\cdot)$ and $v_5:= U^{u^*}(T^*,\cdot)$ in time $T_6^u = d(v_4,v_5) \leq 2r$. Notice that $v_4$ and $v_5$ lie inside $G$ since $r < c$ and both have $W^2_2([0,1])$-norm less than $H$ by a similar argument to the one given above.
\item [$\diamond$] Once on $\mathcal{W}^z_u$ we let $\varphi^u$ follow the reverse deterministic flow until the time $T_7^u = \inf \{ t \geq 0 : U^{v_5}(-t,\cdot) \in G \cap \mathcal{W}_{(r^-)}\}$ which does not depend on $u$, but rather on the choice of $u^*$ instead.
\item [$\diamond$] We can then continue as in the second scenario, by noticing that $U^{v_5}(-T_7^u,\cdot)$ belongs to $B_{n_0}$ and has $W^2_2([0,1])$-norm less than $H$ since $\mathcal{W}^z_u \cap \mathcal{D}_{\mathbf{0}} \subseteq G$ by the mere construction of $G$.
\end{enumerate}
\item If $v_0 \in B_{n_0 + r} \cap \mathcal{D}_{e} \cap \mathcal{W}_{(r^-)}^c$ then we let the path $\varphi^u$ follow the deterministic flow $U^{v_0}$ until we reach a distance from $G$ greater than $r$ in a finite time $T^u_8$.
\end{itemize}
If built in this way, the path $\varphi^u$ verifies all the required properties. Indeed, we have that:
\begin{itemize}
\item [$\bullet$] Each $\varphi^u$ belongs to $C_{D_u} \cap W^{1,2}_2 ([0,T^u]\times [0,1])$ for some $T_u > 0$ (the sum of the corresponding $T^u_i$) since by construction $\varphi^u$ is piecewise differentiable.
\item [$\bullet$] The total time length $T^u$ of the path $\varphi^u$ is uniformly bounded in $u \in G$. Indeed, the total time which $\varphi^u$ spends following the deterministic flow is uniformly bounded by Proposition \ref{A.3} since there are only finitely many equilibrium \mbox{points lying in $B_{n_0+1}$.} On the other hand, there are at most three linear interpolations in $\varphi^u$, each of which lasts a time of length less than one. Finally, the time spent by $\varphi^u$ following the reverse deterministic flow is finite and does not depend on $u$.
\item Each $\varphi^u$ has total rate less than $\Delta + \frac{\delta}{3}$. Indeed, its total rate can be computed as the sum of the rate of each of its pieces.
We have already seen that each linear interpolation has rate less than $\frac{\delta}{9}$ and, since in any case there at most three of them, their total contribution is less than $\frac{\delta}{3}$. On the other hand, the pieces in which $\varphi^u$ follows the deterministic flow have zero rate. Finally, if $\varphi^u$ follows the reverse deterministic flow (i.e. $\p_t \varphi^u = - (\p^2_{xx} \varphi^u + g(\varphi^u)) )$ during the time interval $[t_1,t_2]$ then, similarly to Proposition \ref{Lyapunov}, we have
$$
\frac{1}{2}\int_{t_1}^{t_2} \int_0^1 |\p_t \varphi^u - (\p^2_{xx} \varphi^u + g(\varphi^u))|^{2} = 2 \int_{t_1}^{t_2} \frac{d S(\varphi^u(t,\cdot))}{dt} = 2 (S(\varphi^u(t_2)) -  S(\varphi^u(t_1)))
$$ from where, upon recalling that $\varphi^u(t_2),\varphi^u(t_1) \in \mathcal{W}^z_u$, we obtain that the rate of this last piece is less than $\Delta$.

\item Any path at a distance strictly less than $r$ from $\varphi^u$ in the supremum norm must also escape from $G$ before $T^u$, since by this time $\varphi^u$ reaches a distance $r$ from $G$.
\end{itemize}

Let us notice that for each $u \in G$ we have built a path $\varphi^u$ on the time interval $[0,T^u]$, but we wish all constructed paths to be defined on a same time interval.
For this reason, we consider $T^{(\delta)}:= \sup_{u \in G} T^u < +\infty$ and extend all $\varphi^u$ to the time interval $[0,T^{(\delta)}]$ by following the deterministic flow. It is easy to check that these extended paths
maintain the aforementioned properties. We then define the set $\mathcal{E}_{u,T^{(\delta)}}$ for each $u \in G$ as
$$
\mathcal{E}_{u,T^{(\delta)}}:=\{ \psi \in C_{D_u}([0,T^{(\delta)}] \times [0,1]) : d_{T^{(\delta)}}(\psi , \varphi^u ) < r\}.
$$ It is clear that each $\mathcal{E}_{u,T^{(\delta)}}$ verifies condition (i) by construction, whereas (ii) follows from the large deviations estimate (i) in Section \ref{secLDP}.
\end{proof}

Now, if we write $T^+_\varepsilon = e^{\frac{\Delta + \delta}{\varepsilon^2}}$ and split the interval $[0,T^+_\varepsilon]$ into subintervals of length $T^{(\delta)}$ given by Lemma \ref{cotasuplema0}, then by the Markov property for the solution of \eqref{MainSPDE} we obtain
\begin{equation}\label{boundonguniform}
P_u ( \tau_{\varepsilon}(\partial G) > T^+_\varepsilon) \leq P_u \left( \bigcap_{k=1}^{m_\varepsilon} \{ U^\varepsilon (t,\cdot) \in G \text{ for all } t \in [(k-1)T^{(\delta)} , kT^{(\delta)}] \}\right) \leq (1-\alpha_\varepsilon)^{m_\varepsilon}
\end{equation} where $\alpha_\varepsilon:=T^{(\delta)}e^{-\frac{\Delta + \frac{\delta}{2}}{\varepsilon^2}}$, $m_\varepsilon:=\left\lfloor\frac{T^+_\varepsilon}{T^{(\delta)}}\right\rfloor$ and for the second inequality we used that for $u \in G$
$$
\{ U^{u,\varepsilon}(t,\cdot) \in G \text{ for all }t \in [0,T^{(\delta)}]\} \subseteq \{ U^{u,\varepsilon} \notin \mathcal{E}_{u,T^{(\delta)}}\}.
$$ Since the bound \eqref{boundonguniform} is uniform on $G$, by taking $\varepsilon \rightarrow 0$ a direct \mbox{computation yields \eqref{cotsuplema1eq1}.}

\subsection{Lower bound on $\tau_\varepsilon(\partial G)$}\label{seclowerbound}

Our next purpose is to establish the lower bound on $\tau_\varepsilon(\p G)$ contained in the \mbox{theorem below.}

\begin{teo}There exists $r > 0$ sufficiently small such that
\begin{equation}\label{cotainferioreq0}
\lim_{\varepsilon \rightarrow 0} \left[ \sup_{u \in \p B_{r}} P_u \left( \tau_\varepsilon (\p G) < e^{\frac{\Delta - \delta}{\varepsilon^2}}\right) \right] = 0.
\end{equation}
\end{teo}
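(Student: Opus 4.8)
The plan is to run the standard Freidlin--Wentzell cycling argument, as in \cite{GOV,B1,MOS}. The heuristic is that the quasipotential barrier separating $\mathbf{0}$ from $\partial G$ is exactly $\Delta$ --- this is the content of the identity $V(\mathbf{0},\partial G)=\Delta$ established in the construction of $G$ in the previous chapter --- so that, by the large deviations estimates of Section \ref{secLDP}, each bounded-time ``attempt'' of the stochastic system to leave $G$ succeeds with probability at most $e^{-(\Delta-\delta/2)/\varepsilon^2}$; since an interval of length $e^{(\Delta-\delta)/\varepsilon^2}$ accommodates only of order $e^{(\Delta-\delta)/\varepsilon^2}$ such attempts --- exponentially fewer than $e^{(\Delta-\delta/2)/\varepsilon^2}$ --- a union bound based on the strong Markov property forces the escape probability to $0$.

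First I would fix the geometry. Choose $r>0$ small enough that $\overline{B_r}\subseteq B_c$; by part (ii) of Conditions \ref{assumpg} every deterministic orbit issuing from $\overline{B_r}$ then remains inside $B_{r_{\mathbf{0}}}$, which lies at a positive distance $d_0$ from $\partial G$. Using the reverse triangle inequality for the quasipotential, $V(v,\partial G)\ge V(\mathbf{0},\partial G)-V(\mathbf{0},v)=\Delta-V(\mathbf{0},v)$, together with the continuity of $V(\mathbf{0},\cdot)$ at $\mathbf{0}$ (Appendix) and $V(\mathbf{0},\mathbf{0})=0$, I would pick a neighbourhood $\mathcal{U}_{\mathbf{0}}\subseteq B_{r_{\mathbf{0}}}$ of $\mathbf{0}$ with $V(v,\partial G)\ge\Delta-\delta/4$ for all $v\in\mathcal{U}_{\mathbf{0}}$, and a slightly larger ball $B_{\rho_1}$ with $\overline{\mathcal{U}_{\mathbf{0}}}\subseteq B_{\rho_1}\subseteq\overline{B_{\rho_1}}\subseteq B_c$. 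By the Appendix (uniform convergence of the deterministic flow on bounded sets whose forward orbits remain bounded, Propositions \ref{A.2}, \ref{A.3}), there is a time $T_0<\infty$, uniform over $\overline{B_r}$ and over $\overline{B_{\rho_1}}$, after which every such orbit has entered $\mathcal{U}_{\mathbf{0}}$; fix a further time $T_1>0$ and set $T:=T_0+T_1$.

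The core estimate is the single-window bound $\sup_{v\in\mathcal{U}_{\mathbf{0}}}P_v\big(\tau_\varepsilon(\partial G)\le T\big)\le e^{-(\Delta-\delta/2)/\varepsilon^2}$ for all $\varepsilon$ small: since, by the choice of $\mathcal{U}_{\mathbf{0}}$ and of a small $h>0$, no path issuing from $\mathcal{U}_{\mathbf{0}}$ of rate at most $\Delta-\delta/2+h$ comes within $d_0/2$ of $\partial G$ within time $T$, the event $\{\tau_\varepsilon(\partial G)\le T\}$ is contained in $\{d_T(U^{v,\varepsilon},J^v_T(\Delta-\delta/2+h))\ge d_0/2\}$ uniformly in $v\in\mathcal{U}_{\mathbf{0}}$, and \eqref{LDP2} bounds its probability by $e^{-(\Delta-\delta/2)/\varepsilon^2}$. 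I would then iterate: from $u\in\partial B_r$ the process reaches $\mathcal{U}_{\mathbf{0}}$ before time $T_0$ without leaving $G$ except on a set of probability $\le e^{-C/\varepsilon^2}$ (by \eqref{grandes1}, since the flow does so and stays $d_0$-away from $\partial G$); thereafter one introduces the successive stopping times alternating entrances into $\mathcal{U}_{\mathbf{0}}$ with exits from $B_{\rho_1}$, observes that during the excursions the process --- again by \eqref{grandes1} applied on the fixed window $[0,T_0]$ --- either shadows the flow back to $\mathcal{U}_{\mathbf{0}}$ within time $T_0$ while remaining in $G$, or lies in an event of probability $\le e^{-(\Delta-\delta/2)/\varepsilon^2}$ (the single-window bound, applied at the last visit to $\mathcal{U}_{\mathbf{0}}$), and sums over the at most $\lceil e^{(\Delta-\delta)/\varepsilon^2}/T\rceil$ such excursions fitting into $[0,e^{(\Delta-\delta)/\varepsilon^2}]$, together with the negligible error terms, to obtain \eqref{cotainferioreq0}.

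I expect the main obstacle to be the non-compactness of $C_D([0,1])$: it is precisely what prevents bounding $V(u,\partial G)$ from below uniformly over $u\in\partial B_r$ directly (the sphere $\partial B_r$ carries functions of arbitrarily large potential, for which no such bound can hold), and it is the reason the cycling must be organised around returns to the genuinely small set $\mathcal{U}_{\mathbf{0}}$ --- reached by first letting the deterministic flow act and controlled through the quasipotential estimate of Chapter 3 --- rather than around $\partial B_r$ itself. Consequently the two pieces of uniformity the argument rests on, namely the uniform entrance time $T_0$ (and uniform excursion lengths) over the relevant non-compact families of initial data, and the uniformity in the initial datum of \eqref{grandes1} and \eqref{LDP2}, must be imported from the Appendix and from the form of the large deviations principle of Section \ref{secLDP}; they cannot be extracted from a compactness argument. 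The remaining stopping-time bookkeeping is routine but must be carried out carefully so that every estimate stays uniform over $u\in\partial B_r$.
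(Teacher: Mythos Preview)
Your overall architecture is exactly that of the paper --- an embedded Markov chain recording successive returns to a small ball around $\mathbf{0}$, the bound \eqref{cotainferioreq1} splitting the escape probability into ``too few excursions needed'' versus ``excursions too short'', and the single-excursion escape estimate driving everything. However, two steps of your outline do not go through as written and are precisely the places where the paper imports its two black-box lemmas.

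First, your derivation of the single-window bound is wrong. You write that ``continuity of $V(\mathbf{0},\cdot)$ at $\mathbf{0}$ (Appendix)'' gives $V(\mathbf{0},v)\le\delta/4$ for $v\in\mathcal{U}_{\mathbf{0}}$, and then conclude $V(v,\partial G)\ge\Delta-\delta/4$ via the triangle inequality. But the Appendix only furnishes \emph{lower} semicontinuity of $V(\mathbf{0},\cdot)$ (Proposition \ref{A.5}); upper semicontinuity at $\mathbf{0}$ is false in $C_D([0,1])$, since any $v\notin H^1_0$ --- and such $v$ exist in every sup-norm ball --- has $V(\mathbf{0},v)=+\infty$ (the endpoint of a finite-rate path lies in $W^{1,2}_2$, hence has a trace in $H^1_0$). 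So no sup-norm neighbourhood $\mathcal{U}_{\mathbf{0}}$ with your property exists. The paper bypasses this entirely: Lemma \ref{lemab1} gives directly $\sup_{u\in B_\rho(\mathbf{0})}P_u(\tau_\varepsilon((\partial\tilde G)_{(d)})\le T)\le e^{-(V(\mathbf{0},(\partial\tilde G)_{(2d)})-h)/\varepsilon^2}$, with the quasipotential evaluated at the single point $\mathbf{0}$, not at the variable starting point $u$. The proof (in \cite{B1}) modifies a low-rate path starting near $\mathbf{0}$ into one starting at $\mathbf{0}$ by first letting it flow a short time to gain $W^{2,2}$ regularity and then prepending a cheap linear interpolation --- exactly the mechanism you invoke elsewhere but not here.

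Second, your excursion bookkeeping does not close with \eqref{grandes1} alone. You claim that after exiting $B_{\rho_1}$ the process ``either shadows the flow back to $\mathcal{U}_{\mathbf{0}}$ within time $T_0$ \ldots\ or lies in an event of probability $\le e^{-(\Delta-\delta/2)/\varepsilon^2}$''. But the shadowing failure furnished by \eqref{grandes1} has probability $\le e^{-C/\varepsilon^2}$ for some $C>0$ that is determined by the deviation distance and the window $T_0$, and there is no reason for this $C$ to exceed $\Delta-\delta/2$. Since you must union-bound over $k_\varepsilon\sim e^{(\Delta-\delta/2)/\varepsilon^2}$ excursions, a per-excursion error of $e^{-C/\varepsilon^2}$ with $C$ small ruins the estimate. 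The paper handles this with Lemma \ref{lemab2}: given \emph{any} $K>0$ one can choose $T$ large so that the process, from anywhere in the bounded set, reaches a neighbourhood of some equilibrium or exits, except with probability $\le e^{-K/\varepsilon^2}$. Applied with $K=\Delta-\delta/8$, this closes \eqref{cotainferioreq4}; the point is that the unstable equilibria sit on $\partial\tilde G$, so ``reaching a neighbourhood of an equilibrium'' already means either returning to $B_{r_1}$ or hitting $(\partial\tilde G)_{(d)}$. This is genuinely stronger than what \eqref{grandes1} provides and is the piece your sketch is missing.
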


The key to establishing this lower bound is to observe that for initial data in a small neighborhood of the origin the path described by the stochastic system while it remains inside $G$ will typically consist of several failed attempts to reach $\p G$ followed by one last attempt which is successful and thus leads to the escape from $G$. Each of these failed attempts is an excursion drifting away from the origin which, having failed to reach $\p G$, later returns to (a small neighborhood of) the origin. Hence, the desired lower bound on the time the process needs in order to escape from $G$ can be obtained upon giving suitable bounds on the number and length of these excursions.

To accomplish this we consider, given constants $r_1,r_2 > 0$ such \mbox{that $r_1 < \frac{r_2}{2}$ and $r_2 < c$,} for each $u \in \p B_{r_1}$ and $\varepsilon > 0$ the increasing sequence of stopping times
$$
\left\{\begin{array}{l} \eta_0 = 0\\
\\
\sigma_0 = \inf \{ t \geq 0 : U^{u,\varepsilon}_t \in \p B_{r_2}\}
\end{array}\right.
$$
and for $n \in \N_0$
$$
\left\{\begin{array}{l} \eta_{n+1} = \inf \{ t > \sigma_n : U^{u,\varepsilon}_t \in \p B_{r_1} \cup (\partial \tilde{G})_{(d)}\}\\
\\
\sigma_{n+1} = \inf \{ t > \eta_{n+1} : U^{u,\varepsilon}_t \in \p B_{r_2}\}
\end{array}\right.
$$ where $\p \tilde{G}$ is the pre-domain defined in \eqref{predomain} and $d > 0$ is taken such that $B_{r_2}$ and $(\p \tilde{G})_{(d)}$ are at positive distance from each other, where
$$
(\partial \tilde{G})_{(d)}:=\{ u \in C_D([0,1]) : d(u,\partial \tilde{G}) \leq d \}.
$$ These positive constants $r_1,r_2$ and $d$ will be later taken conveniently small \mbox{during the proof.} Also, whenever any of the sets involved is empty we take the corresponding time as $+\infty$. We then define the Markov chain $(Z^{u,\varepsilon}_n)_{n \in \N}$ by the formula
$$
Z^{u,\varepsilon}_n=U^{u,\varepsilon}_{\eta_n}
$$ for each $n \in \N_0$, and set $\vartheta^u_\varepsilon := \min\{ n \in \N : Z^{u,\varepsilon}_n \in (\partial \tilde{G})_{(d)}\}$. Since the process $U^{u,\varepsilon}$ escapes $\tilde{G}$ in a finite time almost surely and we are only interested in events occurring before $\tau_\varepsilon(\p \tilde{G})$, we need not worry about the possibility of $Z_n^\varepsilon$ not being well defined.

Let us notice then that, since for any $u \in \p B_{r_1}$ we have $\tau^u_\varepsilon (\p \tilde{G}) \leq \tau^u_\varepsilon (\p G)$ by the continuity of the trajectories of $U^u$, for each $\delta > 0$ we obtain the bound
\begin{equation}\label{cotainferioreq1}
\sup_{u \in \p B_{r_1}} P_u \left( \tau_\varepsilon (\p G) < e^{\frac{\Delta - \delta}{\varepsilon^2}}\right) \leq \sup_{u \in \p B_{r_1}} P_u( \nu_\varepsilon \leq k_\varepsilon ) + \sup_{u \in \p B_{r_1}} P_u \left( \eta_{k_\varepsilon} < e^{\frac{\Delta - \delta}{\varepsilon^2}} \right)
\end{equation} where $k_\varepsilon \in \N$ is to be determined next for each $\varepsilon > 0$. Thus, we see that to obtain \eqref{cotainferioreq0} it suffices to show that, for a suitable choice of $(k_\varepsilon)_{\varepsilon > 0}$, both terms in the right hand side of \eqref{cotainferioreq1} vanish as $\varepsilon \rightarrow 0$. We will do this with the aid of the following two lemmas, which are slight modifications of two results originally appearing in \cite{B1}.

\begin{lema}\label{lemab1} Let $F,B \subseteq C_D([0,1])$ be bounded sets and suppose $\psi \in C_D([0,1])$ is such that $d(\psi, F) > 3r$ for some $r > 0$. Then for any $h,T > 0$ there exists $r^* > 0$ such that
$$
\sup_{u \in B_{\rho}(\psi)} P_u ( \tau_\varepsilon( F_{(r)}) \leq  \min \{ T , \tau_\varepsilon( B^c )\} ) \leq e^{-\frac{V(\psi,F_{(2r)}) - h}{\varepsilon^2}}
$$ for any $0 < \rho < r^*$.
\end{lema}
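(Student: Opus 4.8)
The plan is to reduce the claim to the large deviations lower/upper estimates of Section~\ref{secLDP} (specifically \eqref{LDP2}) applied on a fixed finite time horizon, combined with a soft compactness/continuity argument to produce the uniform constant $r^*$. First I would fix $h,T>0$ and the point $\psi$ with $d(\psi,F)>3r$, and I would observe that since we are working inside the bounded set $B$ (and before the escape time $\tau_\varepsilon(B^c)$), we may freely replace the source term $g$ by a globally Lipschitz truncation $g_n$ for $n$ large enough that $B$ is contained in the ball $B_n$; this is exactly the localization device of Section~\ref{trunca}, and it is what makes the Freidlin--Wentzell estimates available. Throughout, the process $U^{u,\varepsilon}$ may thus be taken to be the globally defined truncated process.

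The heart of the matter is the following: on the event $\{\tau_\varepsilon(F_{(r)})\le\min\{T,\tau_\varepsilon(B^c)\}\}$ the path $U^{u,\varepsilon}$ restricted to $[0,T]$ stays in $\overline B$ and reaches $F_{(r)}$, hence it lies at $d_T$-distance at least, say, $r$ from every path $\varphi\in C_{D_u}([0,T]\times[0,1])$ that either leaves $\overline B$ or never enters $F_{(2r)}$. More precisely, I would argue that for $u$ close enough to $\psi$ (within $\rho$ of it, $\rho$ to be chosen), the event above is contained in $\{d_T(U^{u,\varepsilon},J^u_T(s))\ge \tfrac{r}{2}\}$ where $s:=V(\psi,F_{(2r)})-h$ and $J^u_T(s)=\{\varphi: I^u_T(\varphi)\le s\}$ is the level set from \eqref{LDP2}. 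The point is that \emph{any} $\varphi$ with $I^u_T(\varphi)\le s<V(\psi,F_{(2r)})$ cannot connect $\psi$ to $F_{(2r)}$ (by definition of the quasipotential, after also taking care of the fact that $\varphi$ starts at $u\ne\psi$ rather than at $\psi$ — this is where I use that a short linear-interpolation prefix of small rate joins $\psi$ to $u$, at a cost that is $o(1)$ as $\rho\to 0$, so it can be absorbed into $h$); therefore if $U^{u,\varepsilon}$ does reach $F_{(r)}$ while staying in $\overline B$, it must be at distance $\ge r/2$ from the whole level set $J^u_T(s)$ for $u$ close to $\psi$. Once this inclusion of events is established, estimate \eqref{LDP2} with $\delta=r/2$ gives directly
$$
\sup_{u\in B_\rho(\psi)} P_u\big(\tau_\varepsilon(F_{(r)})\le\min\{T,\tau_\varepsilon(B^c)\}\big)\le e^{-\frac{s-h'}{\varepsilon^2}}
$$
for every $0<\varepsilon<\varepsilon_0$, and after relabelling $h'$ this is the asserted bound with $V(\psi,F_{(2r)})-h$ in the exponent (adjusting $h$ by a harmless constant factor, or running the argument with $h/2$ from the start).

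The step I expect to be the main obstacle is making the set inclusion genuinely \emph{uniform} in $u\in B_\rho(\psi)$ and extracting the single radius $r^*$: one must show that there is $\rho>0$ such that for all $u$ with $\|u-\psi\|_\infty<\rho$, every path of rate $\le s$ starting at $u$ stays at distance $\ge r/2$ from $F_{(r)}$ as long as it remains in $\overline B$. This requires a semicontinuity property of the quasipotential in its first argument — that $\liminf_{u\to\psi} V(u,F_{(2r)})\ge V(\psi,F_{(2r)})$, equivalently that short cheap interpolations between $\psi$ and $u$ contribute vanishing rate — which should follow from the estimate $I(s)\le C_{n,M}\|w-u\|_\infty$ for velocity-one linear interpolations derived in the proof of Lemma~\ref{cotasuplema0}, together with the lower semicontinuity properties of $V$ recorded in the Appendix. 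The remaining ingredients (the reduction to the truncated system, the identification of the event with a large-deviations deviation event, and the application of \eqref{LDP2}) are then routine. Finally I would note that the constant $r^*$ produced in this way depends only on $h$, $T$, $r$, the sets $F,B$ and the point $\psi$, as claimed.
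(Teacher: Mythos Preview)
The paper does not actually prove this lemma: it is stated (together with Lemma~\ref{lemab2}) as a ``slight modification'' of results from \cite{B1} and no argument is given. Your proposal is the standard Freidlin--Wentzell argument for such exit/entrance estimates --- truncate to a globally Lipschitz source so that the uniform upper bound \eqref{LDP2} applies, then show that on the event the trajectory must lie at $d_T$-distance $\ge r/2$ from the sublevel set $J^u_T(s)$ with $s=V(\psi,F_{(2r)})-h/2$, because any $\varphi$ within $r/2$ of such a trajectory would itself reach $F_{(3r/2)}\subseteq F_{(2r)}$ while remaining in a neighbourhood of $\overline B$, contradicting $I(\varphi)\le s<V(u,F_{(2r)})$. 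This is correct and is exactly the approach used in \cite{B1}.

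Two small remarks. First, your sentence ``the path $U^{u,\varepsilon}$ restricted to $[0,T]$ stays in $\overline B$'' is not quite right: on the event it stays in $\overline B$ only up to time $\tau_\varepsilon(F_{(r)})$, and may leave afterwards. This is harmless --- the containment argument only uses the trajectory on $[0,\tau_\varepsilon(F_{(r)})]$, and the truncation level $n$ should be chosen so that an $r/2$-neighbourhood of $\overline B$ lies in $B_n$, which makes $I^{(n)}$ agree with $I$ on the relevant portion of $\varphi$. Second, you correctly flag the prefix-interpolation step (passing from $V(\psi,F_{(2r)})$ to $V(u,F_{(2r)})$) as the place needing care; note that the linear-interpolation bound from the proof of Lemma~\ref{cotasuplema0} requires $W^2_2$-regularity at the endpoints, so for a general $\psi\in C_D([0,1])$ one first flows forward for a short time $t_0$ (zero cost, regularizing by Proposition~\ref{A.1}), interpolates between $U^\psi(t_0,\cdot)$ and $U^u(t_0,\cdot)$ (cost $\le C\|u-\psi\|_\infty$ by Proposition~\ref{G.2}), and then prepends this to the shifted path. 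In the actual applications in the paper the lemma is only invoked with $\psi=\mathbf 0$ and $\psi=\pm z$, both smooth, so the issue does not arise in practice.
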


\begin{lema}\label{lemab2} Let $B \subseteq C_D([0,1])$ be bounded and closed. If for $e > 0$ we consider
$$
C_e := \bigcup_{n \in \Z } B_e(z^{(n)}).
$$ then given $K > 0$ there exists $T > 0$ such that
$$
\sup_{u \in B} P_u \left( \min\{ \tau(C_e), \tau( B^c )\} > T \right) \leq e^{- \frac{K}{\varepsilon^2}}.
$$
\end{lema}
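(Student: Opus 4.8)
The plan is to combine a \emph{deterministic} claim---that the flow of \eqref{MainPDE} started anywhere in $B$ either reaches a neighbourhood of the equilibrium set or leaves $B$ within a uniformly bounded time---with the large deviations estimate \eqref{grandes1}, and then to iterate over blocks via the Markov property so as to boost the resulting rate constant up to $K$. Throughout we work, as agreed at the beginning of the chapter, with the truncated process, so that $U^{u,\varepsilon}=U^{(n_0+1),u,\varepsilon}$ is globally defined; and (as in every application of the lemma) we may assume without loss of generality that $B$ lies at a positive distance from $\partial B_{n_0}$. In particular we may fix $\delta>0$ small enough that the closed $\delta$-neighbourhood $B_{(\delta)}:=\{u\in C_D([0,1]):d(u,B)\le\delta\}$ is still contained in $B_{n_0}$, where the truncated and the original potentials---hence their flows---coincide, so that Theorem \ref{descomp1} is available.

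\textbf{The deterministic step.} I would first show: there is $T_0>0$ such that for every $u\in B_{(\delta)}$ the deterministic solution $U^u$ of \eqref{MainPDE} satisfies $U^u(t,\cdot)\in C_{e/2}$ or $U^u(t,\cdot)\notin B_{(\delta)}$ for some $t\le T_0$. Suppose not; then there are $u_k\in B_{(\delta)}$ whose trajectories stay inside the closed bounded set $\overline{B_{(\delta)}}\setminus C_{e/2}\subseteq B_{n_0}$ for all $t\le k$. Fix a small $t_0>0$; by Propositions \ref{G.1} and \ref{A.2} the family $\{U^{u_k}(t_0,\cdot)\}_k$ is relatively compact, so along a subsequence $U^{u_{k_j}}(t_0,\cdot)\to v_\infty$. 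By continuous dependence of the flow on the initial datum together with a diagonal argument, $U^{v_\infty}$ is globally defined, bounded, and stays inside $\overline{B_{(\delta)}}\setminus C_{e/2}$ for all $t\ge0$, so in particular it never enters $C_{e/2}$. But then Theorem \ref{descomp1}(ii) forces $U^{v_\infty}$ to converge to a critical point of $S$, which by Proposition \ref{equilibrios} is some $z^{(n)}$; hence $U^{v_\infty}(t,\cdot)\in B_{e/2}(z^{(n)})\subseteq C_{e/2}$ for all large $t$, a contradiction. This proves the claim.

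\textbf{The stochastic step and iteration.} Fix $T_0$ as above and set $\eta:=\tfrac12\min\{e,\delta\}$. On the event $\{d_{T_0}(U^{u,\varepsilon},U^u)\le\eta\}$ the deterministic claim forces $U^{u,\varepsilon}(t,\cdot)\in C_e$ or $U^{u,\varepsilon}(t,\cdot)\notin B$ for some $t\le T_0$; that is, $\min\{\tau_\varepsilon^u(C_e),\tau_\varepsilon^u(B^c)\}\le T_0$ there. By \eqref{grandes1} (which is uniform in the initial datum) there exist $\varepsilon_0,C>0$ with $\sup_u P_u(d_{T_0}(U^{u,\varepsilon},U^u)>\eta)\le e^{-C/\varepsilon^2}$ for $0<\varepsilon<\varepsilon_0$, whence $\sup_{u\in B}P_u(\min\{\tau_\varepsilon(C_e),\tau_\varepsilon(B^c)\}>T_0)\le e^{-C/\varepsilon^2}$. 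Now put $m:=\lceil K/C\rceil$ and $T:=mT_0$: splitting $[0,T]$ into $m$ consecutive intervals of length $T_0$ and applying the Markov property at the endpoints $kT_0$ (on the event under consideration $U^{u,\varepsilon}$ has not yet left $B$, so each conditional starting point lies in $B$), one gets $\sup_{u\in B}P_u(\min\{\tau_\varepsilon(C_e),\tau_\varepsilon(B^c)\}>T)\le(e^{-C/\varepsilon^2})^m\le e^{-K/\varepsilon^2}$, which is the assertion.

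\textbf{Main obstacle.} The delicate point is the uniform deterministic claim: because $C_D([0,1])$ is infinite-dimensional, the compactness needed to pass to a limiting trajectory is not automatic and must be extracted from the smoothing properties of the parabolic semiflow collected in the Appendix (Propositions \ref{G.1}, \ref{A.2}) together with continuous dependence on initial data. One must also be mildly careful in reducing to the non-truncated equation (hence the shrinking to $B_{(\delta)}\subseteq B_{n_0}$) and in arranging that $\eta$-closeness really produces a point of $B^c$ rather than merely of $\partial B$, which is precisely why the deterministic exit is demanded from the strictly larger set $B_{(\delta)}$.
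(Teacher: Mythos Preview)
The paper does not actually supply a proof of this lemma: it states Lemmas~\ref{lemab1} and~\ref{lemab2} together as ``slight modifications of two results originally appearing in~\cite{B1}'' and immediately proceeds to use them. So there is no in-paper argument to compare against, only the reference.

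That said, your proposal is correct and is precisely the standard Freidlin--Wentzell-type argument one would expect (and is the natural adaptation of the finite-dimensional proof in the spirit of~\cite{B1}): a uniform-in-$u$ deterministic bound on the time to either approach the equilibrium set or leave a slightly enlarged domain, combined with the pathwise large deviations estimate~\eqref{grandes1}, then boosted to any prescribed rate $K$ via the Markov property over $m=\lceil K/C\rceil$ consecutive blocks of length~$T_0$. Your compactness step is handled exactly as elsewhere in the thesis (Propositions~\ref{G.1},~\ref{A.2}), and the passage to a limiting trajectory that contradicts Theorem~\ref{descomp1} is clean.

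One cosmetic point: since the chapter works with the truncation at level $n_0+1$ (so that $U^{u,\varepsilon}=U^{(n_0+1),u,\varepsilon}$), the relevant ball in your reduction is $B_{n_0+1}$ rather than $B_{n_0}$; in every application of the lemma in the chapter the set $B$ is contained in $\overline{G}\subseteq B_{n_0}$, hence automatically at distance at least~$1$ from $\partial B_{n_0+1}$, so your ``without loss of generality'' is justified once the index is adjusted.
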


Now, in order to deal with the first term in the right hand side of \eqref{cotainferioreq1} it will suffice to establish the bound
\begin{equation}\label{cotainferioreq2}
\sup_{u \in \p B_{r_1}} P_u ( \vartheta_\varepsilon = 1 ) \leq e^{- \frac{\Delta - \frac{\delta}{4}}{\varepsilon^2}}
\end{equation} for $\varepsilon > 0$ sufficiently small. Indeed, if we prove \eqref{cotainferioreq2} then by the Markov property of $Z^\varepsilon$ we obtain
$$
\inf_{u \in \p B_{r_1}} P_u( \vartheta_\varepsilon > n ) \geq \left(1 - e^{- \frac{\Delta - \frac{\delta}{4}}{\varepsilon^2}}\right)^n
$$ for every $n \in \N$ and $\varepsilon > 0$ sufficiently small, which implies the inequality
\begin{equation}\label{cotainferioreq5}
\sup_{u \in \p B_{r_1}} P_u( \vartheta_\varepsilon \leq k_\varepsilon ) \leq 1 - \left(1 - e^{- \frac{\Delta - \frac{\delta}{4}}{\varepsilon^2}}\right)^{k_\varepsilon}
\end{equation} whose right hand side vanishes as $\varepsilon \rightarrow 0$ if we set for example $k_\varepsilon := \left[\exp\left({\frac{\Delta - \frac{\delta}{2}}{\varepsilon^2}}\right)\right]+1$.
Thus, let us check that \eqref{cotainferioreq2} holds. Notice that the strong Markov property for $\sigma_0$ yields
$$
\sup_{u \in \p B_{r_1}} P_u( \vartheta_\varepsilon = 1 ) \leq \sup_{u \in \p B_{r_2}} P_u \left( \tau_\varepsilon ((\partial \tilde{G})_{(d)}) = \tau_\varepsilon (\p B_{r_1} \cup (\partial \tilde{G})_{(d)})\right)
$$ so that to check \eqref{cotainferioreq2} it will suffice to find $T > 0$ such that
\begin{equation}\label{cotainferioreq3}
\sup_{u \in \p B_{r_2}} P_u \left( \tau_\varepsilon ((\partial \tilde{G})_{(d)}) \leq T\right) \leq e^{- \frac{ \Delta - \frac{\delta}{8}}{\varepsilon^2}}
\end{equation}
and
\begin{equation}\label{cotainferioreq4}
\sup_{u \in \p B_{r_2}} P_u \left( \tau_\varepsilon (\p B_{r_1} \cup (\partial \tilde{G})_{(d)}) > T\right) \leq e^{- \frac{ \Delta - \frac{\delta}{8}}{\varepsilon^2}}
\end{equation}
for every $\varepsilon > 0$ sufficiently small.

Now, let us observe that, since the potential $S$ attains in $\pm z$ its minimum on $\partial \tilde{G}$, by proceeding as in the proof of Lemma \ref{cotasuplema0} one can show that $V(\mathbf{0},\p \tilde{G})=\Delta$.
Therefore, by conducting a similar argument to the one given in the construction of $G$, it follows that the set
$$
\mathcal{V} := \left\{ u \in C_D([0,1]) : V(\mathbf{0},u) \leq \Delta - \frac{\delta}{16} \right\}
$$ is contained in $\tilde{G}$ and satisfies $d( \mathcal{V},\mathcal{W} ) > 0$. Furthermore, since $\mathcal{V}$ is contained in $\overline{D_\mathbf{0}}$ and the set $\{ u \in \overline{D_\mathbf{0}} : 0 \leq S(u) \leq S(z) \}$ is contained in $B_{n_0 -1}$, then Proposition \ref{costo} implies that $d( \mathcal{V}, \p B_{n_0}) > 0$ and thus we obtain that $d(\mathcal{V}, \p \tilde{G}) \geq \min\{ d( \mathcal{V},\mathcal{W} ) , d( \mathcal{V}, \p B_{n_0})\} > 0$. In particular, if we take $d < \frac{d(\mathcal{V}, \p \tilde{G})}{3}$ then we have $V(\mathbf{0}, (\partial \tilde{G})_{(2d)}) \geq \Delta - \frac{\delta}{16}$ and therefore by Lemma \ref{lemab1} we conclude that for a fixed $T > 0$ and $r_2 > 0$ sufficiently small
$$
\sup_{u \in \p B_{r_2}} P_u ( \tau_\varepsilon ( (\p \tilde{G})_{(d)}) \leq T ) \leq e^{- \frac{ \Delta - \frac{\delta}{8}}{\varepsilon^2}}
$$ provided that $\varepsilon > 0$ is sufficiently small, which yields \eqref{cotainferioreq3}.

On the other hand, if we take $0 < e < \min\{ r_1 , d\}$ then we have that $B_e( z^{(n)} ) \subseteq (\p \tilde{G})_{(d)}$ for any $n \in \Z - \{ 0 \}$ such that $z^{(n)} \in \tilde{G}$. In particular, we see that
\begin{equation}\label{cotainferioreq6}
\sup_{u \in \p B_{r_2}} P_u \left( \tau_\varepsilon (\p B_{r_1} \cup (\partial \tilde{G})_{(d)}) > T\right) \leq \sup_{u \in \p B_{r_2}} P_u \left( \min\{ \tau(C_e), \tau( (\partial \tilde{G})_{(d)} )\} > T \right).
\end{equation} It then follows from Lemma \ref{lemab2} that $T > 0$ can be taken sufficiently large so that \eqref{cotainferioreq4} holds. Together with \eqref{cotainferioreq3}, this yields \eqref{cotainferioreq2} and establishes the convergence to zero of the first term in the right hand side of \eqref{cotainferioreq1}. Thus, it only remains to establish the same convergence for the second term.

Notice that by Proposition \ref{G.1} there exists some $T_{r_2} > 0$ such that for any $u \in \p B_{r_2}$ the system $U^u$ spends a time of length at least $T_{r_2}$ before reaching $B_{\frac{r_2}{2}}$. Furthermore, we may assume that $r_2$ is sufficiently small so that the path described by $U^u$ until time $T_{r_2}$ is at a distance greater than $\frac{r_2}{2}$ from $(\p \tilde{G})_{(d)}$. By the strong Markov property and \eqref{grandes1} we conclude that for $\varepsilon > 0$ sufficiently small
$$
\inf_{u \in \p B_{r_1}} P_u ( \eta_1 \geq T_{r_2} ) \geq \inf_{u \in \p B_{r_2}} P_u \left( d_{T_{r_2}}\left( U^{(n_0),\varepsilon},U^{(n_0)} \right) < \frac{r_2}{2} - r_1 \right) \geq \frac{2}{3}.
$$ Let us observe that since for any $k \in \N$ we have the inequality $\eta_k \geq \sum_{i=1}^k T_{r_2} \mathbbm{1}_{\{\eta_i - \eta_{i-1} \geq T_{r_2}\}}$, by definition of $k_\varepsilon$ we obtain that for $\varepsilon > 0$ sufficiently small
$$
\sup_{u \in \p B_{r_1}} P_u \left( \eta_{k_\varepsilon} < e^{\frac{\Delta - \delta}{\varepsilon^2}} \right) \leq \sup_{u \in \p B_{r_1}} P_u \left( \frac{\eta_{k_\varepsilon}}{k_\varepsilon} < e^{- \frac{\delta}{2\varepsilon^2}} \right) \leq P \left( \frac{1}{k_\varepsilon}\sum_{i=1}^{k_\varepsilon} X_i < \frac{e^{- \frac{\delta}{2\varepsilon^2}}}{T_{r_2}}\right)
$$ where $(X_i)_{i \in \N}$ is a sequence of i.i.d. Bernoulli random variables with parameter $p=\frac{2}{3}$. The result now follows at once from the law of large numbers.

\section{The escape route}

We are now interested in characterizing the typical route that the stochastic system describes to escape from $G$. By the considerations made at the beginning of this Section we expect this typical path to escape $G$ by going through the region of the boundary with the lowest quasipotential, namely $\p^{\pm z}$. More precisely, we wish to prove the \mbox{following theorem.}
\begin{teo}\label{teoescape0} If $c > 0$ is given by (ii) in Conditions \ref{assumpg} then
\begin{equation}\label{escape0}
\lim_{\varepsilon \rightarrow 0} \left[\sup_{u \in B_c} P_u \left( U^{\varepsilon}(\tau_\varepsilon(\partial G),\cdot) \notin \partial^{\pm z}\right)\right] = 0.
\end{equation}
\end{teo}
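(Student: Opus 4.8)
The plan is to follow the quasipotential‑comparison strategy of \cite{GOV,B1}. Write $\p^{\mathrm{rest}}:=\p G\setminus\p^{\pm z}$ for the ``expensive'' part of the boundary. From the construction of $G$ --- in particular from \eqref{lejosdelminimo} and the choice of $\zeta_2$ --- one has $V(\mathbf 0,\p^{\mathrm{rest}})\ge\Delta+\zeta_2$, while the escape from $G$ itself costs only $\Delta$, the infimum being essentially realized on the cheap sub‑region $\tilde\p^z\subseteq\p^{\pm z}$. At the same time, by Theorem~\ref{cotsuplema1} the system leaves $G$ before time $T_\varepsilon:=e^{(\Delta+\delta')/\varepsilon^2}$ with overwhelming probability --- a time scale too short to afford the extra cost $\zeta_2$ needed to exit through $\p^{\mathrm{rest}}$. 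I would make this precise as follows. Since an exit through $\p^{\mathrm{rest}}$ forces the trajectory into $(\p^{\mathrm{rest}})_{(d)}$ (for any fixed small $d>0$) at the escape time, we have
$$
P_u\big(U^\varepsilon(\tau_\varepsilon(\p G),\cdot)\notin\p^{\pm z}\big)\ \le\ P_u\big(\tau_\varepsilon(\p G)>T_\varepsilon\big)\ +\ P_u\big(\tau_\varepsilon((\p^{\mathrm{rest}})_{(d)})\le T_\varepsilon\wedge\tau_\varepsilon(\p G)\big),
$$
and the first term is handled by Theorem~\ref{cotsuplema1}, uniformly over $u\in B_c$; it remains to control the second.

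For the second term I would first reduce to initial data on a small sphere $\p B_{r_1}$ around $\mathbf 0$: from $u\in B_c$, by (ii) in Conditions~\ref{assumpg} and \eqref{grandes1} the process lies in $B_{r_1/2}$ at a fixed time without having left $G$, with probability $1-e^{-C/\varepsilon^2}$, and must then cross $\p B_{r_1}$ before escaping $G$, so the strong Markov property reduces everything to $\sup_{u\in\p B_{r_1}}$. Then I would run the excursion/renewal scheme of Section~\ref{seclowerbound} --- the alternating stopping times between $\p B_{r_1}$ and $\p B_{r_2}$ and the embedded Markov chain $(Z^{u,\varepsilon}_n)$ --- now with detection set $\p B_{r_1}\cup(\p G)_{(d)}$. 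Two estimates close the argument. First, a per‑excursion bound of the form $\sup_{u\in\p B_{r_1}}P_u(Z^{u,\varepsilon}_1\in(\p^{\mathrm{rest}})_{(d)})\le e^{-(\Delta+\zeta_2-2h)/\varepsilon^2}$, obtained exactly as \eqref{cotainferioreq2} was: apply the strong Markov property at $\sigma_0$, bound the event of reaching $(\p^{\mathrm{rest}})_{(d)}$ within a fixed time by Lemma~\ref{lemab1} (using $V(\mathbf 0,(\p^{\mathrm{rest}})_{(2d)})\ge\Delta+\zeta_2-h$), and bound the complementary event by Lemma~\ref{lemab2}. Second, each excursion returning to $\p B_{r_1}$ lasts at least a deterministic time $T_0>0$ with probability $\ge\tfrac23$ (via \eqref{grandes1}, as in Section~\ref{seclowerbound}), so by the law of large numbers the number of completed excursions before $T_\varepsilon\wedge\tau_\varepsilon(\p G)$ is at most $k_\varepsilon:=\lfloor T_\varepsilon/T_0\rfloor$ with probability $1-o(1)$. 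Combining these through the Markov property of $(Z^{u,\varepsilon}_n)$ gives
$$
\sup_{u\in\p B_{r_1}}P_u\big(\tau_\varepsilon((\p^{\mathrm{rest}})_{(d)})\le T_\varepsilon\wedge\tau_\varepsilon(\p G)\big)\ \le\ o(1)+k_\varepsilon\,e^{-\frac{\Delta+\zeta_2-2h}{\varepsilon^2}}\ =\ o(1)+\tfrac1{T_0}\,e^{\frac{\delta'+2h-\zeta_2}{\varepsilon^2}},
$$
which vanishes once $\delta',h$ are chosen with $\delta'+2h<\zeta_2$. Transferring back to $u\in B_c$ via the reduction above then yields \eqref{escape0}.

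The main obstacle, as everywhere in this part, is the lack of compactness of $C_D([0,1])$: feeding Lemma~\ref{lemab1} requires a uniform lower bound for the quasipotential over a genuine neighborhood $(\p^{\mathrm{rest}})_{(2d)}$ of the expensive boundary part, and $V(\mathbf 0,\cdot)$ is only lower semicontinuous. This is overcome precisely as in the construction of $G$ and in Section~\ref{seclowerbound} --- evolving the competing sequences along the deterministic flow for a short time, extracting a convergent subsequence via Proposition~\ref{A.2}, and exploiting the monotonicity of $t\mapsto V(\mathbf 0,U^u(t,\cdot))$ together with the lower semicontinuity of $S$ and $V$ --- so that the distance estimates behind \eqref{lejosdelminimo} and Proposition~\ref{costo}, which underlie $V(\mathbf 0,\p^{\mathrm{rest}})\ge\Delta+\zeta_2$, persist on a small neighborhood. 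A secondary subtlety is that $\p^{\pm z}$ and $\p^{\mathrm{rest}}$ are not separated (they meet along $\p B_{r_z}(\pm z)$), so one cannot literally track which piece of $\p G$ the trajectory reaches; phrasing the comparison in terms of quasipotential cost --- and keeping $d$ small enough that the cost‑$\Delta$ escape route stays away from $(\p^{\mathrm{rest}})_{(d)}$ --- is exactly what circumvents this.
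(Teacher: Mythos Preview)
Your union-bound-over-excursions strategy is a genuine alternative to the paper's ratio approach (the paper bounds $P_v(Z_1^\varepsilon\in\partial G\setminus\partial^{\pm z})/P_v(Z_1^\varepsilon\in\partial G)$ directly in a single excursion, the denominator via Lemma~\ref{lemab0}), but your per-excursion estimate has a real gap. You claim it follows ``exactly as \eqref{cotainferioreq2} was'': split into $\{\tau_\varepsilon((\partial^{\mathrm{rest}})_{(d)})\le T\}$ (Lemma~\ref{lemab1}) and the complementary long-excursion event (Lemma~\ref{lemab2}). The analogy breaks because in \eqref{cotainferioreq2} the detection set is $(\partial\tilde G)_{(d)}$, and since $\pm z\in\mathcal W\subseteq\partial\tilde G$ one has $B_e(\pm z)\subseteq(\partial\tilde G)_{(d)}$ for small $e$, so \eqref{cotainferioreq6} holds and Lemma~\ref{lemab2} bounds the excursion length. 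In your setting the detection set is $(\partial G)_{(d)}$, but the saddle points $\pm z$ are now \emph{interior} to $G$, at distance $r_z$ from $\partial G$; hence $B_e(\pm z)$ is disjoint from $(\partial G)_{(d)}$ for small $d,e$. An excursion from $\partial B_{r_2}$ can enter $B_e(\pm z)$ --- which Lemma~\ref{lemab2} says happens quickly --- without terminating, and you have no control on how long it lingers there. So Lemma~\ref{lemab2} does not bound $P_v(\eta_1>T)$, and the per-excursion bound is not established.

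This is exactly the obstacle the paper isolates via the three-way decomposition \eqref{decomp} according to which of $\tau_\varepsilon^0=\tau_\varepsilon((\partial^{\mathrm{rest}})_{(d)})$, $\tau_\varepsilon^1=T$, $\tau_\varepsilon^2=\tau_\varepsilon(B_e(\pm z))$ occurs first. The first two cases are handled essentially as you propose, but the case $\tau_\varepsilon^2=\min$ requires a separate, nontrivial argument: a second embedded chain oscillating between $\partial B_e(\pm z)$ and $\partial B_f(\pm z)$, plus the comparison \eqref{eqlemab3} showing that from near $\pm z$, exiting through $\partial G\setminus\partial^{\pm z}$ without returning to $B_{r_1}$ is exponentially unlikely relative to exiting $G$ at all. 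Something equivalent to this step is needed to close either the paper's ratio or your union bound; your sketch does not supply it.
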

To prove this result we shall need to establish the following two crucial facts:
\begin{enumerate}
\item [i.] For each $r > 0$ strictly smaller than $c$
$$
\lim_{\varepsilon \rightarrow 0} \left[\sup_{u \in B_c}
P_u\left(\tau_\varepsilon(\partial G) < \tau\left(B_r\right)\right)\right]=0.
$$
\item [ii.] For any $r > 0$ sufficiently small
\begin{equation}\label{escape1}
   \lim_{\varepsilon \rightarrow 0} \left[\sup_{u \in \partial B_r} P_u \left( U^{\varepsilon}(\tau_\varepsilon(\partial G),\cdot) \notin \partial^{\pm z} \right) \right] = 0.
\end{equation}
\end{enumerate}
Indeed, \eqref{escape0} follows immediately from (i) and (ii) by applying the strong \mbox{Markov property.} Hence, it will suffice to establish (i) and (ii). Assertion (i) is shown in the following lemma.

\begin{lema}\label{escapelemai}
For each $r > 0$ strictly smaller than $c$ we have
$$
\lim_{\varepsilon \rightarrow 0} \left[\sup_{u \in B_c}
P_u\left(\tau_\varepsilon(\partial G) < \tau\left(B_r\right)\right)\right]=0.
$$
\end{lema}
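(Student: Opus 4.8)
The plan is to reduce the statement to the large deviations continuity estimate \eqref{grandes1} by first proving a purely deterministic fact: the solution $U^u$ reaches a small ball around the origin within a time $T$ that is \emph{uniform} in $u \in B_c$, while staying well inside $G$. To set this up, note first that since $B_{2r_{\mathbf 0}} \subseteq G$ by (i) in Conditions \ref{assumpg}, every point of $\partial G$ is a limit of points of $G^c \subseteq B_{2r_{\mathbf 0}}^{\,c}$, so $\rho_1 := d(B_{r_{\mathbf 0}}, \partial G) \geq r_{\mathbf 0} > 0$. By (ii) in Conditions \ref{assumpg}, for every $u \in B_c$ the orbit $U^u$ stays in $B_{r_{\mathbf 0}}$ for all times and converges to $\mathbf 0$, hence eventually enters $B_{r/2}$ (here we use $r < c \leq r_{\mathbf 0}$); the only issue is uniformity. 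I would obtain it by the usual device: fix a small $t_0 > 0$ and use Propositions \ref{G.1} and \ref{A.2} to see that $\{ U^u(t_0,\cdot) : u \in B_c \}$ has compact closure $\mathcal K' \subseteq B_{r_{\mathbf 0}} \subseteq B_{2r_{\mathbf 0}} \subseteq \mathcal D_{\mathbf 0}$, so $U^v \to \mathbf 0$ for every $v \in \mathcal K'$. Covering $\mathcal K'$ by finitely many neighbourhoods on each of which $\| U^w(t_v,\cdot)\|_\infty < r/2$ for a fixed time $t_v$ (continuity of the flow, Proposition \ref{G.2}) produces a uniform $T_1$, and then $T := t_0 + T_1$ has the property that every $u \in B_c$ satisfies $\| U^u(s,\cdot)\|_\infty < r/2$ for some $0 \leq s \leq T$, with $U^u(t,\cdot) \in B_{r_{\mathbf 0}}$ throughout $[0,T]$.

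With this in hand I would compare $U^{u,\varepsilon}$ with $U^u$ on $[0,T]$. Within this chapter $U^{u,\varepsilon}$ is the solution of the truncated equation \eqref{eqtruncada} with $n = n_0+1$; since $U^u$ never leaves $B_{r_{\mathbf 0}} \subseteq B_{n_0+1}$ it coincides with $U^{(n_0+1),u}$ on $[0,T]$, and the escape time from $\partial G$ (and the hitting time of $B_r$) are unaffected by the truncation. Put $\rho := \tfrac12 \min\{\rho_1, r\} > 0$. If $d_T(U^{u,\varepsilon}, U^u) < \rho$, then at the time $s \leq T$ with $\| U^u(s,\cdot)\|_\infty < r/2$ one has $\| U^{u,\varepsilon}(s,\cdot)\|_\infty < r$, so $\tau_\varepsilon(B_r) \leq T$; and for every $t \leq T$, $d(U^{u,\varepsilon}(t,\cdot), \partial G) \geq \rho_1 - \rho > 0$, so $\tau_\varepsilon(\partial G) > T$. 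Hence on this event $\tau_\varepsilon(B_r) \leq T < \tau_\varepsilon(\partial G)$, which gives the inclusion $\{ \tau_\varepsilon(\partial G) < \tau(B_r) \} \subseteq \{ d_T(U^{u,\varepsilon}, U^u) \geq \rho \}$, uniformly in $u \in B_c$. Applying \eqref{grandes1} to the (globally Lipschitz) truncated family yields constants $C > 0$, $\varepsilon_0 > 0$ depending only on $\rho$ and $T$ (hence only on $r$) with $\sup_{u \in B_c} P_u(\tau_\varepsilon(\partial G) < \tau(B_r)) \leq e^{-C/\varepsilon^2}$ for $0 < \varepsilon < \varepsilon_0$, which tends to $0$; in fact this is slightly stronger than the assertion of the lemma.

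The one genuinely delicate point is the uniformity in the first step: $B_c$ is not compact, so one cannot simply take the supremum over $u \in B_c$ of the (a priori only upper semicontinuous) hitting times $u \mapsto \inf\{ t : U^u(t,\cdot) \in B_{r/2}\}$. The remedy used repeatedly in this work — letting the deterministic flow act for an arbitrarily short time $t_0$, which by the smoothing estimates of the Appendix maps the bounded set $B_c$ into a relatively compact set — is precisely what resolves this, after which ordinary compactness applies (an alternative would be to invoke a uniform-time-to-equilibrium statement of the type of Proposition \ref{A.3}). Everything else is routine: the distance bound $d(B_{r_{\mathbf 0}}, \partial G) > 0$, the identification of the truncated and untruncated trajectories on $[0,T]$, and the direct application of \eqref{grandes1}.
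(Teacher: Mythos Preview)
Your proof is correct and follows essentially the same approach as the paper's: establish that the deterministic trajectory from any $u \in B_c$ reaches $B_{r/2}$ within a uniform time while staying well inside $G$, then conclude via the large deviations estimate \eqref{grandes1}. The only cosmetic difference is in how the uniform time is obtained: the paper invokes Proposition~\ref{A.3} directly, whereas you spell out the smoothing-then-compactness argument (and explicitly flag Proposition~\ref{A.3} as the alternative), which is in fact the mechanism underlying that proposition.
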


\begin{proof}
Notice that by choice of $c$ we have that any $u \in B_c$ reaches the neighborhood $B_{\frac{r}{2}}$ in a finite time $T^u_r$ while remaining at a distance greater than $r_{\mathbf{0}}$ in \mbox{Conditions \ref{assumpg} from both} $\partial G$ and $\mathcal{W}$. By Proposition \ref{A.3} we therefore conclude that $T_r = \sup_{u \in B_c} T^u_r$ must be finite. Hence, for $u \in B_c$ we obtain
$$
P_u \left( \tau_\varepsilon(\partial G) < \tau_\varepsilon\left(B_r\right) \right) \leq \sup_{v \in B_c} P_v \left( d_{T_r}(U^{(n_0),\varepsilon},U^{(n_0)}) \geq r_{\mathbf{0}}\wedge \frac{r}{2}\right)
$$ which, by \eqref{grandes1} and the uniformity of the bound in $u \in B_c$, implies the result at once.
\end{proof}

In order to prove assertion (ii) we first show that it suffices to study the path described by the stochastic system since its last visit to (a small neighborhood of) the origin. \mbox{We shall do} this by resorting to a Markov chain similar to the one introduced in the preceding section to establish the lower bound. More precisely, given constants $r_1,r_2 > 0$ such that $r_1 < \frac{r_2}{2}$ and $r_2 < c$, for each $u \in \p B_{r_1}$ and $\varepsilon > 0$ we consider the increasing sequence of stopping times
$$
\left\{\begin{array}{l} \eta_0 = 0\\
\\
\sigma_0 = \inf \{ t \geq 0 : U^{u,\varepsilon}(t,\cdot) \in \p B_{r_2}\}
\end{array}\right.
$$
and for $n \in \N_0$
$$
\left\{\begin{array}{l} \eta_{n+1} = \inf \{ t > \sigma_n : U^{u,\varepsilon}(t,\cdot) \in \p B_{r_1} \cup \partial G\}\\
\\
\sigma_{n+1} = \inf \{ t > \eta_{n+1} : U^{u,\varepsilon}(t,\cdot) \in \p B_{r_2}\}
\end{array}\right.
$$ with the convention that $\inf \emptyset = +\infty$. We then define the Markov chain $(Z^{u,\varepsilon}_n)_{n \in \N}$ as
$$
Z^{u,\varepsilon}_n:=U^{u,\varepsilon}(\eta_n,\cdot)
$$ for each $n \in \N_0$ and set $\vartheta^u_\varepsilon := \min \{ n \in \N : Z^{u,\varepsilon}_n \in \p G\}$. Just as in the previous section, for our purposes we will not need to worry about the possibility of $Z_n^\varepsilon$ not being well defined. Also, the constants $r_1$ and $r_2$ will be later taken conveniently small \mbox{throughout the proof.}
It is not hard to see that for any $u \in \p B_{r_1}$ the strong Markov property yields
\begin{equation}\label{cociente}
P_u \left( U^{\varepsilon}(\tau_\varepsilon(\partial G),\cdot) \notin \partial^{\pm z} \right) \leq \sup_{v \in \p B_{r_1}} \frac{ P_v ( Z_1^\varepsilon \in \p G - \p^{\pm z} )}{P_v ( Z_1^\varepsilon \in \p G)}
\end{equation} from which we conclude that in order to show (ii) it will suffice to give a lower bound for the denominator and an upper bound for the numerator such that the quotient of these bounds goes to zero with $\varepsilon$. The following lemma, whose proof can be found in \cite{B1}, provides the desired lower bound.

\begin{lema}\label{lemab0} Let us suppose that $r_1$ is sufficiently small so as to guarantee that for any $u \in \p B_{r_1}$ the deterministic orbit $\{ U^u(t,\cdot) : t \geq 0 \}$ does not intersect $\p B_{\frac{r_2}{2}}$. Then for all $\varepsilon > 0$ sufficiently small
$$
\inf_{u \in \p B_{r_1}} P_u( Z_1^\varepsilon \in \p G) \geq e^{- \frac{\Delta + k r_2}{\varepsilon^2} },
$$ where $k > 0$ is a constant which does not depend on the choice of $r_1$ and $r_2$.
\end{lema}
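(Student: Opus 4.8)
By construction of the chain, $Z_1^\varepsilon\in\partial G$ precisely when the solution started at $u\in\partial B_{r_1}$ first reaches $\partial B_{r_2}$ and thereafter reaches $\partial G$ before hitting $\partial B_{r_1}$. The plan is to exhibit, for each $u\in\partial B_{r_1}$, a single path $\varphi^u\in C_{D_u}([0,T]\times[0,1])$ with $T=T(r_1,r_2)$ independent of $u$, of rate $I(\varphi^u)\le\Delta+2C_\ast r_2$ with $C_\ast$ not depending on $r_1,r_2$, and such that every $\psi$ with $d_T(\psi,\varphi^u)<\delta$ (for a small fixed $\delta$) enjoys the property just described; we work throughout with the $(n_0{+}1)$-truncated dynamics, which agrees with the true one up to the escape from $G$. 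Since \eqref{LDP1} is uniform in the initial datum, applying it with $h:=C_\ast r_2$ then gives $\inf_{u\in\partial B_{r_1}}P_u(Z_1^\varepsilon\in\partial G)\ge e^{-(\Delta+3C_\ast r_2)/\varepsilon^2}$ for all small $\varepsilon$, i.e.\ the assertion with $k:=3C_\ast$. We take $r_2$ small enough that $r_2<r_z$, $r_2<\|z\|_\infty$, and that $\partial B_{r_2/4}$ meets the branch of $\mathcal{W}^z_u$ converging to $\mathbf 0$.

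\textbf{Construction of $\varphi^u$.} In the spirit of the proof of Lemma \ref{cotasuplema0}, $\varphi^u$ is a concatenation of: (a) the deterministic flow $U^u$ on $[0,1]$ — which by the hypothesis on $r_1$ stays in $B_{r_2/2}$, and whose endpoint $v_0:=U^u(1,\cdot)$ has, by parabolic smoothing, $W^2_2$-norm bounded by a constant depending only on $n_0,c$; (b) the velocity-one linear interpolation from $v_0$ to a fixed point $u^\flat$ on the branch of $\mathcal{W}^z_u$ converging to $\mathbf 0$, with $\|u^\flat\|_\infty=r_2/4$, so this segment stays in $B_{r_2/2}$; (c) the time-reversed deterministic flow along $\mathcal{W}^z_u$ from $u^\flat$ toward $z$, stopped at the first hit $u^\sharp$ of $\partial B_{r_2}(z)$; (d) the linear interpolation from $u^\sharp$ to $(1+\rho)z$, $\rho:=r_2/\|z\|_\infty$, which lies in $\mathcal{D}_e$ by Proposition \ref{descomp3} and in $B_{r_z}(z)\subseteq G$; (e) the deterministic flow $U^{(1+\rho)z}$, which blows up and hence leaves the bounded domain $G$, run until it is at sup-distance $3\delta$ from $G$ (still well inside $B_{n_0+1}$). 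The deterministic portions last a time bounded uniformly in $u$ by Proposition \ref{A.3} (finitely many equilibria in $B_{n_0+1}$), and (e) lasts a time depending only on $\rho$; extend every $\varphi^u$ by the deterministic flow to a common horizon $T$.

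\textbf{Rate.} Pieces (a) and (e) contribute zero rate. Pieces (b) and (d) are linear interpolations of sup-length $\le r_2$ between functions of $W^2_2$-norm at most a constant $M$ depending only on $n_0,c$ (by smoothing for $v_0$, and because $u^\flat$, $u^\sharp$, $(1+\rho)z$ lie at controlled distance from $\mathbf 0$, resp.\ $z$); so by the interpolation estimate in the proof of Lemma \ref{cotasuplema0} each has rate $\le C_\ast r_2$, with $C_\ast=C_{n_0,M}$ independent of $r_1,r_2$. Piece (c) follows the reverse flow, so exactly as in that proof its rate equals $2\big(S(u^\sharp)-S(u^\flat)\big)$; since $u^\flat\in\mathcal{D}_{\mathbf 0}$, where $S\ge 0$ (Proposition \ref{Lyapunov}), and $u^\sharp\in\mathcal{W}^z_u$, where $S\le S(z)$ because $S$ decreases along orbits, this is $\le 2S(z)=\Delta$. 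Summing, $I(\varphi^u)\le\Delta+2C_\ast r_2$ uniformly in $u\in\partial B_{r_1}$.

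\textbf{Robustness, and the expected obstacle.} It remains to check that the $\delta$-tube around $\varphi^u$ is contained in $\{Z_1^\varepsilon\in\partial G\}$. The decisive point is that after $\varphi^u$ first exits $B_{r_2}$ — which occurs during piece (c), while it is moving away from $\mathbf 0$ toward $z$ — its remainder (the rest of the $\mathcal{W}^z_u$-arc, together with pieces (d) and (e)) never meets $\mathbf 0$; indeed piece (e) stays in $\mathcal{D}_e$, which is forward invariant and disjoint from $\overline{\mathcal{D}_{\mathbf 0}}\supseteq B_{r_1}$. Being a compact set avoiding $\mathbf 0$, this remainder lies at a fixed positive distance from $\mathbf 0$, so for $r_1$ small enough (relative to the already fixed $r_2,\delta$) it stays at sup-distance $>2\delta$ from $B_{r_1}$, while $\varphi^u$ eventually reaches sup-distance $3\delta$ from $G$ and, travelling toward $z$, also attains sup-norm $\ge r_2+2\delta$ at some time. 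A routine comparison then shows that every $\psi$ within $\delta$ of $\varphi^u$ first reaches $\partial B_{r_2}$ and afterwards reaches $G^c$ before hitting $\partial B_{r_1}$, whence $Z_1^\varepsilon\in\partial G$. I expect the real work to be this robustness step — ensuring neither $\varphi^u$ nor its nearby competitors slip back to $\partial B_{r_1}$ after leaving $B_{r_2}$, which relies on the geometry of the unstable manifold $\mathcal{W}^z_u$ and on the forward invariance of $\mathcal{D}_e$ — together with the accounting of piece (c), where extracting exactly $\Delta$ from the reverse-flow segment uses that $z$ has minimal potential among the unstable equilibria and that $S$ is a Lyapunov function.
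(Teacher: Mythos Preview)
Your approach is correct and is essentially the standard one: it follows the path-construction strategy of Lemma~\ref{cotasuplema0}, which is precisely what the reference \cite{B1} (to which the paper defers for this proof) does as well. The paper does not supply its own argument here.

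A couple of minor bookkeeping points, none of which affect the outcome. First, the interpolation in piece (d) has sup-length at most $2r_2$ rather than $r_2$ (since $\|u^\sharp-z\|_\infty=r_2$ and $\|(1+\rho)z-z\|_\infty=r_2$), so the total rate is $\Delta+O(r_2)$ with a slightly larger implicit constant; this is harmless since $k$ absorbs it. Second, to justify that the $W^2_2$-bound $M$ on the interpolation endpoints is independent of $r_1,r_2$, note that any point on the heteroclinic $\mathcal{W}^z_u$ can be written as $U^{\gamma(t-1)}(1,\cdot)$ for some $\gamma(t-1)$ lying in the bounded arc between $\mathbf{0}$ and $z$, so Proposition~\ref{A.1} applies uniformly. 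Finally, your additional smallness requirement on $r_1$ (relative to the distance $d_0$ of the post-exit arc from $\mathbf{0}$) is consistent with the statement: the phrase ``$r_1$ sufficiently small'' licenses further constraints, and the constant $k$ you obtain depends only on $C_\ast$, not on $r_1,r_2$.
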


The upper bound on the numerator in \eqref{cociente} is more involved and requieres several steps. We start by considering, for fixed constants $d,e > 0$ such that $e < r_z$ and $d < \frac{r_z - e}{2}$ (which will be later made conveniently small), the stopping times
$$
\tau_\varepsilon^0 = \tau_\varepsilon \left( (\p G - \p^{\pm z})_{(d)} \right) \hspace{2cm}\text{ and }\hspace{2cm} \tau_\varepsilon^2 = \tau_\varepsilon \left( B_e( \pm z) \right)
$$ where
$$
(\p G - \p^{\pm z})_{(d)} := \{ u \in C_D([0,1]) : d(u, \p G - \p^{\pm z}) \leq d \}.
$$ where the initial datum is implicit. Notice that $d$ is such that $(\p G - \p^{\pm z})_{(d)}$ and $B_e( \pm z)$ are disjoint. Then we can decompose the event in the numerator into three disjoint parts:
\begin{equation}\label{decomp}
A:=\{ Z_1^{u,\varepsilon} \in \p G - \p^{\pm z}\} = \bigcup_{i=0}^2 [A \cap \{ \tau_\varepsilon^i = \min\{ \tau_\varepsilon^0, \tau_\varepsilon^1, \tau_\varepsilon^2, \tau_\varepsilon^3 \} \}]
\end{equation} where, for notational convenience, we have set $\tau_\varepsilon^1$ as some fixed time $T > 0$ which is to be conveniently determined later and $\tau_\varepsilon^3$ as the escape time $\tau_\varepsilon(\p G)$ from our domain $G$. Observe that the set $A \cap \{\tau_\varepsilon^3 = \min\{ \tau_\varepsilon^0, \tau_\varepsilon^1, \tau_\varepsilon^2, \tau_\varepsilon^3 \} \}$ is empty and is therefore left out of the decomposition. Thus, in order to provide an upper bound for the numerator we see that it will suffice to estimate the probabilities of each of the sets in the decomposition.

\subsection{Upper bound on $P(A \cap \{ \tau_\varepsilon^0 = \min\{ \tau_\varepsilon^0, \tau_\varepsilon^1, \tau_\varepsilon^2, \tau_\varepsilon^3 \} \})$}

To bound the probability of the first set we observe that if $d > 0$ is taken sufficiently small then
\begin{equation}\label{cotacuasipotencial}
V(\mathbf{0},(\p G - \p^{\pm z})_{(2d)}) > \Delta.
\end{equation} Indeed, since
$$
(\p G - \p^{\pm z})_{(2d)} = ( \p \tilde{G} - B_{r_z}(\pm z))_{(2d)} \cup ( [\p B_{r_z}(\pm z) \cap \overline{\mathcal{D}_e}] - \p^{\pm z})_{(2d)}
$$ to establish \eqref{cotacuasipotencial} it will suffice to show that
\begin{equation}\label{cotacuasipotencial2}
V(\mathbf{0},( \p \tilde{G} - B_{r_z}(\pm z))_{(2d)}) > \Delta\hspace{0.7cm}\text{ and }\hspace{0.7cm}V(\mathbf{0},( [\p B_{r_z}(\pm z) \cap \overline{\mathcal{D}_e}] - \p^{\pm z})_{(2d)}) > \Delta.
\end{equation} From the definition of $\p^{\pm z}$ it easily follows that if we take $2d < \frac{d( \tilde{\p}^z, \mathcal{W})}{2}$ then the sets $\tilde{\p}^z$ and $( [\p B_{r_z}(z) \cap \overline{\mathcal{D}_e}] - \p^{z})_{(2d)}$ are disjoint so that the second inequality in \eqref{cotacuasipotencial2} is settled. To obtain the first inequality we begin by noticing that by an argument analogous to the one employed in the construction of $G$ there exists $\alpha > 0$ such that
$$
\inf\{ S(u) : u\in [\mathcal{W} \cap B_{n_0+1}] - B_{\frac{r_z}{2}}(\pm z) \} > \alpha > S(z).
$$ Hence, to establish the first inequality in \eqref{cotacuasipotencial2} it will suffice to consider the set
$$
\mathcal{V}'= \{ u \in C_D([0,1]) : V(\mathbf{0},u) \leq 2 \min\{\alpha, S(z) + \zeta_1\} \}
$$ and show that $d( \mathcal{V}', \p \tilde{G} - B_{r_z}(\pm z) ) > 0$. But, if this were not so, then there would exist sequences $(u_k)_{k \in \N} \subseteq \mathcal{V}'$ and $(v_k)_{k \in \N} \subseteq \p \tilde{G} - B_{r_z}(\pm z)$ such that $d(u_k,v_k) \rightarrow 0$ as $k \rightarrow +\infty$. By Propositions \ref{G.2} and \ref{A.2} we may find subsequences $(u_{k_j})_{j \in \N},(v_{k_j})_{j \in \N}$ and a time $t > 0$ such that
$$
\lim_{j \rightarrow +\infty} U^{u_{k_j}}(t,\cdot) =\lim_{j \rightarrow +\infty} U^{v_{k_j}}(t,\cdot) = v_\infty
$$ for some limit $v_\infty \in C_D([0,1])$. Let us observe that, since we have $v_\infty=\lim_{j \rightarrow +\infty} U^{u_{k_j}}(t,\cdot)$, the lower semicontinuity of $V(\mathbf{0},\cdot)$ and the fact that the mapping $t \mapsto V(\mathbf{0},U^u(t,\cdot))$ is monotone decreasing for any $u \in C_D([0,1])$ together imply that
\begin{equation}\label{cotacuasipotencial3}
V(\mathbf{0},v_\infty) \leq 2 \min\{\alpha, S(z) + \zeta_1\}.
\end{equation} At least one of the following possibilities must then occur:
\begin{enumerate}
\item [$\bullet$] If $v_{k} \in \p B_{n_0} \cap \mathcal{D}_{\mathbf{0}}$ for infinitely many $k \in \N$, then $(k_j)_{j \in \N}$ and $t > 0$ can be taken so as to guarantee that the condition $U^{v_{k_j}}(t,\cdot) \notin B_{n_0 -1}$ is satisfied for every $j \in \N$. Since $\overline{\mathcal{D}_{\mathbf{0}}}$ is closed and invariant under the deterministic flow we therefore conclude that $v_\infty \in \overline{\mathcal{D}_{\mathbf{0}}} - B_{n_0 -1}^\circ$ and thus that $S(v_\infty) > S(z)+\zeta_1$. In particular, we obtain that $V(\mathbf{0},v_\infty) > 2(S(z)+\zeta_1)$, a fact which contradicts \eqref{cotacuasipotencial3}.
\item [$\bullet$] If $v_{k} \in \mathcal{W} \cap B_{n_0}$ for infinitely many $k \in \N$, then $(k_j)_{j \in \N}$ and $t > 0$ can be taken so as to guarantee that $U^{v_{k_j}}(t,\cdot) \in B_{n_0+1} - B_{\frac{2}{3}r_z}(\pm z)$ is satisfied for every $j \in \N$. Since $\mathcal{W}$ is closed and invariant under the deterministic flow we then conclude that $v_\infty \in [\mathcal{W} \cap B_{n_0+1}] - B_{\frac{r_z}{2}}(\pm z)$ and thus that $S(v_\infty) > \alpha$. In particular, we obtain that $V(\mathbf{0},v_\infty) > 2\alpha$, which again contradicts \eqref{cotacuasipotencial3}.
\end{enumerate}

Now that we have shown \eqref{cotacuasipotencial}, Lemma \ref{lemab1} guarantees that if $r_1$ is sufficiently small then there exists $h > 0$ such that
$$
\sup_{u \in \p B_{r_1}} P_u (A \cap \{ \tau_\varepsilon^0 = \min\{ \tau_\varepsilon^0, \tau_\varepsilon^1, \tau_\varepsilon^2, \tau_\varepsilon^3 \} \}) \leq \sup_{u \in \p B_{r_1}} P_u ( \tau_\varepsilon^0 \leq \min\{ T, \tau_\varepsilon(\p G) \} ) \leq e^{- \frac{\Delta + h }{\varepsilon^2}}
$$ which provides the desired upper bound. Indeed, notice that if $T > 0$ is fixed and $r_2 > 0$ is taken sufficiently small then the upper bound obtained and Lemma \ref{lemab0} together yield
\begin{equation}\label{decomp1}
\lim_{\varepsilon \rightarrow 0} \left[\sup_{u \in \p B_{r_1}} \frac{P_u (A \cap \{ \tau_\varepsilon^0 = \min\{ \tau_\varepsilon^0, \tau_\varepsilon^1, \tau_\varepsilon^2, \tau_\varepsilon^3 \} \})}{P_u ( Z_1^\varepsilon \in \p G ) }\right] = 0.
\end{equation}

\subsection{Upper bound on $P(A \cap \{ \tau_\varepsilon^1 = \min\{ \tau_\varepsilon^0, \tau_\varepsilon^1, \tau_\varepsilon^2, \tau_\varepsilon^3 \} \})$}

To bound the probability of the second set we observe that any unstable equilibrium of the system lying inside $G$ and different from $\pm z$ must necessarily belong to $\p G - \p^{\pm z}$. This implies that if we take $e < \min\{r_1,d\}$ then
$$
\sup_{u \in \p B_{r_1}} P_u(A \cap \{ \tau_\varepsilon^1 = \min\{ \tau_\varepsilon^0, \tau_\varepsilon^1, \tau_\varepsilon^2, \tau_\varepsilon^3 \} \}) \leq  \sup_{u \in \p B_{r_1}} P_u ( T \leq \min\{ \tau_\varepsilon(C_e), \tau_\varepsilon(\p G)\} )
$$ where $C_e$ is defined as in Lemma \ref{lemab2}. Hence, by the same lemma we obtain for $T > 0$ sufficiently large the upper bound
$$
\sup_{u \in \p B_{r_1}} P_u(A \cap \{ \tau_\varepsilon^1 = \min\{ \tau_\varepsilon^0, \tau_\varepsilon^1, \tau_\varepsilon^2, \tau_\varepsilon^3 \} \}) \leq  e^{-\frac{ \Delta + 1}{\varepsilon^2}}.
$$ Together with Lemma \ref{lemab0} this upper bound yields
\begin{equation}\label{decomp2}
\lim_{\varepsilon \rightarrow 0} \left[\sup_{u \in \p B_{r_1}} \frac{P_u (A \cap \{ \tau_\varepsilon^1 = \min\{ \tau_\varepsilon^0, \tau_\varepsilon^1, \tau_\varepsilon^2, \tau_\varepsilon^3 \} \})}{P_u ( Z_1^\varepsilon \in \p G ) }\right] = 0.
\end{equation}

\subsection{Upper bound on $P(A \cap \{ \tau_\varepsilon^2 = \min\{ \tau_\varepsilon^0, \tau_\varepsilon^1, \tau_\varepsilon^2, \tau_\varepsilon^3 \} \})$}

To conclude the proof it only remains to give an upper bound on the probability of the third set in the right hand side of \eqref{decomp}. If we write $D^2 = \{ \tau_\varepsilon^2 = \min\{ \tau_\varepsilon^0, \tau_\varepsilon^1, \tau_\varepsilon^2, \tau_\varepsilon^3 \} \}$ then notice that by the strong Markov property one has
$$
P_u (A \cap D^2 ) \leq \E( \mathbbm{1}_{D^2} P_{U^{u,\varepsilon}(\tau_\varepsilon^2,\cdot)} ( U^\varepsilon(\tau_\varepsilon(\p G),\cdot) \in \p G - \p^{\pm z} , \tau_\varepsilon( B_{r_1} ) > \tau_\varepsilon(\p G) )).
$$ The next lemma provides a suitable upper bound on the probability inside the expectation.

\begin{lema} There exists a constant $C > 0$ such that for every $e > 0$ sufficiently small one has
\begin{equation}\label{eqlemab3}
\sup_{v \in B_e(\pm z)} \frac{ P_v ( U^\varepsilon(\tau_\varepsilon(\p G),\cdot) \in \p G - \p^{\pm z} , \tau_\varepsilon( B_{r_1} ) > \tau_\varepsilon(\p G) )}{P_v (\tau_\varepsilon( B_{r_1} ) > \tau_\varepsilon(\p G))} \leq e^{-\frac{C}{\varepsilon^2}}
\end{equation} for every $\varepsilon > 0$ sufficiently small.
\end{lema}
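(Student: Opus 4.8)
By the symmetry $U^{-u,\varepsilon}\stackrel{d}{=}-U^{u,\varepsilon}$ and $-\p^{\pm z}=\p^{\pm z}$ it is enough to treat starting points $v$ in $B_e(z)$. The guiding idea is that $z$ is an equilibrium lying in the interior of $G$, immediately adjacent to the cheap exit region $\p^{\pm z}$: escaping $G$ through the expensive portion $\p G-\p^{\pm z}$ \emph{before} returning to $B_{r_1}$ should cost a strictly positive amount of energy, whereas escaping $G$ at all (while avoiding $B_{r_1}$) should be essentially free. The plan is therefore to bound the numerator in \eqref{eqlemab3} from above and the denominator from below by competing exponentials whose ratio tends to zero.

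For the denominator, given $\zeta,h>0$ I would exhibit, for each $v\in B_e(z)$, a path escaping $G$ before hitting $B_{r_1}$ with rate at most $\zeta$: first follow the deterministic flow for a short time $T_0$ (this keeps us in a small ball around $z$, hence away from $B_{r_1}$ when $r_1<\|z\|_\infty$, while regularising the datum into $W^2_2$ with uniformly bounded norm, exactly as in the proof of Lemma~\ref{cotasuplema0}); then linearly interpolate in short time to $(1+s)z$, whose rate is $O(\|U^v(T_0,\cdot)-(1+s)z\|_\infty)$ and hence below $\zeta$ once $e$ and $s$ are small; finally follow $U^{(1+s)z}$, which explodes by Proposition~\ref{descomp3} and thus leaves the bounded set $G$, and which by the comparison principle has sup-norm at least $\|z\|_\infty>r_1$, so it never enters $B_{r_1}$. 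The lower bound \eqref{LDP1} then yields $\inf_{v\in B_e(z)}P_v(\tau_\varepsilon(B_{r_1})>\tau_\varepsilon(\p G))\ge e^{-(\zeta+h)/\varepsilon^2}$ for small $\varepsilon$. For the positivity of the barrier, since $\p G-\p^{\pm z}$ lies at distance at least $r_z$ from $z$ (it is contained in $(\p\tilde G-B_{r_z}(\pm z))\cup(\p B_{r_z}(\pm z)\cap\overline{\mathcal D_e})$), and since the only zero-rate path issuing from the equilibrium $z$ is the constant one, every path from $z$ to $\p G-\p^{\pm z}$ has positive rate; by compactness of the level sets of the rate function (Proposition~\ref{A.4}) and lower semicontinuity this forces $C':=V(z,(\p G-\p^{\pm z})_{(2r)})>0$ whenever $3r<r_z$.

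To bound the numerator I would fix a large $T>0$ and a bounded neighbourhood $B$ of $G$ and split according to whether $\tau_\varepsilon(\p G)\le T$. On $\{\tau_\varepsilon(\p G)\le T\}$ the trajectory reaches $(\p G-\p^{\pm z})_{(r)}$ before $\min\{T,\tau_\varepsilon(B^c)\}$ while staying inside $G\subseteq B$, so Lemma~\ref{lemab1} with $\psi=z$, $F=\p G-\p^{\pm z}$ bounds this part by $e^{-(C'-h)/\varepsilon^2}$ once $e$ is small enough. On $\{\tau_\varepsilon(\p G)>T\}$ the conditioning forces the trajectory to remain in $G\setminus B_{r_1}$ on all of $[0,T]$; taking $e<\min\{r_1,d\}$ and combining Lemma~\ref{lemab2} (with the attractor set $C_e$) with the strong Markov property, the trajectory must leave $G\setminus B_{r_1}$ or come back to $B_e(z)$ within a bounded time $T_1$, and iterating the short-time estimate over the $\lfloor T/T_1\rfloor$ resulting epochs makes this contribution $\le e^{-(C'+1)/\varepsilon^2}$ for $T$ large. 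Hence the numerator is at most $2e^{-(C'-h)/\varepsilon^2}$, and dividing by the denominator gives a ratio at most $2e^{-(C'-\zeta-2h)/\varepsilon^2}$; choosing $h$ and then $\zeta$ with $C:=C'-\zeta-2h>0$ finishes the argument.

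The hard part will be the long-time half of the numerator estimate: ruling out prolonged sojourns in $G\setminus B_{r_1}$ that neither escape $G$ nor return to the origin, since the process can linger near any equilibria $z^{(n)}$ that happen to lie in $\overline G$. This needs a renewal-type argument over the successive returns to $\bigcup_n B_e(z^{(n)})$, together with some care to make all the estimates from Lemmas~\ref{lemab1}--\ref{lemab2} uniform in the starting configuration. The remaining geometric inputs — the positive distance from $z$ to $\p G-\p^{\pm z}$ and the exploding flow avoiding $B_{r_1}$ — come directly from the construction of $G$ and the comparison principle.
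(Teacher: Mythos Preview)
Your two key inputs are the right ones --- a lower bound on the denominator via a cheap escape path from $z$, and an upper bound on the numerator coming from $V(z,(\p G-\p^{\pm z})_{(2r)})>0$ --- and these are also what the paper uses. But there are two genuine gaps.

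First, your justification of $C'=V\bigl(z,(\p G-\p^{\pm z})_{(2r)}\bigr)>0$ is not solid. Proposition~\ref{A.4} concerns lower semicontinuity of the potential $S$, not compactness of level sets of the rate functional, and in any case the paths competing in the quasipotential have unbounded time horizons, so a compactness-of-level-sets argument needs care. The paper dispatches this in one line via the triangle inequality for the quasipotential: from the construction of $G$ one has $V(\mathbf 0,(\p G-\p^{\pm z})_{(2d)})>\Delta=V(\mathbf 0,\pm z)$, and $V(\mathbf 0,(\p G-\p^{\pm z})_{(2d)})\le V(\mathbf 0,\pm z)+V(\pm z,(\p G-\p^{\pm z})_{(2d)})$ forces the last term to be positive.

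Second, and more seriously, your long-time piece of the numerator does not work as written. For $v=z$ the deterministic flow is the constant $z$, so for every fixed $T$ one has $P_z\bigl(\text{stay in }G\setminus B_{r_1}\text{ on }[0,T]\bigr)\to 1$ as $\varepsilon\to 0$; the crude containment you implicitly use gives no decay. Your iteration sketch does not repair this: Lemma~\ref{lemab2} only says the process visits $C_e=\bigcup_n B_e(z^{(n)})$ or leaves a bounded set, and the equilibria $z^{(n)}$ with $|n|\ge 2$ sit on $\p G-\p^{\pm z}$, so ``returns to $B_e(z)$'' is not what you get; moreover it is unclear which ``short-time estimate'' is being iterated to produce the claimed $e^{-(C'+1)/\varepsilon^2}$ (and that exponent is in fact too optimistic --- since $z$ is an equilibrium the process can wait near $z$ at zero cost, so the long-time contribution cannot be smaller than order $e^{-C'/\varepsilon^2}$). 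The paper handles this by building the renewal directly into the argument: it introduces a second scale $f>e$ and the embedded chain $W_n=U^{\varepsilon}(\tilde\eta_n,\cdot)$ recording successive visits to $\p B_e(\pm z)\cup\p G$ after excursions through $\p B_f(\pm z)$. A one-step strong-Markov decomposition then reduces the full ratio in the lemma to the single-excursion ratio
\[
\frac{P_v\bigl(W_1\in\p G-\p^{\pm z},\ \tau_\varepsilon(B_{r_1})>\tau_\varepsilon(\p G),\ \tilde\vartheta_\varepsilon=1\bigr)}{P_v\bigl(\tau_\varepsilon(B_{r_1})>\tau_\varepsilon(\p G),\ \tilde\vartheta_\varepsilon=1\bigr)},
\]
whose denominator is bounded below by $e^{-Kf/\varepsilon^2}$ via the analogue of Lemma~\ref{lemab0} centered at $\pm z$ (using $V(\pm z,\p G)=0$), and whose numerator is bounded above by $e^{-h/\varepsilon^2}$ via Lemmas~\ref{lemab1}--\ref{lemab2} and the positivity of $V(\pm z,(\p G-\p^{\pm z})_{(2d)})$; taking $f$ small finishes. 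This embedded-chain reduction is exactly the ``renewal-type argument'' you allude to at the end, and it is the missing piece of your proof.
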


\begin{proof} The idea is to consider once again a suitable embedded Markov chain. Fix $f > 0$ such that $B_f(\pm z)$ is at a positive distance from $(\p G - \p^{\pm z})_{(d)}$ and assume that $e < \frac{f}{2}$. Consider then the stopping times
$$
\left\{\begin{array}{l} \tilde{\eta}_0 = 0\\
\\
\tilde{\sigma}_0 = \inf \{ t \geq 0 : U^{u,\varepsilon}(t,\cdot) \in \p B_{f}(\pm z) \}
\end{array}\right.
$$
and for $n \in \N_0$
$$
\left\{\begin{array}{l} \tilde{\eta}_{n+1} = \inf \{ t > \tilde{\sigma}_n : U^{u,\varepsilon}(t,\cdot) \in \p B_{e}(\pm z) \cup \partial G\}\\
\\
\tilde{\sigma}_{n+1} = \inf \{ t > \tilde{\eta}_{n+1} : U^{u,\varepsilon}(t,\cdot) \in \p B_{f}(\pm z)\}
\end{array}\right.
$$ with the convention that $\inf \emptyset = +\infty$. We then define the Markov chain $(W^{u,\varepsilon}_n)_{n \in \N}$ as
$$
W^{u,\varepsilon}_n:=U^{u,\varepsilon}({\tilde{\eta}_n},\cdot)
$$ for $n \in \N_0$ and set $\tilde{\vartheta}^u_\varepsilon := \min \{ n \in \N : W^{u,\varepsilon}_n \in \p G\}$. To show \eqref{eqlemab3} it \mbox{suffices to check that}
$$
\sup_{v \in B_e(\pm z)} \frac{ P_v ( W_1^\varepsilon \in \p G - \p^{\pm z} , \tau_\varepsilon( B_{r_1} ) > \tau_\varepsilon(\p G), \tilde{\vartheta}_\varepsilon=1 ) }{ P_v( \tau_\varepsilon( B_{r_1} ) > \tau_\varepsilon(\p G), \tilde{\vartheta}_\varepsilon = 1)} \leq e^{- \frac{C}{\varepsilon^2}}
$$ holds for all $\varepsilon > 0$ sufficiently small provided $e > 0$ is chosen adequately. To see this, \mbox{let us observe} that $V( \pm z , \p G)= 0$ and thus, by Lemma \ref{lemab0}, we obtain the lower bound $e^{- \frac{K f}{\varepsilon^2}}$ for the denominator, where $K > 0$ does not depend on the choice of both $e$ and $f$. On the other hand, since
$$
V(\mathbf{0}, (\p G - \p^{\pm z})_{(2d)} ) \leq V(\mathbf{0}, \pm z) + V( \pm z , (\p G - \p^{\pm z})_{(2d)})
$$ we see that $V(\pm,(\p G - \p^{\pm z})_{(2d)}) > 0$ and thus, with the aid of Lemmas \ref{lemab1} and \ref{lemab2}, \mbox{one concludes the upper} bound $e^{- \frac{h}{\varepsilon^2}}$ for the numerator, for some small constant $h > 0$. The lemma follows at once by taking $f$ sufficiently small.
\end{proof}

We are now ready to finish the proof of Theorem \ref{teoescape0}. Indeed, by \eqref{eqlemab3} we obtain
$$
P_u (A \cap D^2 ) \leq e^{-\frac{C}{\varepsilon^2}}\E( \mathbbm{1}_{D^2} P_{U^{u,\varepsilon}(\tau_\varepsilon^2,\cdot)} (\tau_\varepsilon( B_{r_1} ) > \tau_\varepsilon(\p G) )) \leq e^{-\frac{C}{\varepsilon^2}}P_u ( Z_1^\varepsilon \in \p G)
$$ for every $u \in \p B_{r_1}$ provided that $e > 0$ is taken sufficiently small. This implies that
\begin{equation}\label{decomp3}
\lim_{\varepsilon \rightarrow 0} \left[\sup_{u \in \p B_{r_1}} \frac{P_u (A \cap \{ \tau_\varepsilon^2 = \min\{ \tau_\varepsilon^0, \tau_\varepsilon^1, \tau_\varepsilon^2, \tau_\varepsilon^3 \} \})}{P_u ( Z_1^\varepsilon \in \p G ) }\right] = 0.
\end{equation} and thus concludes the proof.

\begin{obs} By using \eqref{cotacuasipotencial2}, the same argument given here to prove Theorem \ref{teoescape0} can be used to show that for any $\delta > 0$
$$
\lim_{\varepsilon \rightarrow 0} \left[ \sup_{u \in B_c} P_u \left( U^\varepsilon \left( \tau_\varepsilon(\p \tilde{G}),\cdot \right) \notin B_\delta(\pm z) \right) \right] = 0.
$$ This result tells us, perhaps in a more explicit manner than Theorem \ref{teoescape0} does, that for sufficiently small $\varepsilon > 0$ the escape from $\tilde{G}$ of the stochastic system and thus its route towards explosion typically involves passing through the unstable equilibria with minimal potential, namely $\pm z$, at least whenever the initial datum is close enough to the origin. By the argument to be used in the proof of Theorem \ref{taumagnitude} below, this implies that the same fate holds for arbitrary initial data in $\mathcal{D}_{\mathbf{0}}$.
\end{obs}

\section{Asymptotic loss of memory of $\tau_\varepsilon(\p G)$}\label{sec7asymp}

Our next goal is to show the asymptotic loss of memory as $\varepsilon \rightarrow 0$ of $\tau_\varepsilon^u (\p G)$ for $u \in G$. To this end for each
$\varepsilon > 0$ we define the normalization coefficient $\gamma_\varepsilon > 0$ by the relation
$$
P_{\mathbf{0}}( \tau_\varepsilon(\partial G) > \gamma_\varepsilon ) = e^{-1}.
$$ Notice that $\gamma_\varepsilon$ is well defined since $\tau_\varepsilon^{\mathbf{0}}$ is a continuous almost surely finite random variable, with a strictly increasing distribution function.
The result we aim to prove reads as follows.

\begin{teo}\label{escapeteo1} For every $\varepsilon > 0$ consider the function $\nu_\varepsilon : \R_{\geq 0} \rightarrow [0,1]$ given by
$$
\nu_\varepsilon (t) = P_{\mathbf{0}} (\tau_\varepsilon (\partial G) > t\gamma_\varepsilon).
$$ Then
\begin{enumerate}
\item [i.] There exists $(\delta_\varepsilon)_{\varepsilon > 0} \subseteq \R_{> 0}$ satisfying $\lim_{\varepsilon \rightarrow 0} \delta_\varepsilon = 0$ and such that for any $s,t > 0$
    \begin{equation}
    \label{1.49}\nu_\varepsilon(s +
\delta_\varepsilon)\nu_\varepsilon(t) -
\psi_\varepsilon(s,t) \leq \nu_\varepsilon (s+t) \leq
\nu_\varepsilon (s) \nu_\varepsilon (t -
\delta_\varepsilon) + \psi_\varepsilon(s,t)
\end{equation} where $\psi_\varepsilon(s,t)$ is a function which for any fixed $t_0 > 0$ verifies
    \begin{equation}
    \label{1.50}\lim_{\varepsilon \rightarrow 0}\left[\sup_{s \geq0 \,,\,t\geq t_0}\psi_\varepsilon(s,t)\right]= 0.
    \end{equation}
\item [ii.] There exists $\rho > 0$ such that for every $t \geq 0$
\begin{equation}\label{convunie}
\lim_{\varepsilon \rightarrow 0} \left[ \sup_{u \in B_\rho} |P_u( \tau_\varepsilon(\partial G) > t\gamma_\varepsilon) - e^{-t}|\right] = 0.
\end{equation}
\end{enumerate}
\end{teo}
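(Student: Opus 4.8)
The plan is to reduce to the globally Lipschitz setting and then run the standard approximate–semigroup argument (in the spirit of \cite{GOV,CGOV}) for the escape time, using the embedded excursion chain $(Z^{u,\varepsilon}_n)$ and the stopping times $(\eta_n),(\sigma_n)$ introduced in Chapter~\ref{secescapedeg}. Since every event under consideration takes place before $\tau_\varepsilon(\partial G)$ and $G\subseteq B_{n_0}$, we may work throughout with the truncated solution $U^{(n_0+1),\cdot,\varepsilon}$, for which the large deviations estimates of Section~\ref{secLDP} hold. The coefficient $\gamma_\varepsilon$ is well defined because $\tau^{\mathbf 0}_\varepsilon(\partial G)$ has a continuous, strictly increasing, almost surely finite distribution function; moreover Theorems~\ref{ecotsuplema1} and \ref{cotsuplema1} give $e^{(\Delta-\delta)/\varepsilon^2}\le\gamma_\varepsilon\le e^{(\Delta+\delta)/\varepsilon^2}$ for every $\delta>0$ and all small $\varepsilon$, which we use to absorb polynomial and $e^{o(1/\varepsilon^2)}$ prefactors. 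The backbone is a \textbf{fast-descent lemma}: there exist $\rho>0$ with $B_{2\rho}\subseteq B_c$ and a finite time $T>0$, independent of $\varepsilon$, such that
$$
\inf_{u\in G}P_u\big(\tau_\varepsilon(B_\rho)\le\min\{T,\tau_\varepsilon(\partial G)\}\big)\xrightarrow[\varepsilon\to0]{}1 .
$$
For $u\in B_c$ this follows from Condition~\ref{assumpg}(ii) and \eqref{grandes1}; for general $u\in G$ one first follows the deterministic flow into $\mathcal D_{\mathbf 0}$ and then descends, the paths involved lying in $B_{n_0+1}$ with uniformly controlled rate and the exceptional starting data on $\mathcal W$ being handled by the linear-interpolation device of the proof of Lemma~\ref{cotasuplema0}. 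Set $\delta_\varepsilon:=T/\gamma_\varepsilon\to0$.

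\textbf{Part (i).} Apply the strong Markov property to $\tau^{\mathbf 0}_\varepsilon(\partial G)$ at time $s\gamma_\varepsilon$:
$$
\nu_\varepsilon(s+t)=\E_{\mathbf 0}\Big[\mathbf 1_{\{\tau_\varepsilon(\partial G)>s\gamma_\varepsilon\}}\,P_{U^\varepsilon(s\gamma_\varepsilon,\cdot)}\big(\tau_\varepsilon(\partial G)>t\gamma_\varepsilon\big)\Big].
$$
On $\{\tau_\varepsilon(\partial G)>s\gamma_\varepsilon\}$ the configuration $U^\varepsilon(s\gamma_\varepsilon,\cdot)$ lies in $G$, so by the fast-descent lemma it enters $B_\rho$ within the next time $T$ without leaving $G$, with probability $1-r_\varepsilon$, $r_\varepsilon\to0$ uniformly. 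Applying the strong Markov property again at this entrance time,
$$
\inf_{w\in B_\rho}P_w\big(\tau_\varepsilon(\partial G)>(t-\delta_\varepsilon)\gamma_\varepsilon\big)-r_\varepsilon\ \le\ P_{U^\varepsilon(s\gamma_\varepsilon,\cdot)}\big(\tau_\varepsilon(\partial G)>t\gamma_\varepsilon\big)\ \le\ \sup_{w\in B_\rho}P_w\big(\tau_\varepsilon(\partial G)>(t-\delta_\varepsilon)\gamma_\varepsilon\big)+r_\varepsilon .
$$
Combining with the same lemma applied with starting point $\mathbf 0\in B_\rho$ — so that $\nu_\varepsilon$ itself sandwiches the $\sup$ and $\inf$ over $B_\rho$ up to a shift $\delta_\varepsilon$ and an error $r_\varepsilon$ — and with $P_{\mathbf 0}(\tau_\varepsilon(\partial G)>s\gamma_\varepsilon)=\nu_\varepsilon(s)$, one obtains \eqref{1.49} with $\psi_\varepsilon(s,t)$ a finite sum of such $r_\varepsilon$-type terms, each tending to $0$ uniformly in $s\ge0$, $t\ge t_0$, which is \eqref{1.50}. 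Finally, from \eqref{1.49}--\eqref{1.50}, the normalization $\nu_\varepsilon(1)=e^{-1}$, and the monotonicity and right-continuity of $\nu_\varepsilon$, a routine argument (iterating the inequality over a fine partition of $[0,t]$, letting $\varepsilon\to0$ and then $\delta_\varepsilon\to0$) yields $\nu_\varepsilon(t)\to e^{-t}$ for every $t\ge0$, hence uniformly on $[0,\infty)$ since these are survival functions converging pointwise to a continuous limit.

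\textbf{Part (ii).} Fix $\rho$ as above. For $u\in B_\rho$, couple $U^{u,\varepsilon}$ and $U^{\mathbf 0,\varepsilon}$ through a common Brownian sheet; by \eqref{grandes1} and Proposition~\ref{A.3} both enter $B_{\rho/2}$ before a bounded time $T'$ without leaving $G$, with probability $1-o(1)$ uniformly in $u$. Applying the strong Markov property at these entrance times, and using $\mathbf 0\in B_{\rho/2}\subseteq B_\rho$ to re-insert $\nu_\varepsilon$, one gets
$$
\big|P_u\big(\tau_\varepsilon(\partial G)>t\gamma_\varepsilon\big)-\nu_\varepsilon(t)\big|\ \le\ \big(\nu_\varepsilon(t-\delta'_\varepsilon)-\nu_\varepsilon(t+\delta'_\varepsilon)\big)+o(1)
$$
uniformly in $u\in B_\rho$, with $\delta'_\varepsilon\to0$. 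Since $\nu_\varepsilon\to e^{-t}$ uniformly, the right-hand side vanishes, giving \eqref{convunie}; this also closes the mild circularity in Part~(i), where the $\sup/\inf$ over $B_\rho$ were needed only up to $o(1)$.

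\textbf{Main obstacle.} The genuinely delicate point is to make the $r_\varepsilon$-type errors truly $o(1)$ uniformly in the starting configuration and in $s$: one must control the discrepancy between $P_u(\tau_\varepsilon(\partial G)>\cdot)$ for different $u$ near $\mathbf 0$, and rule out the process lingering in $G$ far from $\mathbf 0$ without escaping. Crude large deviations bounds do not suffice here; one invokes the sharp excursion estimates of Lemmas~\ref{lemab0}, \ref{lemab1}, \ref{lemab2} together with the escape-route description of Theorem~\ref{teoescape0}, following \cite{B1}. The only new ingredient is that $G$ was constructed in Chapter~\ref{dominioauxiliar} so that escape occurs through $\partial^{\pm z}$, after which $\tau_\varepsilon$ is itself controlled (Remark~\ref{obsequivG}), so that all conclusions for $\tau_\varepsilon(\partial G)$ transfer without change to $\tau_\varepsilon$.
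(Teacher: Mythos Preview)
There are two genuine gaps.

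\textbf{First}, your ``fast-descent lemma'' cannot hold with a \emph{bounded} time $T$ uniformly over all of $G$. The domain $G$ contains points of the stable manifold $\mathcal W$ (and in particular $\pm z$), and for such initial data the deterministic flow never reaches $B_\rho$; for data close to $\mathcal W$ the descent time is unbounded. The linear-interpolation device from Lemma~\ref{cotasuplema0} builds low-rate \emph{paths}, but does not show that the \emph{process itself} descends in bounded time with probability near one. The paper's correct version is Lemma~\ref{escapelema2}(ii): from any $u\in G$, with high probability the process either escapes $G$ or enters $B_\rho$ before time $\eta_\varepsilon=e^{\alpha/\varepsilon^2}$ for some $0<\alpha<\Delta$. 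One then sets $\delta_\varepsilon:=\eta_\varepsilon/\gamma_\varepsilon\to0$. Your framework survives this correction, but as written the lemma is false.

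\textbf{Second, and more fundamental}, both parts of your argument silently rely on
\[
\sup_{u,v\in B_\rho}\sup_{t\ge t_0}\big|P_u(\tau_\varepsilon(\partial G)>t\gamma_\varepsilon)-P_v(\tau_\varepsilon(\partial G)>t\gamma_\varepsilon)\big|\longrightarrow 0,
\]
which you never establish. In Part~(i) you need it to pass from the $\sup/\inf$ over $w\in B_\rho$ back to $\nu_\varepsilon$ itself; in Part~(ii) your common-Brownian-sheet coupling only brings both processes into $B_{\rho/2}$ at \emph{different} random times and \emph{different} points, after which you again need exactly this uniform comparison. Sharing the noise does not make two SPDE solutions coalesce, so the bound $|P_u-\nu_\varepsilon|\le\nu_\varepsilon(t-\delta')-\nu_\varepsilon(t+\delta')$ does not follow. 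Lemmas~\ref{lemab0}--\ref{lemab2} are large-deviations estimates on single excursions; they do not compare distributions started from distinct points of $B_\rho$. The paper closes this gap with the Mueller-type coupling of Theorem~\ref{coupling0}, in which the two solutions actually \emph{merge} at a random time $\eta^\varepsilon_{u,v}$ with $P(\eta^\varepsilon_{u,v}\ge\varepsilon^{-3})\to0$; this yields Lemma~\ref{escapelema1}, which is the real engine behind both \eqref{1.49}--\eqref{1.50} and \eqref{convunie}. Without this (or the alternative loss-of-memory argument of \cite{MOS}), your proof is circular.
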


\newpage

\subsection{Coupling of solutions with small initial data}

The key element in the proof of Theorem \ref{escapeteo1} is the fact that, uniformly over any pair $u,v$ of initial data in a small neighborhood of the origin the corresponding escape times $\tau_\varepsilon^u(\p G)$ and $\tau_\varepsilon^v(\p G)$ possess the same asymptotic distribution. We shall establish this fact rigorously on Lemma \ref{escapelema1} below with the aid of a suitable coupling between \mbox{$U^{u,\varepsilon}$ and $U^{v,\varepsilon}$.} More precisely, for $n_0 \in \N$ as in the definition of $G$ the result we require is the following.

\begin{teo}\label{coupling0} There exists $\rho > 0$ such that for any pair of initial data $u,v \in B_\rho$ and $\varepsilon > 0$ sufficiently small there exists a coupling of $U^{(n_0+1),u,\varepsilon}$ and $U^{(n_0+1),v,\varepsilon}$ satisfying
\begin{equation}\label{coupling2}
U^{(n_0+1),u,\varepsilon}(t,\cdot) \equiv U^{(n_0+1),v,\varepsilon}(t,\cdot) \hspace{0.5cm}\text{ for all }t \geq \eta^{\varepsilon}_{u,v},
\end{equation} where
$$
\eta^\varepsilon_{u,v} := \inf\{ t \geq 0 : U^{(n_0+1),u,\varepsilon}(t,\cdot) \equiv U^{(n_0+1),v,\varepsilon}(t,\cdot)\}.
$$ Furthermore, $\eta^\varepsilon_{u,v}$ satisfies
\begin{equation}\label{coupling1}
\lim_{\varepsilon \rightarrow 0} \left[\sup_{u,v \in B_\rho} P \left( \eta^\varepsilon_{u,v} \geq \frac{1}{\varepsilon^3} \right)\right]=0.
\end{equation}
\end{teo}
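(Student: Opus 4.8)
The construction I have in mind is a direct coupling at the level of the driving Brownian sheet: run $U^{(n_0+1),u,\varepsilon}$ with the original sheet $W$, and run $U^{(n_0+1),v,\varepsilon}$ with a sheet $\widetilde W$ that agrees with $W$ until the first time $\eta^\varepsilon_{u,v}$ the two solutions meet, and thereafter is defined so that the two solutions are driven identically. Since the truncated source $g_{n_0+1}$ is globally Lipschitz, once the two solutions coincide at a time $\eta^\varepsilon_{u,v}$ they coincide forever when driven by the same noise, which is exactly \eqref{coupling2}; and by the strong Markov property the post-$\eta^\varepsilon_{u,v}$ evolution of the pair, driven by $\widetilde W$, has the correct law, so $U^{(n_0+1),v,\varepsilon}$ (built this way) is genuinely a copy of the solution with initial datum $v$. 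Thus the whole theorem reduces to the tail estimate \eqref{coupling1}: showing that, uniformly over $u,v$ in a small ball $B_\rho$, the two independent-noise solutions meet before time $\varepsilon^{-3}$ with probability tending to $1$.

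For the meeting estimate the natural quantity to track is the difference $D:=U^{(n_0+1),u,\varepsilon}-U^{(n_0+1),v,\varepsilon}$, which solves a \emph{deterministic-in-the-noise} linear-type heat equation: $\p_t D = \p^2_{xx} D + \big(g_{n_0+1}(U^{u,\varepsilon})-g_{n_0+1}(U^{v,\varepsilon})\big)$ with zero initial datum difference replaced by $u-v$, homogeneous Dirichlet conditions, and no noise term (the $\varepsilon\dot W$ cancels). By the mean value theorem the reaction term is bounded by $L\|D\|$ with $L$ the Lipschitz constant of $g_{n_0+1}$, so by the Duhamel representation and the contractivity/smoothing of the Dirichlet heat semigroup on $[0,1]$ one gets a bound of the form $\|D(t,\cdot)\|_\infty \le C e^{(L-\lambda_1)t}\|u-v\|_\infty$ on any time interval on which both solutions stay in $B_{n_0+1}$, where $\lambda_1=\pi^2$ is the spectral gap. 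This does not by itself force $D$ to hit $0$; the point is rather to combine it with the recurrence already available for the truncated process. Recall from Section \ref{trunca} that each $U^{(n_0+1),w,\varepsilon}$ is a positive recurrent Markov process which almost surely hits every open set in finite time. So the plan is: (i) choose $\rho$ so small that, with probability bounded below uniformly in $u,v\in B_\rho$, both solutions remain in, say, $B_{r_\0}$ during a fixed time window $[0,T_0]$ and are pulled close to $\0$ (using \eqref{grandes1} together with the asymptotic stability of $\0$, exactly as in Conditions \ref{assumpg}(ii)); (ii) on that event the difference $D$ becomes small, of size $\le \kappa$; (iii) from two configurations within $\kappa$ of $\0$ one shows the probability of coalescing in one more unit of time is bounded below by some $q>0$ independent of $\varepsilon$ (this is where one needs a genuine coupling/Girsanov argument or a Markov-chain comparison: the event that the \emph{two} noises conspire to bring the solutions to a common value has probability bounded below, since the target event is open in the pair and the pair process is a nondegenerate diffusion-like process on a bounded region); and (iv) iterate, so that the number of trials before coalescence is stochastically dominated by a geometric variable with parameter $q$, giving $P(\eta^\varepsilon_{u,v}>k)\le(1-q)^k$ and hence $P(\eta^\varepsilon_{u,v}>\varepsilon^{-3})\to 0$ — in fact super-polynomially fast, which is far stronger than needed.

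The technically delicate step — the one I expect to be the main obstacle — is step (iii): producing a uniform-in-$\varepsilon$ lower bound on the one-step coalescence probability for two solutions driven by possibly different noises. In finite dimensions this is a standard reflection coupling; in the SPDE setting one instead argues via the Freidlin--Wentzell lower bound \eqref{LDP1} applied to the \emph{pair} $(U^{(n_0+1),u,\varepsilon},U^{(n_0+1),v,\varepsilon})$, or more cleanly by running the two solutions with the \emph{same} noise for a time window and invoking the exponential contraction estimate for $D$ above: with common noise, $\|D(t,\cdot)\|_\infty\le Ce^{(L-\pi^2)t}\|D(0,\cdot)\|_\infty$, so if $L<\pi^2$ — which one can arrange is not automatic, but one can instead run common noise only on a short window and use that $D$ does not grow by more than a fixed factor, then re-randomize — one drives $D$ below any threshold, and an additional small perturbation argument (the set of noise paths producing exact coalescence, i.e. $D$ hitting $0$, has positive probability because the Dirichlet heat flow with a forcing term is a surjection onto a neighborhood of $0$ in $C_D$) closes the gap. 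I would in fact follow the cleanest route: couple the two solutions with the \emph{same} noise from the outset, show via Duhamel + Gronwall that $\|D(t,\cdot)\|_\infty \le e^{Lt}\|u-v\|_\infty$ while both stay bounded, and then on a high-probability event (from \eqref{grandes1}) that both solutions reach $B_{r_\0}$ and stay in $B_{n_0+1}$, observe that this \emph{already} forces $D$ to remain controlled; the exact coalescence is then obtained not for free but by a final independent-noise refresh step as in (iii). The uniformity over $u,v\in B_\rho$ and the smallness of $\rho$ are exactly what make all the constants in steps (i)--(iii) independent of the particular initial data, and the $\varepsilon^{-3}$ threshold is generous enough that even a crude geometric bound suffices.
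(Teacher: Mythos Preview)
Your proposal has a genuine gap at the crucial step, and it is precisely the step you flag as ``technically delicate'': producing \emph{exact} coalescence. Under your construction the two solutions are driven by the \emph{same} Brownian sheet, so the difference $D=U^{(n_0+1),u,\varepsilon}-U^{(n_0+1),v,\varepsilon}$ solves the noiseless linear parabolic equation $\p_t D=\p_{xx}^2 D + a(t,x)D$ with $a$ bounded. By standard uniqueness for such equations, if $D(0,\cdot)=u-v\not\equiv 0$ then $D(t,\cdot)\not\equiv 0$ for all $t>0$; hence $\eta^\varepsilon_{u,v}=+\infty$ almost surely and the whole scheme collapses. Your patches do not work either: large-deviations lower bounds \eqref{LDP1} only control the probability of reaching open \emph{neighborhoods}, never a single configuration; the ``surjection'' remark gives no exact hitting; and the geometric iteration in (iv) presupposes a one-step coalescence probability $q>0$ that you have not produced. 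In infinite dimensions there is no analogue of reflection coupling that one can invoke off the shelf.

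The paper takes a completely different route, following Mueller's construction: the two solutions are driven by \emph{different} (correlated) white noises, namely $U^u$ by $\dot W_1$ and $U^v$ by $\sqrt{1-\min\{|E|,1\}}\,\dot W_1+\sqrt{\min\{|E|,1\}}\,\dot W_2$ with $E:=U^u-U^v$. The whole point of this choice is that $E$ now carries a genuine martingale part with quadratic variation bounded below by $\sim \varepsilon^2 U(t)^{8/5}$, where $U(t)=\int_0^1 E(t,x)\sin(\pi x)\,dx$. After a random time change one reduces to a one-dimensional diffusion $Y=5X^{1/5}$ that hits $0$ once a Brownian motion reaches a fixed level. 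The remaining issue --- specific to the nonlinearity here, where the drift $\tilde C$ need not be nonpositive --- is handled by observing that $g'_{n_0+1}(0)=0$, so $C(t)\le 0$ as long as both solutions remain in a small ball $B_\delta$; the lower bound on the escape time from Section~\ref{seclowerbound} then keeps both solutions inside $B_\delta$ for a time $e^{h/\varepsilon^2}\gg \varepsilon^{-3}$, which is what yields \eqref{coupling1} and explains the choice of $\rho$. None of this machinery appears in your plan.
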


The existence of a coupling fulfilling these characteristics was first established in \cite{M} for a class of stochastic differential equations with periodic boundary \mbox{conditions of the sort}
\begin{equation}\label{MainSPDEMueller}
\left\{\begin{array}{rll}
\p_t U^u &= \p^2_{xx}U^u - \alpha U^u + a(U^u) + b(U^u)\dot{W} & \quad t>0 \,,\, x \in S^1 \\
U^u(0,x) &= u(x) &\quad x \in S^1
\end{array}\right.
\end{equation}
where $\alpha > 0$ is fixed parameter, $a: \R \rightarrow \R$ is a Lipschitz nonincreasing function and $b: \R \rightarrow \R$ is a positive, Lipschitz function bounded away from both zero and infinity. In this work, Mueller considers an arbitrary pair of continuous functions $u,v \in C(S^1)$ as initial data and shows that the coupling time $\eta^\varepsilon_{u,v}$ is almost surely finite. Later this very same result was adapted in \cite{B2} to a particular system verifying the assumptions in \cite{M} but including Dirichlet boundary conditions instead of periodic ones. Furthermore, in this second work Brassesco shows the asymptotic estimate \eqref{coupling1} for the coupling time $\eta^\varepsilon_{u,v}$. Unfortunately, for our system the coefficient $a$ fails to be increasing so that the results on \cite{B2} cannot be directly applied. Nevertheless, it is still possible to obtain Theorem \ref{coupling0} by performing some minor adjustments to the proof given there, although this comes at the expense of
losing the almost sure finiteness of the coupling time and also a certain freedom in the choice of initial data. In what follows we present a brief summary of the \mbox{proof of Theorem \ref{coupling0}}, highlighting the main differences with \cite{B2} and explaining how to deal with each of them. We refer the reader to \cite{M} and \cite{B2} for the remaining details.

Let us begin by observing that it will suffice to show the coupling for initial data $u,v$ such that $u \geq v$. Indeed, it follows from the proof in this case that if $w:= \max\{u,v\}$ then we can construct the three solutions $U^{(n_0+1),u,\varepsilon}$, $U^{(n_0+1),v,\varepsilon}$ and $U^{(n_0+1),w,\varepsilon}$ in the same probability space so that $U^{(n_0+1),u,\varepsilon}$ and $U^{(n_0+1),w,\varepsilon}$ are identical after the time $\eta^\varepsilon_{u,w}$ and also $U^{(n_0+1),u,\varepsilon}$ and $U^{(n_0+1),w,\varepsilon}$ are identical after the time $\eta^\varepsilon_{v,w}$. It then follows that $U^{(n_0+1),u,\varepsilon}$ and $U^{(n_0+1),v,\varepsilon}$ must become identical after the $\eta^\varepsilon_{u,v} \leq \max\{ \eta^\varepsilon_{u,w}, \eta^\varepsilon_{v,w}\}$, so that it suffices to estimate the desired probability in \eqref{coupling1} for the simpler case $u \geq v$.

Thus, we assume that $u \geq v$ and given two independent Brownian sheets $W_1$ and $W_2$ we consider the pair $(U^{(n_0+1),u,\varepsilon},U^{(n_0+1),v,\varepsilon})$ which satisfies
$$
\p_t U^{(n_0+1),u,\varepsilon} = - \frac{\partial S^{(n_0+1)}}{\partial \varphi}(U^{(n_0+1),u,\varepsilon}) + \eps \dot{W_1}
$$ and
$$
\p_t U^{(n_0+1),v,\varepsilon} = - \frac{\partial S^{(n_0+1)}}{\partial \varphi}(U^{(n_0+1),v,\varepsilon}) + \eps \left( \left( \sqrt{1- \min\{|E|, 1\}} \dot{W_1} + \sqrt{\min\{|E|,1\}} \dot{W_2} \right)\right)
$$
with initial data $u$ and $v$ respectively and where
$$
E:=U^{(n_0+1),u,\varepsilon} - U^{(n_0+1),v,\varepsilon}.
$$ Thus, both $U^{(n_0+1),u,\varepsilon}$ and $U^{(n_0+1),v,\varepsilon}$ are solutions \eqref{eqtruncada} with the appropriate initial data, constructed in the same probability space but with respect to different white noises.
Following Lemma 3.1 of \cite{M} it is possible to construct the pair $(U^{(n_0+1),u,\varepsilon},U^{(n_0+1),v,\varepsilon})$ in such a way that the process $E$ is nonnegative almost surely for $\varepsilon > 0$ sufficiently small. Then, using the weak formulation of solutions to \eqref{MainSPDE} available on Lemma \ref{weaksol} one can immediately see as in \cite{B2} that if we write
$$
U(t) = \int_0^1 E(t,x) \sin (\pi x) dx
$$ then $U$ satisfies
\begin{equation}\label{EcuaU}
U(t)= U(t) + \int_0^t C(s)ds + M_t
\end{equation}
where
\begin{equation}\label{drift}
C(s) = \int_0^1 \left( g_{n_0+1}\left(U^{(n_0+1),u,\varepsilon}(s,x)\right)-g_{n_0+1}\left(U^{(n_0+1),v,\varepsilon}(s,x)\right) - \pi^2 E(s,x)\right) \sin(\pi x) dx.
\end{equation}
and $M$ is a continuous martingale with respect to the filtration generated by $W_1$ and $W_2$ satisfying
\begin{equation}\label{compensator}
\langle M \rangle(t) = 2\eps^2 \int_0^t\int_0^1 \frac{\min\{ E(s,x), 1\}}{1 + \sqrt{1-\min\{E(s,x),1\}}}dxds.
\end{equation} Notice that in order for \eqref{EcuaU} to hold, it was necessary to introduce a $C^\infty$ function in $C_D([0,1])$ in the definition of $U$.
We selected $\sin(\pi x)$ as in \cite{B2} but the same reasoning also holds for other nonnegative $C^\infty$ functions. Now, from \eqref{compensator} we easily obtain
$$
\frac{\langle M \rangle (t)}{dt} \geq \varepsilon^2 \int_0^1 \sin^2 (\pi x) \min\{ E(t,x), 1\}dx.
$$ Using Hölder's inequality we obtain that
$$
\int_0^1 \sin(\pi x) \min\{E(t,x),1\}dx \leq \left(\int_0^1 \sin^2(\pi x) \min\{E(t,x),1\}dx\right)^{\frac{5}{8}} \left(\int_0^1 \sin^{-\frac{2}{3}}(\pi x)dx\right)^{\frac{3}{8}}
$$ which implies the estimate
\begin{align*}
\frac{\langle M \rangle (t)}{dt}& \geq K\varepsilon \left[\int_0^1 \sin(\pi x) \min\{E(t,x),1\}dx\right]^{\frac{8}{5}}\\
\\
& \geq K\varepsilon \left[\int_0^1 \sin(\pi x) \frac{E(t,x)}{\max\{E(t,x),1\}}dx\right]^{\frac{8}{5}}\\
\\
& \geq \frac{K\varepsilon^2}{\left[\sup_{x \in [0,1]} \max\{E(t,x),1\}\right]^\frac{8}{5}}\left(U(t)\right)^{\frac{8}{5}}
\end{align*} where
$$
K := \left(\int_0^1 \sin^{-\frac{2}{3}}(\pi x)dx\right)^{-\frac{3}{5}}.
$$ Thus, we conclude that there exists an adapted process $D$ such that for all $t \geq 0$
$$
\frac{\langle M \rangle (t)}{dt} = \left( U(t) \right)^{\frac{8}{5}}D(t)
$$ and
\begin{equation}\label{boundtimechange}
D(t) \geq \frac{K\varepsilon^2}{\left[\sup_{x \in [0,1]} \max\{E(t,x),1\}\right]^\frac{8}{5}}.
\end{equation} Next, we introduce the time change
$$
\varphi(t) = \int_0^t D(s)ds
$$ and consider the time-changed process
$$
X(t) := U( \varphi^{-1}(t)).
$$ In \cite{M} it is shown that whenever the condition
\begin{equation}\label{conditiontimechange}
\max\left\{ \sup_{t \geq 0} \E\left( \sup_{x \in [0,1]} U^{(n_0+1),u,\varepsilon}(t,x) \right) , \sup_{t \geq 0} \E \left( \sup_{x \in [0,1]} U^{(n_0+1),v,\varepsilon}(t,x) \right) \right\} < +\infty
\end{equation}is met then $\lim_{t \rightarrow +\infty} \varphi(t) = +\infty$ so that the process $X$ is globally defined. Unfortunately, in this article condition \eqref{conditiontimechange} is seen to hold only under the presence of the linear term in \eqref{MainSPDEMueller}, i.e. $\alpha > 0$, which is missing in our system. However, since we are interested in achieving the coupling between the solutions before they escape the domain $G$, \mbox{one can} modify the source term $g_{n_0 +1}$ outside $B_{n_0 +1}$ in such a way that \eqref{conditiontimechange} is satisfied without it affecting our plans. Hence, by \eqref{EcuaU} and the definition of $\varphi$, it can be seen that $X$ satisfies
$$
X(t) = U(0) + \int_0^t \tilde{C}(s) ds + \int_0^t \left(X(t)\right)^{\frac{8}{10}} dB_t
$$ for a certain Brownian motion $B$ and $\tilde{C}$ given by the formula
$$
\tilde C(t):=C(\varphi^{-1}(t)) \frac{1}{\varphi'(\varphi^{-1}(t))}.
$$ Now, Itô's formula yields that, up until its arrival time at zero, the process $Y: = 5 X^{\frac{1}{5}}$ satisfies
$$
Y(t) = 5\left(U(0)\right)^{\frac{1}{5}} +\int_0^t \left(\frac{\tilde C(s)}{\left(Y(s)\right)^4}-\frac{2}{5Y(s)}\right)ds + B_t
$$ In both \cite{M} and \cite{B2} the corresponding term $\tilde{C}$ is nonpositive, so that $Y$ is guaranteed to hit zero before the time the Brownian motion $B$ takes to reach $-5\left(U(0)\right)^{\frac{1}{5}}$. However, in our case the term $\tilde{C}$ may eventually take positive values so that some additional work is needed in order to arrive at the same conclusion. Notice that a straightforward calculation using the definition of $\varphi$ and the bound \eqref{boundtimechange} shows that if for some $h> 0$ the term $C(t)$ is nonpositive for all $t \leq e^{\frac{h}{\varepsilon^2}}$ then $\tilde{C}(t)$ is also nonpositive but only for all $t \leq e^{\frac{h}{2\varepsilon^2}}$. Hence, if we set
$$
\gamma:= \inf \{ t \geq 0 : B_t = -5\left(U(0)\right)^{\frac{1}{5}}\}
$$ then $Y$ is guaranteed to hit zero before $\gamma$ provided that there exists $h > 0$ such that $\gamma \leq e^{\frac{h}{2\varepsilon^2}}$ and the term $C(t)$ is nonpositive for all $t \leq e^{\frac{h}{\varepsilon^2}}$. Now, $Y(t)=0$ implies that $U(\varphi^{-1}(t))=0$ and ultimately that $E( \varphi^{-1}(t), x ) \equiv \mathbf{0}$ which means that both solutions $U^{(n_0+1),u,\varepsilon}$ and $U^{(n_0+1),v,\varepsilon}$ coincide at time $t$. But, since the process $E$ is governed by the differential equation
$$
\p_t E = \p^{2}_{xx} E + \left(g_{n_0+1}(U^{(n_0+1),u,\varepsilon}) - g_{n_0+1}(U^{(n_0+1),v,\varepsilon})\right) + 2\varepsilon^2 \frac{\min\{ E(s,x), 1\}}{1 + \sqrt{1-\min\{E(s,x),1\}}}\dot{W},
$$ we see that once the solutions meet each other, they remain identical forever afterwards and thus the coupling is achieved. Hence, we conclude that the pair $(U^{(n_0+1),u,\varepsilon},U^{(n_0+1),v,\varepsilon})$ satisfies \eqref{coupling2} and furthermore that, if there exists $h > 0$ such that $\gamma \leq e^{\frac{h}{2\varepsilon^2}}$ and the term $C(t)$ is nonpositive for all $t \leq e^{\frac{h}{\varepsilon^2}}$, then $\eta^\varepsilon_{u,v}$ is bounded from above by $\varphi^{-1}(\gamma)$. Our goal now is then to find $h > 0$ such that both these conditions are satisfied with overwhelming probability as $\varepsilon > 0$ tends to zero for all $u,v$ in a small neighborhood of the origin.

Notice that, since $g'_{n_0 + 1}$ is continuous and $g'_{n_0+1}(0)= g_{n_0+1}(0) = 0$, there exists $\delta > 0$ sufficiently small such that $\sup_{|y|\leq \delta} g_{n_0+1}(y) \leq \pi^2$. Therefore, if for some $t \geq 0$ we have that $U^{(n_0+1),u,\varepsilon}(t,\cdot)$ and $U^{(n_0+1),v,\varepsilon}(t,\cdot)$ both belong to $B_\delta$ then by \eqref{drift} we obtain
$$
C(t) \leq \int_0^1 \left[\sup_{|y| \leq \delta} g'_{n_0+1}(y) - \pi^2\right] E(t,x) \sin(\pi x) dx \leq 0.
$$
Since one can show as in the proof of \eqref{lejosdelminimo} that for every $r > 0$ sufficiently small one has
$$
\inf_{u \in \p B_r} S(u) > S(\mathbf{0})=0
$$ then by the methods applied in Section \ref{seclowerbound} we conclude that there exists $0 < \rho < \delta$ sufficiently small and $h > 0$ such that
$$
\lim_{\varepsilon \rightarrow 0} \left[\sup_{u \in B_\rho} P_u \left( \tau_\varepsilon (\p B_\delta) \leq e^{\frac{h}{\varepsilon^2}} \right) \right] = 0.
$$ Thus, by all these considerations we see that if $\delta < \frac{1}{2}$ then for $\varepsilon > 0$ sufficiently small
\begin{equation}\label{cotacoupling0}
P\left( \eta^\varepsilon_{u,v} \geq \frac{1}{\varepsilon^3} \right) \leq 2 \left( P\left( \gamma > \frac{K}{\varepsilon} \right) +  \sup_{u \in B_\rho} P_u \left( \tau_\varepsilon (\p B_\delta) \leq e^{\frac{h}{\varepsilon^2}} \right) \right)
\end{equation} where we have used the fact that if $U^{(n_0+1),u,\varepsilon}(t,\cdot)$ and $U^{(n_0+1),v,\varepsilon}(t,\cdot)$ both belong \mbox{to $B_\delta$} for all $t \in [0,\frac{1}{\varepsilon^3}]$ and $\delta < \frac{1}{2}$ then $\sup_{x \in [0,1]} |E(t,x)| < 1$ for all $t \in [0,\frac{1}{\varepsilon^3}]$ \mbox{so that $\varphi(\frac{1}{\varepsilon^3}) \geq \frac{K}{\varepsilon}$.} Since the bound obtained in \eqref{cotacoupling0} holds for all pairs $u,v \in B_{\rho}$ and $\gamma$ is \mbox{almost surely finite,} we conclude \eqref{coupling1} and so Theorem \ref{coupling0} is proved.

As a direct consequence of Theorem \ref{coupling0} we now obtain the following lemma which establishes the claim in the beginning of the section regarding the asymptotic distribution
of the escape time $\tau_{\varepsilon}(\p G)$ for initial data in a small neighborhood of the origin.

\begin{lema}\label{escapelema1} There exists $\rho > 0$ such that for every $t_0 > 0$
\begin{equation}
\lim_{\varepsilon \rightarrow 0}\left[ \sup_{u,v \in B_{\rho}} \left[ \sup_{t > t_0} |P_u(\tau_{\varepsilon}(\partial G)  > t\gamma_{\varepsilon}) - P_{v}(\tau_{\varepsilon}(\partial G) > t\gamma_{\varepsilon})|\right]\right] = 0.
\end{equation}
\end{lema}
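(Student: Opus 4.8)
The plan is to derive this lemma directly from the pathwise coupling of Theorem~\ref{coupling0} and the asymptotic magnitude of $\tau_\varepsilon(\partial G)$ given by Theorem~\ref{ecotsuplema1}. The point is that the coupling makes the two escape times \emph{equal} except on an event whose probability is controlled by \eqref{coupling1}, while Theorem~\ref{ecotsuplema1} forces the normalizing scale $\gamma_\varepsilon$ to be far larger than the coupling horizon $\varepsilon^{-3}$, so that the restriction $t>t_0$ places us well past the time at which the coupling has typically occurred. Concretely, I would fix $\rho>0$ small enough that $B_\rho\subseteq B_c$ and that the conclusion of Theorem~\ref{coupling0} holds for this $\rho$ (both possible, since that conclusion is inherited by any smaller ball), and for $u,v\in B_\rho$ realize $U^{(n_0+1),u,\varepsilon}$ and $U^{(n_0+1),v,\varepsilon}$ on a common probability space as in Theorem~\ref{coupling0}, so that they agree for all $t\ge\eta^\varepsilon_{u,v}$ and $\sup_{u,v\in B_\rho}P(\eta^\varepsilon_{u,v}\ge\varepsilon^{-3})\to 0$. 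By the construction in Section~\ref{trunca}, each truncated process coincides with the corresponding solution of \eqref{MainSPDE} until it leaves $B_{n_0+1}$, hence (as $G\subseteq B_{n_0}$) until it leaves $G$, so $\tau_\varepsilon^u(\partial G)$ and $\tau_\varepsilon^v(\partial G)$ are well defined on the coupled space and carry the correct marginal laws.

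Next, fixing $t_0>0$ and $t>t_0$, I would work on the event
$$
B_\varepsilon:=\{\eta^\varepsilon_{u,v}<\varepsilon^{-3}\}\cap\{\tau_\varepsilon^u(\partial G)\ge\varepsilon^{-3}\}\cap\{\tau_\varepsilon^v(\partial G)\ge\varepsilon^{-3}\}.
$$
On $B_\varepsilon$ the coupling is achieved strictly before either process leaves $G$, so the two coupled trajectories are identically equal from $\eta^\varepsilon_{u,v}$ onward, in particular up to the escape from $G$, whence $\tau_\varepsilon^u(\partial G)=\tau_\varepsilon^v(\partial G)$ on $B_\varepsilon$. By Theorem~\ref{ecotsuplema1} applied with initial datum $\mathbf{0}\in B_c$ and any $\delta\in(0,\Delta)$ one has $\gamma_\varepsilon>e^{(\Delta-\delta)/\varepsilon^2}$ for all small $\varepsilon$, hence $t\gamma_\varepsilon\ge t_0\gamma_\varepsilon\ge\varepsilon^{-3}$ once $\varepsilon$ is small, uniformly in $t\ge t_0$; consequently the events $\{\tau_\varepsilon^u(\partial G)>t\gamma_\varepsilon\}$ and $\{\tau_\varepsilon^v(\partial G)>t\gamma_\varepsilon\}$ coincide on $B_\varepsilon$.

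Since two events coinciding on $B_\varepsilon$ have probabilities differing by at most $P(B_\varepsilon^c)$, this yields
$$
|P_u(\tau_\varepsilon(\partial G)>t\gamma_\varepsilon)-P_v(\tau_\varepsilon(\partial G)>t\gamma_\varepsilon)|\le P(\eta^\varepsilon_{u,v}\ge\varepsilon^{-3})+P_u(\tau_\varepsilon(\partial G)<\varepsilon^{-3})+P_v(\tau_\varepsilon(\partial G)<\varepsilon^{-3}),
$$
a bound independent of $t$. The first term tends to $0$ uniformly over $u,v\in B_\rho$ by \eqref{coupling1}; for the remaining two, since $B_\rho\subseteq B_c$ and $\varepsilon^{-3}\le e^{(\Delta-\delta)/\varepsilon^2}$ for small $\varepsilon$, Theorem~\ref{ecotsuplema1} gives $\sup_{w\in B_\rho}P_w(\tau_\varepsilon(\partial G)<\varepsilon^{-3})\le\sup_{w\in B_c}P_w(\tau_\varepsilon(\partial G)<e^{(\Delta-\delta)/\varepsilon^2})\to 0$. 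Taking the supremum over $u,v\in B_\rho$ and $t>t_0$ and letting $\varepsilon\to 0$ then gives the assertion.

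The only step that requires genuine care is the claim, on $B_\varepsilon$, that pathwise agreement of the coupled truncated processes from $\eta^\varepsilon_{u,v}$ onward forces equality of the two escape times from $\partial G$: one must observe that, starting from $B_\rho\subseteq G^\circ\subseteq B_{n_0}^\circ$, the first hitting time of $\partial G$ occurs while the process is still strictly below the truncation level $n_0+1$, so no discrepancy between $U^{u,\varepsilon}$ and $U^{(n_0+1),u,\varepsilon}$ (or between their $v$-counterparts) can have been introduced before that time, and the coupled processes $U^{(n_0+1),u,\varepsilon}$ and $U^{(n_0+1),v,\varepsilon}$ are identical on $[\eta^\varepsilon_{u,v},\infty)$. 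Everything else is a routine assembly of the estimates established earlier in the chapter.
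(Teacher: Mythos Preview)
Your proof is correct and follows essentially the same route as the paper: couple the two truncated solutions via Theorem~\ref{coupling0}, and use that $t_0\gamma_\varepsilon\gg\varepsilon^{-3}$ (which the paper deduces from Section~\ref{seclowerbound}, you from Theorem~\ref{ecotsuplema1}) so that the coupling has typically occurred long before the relevant time scale. The paper's version is terser, writing directly
\[
|P_u(\tau_{\varepsilon}(\partial G)>t\gamma_{\varepsilon})-P_{v}(\tau_{\varepsilon}(\partial G)>t\gamma_{\varepsilon})|\le P(\eta^{\varepsilon}_{u,v}\ge t\gamma_\varepsilon),
\]
which tacitly ignores the possibility that one of the two processes hits $\partial G$ \emph{before} the coupling time $\eta^{\varepsilon}_{u,v}$; your decomposition via the event $B_\varepsilon$ and the two extra terms $P_w(\tau_\varepsilon(\partial G)<\varepsilon^{-3})$ is exactly what is needed to make that step rigorous, and those terms are indeed negligible by the lower bound of Theorem~\ref{ecotsuplema1}. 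So your argument is the carefully written version of the paper's.
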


\begin{proof} Let $\rho > 0$ be as in Theorem \ref{coupling0} and given a pair of initial data $u,v \in B_\rho$ let us consider the coupling $\left( U^{(n_0+1),u,\varepsilon}, U^{(n_0+1),v,\varepsilon}\right)$ constructed in the aforementioned theorem. Since by the results established in Section \ref{seclowerbound} for any given $t_0 > 0$ there exists $\varepsilon_0 > 0$ such that $t_0 \gamma_\varepsilon > \frac{1}{\varepsilon^3}$ for all $0 < \varepsilon < \varepsilon_0$, then for all $t \geq t_0$ we have
$$
|P_u(\tau_{\varepsilon}(\partial G)  > t\gamma_{\varepsilon}) - P_{v}(\tau_{\varepsilon}(\partial G) > t\gamma_{\varepsilon})| \leq P ( \eta^\varepsilon_{u,v} \geq t\gamma_\varepsilon ) \leq P\left( \eta^\varepsilon_{u,v} \geq \frac{1}{\varepsilon^2}\right)
$$ for all $0 < \varepsilon < \varepsilon_0$, so that the result follows at once by \eqref{coupling1}.
\end{proof}

\begin{obs} In \cite{MOS} the authors study the asymptotic distribution of the tunneling in a double-well potential model, i.e. the time needed for the stochastic system to go from one well to a small neighborhood of the bottom of the other one. They show that, under proper normalization, the tunneling time converges in distribution to an exponential random variable, as it also happens with $\tau^\varepsilon(\p G)$ in our case. To do this they show an analogue of Lemma \ref{escapelema1} but using a different technique, which is based on an exponential loss of memory of the initial datum. The joining of trajectories due to the attractive drift in the final part of the motion plays an essential role in their argument, and so the reasoning no longer works, for example, when studying the exit time from a bounded region containing only the attractor. However, we point out that the approach we introduce here relying on the coupling of solutions does not have the same limitation, and so it can also be used to study
these other type of problems. This is shown in detail in \cite{B2}.
\end{obs}

\subsection{Proof of Theorem \ref{escapeteo1}}

We shall need the results contained in following lemma for the proof of Theorem \ref{escapeteo1}.

\begin{lema}\label{escapelema2} Let us consider $ 0 < \alpha < \Delta$ and define $\eta_\varepsilon := e^{\frac{\alpha}{\varepsilon^2}}$. Then
\begin{enumerate}
\item [i.] $\lim_{\varepsilon \rightarrow 0}\frac{\eta_\varepsilon}{\gamma_\varepsilon} = 0$
\item [ii.] $\lim_{\varepsilon \rightarrow 0} \left[ \displaystyle{\sup_{u \in G} P_u ( \tau_{\varepsilon}(\partial G) > \eta_\varepsilon \,,\, \tau_{\varepsilon}(B_{\rho}) > \eta_\varepsilon)}\right] = 0$ for any $\rho > 0$.
\end{enumerate}
\end{lema}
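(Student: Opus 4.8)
Part (i) will follow directly from the lower bound on the escape time already established in Theorem~\ref{ecotsuplema1}. Since $\alpha<\Delta$, fix $\delta\in(0,\Delta-\alpha)$. Applying Theorem~\ref{ecotsuplema1} with this $\delta$ and using that $\mathbf 0\in B_c$ we get
\[
\liminf_{\varepsilon\to 0}P_{\mathbf 0}\!\left(\tau_\varepsilon(\partial G)>e^{\frac{\Delta-\delta}{\varepsilon^{2}}}\right)\ \geq\ \lim_{\varepsilon\to 0}P_{\mathbf 0}\!\left(e^{\frac{\Delta-\delta}{\varepsilon^{2}}}<\tau_\varepsilon(\partial G)<e^{\frac{\Delta+\delta}{\varepsilon^{2}}}\right)=1 .
\]
Hence for all sufficiently small $\varepsilon$ the left-hand side exceeds $e^{-1}=P_{\mathbf 0}(\tau_\varepsilon(\partial G)>\gamma_\varepsilon)$, and since $t\mapsto P_{\mathbf 0}(\tau_\varepsilon(\partial G)>t)$ is strictly decreasing this forces $\gamma_\varepsilon>e^{(\Delta-\delta)/\varepsilon^{2}}$. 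Therefore $\eta_\varepsilon/\gamma_\varepsilon<e^{(\alpha-\Delta+\delta)/\varepsilon^{2}}\to 0$, which is (i).

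For part (ii) we may assume $\rho$ small (enlarging $B_\rho$ only shrinks the event), say $B_{2\rho}\subseteq \mathcal D_{\mathbf 0}\cap G$, and we work throughout with the truncated process $U^{(n_0+1),u,\varepsilon}$, which agrees with $U^{u,\varepsilon}$ until the escape from $G$. The event in (ii) is contained in $\{\tau_\varepsilon(\partial G)\wedge\tau_\varepsilon(B_\rho)>\eta_\varepsilon\}$, so the plan is the usual geometric–decay argument: I will produce a fixed time $T>0$ and a constant $\alpha'\in(0,\alpha)$ such that, for every $\varepsilon$ small,
\begin{equation}\label{eq:onestepplan}
\sup_{v\in\overline G}P_v\big(\tau_\varepsilon(\partial G)\wedge\tau_\varepsilon(B_\rho)>T\big)\ \leq\ 1-e^{-\frac{\alpha'}{\varepsilon^{2}}}.
\end{equation}
Splitting $[0,\eta_\varepsilon]$ into $m_\varepsilon:=\lfloor\eta_\varepsilon/T\rfloor$ blocks of length $T$ and applying the Markov property at the endpoints of the blocks — noting that on the event in question the process lies in $\overline G$ at each such endpoint — gives
\[
\sup_{u\in G}P_u\big(\tau_\varepsilon(\partial G)>\eta_\varepsilon,\ \tau_\varepsilon(B_\rho)>\eta_\varepsilon\big)\ \leq\ \big(1-e^{-\alpha'/\varepsilon^{2}}\big)^{m_\varepsilon}\ \leq\ \exp\!\Big(-\tfrac1T\,e^{(\alpha-\alpha')/\varepsilon^{2}}\Big)\ \xrightarrow[\varepsilon\to0]{}\ 0,
\]
which is exactly (ii).

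It remains to prove \eqref{eq:onestepplan}, i.e.\ that from any $v\in\overline G$ not already on $\partial G\cup B_\rho$ the process hits $\partial G\cup B_\rho$ within a bounded time with probability at least $e^{-\alpha'/\varepsilon^{2}}$. First, Lemma~\ref{lemab2} with $B=\overline G$ and $K:=\alpha+1$ gives a time $T_1$ so that, uniformly in $v\in\overline G$, with probability $\geq 1-e^{-(\alpha+1)/\varepsilon^{2}}$ the process reaches $C_e:=\bigcup_{n}B_e(z^{(n)})$ or exits $\overline G$ before $T_1$, where $e\in(0,\rho)$ is fixed. Exiting $\overline G$, or reaching $B_e(z^{(0)})=B_e(\mathbf 0)\subseteq B_\rho$, already means hitting $\partial G\cup B_\rho$; and since $\overline G\subseteq\overline{B_{n_0}}$, the only $z^{(n)}$ with $B_e(z^{(n)})\cap\overline G\neq\emptyset$ are $\mathbf 0$, $\pm z$ and finitely many $z^{(n)}$ with $\|z^{(n)}\|_\infty<n_0$, so reaching $B_e(z^{(n)})$ for any other $n$ forces the continuous path to have left $\overline G$. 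Thus, up to probability $e^{-(\alpha+1)/\varepsilon^{2}}$, after time $T_1$ the process has hit $\partial G\cup B_\rho$ or entered $B_e(w_j)$ for one of finitely many unstable equilibria $w_1,\dots,w_m\in\overline G\setminus\{\mathbf 0\}$. Second, for each such $w_j$ I claim $V(w_j,\ B_\rho\cup(C_D([0,1])\setminus\overline G))=0$: by Proposition~\ref{descomp3} and the scaling structure of Proposition~\ref{equilibrios} the one-dimensional unstable manifold $\mathcal W^{w_j}_u$ of $w_j$ meets $\mathcal D_e$, say at $w_j'$; escaping a hyperbolic equilibrium along its unstable manifold has zero cost, so $V(w_j,w_j')=0$ (a standard property of the quasipotential, cf.\ the Appendix), while from $w_j'\in\mathcal D_e$ the deterministic flow explodes in finite time and so exits the bounded set $\overline G$, whence $V(w_j',C_D([0,1])\setminus\overline G)=0$; subadditivity of $V$ then gives the claim. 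Consequently there is, for each $j$, a path of rate $<\alpha'/3$ starting at $w_j$ that reaches the open set $B_\rho\cup(C_D([0,1])\setminus\overline G)$ within a fixed time $T_2$; prepending a short linear interpolation from a point of $B_e(w_j)$ to $w_j$ (of rate $<\alpha'/3$ once $e$ is small, by the interpolation estimate in the proof of Lemma~\ref{cotasuplema0}) and invoking the large deviations lower bound~\eqref{LDP1}, which is uniform in the initial datum, yields $\inf_{v'\in B_e(w_j)}P_{v'}(\tau_\varepsilon(\partial G)\wedge\tau_\varepsilon(B_\rho)\leq T_2)\geq e^{-\alpha'/\varepsilon^{2}}$ for $\varepsilon$ small. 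Combining the two steps through the strong Markov property at the hitting time of $C_e$ establishes \eqref{eq:onestepplan} with $T=T_1+T_2$ (relabelling $\alpha'$ to absorb a harmless factor $\tfrac12$).

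The main obstacle is this last step: the identity $V(w_j,\,B_\rho\cup(C_D([0,1])\setminus\overline G))=0$ for every unstable equilibrium inside $\overline G$, which couples the (standard but not entirely trivial) fact that leaving a hyperbolic equilibrium along its unstable manifold costs nothing with the geometric input, from Propositions~\ref{equilibrios} and~\ref{descomp3}, that the unstable manifolds of the $z^{(n)}$ genuinely reach into $\mathcal D_e$. The lack of compactness of $C_D([0,1])$ causes no trouble here, since inside the bounded domain $\overline G$ only finitely many equilibria are relevant and Lemma~\ref{lemab2} already controls the time spent away from all of them.
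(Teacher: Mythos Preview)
Your argument is correct, but for part (ii) it takes a different and longer route than the paper. The paper simply observes that the very same construction carried out in Lemma~\ref{cotasuplema0} produces, for every $u\in G$, a path $\varphi^u$ reaching $\partial G\cup B_\rho$ whose rate is bounded only by the interpolation pieces (the reverse-flow piece of cost $\Delta$ is no longer needed, since from points in $\mathcal D_{\mathbf 0}$ one just follows the deterministic flow into $B_\rho$ at zero cost). Hence the rate can be made less than $\alpha/3$ uniformly, and the geometric-decay argument follows exactly as in the proof of Theorem~\ref{cotsuplema1}; there is no intermediate reduction to neighborhoods of equilibria and no appeal to Lemma~\ref{lemab2}. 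Your two-step strategy --- first use Lemma~\ref{lemab2} to land near an equilibrium, then exhibit a cheap path out --- does work, but it adds machinery that the direct construction avoids. Two small points: the unstable manifold of $z^{(n)}$ is not one-dimensional for $|n|\geq 2$, and the statement ``$V(w_j,w_j')=0$, cf.\ the Appendix'' is not actually recorded there; fortunately neither is needed, since Proposition~\ref{descomp3} already gives $(1\pm r)w_j\in\mathcal D_e\cup\mathcal D_{\mathbf 0}$, and a short interpolation to $(1\pm r)w_j$ followed by the deterministic flow suffices (this is exactly the move used in Lemma~\ref{cotasuplema0}). Your proof of (i) is the same as the paper's.
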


\newpage
\begin{proof} Let us notice that by the bounds established for $\tau_\varepsilon(\p G)$ in Sections \ref{secupperbound} and \ref{seclowerbound} we have that
$$
\lim_{\varepsilon \rightarrow 0} \varepsilon^2 \log \gamma_\varepsilon = \Delta
$$ from where (i) immediately follows. Next, we establish (ii) with the aid from the large deviations principle as in Lemma \ref{cotasuplema0}. We must show that there exists a time $T > 0$ such that for each $u \in G$ there exists a set of paths $\mathcal{E}_{u,T} \subseteq C_{D_u}([0,T]\times [0,1])$ satisfying
\begin{enumerate}
\item [$\bullet$] Every path in $\mathcal{E}_{u, T}$ reaches $\partial G \cap B_\rho$ before times $T$.
\item [$\bullet$] $\inf_{x \in G} P_u (U^{\varepsilon} \in \mathcal{E}_{u, T}) \geq \tilde{\alpha}_{\varepsilon}$, where $\tilde{\alpha}_{\varepsilon}:= Te^{-\frac{\alpha}{2\varepsilon^{2}}}.$
\end{enumerate}
Once again, it suffices to show that for each $u \in G$ there exists $\varphi^u \in C_{D_u}([0,T]\times [0,1])$ starting at $u$ with rate less than $\frac{\alpha}{3}$ and such that not only does $\varphi^u$ reach $\partial G \cup B_\rho$ before time $T$, but also any path sufficiently close to $\varphi^u$ does so as well. The construction of such a $\varphi^u$ is similar to the one given in the proof of Lemma \ref{cotasuplema0}. The remainder of the proof follows once again from the large deviations principle valid for our system.
\end{proof}

With Lemmas \ref{escapelema1} and \ref{escapelema2} at our disposal, we are now ready to prove Theorem \ref{escapeteo1}. Given $s > 0$ let us define
$$
R^{u,s}_{\varepsilon} = \inf \{ r > s\gamma_\varepsilon : U^{u,\varepsilon}(r,\cdot) \in B_\rho\}
$$ where $\rho > 0$ is given by Lemma \ref{escapelema1}. We may then decompose $\nu_\varepsilon (t+s)$ as
$$
\nu_\varepsilon (t+s) = P_{\mathbf{0}} ( \tau_\varepsilon(\partial G) > (s+t)\gamma_\varepsilon \,,\, R^{s}_{\varepsilon} > s\gamma_\varepsilon + \eta_\varepsilon) + P_{\mathbf{0}} ( \tau_\varepsilon(\partial G) > (s+t)\gamma_\varepsilon \,,\, R^{s}_{\varepsilon} \leq s \gamma_\varepsilon + \eta_\varepsilon).
$$ Let us observe that for $u \in G$ the Markov property yields
$$
P_u ( \tau_\varepsilon(\partial G) > s\gamma_\varepsilon + \eta_\varepsilon \,,\, R^{s}_{\varepsilon} > s\gamma_\varepsilon  + \eta_\varepsilon) \leq \sup_{u \in G} P_u \left(\tau_\varepsilon(\partial G) > \eta_\varepsilon \,,\, \tau_\varepsilon(B_\rho) > \eta_\varepsilon\right).
$$
Thus, by Lemma \ref{escapelema2} we conclude that for any fixed $t_0 > 0$
$$
\lim_{\varepsilon \rightarrow 0} \left[\sup_{s\geq 0\,,\,t\geq t_0} \left[\sup_{u\in G} P_u (\tau_\varepsilon(\partial G) > (s+t)\gamma_\varepsilon \,,\, R^{s}_{\varepsilon} > s\gamma_\varepsilon + \eta_\varepsilon)\right]\right] = 0.
$$
To establish (i) it suffices then to give proper upper and lower bounds on the second term of the decomposition. But by applying the strong Markov property with respect to the stopping time $R^{\mathbf{0},s}_\varepsilon$ we obtain $$
P_\mathbf{0} (\tau_\varepsilon(\partial G) > (s+t)\gamma_\varepsilon \,,\, R^{s}_{\varepsilon} \leq s\gamma_\varepsilon + \eta_\varepsilon) \leq P_\mathbf{0}(\tau_\varepsilon(\partial G)> s\gamma_\varepsilon) \left[\sup_{u \in
B_\rho} P_u (\tau_\varepsilon(\partial G) > t\gamma_\varepsilon - \eta_\varepsilon)\right]
$$ and
$$
P_\mathbf{0} (\tau_\varepsilon(\partial G) > (s+t)\gamma_\varepsilon \,,\, R^{s}_{\varepsilon} \leq s\gamma_\varepsilon + \eta_\varepsilon) \geq P_\mathbf{0} ( R^{s}_{\varepsilon} \leq s \gamma_\varepsilon + \eta_\varepsilon)\left[\inf_{u \in B_{\rho}}P_u(\tau_\varepsilon (\partial G) > t\gamma_\varepsilon)\right].
$$ From this we immediately obtain (i) by using Lemmas \ref{escapelema1} and \ref{escapelema2}. Now, assertion (ii) will follow immediately from Lemma \ref{escapelema1} once we manage to show that for every $t > 0$ we have
\begin{equation}\label{casop}
\lim_{\varepsilon \rightarrow 0} \nu_\varepsilon(t)=e^{-t}.
\end{equation} To see this, let us first observe that by applying (i) successively we obtain
$$
\left\{ \begin{array}{l}\nu_\varepsilon (2k) \leq [\nu_\varepsilon (2 - \delta_\varepsilon)]^k + \sum_{i=1}^{k-1} \psi_{\varepsilon} (2i,2)\\
\\
\nu_\varepsilon(1)= e^{-1}\leq [\nu_\varepsilon (\frac{1}{k} - \delta_\varepsilon)]^k + \sum_{i=1}^{k-1} \psi_{\varepsilon} (\frac{i}{k},\frac{1}{k}).
\end{array}\right.
$$
Thus, given $0 < \delta < 1$ and $k \in \N$ such that $e^{-k} < \frac{\delta}{2}$ and $(1- \delta)^k < \frac{e^{-1}}{2}$, in light of \eqref{1.50} we may take
$\varepsilon_0 > 0$ such that for every $\varepsilon \leq \varepsilon_0$ the following conditions hold:
\begin{itemize}
\item$\sum_{i=1}^{k-1} \psi_{\varepsilon} (2i,2) <
    \frac{\delta}{2},$
\item$\sum_{i=1}^{k-1} \psi_{\varepsilon}
    (\frac{i}{k},\frac{1}{k}) < \frac{e^{-1}}{2},$
\item$2 - \delta_\varepsilon > 1,$
\item$\frac{1}{k} - \delta_\varepsilon > \frac{1}{2k}.$
\end{itemize}
Under these conditions it can be seen that $\nu_\varepsilon(2k) < \delta \:,\: \nu_\varepsilon (\frac{1}{2k}) > 1 - \delta$ for every $\varepsilon \leq \varepsilon_0$. In particular, this implies that any sequence $(\varepsilon_j)_{j \in \N} \subseteq \R_{ > 0}$ with $\lim_{j \rightarrow +\infty} \varepsilon_j = 0$ satisfies that the family $(\nu_{\varepsilon_j})_{j \rightarrow +\infty}$ is tight, i.e.
$$
\lim_{k \rightarrow +\infty} \left[ \inf_{j \in \N} \left[ \nu_{\varepsilon_j}(k) - \nu_{\varepsilon_j}(k^{-1}) \right] \right] = 1.
$$ Therefore, by Prohorov's theorem we see that in order to establish \eqref{casop} we must only check that any sequence $(\nu_{\varepsilon_j})_{j \in \N}$ which is weakly convergent has the mean one exponential distribution as its limit. But if we denote this limit by $\nu$, then (i) implies that $\nu$ must satisfy the memory loss property, i.e. for every $s,t > 0$
$$
\nu(s+t)=\nu(s)\nu(t)
$$ and thus it must be $\nu(t)=e^{-\lambda t}$ for some $\lambda \geq 0$. By recalling that $\nu_\varepsilon(1)=e^{-1}$ for every $\varepsilon > 0$ we see that $\lambda = 1$. This concludes the proof.

\newpage

\section{Resumen del Capítulo 4}

Este capítulo se encuentra dedicado a estudiar, para datos iniciales $u$ en un entorno pequeño de $\mathbf{0}$, el fenómeno del escape del dominio $G$ construido en el Capítulo 3 por parte del sistema estocástico $U^{u,\varepsilon}$. El problema del escape de un dominio acotado con estas características fue originalmente estudiado en \cite{GOV} para el caso de un potencial de doble pozo finito-dimensional, y luego investigado en \cite{B1} en su variante infinito-dimensional. Los resultados que presentamos en este capítulo son una adaptación a nuestro contexto de los resultados que aparecen en dichas referencias.

El primer resultado caracteriza el orden de magnitud asintótico de $\tau^u_\varepsilon(\p G)$, el tiempo de salida del dominio $G$.

\noindent \textbf{Teorema}. Dado $\delta > 0$ se tiene
$$
\lim_{\varepsilon \rightarrow 0 } \left[\sup_{u \in B_c} \left| P_{u}
\left( e^{\frac{\Delta - \delta}{\varepsilon^{2}}} < \tau_{\varepsilon}(\partial
G) < e^{\frac{\Delta + \delta}{\varepsilon^{2}}}\right)-1 \right|\right] = 0,
$$ donde $B_c$ es el entorno del origen resaltado en la construcción de $G$.

Para probar este resultado mostraremos por separado la cota superior \eqref{cotsuplema1} y la inferior \eqref{cotainferioreq0}.

La cota superior se sigue del hecho de que dado $\delta > 0$ existe $T > 0$ tal que para todo $u \in G$ existe una trayectoria $\varphi^u \in C_D([0,T] \times [0,1])$
con $\varphi^u(0)=u$ y de tasa $I^u_T(\varphi^u) < \Delta + \frac{\delta}{3}$ tal que toda trayectoria suficientemente cercana a $\phi^u$ se escapa de $G$ antes de tiempo $T$.
Usando la estimación \eqref{LDP1} podemos concluir entonces que
$$
\inf_{u \in G} P( \tau^u_\varepsilon (\p G) \leq T ) \geq e^{- \frac{\Delta + \frac{\delta}{2}}{\varepsilon^2}}
$$ de modo tal que, por la propiedad de Markov, el tiempo que $U^{u,\varepsilon}$ tarde en escapar de $G$ será típicamente menor a $T e^{\frac{\Delta + \frac{\delta}{2}}{\varepsilon^2}}$. Observando que
$T e^{\frac{\Delta + \frac{\delta}{2}}{\varepsilon^2}} \ll e^{\frac{\Delta + \delta}{\varepsilon^2}}$ cuando $\varepsilon \rightarrow 0$ se concluye el resultado.

Para la cota inferior se divide el intervalo $[0,\tau^u_\varepsilon(\p G)]$ en subintervalos disjuntos que corresponden a las excursiones que realiza el sistema $U^{u,\varepsilon}$ alejándose de $\mathbf{0}$ en busca de $\p G$. En todas estas excursiones el sistema $U^{u,\varepsilon}$ fracasa en llegar a $\p G$ exceptuando la última de ellas, donde finalmente consigue el éxito y alcanza $\p G$. Es posible mostrar que cada una de estas excursiones tiene típicamente una longitud mayor a cierto $T' > 0$ y que la probabilidad de que sea exitosa es inferior a $e^{-\frac{\Delta - \frac{\delta}{2}}{\varepsilon^2}}$, de modo tal que el tiempo que $U^{u,\varepsilon}$ tarde en escapar de $G$ será típicamente mayor a $T e^{\frac{\Delta - \frac{\delta}{2}}{\varepsilon^2}}$. Observando que
$T e^{\frac{\Delta + \frac{\delta}{2}}{\varepsilon^2}} \gg e^{\frac{\Delta - \delta}{\varepsilon^2}}$ cuando $\varepsilon \rightarrow 0$ se concluye el resultado.

El siguiente resultado obtenido en este capítulo muestra que el sistema $U^{u,\varepsilon}$ típicamente se escapa de $\p G$ por $\p^{\pm z}$.

\noindent \textbf{Teorema}. Si $B_c$ es el entorno de $\mathbf{0}$ resaltado en la construcción de $G$ entonces
$$
\lim_{\varepsilon \rightarrow 0} \left[\sup_{u \in B_c} P_u \left( U^{\varepsilon}(\tau_\varepsilon(\partial G),\cdot) \notin \partial^{\pm z}\right)\right] = 0.
$$

Para probar este resultado es necesario nuevamente dividir el intervalo $[0,\tau^u_\varepsilon(\p G)]$ en las distintas excursiones que realiza el sistema $U^{u,\varepsilon}$ alejándose de $\mathbf{0}$ en busca de $\p G$, y estimar la probabilidad de que en la última de ellas el sistema haya alcanzado $\p^{\pm z}$. Por la propiedad de Markov esto coincide con estimar la probabilidad de que la excursión inicial haya alcanzado $\p^{\pm z}$ condicionada a ser exitosa. Como $V(\mathbf{0}, \p G - \p^{\pm z}) > V(\mathbf{0}, \p^{\pm z})$, i.e. el costo para el sistema $U^{\mathbf{0},\varepsilon}$ de escapar de $G$ es menor si lo hace por $\p^{\pm z}$, con ayuda de las estimaciones de grandes desvíos es posible probar que dicha probabilidad condicional tiende a cero cuando $\varepsilon \rightarrow 0$ y se obtiene así el resultado.

El último resultado de este capítulo concierne la distribución asintótica del tiempo de escape $\tau^u_\varepsilon (\p G)$. Concretamente, mostramos que bajo una normalización adecuada, $\tau^u_\varepsilon (\p G)$ converge en distribución a una variable aleatoria exponencial.

\noindent \textbf{Teorema}. Si para cada $\varepsilon > 0$ definimos el coeficiente $\gamma_\varepsilon > 0$ mediante la relación
$$
P_{\mathbf{0}}(\tau_\varepsilon (\p G) > \gamma_\varepsilon ) = e^{-1}
$$ entonces existe $\rho > 0$ tal que para todo $t \geq 0$ se tiene
$$
\lim_{\varepsilon \rightarrow 0} \left[ \sup_{u \in B_\rho} |P_u( \tau_\varepsilon(\partial G) > t\gamma_\varepsilon) - e^{-t}|\right] = 0.
$$

Para demostrar este resultado, primero tratamos el caso $u= \mathbf{0}$ separadamente. Para probar el resultado en este caso, si definimos $\nu_\varepsilon(t) = P_{\mathbf{0}}(\tau_\varepsilon (\p G) > t\gamma_\varepsilon )$, bastará con verificar que
\begin{enumerate}
\item [i.] La familia de distribuciones $(\nu_\varepsilon)_{\varepsilon > 0}$ es asintóticamente acotada en probabilidad, i.e. uniformemente sobre $\varepsilon > 0$ suficientemente chico.
\item [ii.] Cualquier límite por subsucesiones de $\nu_\varepsilon$ cuando $\varepsilon \rightarrow 0$ es exponencial de \mbox{parámetro 1.}
\end{enumerate}
Tanto (i) como (ii) se obtienen fácilmente una vez que se demuestran las desigualdades en \eqref{1.49}. La dificultad más importante a la hora de mostrar \eqref{1.49} yace en verificar que la distribución asintótica de $\tau^u_\varepsilon(\p G)$ es la misma para datos iniciales $u$ en un entorno suficientemente pequeño de $\mathbf{0}$, i.e. existe $\rho > 0$ tal que para todo $t_0 > 0$
$$
\lim_{\varepsilon \rightarrow 0}\left[ \sup_{u,v \in B_{\rho}} \left[ \sup_{t > t_0} |P_u(\tau_{\varepsilon}(\partial G)  > t\gamma_{\varepsilon}) - P_{v}(\tau_{\varepsilon}(\partial G) > t\gamma_{\varepsilon})|\right]\right] = 0.
$$ Para probar esto recurrimos a un acoplamiento entre soluciones del sistema estocástico para datos en un entorno del origen similar al estudiado en \cite{B2}. Con este resultado, a partir del caso $u=\mathbf{0}$ se deduce inmediatamente el caso $u \in B_\rho$.

\chapter{Asymptotic behavior of $\tau^u_\varepsilon$ for $u \in \mathcal{D}_\mathbf{0}$}

\section{Asymptotic properties of $\tau_\varepsilon$ for initial data in $\mathcal{D}_\mathbf{0}$}\label{sec8}

In this section we devote ourselves to establishing the asymptotic properties as $\varepsilon \rightarrow 0$ of the explosion time $\tau_\varepsilon^u$ for arbitrary $u \in \mathcal{D}_{\mathbf{0}}$. Our first result in this direction, detailed on the following theorem, is concerned with its asymptotic magnitude.

\begin{teo}\label{taumagnitude}
For any bounded set $\mathcal{K} \subseteq \mathcal{D}_{\mathbf{0}}$ at a positive distance from $\mathcal{W}$ and $\delta > 0$
\begin{equation*}\label{taumagnitud1}
\lim_{\varepsilon \rightarrow 0} \left[ \sup_{u \in \mathcal{K}} P_u \left( e^{\frac{\Delta - \delta}{\varepsilon^2}} < \tau_\varepsilon < e^{\frac{\Delta + \delta}{\varepsilon^2}}\right)\right]=1.
\end{equation*}
\end{teo}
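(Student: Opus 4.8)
The plan is to reduce the statement for an arbitrary starting point $u\in\mathcal{K}$ to the already established behavior of the escape time $\tau_\varepsilon(\partial G)$ for initial data near the origin (Theorem \ref{ecotsuplema1}), using that with overwhelming probability the stochastic system is first attracted into the neighborhood $B_c$ of $\mathbf{0}$ appearing in Conditions \ref{assumpg}, and that once it escapes $G$ the explosion follows within a bounded time (Remark \ref{obsequivG}).

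First I would show that there exist a finite time $T_0>0$ and an integer $n$, both depending only on $\mathcal{K}$, such that the deterministic orbits $\{U^u(t,\cdot):0\le t\le T_0,\ u\in\mathcal{K}\}$ stay inside $B_{n-1}$ and $U^u(T_0,\cdot)\in B_{c/2}$ for every $u\in\mathcal{K}$. Since $\mathcal{K}\subseteq\mathcal{D}_{\mathbf{0}}$ lies at a positive distance from $\mathcal{W}$, this uniform attraction follows from the Appendix results along the lines of the proof of Proposition \ref{convsup}: by Propositions \ref{G.1} and \ref{A.2} the family $\{U^u(t_0,\cdot):u\in\mathcal{K}\}$ is, for a suitably small $t_0>0$, contained in a compact subset $\mathcal{K}'\subseteq\mathcal{D}_{\mathbf{0}}$ again at a positive distance from $\mathcal{W}$; on $\mathcal{K}'$ one combines the asymptotic stability of $\mathbf{0}$, the continuous dependence on initial data (Proposition \ref{G.2}) and the uniform control on the time the flow spends near the unstable equilibria (Proposition \ref{A.3}) to obtain the uniform entrance time into $B_{c/2}$, and the uniform boundedness of the orbits on $[0,T_0]$ then determines $n$.

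Next, by the large deviations estimate \eqref{grandes1} applied to the truncated system $U^{(n),u,\varepsilon}$ --- which coincides with $U^{u,\varepsilon}$ up to the escape from $B_n$ and, in the deterministic case, with $U^u$ --- there are constants $C>0$ and $\varepsilon_0>0$ such that
$$
\sup_{u\in\mathcal{K}} P_u\Big(d_{T_0}\big(U^{(n),u,\varepsilon},U^{(n),u}\big)>\tfrac{c}{2}\Big)\le e^{-\frac{C}{\varepsilon^2}}
$$
for all $0<\varepsilon<\varepsilon_0$. On the complement of that event the path $U^{u,\varepsilon}$ remains inside $B_n$ on $[0,T_0]$ and satisfies $U^{u,\varepsilon}(T_0,\cdot)\in B_c$; in particular $\tau_\varepsilon^u(B_c)\le T_0$ with probability at least $1-e^{-C/\varepsilon^2}$, uniformly in $u\in\mathcal{K}$. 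Applying the strong Markov property at the stopping time $\tau_\varepsilon^u(B_c)$ on this event, Theorem \ref{ecotsuplema1} gives that the escape time of the restarted process from $G$ lies in $\big(e^{\frac{\Delta-\delta/2}{\varepsilon^2}},\,e^{\frac{\Delta+\delta/2}{\varepsilon^2}}\big)$ with probability tending to $1$, uniformly over the (random) restarting point in $B_c$. Since the stochastic system must leave the bounded set $G$ before exploding we have $\tau_\varepsilon^u\ge\tau_\varepsilon^u(\partial G)$, which already yields the lower bound $\tau_\varepsilon^u>e^{\frac{\Delta-\delta}{\varepsilon^2}}$ with overwhelming probability; for the upper bound, Theorem \ref{teoescape0} ensures the restarted process escapes $G$ through $\partial^{\pm z}$, and then Remark \ref{obsequivG} (equivalently Corollary \ref{exploacot}) provides a fixed time $\tau^*$, independent of $\varepsilon$, within which the explosion occurs after this escape, again with overwhelming probability. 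Collecting these events, for $\varepsilon$ small enough and uniformly in $u\in\mathcal{K}$,
$$
e^{\frac{\Delta-\delta}{\varepsilon^2}}<e^{\frac{\Delta-\delta/2}{\varepsilon^2}}<\tau_\varepsilon^u\le T_0+e^{\frac{\Delta+\delta/2}{\varepsilon^2}}+\tau^*<e^{\frac{\Delta+\delta}{\varepsilon^2}}
$$
on an event of probability at least $1-o(1)$, which is the claim.

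The main obstacle I anticipate is the first step: establishing the \emph{uniform} finite-time attraction of the deterministic flow into $B_c$ over the non-compact set $\mathcal{K}$. The hypothesis $d(\mathcal{K},\mathcal{W})>0$ is precisely what prevents the orbits from lingering arbitrarily long near the stable manifolds of the unstable equilibria, but converting this into a genuine uniform bound requires the compactness-type Appendix results (smoothing of the flow after a positive time plus continuous dependence) rather than a naive covering argument, since $C_D([0,1])$ is not locally compact. Everything after that step is a bookkeeping assembly of Theorems \ref{ecotsuplema1} and \ref{teoescape0}, Corollary \ref{exploacot}/Remark \ref{obsequivG}, the strong Markov property, and the large deviations bound \eqref{grandes1}.
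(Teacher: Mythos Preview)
Your proposal is correct and follows essentially the same route as the paper: first establish a uniform deterministic entrance time $T_0$ of the orbits from $\mathcal{K}$ into $B_{c/2}$ together with a uniform bound $r_{\mathcal{K}}$ on their sup-norms (the paper invokes Proposition~\ref{A.3} and Proposition~\ref{G.1} exactly as you anticipate), use \eqref{grandes1} on the truncated system to get $\tau_\varepsilon(B_c)\le T_0$ with overwhelming probability, and then apply the strong Markov property together with Theorem~\ref{ecotsuplema1}, Theorem~\ref{teoescape0} and Remark~\ref{obsequivG} to assemble the two-sided bound. The paper merely organizes the argument by first treating the case $\mathcal{K}=B_c$ and then reducing the general case to it, but the content is the same.
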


\begin{proof} First let us suppose that $\mathcal{K}= B_{c}$. In this case the continuity of trajectories and the strong Markov property imply that for $u \in B_{c}$ we have
$$
P_u \left( \tau_\varepsilon < e^{\frac{\Delta - \delta}{\varepsilon^2}} \right) \leq P_u \left( \tau_\varepsilon (\partial G) < e^{\frac{\Delta - \delta}{\varepsilon^2}} \right)
$$ and
$$
P_u \left( \tau_\varepsilon > e^{\frac{\Delta + \delta}{\varepsilon^2}} \right) \leq P_u \left( \tau_\varepsilon(\partial G) > e^{\frac{\Delta + \frac{\delta}{2}}{\varepsilon^2}} \right) + P_u \left( U(\tau_\varepsilon(\partial G),\cdot) \notin \p^{\pm z} \right) + \sup_{v \in \p^{\pm z}} P_v ( \tau_\varepsilon > \tau^* )
$$
for every $\varepsilon > 0$ sufficiently small, from which we can conclude the result in this case by the results in Section \ref{secescapedeg}.

Now, let us observe that for any $u \in \mathcal{D}_{\mathbf{0}}$ the system $U^u$ reaches the set $B_{\frac{c}{2}}$ in a finite time $\tau^u(B_{\frac{c}{2}})$ while remaining at all times inside the ball $B_{r^u}$ where $r^u:= \sup_{t \geq 0} \| U^u(t,\cdot) \|_\infty$. Therefore, if $\mathcal{K}$ is now any bounded set contained in $\mathcal{D}_{\mathbf{0}}$ at a
positive distance from $\mathcal{W}$ then we have that $\tau_{\mathcal{K},\frac{c}{2}} := \sup_{u \in \mathcal{K}} \tau^u(B_{\frac{c}{2}})$ and $r_{\mathcal{K}}:=\sup_{u \in \mathcal{K}} r^u$ are both finite. Indeed, \mbox{the finiteness} of $\tau_{\mathcal{K},\frac{c}{2}}$ follows at once from Proposition \ref{A.3} whereas $r_{\mathcal{K}}$ is finite since by Proposition \ref{G.1} one may find $t_0 > 0$ sufficiently small such that $\sup_{u \in \mathcal{K}} \left[ \sup_{t \in [0,t_0]} \| U^u(t,\cdot) \|_\infty \right]$ is finite. That $\sup_{u \in \mathcal{K}} \left[ \sup_{t \geq t_0} \| U^u(t,\cdot) \|_\infty \right]$ is finite then follows as in the proof of Proposition \ref{A.3} due to the fact that the mapping $u \mapsto r^u$ is both upper semicontinuous and finite on $\mathcal{D}_{\mathbf{0}}$. Using the strong Markov property we can then obtain the bounds
$$
P_u \left( \tau_\varepsilon < e^{\frac{\Delta - \delta}{\varepsilon^2}} \right) \leq P_u \left( \tau_\varepsilon(B_c) > \tau_{\mathcal{K},\frac{c}{2}} \right) + P_u \left( \tau_\varepsilon \leq \tau_{\mathcal{K},\frac{c}{2}}\right) + \sup_{v \in B_c} P_v \left( \tau_\varepsilon < e^{\frac{\Delta - \delta}{\varepsilon^2}} \right)
$$
and
$$
P_u \left( \tau_\varepsilon > e^{\frac{\Delta + \delta}{\varepsilon^2}} \right) \leq P_u \left( \tau_\varepsilon(B_c) > \tau_{\mathcal{K},\frac{c}{2}} \right) + \sup_{v \in B_c} P_v \left( \tau_\varepsilon > e^{\frac{\Delta + \frac{\delta}{2}}{\varepsilon^2}} \right)
$$ for any $u \in \mathcal{K}$ and $\varepsilon > 0$ sufficiently small. But let us observe that for $u \in \mathcal{K}$ we have
\begin{equation}\label{convunibola0}
P_u \left( \tau_\varepsilon(B_c) > \tau_{\mathcal{K},\frac{c}{2}} \right) \leq P_u\left( d_{\tau_{\mathcal{K},\frac{c}{2}}}( U^{(r_{\mathcal{K}}+1),\varepsilon}, U^{(r_{\mathcal{K}}+1)}) > \min\left\{\frac{c}{2}, \frac{1}{2}\right\} \right)
\end{equation} and
$$
P_u \left( \tau_\varepsilon \leq \tau_{\mathcal{K},\frac{c}{2}}\right) \leq P_u \left( d_{\tau_{\mathcal{K},\frac{c}{2}}}( U^{(r_{\mathcal{K}}+1),\varepsilon}, U^{(r_{\mathcal{K}}+1)}) > 1\right).
$$ Now the uniform bounds given by \eqref{grandes1} allow us to conclude the result.
\end{proof}

The next proposition shows that, for initial data in a small neighborhood of the origin, both the explosion time and the escape time from $G$ are asymptotically of the same order of magnitude. We will use this fact to conclude that the explosion time $\tau_\varepsilon$ shares the same asymptotic distribution with the escape time from $G$ and thus obtain the asymptotic loss of memory for $\tau_\varepsilon$.

\begin{prop}\label{nescapelema3} If $\tau^* > 0$ is taken as in Remark \ref{obsequivG} then
\begin{equation}
\lim_{\varepsilon \rightarrow 0} \left[ \sup_{u \in B_c} P_u ( \tau_\varepsilon > \tau_\varepsilon (\partial G) + \tau^* )\right] = 0.
\end{equation}
\end{prop}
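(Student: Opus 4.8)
The plan is to obtain the proposition as a short consequence of the escape analysis of Chapter \ref{secescapedeg} together with the explosion estimate of Chapter 2. Concretely, I will use two already established inputs: Theorem \ref{teoescape0}, which gives $\lim_{\varepsilon\to 0}\big[\sup_{u\in B_c}P_u\big(U^\varepsilon(\tau_\varepsilon(\partial G),\cdot)\notin\partial^{\pm z}\big)\big]=0$, and Remark \ref{obsequivG} (a consequence of Corollary \ref{exploacot}), which provides constants $\tau^*,C>0$ such that $\sup_{w\in\partial^{\pm z}}P_w(\tau_\varepsilon>\tau^*)\le e^{-C/\varepsilon^2}$ for all $\varepsilon>0$ small enough; this is exactly the $\tau^*$ appearing in the statement.

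First I would record two elementary facts. Since $\tau_\varepsilon(\partial G)$ is the hitting time of a closed set, it is a stopping time of $U^{u,\varepsilon}$. Moreover, as $G$ is bounded and $B_c$ lies in its interior, any path started in $B_c$ that explodes must leave the bounded set $G$ beforehand and hence cross $\partial G$; thus $\tau_\varepsilon(\partial G)\le\tau_\varepsilon$ on $\{\tau_\varepsilon<+\infty\}$. Also, because the truncated dynamics $U^{(n_0+1),u,\varepsilon}$ is positive recurrent and almost surely visits every open set, the process leaves $G$ — and therefore hits $\partial G$ — in finite time almost surely, so $P_u(\tau_\varepsilon(\partial G)<+\infty)=1$ and $U^\varepsilon(\tau_\varepsilon(\partial G),\cdot)$ is a genuine element of $C_D([0,1])$, lying in the bounded set $\partial G$.

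The main step is then the strong Markov property applied at $\tau_\varepsilon(\partial G)$. Conditionally on $U^\varepsilon(\tau_\varepsilon(\partial G),\cdot)=w$, the shifted process $\{U^\varepsilon(t+\tau_\varepsilon(\partial G),\cdot)\}_{t\ge 0}$ is a solution of \eqref{MainSPDE} with initial datum $w$, whose explosion time equals $\tau_\varepsilon-\tau_\varepsilon(\partial G)$; hence on this event $\{\tau_\varepsilon>\tau_\varepsilon(\partial G)+\tau^*\}$ becomes $\{\tau_\varepsilon^w>\tau^*\}$. Splitting according to whether $U^\varepsilon(\tau_\varepsilon(\partial G),\cdot)$ belongs to $\partial^{\pm z}$ or not, one gets
\begin{equation*}
P_u\big(\tau_\varepsilon>\tau_\varepsilon(\partial G)+\tau^*\big)\le \E_u\Big[\mathbbm 1_{\{U^\varepsilon(\tau_\varepsilon(\partial G),\cdot)\in\partial^{\pm z}\}}\,P_{U^\varepsilon(\tau_\varepsilon(\partial G),\cdot)}\big(\tau_\varepsilon>\tau^*\big)\Big]+P_u\big(U^\varepsilon(\tau_\varepsilon(\partial G),\cdot)\notin\partial^{\pm z}\big).
\end{equation*}
The first term on the right is bounded by $\sup_{w\in\partial^{\pm z}}P_w(\tau_\varepsilon>\tau^*)\le e^{-C/\varepsilon^2}$ via Remark \ref{obsequivG}, while the second tends to $0$ uniformly over $u\in B_c$ by Theorem \ref{teoescape0}. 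Taking $\sup_{u\in B_c}$ and letting $\varepsilon\to 0$ finishes the proof.

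I do not anticipate a real obstacle: the statement is essentially a corollary of the preceding two chapters. The only points requiring attention are the bookkeeping ones above — verifying that $\tau_\varepsilon(\partial G)$ is a finite stopping time dominated by $\tau_\varepsilon$, that the strong Markov property may legitimately be invoked there, and that on the complementary event no estimate on $\tau_\varepsilon$ is needed since the crude bound by $P_u(U^\varepsilon(\tau_\varepsilon(\partial G),\cdot)\notin\partial^{\pm z})$ already vanishes as $\varepsilon\to 0$.
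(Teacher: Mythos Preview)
Your proof is correct and follows essentially the same approach as the paper: apply the strong Markov property at $\tau_\varepsilon(\partial G)$, split according to whether the exit point lies in $\partial^{\pm z}$, and bound the two terms using Remark~\ref{obsequivG} and Theorem~\ref{teoescape0} respectively. The paper's proof is just a terser version of what you wrote, omitting the bookkeeping details you spelled out.
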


\begin{proof} For any $u \in B_c$ we have by the strong Markov property that
$$
P_u ( \tau_\varepsilon > \tau_\varepsilon (\partial G) + \tau^* ) \leq \sup_{v \in B_c} P_v \left( U^\varepsilon (\tau_\varepsilon(\partial G), \cdot) \notin \partial^{\pm z} \right) + \sup_{v \in \p^{\pm z}} P_v ( \tau_\varepsilon > \tau^*).
$$ We may now conclude the result by Theorem \ref{teoescape0} and Remark \ref{obsequivG}.
\end{proof}

\medskip
\begin{cor}\label{nescapecor0} Let $\rho > 0$ be as in Lemma \ref{escapelema1} and for each $\varepsilon > 0$ define $\beta_\varepsilon$ as in \eqref{defibeta}. Then
\begin{enumerate}
\item [i.] $\lim_{\varepsilon \rightarrow 0} \frac
    {\beta_\varepsilon}{\gamma_\varepsilon} = 1$.
\item [ii.] $\lim_{\varepsilon \rightarrow 0} \left[ \sup_{u \in B_\rho} |P_u (\tau_\varepsilon (\partial G) > t\beta_\varepsilon ) - e^{-t}| \right]= 0.$
\end{enumerate}
\end{cor}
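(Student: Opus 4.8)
The plan is to deduce this corollary entirely from Proposition \ref{nescapelema3}, Theorem \ref{escapeteo1} and the monotonicity of survival functions; the only genuine input beyond bookkeeping is the control on the gap between $\tau_\varepsilon$ and $\tau_\varepsilon(\partial G)$ supplied by Proposition \ref{nescapelema3}, since $\tau^*$ is a constant while $\gamma_\varepsilon$ grows like $e^{\Delta/\varepsilon^2}$.

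First I would prove (i). Since $G \subseteq B_{n_0}$ and the process starts at $\mathbf{0}\in G$, any path that explodes must cross $\partial G$ before exploding, so $\tau_\varepsilon(\partial G) \le \tau_\varepsilon$ holds $P_{\mathbf 0}$-almost surely; in particular $P_{\mathbf 0}(\tau_\varepsilon > t) \ge P_{\mathbf 0}(\tau_\varepsilon(\partial G) > t)$ for every $t\ge 0$. Evaluating at $t$ slightly below $\gamma_\varepsilon$, where $P_{\mathbf 0}(\tau_\varepsilon(\partial G) > t) > e^{-1}$ because $\tau_\varepsilon(\partial G)$ has a continuous strictly increasing law, the definition \eqref{defibeta} forces $\beta_\varepsilon \ge \gamma_\varepsilon$, hence $\liminf_{\varepsilon\to 0}\beta_\varepsilon/\gamma_\varepsilon \ge 1$. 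For the reverse bound, fix $\delta>0$. Using the elementary inclusion $\{\tau_\varepsilon > s\} \subseteq \{\tau_\varepsilon(\partial G) > s-\tau^*\}\cup\{\tau_\varepsilon > \tau_\varepsilon(\partial G)+\tau^*\}$ together with $(1+\delta)\gamma_\varepsilon - \tau^* \ge (1+\delta/2)\gamma_\varepsilon$ for $\varepsilon$ small, one gets
$$
P_{\mathbf 0}(\tau_\varepsilon > (1+\delta)\gamma_\varepsilon) \le P_{\mathbf 0}\big(\tau_\varepsilon(\partial G) > (1+\delta/2)\gamma_\varepsilon\big) + P_{\mathbf 0}\big(\tau_\varepsilon > \tau_\varepsilon(\partial G)+\tau^*\big).
$$
The first term tends to $e^{-(1+\delta/2)}<e^{-1}$ by the convergence $\nu_\varepsilon(t)\to e^{-t}$ established in the proof of Theorem \ref{escapeteo1} (at $t=1+\delta/2$), and the second tends to $0$ by Proposition \ref{nescapelema3} (recall $\mathbf 0\in B_c$). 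Thus $P_{\mathbf 0}(\tau_\varepsilon > (1+\delta)\gamma_\varepsilon)<e^{-1}$ for $\varepsilon$ small, which by \eqref{defibeta} gives $\beta_\varepsilon \le (1+\delta)\gamma_\varepsilon$; letting $\delta\downarrow 0$ gives $\limsup_{\varepsilon\to 0}\beta_\varepsilon/\gamma_\varepsilon\le 1$, so $\beta_\varepsilon/\gamma_\varepsilon\to 1$.

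Assertion (ii) is then immediate from (i). Given $t>0$ and $\eta\in(0,1)$, by (i) there is $\varepsilon_0>0$ with $(1-\eta)\gamma_\varepsilon \le \beta_\varepsilon \le (1+\eta)\gamma_\varepsilon$ for all $\varepsilon<\varepsilon_0$, so monotonicity of $t'\mapsto P_u(\tau_\varepsilon(\partial G)>t')$ yields, uniformly in $u$,
$$
P_u\big(\tau_\varepsilon(\partial G) > t(1+\eta)\gamma_\varepsilon\big) \le P_u\big(\tau_\varepsilon(\partial G) > t\beta_\varepsilon\big) \le P_u\big(\tau_\varepsilon(\partial G) > t(1-\eta)\gamma_\varepsilon\big).
$$
Taking $\sup_{u\in B_\rho}$ and applying Theorem \ref{escapeteo1}(ii) at the fixed times $t(1\pm\eta)$ (shrinking $\rho$ if necessary so that both Lemma \ref{escapelema1} and Theorem \ref{escapeteo1}(ii) hold for it), we obtain $\limsup_{\varepsilon\to 0}\sup_{u\in B_\rho}P_u(\tau_\varepsilon(\partial G)>t\beta_\varepsilon)\le e^{-t(1-\eta)}$ and $\liminf_{\varepsilon\to 0}\inf_{u\in B_\rho}P_u(\tau_\varepsilon(\partial G)>t\beta_\varepsilon)\ge e^{-t(1+\eta)}$. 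Since $\eta$ is arbitrary and $\sup_{u}|P_u-e^{-t}| = \max(\sup_u P_u - e^{-t},\, e^{-t}-\inf_u P_u)$, this gives $\sup_{u\in B_\rho}|P_u(\tau_\varepsilon(\partial G)>t\beta_\varepsilon)-e^{-t}|\to 0$.

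I do not expect a serious obstacle here: the substance (the exponential limit law for $\tau_\varepsilon(\partial G)$, and the fact that explosion occurs within a bounded time $\tau^*$ after escaping $G$) is already contained in Theorem \ref{escapeteo1} and Proposition \ref{nescapelema3}. The one point requiring care is that $\beta_\varepsilon$ is defined as an implicit $e^{-1}$-quantile of $\tau_\varepsilon$ rather than of $\tau_\varepsilon(\partial G)$, so the two-sided probability estimate must be converted into a two-sided bound on the ratio $\beta_\varepsilon/\gamma_\varepsilon$ via the monotonicity/definition argument above; one should also note at the outset that $\tau_\varepsilon$ is $P_{\mathbf 0}$-a.s.\ finite with continuous strictly increasing distribution function, so that $\beta_\varepsilon$ is well defined and these comparisons are meaningful.
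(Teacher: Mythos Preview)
Your proof is correct and, for part (i), follows essentially the same route as the paper: you obtain $\beta_\varepsilon \ge \gamma_\varepsilon$ from the trivial inclusion $\tau_\varepsilon(\partial G)\le \tau_\varepsilon$, and you bound $\beta_\varepsilon$ above by $(1+\delta)\gamma_\varepsilon$ by combining Proposition \ref{nescapelema3} with the convergence $\nu_\varepsilon(t)\to e^{-t}$ from Theorem \ref{escapeteo1}. The paper frames the upper bound as a proof by contradiction, but the ingredients and the estimate are identical.

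For part (ii) your argument is slightly different from, and a bit more direct than, the paper's. The paper first deduces from (i) that $P_{\mathbf 0}(\tau_\varepsilon(\partial G)>t\beta_\varepsilon)\to e^{-t}$ and then extends this to all $u\in B_\rho$ by invoking the coupling Lemma \ref{escapelema1} once more (now with $\beta_\varepsilon$ in place of $\gamma_\varepsilon$). You instead use (i) to sandwich $t\beta_\varepsilon$ between $t(1\pm\eta)\gamma_\varepsilon$ and apply the already-established uniform limit in Theorem \ref{escapeteo1}(ii) at the two fixed levels $t(1\pm\eta)$, then let $\eta\downarrow 0$. This bypasses a second appeal to the coupling and uses only monotonicity of the survival function together with the uniformity already packed into Theorem \ref{escapeteo1}(ii); both approaches are valid, and yours is marginally cleaner.
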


\medskip\begin{proof} Let us first notice that by the upper bound for the explosion time we have that $\beta_\varepsilon$ is finite for every $\varepsilon > 0$ sufficiently small. Now, the continuity of trajectories implies that
$$
P_{\mathbf{0}}(\tau_\varepsilon (\partial G) > \beta_\varepsilon ) \leq P_\mathbf{0}( \tau_\varepsilon > \beta_\varepsilon ) \leq e^{-1},
$$ from where we conclude $\gamma_\varepsilon \leq \beta_\varepsilon$ and, thus, that $\liminf_{\varepsilon \rightarrow 0} \frac {\beta_\varepsilon}{\gamma_\varepsilon} \geq 1$. Let us now suppose that
$\limsup_{\varepsilon \rightarrow 0} \frac {\beta_\varepsilon}{\gamma_\varepsilon} > 1$. Then there would exist $\lambda > 0$ and a sequence $(\varepsilon_j)_{j \in \N} \subseteq \R_{> 0}$ with $\lim_{j \rightarrow +\infty} \varepsilon_j = 0$ such that for all $j \in \N$ sufficiently large
$$
e^{-1} < P_\mathbf{0}( \tau_{\varepsilon_j} > \beta_{\varepsilon_j} - 1 ) \leq P_{\mathbf{0}} \left(\tau_{\varepsilon_j}(\partial G) > \left(1+ \lambda_0\right)\gamma_{\varepsilon_j}\right) + P_{\mathbf{0}} \left( \tau_{\varepsilon_j} > \tau_{\varepsilon_j} (\partial G) + \tau^* \right).
$$ Taking the limit on the right hand side of this inequality with $j \rightarrow +\infty$, by \mbox{Proposition \ref{nescapelema3}} we arrive at the contradiction $e^{-1} \leq e^{-(1+\lambda)}$. We thus conclude that $\limsup_{\varepsilon \rightarrow 0} \frac {\beta_\varepsilon}{\gamma_\varepsilon} \leq 1$ which implies (i).

Notice that (i) itself implies by Theorem \ref{escapeteo1} that for every $t > 0$
$$
\lim_{\varepsilon \rightarrow 0} P_{\mathbf{0}} ( \tau_\varepsilon(\partial G) > t \beta_\varepsilon ) = e^{-t}.
$$ We can now establish (ii) by following the proof of Theorem \ref{escapeteo1} since one can show as in Lemma \ref{escapelema1} that for every $t_0 > 0$
\begin{equation}
\lim_{\varepsilon \rightarrow 0}\left[ \sup_{u,v \in B_{\rho}} \left[ \sup_{t > t_0} |P_u(\tau_{\varepsilon}(\partial G)  > t\beta_{\varepsilon}) - P_{v}(\tau_{\varepsilon}(\partial G) > t\beta_{\varepsilon})|\right]\right] = 0.
\end{equation}
\end{proof}

We are now ready to show the asymptotic exponential distribution of the \mbox{explosion time} for arbitrary initial data in $\mathcal{D}_{\mathbf{0}}$. This is contained in the following theorem.

\begin{teo}\label{nteoasint} For any bounded set $\mathcal{K} \subseteq \mathcal{D}_{\mathbf{0}}$ at a positive distance from $\mathcal{W}$ we have
\begin{equation}\label{convunicompact}
\lim_{\varepsilon \rightarrow 0} \left[ \sup_{u \in \mathcal{K}} |P_u (\tau_{\varepsilon} > t\beta_{\varepsilon}) - e^{-t}| \right] = 0.
\end{equation} for every $t > 0$.
\end{teo}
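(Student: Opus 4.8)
\emph{Overview and the case $\mathcal{K}=B_\rho$.} The plan is to first prove the statement for the small ball $\mathcal{K}=B_\rho$ around the origin, with $\rho>0$ as in Lemma~\ref{escapelema1} and Corollary~\ref{nescapecor0} (shrunk if necessary so that $B_\rho\subseteq G$), and then bootstrap to an arbitrary bounded $\mathcal{K}\subseteq\mathcal{D}_{\mathbf 0}$ with $d(\mathcal{K},\mathcal{W})>0$ by the localization argument already used in the proof of Theorem~\ref{taumagnitude}. For the ball $B_\rho$: since $G$ is bounded and the paths of $U^{u,\varepsilon}$ are continuous in $\overline{\R}$, a trajectory starting inside $G$ must cross $\p G$ before exploding, so $\tau_\varepsilon(\p G)\le\tau_\varepsilon$. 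Hence $P_u(\tau_\varepsilon>t\beta_\varepsilon)\ge P_u(\tau_\varepsilon(\p G)>t\beta_\varepsilon)$, and by Corollary~\ref{nescapecor0}(ii) the right-hand side tends to $e^{-t}$ uniformly over $u\in B_\rho$, giving the lower bound. For the upper bound, fix $\tau^*>0$ as in Remark~\ref{obsequivG} and use the inclusion
$$
\{\tau_\varepsilon>t\beta_\varepsilon\}\subseteq\{\tau_\varepsilon>\tau_\varepsilon(\p G)+\tau^*\}\cup\{\tau_\varepsilon(\p G)>t\beta_\varepsilon-\tau^*\}.
$$
The probability of the first event tends to $0$ uniformly over $B_c\supseteq B_\rho$ by Proposition~\ref{nescapelema3}. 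For the second, recall $\varepsilon^2\log\beta_\varepsilon\to\Delta$, so $\tau^*/\beta_\varepsilon\to0$; thus for every $\delta>0$ and all sufficiently small $\varepsilon$ we have $t\beta_\varepsilon-\tau^*\ge(t-\delta)\beta_\varepsilon$, and by the monotonicity of $s\mapsto P_u(\tau_\varepsilon(\p G)>s\beta_\varepsilon)$ together with Corollary~\ref{nescapecor0}(ii),
$$
\limsup_{\varepsilon\to0}\ \sup_{u\in B_\rho}P_u(\tau_\varepsilon(\p G)>t\beta_\varepsilon-\tau^*)\le e^{-(t-\delta)}.
$$
Letting $\delta\downarrow0$ yields $\limsup_{\varepsilon\to0}\sup_{u\in B_\rho}P_u(\tau_\varepsilon>t\beta_\varepsilon)\le e^{-t}$, which settles the case $\mathcal{K}=B_\rho$.

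\emph{Reduction of a general $\mathcal{K}$ to $B_\rho$.} Let $\mathcal{K}\subseteq\mathcal{D}_{\mathbf 0}$ be bounded with $d(\mathcal{K},\mathcal{W})>0$. Exactly as in the proof of Theorem~\ref{taumagnitude}, using Propositions~\ref{G.1} and \ref{A.3} and the upper semicontinuity and finiteness of $u\mapsto\sup_{t\ge0}\|U^u(t,\cdot)\|_\infty$ on $\mathcal{D}_{\mathbf 0}$, the quantities $\tau_{\mathcal{K}}:=\sup_{u\in\mathcal{K}}\tau^u(B_{\rho/2})$ and $r_{\mathcal{K}}:=\sup_{u\in\mathcal{K}}\sup_{t\ge0}\|U^u(t,\cdot)\|_\infty$ are finite, so in particular $\tau_{\mathcal{K}}/\beta_\varepsilon\to0$. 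For $u\in\mathcal{K}$ decompose
$$
\{\tau_\varepsilon>t\beta_\varepsilon\}\subseteq\{\tau_\varepsilon(B_\rho)>\tau_{\mathcal{K}}\}\cup\{\tau_\varepsilon(B_\rho)\le\tau_{\mathcal{K}},\ \tau_\varepsilon>t\beta_\varepsilon\}.
$$
On $\{\tau_\varepsilon(B_\rho)>\tau_{\mathcal{K}}\}$ the deterministic orbit $U^u$ has reached $B_{\rho/2}$ by time $\tau_{\mathcal{K}}$ while $U^{u,\varepsilon}$ has not entered $B_\rho$ nor left $B_{r_{\mathcal{K}}}$, so that $d_{\tau_{\mathcal{K}}}(U^{(r_{\mathcal{K}}+1),\varepsilon},U^{(r_{\mathcal{K}}+1)})>\rho/2$, an event of probability at most $e^{-C/\varepsilon^2}$ uniformly over $\mathcal{K}$ by \eqref{grandes1}. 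The strong Markov property at $\tau_\varepsilon(B_\rho)$ bounds the probability of the second event by $\sup_{v\in B_\rho}P_v(\tau_\varepsilon>t\beta_\varepsilon-\tau_{\mathcal{K}})$, which, by the shift argument of the previous paragraph applied to $\tau_{\mathcal K}$ in place of $\tau^*$ and the already-proved case $\mathcal{K}=B_\rho$, has $\limsup$ at most $e^{-(t-\delta)}$ for every $\delta>0$. Letting $\delta\downarrow0$ gives $\limsup_{\varepsilon\to0}\sup_{u\in\mathcal{K}}P_u(\tau_\varepsilon>t\beta_\varepsilon)\le e^{-t}$. For the matching lower bound, by the strong Markov property at $\tau_\varepsilon(B_\rho)$, on $\{\tau_\varepsilon(B_\rho)\le\tau_{\mathcal K}\}$ the conditional probability that $\tau_\varepsilon>t\beta_\varepsilon$ is at least $\inf_{v\in B_\rho}P_v(\tau_\varepsilon>t\beta_\varepsilon)$, whence
$$
P_u(\tau_\varepsilon>t\beta_\varepsilon)\ge P_u(\tau_\varepsilon(B_\rho)\le\tau_{\mathcal{K}})\ \inf_{v\in B_\rho}P_v(\tau_\varepsilon>t\beta_\varepsilon);
$$
since $P_u(\tau_\varepsilon(B_\rho)\le\tau_{\mathcal{K}})\ge1-e^{-C/\varepsilon^2}\to1$ uniformly over $\mathcal{K}$ and $\inf_{v\in B_\rho}P_v(\tau_\varepsilon>t\beta_\varepsilon)\to e^{-t}$, we get $\liminf_{\varepsilon\to0}\inf_{u\in\mathcal{K}}P_u(\tau_\varepsilon>t\beta_\varepsilon)\ge e^{-t}$. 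Combining the two bounds proves \eqref{convunicompact}.

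\emph{Main obstacle.} The substantive work has already been carried out in Chapter~\ref{secescapedeg} and in Corollary~\ref{nescapecor0} and Proposition~\ref{nescapelema3}; here the only delicate point is bookkeeping with the additive constants $\tau^*$ and $\tau_{\mathcal K}$ that appear inside the time window $t\beta_\varepsilon$ when passing from $\tau_\varepsilon$ to $\tau_\varepsilon(\p G)$ and from $\mathcal{K}$ to $B_\rho$. Controlling these uniformly requires precisely the three ingredients used above: the fact that $\tau^*/\beta_\varepsilon\to0$ and $\tau_{\mathcal K}/\beta_\varepsilon\to0$ (which rests on $\varepsilon^2\log\beta_\varepsilon\to\Delta$), the monotonicity of $s\mapsto P_v(\tau_\varepsilon>s\beta_\varepsilon)$, and the continuity of $t\mapsto e^{-t}$, so that the $\pm\delta$ perturbations introduced in the window can be absorbed in the limit $\delta\downarrow0$ without destroying uniformity.
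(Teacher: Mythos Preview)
Your proof is correct and follows essentially the same approach as the paper: first establish the result on the small ball via the sandwich $\tau_\varepsilon(\partial G)\le\tau_\varepsilon\le\tau_\varepsilon(\partial G)+\tau^*$ together with Corollary~\ref{nescapecor0}(ii) and Proposition~\ref{nescapelema3}, then reduce a general $\mathcal K$ to the small ball via the strong Markov property and the deterministic hitting-time bound from the proof of Theorem~\ref{taumagnitude}. The only cosmetic difference is that you reduce directly to $B_\rho$ rather than to $B_c$ as the paper does, and you are more explicit about absorbing the additive constants $\tau^*$ and $\tau_{\mathcal K}$ via a $\delta$-shift; both choices are harmless.
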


\begin{proof} Let us consider the radius $\rho > 0$ given by Lemma \ref{escapelema1} and suppose $\rho \leq c$ where $c$ is taken as in Conditions \ref{assumpg}. Then from the inequalities
$$
P_u( \tau_\varepsilon(\partial G) > t\beta_\varepsilon ) \leq P_u (\tau_{\varepsilon} > t\beta_{\varepsilon}) \leq P_u( \tau_\varepsilon(\partial G)> t\beta_\varepsilon - \tau^*) + P_u ( \tau_\varepsilon > \tau_\varepsilon(\p G) + \tau^*)
$$ for $u \in B_\rho$ one can easily verify, using (ii) in Corollary \ref{nescapecor0} and Proposition \ref{nescapelema3}, that
\begin{equation}\label{convunibola}
\lim_{\varepsilon \rightarrow 0} \left[ \sup_{u \in B_\rho} | P_u (\tau_\varepsilon > t \beta_\varepsilon ) - e^{-t} | \right] = 0.
\end{equation}

Now, given a bounded set $\mathcal{K} \subseteq \mathcal{D}_{\mathbf{0}}$ at a positive distance from $\mathcal{W}$, take $\tau_{\mathcal{K},\frac{c}{2}} > 0$ as in the proof of Theorem \ref{taumagnitude}. The strong Markov property implies for \mbox{each $u \in \mathcal{K}$ the inequalities}
$$
\inf_{v \in \mathcal{K}} P_v ( \tau_\varepsilon (B_c) \leq \tau_{\mathcal{K},\frac{c}{2}}) \inf_{v \in B_c} P( \tau_\varepsilon > t\beta_\varepsilon) \leq P_u (\tau_\varepsilon > t\beta_\varepsilon)
$$ and
$$
P_u (\tau_\varepsilon > t\beta_\varepsilon) \leq \sup_{v \in \mathcal{K}}  P_v ( \tau_\varepsilon (B_c) > \tau_{\mathcal{K},\frac{c}{2}}) + \sup_{v \in B_c} P( \tau_\varepsilon > t\beta_\varepsilon - \tau_{\mathcal{K},\frac{c}{2}}).
$$ From these we may conclude \eqref{convunicompact} by recalling \eqref{convunibola} and \eqref{convunibola0}.
\end{proof}

\section{Stability of time averages}

Our purpose in this final section is to show the stability of time averages along typical paths of the stochastic system up until (almost) the explosion time. The precise statement we wish to show is that of the following theorem.

\begin{teo} There exists a sequence $(R_\varepsilon)_{\varepsilon > 0}$ with $\lim_{\varepsilon \rightarrow 0} R_\varepsilon = +\infty$ and $\lim_{\varepsilon \rightarrow 0} \frac{R_\varepsilon}{\beta_\varepsilon} = 0$ such that given $\delta > 0$ for any bounded set $\mathcal{K} \subseteq \mathcal{D}_{\mathbf{0}}$ at a positive \mbox{distance from $\mathcal{W}$} we have
\begin{equation}\label{average}
\lim_{\varepsilon \rightarrow 0} \left[ \sup_{u \in B} P_u \left( \sup_{0 \leq t \leq \tau_\varepsilon - 3R_\varepsilon}\left| \frac{1}{R_\varepsilon}\int_t^{t+R_\varepsilon} f(U^{\varepsilon}(s,\cdot))ds - f(\mathbf{0})\right| > \delta \right) \right] = 0
\end{equation} for any bounded continuous function $f: C_D([0,1]) \rightarrow \R$.
\end{teo}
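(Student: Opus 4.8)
The plan is: fix the sequence $R_\varepsilon$; reduce, via continuity of $f$ at $\mathbf{0}$, to controlling the Lebesgue measure of the time the process spends outside a small ball $B_\rho$ around the origin; reduce further to finitely many aligned windows and to initial data inside $G$; prove a per–window estimate by an excursion/block decomposition; and finally assemble everything with a union bound. For the sequence take any $(R_\varepsilon)_{\varepsilon>0}$ with $R_\varepsilon\to+\infty$ and $R_\varepsilon/\beta_\varepsilon\to0$ (e.g. $R_\varepsilon=\sqrt{\beta_\varepsilon}$), the second condition guaranteeing that the relevant windows lie in the metastable regime. Given a bounded continuous $f$ and $\delta>0$, by continuity of $f$ at $\mathbf{0}$ choose $\rho>0$ --- also small enough for the constructions of Chapter \ref{secescapedeg} and with $\rho\le c$ --- so that $|f(v)-f(\mathbf{0})|<\delta/2$ whenever $\|v\|_\infty\le\rho$. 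Then for every window $\big|\tfrac1{R_\varepsilon}\int_t^{t+R_\varepsilon}f(U^{\varepsilon}(s,\cdot))\,ds-f(\mathbf{0})\big|\le\tfrac\delta2+\tfrac{2\|f\|_\infty}{R_\varepsilon}\,\mathrm{Leb}\{s\in[t,t+R_\varepsilon]:U^{\varepsilon}(s,\cdot)\notin B_\rho\}$, so with $\alpha:=\delta/(4\|f\|_\infty)$ it suffices to show that, uniformly over $u\in\mathcal{K}$, with probability tending to $1$ one has $\mathrm{Leb}\{s\in[t,t+R_\varepsilon]:U^\varepsilon(s,\cdot)\notin B_\rho\}\le\alpha R_\varepsilon$ for all $0\le t\le\tau_\varepsilon-3R_\varepsilon$.

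This quantity increases when the window is enlarged, so it is enough to bound $L_j:=\mathrm{Leb}\{s\in W_j:U^\varepsilon(s,\cdot)\notin B_\rho\}$ by $\alpha R_\varepsilon/2$ for all aligned windows $W_j=[jR_\varepsilon,(j+1)R_\varepsilon]$ with $(j+1)R_\varepsilon\le\tau_\varepsilon-R_\varepsilon$; on the overwhelming–probability event $\{\tau_\varepsilon\le e^{(\Delta+\delta_0)/\varepsilon^2}\}$ of Theorem \ref{taumagnitude} there are at most $e^{(\Delta+\delta_0)/\varepsilon^2}/R_\varepsilon$ of them, and on the event of Proposition \ref{nescapelema3} (and since $R_\varepsilon>\tau^*$ eventually) each such $W_j$ lies in $[0,\tau_\varepsilon(\partial G)]$. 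Moreover, since $\mathcal{K}$ is bounded and at a positive distance from $\mathcal{W}$, by Proposition \ref{A.3} the deterministic flow carries $\mathcal{K}$ into $B_{\rho/2}$ within a uniform finite time $T_{\mathcal{K}}$ while staying in a fixed ball, so by \eqref{grandes1} the stochastic system hits $B_\rho$ before $T_{\mathcal{K}}$ with overwhelming probability; applying the strong Markov property at that hitting time and absorbing the error of order $\|f\|_\infty T_{\mathcal{K}}/R_\varepsilon\to0$ coming from the initial segment reduces everything to initial data $v\in G$.

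The per–window estimate I would prove is: there is $\lambda_\varepsilon\to+\infty$ with $\sup_{v\in G}P_v\big(L_{[0,R_\varepsilon]}>\alpha R_\varepsilon/2,\ R_\varepsilon\le\tau_\varepsilon(\partial G)\big)\le \theta_\varepsilon+e^{-\lambda_\varepsilon/\varepsilon^2}$, where $\theta_\varepsilon$ tends to $0$ faster than $e^{-(\Delta+\delta_0)/\varepsilon^2}$ and $L_{[0,R_\varepsilon]}=\mathrm{Leb}\{s\in[0,R_\varepsilon]:U^\varepsilon(s,\cdot)\notin B_\rho\}$. The first input is that one may fix $T_0>0$ large enough that any excursion of $U^\varepsilon$ out of $B_\rho$ returning to $B_{\rho/2}$ without escaping $G$ has duration at most $T_0$ with probability $\ge1-e^{-c_2(T_0)/\varepsilon^2}$, with $c_2(T_0)>2\Delta$ --- a Lemma \ref{lemab2}–type bound, using that $\mathbf{0}$ is the unique stable equilibrium and that the deterministic flow cannot dwell in $G\setminus B_\rho$ (near an unstable $z^{(n)}$ inside $G$, or near $\partial G\cap\mathcal{D}_e^*$) for an arbitrarily long time; discretising the excursion's start over a grid and union–bounding over the at most $e^{(\Delta+\delta_0)/\varepsilon^2}$ opportunities before $\tau_\varepsilon(\partial G)$ keeps the total in $\theta_\varepsilon$, and on the resulting event $L_{[0,R_\varepsilon]}\le T_0\cdot\#\{\text{excursions starting in }[0,R_\varepsilon]\}$. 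The second, crucial, input is that the cost of any path joining $\partial B_{\rho/2}$ to $\partial B_\rho$ is bounded below by a positive constant \emph{regardless of its duration} --- the drift is inward on $B_\rho\subseteq B_{r_{\mathbf{0}}}$, so this follows from Proposition \ref{costo} --- so that for the \emph{fixed} time $T_1:=4T_0/\alpha$ the estimate \eqref{LDP2} gives $\sup_{w\in B_{\rho/2}}P_w(\tau_\varepsilon(\partial B_\rho)\le T_1)\le q_\varepsilon:=e^{-c_1/\varepsilon^2}$ with $c_1>0$. Since each excursion is preceded by a visit to $B_{\rho/2}$, after which the process stays in $B_\rho$ for at least $T_1$ with probability $\ge1-q_\varepsilon$, the strong Markov property bounds the number of excursions in $[0,R_\varepsilon]$ by $R_\varepsilon/T_1$ plus a $\mathrm{Bin}(2R_\varepsilon/T_1,q_\varepsilon)$–dominated number of ``quick restarts''; a Chernoff bound yields $\#\{\text{excursions in }[0,R_\varepsilon]\}\le2R_\varepsilon/T_1$ off an event of probability $\le e^{-c_1R_\varepsilon/(2T_1\varepsilon^2)}=:e^{-\lambda_\varepsilon/\varepsilon^2}$, with $\lambda_\varepsilon=c_1R_\varepsilon/(2T_1)\to+\infty$. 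On the intersection $L_{[0,R_\varepsilon]}\le T_0\cdot2R_\varepsilon/T_1=\alpha R_\varepsilon/2$. Summing this over the at most $e^{(\Delta+\delta_0)/\varepsilon^2}/R_\varepsilon$ aligned windows (using the Markov property at each $jR_\varepsilon$) bounds the probability of a bad aligned window by $R_\varepsilon^{-1}e^{(\Delta+\delta_0)/\varepsilon^2}(\theta_\varepsilon+e^{-\lambda_\varepsilon/\varepsilon^2})\to0$ for $\delta_0$ small, and undoing the reductions finishes the proof.

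\textbf{Main obstacle.} The delicate step is the per–window estimate: one needs, with constants \emph{uniform in $j$ and in the starting configuration in $G$}, both that every non‑escaping excursion away from the origin is short on a single scale $T_0$ --- which, as in Lemma \ref{lemab2}, must rule out lingering near the unstable equilibria $z^{(n)}$ inside $G$ or near $\partial G\cap\mathcal{D}_e^*$ --- and that a fresh departure from $B_{\rho/2}$ is exponentially unlikely on \emph{any} prescribed finite time horizon, the latter resting on the strict positivity of the quasipotential from $\mathbf{0}$ to $\partial B_\rho$. It is precisely this ``positive cost on every time scale'' that makes it legitimate to take $T_1$ as large as $4T_0/\alpha$ and so beat the factor $T_0$ lost to the excursion lengths; without it the naive bound (number of excursions times maximal excursion length) is of the same order as $R_\varepsilon$ and gives nothing.
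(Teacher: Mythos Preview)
Your reductions---to initial data in $G$, to aligned windows, to indicator functions of small balls---are correct and essentially match the paper's. The per-window estimate, however, contains a genuine gap.

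The problematic claim is your ``first input'': that for $T_0$ large one has
\[
\sup_{w\in\partial B_\rho}P_w\bigl(\text{excursion duration}>T_0,\ \text{return to }B_{\rho/2}\text{ without escaping }G\bigr)\le e^{-c_2(T_0)/\varepsilon^2}
\quad\text{with }c_2(T_0)>2\Delta.
\]
This is false. Lemma~\ref{lemab2} only controls the time to reach $C_e=\bigcup_n B_e(z^{(n)})$ or to leave $G$; it says nothing about the time spent \emph{near an unstable equilibrium}. And staying exactly at $z$ has zero rate, since $z$ is a critical point of $S$. Hence the cheapest path from $\partial B_\rho$ that stays out of $B_{\rho/2}$ for an arbitrarily long time is: climb to a neighborhood of $\pm z$ (cost $\approx V(\mathbf 0,\pm z)=\Delta$ by Lemma~\ref{lemab1}), linger there for free, then follow the deterministic flow back down. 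By the large-deviations lower bound \eqref{LDP1} this gives
\[
P_w\bigl(\text{duration}>T_0,\ \text{return}\bigr)\ \ge\ e^{-(\Delta+h)/\varepsilon^2}
\]
for any $h>0$ and all small $\varepsilon$, so the best possible $c_2$ is $\Delta$, not $2\Delta$. Plugging $c_2\le\Delta$ into your bookkeeping, the per-window ``long-excursion'' contribution is
\[
\theta_\varepsilon\ \gtrsim\ \frac{2R_\varepsilon}{T_1}\,e^{-\Delta/\varepsilon^2},
\]
and after the union bound over the $e^{(\Delta+\delta_0)/\varepsilon^2}/R_\varepsilon$ windows you get a term of order $e^{\delta_0/\varepsilon^2}\to+\infty$. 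The argument collapses precisely at the step you flagged as the main obstacle.

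The paper avoids this by \emph{not} bounding excursion durations at all. It chooses $R_\varepsilon=e^{\alpha/\varepsilon^2}$ with $0<\alpha<\Delta$, subdivides each window into $N_\varepsilon=R_\varepsilon/t_\varepsilon$ blocks of \emph{exponential} length $t_\varepsilon=e^{b/\varepsilon^2}$ (with $0<b<\alpha$ and $b<V(\mathbf 0,\partial B_\theta)$), and in each block asks only: ``did the process reach $B_r$ in the first $\sqrt{t_\varepsilon}$ and then stay inside $B_\theta$ for the remaining $t_\varepsilon-\sqrt{t_\varepsilon}$?'' The second clause holds with probability $\to 1$ by the lower-bound machinery of Section~\ref{seclowerbound} (this is \eqref{aleq4}); the first by Lemma~\ref{escapelema2}. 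An exponential-Chebyshev bound on the number of ``bad'' blocks then yields a per-window failure probability of order $e^{-cN_\varepsilon}=e^{-c\,e^{(\alpha-b)/\varepsilon^2}}$, which is \emph{doubly} exponentially small and annihilates the $K_\varepsilon^2=e^{2\gamma/\varepsilon^2}$ windows. The crucial difference is that the paper measures time spent \emph{near} $\mathbf 0$ on an exponential scale, rather than trying to cap individual excursion lengths by a fixed constant---which, as you yourself note, is exactly where the saddle at $\pm z$ bites.
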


This result was originally established in \cite{GOV} for the double-well potential model in the finite-dimensional setting. Later the analogous result in the infinite-dimensional setting was obtained in \cite{B1}. We present here an adaptation of those proofs to our model.

Let us observe that it suffices to show the result for the particular case $\mathcal{K}=B_c(\mathbf{0})$. Indeed, if we take $\tau_{\mathcal{K},\frac{c}{2}} > 0$ as in the proof of Theorem \ref{taumagnitude} then, since $R_\varepsilon > k$ holds for $\varepsilon > 0$ sufficiently small, the strong Markov property then implies that for every $u \in \mathcal{K}$ and bounded continuous function $f: C_D([0,1]) \rightarrow \R$ we have
$$
P_u \left( \sup_{0 \leq t \leq \tau_\varepsilon - 3R_\varepsilon}\left| \vartheta^\varepsilon_t(f) \right| > \delta \right) \leq \sup_{v \in \mathcal{K}} P( \tau_\varepsilon(B_c) > \tau_{\mathcal{K},\frac{c}{2}}) + \sup_{v \in B_c} P_v \left( \sup_{0 \leq t \leq \tau_\varepsilon - 3R_\varepsilon}\left| \vartheta^\varepsilon_t(f) \right| > \frac{\delta}{2} \right)
$$ where for $0 \leq t < \tau_\varepsilon - R_\varepsilon$ we write
$$
\vartheta^{u,\varepsilon}_t(f) = \frac{1}{R_\varepsilon}\int_t^{t+R_\varepsilon} f(U^{u,\varepsilon}(s,\cdot))ds - f(\mathbf{0}).
$$ Furthermore, by Proposition \ref{nescapelema3} we see that in fact it will suffice to show that
\begin{equation}\label{averagelocal1}
\lim_{\varepsilon \rightarrow 0} \left[ \sup_{u \in B_c} P_u \left( \sup_{0 \leq t \leq \tau_\varepsilon(\p G) - 2R_\varepsilon}\left| \vartheta^\varepsilon_t(f) \right| > \delta \right)\right] = 0.
\end{equation}This provides the advantage of only having to consider paths inside a bounded domain. Finally, let us notice that in order to obtain \eqref{averagelocal1} for any bounded continuous function $f :C_D([0,1]) \rightarrow \R$, it will be enough to show \eqref{averagelocal1} only for the class of functions $\mathbbm{1}_\theta$ with $\theta > 0$ sufficiently small, where $\mathbbm{1}_\theta$ denotes the indicator function of the ball $B_\theta$. Indeed, this follows from \mbox{the fact} that for  and $\theta > 0$ one has
$$
|\vartheta^{u,\varepsilon}_t(f)| \leq \sup_{u \in B_\theta}|f(u)-f(\mathbf{0})|- 2 \|f\|_\infty \vartheta^{u,\varepsilon}_t(\mathbbm{1}_\theta)
$$ for every $0 \leq t < \tau_\varepsilon - R_\varepsilon$. Thus, let us fix $\delta, \theta > 0$ and for each $u \in B_c$ and $l \in \N_0$ let us define the set
$$
A^{u,\varepsilon}_l:= \{ |\vartheta^{u,\varepsilon}_{lR_\varepsilon}(\mathbbm{1}_\theta)| \leq \delta \}
$$ with the convention that $|\vartheta^{u,\varepsilon}_{lR_\varepsilon}(\mathbbm{1}_\theta)|=+\infty$ whenever $l \geq l^u_\varepsilon$, where
$$
l^u_\varepsilon := \inf \{ l \in \N_0 : \tau^u_\varepsilon(\p G) \leq (l+1)R_\varepsilon \}.
$$ Let us observe that the validity of \eqref{averagelocal1} for $f=\mathbbm{1}_\theta$ will follow if we manage to show that, for $(R_\varepsilon)_{\varepsilon}$ as in the statement of the theorem, one has
\begin{equation}\label{averagelocal2}
\lim_{\varepsilon \rightarrow 0} \left[\inf_{u \in B_c} P_u\left( \left[\bigcap_{0 \leq l < l_\varepsilon} A^\varepsilon_l \right] \cap \{ l_\varepsilon > 1 \} \right)\right]=1.
\end{equation}
Now, for each $u \in B_c$ and $K_\varepsilon \geq 2$ we have
\begin{align*}
P_u\left( \left[\bigcap_{0 \leq l < l_\varepsilon} A^\varepsilon_l \right] \cap \{ l_\varepsilon > 1 \} \right)& = \sum_{L=2}^\infty P_u\left( \left[\bigcap_{0 \leq l < l_\varepsilon} A^\varepsilon_l \right] \cap \{ l_\varepsilon = L \} \right)\\
\\
& = P_u ( l_\varepsilon > 1 ) - \sum_{L=2}^\infty P_u\left( \left[\bigcup_{0 \leq l < l_\varepsilon} (A^\varepsilon_l)^c \right] \cap \{ l_\varepsilon = L \} \right)\\
\\
& \geq P_u ( K_\varepsilon \geq l_\varepsilon > 1 ) - \sum_{L=2}^{[K_\varepsilon]} P_u\left( \left[\bigcup_{0 \leq l < l_\varepsilon} (A^\varepsilon_l)^c \right] \cap \{ l_\varepsilon = L \} \right)
\end{align*} so that we may obtain \eqref{averagelocal2} provided that we can choose the sequences $(R_\varepsilon)_{\varepsilon > 0}$ and $(K_\varepsilon)_{\varepsilon}$ in such a way that:
\begin{enumerate}
\item [i.] $\lim_{\varepsilon \rightarrow 0} \left[\inf_{u \in B_c} P_u ( K_\varepsilon \geq l_\varepsilon > 1 )\right] = 1$
\item [ii.] $\lim_{\varepsilon \rightarrow 0} \left[\sup_{u \in B_c} \sum_{L=2}^{[K_\varepsilon]} P_u \left( \left[\bigcup_{0 \leq l < l_\varepsilon} (A^\varepsilon_l)^c \right] \cap \{ l_\varepsilon = L \} \right) \right] =0$.
\end{enumerate} Since by definition of $l^u_\varepsilon$ for every $u \in B_c$ we have
$$
P_u ( K_\varepsilon \geq l_\varepsilon > 1 ) = P_u ( 2R_\varepsilon < \tau_\varepsilon(\p G) \leq (K_\varepsilon + 1)R_\varepsilon),
$$ by Theorem \ref{ecotsuplema1} we see that (i) follows if for each $\varepsilon > 0$ we choose $R_\varepsilon = e^{\frac{\alpha}{\varepsilon^2}}$ with $0 < \alpha < \Delta$ and $K_\varepsilon = e^{\frac{\gamma}{\varepsilon^2}}$ with $\gamma > \Delta - \alpha$. Therefore, it only remains to check that (ii) holds for this choice of the sequences $(R_\varepsilon)_{\varepsilon > 0}$ and $(K_\varepsilon)_{\varepsilon}$. But notice that for each $u \in B_c$ we have
\begin{align*}
\sum_{L=2}^{[K_\varepsilon]} P_u \left( \left[\bigcup_{0 \leq l < l_\varepsilon} (A^\varepsilon_l)^c \right] \cap \{ l_\varepsilon = L \} \right)& \leq \sum_{L=2}^{[K_\varepsilon]} P_u \left( \left[\bigcup_{0 \leq l < l_\varepsilon} (A^\varepsilon_l)^c \right] \cap \{ l_\varepsilon > l \} \right)\\
\\
& \leq \sum_{L=2}^{[K_\varepsilon]} \sum_{l=0}^L P_u \left( (A^\varepsilon_l)^c \cap \{ l_\varepsilon > l \} \right)\\
\\
& \leq K_\varepsilon^2 \sup_{0 \leq l < K_\varepsilon} P_u\left( (A^\varepsilon_l)^c \cap \{ l_\varepsilon > l \} \right)\\
\\
& \leq K_\varepsilon^2 \sup_{u \in G} P_u \left( (A^\varepsilon_0)^c \cap \{ l_\varepsilon > 0 \} \right)
\end{align*} where in the last inequality we have used the Markov property. The fact that (ii) holds now follows from the following proposition. This concludes the proof.

\begin{prop} If $0 < \theta < c$ where $c$ is given by Conditions \ref{assumpg} then there exists $a > 0$ such that given $\delta > 0$ for $\varepsilon > 0$ sufficiently small we have
\begin{equation}\label{averagelocal3}
\sup_{u \in G} P_u \left( (A^\varepsilon_0)^c \cap \{ l_\varepsilon > 0 \} \right) \leq e^{-\frac{\delta}{16} e^{\frac{a}{\varepsilon^2}}}.
\end{equation}
\end{prop}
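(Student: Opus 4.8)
The first step is to unwind the definitions. Since $\mathbf 0\in B_\theta$ we have $\mathbbm{1}_\theta(\mathbf 0)=1$, so that
$$\big|\vartheta^{u,\varepsilon}_0(\mathbbm{1}_\theta)\big|=\frac{1}{R_\varepsilon}\operatorname{Leb}\big\{s\in[0,R_\varepsilon]:U^{u,\varepsilon}(s,\cdot)\notin B_\theta\big\},$$
and therefore $(A^\varepsilon_0)^c\cap\{l_\varepsilon>0\}$ is exactly the event that $\tau_\varepsilon(\partial G)>R_\varepsilon$ while the occupation time of $G\setminus B_\theta$ over $[0,R_\varepsilon]$ exceeds $\delta R_\varepsilon$ (recall $R_\varepsilon=e^{\alpha/\varepsilon^2}$ with $0<\alpha<\Delta$). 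This event is decreasing in $\theta$, so we may take $\theta$ as small as convenient: small enough that $B_\theta$ is forward invariant for the deterministic flow with $U^v\to\mathbf 0$ for $v\in B_\theta$ (asymptotic stability of $\mathbf 0$), that $s_0:=\inf_{\partial B_\theta}S>0$, and that $B_\theta$ sits at a positive distance from $\mathcal W$. Until $\tau_\varepsilon(\partial G)$ the process coincides with the truncation $U^{(n_0+1),\varepsilon}$, for which the estimates of Section \ref{secLDP} and Lemmas \ref{lemab1}--\ref{lemab2} apply. Finally I fix $e<\theta$ small enough that, writing $C_e^{*}$ for the union of the balls $B_e(z^{(n)})$ over the finitely many unstable equilibria $z^{(n)}\in\overline G$, one has $V(\mathbf 0,C_e^{*})>\alpha+h_0$ for some $h_0>0$; this is possible because $V(\mathbf 0,z^{(n)})\ge 2(S(z^{(n)})-S(\mathbf 0))\ge 2S(z)=\Delta>\alpha$ (Proposition \ref{costo}) and $V(\mathbf 0,\cdot)$ is lower semicontinuous.

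The plan is then to decompose the trajectory into excursions away from $B_\theta$. Let $0\le\rho_1^{\mathrm{out}}<\rho_1^{\mathrm{in}}<\rho_2^{\mathrm{out}}<\cdots$ be the successive times at which $U^\varepsilon$ leaves $B_\theta$ and subsequently returns to it, and call the $i$-th excursion \emph{short} if it has length at most a fixed $T_2$ (chosen below from Lemma \ref{lemab2}) and \emph{long} otherwise. On $(A^\varepsilon_0)^c\cap\{l_\varepsilon>0\}$ the occupation time of $G\setminus B_\theta$ is the sum of the excursion lengths and exceeds $\delta R_\varepsilon$, so either the short excursions contribute more than $\delta R_\varepsilon/2$ of it or the long ones do. Two ingredients drive the estimates. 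First, by Lemma \ref{lemab2} (with $B$ a ball containing $\overline G$) there is a time $T_2$ with $\sup_v P_v\big(\min\{\tau_\varepsilon(C_e),\tau_\varepsilon(\partial G)\}>T_2\big)\le e^{-K/\varepsilon^2}$ and $K>\alpha$; since $C_e\supseteq B_e(\mathbf 0)$ and $e<\theta$, this says that within any window of length $T_2$ the process reaches $B_\theta$, or a saddle-neighborhood $C_e^{*}$, or exits $G$ — with a doubly-exponentially small error once iterated over the $\asymp e^{\alpha/\varepsilon^2}$ windows in $[0,R_\varepsilon]$. Second, to remain in $\overline G\setminus B_\theta$ the process can neither be attracted to $\mathbf 0$ (it would enter $B_\theta$) nor explode (it would leave $G$), and both remaining near a saddle (via its unstable direction) and keeping away from the attractor against the gradient drift $-\nabla S$ cost a rate of order $L$ over a time interval of length $L$ (Theorem \ref{descomp1} and Proposition \ref{caract} rule out any other possibility); hence $\sup_v P_v\big(U^\varepsilon\text{ stays in }\overline G\setminus B_\theta\text{ on }[0,L]\big)\le e^{-c_0 L/\varepsilon^2}$ for all large $L$, some $c_0>0$.

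For the short-excursion case: if the short excursions contribute more than $\delta R_\varepsilon/2$, their number exceeds $\delta R_\varepsilon/(2T_2)$, which is exponentially large. Each short excursion starts by leaving the forward-invariant ball $B_\theta$, an event of conditional probability at most $e^{-(s_0-h)/\varepsilon^2}$ over a window of length $T_2$ (by \eqref{LDP2} and $\inf_{\partial B_\theta}S=s_0$), so by the strong Markov property at the return times $\rho_i^{\mathrm{in}}$ the number of short excursions is stochastically dominated by a binomial with $\asymp R_\varepsilon/T_2$ trials and success probability $e^{-(s_0-h)/\varepsilon^2}$, whose mean is $o(R_\varepsilon)$; a Chernoff bound $P(\mathrm{Bin}(n,p)>k)\le(enp/k)^k$ with $k\asymp\delta R_\varepsilon/T_2$ then gives a bound of the form $\exp\!\big(-c\,\varepsilon^{-2}e^{\alpha/\varepsilon^2}\big)$. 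For the long-excursion case: an excursion of length exceeding $L_\varepsilon:=\varepsilon^2 e^{a'/\varepsilon^2}$ (with $a<a'<\alpha$) has probability at most $R_\varepsilon e^{-c_0 L_\varepsilon/\varepsilon^2}$ by the sojourn estimate above, which is doubly-exponentially small; and if no excursion is this long yet the long ones still contribute more than $\delta R_\varepsilon/2$, their number exceeds $k_\varepsilon:=\delta R_\varepsilon/(2L_\varepsilon)\asymp\varepsilon^{-2}e^{(\alpha-a')/\varepsilon^2}$, again exponentially large. Every long excursion reaches $C_e^{*}$ within time $T_2$ (by Lemma \ref{lemab2}, since it stays outside $B_\theta\supseteq B_e(\mathbf 0)$ and does not exit $G$), and reaching $C_e^{*}$ from $\partial B_\theta$ has conditional probability at most $e^{-(V(\mathbf 0,C_e^{*})-o(1)-h)/\varepsilon^2}\le e^{-(\alpha+h_0/2)/\varepsilon^2}$ by Lemma \ref{lemab1}; so by the strong Markov property at the $\rho_i^{\mathrm{in}}$ the number of long excursions is dominated by a binomial with $\asymp R_\varepsilon/T_2$ trials and this success probability, and the same Chernoff bound with $k_\varepsilon$ is doubly-exponentially small. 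Collecting these three contributions together with the iterated Lemma \ref{lemab2} error gives the bound $e^{-\frac{\delta}{16}e^{\frac{a}{\varepsilon^2}}}$ for any $a<\alpha$ and $\varepsilon$ small.

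The hard part is precisely the behavior near the separatrix and the saddles: there the deterministic flow provides no restoring drift and can even be stationary, so the naive ``every window of fixed length costs a fixed amount of rate'' estimate fails, and one is forced to account for excursions one at a time, exploiting that each visit to a saddle-neighborhood costs $\approx\Delta>\alpha$ and that lingering outside $B_\theta$ costs linearly in the elapsed time. A secondary nuisance, as throughout the chapter, is the lack of local compactness of $C_D([0,1])$, which is handled by the smoothing/precompactness statements of the Appendix.
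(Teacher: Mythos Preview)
Your excursion-based decomposition is a natural idea, but it differs substantially from the paper's route and, as written, has two genuine gaps.

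\textbf{Gap 1: the short-excursion count.} You apply the strong Markov property at the return times $\rho_i^{\mathrm{in}}$ and claim that each subsequent departure from $B_\theta$ has conditional probability at most $e^{-(s_0-h)/\varepsilon^2}$ on a window of length $T_2$. But $\rho_i^{\mathrm{in}}$ is the hitting time of $\partial B_\theta$, so at that moment the process sits \emph{on the boundary}. From $v\in\partial B_\theta$ the quasipotential to $\partial B_\theta$ is zero, and the LDP upper bound \eqref{LDP2} yields nothing; heuristically the process at the boundary of an attracting ball re-exits with probability of order one on any short window. This is exactly why the lower-bound argument of Section~\ref{seclowerbound} (and the standard Freidlin--Wentzell machinery) uses \emph{two} nested radii: one waits for the process to reach an inner ball $B_r$ before measuring the cost of escaping the outer ball $B_\theta$. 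Without that inner-ball step, your binomial has success probability of order one, not $e^{-s_0/\varepsilon^2}$, and the Chernoff bound is vacuous.

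\textbf{Gap 2: the sojourn estimate.} You assert $\sup_v P_v(U^\varepsilon\text{ stays in }\overline G\setminus B_\theta\text{ on }[0,L])\le e^{-c_0 L/\varepsilon^2}$, justified by ``remaining near a saddle costs a rate of order $L$''. This is false at the level of the rate function: the constant path $\varphi\equiv z^{(n)}$ at any saddle in $G$ lies in $\overline G\setminus B_\theta$ and has $I_L(\varphi)=0$, so the infimum of $I_L$ over staying paths is zero and the LDP upper bound gives no decay. Theorem~\ref{descomp1} and Proposition~\ref{caract} concern only the deterministic flow and do not supply the stochastic estimate. The real mechanism of escape from a saddle neighborhood is noise expansion along the unstable direction, which takes time of order $\log(1/\varepsilon)$ and is not treated in the paper. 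What the paper's toolbox does give (iterating the construction behind Lemma~\ref{escapelema2}(ii)) is the weaker bound $e^{-L\,e^{-\alpha''/(2\varepsilon^2)}}$ for any $\alpha''>0$; this would suffice for your application with $L_\varepsilon=\varepsilon^2 e^{a'/\varepsilon^2}$, but it is not the estimate you wrote.

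\textbf{The paper's route.} The paper sidesteps both issues with a single clean device: partition $[0,R_\varepsilon]$ into $N_\varepsilon\asymp e^{(\alpha-b)/\varepsilon^2}$ subintervals of length $t_\varepsilon=e^{b/\varepsilon^2}$ (with $0<b<\alpha$), and set $Y_i=0$ iff on the $i$-th interval the process first hits an inner ball $B_r$ within time $\sqrt{t_\varepsilon}$ and then remains in $B_\theta$ for the rest. The occupation time of $B_\theta^c$ is then at most $N_\varepsilon\sqrt{t_\varepsilon}+t_\varepsilon\sum_i Y_i$, so the event reduces to $\sum_i Y_i>\tfrac{\delta}{4}N_\varepsilon$. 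One shows $\sup_{v\in G}P_v(Y_1=1,\,\tau_\varepsilon(\partial G)>t_\varepsilon)\to 0$ by splitting into ``did not reach $B_r$ within $\sqrt{t_\varepsilon}$ while staying in $G$'' (handled by Lemma~\ref{escapelema2}(ii), which absorbs the saddle issue) and ``reached $B_r$ but then escaped $B_\theta$ within $t_\varepsilon$'' (handled by the two-radius lower-bound argument of Section~\ref{seclowerbound}, choosing $b$ small). An exponential Chebyshev inequality together with the Markov property at the partition points then yields $e^{-\frac{\delta}{8}N_\varepsilon}$, which is \eqref{averagelocal3} with $a=\alpha-b$.
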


\begin{proof} Notice that, since $\theta < c$, we have that $B_\theta \subseteq \mathcal{D}_{\mathbf{0}}$ and thus that $V(\mathbf{0}, \p B_\theta) > 0$. Thus, by the methods in Section \ref{seclowerbound} one can show that there exist sufficiently small $0 < r < \theta$ and $0 < b < \alpha$ such that
\begin{equation}\label{aleq4}
\lim_{\varepsilon \rightarrow 0} \left[\sup_{u \in B_r} P_u \left( \tau_\varepsilon(\p B_\theta) \leq e^{\frac{b}{\varepsilon^2}} \right)\right] = 0.
\end{equation} Now, for each $\varepsilon > 0$ let us set $t_\varepsilon:=e^{\frac{b}{\varepsilon^2}}$, $N_\varepsilon:= \left[ \frac{R_\varepsilon}{t_\varepsilon}\right]$ and for $1 \leq i \leq N_\varepsilon$ and $u \in G$ define the random variable
$$
Y^{u,\varepsilon}_i = \left\{ \begin{array}{ll}0 &\text{ if $U^{u,\varepsilon}$ visits $B_d$ in $[(i-1)t_\varepsilon, (i-1)t_\varepsilon + \sqrt{t_\varepsilon})$ and then}\\ & \text{ spends the rest of the time interval $[(i-1)t_\varepsilon,it_\varepsilon)$ in $B_\theta$}\\ \\ 1 & \text{ otherwise.}\end{array}\right.
$$ Since $\lim_{\varepsilon \rightarrow 0} \frac{R_\varepsilon}{t_\varepsilon} = +\infty$ and $\lim_{\varepsilon \rightarrow 0} t_\varepsilon = +\infty$, for $\varepsilon > 0$ sufficiently small and any $u \in G$ we have that
\begin{align*}
P_u \left( (A^\varepsilon_0)^c \cap \{ l_\varepsilon > 0 \} \right) &= P_u \left( \left|\frac{1}{R_\varepsilon}\int_0^{R_\varepsilon} \mathbbm{1}_\theta( U^\varepsilon(t,\cdot))dt - \right|> \delta , \tau_\varepsilon(\p G) > R_\varepsilon \right) \\
\\
& \leq P_u \left( \left|\frac{1}{N_\varepsilon t_\varepsilon}\int_0^{N_\varepsilon t_\varepsilon} \mathbbm{1}_\theta( U^\varepsilon(t,\cdot))dt - \right|> \frac{\delta}{2} , \tau_\varepsilon(\p G) > N_\varepsilon t_\varepsilon \right) \\
\\
& \leq P_u \left( \frac{1}{N_\varepsilon} \sum_{i=1}^{N_\varepsilon} Y^\varepsilon_i > \frac{\delta}{2} - \frac{1}{\sqrt{t_\varepsilon}} , \tau_\varepsilon(\p G) > N_\varepsilon t_\varepsilon \right)\\
\\
& \leq P_u \left( \sum_{i=1}^{N_\varepsilon} Y^\varepsilon_i > \frac{\delta}{4}N_\varepsilon , \tau_\varepsilon(\p G) > N_\varepsilon t_\varepsilon \right)\\
\\
& \leq e^{-\frac{\delta}{4}N_\varepsilon} \E_u\left( \mathbbm{1}_{\{\tau_\varepsilon(\p G) > N_\varepsilon t_\varepsilon\}}e^{ \sum_{i=1}^{N_\varepsilon} Y^\varepsilon_i} \right)\\
\\
& \leq e^{-\frac{\delta}{4}N_\varepsilon} \left[\sup_{v \in G} \E_v \left( \mathbbm{1}_{\{\tau_\varepsilon(\p G) > t_\varepsilon\}}e^{Y^\varepsilon_1}\right)\right]^{N_\varepsilon}
\end{align*} where the last inequality is a consequence of the Markov property. Now
\begin{align*}
\sup_{v \in G} \E_v \left( \mathbbm{1}_{\{\tau_\varepsilon(\p G) > t_\varepsilon\}}e^{Y^\varepsilon_1}\right)& \leq \sup_{v \in G} \left[ P_v( Y^\varepsilon_1 = 0 , \tau_\varepsilon(\p G) > t_\varepsilon) + e P_v ( Y^\varepsilon_1 = 1 , \tau_\varepsilon(\p G) > t_\varepsilon) \right] \\
\\
& = \sup_{v \in G} \left[ P_v(\tau_\varepsilon(\p G) > t_\varepsilon) + (e-1) P_v ( Y^\varepsilon_1 = 1 , \tau_\varepsilon(\p G) > t_\varepsilon) \right] \\
\\
& \leq e^{(e-1) \sup_{v \in G}P_v ( Y^\varepsilon_1 = 1 , \tau_\varepsilon(\p G) > t_\varepsilon)}
\end{align*} where in the last inequality we have used the fact that $1+x \leq e^x$ is valid for all $x \geq 0$.
But let us observe that
$$
\sup_{v \in G}P_v ( Y^\varepsilon_1 = 1 , \tau_\varepsilon(\p G) > t_\varepsilon) \leq \sup_{v \in G} P( \tau_\varepsilon(B_r) > \sqrt{t_\varepsilon} , \tau_\varepsilon(\p G) > t_\varepsilon ) + \sup_{v \in B_r} P_v ( \tau_\varepsilon( \p B_\theta) \leq t_\varepsilon )
$$ where each term in the right hand side tends to zero as $\varepsilon \rightarrow 0$ by Lemma \ref{escapelema2} and \eqref{aleq4} respectively. Thus, for $\varepsilon > 0$ sufficiently small we obtain that
$$
\sup_{u \in G} P_u \left( (A^\varepsilon_0)^c \cap \{ l_\varepsilon > 0 \} \right) \leq e^{-\frac{\delta}{8} N_\varepsilon} \leq e^{-\frac{\delta}{16} \frac{R_\varepsilon}{t_\varepsilon}} = e^{-\frac{\delta}{16} e^{\frac{a}{\varepsilon^2}}}
$$ where $a = \alpha - b > 0$.
\end{proof}

\newpage

\section{Resumen del Capítulo 5}

En este capítulo damos la demostración de los Teoremas II, III y IV en la sección de resultados del Capítulo 1. Los Teoremas II y III se deducen de los resultados del Capítulo 4 para el tiempo de escape del dominio $G$ dado que $\tau^u_\varepsilon$ y $\tau^u_\varepsilon(\p G)$ son asintóticamente equivalentes. Más precisamente, existe $\tau^* > 0$ tal que
$$
\lim_{\varepsilon \rightarrow 0} \left[\sup_{u \in G} | P_u ( \tau_\varepsilon (\p G) \leq \tau_\varepsilon \leq \tau_\varepsilon (\p G) + \tau^* ) - 1 |\right] = 0.
$$ La desigualdad $\tau_\varepsilon (\p G) \leq \tau_\varepsilon$ vale siempre como consecuencia de la continuidad de las trayectorias de $U^{u,\varepsilon}$. Por otro lado, la segunda desigualdad $\tau_\varepsilon \leq \tau_\varepsilon (\p G) + \tau^*$ se deduce de los resultados del Capítulo 2, puesto que $U^{u,\varepsilon}$ se escapa de $G$ típicamente por $\p G$ y, además, $\p G$ es un subconjunto cerrado de $\mathcal{D}_e^*$ a una distancia positiva de la frontera.

Por último, como $\tau^u_\varepsilon$ y $\tau^u_\varepsilon(\p G)$ son asintóticamente equivalentes, podemos suponer que en los promedios ergódicos en el enunciado del Teorema IV figura $\tau^u_\varepsilon(\p G)$ en lugar de $\tau^u_\varepsilon$. La demostración del Teorema IV en este caso sigue los pasos de \cite{GOV} y \cite{B1}.

\chapter{A finite-dimensional problem}

In this chapter we study the asymptotic properties of the explosion time for small random perturbations of a particular ordinary differential equation with blow-up. This can be seen as a finite-dimensional version of our original problem. However, the equation we consider in this chapter is not the finite-dimensional analogue of the original equation \eqref{MainPDE}, and thus one cannot perform the exact same analysis of the previous chapters.
We decided to include this variant here for a number of reasons. First, because it serves as an example of the lack of an unified approach to treat perturbations of differential equations with blow-up: in general, different systems require different techniques to study them. We also do it to show that the finite-dimensional structure can simplify matters to some extent, allowing us to achieve more general results than for the infinite-dimensional alternative.
Finally, we do it to show that the ideas developed in this first part are not restricted to equations with Dirichlet boundary conditions. The analysis of this chapter can be found in more detail in \cite{GS}.

\section{Preliminaries}

\subsection{The deterministic system}
We consider small random perturbations of the following ODE
\begin{equation}
\label{1.1}
\left\{\begin{array}{lcll}
U'_1 &= &\frac{2}{h^2} ( -U_1 + U_2 ),\\
U'_i &= &\frac{1}{h^2} ( U_{i+1} - 2U_i + U_{i-1} ) &\,\,\, 2 \leq i \leq d-1,\\
U'_d &= &\frac{2}{h^2} ( -U_d + U_{d-1} +hg(U_d) )\\
U(0) &= &u.
\end{array}\right.
\end{equation}
Here $g\colon \R \to \R$ is a reaction term given by $g(x) = (x^+)^p -x$ for $p>1$ \mbox{and $h>0$ is fixed.} These kind of systems arise as spatial discretizations of diffusion equations with nonlinear boundary conditions of Neumann type. In fact, it is well known that as $h\to 0$ solutions to this system converge to solutions of the PDE
$$
\left \{\begin{array}{rcll}
\p_t U(t,x) & = & \p^2_{xx}U(t,x) & 0<x<1, 0\le t<T,\\
\p_x U(t,0) & = & 0 & 0\le t <T,\\
\p_x U(t,1) &= & g(U(t,1)) & 0\le t <T,\\
U(0,x) & = & U_0(x) & 0\le x \le 1.
\end{array} \right.
$$
\newpage

For details on this convergence see \cite{DER}. Equation \eqref{1.1} can be written in matrix form as
\begin{equation}
\label{A1}
dU = \Big(-AU + \frac2h g(U_d)e_d\Big)dt
\end{equation} for some positive definite $A \in \R^{d \times d}$ and where $e_d$ denotes the $d$-th canonical vector on $\R^d$.
The field $b(u):= -AU + \frac2hg(U_d)e_d$ is of gradient type, i.e. $b=-\nabla S$, with \mbox{potential $S$} given by
$$
S(u) = \frac{1}{2} \langle Au , u \rangle - \frac{2}{h}\Big(\frac{\;\;\;|u_d^+|^{p+1}}{p+1} - \frac{\,\,{u_d}^2}{2}\Big).
$$ This potential satisfies all properties shown for its \mbox{infinite-dimensional} analogue in \eqref{formalPDE}.
It has exactly two critical points, $\1:=(1,\dots,1)$ and the origin, both of them hyperbolic. The origin $0$ is the unique local minimum of the potential $S$ while $\1$ is a saddle point. Furthermore, we have a decomposition of $\R^d$ similar to \eqref{decomp12} (see \cite{AFBR,GS} for details). Indeed, we have
\begin{equation*}
\R^d = \mathcal{D}_0 \cup \mathcal{W}^s_1 \cup \mathcal{D}_e
\end{equation*} where $\mathcal{D}_0$ denotes the stable manifold of the origin, $\mathcal{W}^s_{1}$ is the stable manifold of $\1$ and $\mathcal{D}_e$ is the
domain of explosion. Once again the sets $\mathcal{D}_0$ and $\mathcal{D}_e$ are open in $\R^d$ and the origin is an asymptotically stable equilibrium of the system.
$\mathcal{W}^s_{1}$ is a manifold of \mbox{codimension one.} The saddle point $\1$ also admits an unstable manifold, $\mathcal W^u_\1$. This unstable manifold is contained in $\R^d_+$ and has dimension one. Furthermore, it has nonempty intersection with both $\mathcal{D}_0$ and $\mathcal{D}_e$ and joins $\1$ with the origin. An illustration of this decomposition is given in Figure \ref{fig:inclination}
for the $2$-dimensional case. Finally, we have the finite-dimensional analogue of Theorem \ref{descomp2}, originally proved in \cite{AFBR}.

\psfrag{U1}{\vspace{-155pt}$U\equiv 0$}
\psfrag{U2}{\vspace{-45pt}$U \equiv \1$}
\psfrag{De}{\hspace{60pt}\vspace{30pt}$\mathcal{D}_e$}
\psfrag{D0}{$\mathcal{D}_0$}
\psfrag{Wu}{$\mathcal W_1^u$}
\psfrag{Ws}{$\mathcal W_1^s$}
\begin{figure}
	\centering
	\includegraphics[width=8cm]{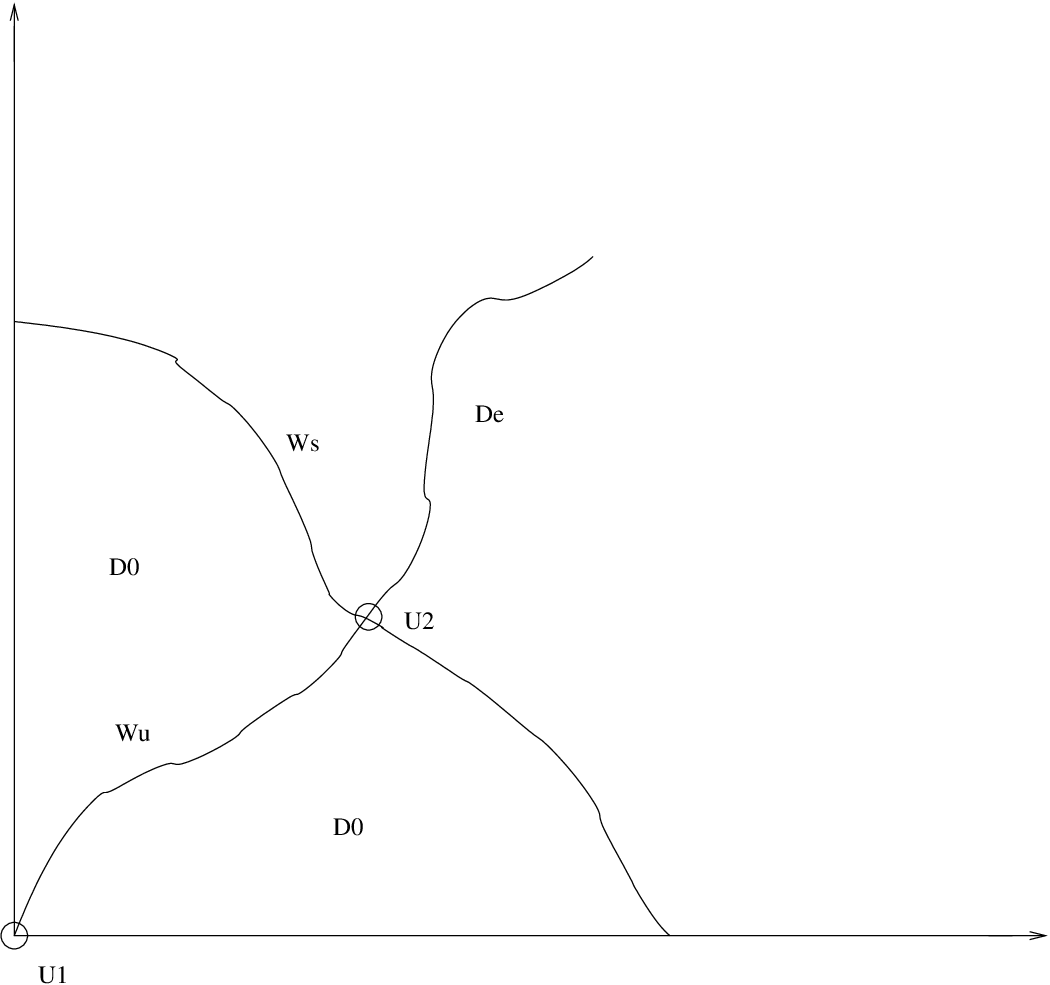}
	\caption{The phase diagram of equation \eqref{1.1}.}
	\label{fig:inclination}
\end{figure}

\subsection{The stochastic system}

We study random perturbations of \eqref{1.1} given by additive white-noise. More precisely, we consider stochastic differential equations of the form

\begin{equation}
\label{Aestoc}
dU^{\ve} = \Big(-AU^{\ve} + \frac2h g(U_d^{\ve})e_d\Big)dt + \ve dW
\end{equation}
for $\ve>0$ small and where $W=(W_1, \dots,W_d)$ a $d-$dimensional standard Brownian motion.
Given a probability space $(\Omega,\F, P)$ and a standard $d$-dimensional Brownian motion $W$, we say that a stochastic process $U^\varepsilon = (U^\varepsilon(t))_{t \geq 0}$ is a solution up to an explosion time of \eqref{Aestoc} on $(\Omega, \mathcal{F}, P)$ and with respect to $W$ if it satisfies the following:

\begin{itemize}
\item $U^\varepsilon$ has continuous paths taking values in $\R^d \cup \{ \infty\}$
\item $U^\varepsilon$ is adapted to the augmented filtration generated by $W$.
\item For every $n \in \N$ we have $P$-almost surely
$$
\int_{0}^{t\wedge{\tau^{(n)}}} |b(U^\varepsilon(s))|\,ds < +\infty \hspace{0,5cm} \,\forall\,\,\, 0\leq t < + \infty
$$
and
$$
U^\varepsilon({t\wedge \tau^{(n)}}) = U^\varepsilon(0) + \int_{0}^{t} b(U^\varepsilon(s))\mathbbm{1}_{\{s \leq \tau^{(n)}\}}\,ds
+ \varepsilon W({t \wedge \tau{(n)}}); \,\,\,\forall\,\,\, 0\leq t < + \infty.
$$ where $\tau^{(n),\varepsilon}:=\inf \{ t \geq 0 : \|U^\varepsilon(t)\|_\infty \geq n\}$.

\item $U^\varepsilon$ has the strong Markov property.
\end{itemize}We call $\tau^\varepsilon:= \lim_{n \rightarrow +\infty} \tau^{(n),\varepsilon}$ the \textit{explosion time} of $U^{\varepsilon}$.

As in the previous chapters, we shall write $U^{u,\ve}$ to denote the unique solution to
\eqref{Aestoc} with initial datum $u \in \R^d$ and also write $U^u$ to denote the corresponding solution to \eqref{1.1}.
Furthermore, for each $n \in \N$ we consider the truncation $S^{(n)}$ of the potential $S$ given by
$$
S^{(n)}(u) = \frac{1}{2}\langle Au, u \rangle - \frac{2}{h} G_n(u_d)
$$ where $G_n : \R \longrightarrow \R$ is a function of class $C^2$ satisfying
that \begin{equation*}\label{gtruncada2}
G_n(u) = \left\{\begin{array}{ll}
\frac{|u^+|^{p+1}}{p+1} - \frac{u^2}{2} &\,\,\text{if}\,\,u \leq n\\
0 &\,\,\text{if}\,\,u \geq 2n.
\end{array}\right.
\end{equation*} The unique solution $U^{(n),u}$ to the equation $\dot{U}^{(n),u} = -\nabla S^{(n)}(U^{(n),u})$ with \mbox{initial datum $u$} is globally defined and coincides with $U^u$ until the escape from the \textit{unbounded} set
$$
\Pi^n := \{ u \in \R^d : u_d < n \}.
$$ Similarly, for $\varepsilon > 0$ the unique solution $U^{(n),u,\varepsilon}$ to the equation
$$
d{U}^{(n),u,\varepsilon} = -\nabla S^{(n)}(U^{(n),u,\varepsilon}) dt + \varepsilon dW
$$ with initial datum $u$ is globally defined and coincides with $U^{u,\varepsilon}$ until the escape from $\Pi^n$. Moreover, since the field $-\nabla S^{(n)}$ is globally Lipschitz, the family of solutions $\left(U^{(n),u,\varepsilon}\right)_{\varepsilon > 0}$ satisfies the analogous large deviations estimates of Section \ref{secLDP} with rate function
$$
I^u_T(\varphi)=\left\{\begin{array}{ll}\displaystyle{\frac12\int_0^T|\dot \varphi(s) + \nabla S^{(n)}(\varphi(s))|^2 ds} & \mbox{if $\varphi$ is absolutely continuous and $\varphi(0)=u$}\\
\\
+\infty & \mbox{otherwise}
\end{array} \right.
$$ Finally, for each $\varepsilon > 0$ and $u \in \R^d$ the process $U^{(n),u,\varepsilon}$ is positive recurrent.

\subsection{Main results}

We now state the main results obtained in this finite-dimensional setting. Some of these results are more refined than their infinite-dimensional counterparts. This is due to the friendlier finite-dimensional setting and also to the convenient choice of reaction term. We maintain the notation of Chapter 1. Our first result is concerned with the almost sure existence of blow-up for arbitrary initial data and noise parameter.

\medskip
\noindent \textbf{Theorem I}. For any $u \in \R^d$ and $\varepsilon > 0$ we have $P_u(\tau_\ve < +\infty) = 1$.

\medskip
Let us notice that for the infinite-dimensional system we were only able to show that
$$
\lim_{\varepsilon \rightarrow 0} P_u ( \tau_\varepsilon < +\infty ) = 1.
$$ This is because, by the particular choice of reaction term $g$, solutions in this setting only explode in one direction, so that comparison arguments can be successfully applied.

Next, we study the asymptotic behavior of the explosion time for initial data in $\mathcal{D}_e$.

\medskip
\noindent \textbf{Theorem II}. Given $\delta > 0$ and a compact set $\mathcal{K} \subseteq \mathcal{D}_e$ there exists $C > 0$ such that
$$
\sup_{u \in \mathcal{K}} P_u ( |\tau_\varepsilon - \tau_0| > \delta ) \leq  e^{-\frac{C}{\varepsilon^2}}
$$ for every $\varepsilon > 0$ sufficiently small.

\medskip
Finally, we show metastable behavior for solutions of \eqref{Aestoc} with initial data $u \in \mathcal{D}_0$. We have the following results.

\medskip
\noindent \textbf{Theorem III}. Given $\delta > 0$ and a compact set $\mathcal{K} \subseteq \mathcal{D}_0$ we have
$$
\lim_{\varepsilon \rightarrow +\infty} \left[ \sup_{u \in \mathcal{K}} \left|P_u \left( e^{\frac{\Delta - \delta}{\varepsilon^2}} < \tau_\varepsilon < e^{\frac{\Delta + \delta}{\varepsilon^2}}\right)- 1\right|\right]=0
$$ where $\Delta:=2(S(\1)-S(0))$.

\medskip
\noindent \textbf{Theorem IV}. If for each $\varepsilon > 0$ we define the scaling coefficient
$$
\beta_{\varepsilon}= \inf \{ t \geq 0 : P_0 ( \tau_\ve > t ) \leq e^{-1} \}
$$ then $\lim_{\varepsilon \rightarrow 0} \varepsilon ^{2}\log\beta_{\varepsilon} = \Delta$ and for each compact set $\mathcal{K} \subseteq \mathcal{D}_0$ and $t > 0$ we have
$$
\lim_{\varepsilon \rightarrow 0} \left[ \sup_{u \in \mathcal{K}} \left|P_{u} (\tau_{\varepsilon} > t\beta_{\varepsilon}) - e^{-t}\right| \right]=0.
$$
\noindent \textbf{Theorem V}. There exists a sequence $(R_\varepsilon)_{\varepsilon > 0}$ with $\lim_{\varepsilon \rightarrow 0} R_\varepsilon = +\infty$ and $\lim_{\varepsilon \rightarrow 0} \frac{R_\varepsilon}{\beta_\varepsilon} = 0$ such that given $\delta > 0$ for any compact set $\mathcal{K} \subseteq \mathcal{D}_{0}$ we have
$$
\lim_{\varepsilon \rightarrow 0} \left[ \sup_{u \in \mathcal{K}} P_u \left( \sup_{0 \leq t \leq \tau_\varepsilon - 3R_\varepsilon}\left| \frac{1}{R_\varepsilon}\int_t^{t+R_\varepsilon} f(U^{\varepsilon}(s,\cdot))ds - f(0)\right| > \delta \right) \right] = 0
$$ for any bounded continuous function $f: \R \rightarrow \R$.

\newpage

With the exception of Theorem I, the proof of the remaining results follow very closely the ideas featured in the previous chapters for the infinite-dimensional problem (perhaps with even fewer technical difficulties). Thus, we include here only the parts of the analysis which differ from the ones given in the previous setting. The main differences appear on the proof of Theorem II and the construction of the auxiliary domain $G$. For the latter, we shall exploit the fact that we are in a finite-dimensional setting to obtain a different construction of $G$, one which does not rely so heavily on the structure of the potential $S$. In particular, this will allow us to obtain our results for every $p > 1$ instead of $p \in (1,5)$. Finally, the results presented here can be extended to more general systems than \eqref{1.1}. We refer the reader to \cite{GS} for details on possible extensions.

\section{Almost sure blow-up in the stochastic model}\label{section4}

\label{estoc.exp}

In this section we devote ourselves to the proof of Theorem I. The idea is to show that, conditioned on non-explosion, the
system is guaranteed to enter a specific region of space in which we can prove that explosion occurs with total probability. From this we can conclude that non-explosion must happen with zero probability.
We do this by comparison with an adequate Ornstein-Ühlenbeck process. Indeed, let $Y^{y,\,\varepsilon}$ be the solution to
\begin{equation}
\label{OU}
dY^{y,\,\varepsilon}=-\Big(AY^{y,\,\varepsilon} + \frac{2}{h} Y^{y,\,\varepsilon}_d e_d\Big)\,dt + \varepsilon dW
\end{equation}
with initial datum $y \in \R^d$. Notice that the drift term in \eqref{OU} is linear and given by a negative definite matrix. Hence, $Y^{y,\ve}$ is in fact a
$d$-dimensional Ornstein-Ühlenbeck process which admits an invariant
distribution supported in $\R^d$. Since we also have convergence to this equilibrium measure for any initial distribution, the hitting time of $Y^{y,\ve}$ of any open set is finite almost surely.

On the other hand, the drift term of \eqref{OU} is smaller or equal than
$b$ so that by the comparison principle we conclude that if $u \geq y$ then $U^{u,\ve}(t) \ge Y^{y,\ve}$ holds almost surely for as long as $U^{u,\ve}$ is finite. From here, Theorem I follows at once from the next lemma and the
strong Markov property.

\begin{lema}
If we consider the set
$$
\Theta^M:=\{ y \in \R^d : y_k \geq 0 \,\,\text{for all}\,\,0\leq k \leq d-1\,,\, y_d \geq M \},
$$
then we have
$$
\lim_{M\rightarrow \infty } \left[\inf_{y\in \Theta^M} P_y(\tau_\ve <\infty)\right] =1.
$$
\end{lema}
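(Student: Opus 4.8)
The plan is to reduce the statement, via the cooperative comparison principle, to a scalar blow-up estimate for the last coordinate started very high. First I would note that every $y\in\Theta^M$ dominates $Me_d$ componentwise and that $Me_d\in\Theta^M$ itself; since the drift $b$ is quasi-monotone (it is tridiagonal with non-negative off-diagonal entries), the comparison principle for the globally Lipschitz truncated systems of this section --- the same one used above to compare $U^{u,\ve}$ with the Ornstein--Uhlenbeck process --- gives, after passing to the limit in the truncation level, $U^{y,\ve}(t)\ge U^{Me_d,\ve}(t)$ componentwise for every $t$ before explosion, both processes being driven by the same $W$, together with $\tau_\ve^{y}\le\tau_\ve^{Me_d}$ pathwise. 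Hence $\inf_{y\in\Theta^M}P_y(\tau_\ve<\infty)=P_{Me_d}(\tau_\ve<\infty)$, and it suffices to prove that $P_{Me_d}(\tau_\ve<\infty)\to1$ as $M\to\infty$.

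The second ingredient is a uniform-in-$M$ lower bound on the whole trajectory on a fixed time window $[0,T]$. The point is that whenever a coordinate of $U^{Me_d,\ve}$ is simultaneously the smallest one and negative, its drift is non-negative: for an interior coordinate this is the discrete maximum principle, and for the boundary coordinates $1$ and $d$ it follows from the explicit equations together with $g(x)=-x>0$ for $x<0$. Since $Me_d$ has all coordinates equal to $0$ or $M$, this sign property should propagate to show that $\min_i\inf_{s\le t}U^{Me_d,\ve}_i(s)$ stays above the running minimum of the noise $\ve W_i$; Doob's maximal inequality for Brownian motion then yields, for each $\delta>0$, a constant $C=C(\delta,T,\ve)$ independent of $M$ with $P_{Me_d}\big(\inf_{t\le T\wedge\tau_\ve}\min_iU^{Me_d,\ve}_i(t)<-C\big)<\delta$. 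This is the step I expect to require the most care, since turning ``the minimal coordinate has non-negative drift when negative'' into a rigorous comparison calls for a Tanaka-type argument or a smooth approximation of $u\mapsto\min_iu_i$, and it is the place where the coupling between the coordinates must be treated honestly.

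On the event $\{U^{Me_d,\ve}_{d-1}\ge -C\text{ on }[0,T\wedge\tau_\ve]\}$ the last coordinate obeys the scalar inequality
\[
dU^{Me_d,\ve}_d\ \ge\ \Big[\tfrac{2}{h^2}\big(-C-U^{Me_d,\ve}_d\big)+\tfrac{2}{h}\big((U^{Me_d,\ve}_d)^+\big)^p-\tfrac{2}{h}U^{Me_d,\ve}_d\Big]dt+\ve\,dW_d=:f_C(U^{Me_d,\ve}_d)\,dt+\ve\,dW_d,
\]
and $f_C(x)\ge\tfrac1h x^p$ once $x\ge K_0$, where $K_0=K_0(C,h)$ is the threshold past which the $x^p$ term dominates the affine corrections. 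Comparing with the one-dimensional SDE $dX=f_C(X)\,dt+\ve\,dW_d$, $X(0)=M$, gives $U^{Me_d,\ve}_d\ge X$ on $[0,T\wedge\tau_\ve)$; and on $F_M:=\{\inf_{t\le T}\ve W_d(t)>-(M-K_0)\}$ one checks that $X(t)\ge K_0+\tfrac1h\int_0^tX(s)^p\,ds$ for as long as $X\ge K_0$, so that this minorant, and therefore $X$ and $U^{Me_d,\ve}_d$, blows up before the deterministic time $T^\ast:=h/((p-1)K_0^{p-1})$. Fixing $T:=T^\ast+1$, on the intersection of this event with the one from the previous paragraph we get $\tau_\ve\le T<\infty$. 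Since $P(F_M^c)=P\big(\inf_{[0,T]}W_d\le-(M-K_0)/\ve\big)\to0$ as $M\to\infty$ with $\ve,T,K_0$ fixed, we obtain $P_{Me_d}(\tau_\ve<\infty)\ge1-\delta-P(F_M^c)$, hence $\liminf_{M\to\infty}\inf_{y\in\Theta^M}P_y(\tau_\ve<\infty)\ge1-\delta$; letting $\delta\downarrow0$ concludes the proof. Apart from the lower-bound step flagged above, every remaining ingredient is either the comparison principle already invoked in this section or a routine scalar blow-up computation.
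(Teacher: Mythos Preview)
Your outline is valid but takes a different path from the paper, and the step you flag as delicate has a one-line fix you are overlooking.

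The paper does not reduce to the single initial point $Me_d$. Instead it subtracts the noise: $Z^{y,\ve}:=U^{y,\ve}-\ve W$ solves a random ordinary differential equation pathwise. Conditioning on $A_k=\{\sup_{[0,1]}|W|\le k\}$, one gets $\dot Z\ge QZ+q$ for a cooperative matrix $Q$ and a vector $q$ depending only on $k,\ve,h$; this produces a deterministic lower bound on $Z_{d-1}$ on $[0,1]$. The last coordinate then obeys $\dot Z_d\ge -\alpha_1 M+\alpha_2 Z_d+\alpha_3 Z_d^p$ with $Z_d(0)\ge M$, whose blow-up time tends to $0$ as $M\to\infty$. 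Hence for each $k$ there is $M_k$ with $\tau_\ve\le1$ on $A_k$ for every $y\in\Theta^M$, $M\ge M_k$; letting $k\to\infty$ finishes. The whole point of subtracting $\ve W$ is precisely to replace your Step~2 by a deterministic comparison and thereby avoid any Tanaka-type argument.

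On your Step~2: the assertion that $\min_iU^{Me_d,\ve}_i$ stays above ``the running minimum of the noise $\ve W_i$'' is not well-posed as written, because the argmin switches and the martingale part of $\min_iU_i$ is not any single $\ve W_i$. You can repair it with the machinery you mention, but there is no need: the section has already established $U^{u,\ve}\ge Y^{y,\ve}$ componentwise whenever $u\ge y$, with $Y^{y,\ve}$ the Ornstein--Uhlenbeck process~\eqref{OU}. Taking $u=Me_d$ and $y=0$ gives $U^{Me_d,\ve}_i(t)\ge Y^{0,\ve}_i(t)$ for all $t<\tau_\ve$, and $\min_i\inf_{[0,T]}Y^{0,\ve}_i$ is a finite random variable whose law does not involve $M$. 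This hands you the constant $C=C(\delta,T,\ve)$ at once, after which the rest of your argument goes through unchanged. The same OU lower bound also secures the implication $\tau_\ve^y\le\tau_\ve^{Me_d}$ in Step~1, since it forces any explosion of $U^{Me_d,\ve}$ to occur through $+\infty$.
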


\begin{proof}
Consider the auxiliary process $Z^{y,\,\varepsilon}:= U^{y,\,\varepsilon} - \ve
W$. Notice that this process verifies the random differential equation
\begin{equation*}
 dZ^{y,\ve}=b(Z^{y,\ve} +\ve W)dt, \quad Z^{y,\ve}(0)=y.
\end{equation*}
Also observe that $Z^{y,\ve}$ has the same explosion time as
$U^{y,\ve}$. For each $k \in \N$ let us define the set $A_k:=\{\sup_{0\leq t
\leq 1} |W_d(t)|\le k\}$. On $A_k$ we have that $Z^{y,\ve}$ verifies the
inequality
\begin{equation}
\label{ineq.z}
\frac{dZ^{y,\,\varepsilon}}{dt} \ge -AZ^{y,\,\varepsilon} - \frac{4}{h^2}\ve k \sum e_i+ \frac{2}{h} ((Z^{y,\,\varepsilon}_d - \varepsilon k)_+^p - Z^{y,\,\varepsilon}_d - \varepsilon k) e_d.
\end{equation}
Observe that \eqref{ineq.z} can be written as
$$
\frac{dZ^{y,\,\varepsilon}}{dt} \ge QZ^{y,\,\varepsilon} + q +
(Z^{y,\,\varepsilon}_d - \varepsilon k)_+^p e_d \ge QZ^{y,\,\varepsilon} + q ,
$$
where $Q\in\R^{d\times d}$ verifies a comparison principle and $q\in\R^d$ both depend on $\ve, h$ and $k$, but
not on $M$. This allows us to conclude the inequality
$Z_{d-1}^{y,\,\varepsilon} \ge -(M + |q|){\mathrm exp}(|Q|)$ for all $0\le t \le
\min\{1,\tau_\varepsilon^y\}$. In particular, for all $0\le t \le
\min\{1,\tau_\varepsilon^y\}$ the last coordinate verifies the inequality
$$
\left\{\begin{array}{ll}
\frac{dZ_d^{y,\,\varepsilon}}{dt} \ge -\alpha_1 M + \alpha_2
Z_d^{y,\,\varepsilon} + \alpha_3(Z^{y,\,\varepsilon}_d)^p & \\
\\
Z_d^{y,\,\varepsilon}(0)\ge M
\end{array}
\right.
$$
for positive constants $\alpha_1, \alpha_2, \alpha_3$
which do not
depend on $M$. It is a
straightforward calculation to check that solutions to this one-dimensional
inequality blow up in a finite time that converges to zero as \mbox{$M\to
+\infty$.} Therefore, for each $k \in \N$ there exists $M_k$ such that
$P(A_k)\leq \inf_{y \in \Theta^{M}}  P_y(\tau_\ve <\infty)$ for all $M \geq
M_k$. Since $\lim_{k \rightarrow +\infty} P(A_k)=1$, this concludes the proof.
\end{proof}

\section{Convergence of $\tau^u_\varepsilon$ for initial data in $\mathcal{D}_e$}
\label{estoc.conv}

Our purpose in this section is to prove Theorem II. We shall only give the upper bound for the explosion time $\tau^u_\varepsilon$ since the lower bound can be obtained exactly as in \mbox{Proposition \ref{convergenciainferior0}.}
We need the following lemma, whose proof can be found on \cite{GS}.

\begin{lema}[\textbf{Maximum Principle}]\label{prinmax} If $U$ is a solution to \eqref{1.1} then for every $t \geq 0$
\begin{equation}\label{pmaximo} \max_{k=1,\dots,d}
|U_k (t)| \leq \max \{ \max_{k=1,\dots,d} |U_k(0)| , \max_{0\leq s \leq t} U_d(s)\}.
\end{equation}
\end{lema}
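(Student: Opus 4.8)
The statement is a (time-continuous, spatially discrete) maximum principle, and the natural route is the classical one: fix a time horizon, and prove the bound by a perturbation (``$\delta$-trick'') argument that rules out a first crossing of the barrier. Since the right-hand side of \eqref{pmaximo} is nondecreasing in $t$, it suffices to fix $T$ in the interval of existence $[0,\tau^u)$ and set
$$
\mathcal{M} := \max\Big\{ \max_{k=1,\dots,d}|U_k(0)| \,,\, \max_{0\le s\le T} U_d(s) \Big\},
$$
which is finite by continuity of $U$ on $[0,T]$. The goal is then to show $-\mathcal{M}\le U_k(t)\le \mathcal{M}$ for all $k$ and all $t\in[0,T]$. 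For $k=d$ the upper bound $U_d(t)\le \mathcal{M}$ is built into the definition of $\mathcal{M}$, so the real content is (a) the upper bound $U_k(t)\le\mathcal{M}$ for $k=1,\dots,d-1$, and (b) the lower bound $U_k(t)\ge-\mathcal{M}$ for all $k=1,\dots,d$ (note $\mathcal{M}\ge|U_k(0)|$ makes the initial datum fit both bounds).

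\textbf{Upper bound.} For $\delta>0$ put $v_k^\delta(t):=U_k(t)-\mathcal{M}-\delta(1+t)$, so $v_k^\delta(0)\le-\delta<0$. If $v_k^\delta(t)\le 0$ failed on $[0,T]$ for some $k$, continuity would give a first time $t_0\in(0,T]$ and an index $k_0$ with $v_{k_0}^\delta(t_0)=0$, $(v_{k_0}^\delta)'(t_0)\ge 0$, and $U_k(t_0)\le U_{k_0}(t_0)$ for all $k$. The case $k_0=d$ is impossible since it would force $U_d(t_0)>\mathcal{M}\ge\max_{s\le T}U_d(s)$. If $k_0=1$ then $U_1'(t_0)=\tfrac{2}{h^2}(U_2(t_0)-U_1(t_0))\le 0$, while if $2\le k_0\le d-1$ then $U_{k_0}'(t_0)=\tfrac{1}{h^2}\big[(U_{k_0+1}-U_{k_0})+(U_{k_0-1}-U_{k_0})\big](t_0)\le 0$; in either case $(v_{k_0}^\delta)'(t_0)=U_{k_0}'(t_0)-\delta<0$, a contradiction. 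Hence $v_k^\delta\le 0$ on $[0,T]$ for every $\delta>0$, and letting $\delta\to 0^+$ yields $U_k(t)\le\mathcal{M}$.

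\textbf{Lower bound.} Symmetrically, set $w_k^\delta(t):=U_k(t)+\mathcal{M}+\delta(1+t)\ge\delta>0$ at $t=0$. A failure would produce $t_0\in(0,T]$, $k_0$ with $w_{k_0}^\delta(t_0)=0$, $(w_{k_0}^\delta)'(t_0)\le 0$, $U_k(t_0)\ge U_{k_0}(t_0)$ for all $k$, and in particular $U_{k_0}(t_0)=-\mathcal{M}-\delta(1+t_0)<0$. For $k_0=1$ one gets $U_1'(t_0)\ge 0$ and for $2\le k_0\le d-1$ again $U_{k_0}'(t_0)\ge 0$ from the discrete Laplacian, forcing $(w_{k_0}^\delta)'(t_0)\ge\delta>0$, a contradiction. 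The only new point is $k_0=d$: then $U_d'(t_0)=\tfrac{2}{h^2}(U_{d-1}(t_0)-U_d(t_0))+\tfrac{2}{h}g(U_d(t_0))$, where the first term is $\ge 0$ and, since $U_d(t_0)<0$, one has $g(U_d(t_0))=(U_d(t_0)^+)^p-U_d(t_0)=-U_d(t_0)>0$; hence $U_d'(t_0)>0$ and $(w_d^\delta)'(t_0)>\delta>0$, again a contradiction. Letting $\delta\to0^+$ gives $U_k(t)\ge-\mathcal{M}$, and combining the two bounds proves \eqref{pmaximo}; as $T<\tau^u$ was arbitrary the lemma follows.

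\textbf{Expected difficulty.} None of the steps is deep; the only place requiring care is the boundary node $k=d$. For the upper bound this node is harmless because its contribution is absorbed into $\mathcal{M}$, and for the lower bound the sign structure of the reaction term ($g(x)=-x>0$ for $x<0$) is exactly what makes the argument close. The mild technical points are the standard ones: justifying existence of the ``first crossing time'' on the compact interval $[0,T]$ and the one-sided derivative sign there, which the additive $\delta(1+t)$ perturbation handles cleanly.
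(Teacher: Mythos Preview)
Your argument is correct: the $\delta$-perturbation and first-crossing-time device is the standard way to prove discrete maximum principles, and you handle the only subtle point---the boundary node $k_0=d$ in the lower bound---correctly by using that $g(x)=-x>0$ for $x<0$. The paper itself does not supply a proof of this lemma but defers to the reference \cite{GS}, so there is no in-paper argument to compare against; your proof is self-contained and would serve perfectly well.
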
 The upper bound for the explosion time is given in the following proposition.

\begin{prop} For any $\delta > 0$ and compact set $\mathcal{K} \in \mathcal{D}_e$ there exists $C > 0$ such that
\begin{equation*}
\sup_{u \in \mathcal{K}} P_u ( \tau_\varepsilon > \tau_0 + \delta ) \leq e^{- \frac{C}{\varepsilon^2}}
\end{equation*}for every $\varepsilon > 0$ sufficiently small.
\end{prop}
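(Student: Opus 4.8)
The plan is to mimic the structure of the proof of Proposition~\ref{convsup} in Chapter~2, but to exploit the finite-dimensional setting and, crucially, the one-sided nature of the blow-up that is built into the reaction term $g(x)=(x^+)^p-x$. First I would reduce, via the Markov property and the large deviations estimate \eqref{grandes1} for the truncated systems $U^{(n),u,\varepsilon}$, the problem to controlling the explosion time starting from a configuration where the deterministic solution has already driven the potential $S$ very negative. Concretely, fix a compact $\mathcal{K}\subseteq\mathcal{D}_e$; by the finite-dimensional analogue of Proposition~\ref{caract} there is $M>0$ and a finite time $\mathcal{T}_M:=\sup_{u\in\mathcal{K}}\mathcal{T}_M^u<+\infty$ (finiteness uses continuity/compactness exactly as in Lemma~\ref{lemacontsup}, which in $\R^d$ is straightforward) such that $S(U^u(\mathcal{T}_M^u))\le -M$ and the orbits $\{U^u(t):0\le t\le \mathcal{T}_M^u,\ u\in\mathcal{K}\}$ stay in a bounded set, say of sup-norm $\le \mathcal{R}_M$. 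Then for any $\rho>0$,
$$
P_u(\tau_\varepsilon>\tau_0+\delta)\le P\big(d_{\mathcal{T}_M}(U^{(\mathcal{R}_M+1),u,\varepsilon},U^{(\mathcal{R}_M+1),u})>\rho\big)+\sup_{v\in B_\rho(U^u(\mathcal{T}_M^u))}P_v(\tau_\varepsilon>\delta),
$$
and \eqref{grandes1} handles the first term for any fixed $\rho$.

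For the second term I would reproduce the comparison argument behind Lemma~\ref{expestimate}: introduce the process $Z^{v,\varepsilon}=U^{v,\varepsilon}-\varepsilon W$ (the finite-dimensional substitute for subtracting $V^{\mathbf 0,\varepsilon}$), which has the same explosion time as $U^{v,\varepsilon}$ and solves a random ODE $\dot Z^{v,\varepsilon}=b(Z^{v,\varepsilon}+\varepsilon W)$. On the event $\{\sup_{[0,\delta]}|W(t)|\le K\}$ one gets, by the mean value theorem applied to $g$, a differential inequality of the form $\dot Z^{v,\varepsilon}\ge -AZ^{v,\varepsilon}+\tfrac{2}{h}\big((Z_d^{v,\varepsilon})^p - c_1 (Z_d^{v,\varepsilon})^{p-1}-c_2\big)e_d - c_3\varepsilon K\sum e_i$, i.e. $Z^{v,\varepsilon}$ is a supersolution of a perturbed equation $\underline Z^{(h)}$ whose associated truncated potential $\underline S^{(h)}$ differs from $S$ by terms of size $O(hK^p)$ uniformly on the ball $B_{\mathcal{R}_M+1}$. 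Using the energy functional $\phi(t)=|Z^{v,\varepsilon}(t)|^2$ (or its last-coordinate analogue) one shows exactly as in Lemma~\ref{expestimate}(i)--(ii) that if $M$ is large enough and $h$ small enough then $\underline S^{(h)}(v)\le -M/2$ forces blow-up before time $\delta$; and here the Maximum Principle (Lemma~\ref{prinmax}) replaces the PDE comparison/"explodes only through one side" bookkeeping, guaranteeing that the solution cannot escape to $-\infty$ in the other coordinates before $Z_d$ blows up, so no analogue of the restriction to $\mathcal{D}_e^*$ is needed. Thus for $v$ in a sufficiently small ball around $U^u(\mathcal{T}_M^u)$ one has $\underline\tau^{(h),v}<\delta$, and $Z^{v,\varepsilon}$ explodes before $\underline Z^{(h),v}$ does provided $\varepsilon\sup_{[0,\delta]}|W|$ is small; hence
$$
\sup_{v\in B_\rho(U^u(\mathcal{T}_M^u))}P_v(\tau_\varepsilon>\delta)\le P\Big(\sup_{t\in[0,\delta]}|\varepsilon W(t)|>\kappa\Big)
$$
for a constant $\kappa=\kappa(M,\mathcal{R}_M,h)>0$, and the right-hand side is bounded by $e^{-C/\varepsilon^2}$ by a standard Gaussian tail (reflection principle) estimate, uniformly in $u\in\mathcal{K}$ since $\rho$, $M$, $\mathcal{R}_M$, $h$ were chosen depending only on $\mathcal{K}$ and $\delta$.

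Combining the two bounds and taking $\rho$ as dictated by the comparison argument yields $\sup_{u\in\mathcal{K}}P_u(\tau_\varepsilon>\tau_0+\delta)\le e^{-C/\varepsilon^2}$; together with the lower bound (proved verbatim as in Proposition~\ref{convergenciainferior0}, again using \eqref{grandes1} and that $U^u$ stays bounded on $[0,\tau_0-\delta]$) this gives Theorem~II. The main obstacle, as in Chapter~2, is the quantitative comparison step: one must choose $M$ large enough that the energy inequality $\dot\phi\ge 2M+c\,\phi^{(p+1)/2}$ (or its last-coordinate one-dimensional version $\dot Z_d\ge -\alpha_1 M+\alpha_2 Z_d+\alpha_3 Z_d^p$ as in Section~\ref{section4}) forces blow-up strictly before time $\delta$, uniformly in the perturbation parameters $h$ and $\varepsilon K$, and simultaneously small enough $h$ that the $O(hK^p)$ discrepancy between $\underline S^{(h)}$ and $S$ does not spoil $\underline S^{(h)}(v)\le -M/2$; keeping all these constants independent of $u\in\mathcal{K}$ is what makes the uniformity work, and it is exactly there that finiteness of $\mathcal{T}_M$ and $\mathcal{R}_M$ on the compact $\mathcal{K}$ is used.
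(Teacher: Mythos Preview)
Your outline is workable but follows a genuinely different route from the paper. You transplant the Chapter~2 machinery (potential level sets $\{S\le -M\}$, the perturbed family $\underline Z^{(h)}$, and the energy estimate of Lemma~\ref{expestimate}) to the ODE; this requires the finite-dimensional analogue of Proposition~\ref{caract} and an adapted blow-up criterion. The paper instead bypasses the potential entirely: it compares $U^u$ with the linear solution $Y^u$ of $\dot Y=-(AY+\tfrac2h Y_de_d)$ to get a uniform lower bound $\rho_{\mathcal K}+1$ on \emph{every} coordinate of $U^u$ up to $\tau_0^u$, then uses Lemma~\ref{prinmax} only to conclude that $U^u$ enters the explicit region $\Theta^{M+1}_{\rho_{\mathcal K}+1}=\{y_k\ge\rho_{\mathcal K}+1,\ y_d\ge M+1\}$ at a uniformly bounded time $T_u\le T_{\mathcal K}$. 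For the stochastic half it subtracts the OU process $Y^{y,\varepsilon}$ (rather than $\varepsilon W$), obtains $U^{y,\varepsilon}\ge Y^{y,\varepsilon}$ by the comparison principle, and on the event $\{|Y^{y,\varepsilon}-Y^y|\le1,\ |\varepsilon W|\le1\}$ reads off $U^{y,\varepsilon}_{d-1}\ge\rho_{\mathcal K}-1$ directly; this feeds an explosive integral inequality for $U_d^{y,\varepsilon}$ alone. One caution for your plan: the full energy inequality $\dot\phi\ge 2M+c\,\phi^{(p+1)/2}$ of Lemma~\ref{expestimate} does \emph{not} carry over, because the nonlinearity sits only in the $d$-th coordinate---the computation yields $\dot\phi\ge -4S(Z)+c(Z_d^+)^{p+1}$ and $(Z_d^+)^{p+1}$ cannot control $|Z|^{p+1}$. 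Your fallback to the last-coordinate inequality is the right move, but it needs a lower bound on $Z_{d-1}$ along the trajectory; Lemma~\ref{prinmax} is stated for \eqref{1.1}, gives only $|U_k|\le\max(\|U(0)\|_\infty,\max_s U_d(s))$ (an upper bound that degrades as $U_d$ grows), and does not immediately transfer to the perturbed $\underline Z^{(h)}$. The OU comparison the paper uses supplies this lower bound for free, and that is the real simplification the finite-dimensional structure affords over the PDE case.
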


\begin{proof} Given $u \in \mathcal{K}$, let $Y^{u}$ be the solution to the ordinary differential equation
$$
\dot{Y}^{u}=-\Big(AY^{u} + \frac{2}{h} Y^{u,\,\varepsilon}_d e_d\Big)
$$ with initial datum $u$. Let us notice that we have $U^{u} \geq Y^{u}$ for as long as $U^{u}$ is defined by \mbox{the comparison principle.} Now, since $Y^{u}$ is
the solution to a linear system of ordinary differential equations whose associated matrix is symmetric and negative definite, we get that there exists
$\rho_{\mathcal{K}} \in \R$ such that for all $u \in \mathcal{K}$ every coordinate of $U^{u}$ remains bounded from below by $\rho_{\mathcal{K}} + 1$ up until $\tau^u_0$.
Thus, if for $\rho \in \R$ and $M > 0$ we write
$$
\Theta_{\rho}^M:=\{ y \in \R^d : y_k \geq \rho \,\,\text{for all}\,\,0\leq k \leq d-1\,,\, y_d \geq M \}
$$ then by Lemma \ref{prinmax} we conclude that $T_u:=
\inf \{ t \geq 0 : U^{u}_t \in \Theta_{\rho_{\mathcal{K}}+1}^{M+1}\}$ is finite.
Furthermore, since $U^{M+2,\,u}$ agrees with $U^{u}$ until the escape from $\Pi^{M+2}$, we obtain the expression
$$
T_u= \inf \{ t \geq 0 : U^{M+2,\,u}_t \in \Theta_{\rho_{\mathcal{K}+1}}^{M+1}\}.
$$ Taking $T_{\mathcal{K}}:=\sup_{u \in \mathcal{K}} T_u <+\infty$ we may compute
\begin{align*}
P_u\big(\tau_{\varepsilon}(\Theta_{\rho_{\mathcal{K}}}^M) > T_u\big) &\leq  P_u\big( \pi^{M+2}_\varepsilon \wedge \tau_{\varepsilon}(\Theta_{\rho_{\mathcal{K}}}^M) > T_u \big) + P_u\big(\pi^{M+2}_\varepsilon \leq
T_u\,,\,\tau_{\varepsilon}(\Theta_{\rho_{\mathcal{K}}}^M) > T_u \big) \\
\\
& \leq 2 \sup_{v \in \R^d} P_v \Big(\sup_{0\le t \le T_{\mathcal{K}}} |U^{M+2,\,\varepsilon}(t) - U^{M+2}(t)| > 1 \Big).
\end{align*}
On the other hand, by the strong Markov property for $U^{u,\varepsilon}$ we get
\begin{equation}\label{cotaexpfinito}
P_u \big( \tau_\varepsilon > \tau_0 + \delta \big) \leq P_u \big(
\tau_\varepsilon > T_u + \delta \big) \leq \sup_{y \in
\Theta_{\rho_{\mathcal{K}}}^M} P_y ( \tau_\varepsilon > \delta) + \sup_{u \in
\mathcal{K}} P_u\big(\tau_{\varepsilon}(\Theta_{\rho_{\mathcal{K}}}^M) >
T_u\big).
\end{equation}
Thus, by \eqref{grandes1} and the previous computation, in order to finish the proof it will suffice to show that the first term on the right hand side of \eqref{cotaexpfinito} tends to zero exponentially fast in $\frac{1}{\varepsilon^2}$ as $\varepsilon \rightarrow 0$ for an adequate choice of $M$. To see this we consider for $\varepsilon > 0$ and $y \in \Theta_{\rho_{\mathcal{K}}}^M$
the processes $Y^{y,\,\varepsilon}$ and $Z^{y,\,\varepsilon}$ defined by
$$
dY^{y,\,\varepsilon}=-\Big(AY^{y,\,\varepsilon} + \frac{2}{h}
Y^{y,\,\varepsilon}_d e_d\Big)\,dt + \varepsilon dW
$$
and $Z^{y,\,\varepsilon}:= U^{y,\,\varepsilon} - Y^{y,\,\varepsilon}$,
respectively. Notice that $Y^{y,\,\varepsilon}$ is globally defined and thus that both $U^{y,\,\varepsilon}$ and $Z^{y,\,\varepsilon}$ have the same explosion
time. Furthermore, $Z^{y,\,\varepsilon}$ is the solution of the random differential equation
$$
dZ^{y,\,\varepsilon}=-\Big(AZ^{y,\,\varepsilon} + \frac{2}{h}\Big(
\Big[\big(U_d^{y,\ve}\big)^{+}\Big]^p - Z^{y,\,\varepsilon}_d\Big)e_d\Big)\,dt.
$$
The continuity of trajectories allows us to use the Fundamental Theorem of
Calculus to show that almost surely $Z^{y,\,\varepsilon}(\omega)$ is a solution
to the ordinary differential equation
\begin{equation}\label{rde1}
\dot{Z}^{y,\,\varepsilon}(t)(\omega) = -AZ^{y,\,\varepsilon}(\omega) +
\frac{2}{h}\Big( \Big[\big(U_d^{y,\ve}\big)^{+}\Big]^p(\omega) -
Z^{y,\,\varepsilon}_d(\omega)\Big)e_d.
\end{equation}
Then, for each $y \in \Theta_{\rho_{\mathcal{K}}}^M$ and $\varepsilon > 0$ let $\Omega^y_\varepsilon$ be a set of probability one in which \eqref{rde1} holds.
Notice that for every $\omega \in \Omega^y_\varepsilon$ we have the inequality
\begin{equation*}\label{rde2}
\dot{Z}^{y,\,\varepsilon}(\omega) \geq -AZ^{y,\,\varepsilon}(\omega) - \frac{2}{h}Z^{y,\,\varepsilon}_d(\omega)e_d.
\end{equation*}
By the comparison principle we conclude that $Z^{y,\,\varepsilon}(\omega)\geq 0 $ for every $\omega \in \Omega^y_\varepsilon$
and, therefore, that the inequality $U^{y,\,\varepsilon}(\omega) \geq
Y^{y,\,\varepsilon}(\omega)$ holds for as long as $U^{y,\,\varepsilon}(\omega)$
is defined.

For each $y \in \Theta_{\rho_{\mathcal{K}}}^M$ and $\varepsilon > 0$ let us
also consider the set
$$
\tilde{\Omega}^y_\varepsilon = \Big\{ \omega \in \Omega : \sup_{0\le t\le
\delta} | Y^{y,\,\varepsilon}(\omega,t)-  Y^{y}(\omega,t)| \leq 1 \,,\, \sup_{0
\leq t \leq \delta} |\varepsilon W(\omega,t)| \leq 1 \Big\}.
$$ Our goal is to show that if $M$ is appropriate then for each $y \in \Theta_{\rho_{\mathcal{K}}}^M$ \mbox{the trajectory $U^{y,\,\varepsilon}(\omega)$} explodes before time $\delta$ for all $\omega \in \Omega^y_\varepsilon \cap \tilde{\Omega}^y_\varepsilon$.
From this we get that
$$
\inf_{y \in \Theta_{\rho_{\mathcal{K}}}^M} P(\tilde{\Omega}^y_\varepsilon) =
\inf_{y \in \Theta_{\rho_{\mathcal{K}}}^M} P( \Omega^y_\varepsilon \cap
\tilde{\Omega}^y_\varepsilon) \leq \inf_{y \in \Theta_{\rho_{\mathcal{K}}}^M}
P_y (\tau_\varepsilon \leq \delta ).
$$ and so by \eqref{grandes1} we may conclude the result.

Hence, let us take $y \in  \Theta_{\rho_{\mathcal{K}}}^M$, $\omega \in
\Omega^y_\varepsilon \cap \tilde{\Omega}_\varepsilon$ and suppose that
$U^{y,\,\varepsilon}(\omega)$ is defined in $[0,\delta]$.
Notice that since $\omega \in \Omega^y_\varepsilon \cap
\tilde{\Omega}_\varepsilon$ then the $(d-1)$-th coordinate of
$Y^{y,\,\varepsilon}(\omega,t)$ is bounded from below by $\rho_{\mathcal{K}} -
1$ for all $t \in [0,\delta]$. By comparison we know that the $(d-1)$-th coordinate
of $U^{y,\,\varepsilon}_t(\omega,t)$ is bounded from below by
$\rho_{\mathcal{K}} - 1$ as well.
From here we deduce that the last coordinate of $U^{y,\varepsilon}(\omega)$
verifies the integral equation
\begin{equation*}
{U}^{y,\,\varepsilon}_d(\omega,t) \geq {U}^{y,\,\varepsilon}_d(\omega,s) + \int_s^t \frac{2}{h²}\,\Big(- U^{y,\,\varepsilon}_d(\omega,r) +\rho_{\mathcal{K}} -1  + hg\big( U^{y,\,\varepsilon}_d(\omega,r)\big) \Big)\,dr - 1
\end{equation*}for $s < t$ in the interval $[0,\delta]$. We can take $M \in \N$ sufficiently large so as to guarantee that there exists a constant $\alpha > 0$ such that for all $m\geq M$ we have
\begin{equation*}\label{lcotafea}
\frac{2}{h²}\,\big(-m +\rho_{\mathcal{K}} -1  + hg(m) \big) \geq \alpha m^p.
\end{equation*} If we recall that $U^{y,\,\varepsilon}_d(\omega,0) \geq M$ then our selection of $M$ implies that
\begin{equation*}
{U}^{y,\,\varepsilon}_d(\omega,t) \geq M-1 + \alpha\int_0^t \big( U^{y,\,\varepsilon}_d(\omega,u)\big)^p\,du
\end{equation*}
for every $t \in [0,\delta]$. But if this inequality holds and $M$ is sufficiently large, one can check that $U^{y,\,\varepsilon}(\omega)$ explodes before time
$\delta$, a fact which contradicts our assumptions. Therefore, if $y \in
\Theta_{\rho_{\mathcal{K}}}^M$ and $\omega \in \Omega^y_\varepsilon \cap
\tilde{\Omega}_\varepsilon$ then $U^{y,\,\varepsilon}(\omega)$ explodes before
time $\delta$, \mbox{which concludes the proof.}
\end{proof}

\section{Construction of an auxiliary domain}

In this final section we present the alternative construction of the auxiliary domain $G$. The reader will notice that the finite-dimensional environment plays an essential role in the construction. Despite this fact, we point out that the only other ingredient which is relevant in this alternative construction is the validity of an analogue of Theorem \ref{descomp2}, so that one may carry out the same construction in other systems with a similar description.

We wish to construct a bounded domain $G$ satisfying the following properties:
\begin{enumerate}
\item [i.] $G$ contains $\1$ and the origin.
\item [ii.] There exists $c > 0$ such that $B_c \subseteq G$ and for all
$u \in B_c$ the system $U^{u}$ is globally defined and tends to $0$ without escaping $G$.
\item [iii.] There exists a closed subset $\p^{\1}$ of the boundary $\partial G$ which satisfies:
\begin{enumerate}
\item [$\bullet$] $V(0,\partial G - \partial^{\1} ) >  V(0,\partial^{\1}) = V(0, \1)$.
\item [$\bullet$] $\p^{\1}$ is contained in $\mathcal{D}_e$ and at a positive distance from its boundary.
\end{enumerate}
\end{enumerate}

\newpage

The domain $G$ is constructed in the following manner:

Since $S: \R^d \rightarrow \R$ is continuous and $S(\1) > S(0)$, we may take $c>0$ such that $S(u)< S(\1)$ for $u \in B_c$. Then, for each $u\in \partial B_c$ consider the ray $R_u:=\{ \lambda u :
\lambda > 0\}$. Since the vector $\1$ is not tangent to $\W_\1^s$ at $\1$, we
may take a sufficiently small neighborhood $V$ of $c\cdot \1$ such that for every $u \in
V\cap \partial B_c$ the ray $R_u$ intersects $\W_\1^s \cap (\R_{> 0})^d$. For
such $V$ we may then define $\bar{\lambda}_u=\inf\{ \lambda > 0 : \lambda u \in
\W_\1^s\}$ for $u \in V\cap\partial B_c$. If we consider
$$
\eta:= \inf_{u \in \partial[V\cap\partial B_c]} \phi(\bar{\lambda}_u u) >
\phi(\1)
$$ where by $\partial[V\cap \partial B_c]$ we understand the boundary of $V\cap \partial B_c(0)$ as a $(d-1)$-dimensional manifold, then the fact that $S(U^u(t))$ is strictly decreasing allows us to shrink $V$ into a smaller neighborhood $V^*$ of $c\cdot \1$ such that $S(v)=\eta$ is satisfied for all $v \in \partial[V^*\cap \partial B_c]$. Let us also observe that since $\1$ is the only saddle point we can
take $V$ sufficiently small so as to guarantee that $\max\{ S(\lambda u) : \lambda > 0\} \geq \eta$ for all $u \in \partial B_c\setminus V^*$. Then if we take the level curve $C_\eta = \{ x \in \R^d : S(x) = \eta \}$ every ray
$R_u$ with $u \in \partial B_c\setminus V^*$ intersects $C_\eta$. With this we may define for each $u \in \partial B_c$
$$
\lambda_u^*= \left\{\begin{array}{ll}
\bar{\lambda}_u & \mbox{ if }\,\,u \in V^*\\
\\
\inf\{ \lambda > 0 : \lambda u \in C_\eta\} & \mbox{ if }\,\,u \in B_c(0)\setminus V^*
\end{array}
\right..
$$ Notice that the mapping $u \mapsto \lambda^*_u$ is continuous. Thus, if $\tilde{G}:=\{ \lambda u : 0 \leq \lambda < \lambda^*_u\,,\, u \in
\partial B_c\}$ then $\partial\tilde{G} =\{ \lambda^*_u u : u \in \partial
B_c(0)\}$. To finish the construction of our domain we must make a slight radial expansion of $\tilde{G}$, i.e. for $\alpha > 0$ consider $G$ defined by the
formula
$$
G:=\{\lambda u : 0 \leq \lambda < (1+\alpha)\lambda^*_u\,,\, u \in \partial B_c\}.
$$
Observe that the finite-dimensional analogue of Theorem \ref{descomp2} ensures that $G$ verifies (i). Since $\lambda^*_u > 1$ for all $u \in \partial B_c(0)$ then it
must also verify (ii). Furthermore, if we define $\partial^\1:=\{(1+\alpha)\lambda^*(u) u : u \in \overline{V^*}\}$ then $\partial^\1$ is closed and contained in $\mathcal{D}_e$. By taking $\alpha > 0$ sufficiently small, the continuity of $S$ implies that (iii) holds as well.
See Figure 6.2.

\psfrag{0}{$0$}
\psfrag{1}{\vspace{-45pt}$\1$}
\psfrag{W1s}{$\W_\1^s$}
\psfrag{phieta}{$C_\eta$}
\psfrag{Wu}{$\mathcal W_1^s$}
\psfrag{Ws}{$\mathcal W_1^s$}
\begin{figure}
\begin{center}
\includegraphics[width=7cm]{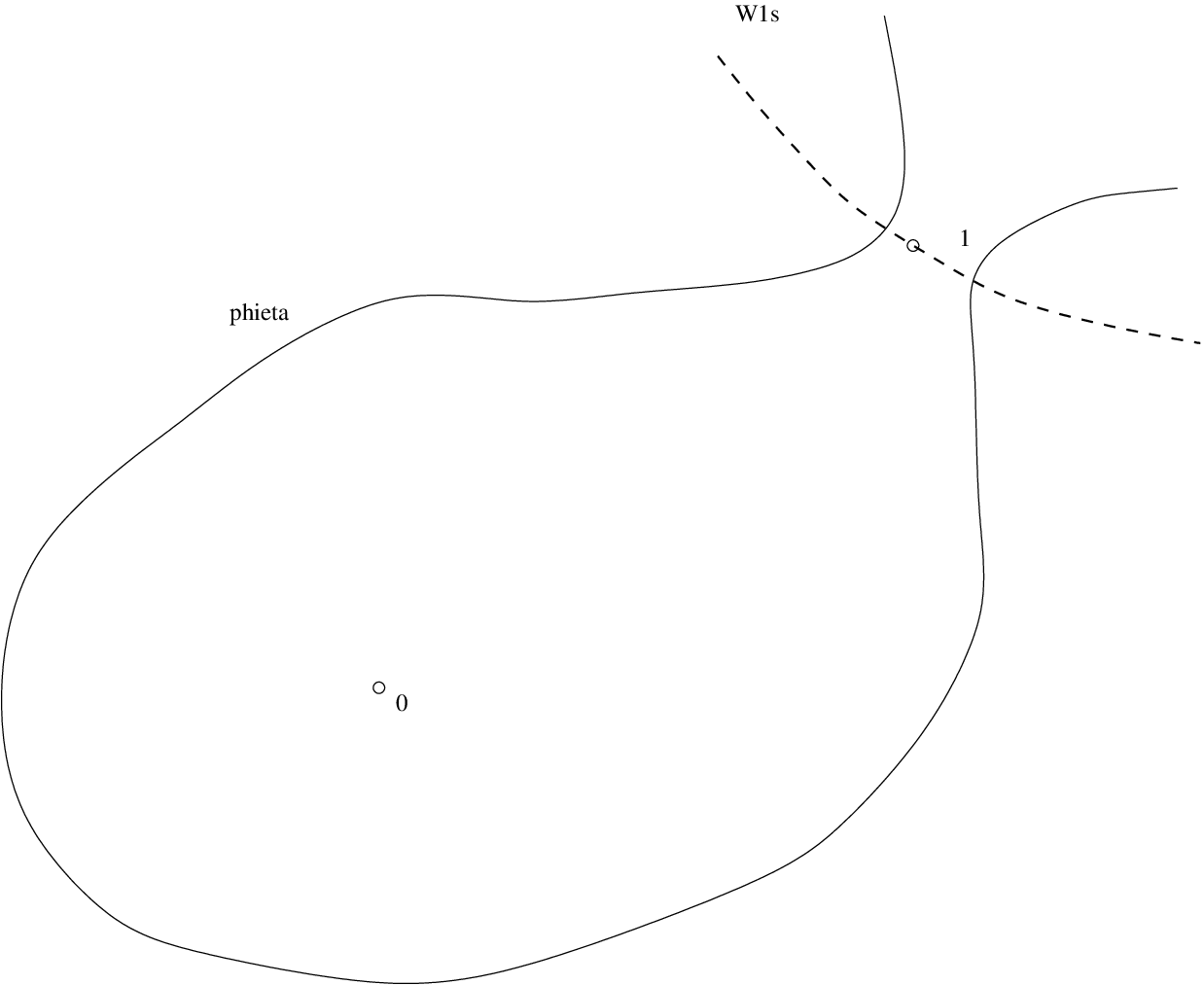}
\end{center}
\label{fig:gaux}
\caption{The level curve $C_\eta$ and the stable manifold of $\1$.}
\end{figure}

\newpage
\section{Resumen del Capítulo 6}

Estudiamos las propiedades asintóticas del tiempo de explosión para perturbaciones aleatorias por ruido blanco aditivo de la ecuación diferencial ordinaria
$$
\left\{\begin{array}{lcll}
U'_1 &= &\frac{2}{h^2} ( -U_1 + U_2 ),\\
U'_i &= &\frac{1}{h^2} ( U_{i+1} - 2U_i + U_{i-1} ) &\,\,\, 2 \leq i \leq d-1,\\
U'_d &= &\frac{2}{h^2} ( -U_d + U_{d-1} +hg(U_d) )\\
U(0) &= &u
\end{array}\right.
$$ donde $h > 0$ es un parámetro fijo y $g : \R \rightarrow \R$ es un término de reacción dado por $g(x) = (x^+)^p -x$ para $p>1$. Este tipo de sistemas surge como discretizaciones espaciales de ecuaciones de difusión con condiciones de frontera no lineales de Neumann. De hecho, puede probarse que cuando $h\to 0$ las soluciones de este sistema convergen a las solución de la EDP
$$
\left \{\begin{array}{rcll}
\p_t U(t,x) & = & \p^2_{xx}U(t,x) & 0<x<1, 0\le t<T,\\
\p_x U(t,0) & = & 0 & 0\le t <T,\\
\p_x U(t,1) &= & g(U(t,1)) & 0\le t <T,\\
U(0,x) & = & U_0(x) & 0\le x \le 1.
\end{array} \right.
$$

La ecuación diferencial ordinaria puede escribirse como en forma matricial como
$$
dU = - \nabla S,
$$ donde $S$ viene dado por
$$
S(u) = \frac{1}{2} \langle Au , u \rangle - \frac{2}{h}\Big(\frac{\;\;\;|u_d^+|^{p+1}}{p+1} - \frac{\,\,{u_d}^2}{2}\Big).
$$ para cierta matriz $A \in \R^{d \times d}$ definida positiva. Este potencial $S$ satisface las mismas propiedades que su análogo infinito-dimensional en \eqref{formalPDE}.
Tiene exactamente dos puntos críticos, $\1:=(1,\dots,1)$ y el origen, ambos ellos hiperbólicos. El origen $\mathbf{0}$ es el único mínimo local de $S$ mientras que $\1$ es un punto de ensilladura. Más aún, se tiene una descomposición de $\R^d$ análoga a la de \eqref{decomp12}.

Las perturbaciones estocásticas que vamos a considerar son de la forma
$$
dU^{\ve} = - \nabla S dt + \ve dW
$$
par $\ve>0$ pequeño y donde $W=(W_1, \dots,W_d)$ es un movimiento Browniano $d$-dimensional estándar. La solución $U^\varepsilon$ de esta EDOE conserva las propiedades de la solución de \eqref{MainSPDE}.

Los resultados que podemos probar en este contexto son esencialmente los mismos que para la EDP \eqref{MainPDE}, con la excepción de que la restricción $p < 5$ desaparece en este contexto puesto que la geometría del potencial $S$ puede manejarse con mayor facilidad al estar definido sobre un espacio finito-dimensional como lo es $\R^d$ (el potencial de \eqref{MainPDE} estaba definido $C_D([0,1])$, un espacio infinito-dimensional). Por otro lado, la elección particular del término $g$ (con término no lineal acotado inferiormente) nos permite probar además que el fenómeno de blow-up se hace presente casi seguramente.

\noindent \textbf{Teorema}. Para cualquier $u \in \R^d$ y $\varepsilon > 0$ tenemos $P(\tau^u_\ve < +\infty) = 1$.

La idea de la demostración de este resultado consiste en mostrar que, condicionado a no explotar, el sistema estocástico alcanza inexorablemente una región particular del espacio en donde uno puede probar que el fenómeno de blow-up ocurre con probabilidad total. A partir de esto se deduce inmediatamente que la ausencia de blow-up debe darse con probabilidad nula. Mostramos que el proceso alcanza esta región particular mediante comparación con un proceso de Ornstein-Ühlenbeck adecuado, mientras que la explosión casi segura ocurre en esa región se obtiene mediante técnicas de ecuaciones similares a las empleadas durante el Capítulo 2.

Con respecto a los resultados restantes, la demostración de los mismos sigue muy de cerca las ideas presentadas en los capítulos anteriores para el problema infinito-dimensional (con quizás menos dificultades técnicas). Incluimos en este capítulo únicamente las partes del análisis que difieren de aquellas dadas en el marco anterior. Estas aparecen en la demostración de lo que sería el Teorema I en este contexto y en la construcción del dominio auxiliar $G$. Con respecto al Teorema I, el potencial en este contexto tiene una estructura ligeramente diferente al infinito-dimensional, lo cual no nos permite adaptar por completo las ideas desarrolladas en el Capítulo 2 y nos obliga por lo tanto a introducir algunas variantes de las técnicas de ecuaciones empleadas durante éste. Finalmente, para la construcción de $G$ explotamos el marco finito-dimensional de este nuevo problema para proponer una nueva construcción que no impone la restricción $p < 5$. A partir de aquí, pueden obtenerse los resultados restantes para todo $p > 1$.

\part{The Fernández-Ferrari-Garcia dynamics on diluted models}

\chapter*{Introducción a la Parte II}

La mecánica estadística del equilibrio intenta explicar el comportamiento macroscópico de sistemas en equilibrio térmico en términos de la interacción microscópica entre su gran número de constituyentes.
Como un ejemplo típico, uno podría tomar un material ferromagnético como el hierro: sus constituyentes son entonces los spins de los imanes elementales en los sitios de un cierto reticulado de cristal.
O también podemos pensar en una aproximación discreta de un gas real, en cuyo caso los constituyentes son los números de partículas en las celdas elementales de cualquier partición del espacio. El objeto central en cualquiera de estos sistemas es el Hamiltoniano que describe la interacción entre los constituyentes. Éste determina la energía relativa entre configuraciones que difieren únicamente microscópicamente. Los estados de equilibrio con respecto a la interacción dada son descritos por las medidas de Gibbs asociadas. Éstas son medidas de probabilidad en el espacio de configuraciones con probabilidades condicionales dadas respecto a configuraciones fijas fuera de regiones acotadas. Dichas probabilidades condicionales son determinadas por el factor de Boltzmann: la exponencial de la temperatura inversa multiplicada por la energía relativa. Esto le permite a uno calcular, al menos en principio, esperanzas en equilibrio y funciones de correlación espacial siguiendo el formalismo de Gibbs estándar. El conjunto de medidas de Gibbs para una interacción dada es un simplex cuyos vértices llamamos medidas de Gibbs extremales.
Estas medidas extremales son de mayor importancia puesto que describen los posibles macroestados (o fases de equilibrio) de \mbox{nues}tro sistema físico. En un estado tal, las observables macroscópicas no fluctúan mientras que la correlación entre observaciones locales hechas a larga distancia entre ellas decaen a cero. Un aspecto muy importante en el estudio de cualquier sistema de la mecánica estadística del equilibrio es determinar cuando existe más de un posible macroestado para el sistema, un fenómeno conocido como transición de fase. Por la estructura de simplex del conjunto de medidas de Gibbs, la ocurrencia de transición de fase para un sistema dado es equivalente a la existencia de medidas de Gibbs múltiples (no necesariamente extremales).

Uno de los modelos más famosos y mejor entendidos de la mecánica estadística es el modelo de Ising ferromagnético estándar en el reticulado $\Z^2$. En este modelo, sobre cada sitio del reticulado se tiene una variable de spin que toma puede tomar solamente los valores $+$ y $-$. La interacción es entre vecinos más cercanos y tiende a alinear spins vecinos en la misma dirección. Mediante los argumentos ingeniosos formulados en primera instancia por Peierls en 1936,
la transición de fase en este modelo puede entenderse a través de la inspección de configuraciones típicas de contornos, i.e. líneas quebradas que separan los dominios con spins $+$ y $-$, respectivamente.
Para temperaturas bajas, existe una fase $+$ que es realizada por un mar infinito de spins $+$ con islas finitas de spins $-$ (que a su vez pueden contener lagos de spins $+$, y así sucesivamente). En términos de contornos,
este panorama equivale a que haya únicamente finitos contornos rodeando cada sitio del reticulado. También se tiene una fase $-$ que verifica la descripción simétrica. Por otro lado, por encima de cierta temperatura crítica no existe ningún camino infinito que una vecinos más cercanos con el mismo spin. Por lo tanto, para este modelo la estructura geométrica de las configuraciones típicas está bien entendida (ver \cite{PS,G} por ejemplo).
En general, sin embargo, se sabe mucho menos, y mucho menos es cierto. Aún así, ciertos aspectos de este análisis geométrico tienen amplias aplicaciones, al menos en ciertos regímenes del diagrama de fases. Estos ``ciertos regímenes'' son, por un lado, el régimen de alta temperatura (o, en el contexto de gases, baja densidad) y, al otro extremo, el comportamiento a baja temperatura (o altas densidades).

A temperaturas altas o baja densidad, todas las consideraciones termodinámicas están basadas en el hecho de que la entropía domina sobre la energía. Esto es, la interacción entre los \mbox{cons}tituyentes no es lo suficientemente efectiva
para forzar un ordenamiento macroscópico del sistema. Como resultado, los constituyentes se comportan aproximadamente al azar, no muy influenciados por otros constituyentes que se encuentran lejos.
Así, el comportamiento del sistema es casi el de un sistema libre con componentes independientes. Esto significa, en particular, que en el centro de una caja grande típicamente vamos a encontrar aproximadamente las mismas configuraciones sin importar qué condiciones de frontera sean impuestas fuera de dicha caja. A bajas temperaturas o densidades grandes (cuando la interacción es suficientemente fuerte), el panorama de arriba ya no es válido.
En realidad, las características específicas de la interacción entrarán en juego y determinarán las cualidades específicas de los macroestados. En muchos casos, el comportamiento a baja temperatura
puede ser descrito como una perturbación aleatoria de un estado fundamental, i.e. una configuración fija de energía mínima. Luego, a bajas temperaturas podemos esperar que las fases de equilibrio se realicen como una \mbox{configuración} de estado fundamental determinística, perturbada por finitas islas aleatorias en donde la configuración difiere con dicho estado fundamental. Esto significa que el patrón del estado fundamental puede percolar a través del espacio hasta el finito. Una manera prominente de confirmar este panorama general es provista por la llamada teoría de Pirogov-Sinai, descrita en detalle en \cite{Z2}.

Esencialmente, esta teoría introduce en primer lugar una noción generalizada de contornos que puede ser utilizada por una amplia gama de sistemas y luego da condiciones que garantizan cuando existen solamente finitos de estos contornos alrededor de cada sitio del reticulado. Cuando esto suceda, un panorama similar al del modelo de Ising puede obtenerse. Más allá de lo poderosa que sea como herramienta, una de las desventajas de la teoría de Pirogov-Sinai es que la mayoría de sus aplicaciones se apoyan fuertemente en la convergencia absoluta de ciertas expansiones, llamadas expansiones en aglomerados, y esta convergencia muchas veces depende de resultados combinatorios profundos. Así, la teoría de Pirogov-Sinai constituye en realidad (al menos hasta cierto grado) un enfoque más combinatorio que probabilístico para entender las fases de equilibrio de un sistema físico dado. Como el problema matemático mismo se encuentra formulado dentro de un marco probabilístico, uno puede ver que este enfoque quizás no sea el más natural posible. Más aún, debido a la absoluta convergencia de las expansiones involucradas, uno obtiene gratuitamente la analiticidad de las funciones de correlación con respecto a los parámetros del modelo (como lo son la temperatura inversa o la densidad de partículas). Aunque la analiticidad es una buena propiedad para tener, es también un síntoma de que este enfoque es quizás demasiado fuerte y no óptimo desde el punto de vista probabilístico.

Como una alternativa, en \cite{FFG1} los autores proveen un enfoque nuevo al estudio de este tipo de sistemas, uno que es puramente probabilístico. En lugar de depender de las expansiones en aglomerados para probar que existe una medida de equilibrio que satisface el panorama de mar con islas descrito arriba, ellos realizan esta medida como la distribución estacionaria de una red de pérdida que puede ser estudiada utilizando herramientas estándar y nociones de modelos probabilísticos y procesos. En este contexto, la existencia de la medida de equilibrio está relacionada con la ausencia de percolación en un proceso de percolación orientada. Más aún, muestran que la dinámica converge exponencialmente rápido a la medida buscada, de manera que este enfoque es también valioso para propósitos de simulación. Para ser precisos, en su trabajo los autores consideran únicamente el modelo de contornos de Peierls para la interacción de Ising a baja temperatura, mientras que en \cite{FFG2} discuten como algunas de estas ideas pueden ser extendidas a otros modelos.

En esta segunda parte de la tesis introducimos una familia general de sistemas, la clase de los \textit{modelos diluidos}, y mostramos que los resultados principales en \cite{FFG1} pueden extenderse a esta familia más amplia.
El marco de modelos diluidos encaja perfectamente con el rango de aplicabilidad de este nuevo enfoque: los modelos diluidos son, quizás, la familia más amplia de modelos a la cual la dinámica presentada en \cite{FFG1} pueda aplicarse. Este marco incluye tanto modelos discretos como continuos de manera unificada, pero es lo suficientemente concreto como para que aún sea posible obtener un criterio general para la existencia de la medida de equilibrio. Concretamente, en esta segunda parte vamos a desarrollar el siguiente plan:

\begin{enumerate}
\item [i.] Introducir la familia de modelos diluidos y mostrar que para cualquier elemento en esta familia podemos definir una dinámica con las características mostradas en \cite{FFG1}.
\item [ii.] Utilizar la construcción de la dinámica para obtener un criterio general para la unicidad de medidas de Gibbs en modelos diluidos y estudiar propiedades de este único equilibrio, como la propiedad de mixing exponencial.
\item [iii.] Mostrar que la ausencia de percolación en el proceso de percolación orientada implica la continuidad de las funciones de correlación con respecto a los parámetros del modelo.
\item [iv.] Explotar el marco general de los modelos diluidos y las características de sus dinámicas asociadas para mostrar que, bajo condiciones adecuadas, las medidas de equilibrio de sistemas discretos convergen, cuando son apropiadamente escaladas, a la medida de equilibrio de sistemas continuos en el régimen de alta temperatura o baja densidad.
\item [v.] Combinar las ideas y resultados previos con el marco de la teoría de Pirogov-Sinai para obtener algunos resultados fuera del rango de convergencia de las expansiones en \mbox{aglomerados.}
\item [vi.] Combinar las ideas y resultados previos con el marco de la teoría de Pirogov-Sinai para obtener un algoritmo de simulación perfecta para una clase medianamente grande de medidas de equilibrio en el régimen de baja temperatura o alta densidad.
\end{enumerate}

La Parte II está organizada de la siguiente manera. En el Capítulo 7 proveemos del marco teórico en donde se definen los modelos diluidos. El Capítulo 8 se enfoca en la definición de modelos diluidos y alguna de sus propiedades básicas. En este capítulo también adaptamos algunas nociones elementales de la mecánica estadística a este marco de trabajo. El capítulo siguiente esta destinado a mostrar lo restante de (i) y (ii) en el plan de arriba.
Los items (iii) y (iv) son establecidos en los Capítulos 10 y 11, respectivamente. Finalmente, los items (v) y (vi) se establecen en el Capítulo 12.

\chapter*{Introduction to Part II}

The purpose of equilibrium statistical mechanics is to describe macroscopic behavior of systems in thermal equilibrium in terms of the microscopic interactions among the great number of elements which constitute them. 
The most common example is the one of some ferromagnetic material like iron: its elements are then the spins of elementary magnets situated at the various sites of a given crystal lattice. Or we may also consider a lattice approximation to a real gas, in which case the elements are the particle numbers inside each of the cells of a given partition of space.
In any of these systems, the central object is the Hamiltonian which describes the microscopic interaction between its elements by determining energy between configurations which differ only microscopically. 
The equilibrium states with respect to the given interaction are specified by the so called Gibbs measures associated to the model.
These are probability measures on the space of configurations with given conditional probabilities relative to fixed configurations outside of bounded regions. These conditional probabilities are given by the Boltzmann factor, i.e. the exponential of the inverse temperature times the relative energy of the configuration. The set of Gibbs measures for a given interaction is a simplex whose vertices we call extremal Gibbs measures.
These are most important since they describe the possible macrostates (or equilibrium phases) of our physical system. In such a state, we have that macroscopic observables do not fluctuate and also that the correlation between local observations made far apart from each other decays to zero. A very important aspect in the study of any system in equilibrium statistical mechanics is determining whether there exists more than one possible macrostate for the system, a phenomenon known as phase transition. By the simplex structure of the set of Gibbs measures, the occurrence of phase transition for a given system is equivalent to the existence of multiple (not necessarily extremal) Gibbs measures.

One of the most famous and better understood models in statistical mechanics is the standard ferromagnetic Ising model on the square lattice. In this model, at each site of the lattice we have a spin variable which can take only two values, $+$ or $-$. The interaction is of nearest-neighbor and tends to align neighboring spins in the same direction. In 1936, Peierls showed that the phase
transition in this model can be understood by looking at the typical configurations of contours: finite circuits separating the domains with plus and minus spins, respectively.
For low temperatures, there exists a plus phase which is realized by an infinite sea of plus spins with finite islands of minus spins (which may further contain lakes of plus spins, and so on). In terms of contours, this picture corresponds to there being only finitely many contours surrounding any site in the lattice. One also has a minus phase verifying the symmetric description. On the other hand, above a certain critical temperature there is no infinite path joining nearest neighbors with the same spin value. Thus, for this model the geometric structure of typical configurations is well understood (see \cite{PS,G} for example). In general, however, much less is known and a similar description may not always hold. Still, certain aspects of the geometric analysis performed by Peierls have wide applications, at least in certain regimes of the phase diagram. These regimes are, on the one hand, the high-temperature (or low-density of particles in a lattice gas setting) regime and, on the other hand, the low-temperature behavior (or high-density of particles).

At high temperatures, the behavior is dictated by the fact that entropy dominates over energy. That is, the interaction between the elements of our system is not
strong enough to enforce a macroscopic ordering of it. As a consequence, the elements of our system behave more or less at random, without being much influenced by other elements which are far apart.
Thus, the behavior of the system is almost like that of a free system with independent components.
In particular, we have that deep inside a large region we will typically encounter more or less the same configurations no matter which boundary conditions are imposed outside this region.
However, at low temperatures (i.e. when the interaction is strong enough),
the scenario described above no longer holds. Instead, the particular characteristics of the interaction
will come into play and determine the specific features of the low temperature macrostates. In
many cases, the low temperature behavior can be seen as a random perturbation of
a ground state: a fixed configuration having minimal energy. Therefore, one expects, for sufficiently low temperatures, the equilibrium phases to be obtained as a deterministic ground state configuration perturbed by finite
random islands on which the configuration disagrees with the corresponding ground state. In particular, the configuration pattern imposed by the ground state percolates in space to infinity. One way in which to show this general picture is provided by Pirogov-Sinai theory, described in detail in \cite{Z2}.

Essentially, this theory first introduces a generalized notion of contours which can be used for a wide range of systems and then gives conditions which guarantee when are there only finitely many of such contours surrounding each site in the lattice. Whenever this is the case, a similar picture to the one for the Ising model can be obtained. As powerful a tool as it may be, one of the disadvantages of Pirogov-Sinai theory is that most of its applications rely heavily on the absolute convergence of certain expansions, known as the cluster expansions, and this convergence often depends on deep combinatorial results. Thus, in fact Pirogov-Sinai theory constitutes (at least to some degree) more of a combinatorial approach to understanding the equilibrium phases of a given physical system rather than a probabilistic one. Since the mathematical problem itself is posed within the probabilistic framework, one can see that this approach is perhaps not the most natural one to have. Furthermore, due to the absolute
convergence of the expansions involved, one obtains for free the analyticity of
correlation functions with respect to the parameters in the model (e.g. inverse temperature or density of particles). Though analyticity is a very nice property to have, it is also a symptom that this approach is perhaps too strong and not optimal from the probabilistic point of view.

As an alternative, in \cite{FFG1} the authors provide a fresh new approach to study this type of systems, one which is purely probabilistic. Instead of relying on cluster expansions to prove that there exists an equilibrium measure satisfying the sea with islands picture described above, they realize this measure as the stationary distribution of a loss network dynamics that can be studied using standard tools and notions from probabilistic models and processes. In this context, the existence of the equilibrium measure is related to the absence of percolation in an oriented percolation process. Furthermore, they show that the dynamics converges exponentially fast to the desired measure, so that this approach is also valuable for simulation purposes. Strictly speaking, in their work the authors only consider the model of Peierls contours for the Ising interaction at low temperature (i.e. low density of contours), while in \cite{FFG2} they discuss how some of these ideas can be extended to other models in the low density regime.

In this second part of the thesis we introduce a general family of systems called \mbox{\textit{diluted models},} and show that the main results obtained in \cite{FFG1} can be extended to this broader family.
The framework of diluted models perfectly fits the range of applicability of this new approach: diluted models are, perhaps, the broadest family of models for which the dynamics presented in \cite{FFG1} may be applied. This framework covers both discrete and continuum systems in an unified way, while remaining concise enough so that a general criterion for the existence of the equilibrium measure can still be obtained. Concretely, in this second part of the thesis we carry out the following plan:

\begin{enumerate}
\item [i.] Introduce the family of diluted models and show that for any element in this family we can define a dynamics with the characteristics shown in \cite{FFG1}.
\item [ii.] Use the construction of the dynamics to obtain a general criterion for uniqueness of Gibbs measures in diluted models and study properties of this unique equilibrium, such as exponential mixing.
\item [iii.] Show that the absence of percolation in the oriented percolation process implies the continuity of correlation functions with respect to the parameters of the model.
\item [iv.] Exploit the general framework of diluted models and the characteristics of their associated dynamics to show that, under suitable conditions, equilibrium measures of discrete systems converge, when properly rescaled, to equilibrium measures of continuum systems in the high temperature (or low density) regime.
\item [v.] Combine the previous ideas and results with the framework of Pirogov-Sinai theory to obtain results outside the range of convergence of cluster expansions.
\item [vi.] Combine the previous ideas and results with the framework of Pirogov-Sinai theory to obtain a perfect simulation algorithm for a large class of equilibrium measures in the low temperature (or high density) regime. This extends the previous results in \cite{FFG1} obtained for the high temperature (or low density) regime.
\end{enumerate}

Part II is organized as follows. In Chapter 7 we provide the theoretical setting in which diluted models are defined. Chapter 8 focuses on the definition of diluted models and some of their basic properties. In this chapter we also adapt some elementary notions from statistical mechanics to this framework. The following chapter is devoted to showing the remainder of (i) and (ii) in the plan above.
Items (iii) and (iv) are established in Chapters 10 and 11, respectively. Finally, items (v) and (vi) are settled in Chapter 12.

\chapter{Preliminaries}

Since we intend the class of diluted models to include discrete and continuum models, we are interested in adopting a general framework which a priori makes no distinction between both types of systems.
The correct framework is that of particle configurations, which we introduce now. We refer to \cite{K,DVJ2,DVJ1} for further details.

\section{Particle configurations}

Throughout this second part we fix two locally compact complete separable metric spaces: the \textit{allocation space} or \textit{lattice} $(S,d_S)$ and the \textit{animal set} or \textit{spin set} $(G,d_G)$. Typical examples of allocation spaces include $S=\Z^d$ or $S=\R^d$ for some $d \in \N$, whereas the spin set can range from finite sets such as $G=\{+,-\}$, to uncountable sets such as $G=S^{d-1}$.
The product space $S \times G$ is also a locally compact complete separable metric space if endowed with the product metric $d_S + d_G$. For convenience, we shall denote an element $(x,\gamma) \in S \times G$ simply by $\gamma_x$, which we interpret as an animal $\gamma$ positioned at location $x$.

\begin{notat} Given a metric space $(X,d)$ we write:
\begin{itemize}
\item [$\bullet$] $\B_X$ for the class of all Borel subsets of $X$.
\item [$\bullet$] $\B^0_X$ for the class of all Borel subsets of $X$ with compact closure.
\end{itemize}
\end{notat}

\begin{defi}Let $\xi$ be a measure on $(S\times G, \B_{S \times G})$.
\begin{itemize}
\item [$\bullet$] $\xi$ is said to be a \textit{Radon measure} if $\xi(B) < +\infty$ for every $B \in \B^0_{S \times G}$.
\item [$\bullet$] $\xi$ is said to be a \textit{particle configuration} if $\xi(B) \in \N_0$ for every $B \in \B^0_{S \times G}$.
\end{itemize}
\end{defi}

\begin{prop} A Radon measure $\xi$ on $(S\times G, \B_{S \times G})$ is a particle configuration if and only if there exist a countable set $Q_\xi \subseteq S \times G$ and $m_\xi: Q_\xi \to \N$ such that
\begin{equation}\label{standardrep}
\xi = \sum_{\gamma_x \in Q_\xi} m_\xi(\gamma_x) \delta_{\gamma_x}
\end{equation} with $\delta_{\gamma_x}$ being the Dirac measure centered at $\gamma_x$. We call \eqref{standardrep} the \textit{standard representation} of the particle configuration $\xi$.
\end{prop}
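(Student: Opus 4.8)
The statement is an equivalence whose content is concentrated in the forward implication; the converse is a one-line check. The plan is therefore: (i) dispatch the ``if'' direction directly; (ii) for the ``only if'' direction, take $Q_\xi$ to be the atom set of $\xi$ and $m_\xi$ the multiplicity function, show $Q_\xi$ is countable via $\sigma$-compactness, and then prove that $\xi$ carries no non-atomic part, which is the only substantial step. For the easy direction, suppose $\xi$ is a Radon measure of the form $\sum_{\gamma_x\in Q_\xi}m_\xi(\gamma_x)\delta_{\gamma_x}$ with $Q_\xi$ countable and $m_\xi\colon Q_\xi\to\N$. For $B\in\B^0_{S\times G}$ we have $\xi(B)=\sum_{\gamma_x\in Q_\xi\cap B}m_\xi(\gamma_x)$, a sum of terms each at least $1$; since $\xi$ is Radon this sum is finite, so it is a finite sum of positive integers and $\xi(B)\in\N_0$, i.e. $\xi$ is a particle configuration.

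\textbf{Setting up the forward direction.} Let now $\xi$ be a particle configuration. I would first use that $S\times G$ is a locally compact, second countable metric space, hence $\sigma$-compact, to write $S\times G=\bigcup_{n\in\N}K_n$ with each $K_n$ compact and $K_n\uparrow S\times G$. Define $Q_\xi:=\{\gamma_x\in S\times G:\xi(\{\gamma_x\})>0\}$ and $m_\xi(\gamma_x):=\xi(\{\gamma_x\})$ for $\gamma_x\in Q_\xi$; since every singleton lies in $\B^0_{S\times G}$, the values of $m_\xi$ are in $\N$. Each atom in $K_n$ has mass at least $1$ while, by countable additivity, $\sum_{\gamma_x\in Q_\xi\cap K_n}m_\xi(\gamma_x)=\xi(Q_\xi\cap K_n)\le\xi(K_n)<+\infty$, so $Q_\xi\cap K_n$ is finite; hence $Q_\xi=\bigcup_n(Q_\xi\cap K_n)$ is countable.

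\textbf{Killing the non-atomic part.} It remains to prove $\xi=\sum_{\gamma_x\in Q_\xi}m_\xi(\gamma_x)\delta_{\gamma_x}$. Since $Q_\xi$ is countable, countable additivity gives $\xi(B\cap Q_\xi)=\sum_{\gamma_x\in Q_\xi\cap B}m_\xi(\gamma_x)$ for every Borel $B$, so it is enough to show that $\nu(B):=\xi(B\setminus Q_\xi)$ defines the zero measure. Now $\nu$ is a non-negative Radon measure on $S\times G$ with no atoms, and it inherits the property $\nu(B)\in\N_0$ for all $B\in\B^0_{S\times G}$ (writing $\nu(B)=\xi(B)-\xi(B\cap Q_\xi)$ as a difference of elements of $\N_0$). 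Suppose $\nu\ne0$; then $\nu(K_n)\ge1$ for some $n$, and the restriction of $\nu$ to $K_n$ is a finite, non-atomic measure. By the classical theorem of Sierpi\'nski on the range of a non-atomic measure, there is a Borel set $A\subseteq K_n$ with $0<\nu(A)<1$; since $A\in\B^0_{S\times G}$ this forces $\nu(A)\in\N_0$, a contradiction. Hence $\nu\equiv0$, which yields the claimed representation for all Borel $B$.

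\textbf{Main obstacle.} The one genuinely nontrivial point is the exclusion of a non-atomic component of $\xi$; the remaining arguments are routine bookkeeping with $\sigma$-compactness and countable additivity. I would establish it via Sierpi\'nski's range theorem as above, or equivalently via the partition lemma (a finite non-atomic measure admits, for each $\varepsilon>0$, a finite partition into pieces of mass $<\varepsilon$); in either case one must make sure the ``small'' set produced belongs to $\B^0_{S\times G}$, which is automatic here because it is taken inside the compact set $K_n$. All of these are standard facts about point processes and can also be quoted from \cite{K,DVJ2,DVJ1}.
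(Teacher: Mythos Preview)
Your proof is correct. The paper does not actually prove this proposition: it is stated as a standard fact about point measures, with the chapter's opening line referring the reader to \cite{K,DVJ2,DVJ1} for background. Your argument supplies a complete self-contained proof of what the paper simply quotes, handling both directions cleanly; the use of Sierpi\'nski's range theorem to kill the non-atomic part is the standard route and your care to keep the ``small'' set inside a compact $K_n$ so that it lies in $\B^0_{S\times G}$ is exactly the point that needs checking.
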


Thus any particle configuration on $S \times G$ may also be regarded as a locally finite point configuration on $S \times G$ where the points are allowed to have varied multiplicities. \mbox{In the following} we shall often view particle configurations in this manner if convenient. With this in mind, we define the \textit{support} of $\xi$ as
$$
[\xi]:=\{ (\gamma_x,i) \in (S\times G)\times \N : \gamma_x \in Q_\xi \text{ and }i \leq m_\xi(\gamma_x)\}
$$ which is merely the set of points that constitute $\xi$ as a point configuration counted with their respective multiplicities. If we only wish to consider the set of points in $\xi$ without regard for their multiplicities then we shall write $\langle \xi \rangle$, i.e. the projection onto $S \times G$ of $[\xi]$.

\begin{defi} A measure $\xi$ on $(S\times G, \B_{S \times G})$ is said to be of \textit{locally finite allocation} if it satisfies $\xi(\Lambda \times G) < +\infty$ for every $\Lambda \in \B^0_S$.
\end{defi}

\begin{notat}$\,$
\begin{itemize}
\item [$\bullet$] We shall write $\mathcal{N}(S\times G)$ to denote the space of all particle configurations on $S \times G$ which are of locally finite allocation.
\item [$\bullet$] Given $\Lambda \in \B^0_S$ we write $\mathcal{N}(\Lambda \times G)$ to denote the space of all particle configurations of locally finite allocation which are supported on $\Lambda \times G$.
\end{itemize}
\end{notat}

\section{The space $\mathcal{N}(S\times G)$ of particle configurations}

\subsection{Restriction and superposition of particle configurations}

\begin{defi} Given $\xi \in \mathcal{N}(S \times G)$ and $A \in \B_{S \times G}$ we define the \textit{restriction} of $\xi$ to $A$ as the particle configuration $\xi_{A}$ given for every $B \in \B_{S \times G}$ by the formula
$$
\xi_A (B) = \xi (A \cap B).
$$ Equivalently, if $\xi = \sum_{\gamma_x \in Q_\xi} m(\gamma_x) \delta_{\gamma_x}$ we define $\xi_A$ through the standard representation
$$
\xi_A = \sum_{\gamma_x \in Q_\xi \cap A} m(\gamma_x) \delta_{\gamma_x}.
$$
\end{defi}

\begin{defi} Given $\sigma,\eta \in \mathcal{N}(S \times G)$ we define their \textit{superposition} as the particle configuration $\sigma \cdot \eta$ given for every $B \in \B_{S \times G}$ by the formula
$$
\sigma \cdot \eta (B) = \sigma(B) + \eta (B).
$$
\end{defi}

\begin{obs}\label{obsiden} Given $\Lambda \in \B^0_S$ there is a natural identification between $\mathcal{N}(S \times G)$ and $\mathcal{N}(\Lambda \times G) \times \mathcal{N}(\Lambda^c \times G)$ given by the restriction and superposition operations, i.e. we have that the applications
$$
\begin{array}{ccc}
\mathcal{N}(S \times G) \overset{r}{\longrightarrow} \mathcal{N}(\Lambda \times G)\times \mathcal{N}(\Lambda^c \times G) & &\mathcal{N}(\Lambda \times G)\times \mathcal{N}(\Lambda^c \times G) \overset{s}{\longrightarrow} \mathcal{N}(S \times G)\\
\hspace{-0.35cm}\xi \longmapsto (\xi_{\Lambda \times G},\xi_{\Lambda^c \times G}) & \text{ and }& \hspace{2.5cm} (\sigma,\eta) \longmapsto \sigma\cdot \eta.
\end{array}
$$ are bijections and have each other as their respective inverse.
\end{obs}

\subsection{Measurable structure}

The space $\mathcal{N}(S \times G)$ can be endowed with a measurable space structure by considering the $\sigma$-algebra $\F$ generated by the counting events
\begin{equation}\label{salgebra}
\F = \sigma\left( \{ \xi \in \mathcal{N}(S \times G) : \xi(B) = k \} : k \in \N_0 \text{ and } B \in \B^0_{S\times G} \right).
\end{equation} Furthermore, for any $A \in \B_{S\times G}$ we define $\F_A$, the $\sigma$-\textit{algebra of events occurring in }$A$, by considering only the counting events inside $A$, i.e.
$$
\F_A = \sigma\left( \{ \xi \in \mathcal{N}(S \times G) : \xi(B) = k \} : k \in \N_0 \text{ and } B \in \B^0_A \right).
$$ Alternatively, if for every $B \in \B_{S\times G}$ we define the respective counting random variable $N_B : \mathcal{N}(S \times G) \to \N_0$ by the formula $N_B(\eta) = \eta(B)$ then for each $A \in \B_{S\times G}$ the $\sigma$-algebra $\F_A$ can also be defined as the one generated by the counting random variables inside $A$, i.e.
$$
\F_A = \sigma\left( N_B : B \in \B^0_A\right).
$$

\begin{obs} For any $\Lambda \in \B^0_s$ the identification $\mathcal{N}(S \times G)=\mathcal{N}(\Lambda \times G) \times \mathcal{N}(\Lambda^c \times G)$ on Remark \ref{obsiden} is in fact a measurable isomorphism when endowing each space with the $\sigma$-algebras $\F$ and $\F_{\Lambda \times G} \otimes \F_{\Lambda^c \times G}$, respectively.
\end{obs}

\begin{defi} $\,$
\begin{enumerate}
\item [$\bullet$] A function $f: \mathcal{N}(S\times G) \to \R$ is called a \textit{local function} if there exists $\Lambda \in \B^0_S$ such that $f$ is $\F_{\Lambda \times G}$-measurable.
\item [$\bullet$] An event $A \in \F$ is called a \textit{local event} if $\mathbbm{1}_A$ is a local function.
\item [$\bullet$] Given a function $f: \mathcal{N}(S\times G) \to \R$ we define its \textit{measurability support} as
$$
\Lambda_f = \bigcap_{\Lambda \in \mathcal{D}_f} \overline{\Lambda}
$$ where $\mathcal{D}_f = \{ \Lambda \in \B_S : f \text{ is }\F_{\Lambda \times G}\text{-measurable}\}$. That is, $\Lambda_f$ is the \mbox{smallest closed set} $\Lambda \in \B_S$ such that $f$ is $\F_{\Lambda \times G}$-measurable. Notice that if $f$ is local then $\Lambda_f \in \B^0_S$.
\end{enumerate}
\end{defi}

\begin{obs}Notice that a function $f: \mathcal{N}(S\times G) \to \R$ is $\F_{\Lambda\times G}$-measurable if and only if $f(\sigma)=f(\eta)$ whenever $\sigma,\eta \in \mathcal{N}(S \times G)$ are such that $\sigma_{\Lambda \times G}=\eta_{\Lambda \times G}$.
\end{obs}

\subsection{Topological structure}

The space $\mathcal{N}(S \times G)$ can also be endowed with a topological structure. We think of any two particle configurations $\xi,\eta$ as close to each other whenever the particles in $\xi$ lying inside some sufficiently large compact set $K$ can be matched with nearby particles of $\eta$ and viceversa. The precise definitions are given below.

\begin{defi} Given $\delta > 0$ and $\xi,\eta \in \mathcal{N}(S)$ we say that $\xi$ is $\delta$-\textit{embedded} in $\eta$ if there exists an injective application $p:[\xi] \to [\eta]$ such that $d\left( \pi_{S\times G}(x,i) , \pi_{S\times G}(p(x,i))\right) < \delta$ for each $(x,i) \in [\xi]$, where $\pi_{S\times G} : (S\times G) \times \N \to S$ is the projection onto $S \times G$ and $d=d_S + d_G$ is the metric on $S\times G$. We denote it by $\xi \preceq_{\delta} \eta$.
\end{defi}

\begin{defi} Given a particle configuration $\xi \in \mathcal{N}(S \times G)$, a compact set $K \subseteq S \times G$ and $\delta > 0$ we define the $(K,\delta)$-neighborhood of $\xi$ by the formula
$$
(\xi)_{K,\delta} = \{ \eta \in \mathcal{N}(S \times G) : \xi_K \preceq_\delta \eta \text{ and }\eta_K \preceq_\delta \xi \}.
$$
\end{defi}

\begin{defi} We define the \textit{vague topology} on $\mathcal{N}(S \times G)$ as the one generated by the basis
$$
\mathfrak{B} = \{ (\xi)_{K,\delta} : \xi \in \mathcal{N}(S \times G) , K \subseteq S \times G\text{ compact and }\delta > 0\}.
$$
\end{defi}

\begin{obs}It can be shown that $\mathcal{N}(S \times G)$ admits a metric which is consistent with the vague topology under which
it is complete and separable.
\end{obs}

\begin{obs} The $\sigma$-algebra $\F$ defined in \eqref{salgebra} is actually the Borel $\sigma$-algebra given by the vague topology on $\mathcal{N}(S \times G)$.
\end{obs}

\section{Poisson processes on $S \times G$}

We shall call any random element of $\mathcal{N}(S \times G)$ a \textit{point process }on $S \times G$. Throughout this part we shall work with many different point processes on $S \times G$, all of them related in one way or another to the Poisson point process, which we define below.

\begin{defi} Let $\nu$ be a measure on $(S \times G,\B_{S \times G})$ with locally finite allocation. \mbox{The \textit{Poisson measure with intensity}} $\nu$ is defined as the unique measure $\pi^\nu$ on $\mathcal{N}(S \times G)$ which satisfies
$$
\pi^\nu ( \{ \xi \in \mathcal{N}(S \times G) : \xi(B_i) = k_i \text{ for all }i=1,\dots,n \} ) = \prod_{i=1}^n \frac{e^{-\nu(B_i)} \left(\nu(B_i)\right)^{k_i}}{k_i!}
$$ for all $k_1,\dots,k_n \in \N_0$, disjoint $B_1,\dots,B_n \in \B^0_{S \times G}$ and $n \in \N$.
\end{defi}

\begin{defi}\label{defiPoissonasd} Let $\nu$ be a measure on $(S \times G,\B_{S \times G})$ with locally finite allocation. A point process $X$ on $S \times G$ is called a \textit{Poisson process} with intensity measure $\nu$ if it is distributed according to $\pi^\nu$, i.e. for every $n \in \N$ and all choices of disjoint sets $B_1,\dots,B_n \in \B^0_{S \times G}$ the random variables $X(B_1),\dots,X(B_n)$ are independent and have a Poisson distribution with respective means $\nu(B_1),\dots,\nu(B_n)$.
\end{defi}

It follows from Definition \ref{defiPoissonasd} that if $X$ is a \mbox{Poisson process with intensity measure $\nu$} and we consider $\Lambda \in \B^0_S$ then, conditioned on the event $\{X(\Lambda \times G) = n\}$, the locations of these $n$ particles inside $\Lambda \times G$ are independent and distributed according to $\frac{\nu}{\nu(\Lambda \times G)}$. The next proposition found in \cite[Proposition~3.1]{M2} generalizes this idea to obtain a convenient formula for the integral of functions with respect to the restricted Poisson measures.
\begin{prop} For any $\Lambda \in \B^0_S$ and any bounded nonnegative $f: \mathcal{N}(\Lambda \times G) \to \R$ we have the formula
\begin{equation}\label{poisson}
\int f(\sigma) d\pi^\nu_{\Lambda}(\sigma) = \sum_{n=0}^\infty \frac{e^{-\nu(\Lambda \times G)}}{n!} \int_{(\Lambda \times G)^n} f\left(\sum_{i=1}^n \delta_{\gamma_x^i}\right) d\nu^n(\gamma_x^1,\dots,\gamma_x^n)
\end{equation} where $\nu^n$ denotes the $n$-fold product measure of $\nu$.
\end{prop}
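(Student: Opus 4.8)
The plan is to verify the identity first for indicator functions and then extend it to all bounded nonnegative $f$ by a routine approximation argument. Since $\nu$ has locally finite allocation we have $\nu(\Lambda\times G)<+\infty$, so for every bounded nonnegative $f$ the right-hand side of \eqref{poisson} is a well-defined element of $[0,+\infty)$ (it is dominated by $\|f\|_\infty\sum_{n\geq 0}e^{-\nu(\Lambda\times G)}\nu(\Lambda\times G)^n/n!=\|f\|_\infty$), it is linear in $f$, and, by monotone convergence applied inside each integral $\int_{(\Lambda\times G)^n}$ and then to the series of nonnegative terms, it is continuous along increasing sequences of bounded nonnegative functions. Hence, writing $Q(f)$ for the right-hand side, the set function $\mu(A):=Q(\mathbbm{1}_A)$, $A\in\F_{\Lambda\times G}$, is a probability measure on $\mathcal{N}(\Lambda\times G)$: countable additivity is exactly the monotone continuity just recorded, and $\mu(\mathcal{N}(\Lambda\times G))=Q(1)=1$. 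Approximating an arbitrary bounded nonnegative $f$ from below by simple functions and using monotone convergence on both sides of \eqref{poisson}, we see that it suffices to prove that $\mu=\pi^\nu_\Lambda$.

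To identify $\mu$ we compute its finite-dimensional distributions. Fix disjoint sets $B_1,\dots,B_m\in\B^0_{\Lambda\times G}$ and $k_1,\dots,k_m\in\N_0$, and set $B_0:=(\Lambda\times G)\setminus\bigcup_{j=1}^m B_j$. Decomposing $(\Lambda\times G)^n$ according to which of the blocks $B_0,\dots,B_m$ each coordinate lies in, one finds that the integrand $\mathbbm{1}_{\{\sigma(B_j)=k_j,\,j=1,\dots,m\}}\big(\sum_{i=1}^n\delta_{\gamma_x^i}\big)$ is supported on the arrangements placing exactly $k_j$ coordinates in $B_j$ for $j=1,\dots,m$ (hence exactly $k_0:=n-\sum_{j=1}^m k_j$ in $B_0$, which is only possible when $n\geq\sum_{j=1}^m k_j$); counting such arrangements with the multinomial coefficient $\frac{n!}{k_0!\,k_1!\cdots k_m!}$, using $\nu^n\big(\prod_i B_{\phi(i)}\big)=\prod_{l=0}^m\nu(B_l)^{\#\phi^{-1}(l)}$, and summing the resulting series in $k_0$, one obtains, after inserting $\nu(\Lambda\times G)=\sum_{j=0}^m\nu(B_j)$,
\begin{equation*}
\mu\big(\{\sigma:\sigma(B_j)=k_j\text{ for }j=1,\dots,m\}\big)=\prod_{j=1}^m\frac{e^{-\nu(B_j)}\,\big(\nu(B_j)\big)^{k_j}}{k_j!}.
\end{equation*}
By the very definition of the Poisson measure, the right-hand side is $\pi^\nu_\Lambda\big(\{\sigma:\sigma(B_j)=k_j,\,j=1,\dots,m\}\big)$. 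Since the counting random variables $\{N_B:B\in\B^0_{\Lambda\times G}\}$ generate $\F_{\Lambda\times G}$ and the law of a point process on $\Lambda\times G$ is determined by these finite-dimensional distributions, we conclude $\mu=\pi^\nu_\Lambda$, which finishes the proof.

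The combinatorial identity in the second step is entirely routine; the one point that genuinely requires care is the passage from ``$\mu$ and $\pi^\nu_\Lambda$ agree on every counting event'' to ``$\mu=\pi^\nu_\Lambda$'', because the family of counting events is not stable under finite intersections and hence is not literally a $\pi$-system. I would handle this in the standard way: for a fixed finite disjoint family $B_1,\dots,B_m$ the events $\{\sigma(B_j)=k_j,\,j=1,\dots,m\}$, as $(k_j)$ ranges over $\N_0^m$, form a countable measurable partition of $\mathcal{N}(\Lambda\times G)$, so letting the $B_j$ range over a fixed countable ring generating $\B_{\Lambda\times G}$ and applying a Dynkin/monotone-class argument to the (now genuinely intersection-stable) algebra of finite unions of such cylinders yields the required uniqueness. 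Alternatively, one may sidestep this issue by showing directly that $Q(f)$ and $\int f\,d\pi^\nu_\Lambda$ agree on every $f$ of the form $f(\sigma)=\exp\big(-\sum_i h(\gamma_x^i)\big)$ with $h\geq 0$ bounded measurable --- both equal the Laplace functional $\exp\big(-\int_{\Lambda\times G}(1-e^{-h})\,d\nu\big)$ --- and then invoke the fact that the Laplace functional determines a point process law, combined with the same monotone-class extension to general bounded nonnegative $f$.
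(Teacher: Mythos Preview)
Your argument is correct and complete. The combinatorial computation of the finite-dimensional distributions is carried out properly, and you rightly flag the one genuinely delicate point --- that counting events are not intersection-stable --- and offer two standard remedies (the partition/Dynkin route and the Laplace-functional route), either of which closes the gap.

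As for comparison: the paper does not actually prove this proposition. It is stated with a citation to \cite[Proposition~3.1]{M2} and no argument is given in the text. So your self-contained proof goes beyond what the paper provides; there is nothing further to compare.
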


In our work we shall, among other things, establish limit theorems for point processes. Therefore, we shall require a notion of convergence which is appropriate for our purposes. The most familiar notion available is that of convergence in distribution.

\begin{defi}We say that a sequence $(X_n)_{n \in \N}$ of point processes on $S \times G$ converges \textit{in distribution} to a point process $X$ on $S \times G$ if
$$
\lim_{n \rightarrow +\infty}\E(f(X_n)) = \E(f(X))
$$ for every bounded continuous function $f:\mathcal{N}(S \times G) \to \R$. We denote it by $X_n \overset{d}{\longrightarrow} X_n$.
\end{defi} At some points throughout our work the use of local functions shall be much more natural than that of continuous ones. Under such circumstances we shall adopt instead the notion of local convergence for point processes, which we introduce next.

\begin{defi}\label{localconvergence} We say that a sequence $(X_n)_{n \in \N}$ of point processes on $S \times G$ converges \textit{locally} to a point process $X$ on $S \times G$ if
$$
\lim_{n \rightarrow +\infty}\E(f(X_n)) = \E(f(X))
$$ for every bounded local function $f:\mathcal{N}(S \times G) \to \R$. We denote it by $X_n \overset{loc}{\longrightarrow} X_n$.
\end{defi}

It is important to notice that in most cases local functions need not be continuous. Therefore, in general the notions of local convergence and convergence in distribution do not coincide. Nevertheless, since local functions are always dense in the space of uniformly continuous functions with the supremum norm, we get the following result.

\begin{teo}Local convergence implies convergence in distribution.
\end{teo}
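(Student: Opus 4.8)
The statement to prove is: \emph{Local convergence implies convergence in distribution.} That is, if $X_n \overset{loc}{\longrightarrow} X$ then $X_n \overset{d}{\longrightarrow} X$, i.e. $\E(f(X_n)) \to \E(f(X))$ for every bounded continuous $f : \mathcal{N}(S\times G) \to \R$.

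\medskip
\noindent \textbf{Approach.} The plan is to exploit the hint given immediately before the statement: local functions are dense, in the supremum norm, in the space of bounded uniformly continuous functions on $\mathcal{N}(S\times G)$. Since $\mathcal{N}(S\times G)$ is metrizable (by the remark stating it admits a complete separable metric consistent with the vague topology), convergence in distribution can be tested against bounded uniformly continuous functions alone — this is the standard Portmanteau-type reduction. So it suffices to approximate a given bounded uniformly continuous $f$ by local functions and pass to the limit via an $\varepsilon/3$ argument.

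\medskip
\noindent \textbf{Key steps, in order.} First I would record the reduction: to show $X_n \overset{d}{\longrightarrow} X$ on a metric space it is enough to verify $\E(g(X_n)) \to \E(g(X))$ for all bounded uniformly continuous $g$; this is classical (e.g.\ the equivalence of the Portmanteau conditions) and may be cited. Second, I would fix such a $g$ and $\varepsilon > 0$, and invoke the density statement to produce a bounded \emph{local} function $f$ with $\|g - f\|_\infty < \varepsilon/3$; one should note the approximating $f$ can be taken bounded (e.g.\ by truncating, since $g$ is bounded, this does not spoil locality or the sup-norm bound). Third, the triangle inequality gives, for every $n$,
$$
|\E(g(X_n)) - \E(g(X))| \le |\E(g(X_n)) - \E(f(X_n))| + |\E(f(X_n)) - \E(f(X))| + |\E(f(X)) - \E(g(X))|,
$$
where the first and third terms are each $\le \|g-f\|_\infty < \varepsilon/3$ uniformly in $n$, and the middle term tends to $0$ as $n\to\infty$ by the hypothesis of local convergence applied to the bounded local function $f$. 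Hence $\limsup_n |\E(g(X_n)) - \E(g(X))| \le 2\varepsilon/3 < \varepsilon$, and letting $\varepsilon \downarrow 0$ finishes the proof.

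\medskip
\noindent \textbf{Main obstacle.} The routine measure-theoretic steps are immediate; the only substantive point is the density of bounded local functions in the bounded uniformly continuous functions with respect to $\|\cdot\|_\infty$ — and even this is asserted in the text just above the theorem, so it may be taken as given (it follows from the fact that a uniformly continuous function on $\mathcal{N}(S\times G)$ is well-approximated, uniformly, by a function depending only on the configuration restricted to a large compact window $K \subseteq S\times G$, because points far outside $K$ barely affect the vague-topology distance). If a self-contained argument for the density were wanted, the work would go into making that window-truncation quantitative using the definition of the $(K,\delta)$-neighborhoods; but for the purposes of this proof I would simply cite the preceding remark. So there is essentially no hard step: the theorem is a clean $\varepsilon/3$ consequence of density plus the definitions.
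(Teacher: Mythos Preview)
Your proposal is correct and follows exactly the approach the paper indicates: the paper does not spell out a proof but simply states that the result follows from the density of local functions in the space of uniformly continuous functions with the supremum norm, which is precisely the density-plus-Portmanteau-plus-$\varepsilon/3$ argument you give.
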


\newpage

\section{Resumen del Capítulo 7}

En este primer capítulo de la segunda parte introducimos el marco teórico sobre el cuál definiremos todos los modelos que nos interesará estudiar. Todos estos modelos serán casos particulares de configuraciones de partículas aleatorias. Dado un espacio de posiciones $S$ y otro de spines $G$, una configuración de partículas $\xi$ en $S \times G$ es una medida sobre $S \times G$ que admite la representación
$$
\xi = \sum_{(x,\gamma) \in Q_\xi} m_\xi (x,\gamma) \delta_{(x,\gamma)}
$$ para cierto conjunto numerable $Q_\xi \subseteq S \times G$ sin puntos de acumulación y una función $m_\xi : Q_\xi \rightarrow \N$. Denotamos por $\mathcal{N}(S \times G)$ al espacio de aquellas configuraciones de partículas $\xi$ en $S \times G$ que satisfacen que $Q_\xi \cap (\Lambda \times G)$ es finito para todo subconjunto acotado $\Lambda$ de $S$.

El espacio $\mathcal{N}(S \times G)$ tiene una estructura de espacio medible bajo la $\sigma$-álgebra generada por los eventos de conteo, i.e.
$$
\F = \sigma\left( \{ \xi \in \mathcal{N}(S \times G) : \xi(B) = k \} : k \in \N_0 \text{ y } B \subseteq S \text{ boreliano acotado} \right).
$$ También existe una topología natural en este espacio, la topología vaga, que es la generada por la base
$$
\mathfrak{B} = \{ (\xi)_{K,\delta} : \xi \in \mathcal{N}(S \times G) , K \subseteq S \times G\text{ compacto y }\delta > 0\},
$$ donde el entorno $(\xi)_{K,\delta}$ viene dado por
$$
(\xi)_{K,\delta} = \{ \eta \in \mathcal{N}(S \times G) : \xi_K \preceq_\delta \eta \text{ and }\eta_K \preceq_\delta \xi \}.
$$ y cuando, dadas dos configuraciones de partículas $\xi$ y $\eta$, por $\xi_K \preceq_\delta \eta$ entendemos que existe una correspondencia inyectiva (teniendo en cuenta la multiplicidad) entre las partículas de $\xi$ dentro de $K$ y las de $\eta$ tal que las partículas correspondidas están a distancia menor que $\delta$ entre sí. Puede verse que esta topología es metrizable y que así $\mathcal{N}(S \times G)$ resulta completo y separable.

Provistos de este marco teórico, podemos definir el más básico de los modelos de interés que es el Proceso de Poisson. Dada una medida $\nu$ en $S \times G$ con $\nu(\Lambda \times G) < +\infty$ para todo $\Lambda \subseteq S$ acotado, decimos que una configuración de partículas aleatoria $X$ es un proceso de Poisson en $S \times G$
si
\begin{enumerate}
\item Para cada $B \subseteq S \times G$ boreliano la variable aleatoria $X(B)$ tiene distribución Poisson de parámetro $\nu(B)$
\item Si $B_1,\dots,B_n$ son borelianos disjuntos de $S \times G$ entonces las variables aleatorias $X(B_1),\dots,X(B_n)$ son independientes.
\end{enumerate} Todos los demás modelos que estudiemos en esta segunda parte se podrán obtener, de una manera u otra, a partir de un proceso de Poisson adecuado.

Por último, introducimos las nociones de convergencia en distribución y local para configuraciones de partículas aleatorias en $S \times G$. Decimos que una sucesión $(X_n)_{n \in \N}$ de configuraciones de partículas aleatorias converge a otra $X$ en distribución si
$$
\lim_{n \rightarrow +\infty}\E(f(X_n)) = \E(f(X))
$$ para toda función $f:\mathcal{N}(S \times G) \to \R$ continua y acotada, mientras que decimos que lo hace localmente si vale lo anterior para toda función acotada local (i.e., que depende del estado de la configuración sólo dentro de una región acotada) en lugar de continua. Puede verificarse que la convergencia local implica la convergencia en distribución.

\chapter{Diluted models}\label{him}

\section{Definition of a diluted model}

In this section we formally define the models which we shall study throughout this work.
Diluted models on $\mathcal{N}(S \times G)$ are always defined by specifying two characteristic elements: a measure $\nu$ on $S\times G$ called the \textit{intensity measure} and a family $H$ of measurable functions
$$
H_{\Lambda} : \mathcal{N}(\Lambda \times G) \times \mathcal{N}(S\times G) \to [0,+\infty]
$$ called the \textit{local Hamiltonians}, both satisfying the conditions on Assumptions \ref{assump} below. Essentially, the measure $\nu$ will be responsible for the way in which particles in $G$ are distributed throughout the location space $S$ while the Hamiltonians $H_{\Lambda}$ will determine how these particles interact among themselves. Also, the family $H$ shall be referred to as the \textit{Hamiltonian}.

\begin{notat}We establish the following applications:
\begin{enumerate}
\item [$\bullet$] For $\Lambda \in \B^0_S$ and $\eta \in \mathcal{N}(S\times G)$ we define $H_{\Lambda|\eta}: \mathcal{N}(\Lambda \times G) \to [0,+\infty]$ by the formula
$$
H_{\Lambda|\eta} = H_{\Lambda}( \cdot , \eta).
$$ $H_{\Lambda|\eta}$ shall be called the \textit{local Hamiltonian on $\Lambda$ with boundary condition $\eta$}.
\item [$\bullet$] For $\eta \in \mathcal{N}(S\times G)$ we define $\Delta E_{\eta} : S \times G \to [-\infty,+\infty]$ by the formula
$$
\Delta E_{\eta} (\gamma_x) = H_{\{x\}|\eta}( \eta_{\{x\}} + \delta_{\gamma_x} ) - H_{\{x\}|\eta}(\eta_{\{x\}}).\footnote{Here we adopt the convention $\infty - \infty = \infty$.}
$$ $\Delta E_{\eta}$ shall be called the \textit{energy leap function with base configuration $\eta$}. It represents the energy cost for the model to add the particle $\gamma_x$ to its current configuration whenever the latter is given by $\eta$.
\end{enumerate}
\end{notat}

\begin{assump} \label{assump} We assume that the pair $(\nu,H)$ satisfies the following:
\begin{enumerate}
\item \textit{Locally finite allocation}. For every $\Lambda \in \mathcal{B}^0_S$ the measure $\nu$ satisfies $\nu(\Lambda \times G) < +\infty$.
\item \textit{Diluteness condition}. $H_{\Lambda|\eta}( \emptyset ) = 0$ for every $\Lambda \in \B^0_S$ and $\eta \in \mathcal{N}(S \times G)$.
\item \textit{Bounded energy loss}.
$$
-\infty < \Delta E := \inf_{\substack{ \eta \in \mathcal{N}(S \times G) \\ \gamma_x \in S \times G }} \Delta E_{\eta} (\gamma_x) < +\infty.
$$

\item \textit{Integrable local interaction range}. If we define a relation $\rightharpoonup$ on $S\times G$ by setting
    $$
    \tilde{\gamma}_y \rightharpoonup \gamma_x \Longleftrightarrow \exists\,\, \eta \in \mathcal{N}(S\times G) \text{ with }\Delta E_{\eta}(\gamma_x) \neq \Delta E_{\eta + \delta_{\tilde{\gamma}_y}} (\gamma_x)
    $$ then for every $B \in \B_{S \times G}$ the \textit{interaction range} of $B$ defined as the set
$$
I(B)= \{ \tilde{\gamma}_y \in S \times G : \exists\,\, \gamma_x \in B \text{ such that } \tilde{\gamma}_y \rightharpoonup \gamma_x \}
$$ is measurable and each $\Lambda \in \mathcal{B}^0_S$ satisfies $\nu(I(\Lambda \times G)) < +\infty$.

\item \textit{Consistent Hamiltonian}.
\begin{enumerate}
\item [i.] Given $\Delta, \Lambda \in \B^0_S$ such that $\Delta \subseteq \Lambda$ and any $\eta \in \mathcal{N}(S\times G)$
$$
H_{\Lambda|\eta}(\sigma) = H_{\Delta|\sigma_{(\Lambda - \Delta) \times G} \cdot \eta_{\Lambda^c \times G}}(\sigma_{\Delta \times G}) + H_{\Lambda - \Delta| \emptyset_{\Lambda \times G}\cdot \eta_{\Lambda^c \times G}}(\sigma_{(\Lambda - \Delta) \times G})
$$ for every $\sigma \in \mathcal{N}(\Lambda \times G)$.
\item [ii.] For every $\Lambda \in \B^0_S$, $\eta \in \mathcal{N}(S \times G)$ and $\gamma_x \in S \times G$
$$
H_{\Lambda|\eta}(\sigma + \delta_{\gamma_x}) = H_{\Lambda|\eta}(\sigma) + \Delta E_{\sigma_{\Lambda \times G} \cdot \eta_{\Lambda^c \times G}} (\gamma_x).
$$
\end{enumerate}
\item \textit{Interaction measurability of the Hamiltonian}.
\begin{enumerate}
\item [i.] For every $\Lambda \in \B^0_S$ the application $H_\Lambda$ is $(\F_{\Lambda \times G} \otimes \F_{(\Lambda^c \times G) \cap I(\Lambda \times G)})$-measurable.
\item [ii.] For every $\gamma_x \in S \times G$ the application $\eta \mapsto \Delta E_{\eta} (\gamma_x)$ is $\F_{I(\{\gamma_x\})}$-measurable.
\end{enumerate}

\end{enumerate}
\end{assump}

In what follows we shall refer to the different diluted models by the pair $(\nu,H)$ which defines them. We shall say that a given diluted model is of \textit{bounded local interaction range} whenever $I(\Lambda \times G)$ is bounded for every $\Lambda \in \mathcal{B}^0_S$. Also, whenever $\tilde{\gamma}_y \rightharpoonup \gamma_x$ we shall say that $\tilde{\gamma}_y$ has an \mbox{\textit{impact} on $\gamma_x$}. Notice that this impact relation $\rightharpoonup$ need not be symmetric.

\begin{defi}\label{defibgd} Given $\Lambda \in \mathcal{B}^0_S$ and a particle configuration $\eta \in \mathcal{N}( S \times G)$ we define the corresponding \textit{Boltzmann-Gibbs distribution} $\mu_{\Lambda|\eta}$ by the formula
\begin{equation}\label{Gibbs1}
\mu_{\Lambda|\eta} = \omega_{\Lambda|\eta} \times \delta_{\eta_{\Lambda^c}}
\end{equation} where we identify $\mathcal{N}(S \times G)$ with $\mathcal{N}( \Lambda \times G) \times \mathcal{N}(\Lambda^c \times G)$ and $\omega^\eta_\Lambda$ is the probability measure on $\mathcal{N}(\Lambda \times G)$ defined through the relation
$$
d\omega_{\Lambda|\eta} = \frac{e^{-H_{\Lambda|\eta}}}{Z_{\Lambda|\eta}} d\pi^\nu_\Lambda
$$ with $\pi^\nu_\Lambda$ denoting the Poisson measure on $\mathcal{N}(\Lambda \times G)$ of intensity measure $\nu_{\Lambda \times G}$ and
$$
Z_{\Lambda|\eta}=\displaystyle{\int_{\mathcal{N}(\Lambda \times G)} e^{-H_{\Lambda|\eta}(\sigma)} d\pi^\nu_\Lambda(\sigma)}
$$ serving as a normalizing constant. Notice that due to Assumptions \ref{assump} we have
$$
Z_{\Lambda|\eta} \geq \pi^\nu ( N_{\Lambda \times G} = 0 ) = e^{- \nu(\Lambda \times G) } > 0
$$ and thus $\omega_{\Lambda|\eta}$ is well defined.
\end{defi}

\begin{obs}\label{tradition} We would like to point out that for the discrete setting, i.e. when $S=\Z^d$, this is not the standard way in which most lattice systems are defined. Traditionally, discrete systems are defined on the configuration space $G^{\Z^d}$ for a given spin set $G$, so that in each configuration all sites in the lattice are assigned exactly one spin. In this context, a model is regarded as diluted whenever the element $0$ belongs to the spin set $G$ and a site with $0$-spin is understood as an empty site, i.e. devoid of any particles. Furthermore, for any given $\Lambda \in \B^0_{\Z^d}$ and $\eta \in G^{\Z^d}$, the corresponding Boltzmann-Gibbs distribution $\mu_\Lambda^\eta$ (notice the difference in notation) in this context is usually defined by the formula
$$
\mu_{\Lambda}^\eta(\sigma) = \frac{\mathbbm{1}_{\{\sigma_{\Lambda^c} \equiv \eta_{\Lambda^c}\}}}{Z_{\Lambda}^\eta} e^{- \beta \sum_{B \subseteq \Z^d : B \cap \Lambda \neq \emptyset} \Phi_B(\sigma)}
$$ where $\beta > 0$ is a parameter known as the \textit{inverse temperature}, $\sigma_{\Lambda^c}$ indicates the restriction of $\sigma$ to the region $\Lambda^c$ and also for every $B \subseteq \Z^d$ the function $\Phi_B : G^{\Z^d} \rightarrow \overline{\R}$ depends only on the values of spins inside $B$. The family $\Phi = (\Phi_B)_{B \subseteq \Z^d}$ is known as the \textit{potential}. Nonetheless, we prefer to adopt the definition given in Definition \ref{defibgd} as it will allow us to treat discrete and continuum systems in the same manner. This will be necessary for Chapter 11, where we study the convergence of discrete systems towards continuum ones.
\end{obs}

The Boltzmann-Gibbs distribution $\mu_{\Lambda|\eta}$ is meant to describe the local behavior of the model inside the bounded set $\Lambda$ once the configuration outside $\Lambda$ has been fixed as $\eta$. It then seems natural to expect Boltzmann-Gibbs distributions to exhibit some sort of consistency among themselves. This is indeed true, as our next proposition shows.

\begin{prop} For $\Delta \subseteq \Lambda \in \mathcal{B}^0_S$ and $\eta \in \mathcal{N}(S \times G)$ we have the following consistency property:
\begin{equation}\label{consistencia}
\mu_{\Lambda|\eta}(A) = \int_{\mathcal{N}(S\times G)} \mu_{\Delta|\xi} (A) d\mu_{\Lambda|\eta} (\xi)
\end{equation}for every $A \in \F$.
\end{prop}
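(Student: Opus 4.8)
The plan is to prove the consistency property \eqref{consistencia} by reducing it to the defining decomposition of the Hamiltonian (property (5i) of Assumptions \ref{assump}) together with the explicit formula \eqref{poisson} for integrals against restricted Poisson measures. Since $\mu_{\Lambda|\eta}$ is supported on configurations agreeing with $\eta$ outside $\Lambda$, and likewise $\mu_{\Delta|\xi}$ is supported on configurations agreeing with $\xi$ outside $\Delta$, both sides of \eqref{consistencia} are measures on $\mathcal{N}(S\times G)$ concentrated on $\{\xi : \xi_{\Lambda^c\times G} = \eta_{\Lambda^c\times G}\}$. Using the identification $\mathcal{N}(S\times G)=\mathcal{N}(\Delta\times G)\times\mathcal{N}((\Lambda-\Delta)\times G)\times\mathcal{N}(\Lambda^c\times G)$ via restriction and superposition (Remark \ref{obsiden}), it suffices to verify the identity for $A$ of product form $A_\Delta\times A_{\Lambda-\Delta}\times\{\eta_{\Lambda^c\times G}\}$, since such sets generate $\F$ restricted to the relevant slice; a monotone class argument then extends it to all $A\in\F$.

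First I would write out the right-hand side explicitly. Fixing the outside configuration as $\eta_{\Lambda^c\times G}$, the inner integral $\mu_{\Delta|\xi}(A)$ depends on $\xi$ only through $\xi_{(\Lambda-\Delta)\times G}$ (for the boundary condition on $\Delta$) and through membership of $\xi_{\Delta\times G}$ nothing — rather, $\mu_{\Delta|\xi}(A)$ integrates $\mathbbm{1}_{A_\Delta}$ against $\omega_{\Delta|\xi}$ and then checks $\xi_{(\Lambda-\Delta)\times G}\in A_{\Lambda-\Delta}$. So the right-hand side becomes
\begin{equation*}
\int_{\mathcal{N}((\Lambda-\Delta)\times G)} \mathbbm{1}_{A_{\Lambda-\Delta}}(\tau)\left[\frac{1}{Z_{\Delta|\tau\cdot\eta_{\Lambda^c}}}\int_{\mathcal{N}(\Delta\times G)}\mathbbm{1}_{A_\Delta}(\rho)e^{-H_{\Delta|\tau\cdot\eta_{\Lambda^c}}(\rho)}d\pi^\nu_\Delta(\rho)\right]d\omega_{\Lambda|\eta}^{(\Lambda-\Delta)}(\tau),
\end{equation*}
where $\omega_{\Lambda|\eta}^{(\Lambda-\Delta)}$ is the marginal of $\omega_{\Lambda|\eta}$ on $\mathcal{N}((\Lambda-\Delta)\times G)$. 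The key computation is then to apply property (5i), namely $H_{\Lambda|\eta}(\rho\cdot\tau)=H_{\Delta|\tau\cdot\eta_{\Lambda^c}}(\rho)+H_{\Lambda-\Delta|\emptyset_{\Lambda\times G}\cdot\eta_{\Lambda^c}}(\tau)$, and to use that the Poisson measure $\pi^\nu_\Lambda$ factorizes as $\pi^\nu_\Delta\otimes\pi^\nu_{\Lambda-\Delta}$ (independence over disjoint regions, Definition \ref{defiPoissonasd}). This exponential-splitting plus product-measure structure is exactly what makes the normalizing constants telescope: $Z_{\Lambda|\eta}=\int Z_{\Delta|\tau\cdot\eta_{\Lambda^c}}e^{-H_{\Lambda-\Delta|\emptyset\cdot\eta_{\Lambda^c}}(\tau)}d\pi^\nu_{\Lambda-\Delta}(\tau)$, so that $d\omega_{\Lambda|\eta}^{(\Lambda-\Delta)}(\tau)=\frac{Z_{\Delta|\tau\cdot\eta_{\Lambda^c}}e^{-H_{\Lambda-\Delta|\emptyset\cdot\eta_{\Lambda^c}}(\tau)}}{Z_{\Lambda|\eta}}d\pi^\nu_{\Lambda-\Delta}(\tau)$. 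Substituting this into the displayed expression cancels the $Z_{\Delta|\tau\cdot\eta_{\Lambda^c}}$ factors and recombines the two Hamiltonians into $H_{\Lambda|\eta}(\rho\cdot\tau)$, recovering precisely $\mu_{\Lambda|\eta}(A_\Delta\times A_{\Lambda-\Delta}\times\{\eta_{\Lambda^c}\})$.

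The main obstacle I anticipate is purely bookkeeping: making sure all the measurability and integrability hypotheses in Assumptions \ref{assump} are actually invoked where needed — in particular, that $\tau\mapsto Z_{\Delta|\tau\cdot\eta_{\Lambda^c}}$ is measurable and finite (bounded below by $e^{-\nu(\Delta\times G)}>0$ by the diluteness condition, and bounded above because $H\geq 0$), so that the marginal density above is well-defined; and that Fubini's theorem applies to interchange the iterated integrals, which is justified since all integrands are nonnegative. One should also confirm that the marginal of $\omega_{\Lambda|\eta}$ on $\mathcal{N}((\Lambda-\Delta)\times G)$ really has the claimed form, which is again just the factorization of $\pi^\nu_\Lambda$ over $\Delta$ and $\Lambda-\Delta$ combined with (5i). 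Once the product-set case is settled, extending to arbitrary $A\in\F$ is a standard Dynkin $\pi$-$\lambda$ argument, using that both sides of \eqref{consistencia} are finite measures in $A$ agreeing on the $\pi$-system of measurable rectangles adapted to the decomposition $\mathcal{N}(\Delta\times G)\times\mathcal{N}((\Lambda-\Delta)\times G)\times\mathcal{N}(\Lambda^c\times G)$.
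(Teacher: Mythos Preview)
Your proposal is correct and follows essentially the same approach as the paper: the three-way decomposition $\mathcal{N}(S\times G)=\mathcal{N}(\Delta\times G)\times\mathcal{N}((\Lambda-\Delta)\times G)\times\mathcal{N}(\Lambda^c\times G)$, the factorization $\pi^\nu_\Lambda=\pi^\nu_\Delta\times\pi^\nu_{\Lambda-\Delta}$, Fubini--Tonelli, and the Hamiltonian splitting from property~(5i) of Assumptions~\ref{assump}. The only difference is that the paper works directly with the indicator $\mathbbm{1}_{\{(\xi^{(1)},\xi^{(2)},\eta^{(3)})\in A\}}$ for arbitrary $A\in\F$, so your reduction to product sets followed by a $\pi$--$\lambda$ argument is unnecessary (though harmless).
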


\begin{proof} Notice that if we identify $\mathcal{N}(S \times G)$ with $\mathcal{N}(\Delta \times G) \times \mathcal{N}((\Lambda - \Delta)\times G) \times \mathcal{N}(\Lambda^c \times G)$ and for every $\xi \in \mathcal{N}(S \times G)$ we write $\xi = (\xi^{(1)},\xi^{(2)},\xi^{(3)})$ accordingly then, since we have $\pi^\nu = \pi^\nu_{\Delta} \times \pi^\nu_{\Lambda -\Delta} \times \pi^\nu_{\Lambda^c}$, using the Fubini-Tonelli theorem and (i) in the the consistent Hamiltonian property we obtain
\begin{align*}
\mu_{\Lambda|\eta}(A) &= \frac{1}{Z_{\Lambda|\eta}} \int_{\mathcal{N}\left( (\Lambda - \Delta) \times G \right)} \int_{\mathcal{N}(\Delta \times G)} e^{-H_{\Lambda|\eta}(\xi^{(1)}\cdot\xi^{(2)})} \mathbbm{1}_{\{(\xi^{(1)},\xi^{(2)},\eta^{(3)}) \in A\}}d\pi^\nu_{\Delta}(\xi^{(1)})d\pi^\nu_{\Lambda - \Delta}(\xi^{(2)})\\
\\
& = \frac{1}{Z_{\Lambda|\eta}} \int_{\mathcal{N}\left( (\Lambda - \Delta) \times G \right)} Z_{\Delta|\xi^{(2)} \cdot \eta^{(3)}} \mu_{\Delta|{\xi^{(2)} \cdot \eta^{(3)}}} (A) e^{-H_{\Lambda - \Delta| \eta^{(3)}}(\xi^{(2)})}d\pi^\nu_{\Lambda - \Delta}(\xi^{(2)})\\
\\
& = \frac{1}{Z_{\Lambda|\eta}} \int_{\mathcal{N}\left( (\Lambda - \Delta) \times G \right)} \int_{\mathcal{N}(\Delta \times G)} e^{-H_{\Lambda|\eta}(\xi^{(1)}\cdot\xi^{(2)})} \mu_{\Delta|\xi^{(1)}\cdot \xi^{(2)} \cdot \eta^{(3)}} (A) d\pi^\nu_{\Delta}(\xi^{(1)}) d\pi^\nu_{\Lambda - \Delta}(\xi^{(2)})\\
\\
& = \int_{\mathcal{N}(S\times G)} \mu_{\Delta|\xi} (A) d\mu_{\Lambda|\eta} (\xi)
\end{align*}for every $A \in \F$ which concludes the proof.
\end{proof}

\begin{defi}\label{Gibbs2}
A probability measure $\mu$ on $\mathcal{N}( S \times G)$ is called a \textit{Gibbs measure} for the diluted model with intensity measure $\nu$ and Hamiltonian family $H$ if
$$
\mu(A) = \int_{\mathcal{N}( S \times G)} \mu_{\Lambda|\eta}(A) \,d\mu(\eta)
$$for every $\Lambda \in \mathcal{B}^0_S$ and $A \in \F$.
\end{defi}

Notice that by Definition \ref{Gibbs2} a probability measure $\mu$ is a Gibbs measure if and only if it is consistent, in the sense of equation \eqref{consistencia}, with every Boltzmann-Gibbs distribution, each of which describes the local equilibrium state of the model inside some bounded region. Hence, we may think of Gibbs measures as those representing the global equilibrium states of our model. For this reason they are sometimes referred to as \textit{infinite-volume \mbox{Boltzmann-Gibbs distributions}}. The next proposition validates this choice of terminology.

\begin{prop}\label{limitegibbs} Let $(\nu,H)$ be a diluted model of bounded local interaction range and $\mu$ be a probability measure on $\mathcal{N}(S \times G)$ such that
$$
\mu_{\Lambda|\eta} \overset{loc}{\longrightarrow} \mu
$$ as $\Lambda \nearrow S$ for some $\eta \in \mathcal{N}(S \times G)$. Then $\mu$ is a Gibbs measure for the model $(\nu,H)$.
\end{prop}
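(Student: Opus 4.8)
The plan is to verify the defining DLR equation of Definition \ref{Gibbs2} directly, using the consistency relation \eqref{consistencia} together with the assumed local convergence. First I would reduce the problem: both sides of the identity in Definition \ref{Gibbs2}, viewed as set functions of $A$, are probability measures on $(\mathcal{N}(S\times G),\F)$ — countable additivity of $A \mapsto \int_{\mathcal{N}(S\times G)} \mu_{\Delta|\xi}(A)\,d\mu(\xi)$ follows from the monotone convergence theorem, and it is a probability measure since each $\mu_{\Delta|\xi}$ is one. Since the class of local events is a $\pi$-system generating $\F$ (the intersection of two local events is local, and the counting events generating $\F$ are local), it suffices by Dynkin's lemma to check that
\[
\mu(f) = \int_{\mathcal{N}(S\times G)} \mu_{\Delta|\xi}(f)\,d\mu(\xi)
\]
for every $\Delta \in \B^0_S$ and every bounded local function $f : \mathcal{N}(S\times G)\to\R$.

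The key technical step — and the one I expect to be the main obstacle — is to show that for fixed $\Delta \in \B^0_S$ and bounded local $f$, the map $g : \xi \mapsto \mu_{\Delta|\xi}(f)$ is again a bounded local function. Boundedness is immediate since $\mu_{\Delta|\xi}$ is a probability measure, so $|g| \le \|f\|_\infty$. For locality, I would write, using \eqref{Gibbs1},
\[
g(\xi) = \frac{1}{Z_{\Delta|\xi}} \int_{\mathcal{N}(\Delta\times G)} f\!\left(\sigma \cdot \xi_{\Delta^c \times G}\right) e^{-H_{\Delta|\xi}(\sigma)}\, d\pi^\nu_\Delta(\sigma),
\]
and then track the dependence on $\xi$ of each ingredient. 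By the interaction measurability of the Hamiltonian, $H_{\Delta|\xi}(\sigma)$ and hence $Z_{\Delta|\xi}$ depend on $\xi$ only through its restriction to $(\Delta^c\times G)\cap I(\Delta\times G)$; because the model has bounded local interaction range there is $\Lambda_1 \in \B^0_S$ with $I(\Delta\times G)\subseteq \Lambda_1\times G$. Meanwhile $f(\sigma\cdot\xi_{\Delta^c\times G})$ depends on $\xi$ only through its restriction to $(\Lambda_f\setminus\Delta)\times G$, where $\Lambda_f \in \B^0_S$ is the measurability support of $f$. Consequently $g$ is $\F_{\Lambda'\times G}$-measurable with $\Lambda' := \Delta \cup \Lambda_1 \cup \Lambda_f \in \B^0_S$, so $g$ is a bounded local function.

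With this in hand I would conclude as follows. Fix $\Delta \in \B^0_S$ and bounded local $f$, and choose an exhausting sequence $\Lambda_n \nearrow S$ with $\Delta \subseteq \Lambda_n$ for all $n$. Applying the consistency property \eqref{consistencia} to the pair $\Delta \subseteq \Lambda_n$ with boundary condition $\eta$ gives
\[
\mu_{\Lambda_n|\eta}(f) = \int_{\mathcal{N}(S\times G)} \mu_{\Delta|\xi}(f)\, d\mu_{\Lambda_n|\eta}(\xi) = \int_{\mathcal{N}(S\times G)} g(\xi)\, d\mu_{\Lambda_n|\eta}(\xi).
\]
Now let $n \to \infty$. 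Since $f$ is bounded and local, the left-hand side converges to $\mu(f)$ by the hypothesis $\mu_{\Lambda_n|\eta} \overset{loc}{\longrightarrow} \mu$; since $g$ is bounded and local, the right-hand side converges to $\int g\, d\mu = \int \mu_{\Delta|\xi}(f)\, d\mu(\xi)$ by the same hypothesis. This establishes the displayed identity for all bounded local $f$ and all $\Delta \in \B^0_S$, and hence, by the reduction in the first paragraph, that $\mu$ is a Gibbs measure for $(\nu,H)$. The only place boundedness of the local interaction range is genuinely used is in producing the set $\Lambda_1$ above; without it $g$ need not be local and the passage to the limit in the second integral would fail.
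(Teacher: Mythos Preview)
Your proof is correct and follows essentially the same approach as the paper: you show that $\xi\mapsto\mu_{\Delta|\xi}(f)$ is bounded and local (using interaction measurability of $H$ and boundedness of the local interaction range), then pass to the limit on both sides of the consistency relation \eqref{consistencia} via the assumed local convergence, and finally extend from local events to all of $\F$ by a $\pi$-system argument. The paper phrases the argument for local events $A$ rather than bounded local functions $f$, and identifies the measurability support of $g$ as $(\Lambda_A\times G)\cup I(\Delta\times G)$ directly rather than via an auxiliary $\Lambda_1$, but these are cosmetic differences.
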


\begin{proof} Let us first notice that, by the interaction measurability of $H$ in \mbox{Assumptions \ref{assump},} for any $\Delta \in \mathcal{B}^0_S$ and $\xi \in \mathcal{N}(S\times G)$ we have
$$
H_{\Delta|\xi}=H_{\Delta|\xi_{I(\Delta \times G)}}.
$$ From this we obtain that for any local event $A \in \F$ the mapping
$$
(\sigma,\xi) \mapsto \mathbbm{1}_{\{\sigma \cdot \xi_{\Delta^c \times G} \in A\}}e^{-H_{\Delta|\xi}(\sigma)}
$$ is $\left(\F_{\Delta \times G} \otimes \F_{(\Lambda_A \times G) \cup I(\Delta \times G)}\right)$-measurable which implies that the mapping $\xi \mapsto \mu_{\Delta|\xi}(A)$ is $\F_{(\Lambda_A \times G) \cup I(\Delta \times G)}$-measurable. Since the model is of bounded local interaction range we get that for any local event $A \in \F$ the mapping $\xi \mapsto \mu_{\Delta|\xi}(A)$ is local as well. Therefore, $\mu_{\Lambda_n|\eta} \overset{loc}{\longrightarrow} \mu$ implies that for any local event $A$ we have
$$
\int \mu_{\Delta|\xi} (A)\,d\mu(\xi)= \lim_{\Lambda \nearrow S} \int \mu_{\Delta|\xi} (A)\,d\mu_{\Lambda|\eta} (\xi)= \lim_{\Lambda \nearrow S} \mu_{\Lambda|\eta}  (A)= \mu(A)
$$
where the second equality follows from the consistency property of the Boltzmann-Gibbs distributions and the fact that $\Lambda \nearrow S$. Since the class of local events is closed under intersections and it generates $\F$, we see that
$$
\mu(A) = \int_\Omega \mu_{\Delta|\xi}(A) \,d\mu(\xi)
$$ for any $A \in \F$ and $\Delta \in \mathcal{B}^0_S$ which allows us to conclude $\mu$ is a Gibbs measure.
\end{proof}

Standard compactness arguments imply that Gibbs measures for traditional lattice systems (see Remark \ref{tradition}) always exist. In the continuum setting the situation is much more delicate, although one can show that, under some reasonable additional assumptions on the pair $(\nu,H)$ (known as the almost sure Feller property), every diluted model of bounded interaction range admits at least one Gibbs measure. Whenever a diluted model admits more than one Gibbs measure we say that the model exhibits a \textit{phase transition}. In this second part of the thesis we shall be specifically interested in studying properties of Gibbs measures for diluted models in general, as well as developing tools to establish the occurrence (or absence) of phase transitions. For the latter, the property detailed on the following proposition will play an important role.

\begin{defi} Let $H$ be a Hamiltonian inducing a measurable local interaction range, i.e. $I(B)$ is measurable for every $B \in \mathcal{B}_{S \times G}$. A particle configuration $\sigma \in \mathcal{N}(S\times G)$ is of \textit{finite local interaction range} with respect to $H$ if $\sigma( I(\Lambda \times G) )< +\infty$ for every $\Lambda \in \mathcal{B}^0_{S}$. We write $\mathcal{N}_H(S \times G)$ to denote the space of all particle configurations which are of finite local \mbox{interaction range} with respect to $H$.
\end{defi}

\begin{prop}\label{lfir} If the pair $(\nu,H)$ satisfies Assumptions \ref{assump} then
\begin{enumerate}
\item [i.] $\mathcal{N}_H(S \times G)$ is a measurable subset of $\mathcal{N}(S \times G )$.
\item [ii.] Every Gibbs measure of the diluted model $(\nu,H)$ is supported on $\mathcal{N}_H(S \times G)$.
\end{enumerate}
\end{prop}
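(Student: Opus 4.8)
The plan is to reduce (i) to a countable measurability check and to derive (ii) from a single first-moment estimate, the latter being where the Bounded energy loss and Consistent Hamiltonian hypotheses of Assumptions \ref{assump} genuinely enter.

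For (i), I would first replace the uncountable family $\{I(\Lambda\times G):\Lambda\in\mathcal{B}^0_S\}$ by a countable one. Since $S$ is locally compact and separable it is $\sigma$-compact, so I fix an increasing sequence of bounded open sets $U_n\nearrow S$. Given $\Lambda\in\mathcal{B}^0_S$, its closure is compact and hence contained in some $U_n$, and as $B\mapsto I(B)$ is monotone (immediate from the definition of the interaction range) one gets $I(\Lambda\times G)\subseteq I(U_n\times G)$; therefore
\[
\mathcal{N}_H(S\times G)=\bigcap_{n\in\N}\bigl\{\sigma\in\mathcal{N}(S\times G):\sigma\bigl(I(U_n\times G)\bigr)<+\infty\bigr\}.
\]
Each $I(U_n\times G)$ is Borel by the Integrable local interaction range hypothesis, and writing it as an increasing union of bounded Borel sets $I(U_n\times G)\cap C_m$ (with $C_m\nearrow S\times G$ compact) exhibits $\sigma\mapsto\sigma(I(U_n\times G))=\sup_m N_{I(U_n\times G)\cap C_m}(\sigma)$ as an $\F$-measurable map into $[0,+\infty]$. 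Hence every set in the intersection above, and thus $\mathcal{N}_H(S\times G)$ itself, lies in $\F$.

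For (ii), let $\mu$ be any Gibbs measure for $(\nu,H)$; by the reduction in (i) it suffices to prove $\mu\bigl(N_{I(\Lambda\times G)}=+\infty\bigr)=0$ for each fixed $\Lambda\in\mathcal{B}^0_S$. Write $B:=I(\Lambda\times G)$, so $\nu(B)<+\infty$ by Assumptions \ref{assump}, and partition $S$ into bounded Borel sets $S=\bigsqcup_j\Lambda_j$ (for instance $\Lambda_1=U_1$, $\Lambda_j=U_j\setminus U_{j-1}$), so that $N_B=\sum_j N_{B_j}$ with $B_j:=B\cap(\Lambda_j\times G)$ and $\sum_j\nu(B_j)=\nu(B)$. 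The crux is the bound
\[
\E_{\omega_{\Lambda_j|\eta}}\!\bigl[N_{B_j}\bigr]\le e^{-\Delta E}\,\nu(B_j)\qquad\text{for every }\eta\in\mathcal{N}(S\times G).
\]
I would establish it by expanding $\int N_{B_j}(\sigma)\,e^{-H_{\Lambda_j|\eta}(\sigma)}\,d\pi^\nu_{\Lambda_j}(\sigma)$ through formula \eqref{poisson} (extended to nonnegative measurable integrands by monotone convergence), using symmetry of the integrand under $\pi^\nu_{\Lambda_j}$ to single out one of the $n$ particles, and then applying the Consistent Hamiltonian identity in the form $H_{\Lambda_j|\eta}(\tau+\delta_{\gamma_x})=H_{\Lambda_j|\eta}(\tau)+\Delta E_{\tau_{\Lambda_j\times G}\cdot\,\eta_{\Lambda_j^c\times G}}(\gamma_x)\ge H_{\Lambda_j|\eta}(\tau)+\Delta E$, together with $B_j\subseteq\Lambda_j\times G$. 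Re-summing the remaining series via \eqref{poisson} with integrand $e^{-H_{\Lambda_j|\eta}}$ gives $\int N_{B_j}\,e^{-H_{\Lambda_j|\eta}}\,d\pi^\nu_{\Lambda_j}\le e^{-\Delta E}\nu(B_j)\,Z_{\Lambda_j|\eta}$, and dividing by $Z_{\Lambda_j|\eta}$ yields the claim.

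It then remains to assemble the pieces. Since $B_j\subseteq\Lambda_j\times G$, only the $\omega_{\Lambda_j|\eta}$-marginal of the Boltzmann--Gibbs distribution $\mu_{\Lambda_j|\eta}$ contributes to $N_{B_j}$, so the defining relation of a Gibbs measure with bounded region $\Lambda_j$ (extended from indicators to nonnegative measurable functions in the usual way) gives $\E_\mu[N_{B_j}]=\int\E_{\omega_{\Lambda_j|\eta}}[N_{B_j}]\,d\mu(\eta)\le e^{-\Delta E}\nu(B_j)$; summing over $j$ and using monotone convergence,
\[
\E_\mu\!\bigl[N_B\bigr]=\sum_j\E_\mu\!\bigl[N_{B_j}\bigr]\le e^{-\Delta E}\sum_j\nu(B_j)=e^{-\Delta E}\,\nu(B)<+\infty,
\]
whence $N_B<+\infty$ holds $\mu$-almost surely, and intersecting over the $U_n$ completes (ii). I expect the main obstacle to be exactly this first-moment estimate: the naive route through $Z_{\Lambda_j|\eta}\ge e^{-\nu(\Lambda_j\times G)}$ produces a constant that degenerates as the blocks $\Lambda_j$ grow, so one must instead use the bounded energy loss and the additive structure of the Hamiltonian to obtain a constant insensitive to the size of $\Lambda_j$; once that is available, the $\sigma$-compactness reductions and the summation are routine.
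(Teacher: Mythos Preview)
Your proof is correct. Part (i) is essentially the paper's argument (the paper also reduces to a countable exhausting sequence $\Lambda_n\nearrow S$). For (ii), however, you take a genuinely different route. The paper does not partition $S$; instead it applies the Gibbs property directly on the single bounded region $I(\Lambda_n\times G)\cap\Lambda_k$ and bounds crudely via $e^{-H}\le 1$ in the numerator and $Z\ge \pi^\nu(N_{I(\Lambda_n\times G)}=0)=e^{-\nu(I(\Lambda_n\times G))}$ in the denominator, arriving at $\E_\mu\bigl[\sigma(I(\Lambda_n\times G)\cap(\Lambda_k\times G))\bigr]\le e^{\nu(I(\Lambda_n\times G))}\,\nu(I(\Lambda_n\times G))$ uniformly in $k$. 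The point there is that the \emph{conditioning region itself} carries $\nu$-mass at most $\nu(I(\Lambda_n\times G))$, so the ``naive'' partition-function bound does not degenerate, and neither Bounded energy loss nor the additive Hamiltonian identity is invoked at all. Your version instead conditions on blocks $\Lambda_j$ of arbitrary size but compensates with the sharp per-particle estimate $\E_{\omega_{\Lambda_j|\eta}}[N_{B_j}]\le e^{-\Delta E}\nu(B_j)$, which sums to the cleaner global bound $\E_\mu\bigl[\sigma(I(\Lambda\times G))\bigr]\le e^{-\Delta E}\,\nu(I(\Lambda\times G))$. Your argument is slightly longer but buys a sharper constant and, more to the point, remains valid without the implicit hypothesis $H\ge 0$ underlying the paper's step $e^{-H}\le 1$; this matters for models with genuinely attractive components that are still permitted by Assumptions~\ref{assump}.
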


\begin{proof} If $(\Lambda_n)_{n \in \N} \subseteq \mathcal{B}^0_{S}$ is such that $\Lambda_n \nearrow S$ then we can write
$$
\mathcal{N}_H(S \times G) = \bigcap_{n \in \N} \{ \sigma \in \mathcal{N}(S\times G) : \sigma ( I(\Lambda_n \times G) ) < + \infty \}.
$$ Since for each $n \in \N$ the set $I(\Lambda_n \times G)$ is measurable by Assumptions \ref{assump} then we obtain that the sets $\{ \sigma \in \mathcal{N}(S\times G) : \sigma ( I(\Lambda_n \times G) ) < + \infty \}$ are also measurable for every $n \in \N$ and (i) follows at once from this. To establish (ii) it suffices to show that for each $n \in \N$
\begin{equation}\label{lfirsup}
\int_{\mathcal{N}(S \times G)} \sigma(I(\Lambda_n \times G)) d\mu(\sigma) = \sup_{k \in \N} \int_{\mathcal{N}(S \times G)} \sigma(I(\Lambda_n \times G) \cap \Lambda_k) d\mu(\sigma) < +\infty.
\end{equation}
But since $\mu$ is a Gibbs measure for each $n,k \in \N$ we have
\begin{align*}
\int \sigma(I(\Lambda_n \times G) \cap \Lambda_k) d\mu(\sigma)&= \int \left[\int \sigma(I(\Lambda_n \times G) \cap \Lambda_k) d\mu_{I(\Lambda_n \times G) \cap \Lambda_k | \xi}(\sigma) \right] d\mu(\xi)\\
\\
&\leq \int \left[ \int \frac{\sigma(I(\Lambda_n \times G) \cap \Lambda_k)}{Z_{I(\Lambda_n \times G) \cap \Lambda_k|\xi}} d \pi^\nu_{I(\Lambda_n \times G) \cap \Lambda_k} (\sigma) \right] d\mu(\xi)\\
\\
&\leq \frac{\displaystyle{\int \sigma(I(\Lambda_n \times G)) d\pi^\nu(\sigma)}}{ \pi^\nu ( N_{I(\Lambda_n \times G)}=0)} = e^{\nu(I(\Lambda_n \times G))}\nu( I(\Lambda_n \times G))
\end{align*} which, by Assumptions \ref{assump}, establishes \eqref{lfirsup} and thus concludes the proof.
\end{proof}

\section{Some examples of diluted models}\label{examples}

\subsection{The Widom-Rowlinson model}

The Widom-Rowlinson model is a classical hardcore interaction model in which particles located on $\R^d$ for $d \geq 2$ may be of two different types, $(+)$-particles and $(-)$-particles, and any two particles of different type are forbidden to become within a certain distance $r > 0$ of each other. In the present context of diluted models, the \mbox{Widom-Rowlinson model} is defined as the diluted model on $\mathcal{N}(\R^d \times \{+,-\})$ specified by
\begin{enumerate}
\item [$\bullet$] The intensity measure $\nu^{\lambda_+,\lambda_-}$ defined as
$$
\nu^{\lambda_+,\lambda_-} = \left( \lambda_+ \mathcal{L}^d \times \delta_+ \right) +  \left(\lambda_- \mathcal{L}^d \times \delta_-\right)
$$ where $\lambda_+,\lambda_- > 0$ are two fixed parameters known as the \textit{fugacities} of $(+)$-particles and $(-)$-particles respectively and $\mathcal{L}^d$ denotes the Lebesgue measure on $\R^d$.
%Let us observe that a Poisson process on $\R^d \times \{+,-\}$ with intensity measure $\nu^{\lambda_+,\lambda_-}$ can be interpreted in this case as a superposition of two independent homogeneous Poisson processes on $\R^d$ with intensities $\lambda_+$ and $\lambda_-$, each representing the location of $(+)$-particles and $(-)$-particles respectively.
\item [$\bullet$] The Hamiltonian $H$ given for each $\Lambda \in \B^0_{\R^d}$ and $\eta \in \mathcal{N}(\R^d \times \{+,-\})$ by the formula
$$
H_{\Lambda|\eta}(\sigma)= \sum_{(\gamma_x ,\tilde{\gamma}_y) \in e_{\Lambda}(\sigma|\eta)} U( \gamma_x , \tilde{\gamma}_y )
$$ where
\begin{equation}\label{wru}
U(\gamma_x,\tilde{\gamma}_y) := \left\{ \begin{array}{ll} +\infty &\text{if }\gamma \neq \tilde{\gamma}\text{ and }\|x-y\|_\infty \leq r\\ 0 &\text{otherwise}\end{array}\right.
\end{equation} and
$$
e_{\Lambda}(\sigma|\eta) := \{ (\gamma_x ,\tilde{\gamma}_y) \in \langle \sigma \cdot \eta_{\Lambda^c \times G} \rangle^2 : x \in \Lambda \}.
$$
\end{enumerate}
Thus, in this model the measure $\omega_{\Lambda|\eta}$ in \eqref{Gibbs1} becomes the distribution of a superposition of two independent homogeneous Poisson processes of respective intensities $\lambda_+$ and $\lambda_-$ conditioned on the event that no particle inside $\Lambda$ has a particle of the opposite type (including also particles in $\eta_{\Lambda^c \times \{+,-\}}$) at a distance smaller than $r$ from them.

There also exists a discrete version of the Widom-Rowlinson model, first introduced by Lebowitz and Gallavoti in \cite{L}. In the traditional setting, this model is defined on the configuration space $\{+,0,-\}^{\Z^d}$ through the Boltzmann-Gibbs distributions given for each $\Lambda \in \B^0_{\Z^d}$ and $\eta \in \{+,0,-\}^{\Z^d}$ by the formula
\begin{equation}\label{wrdbgd}
\mu_{\Lambda}^\eta(\sigma) = \frac{\mathbbm{1}_{\{\sigma_{\Lambda^c} \equiv \eta_{\Lambda^c}\}}}{Z_{\Lambda}^\eta} e^{-\sum_{B : B \cap \Lambda \neq \emptyset} \Phi_B(\sigma)}
\end{equation} where for each $B \subseteq \Z^d$ the interaction $\Phi_B$ is given by
\begin{equation}\label{wrdht}
\Phi_B(\sigma) = \left\{ \begin{array}{ll} (+\infty) \mathbbm{1}_{\{ \sigma(x) \times \sigma(y)= - \}} & \text{ if $B=\{x,y\}$ with $\|x-y\|_\infty \leq r$} \\ \\ - (\mathbbm{1}_{\{\sigma(x)= +\}} \log \lambda_+  + \mathbbm{1}_{\{\sigma(x)= -\}} \log \lambda_- ) & \text{ if $B=\{x\}$} \\ \\ 0 & \text{ otherwise.}\end{array}\right.
\end{equation}
Notice that the inverse temperature $\beta$ is missing in \eqref{wrdbgd}: since pair interactions are either $0$ or $+\infty$, it is customary to set $\beta = 1$ and vary only the fugacity parameters in the model.

\newpage
In the present setting of diluted models, this discrete version of the Widom-Rowlinson model is defined as the diluted model on $\mathcal{N}(\Z^d \times \{+,-\})$ specified by
\begin{enumerate}
\item [$\bullet$] The intensity measure $\nu^{\lambda_+,\lambda_-}$ defined as
$$
\nu^{\lambda_+,\lambda_-} = \left( \lambda_+ c^d \times \delta_+ \right) +  \left(\lambda_- c^d \times \delta_-\right)
$$ where $c^d$ denotes the counting measure on $\Z^d$.
\item [$\bullet$] The Hamiltonian $H$ given for each $\Lambda \in \B^0_{\Z^d}$ and $\eta \in \mathcal{N}(\Z^d \times \{+,-\})$ by the formula
\begin{equation}\label{wrhd}
H_{\Lambda|\eta}(\sigma)= \sum_{(\gamma_x ,\tilde{\gamma}_y) \in e_{\Lambda}(\sigma|\eta)} U( \gamma_x , \tilde{\gamma}_y ) + \sum_{x \in \Lambda} V_x(\sigma)
\end{equation} where the pair interaction $U$ is the same as in \eqref{wru} and for each $x \in \Lambda$ we set
$$
V_x (\sigma):= \left\{ \begin{array}{ll} +\infty &\text{if } \sigma( \{x\} \times \{+,-\} ) > 1 \\ 0 &\text{otherwise.}\end{array}\right.
$$
\end{enumerate}
The term $V$ is introduced to allow at most one \mbox{particle per site} as in the traditional setting. \mbox{Another possible (and perhaps more natural) way} in which to define this discrete version within the setting of diluted models is to leave the term $V$ out of \eqref{wrhd} and then avoid the possibility of multiple particles of the same type per site by considering the projected Boltzmann-Gibbs distributions $\langle \mu_{\Lambda|\eta} \rangle$ instead, i.e. by considering particle configurations without any regard for their respective particle multiplicities. Both possibilities are indeed equivalent, but the latter introduces a change in fugacities: for a choice of fugacities $\lambda_{\pm}$ in the second alternative, one recovers the traditional discrete Widom-Rowlinson model with fugacities $e^{\lambda_\pm} - 1$. For this reason, unless explicitly stated otherwise, we shall always work with the first of these alternatives. We refer the reader to \cite{GHM} for a review of the general results known
for these models.

\subsection{The Widom-Rowlinson model with generalized interactions}

There are several interesting generalizations of the Widom-Rowlinson model. One possible generalization which has been well studied is to consider a model in which nearby pairs of particles of the opposite type are not necessarily forbidden, but merely \mbox{discouraged instead.} More precisely, given a decreasing function $h: \R^+ \rightarrow [0,+\infty]$ with bounded support we define the Widom-Rowlinson model
with \textit{interspecies repulsion function} $h$ by replacing the previous pair interaction $U$ in \eqref{wru} by
$$
U_h(\gamma_x,\tilde{\gamma}_y) := h(|x-y|)\mathbbm{1}_{\{\gamma \neq \tilde{\gamma}\}} = \left\{ \begin{array}{ll} h(\|x-y\|_\infty) &\text{if }\gamma \neq \tilde{\gamma}\\ 0 &\text{otherwise.}\end{array}\right.
$$ We may also add a type-independent repulsion by taking a second decreasing function $j: \R^+ \rightarrow [0,+\infty]$ with bounded support and redefining the Hamiltonian as
$$
H_{\Lambda|\eta}(\sigma)= \sum_{\{\gamma_x ,\tilde{\gamma}_y\} \in e^u_{\Lambda}(\sigma|\eta)} h(\|x-y\|_\infty)\mathbbm{1}_{\{\gamma \neq \tilde{\gamma}\}} + j(\|x-y\|_\infty)
$$ where
$$
e^u_{\Lambda}(\sigma|\eta) = \{ \{\gamma_x, \tilde{\gamma}_y\} \subseteq \langle \sigma \cdot \eta_{\Lambda^c \times G} \rangle : \gamma_x \neq \tilde{\gamma}_y, \{x,y\} \cap \Lambda \neq \emptyset \}.
$$
Notice that for $h:= (+\infty) \mathbbm{1}_{[0,r]}$ and $j \equiv 0$ we obtain the original Widom-Rowlinson model.
We refer the reader to \cite{GH}, where these type of generalizations were investigated.

\subsection{The thin rods model}

Another possible generalization of the Widom-Rowlinson model is to consider $q \geq 3$ types of particles instead of just two. Furthermore, one could have different \textit{exclusion radii} $r_{ij}$ for the different pairs of types of particles $1 \leq i < j \leq q$. This asymmetric generalization of the original model was studied in \cite{BKL}. We introduce here a different asymmetric variant which is also featured on the former reference. Given $q \geq 3$ and $l> 0$ we consider in $\R^2$ a system consisting of rods of length $2l$ and zero width positioned anywhere throughout the plane. We assume that these rods may possess $q$ different orientations specified by some fixed angles $0 \leq \theta_1 < \dots < \theta_q < \pi$ measured with respect to the $x$-axis. Finally, the interaction between these rods is that no two rods are allowed to intersect each other. More precisely, if we set
$$
L_i = \{ t \cdot (\cos\theta_i, \sin \theta_i) : t \in [-l,l] \}
$$ then the thin rods model is defined as the diluted model on $\mathcal{N}(\R^2 \times \{1,\dots,q\})$ \mbox{specified by}
\begin{enumerate}
\item [$\bullet$] The intensity measure
$$
\nu := \sum_{i=1}^q \lambda_i \mathcal{L}^d \times \delta_{\theta_{i}}
$$ where for $1 \leq i \leq q$ the parameter $\lambda_i > 0$ is the fugacity of rods of orientation $\theta_i$.
\item [$\bullet$] The Hamiltonian
$$
H_{\Lambda|\eta}(\sigma) := \sum_{(\gamma_x ,\tilde{\gamma}_y) \in e_{\Lambda}(\sigma|\eta)} U( \gamma_x , \tilde{\gamma}_y )
$$ where
\begin{equation}\label{tru}
U(\gamma_x,\tilde{\gamma}_y) := \left\{ \begin{array}{ll} +\infty &\text{if }(L_\gamma + x) \cap (L_{\tilde{\gamma}} + y) \neq \emptyset\\ 0 &\text{otherwise.}\end{array}\right.
\end{equation}
\end{enumerate}
In general, given a probability measure $\rho$ on $S^1_*:=[0,\pi)$ we define the thin rods model with fugacity $\lambda$, rod length $2l$ and orientation measure $\rho$ as the diluted model on $\mathcal{N}(\R^2 \times S^1_*)$ specified by the intensity measure $\nu^\lambda := \lambda \mathcal{L}^2 \times \rho$ and the Hamiltonian $H$ given by \eqref{tru}. Notice that this broader definition allows for an infinite number of possible orientations depending on the measure $\rho$. Also, notice that the original model with $q$ orientations is recovered by setting $\lambda = \lambda_1 + \dots + \lambda_q$ and $\rho = \frac{1}{\lambda} \sum_{i=1}^q \lambda_i \delta_{\theta_{i}}$.

\subsection{The tolerant Widom-Rowlinson model}

Yet another variant to consider is the tolerant Widom-Rowlinson model, in which particles can tolerate up to $k \in \N$ particles of the opposite type within a distance $r > 0$ from them. In this case, the intensity measure remains unchanged while the Hamiltonian $H$ becomes
$$
H_{\Lambda|\eta}(\sigma)= \sum_{(\gamma^1_{x_1} , \dots, \gamma^{k+1}_{x_n}) \in e_{\Lambda}^k(\sigma|\eta)} U(\gamma^1_{x_1} , \dots, \gamma^{k+1}_{x_n})
$$ where
$$
U(\gamma^1_{x_1},\dots, \gamma^{k+1}_{x_n}) := \left\{ \begin{array}{ll} +\infty &\text{if }\gamma^1 \neq \gamma^2 = \dots = \gamma^{k+1}\text{ and } \max_{j=2,\dots,k+1} \|x_1-x_j\|_\infty \leq r \\ 0 & \text{otherwise}\end{array}\right.
$$ and
$$
e_{\Lambda}^k(\sigma|\eta) := \{ (\gamma^1_{x_1} , \dots, \gamma^{k+1}_{x_n}) \in \langle \sigma \cdot \eta_{\Lambda^c \times G} \rangle^{k+1} : x_1 \in \Lambda \}.
$$ Observe that, unlike all previous models, the interactions featured here are not pairwise. This fact is of interest since most continuum models featured in the literature present only pairwise interactions. A discrete analogue of this model is also available just as it was in the original setting. \mbox{Its specification} can be deduced from its continuum counterpart in the same way as before, so we omit it here.

\subsection{The symbiotic model}

This is an example of a model in which the interactions involved are of attractive type instead of repulsive. It features particles of two types, \textit{hosts} and \textit{parasites}, which interact in the following way: the hosts spread freely throughout $\R^d$ without any care for the location of parasites, whereas the parasites prefer to locate themselves near the hosts. More precisely, the symbiotic model is defined as the diluted model on $\mathcal{N}(\R^d \times \{h,p\})$ with intensity measure
$$
\nu^{\lambda_h,\lambda_p} = (\lambda_h \mathcal{L}^d \times \delta_{h}) + (\lambda_p \mathcal{L}^d \times \delta_{p})
$$ and Hamiltonian given for each $\Lambda \in \B^0_{\R^d}$ and $\eta \in \mathcal{N}(\R^d \times \{h,p\})$ by the formula
$$
H_{\Lambda|\eta}(\sigma) = \sum_{p_x \in \sigma \,:\, x \in \Lambda} J\mathbbm{1}_{\{ \sigma( B^*(x,r) \times \{h\}) = 0 \}}
$$ where $\lambda_{h},$ $\lambda_p$, $J$ and $r$ are all positive constants, and for $x \in \R^d$ we set
$$
B^*(x,r):=\{ y \in \R^d : 0 < \|x - y \|_2 < r\}.
$$Unlike all the previous examples, notice that for this model we have $\Delta E < 0$. Furthermore, this is the example of a model in which the impact relation is not symmetric: parasites do not have any impact on hosts, whereas hosts always have an impact on nearby parasites.

\subsection{An inconsistent example: the Ising contours model}\label{exampleisingc}

A very important tool in the study of phase transitions is the use of contour models.
Perhaps one of the most popular examples in statistical mechanics of a contour model is the Ising contours model (also known as Peierls contours), which arises as a geometrical representation of the Ising model on $\Z^d$ and was used originally by Peierls to establish the occurrence of phase transition at low temperatures.
The Ising model is not a diluted model itself in the sense that particles are forced to occupy \textit{every} site of the lattice.\footnote{Although there exists an equivalent representation of the Ising model as a lattice gas which falls into the category of diluted models as presented in this chapter. See \cite[p.~154]{OV}.} Nonetheless, it fits within the framework of Boltzmann-Gibbs distributions \mbox{proposed in this chapter.}

The Ising model (with zero external field) is defined on the configuration space $\{+,-\}^{\Z^d}$ through the Boltzmann-Gibbs distributions given for each $\Lambda \in \B^0_{\Z^d}$ and $\eta \in \{+,-\}^{\Z^d}$ by the formula
$$
\mu_{\Lambda}^\eta(\sigma) = \frac{\mathbbm{1}_{\{\sigma_{\Lambda^c} \equiv \eta_{\Lambda^c}\}}}{Z_{\Lambda}^\eta} e^{-\beta \sum_{B : B \cap \Lambda \neq \emptyset} \Phi_B(\sigma)}
$$ with $Z_{\Lambda}^\eta$ being the normalizing constant, and for each $B \subseteq \Z^d$
\begin{equation}\label{hising}
\Phi_B(\sigma) = \left\{ \begin{array}{ll} \mathbbm{1}_{\{ \sigma(x)\sigma(y)=-1\}} & \text{ if $B=\{x,y\}$ with $\|x-y\|_1 = 1$} \\ \\ 0 & \text{ otherwise.}\end{array}\right.
\end{equation}

Now, the contour representation arises in the study of Boltzmann-Gibbs distributions with a constant boundary condition, i.e. either $\eta(x)=+$ or $\eta(x)=-$ for all $x \in \Z^d$. To fix ideas let us consider the $(+)$-boundary condition and denote the corresponding Boltzmann-Gibbs distribution on the volume $\Lambda$ by $\mu_{\Lambda}^+.$ From \eqref{hising} we immediately see that the weight assigned by $\mu_{\Lambda}^+$ to each configuration which is positively aligned outside $\Lambda$ depends only on the amount of misaligned nearest neighbors spins in the configuration. With this in mind, one may introduce the following alternative representation of any such configuration which keeps track of misaligned spins:
\begin{enumerate}
\item [$\bullet$] Consider the edge set $e(\Z^d)$ consisting of all bonds joining nearest neighbors in $\Z^d$.
\item [$\bullet$] For each bond $e \in e(\Z^d)$ consider the plaquette $p(e)$: the unique $(d-1)$-dimensional unit cube with vertices in the dual lattice intersecting $e$ in a perpendicular manner. Recall that the dual lattice $(\Z^d)^*$ is defined as
    $$
    (\Z^d)^*:=\left\{ \left(x_1+\frac{1}{2}, \dots, x_d + \frac{1}{2}\right) : x \in \Z^d\right\}.
    $$
\item [$\bullet$] We call any collection of plaquettes a \textit{surface}. We shall say that a surface $P$ is \textit{closed} if every $(d-2)$-dimensional face of $P$ is shared by an even number of \mbox{plaquettes in $P$.}
\item [$\bullet$] We say that two plaquettes are adjacent if they share a $(d-2)$-dimensional face. \mbox{A surface $P$} is said to be \textit{connected} if for every pair of plaquettes in $P$ there exists a sequence of pairwise adjacent plaquettes joining them.
\item [$\bullet$] A \textit{contour} is then defined as a connected and closed surface. Two contours $\gamma$ and $\gamma'$ are said to be \textit{incompatible} if they share a $(d-2)$-dimensional face, in which case we denote this fact by $\gamma \not \sim \gamma'$.
\item [$\bullet$] Given a configuration $\sigma$ which is positively aligned outside $\Lambda$ we may assign to it a family $\Gamma_\sigma$ of pairwise compatible contours lying inside $\Lambda^*$, the smallest subset of the dual lattice which contains $\Lambda$.
Indeed, given any such configuration $\sigma$ we may consider the surface $P_\sigma$ consisting of those plaquettes $p(e)$ such that the bond $e$ joins misaligned spins in $\sigma$. This surface $P_\sigma$ is split into maximal connected components, each of which is a contour. If $\Gamma_\sigma$ denotes the collection of these maximal components, we immediately see that $\Gamma_\sigma$ satisfies all desired requirements.
\item [$\bullet$] If $\Lambda \in \B^0_{\Z^d}$ is simply connected\footnote{We say that $\Lambda \subseteq \Z^d$ is \textit{simply connected} if the set $\bigcup_{x \in \Lambda} (x + [-\frac{1}{2},\frac{1}{2}]^d)$ is simply connected in $\R^d$.} then for any given family $\Gamma$ of compatible contours lying inside $\Lambda^*$ there exists a unique \mbox{configuration} $\sigma^+_\Gamma$ such that $\Gamma_{\sigma^+_\Gamma} = \Gamma$. Indeed, the value of $\sigma^+_\Gamma(x)$ for $x \in \Lambda$ can be computed as $(-1)^{n_\Gamma(x)}$, where $n_\Gamma(x)$ denotes the total number of contours in $\Gamma$ around $x$. We call $\sigma^+_\Gamma$ the $(+)$-\textit{alignment} of $\Gamma$.
\end{enumerate}

From the considerations made above it is clear that for any configuration $\sigma \in \{+,-\}^{\Z^d}$ in the support of $\mu^+_\Lambda$ one has that
$$
\mu_{\Lambda}^+(\sigma) = \frac{1}{Z_{\Lambda}^+}e^{-\beta \sum_{\gamma \in \Gamma_\sigma} |\gamma|}
$$ where $|\gamma|$ denotes the total number of plaquettes in $\gamma$. Thus, if we are only interested in understanding the behavior of the system with a positively aligned boundary condition, \mbox{we may restrict} ourselves to the study of the interactions between the different contours, which give rise to a contour model. Before we can introduce it, we make some conventions.

We define the spin set $G$ as the space of possible contour shapes, i.e. without any regard for their position on $(\Z^d)^*$, and identify each contour $\gamma$ with an element $\gamma_x \in (\Z^d)^* \times G$: \mbox{$\gamma$ shall be its shape} whereas $x$ will be the location of the minimal vertex in $\gamma$ when ordered according to the dictionary order in $(\Z^d)^*$. See Figure \ref{fig3} for a possible example. Finally, we define the Ising contours model as the model on $\mathcal{N}((\Z^d)^* \times G)$ with intensity measure
\begin{figure}
	\centering
	\includegraphics[width=8cm]{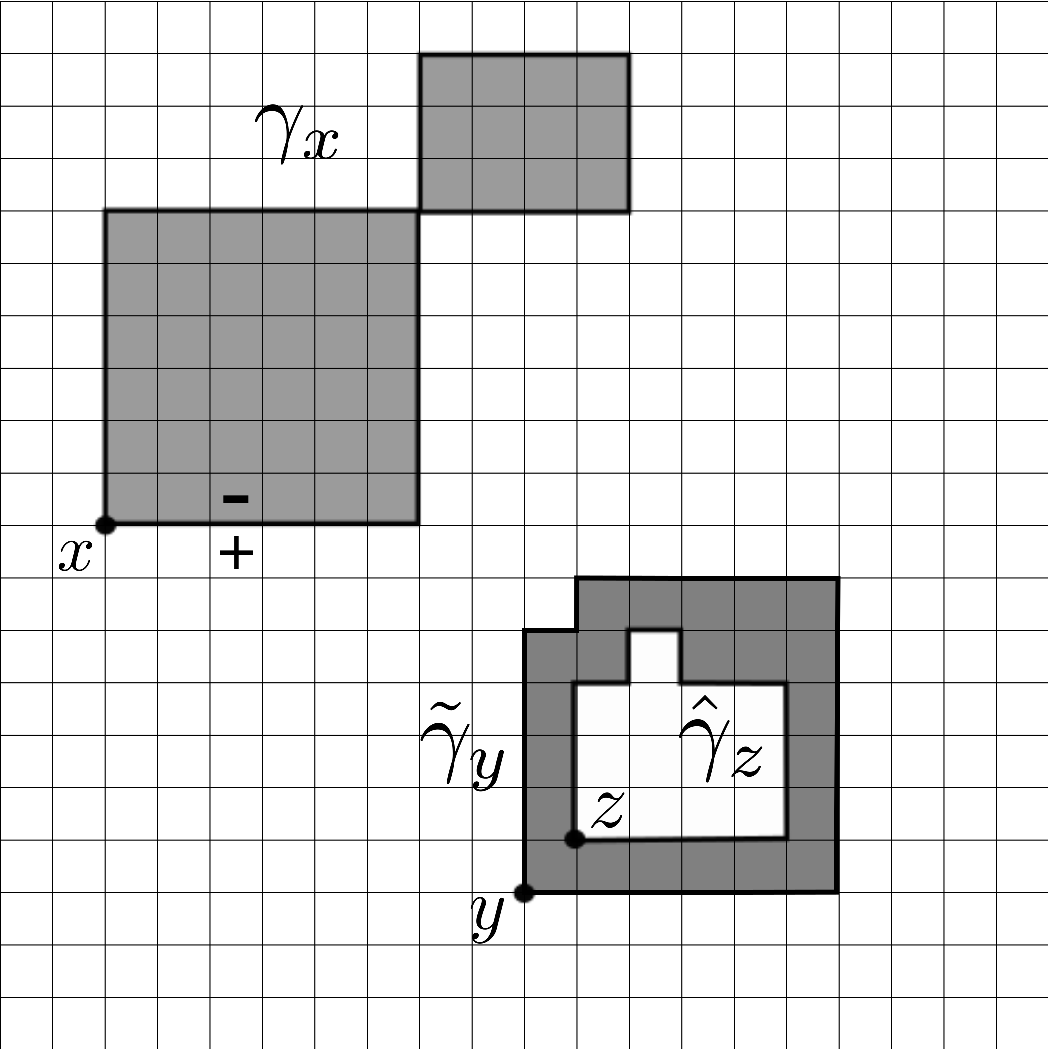}
	\caption{Ising contours in the dual lattice $(\Z^2)^*$ for the $(+)$-boundary condition.}
	\label{fig3}
\end{figure}

\begin{equation}\label{imic}
\nu^\beta(\gamma_x):= e^{-\beta |\gamma_x|}
\end{equation} and Hamiltonian
\begin{equation}\label{hic}
H_{\Lambda|\Gamma'} (\Gamma) = \left\{ \begin{array}{ll} +\infty & \text{ if either $\Gamma$ is incompatible, $\Gamma \not \sim \Gamma'_{\Lambda^c \times G}$ or $\Gamma \not \subseteq \Lambda$}\\ \\ 0 & \text{ otherwise.}\end{array}\right.
\end{equation} where $\Gamma \subseteq \Lambda$ indicates that all contours in $\Gamma$ lie entirely inside $\Lambda$. Notice that it is because of this last restriction in the Hamiltonian that the model fails to satisfy Assumptions \ref{assump}. Indeed, one has that:
\begin{enumerate}
\item [i.] $\Delta E_{\eta} \equiv +\infty$ for all $\eta \in \mathcal{N}((\Z^d)^* \times G)$ since contours contain more than one vertex. Therefore, for each $\gamma_x \in S \times G$ the quantity $\Delta_\eta(\gamma_x)$ fails to represent what it should: the energy cost for the infinite-volume system to add the particle $\gamma_x$ when the current configuration of the system is given by $\eta$. To fix this problem one considers instead the \textit{localized energy leap functions} $\Delta E_{\Lambda|\eta} : S \times G \rightarrow [-\infty,+\infty]$ defined as
    $$
    \Delta E_{\Lambda | \eta} (\gamma_x ) = \left\{ \begin{array}{ll} +\infty & \text{ if either $\gamma_x \not \sim \eta $ or $\gamma_x \not \subseteq \Lambda$}\\ \\ 0 & \text{ otherwise.}\end{array}\right.
    $$ Notice that for every $\eta \in \mathcal{N}((\Z^d)^* \times G)$ and $\gamma_x \in S \times G$ one has that
    \begin{equation}\label{elising}
    \Delta E^*_{\eta}(\gamma_x) := \lim_{\Lambda \nearrow (\Z^d)^*} \Delta E_{\Lambda|\eta} (\gamma_x) = (+\infty)\mathbbm{1}_{\{ \gamma_x \not \sim \eta \}}.
    \end{equation} Thus, for the localized energy leap functions one recovers in the limit as $\Lambda \nearrow (\Z^d)^*$ the correct notion of energy cost. Taking this into consideration, one also redefines the impact relation by incompatibility, i.e. $\tilde{\gamma}_y \rightharpoonup \gamma_x$ if and only if $\tilde{\gamma}_y \not \sim \gamma_x$.

\item [ii.] The consistent Hamiltonian property is not satisfied. As a consequence, one can easily check that the resulting Boltzmann-Gibbs distributions are also inconsistent in the sense of \eqref{consistencia}. In this context, Gibbs measures as defined in Definition \ref{Gibbs2} could fail to exist. However, it is still possible for these contour model to admit ``infinite-volume Boltzmann-Gibbs distributions'' in the sense described by Proposition \ref{limitegibbs}. These limiting measures will be of particular interest to us since, as we will see on Chapter \ref{chapterffg}, their existence implies a phase transition in the Ising model.
\end{enumerate}
Despite the fact that not all conditions on Assumptions \ref{assump} are satisfied, under the minor adjustments suggested above most of the analysis carried out in the next chapter for diluted models will also hold in this context, which is why we decided to include this model among the examples even if it is not a diluted model as we understand them. Another fact worth mentioning is that this is the only given example which is of unbounded local interaction range. Finally, we point out the following crucial relation between the Boltzmann-Gibbs distributions in the original Ising and Ising contours models: for any simply connected $\Lambda \in \B^0_{\Z^d}$ and family $\Gamma$ of compatible contours lying inside $\Lambda^*$ we have
\begin{equation}\label{dualidadising}
\mu^+_\Lambda ( \sigma^+_{\Gamma}) = \mu_{\Lambda^*|\emptyset}(\Gamma).
\end{equation} An analogous contour representation is also available for Boltzmann-Gibbs distributions with negatively aligned boundary condition.
We refer to \cite{PS} for a \mbox{thorough review} of the Ising model and the standard proof of phase transition using its contour representation.

\newpage

\section{Resumen del Capítulo 8}

Definimos en este capítulo la clase de modelos diluidos, que habremos de estudiar en lo que resta de la segunda parte. Esencialmente, un modelo diluido se define a partir de sus distribuciones de Boltzmann-Gibbs, medidas sobre el espacio de configuraciones $\mathcal{N}(S \times G)$ que describen el comportamiento local del modelo en volúmenes finitos sujeto sujeto a distintas condiciones de frontera. Concretamente, fijada una medida $\nu$ en $S \times G$,  dado $\Lambda \subseteq S$ acotado y $\eta \in \mathcal{N}(S \times G)$ se define la distribución de Boltzmann-Gibbs $\mu_{\Lambda|\eta}$ en $\Lambda$ con respecto a la condición de frontera $\eta$ mediante la fórmula
$$
\mu_{\Lambda|\eta} = \omega_{\Lambda|\eta} \times \delta_{\eta_{\Lambda^c}}
$$ donde identificamos $\mathcal{N}(S \times G) = \mathcal{N}( \Lambda \times G) \times \mathcal{N}(\Lambda^c \times G)$ y $\omega^\eta_\Lambda$ es la medida de probabilidad en $\mathcal{N}(\Lambda \times G)$ definida a través de la relación
$$
d\omega_{\Lambda|\eta} = \frac{e^{-H_{\Lambda|\eta}}}{Z_{\Lambda|\eta}} d\pi^\nu_\Lambda.
$$ Aquí $\pi^\nu_\Lambda$ denota la distribución de un proceso de Poisson en $\mathcal{N}(\Lambda \times G)$ con intensidad $\nu_{\Lambda \times G}$, cuyo rol es el de distribuir las partículas dentro de $\Lambda$, mientras que $H_{\Lambda|\eta}$ es lo que se denomina el Hamiltoniano relativo a $\Lambda$ con condición de frontera $\eta$, encargado de asignar un peso a las distintas configuraciones de acuerdo a la interacción que haya entre las partículas que la conforman. De esta manera, si $H$ denota a la familia de Hamiltonianos locales, el par $(\nu, H)$ determina por completo al modelo. Asumimos que $(\nu,H)$ cumple las condiciones dadas en \ref{assump}.

De particular interés en cada modelo son las medidas de Gibbs asociadas al mismo, es decir, las medidas $\mu$ sobre $\mathcal{N}(S \times G)$ que son compatibles con las distribuciones de Boltzmann-Gibbs para cualquier volumen $\Lambda$, i.e.
$$
\mu(\cdot) = \int_{\mathcal{N}(S \times G)} \mu_{\Lambda|\eta}(\cdot) d\mu(\eta).
$$ Las medidas de Gibbs representan los distintos posibles estados de equilibrio globales del modelo. En efecto, mostramos que bajo ciertas condiciones adicionales sobre el par $(\nu,H)$, cualquier límite local de las distribuciones de Boltzmann-Gibbs $\mu_{\Lambda|\eta}$ cuando $\Lambda \nearrow S$ es una medida de Gibbs. En lo que resta, nos interesará estudiar qué condiciones garantizan existencia y unicidad o multiplicidad de medidas de Gibbs.

Por último, culminamos el capítulo mostrando que algunos modelos clásicos, como lo son el modelo de Widom-Rowlinson (tanto en su versión continua como discreta) y el de contornos de Ising, caen dentro de la familia de modelos diluidos. Explicamos en detalle la dualidad entre el modelo de Ising original y su modelo de contornos asociado, que será de vital importancia para los desarrollos del Capítulo 12.
Agregamos además algunos otros ejemplos, como los modelos de Widom-Rowlinson tolerante y el simbiótico, para mostrar la flexibilidad de nuestras definiciones y la amplitud de nuestro marco teórico.

\chapter{The Fernández-Ferrari-Garcia dynamics}\label{chapterffg}

In this chapter we study the Fernández-Ferrari-Garcia dynamics first \mbox{introduced in \cite{FFG1}}. In their work the authors focus on the Ising contours model and show that, for a sufficiently large value of the inverse temperature $\beta$, the infinite-volume Boltzmann-Gibbs distribution of this contour model can be realized as the unique invariant measure of these dynamics. Later on \cite{FFG2}, the authors investigate the possibility of using this new approach as a perfect simulation scheme for Gibbs measures of a number of systems with exclusion in the low density or extreme temperature regime. Our purpose now is to \mbox{carry out} the same endeavor in general for the broader family of diluted models. The main ideas in this chapter are those originally featured in \cite{FFG1}: the majority of the results presented here are direct generalizations of those found there.
Nonetheless, some of the proofs we give in this chapter are different from the ones in the original article, since not all of their proofs can be adapted to continuum models.

Given an intensity measure $\nu$ and a Hamiltonian $H$ satisfying Asssumptions \ref{assump}, \mbox{one could summarize} the essentials of the associated Fernández-Ferrari-García dynamics as follows:
\begin{enumerate}
%\item [$\bullet$] Given an initial animal configuration $\sigma \in \mathcal{N}(S \times G)$ at time $t=0$, each $\gamma_x \in \sigma$ is allowed to live for an independent random exponential time of parameter 1.
\item [$\bullet$] At rate $e^{-\Delta E}$ the birth of new animals is proposed with intensity given by $\nu$.
\item [$\bullet$] Each $\gamma_x$ proposed for birth will be effectively born with probability $e^{-(\Delta E_{\eta}(\gamma_x)-\Delta E)}$, where $\eta$ is the state of the system at the time in which the birth of $\gamma_x$ is proposed.
\item [$\bullet$] Every effectively born animal has an independent random exponential lifetime of parameter 1.
\item [$\bullet$] After its lifetime has expired, each animal dies and vanishes from the configuration.
\end{enumerate}
Our aim in this section is to make this description rigorous as well as to study some of the basic properties enjoyed by these dynamics. We shall begin by introducing the dynamics restricted to a finite volume and then treat the more delicate scenario of infinite volume. All processes defined below shall be subsets of the product space $\mathcal{C}= (S \times G) \times \R \times \R^+$. The elements of $\mathcal{C}$ shall be called \textit{cylinders} since any $(\gamma_x, t, s) \in \mathcal{C}$ can be seen as a cylinder on $S \times \R$ of axis $\{x\}\times [t,t+s]$ and diameter $\gamma$. However, we shall prefer to describe each cylinder $C=(\gamma_x,t,s)\in \mathcal{C}$ in terms of its \textit{basis} $\gamma_x$, its \textit{time of birth} $t$ and its \textit{lifespan} $s$. We shall denote these three features of $C$ by $basis(C)$, $b_C$ and $l_C$, respectively.

Following this line of thought, we can identify any random element $\mathcal{V} \in \C$ with a birth and death process on $S \times G$ by means of its time sections, i.e. if for each $t \in \R$ we define the random particle configuration $\mathcal{V}_t \in \mathcal{N}(S \times G)$ by the formula
$$
\mathcal{V}_t ( \{ \gamma_x\})  = \#\{ C \in \mathcal{V} : basis(C)=\gamma_x \text{ and }b_C \leq t < b_C + l_C \}
$$ for every $\gamma_x \in S \times G$, then $(\mathcal{V}_t)_{t \in \R}$ constitutes a birth and death process on $S \times G$. \mbox{From this point of view} we thus interpret any cylinder $(\gamma_x,t,s)$ as a particle $\gamma$ being born at time $t$ on location $x$ which lives on for a period of length $s$.

\section{Local dynamics}\label{localdynamics}

We begin our formal introduction of the Fernández-Ferrari-García dynamics (referred to as FFG dynamics from now on) by fixing a bounded set $\Lambda \in \mathcal{B}^0_{S}$ and a particle configuration $\eta \in \mathcal{N}(S\times G)$, and defining the dynamics on the finite volume $\Lambda \times G$ with $\eta$ acting as a boundary condition. Though it may seem misleading at first, we choose to build these local dynamics from infinite-volume processes since it will provide a clear and direct way in which to couple all local dynamics together.

Consider a Poisson process $\Pi$ on $\mathcal{C}$ with intensity measure $\phi_\nu = \nu \times e^{-\Delta E} \mathcal{L} \times \mathcal{E}^1$, where $\mathcal{L}$ is the Lebesgue measure on $\R$ and $\mathcal{E}^1$ is the exponential distribution of parameter 1. We shall refer to $\Pi$ as the \textit{free process}, whose time evolution can be described as follows:
%We choose this name due to the fact that $\Pi$ can be seen as a free birth and death process. Indeed, a cylinder $(\gamma_x,t,s) \in \mathcal{C}$ can be interpreted as an animal $\gamma$ being born on the site $x$ at time $t$ and with lifetime $s$. The dynamics of this free process could be informally described as follows:
\begin{enumerate}
\item [$\bullet$] At rate $e^{-\Delta E}$ animals are born with intensity given by $\nu$, regardless of the impact preexisting animals may have upon them.
\item [$\bullet$] Each animal has an independent random exponential lifetime of parameter 1.
\item [$\bullet$] After its lifetime has expired, each animal dies and vanishes from the configuration.
\end{enumerate}
This free process constitutes, as its name suggests, a stationary non-interacting birth and death process whose invariant measure is $\pi^\nu$. To be able to define the dynamics we need to add an additional component to $\Pi$: to each cylinder in $\Pi$ we will attach an independent random variable uniformly distributed on $[0,1]$, which will be called its \textit{flag}. Each flag will be used to determine the success of the associated cylinder's attempted birth in the dynamics. One way in which to attach these flags would be to replace $\Pi$ with the marked Poisson process $\overline{\Pi}$ on $\mathcal{C} \times [0,1]$ with intensity measure $\overline{\phi}_\nu :=\phi_\nu \times \mathcal{L}_{[0,1]}$. Thus, elements of $\overline{\Pi}$ can be seen as cylinders in $\Pi$ together with their respective independent flags in $[0,1]$. For any given $(\gamma_x,t,s) \in \Pi$ we shall denote its corresponding flag by $F(\gamma_x,t,s)$.\footnote{Notice that for each $(\gamma_x,t,s) \in \Pi$ its flag is the unique $u \
in [0,1]$ such that $(\gamma_x,t,s,u) \in \overline{\Pi}$. Thus, there is no ambiguity in this choice of notation.} Finally, recalling the identification $\mathcal{N}(S \times G) = \mathcal{N}(\Lambda \times G) \times \mathcal{N}(\Lambda^c \times G)$ we define the local FFG process $\mathcal{K}^{\Lambda|\eta}$ on $\Lambda \times G$ with boundary condition $\eta$ by the formula
\begin{equation}\label{keptformula1}
\mathcal{K}^{\Lambda|\eta} = \{ (\gamma_x,t,s) \in \Pi_{\Lambda \times G} : F(\gamma_x,t,s) < M(\gamma_x | \mathcal{K}^{\Lambda|\eta}_{t^-}) \} \times \{ (\gamma_x,t,s) \in \mathcal{C}: \gamma_x \in \eta_{\Lambda^c \times G}\}
\end{equation} where for $\gamma_x \in S \times G$ and $\xi \in \mathcal{N}(S\times G)$ we use the notation $M(\gamma_x|\xi):=e^{-(\Delta E_{\xi}(\gamma_x)-\Delta E)}$ and
$\Pi_{\Lambda \times G}$ denotes the restriction of $\Pi$ to $(\Lambda \times G) \times \R \times \R^+$. In other words, $\mathcal{K}^{\Lambda|\eta}$ is the process obtained as a thinning of the free process inside $\Lambda$ given by the rule on \eqref{keptformula1} with the addition of a boundary condition $\eta$ outside $\Lambda$ which must be kept fixed for all times, i.e. $(\mathcal{K}^{\Lambda|\eta}_t)_{\Lambda^c \times G} = \eta_{\Lambda^c \times G}$ for every $t \in \R$. Notice that the self-referential nature of the thinning rule in \eqref{keptformula1} could lead to $\mathcal{K}^{\Lambda|\eta}$ not being well defined. Indeed, let us introduce some definitions that will help us give further details on this matter.

\begin{defi}\label{defiances}$\,$
\begin{enumerate}
\item [$\bullet$] Given $C, \tilde{C} \in \C$ we say that $\tilde{C}$ is a \textit{first generation ancestor} of $C$ and write $\tilde{C} \rightharpoonup C$ whenever
$$
basis(\tilde{C}) \rightharpoonup basis(C)\hspace{1.5cm} \text{ and }\hspace{1.5cm} b_{\tilde{C}} < b_C < b_{\tilde{C}} + l_{\tilde{C}}.
$$ We shall denote the set of all first generation ancestors of a given $C \in \C$ by $\mathcal{P}(C)$.
\item [$\bullet$] For $C \in \C$ we define $\A_1(C):=\Pi_{\mathcal{P}(C)}$ and for $n \in \N$ we set
$$
\A_{n+1}(C)= \bigcup_{\tilde{C} \in \mathcal{A}_n(C)} \A_1(\tilde{C}).
$$ We define the \textit{clan of ancestors} of $C$ in $\Pi$ as
$$
\A(C):= \bigcup_{n \in \N} \A_n(C).
$$ Furthermore, for $\Lambda \in \B^0_S$ and $n \in \N$ we define the $n$-th generation of ancestors of $C$ restricted to $\Lambda$ as
$$
\A_{n}^\Lambda(C) = \{ \tilde{C} \in \A_n(C) : basis(\tilde{C}) \in \Lambda \times G\}.
$$ We define $\A^\Lambda(C)$, the clan of ancestors of $C$ restricted to $\Lambda$, in an analogous fashion.
\item [$\bullet$] For $t \in \R$ and $\Lambda \in \B^0_S$ let us define the \textit{clan of ancestors of $\Lambda \times G$ at time $t$} as
$$
\mathcal{A}^t(\Lambda \times G) := \bigcup_{n \in \N_0} \mathcal{A}^t_{n}(\Lambda \times G)
$$ where $\mathcal{A}^t_0(\Lambda \times G) := \{ C \in \Pi : basis(C) \in \Lambda \times G \text{ , } b_C \leq t < b_C + l_C \}$ and for $n \in \N$
$$
\mathcal{A}^t_n(\Lambda \times G) := \bigcup_{C \in \mathcal{A}^t_0 (\Lambda \times G)} \mathcal{A}_n(C).
$$ For any $\Delta \in \B^0_S$ such that $\Lambda \subseteq \Delta$ we define $\A^{t,\Delta}(\Lambda \times G)$, the clan of ancestors of $\Lambda \times G$ at time $t$ restricted to $\Delta \times G$, in the same manner as above.
\end{enumerate}
\end{defi}

Having defined the notion of ancestors in the dynamics, we now return to discuss the good definition of $\mathcal{K}^{\Lambda|\eta}$. Notice that if we wish to determine whether a given cylinder $C=(\gamma_x,t,s) \in \Pi_{\Lambda \times G}$ belongs to $\mathcal{K}^{\Lambda|\eta}$ or not then first we need to specify the configuration $\mathcal{K}^{\Lambda|\eta}_{t^-}$ in order to evaluate whether the condition on \eqref{keptformula1} is satisfied. To be more accurate, due to Assumptions \ref{assump} we will only need to specify $\mathcal{K}^{\Lambda|\eta}_{t^-}$ inside the set $I( \{\gamma_x\})$. However, since $\mathcal{K}^{\Lambda|\eta}$ is known outside $\Lambda$ as it coincides with $\eta$ for all times, it remains to specify $\mathcal{K}^{\Lambda|\eta}_{t^-}$ inside $I( \{\gamma_x\}) \cap (\Lambda \times G)$. Therefore, recalling Definition \ref{defiances}, we see that to determine the fate of $C$ we must first determine the fate of all its first generation ancestors with bases in $\Lambda \times G$, i.
e. cylinders in $\A^\Lambda_1(C)$. But this task itself involves determining the fate of a second generation of ancestors of $C$, those cylinders with bases in $\Lambda \times G$ being ancestors to cylinders in $\A^\Lambda_1(C)$. In general, to determine if $C$ belongs to $\mathcal{K}^{\Lambda|\eta}$ we must study the fate of every cylinder in $\A^{\Lambda}(C)$, the clan of ancestors of $C$ restricted to $\Lambda$. If $\A^{\Lambda}(C)$ were to span over an infinite number of generations then it may be impossible to decide whether to keep $C$ or not and, therefore, $\mathcal{K}^{\Lambda|\eta}$ may not be well defined in this situation. On the other hand, if we were able to guarantee that for every cylinder $C \in \Pi_{\Lambda \times G}$ the restricted clan $\A^\Lambda(C)$ spans only over a finite number of generations then $\mathcal{K}^{\Lambda|\eta}$ would be well defined. Indeed, since $M(\gamma_x | \mathcal{K}^\sigma_{t^-}) = M(\gamma_x | \eta_{\Lambda^c \times G})$ for any cylinder $(\gamma_x, t,s) \in \
Pi_{\Lambda \times G}$ with no ancestors preceding it, we have that the fate of every cylinder in the last generation of ancestors restricted to $\Lambda$ of a given cylinder $C$ can be decided upon inspecting their respective flags (and nothing else) and thus it will also be possible to determine the fate of all their descendants, including $C$. More precisely, take $C \in \Pi_{\Lambda \times G}$ and let $N$ be a nonnegative integer such that $\mathcal{A}^\Lambda_n(C) = \emptyset$ for every $n > N$. If we set
$$
K_N^\Lambda(C):= \{ (\tilde{\gamma}_y,r,l) \in \mathcal{A}^\Lambda_N(C) : F(\tilde{\gamma}_y,r)< M(\tilde{\gamma}_y|\eta_{\Lambda^c \times G})\}
$$ and for $1 \leq i \leq N-1$ inductively define
$$
K_i^\Lambda(C) = K_{i+1}^\Lambda(C) \cup \{ (\tilde{\gamma}_y,r,l) \in \mathcal{A}^\Lambda_i(C) : F(\tilde{\gamma}_y,r,l)< M(\tilde{\gamma}_y|K^\Lambda_{i+1}(C))\}
$$ then the cylinder $C \in \Pi_{\Lambda \times G}$ will be kept if and only if
$$
F(C) < M(\tilde{\gamma}_y|K^\Lambda_1(C)).
$$
In other words, to decide if a cylinder $C \in \Pi_{\Lambda \times G}$ is kept one could conduct the following procedure:
\begin{enumerate}
\item [i.] If $C$ has no first generation ancestors, i.e. $\mathcal{A}^\Lambda_1(C)=\emptyset$, then the value of its flag $u$ alone will determine whether $C$ is kept or not. Otherwise, the value of $u$ will decide if $C$ is kept once we determine the fate of all the first generation ancestors of $C$.
\item [ii.] To decide whether any given first generation ancestor $\tilde{C} \in \mathcal{A}^\Lambda_1(C)$ is kept, one must repeat step (i) for $\tilde{C}$ instead of $C$.
\item [iii.] Since the clan of ancestors of $C$ restricted to $\Lambda$ possesses only a finite number of generations, one must go backwards in time and examine a previous generation of ancestors only a finite amount of times (at most $N$ times) so that it is ultimately possible to determine whether $C$ is kept.
\end{enumerate} Therefore, we are left to answer the question of under which conditions do the clans of ancestors restricted to $\Lambda$ possess only a finite amount of generations. Fortunately, under Assumptions \ref{assump} this is always the case. This is the content of the following proposition.

\begin{prop}\label{localfinit} For every $\Lambda \in \B^0_S$ we have that $\A^{t,\Lambda}(\Lambda \times G)$ is finite for all $t \in \R$.
\end{prop}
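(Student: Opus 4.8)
The plan is to show that $\mathcal{A}^{t,\Lambda}(\Lambda \times G)$ is finite almost surely by a first-moment (expectation) computation combined with the Borel--Cantelli lemma, controlling the expected number of cylinders in each successive generation of ancestors. The key quantitative input from Assumptions \ref{assump} is the \emph{integrable local interaction range} condition: for every $\Lambda' \in \mathcal{B}^0_S$ we have $\nu(I(\Lambda' \times G)) < +\infty$. Together with the fact that the free process $\Pi$ has intensity $\phi_\nu = \nu \times e^{-\Delta E}\mathcal{L} \times \mathcal{E}^1$, this lets us bound the expected number of first-generation ancestors of any fixed cylinder.

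First I would reduce to a single cylinder: since $\mathcal{A}^t_0(\Lambda \times G)$ — the set of cylinders in $\Pi$ with basis in $\Lambda \times G$ alive at time $t$ — is itself a.s.\ finite (it is governed by a Poisson process whose intensity of the relevant slab $(\Lambda \times G) \times \R \times \R^+$ restricted to cylinders alive at $t$ is $\nu(\Lambda \times G)$, finite by locally finite allocation), it suffices to show that for each fixed cylinder $C \in \Pi$ the restricted clan $\mathcal{A}^\Lambda(C)$ has only finitely many nonempty generations a.s. Next I would estimate $\E[\#\mathcal{A}_1^\Lambda(C)]$: a first-generation ancestor $\tilde C$ of $C$ with $basis(\tilde C) \in \Lambda \times G$ must have $basis(\tilde C) \rightharpoonup basis(C)$, hence $basis(\tilde C) \in I(\{basis(C)\}) \cap (\Lambda \times G)$, and must have birth time $b_{\tilde C} \in (b_C - l_{\tilde C}, b_C)$ with lifespan $l_{\tilde C}$ exceeding $b_C - b_{\tilde C}$. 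Using the Poisson intensity and integrating out the lifespan (the $\mathcal{E}^1$ factor gives $\int_0^\infty \P(l > r)\,dr = 1$ after the time integral collapses), one gets $\E[\#\mathcal{A}_1^\Lambda(C)] \le e^{-\Delta E}\,\nu\big(I(\{basis(C)\}) \cap (\Lambda \times G)\big) \le e^{-\Delta E}\,\nu(I(\Lambda \times G)) =: \kappa_\Lambda < +\infty$. Crucially this bound is uniform over $C$ and over the choice of base point in $\Lambda \times G$.

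Then I would iterate. By the branching structure of Definition \ref{defiances} and the independence properties of the Poisson process (ancestors of distinct cylinders are built from disjoint regions of $\mathcal{C}$, or one uses the strong Markov / restriction property of Poisson processes along the time direction), one obtains $\E[\#\mathcal{A}_n^\Lambda(C)] \le \kappa_\Lambda^{\,n}$. However, $\kappa_\Lambda$ need not be $< 1$, so this alone does not give summability. To fix this I would instead exploit the time-ordering: every cylinder in $\mathcal{A}_n^\Lambda(C)$ is born strictly before $C$, and more importantly the birth times along any ancestral chain are strictly decreasing, so the whole clan $\mathcal{A}^{t,\Lambda}(\Lambda \times G)$ is contained in the cylinders of $\Pi$ born in a finite backward time window only if one can show chains cannot regress indefinitely. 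The cleanest route is a direct domination argument: compare the generation sizes with a subcritical branching process by using that, restricted to a bounded spatial region $\Lambda \times G$ and looking backward from time $t$, the expected offspring accrued \emph{per unit backward time} is finite, and a cylinder of lifespan $s$ can only have ancestors in a time interval of length at most $s$; since lifespans are exponential (hence a.s.\ finite) and the number of cylinders of $\Pi_{\Lambda \times G}$ in any bounded space-time region is a.s.\ finite, a supermartingale / renewal argument shows the backward exploration terminates. Concretely, I would define $T_n$ = infimum of birth times over generation $n$, show $T_n$ is strictly decreasing, and argue $T_n \to -\infty$ on the event of non-termination, then show this forces infinitely many cylinders of $\Pi_{\Lambda \times G}$ to have bases in the fixed set $I(\Lambda \times G)$ and lifespans long enough to overlap a fixed bounded interval — an event of probability zero by the Poisson property and $\nu(I(\Lambda \times G)) < \infty$ together with $\E[l] = 1$.

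**Main obstacle.** The genuinely delicate point is handling the self-referential, non-compactly-supported nature of the backward exploration: a priori a clan could, through a long chain of short-lived but long-reaching ancestors, extend arbitrarily far back in time while staying in $\Lambda$. Ruling this out requires carefully combining the spatial finiteness ($\nu(I(\Lambda \times G)) < \infty$) with the temporal structure (births strictly precede, lifespans integrable), and the most robust way is to dominate the reversed dynamics by a subcritical or at worst a time-inhomogeneous branching/renewal process whose total progeny is a.s.\ finite — this is the step where the proof in \cite{FFG1} for contours must be adapted, since there one uses percolation estimates specific to that model, whereas here the general Assumptions \ref{assump} must carry the argument. I expect the write-up to isolate a lemma stating that the expected total number of cylinders of $\Pi_{\Lambda \times G}$ reachable by backward ancestry from a fixed cylinder is finite, proved by summing a geometric-type series after a suitable time-change or by a Gronwall-type estimate on $\E[\#\mathcal{A}_{\le n}^\Lambda(C)]$.
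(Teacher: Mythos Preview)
Your branching/first-moment route is the machinery the paper deploys for the \emph{infinite-volume} Propositions~\ref{finit1} and~\ref{finit2}, where a diluteness hypothesis forces subcriticality. For the present \emph{local} statement no such hypothesis is available, and you correctly note that $\kappa_\Lambda$ need not be $<1$, so this line does not close. Your fallback argument also has a gap: you claim that on non-termination $T_n\to-\infty$ forces infinitely many cylinders of $\Pi_{\Lambda\times G}$ to overlap a \emph{fixed} bounded time interval, but nothing in the setup produces such an interval --- the ancestry chain could perfectly well drift to $-\infty$ with each cylinder alive only near its own birth time. (The complementary case where the $T_n$ accumulate at a finite value is fine: that would put infinitely many births of $\Pi_{\Lambda\times G}$ in a bounded space-time box, impossible for a Poisson process with $\nu(\Lambda\times G)<\infty$.)

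The paper's proof is much shorter and uses a regeneration idea you did not hit on. Since $\nu(\Lambda\times G)<\infty$, the stationary free process satisfies $\pi^\nu(\{\xi:\xi(\Lambda\times G)=0\})=e^{-\nu(\Lambda\times G)}>0$, so the set of times $s$ at which $\Pi_s(\Lambda\times G)=0$ is a.s.\ unbounded in both directions. Pick such an empty time $s<t$. No ancestry chain restricted to $\Lambda$ can cross $s$: an ancestor with basis in $\Lambda\times G$ born before $s$ would have to be alive at the birth of its descendant (born after $s$), hence alive at $s$, contradicting $\Pi_s(\Lambda\times G)=0$. Therefore $\mathcal{A}^{t,\Lambda}(\Lambda\times G)$ is contained in $\Pi_{(\Lambda\times G)\times[s,t]\times\R^+}$, a Poisson process on a set of finite $\phi_\nu$-measure, hence a.s.\ finite. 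This bypasses entirely the need to control generation sizes or iterate any expectation bound.
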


\begin{proof}Since $\Pi$ is a stationary process whose invariant measure $\pi^\nu$ satisfies
$$
\pi^\nu( \{ \xi \in \mathcal{N}(S \times G) : \xi( \Lambda \times G) = 0 \} ) = e^{-\nu(\Lambda \times G)} > 0
$$ we have that the entrance times $(t_i(\Lambda))_{i \in \Z}$ to the set $\{ \xi \in \mathcal{N}(S \times G) : \xi( \Lambda \times G) = 0 \}$ are well defined \mbox{(i.e. finite almost surely)} and satisfy $t_i(\Lambda) \rightarrow \pm \infty$ as $i \rightarrow \pm \infty$. In particular, for every $t \in \R$ there exists $i_0 \in \Z$ such that $t_{i_0-1}(\Lambda) \leq t < t_{i_0}(\Lambda)$. Since $\Pi_{t_i(\Lambda)}(\Lambda \times G) = 0$ for each $i \in \Z$ by definition, this implies that there exist (random) $k < r \in \Z$ such that
\begin{equation}\label{contan}
\A^{t,\Lambda}(\Lambda \times G) \subseteq \Pi_{(\Lambda \times G) \times [t,t_{i_0}] \times \R^+} \subseteq \Pi_{(\Lambda \times G) \times [k,r] \times \R^+}.
\end{equation} Since for every $k < r \in \Z$ we have $\phi((\Lambda \times G) \times [k,r] \times \R^+) = (r-k)\nu(\Lambda \times G) < +\infty$ by Assumptions \ref{assump}, with probability one we have that for every $k < r \in \Z$ the random particle configurations $\Pi_{(\Lambda \times G) \times [k,r] \times \R^+}$ are all finite. By \eqref{contan} this concludes the proof.
\end{proof}

By the discussion above, Proposition \ref{localfinit} yields that for $\Lambda \in \B^0_S$ and $\eta \in \mathcal{N}(S\times G)$ the process $\mathcal{K}^{\Lambda|\eta}$ is well defined and constitutes an interacting birth and death process. Moreover, $\mathcal{K}^{\Lambda|\eta}$ is stationary due to the time translational invariance of its construction and that of $\Pi$.

\section{Infinite-volume dynamics}

\subsection{Stationary dynamics}

As stated before, some complications arise when lifting the restriction of finite volume in the dynamics. The procedure to define the unrestricted FFG process in the entire space $S \times G$ is completely analogous to that of the finite volume case: it suffices to take $\Lambda=S$ everywhere in the previous section. Thus, the FFG process $\mathcal{K}$ on the whole space $S \times G$ is defined by the formula
\begin{equation}\label{keptformula2}
\mathcal{K} = \{ (\gamma_x,t,s) \in \Pi : F(\gamma_x,t,s) \leq M(\gamma_x | \mathcal{K}_{t^-}) \}.
\end{equation}
Following the analysis of the previous section in this context, we see that in order for $\mathcal{K}$ to be well defined we must guarantee that for every cylinder $C \in \Pi$ its clan of ancestors $\A(C)$ spans only over a finite number of generations. Notice that the argument used in Proposition \ref{localfinit} will not go through this time as in general we have $\nu(S \times G)=+\infty$. Therefore, we will need to impose additional conditions on both $\nu$ and $H$ besides those on Assumptions \ref{assump} to guarantee that $\mathcal{K}$ is well defined in this case. This is the content of the next proposition.

\begin{prop}\label{finit1} If there exists a measurable function $q: S \times G \to \R$ satisfying $\inf_{\gamma_x \in S \times G} q(\gamma_x) \geq 1$ and such that
\begin{equation}\label{hdiluted}
\alpha_q:=\sup_{\gamma_x \in S\times G} \left[\frac{e^{-\Delta E}}{q(\gamma_x)} \int_{I(\{\gamma_x\})} q(\tilde{\gamma}_y)d\nu(\tilde{\gamma}_y)\right] < 1
\end{equation} then $\mathcal{A}^t(\Lambda \times G)$ is finite for every $t \in \R$ and $\Lambda \in \B^0_S$ almost surely.
\end{prop}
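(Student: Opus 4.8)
The plan is to dominate the clan of ancestors by a subcritical branching structure whose criticality parameter is exactly $\alpha_q$, the $q$-weighting of \eqref{hdiluted} serving as a superharmonicity condition for the mean offspring operator. First I would reduce to a single cylinder. Since $\mathcal{A}^t(\Lambda\times G)=\mathcal{A}^t_0(\Lambda\times G)\cup\bigcup_{C\in\mathcal{A}^t_0(\Lambda\times G)}\mathcal{A}(C)$ by Definition \ref{defiances}, and since $\mathcal{A}^t_0(\Lambda\times G)$ is (by the Campbell formula applied to $\Pi$, integrating the factor $e^{-\Delta E}\mathcal{L}\otimes\mathcal{E}^1$ over the birth–lifespan pairs that straddle $t$) a point set of finite mean cardinality $e^{-\Delta E}\nu(\Lambda\times G)<+\infty$, it is almost surely finite. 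Hence $\mathcal{A}^t(\Lambda\times G)$ is a finite union, and it suffices to prove that for a fixed cylinder $C_*\in\mathcal{C}$ the clan $\mathcal{A}(C_*)$ built from $\Pi\cup\{C_*\}$ is almost surely finite; the statement as phrased then follows by the Mecke–Slivnyak formula, summing over the (a.s. finitely many) cylinders of generation $0$.

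Next I would introduce the $q$-weighted generation sizes. For fixed $C_*$ with basis $\gamma_x$, set $\mathcal{A}_0(C_*)=\{C_*\}$ and $W_n:=\sum_{C\in\mathcal{A}_n(C_*)}q(\mathrm{basis}(C))$, so $W_0=q(\gamma_x)\geq 1$. The one computation that drives everything is the following: for any cylinder $\widetilde C$ with basis $\widetilde\gamma_z$ and birth time $b_{\widetilde C}$, the set $\mathcal{P}(\widetilde C)$ of its first-generation ancestors consists of the cylinders of $\Pi$ with basis in $I(\{\widetilde\gamma_z\})$ whose birth–lifespan pair $(t',s')$ satisfies $t'<b_{\widetilde C}<t'+s'$; integrating $e^{-\Delta E}\,dt'\otimes e^{-s'}\,ds'$ over that region gives exactly the factor $e^{-\Delta E}$, so the $q$-weighted $\phi_\nu$-intensity of $\mathcal{P}(\widetilde C)$ equals $e^{-\Delta E}\int_{I(\{\widetilde\gamma_z\})}q(\widetilde\gamma_y)\,d\nu(\widetilde\gamma_y)$, which by \eqref{hdiluted} is at most $\alpha_q\,q(\widetilde\gamma_z)$.

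I would then establish $\mathbb{E}[W_{n+1}]\leq\alpha_q\,\mathbb{E}[W_n]$, whence $\mathbb{E}[W_n]\leq\alpha_q^n\,q(\gamma_x)$ and $\mathbb{E}\big[\sum_{n\geq 0}W_n\big]\leq q(\gamma_x)/(1-\alpha_q)<+\infty$; since $q\geq 1$ this gives $\sum_{n\geq 0}\#\mathcal{A}_n(C_*)\leq\sum_{n\geq 0}W_n<+\infty$ almost surely, so the clan is finite. For the inductive bound one uses that every cylinder of $\mathcal{A}_n(C_*)$ is linked to $C_*$ by a chain of $\rightharpoonup$-ancestors with strictly increasing birth times, so $\mathcal{A}_n(C_*)$ is measurable with respect to $C_*$ together with $\Pi$ restricted to birth times $\geq\underline b_n:=\min\{b_C:C\in\mathcal{A}_n(C_*)\}$, whereas the first-generation ancestors of any $\widetilde C\in\mathcal{A}_n(C_*)$ are born strictly before $b_{\widetilde C}\geq\underline b_n$. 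Exploring the cylinders of generation $n$ in decreasing order of birth time and revealing, for each, the fresh Poisson points just below its birth time, the independence of $\Pi$ over disjoint time windows lets one apply Mecke's formula to obtain $\mathbb{E}[W_{n+1}\mid\text{generation }n]\leq\sum_{\widetilde C\in\mathcal{A}_n(C_*)}e^{-\Delta E}\int_{I(\{\mathrm{basis}(\widetilde C)\})}q\,d\nu\leq\alpha_q W_n$. Equivalently, one may couple $(\mathcal{A}_n(C_*))_{n}$ to a multitype Galton–Watson process with type space $S\times G$ in which an individual of type $\widetilde\gamma_z$ produces offspring as a Poisson process on $I(\{\widetilde\gamma_z\})$ of intensity $e^{-\Delta E}\nu$, condition \eqref{hdiluted} being precisely subcriticality.

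The main obstacle is making this conditional/branching estimate rigorous in the face of the self-referential, space–time-entangled definition of the clan of ancestors: one must choose the exploration filtration (or the coupling to the branching process) so that the Poisson points feeding each new generation are genuinely fresh and independent of what has already been revealed, and so that Mecke's formula yields the clean constant $\alpha_q$ with no double counting (the union defining $\mathcal{A}_{n+1}$ may overlap, which only helps, giving an inequality). Once this bookkeeping is set up, the proof collapses to the temporal computation giving the factor $e^{-\Delta E}$ and the definition of $\alpha_q$.
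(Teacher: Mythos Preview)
Your proposal is correct and takes essentially the same route as the paper: dominate the clan of ancestors by a branching process whose $q$-weighted mean offspring is bounded by $\alpha_q$, use the temporal integral $\int_{-\infty}^{b_C}\int_{b_C-t'}^{\infty}e^{-s'}\,ds'\,dt'=1$ to produce the factor $e^{-\Delta E}$, and conclude by summing the geometric series. Your ``equivalently, one may couple to a multitype Galton--Watson process'' is exactly what the paper does; it packages the coupling as a separate Domination Lemma, building independent copies $\Pi^{(n,k)}$ of $\Pi$ to supply fresh Poisson points wherever the ancestor regions $\mathcal{P}(C)$ overlap, so that the dominating family $(\mathcal{B}_n)_n$ is a genuine branching process with conditionally Poisson offspring. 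This cleanly resolves the bookkeeping issue you flag as the main obstacle, whereas your first alternative (time-ordered exploration with the filtration of $\Pi$ restricted to birth times $\geq\underline b_n$) is less straightforward than you suggest: $\underline b_n$ is a random time defined through the very generations you are trying to reveal, and the $\mathcal{P}$-regions of different generation-$n$ cylinders can overlap in space--time, so the ``fresh Poisson points'' are not obviously independent of what has been revealed without the auxiliary processes.

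One point you omit: the statement asserts finiteness \emph{for every} $t\in\R$ and $\Lambda\in\B^0_S$ on a single full-probability event. The paper handles this by a short countability argument at the end: take $\Lambda_n\nearrow S$ and observe that any $\mathcal{A}^t(\Lambda\times G)$ is contained in some $\mathcal{A}^r(\Lambda_{n_0}\times G)$ with $r\in\mathbb{Q}$, $n_0\in\N$, reducing to countably many fixed pairs.
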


\begin{defi} Whenever \eqref{hdiluted} holds we say that $\nu$ and $H$ satisfy the (F1)-diluteness condition with size function $q$ and that the associated model is \textit{heavily diluted}.
\end{defi}

Thus, whenever dealing with a heavily diluted model we have that $\mathcal{K}$ is well defined and constitutes a stationary interacting birth and death process on the entire space $S \times G$. We postpone the proof of Proposition \ref{finit1} until Section \ref{ancestors}.

\subsection{Dynamics on a bounded time window}\label{forwarddynamics}

One can wonder whether it is possible that, upon relaxing the conditions on Proposition \ref{finit1}, the FFG process remains well defined on the infinite volume but perhaps only for a bounded time window,
i.e. if given $t_1 < t_2 \in \R$ we replace $\phi$ in the construction above by
$$
\phi_{[t_1,t_2]}= \nu \times e^{-\Delta E} \mathcal{L}_{[t_1,t_2]} \times \mathcal{E}^1.
$$
This will occur if for every $\Lambda \in \B^0_{S}$ one has that
\begin{equation}\label{clanbounded}
\A^{[t_1,t_2]}(\Lambda \times G) := \{ C \in \A^{t_2}(\Lambda \times G) : birth(C) \geq t_1 \}
\end{equation} spans only over a finite number of generations. The next proposition shows that this is the case whenever the coefficient $\alpha_q$ defined on \eqref{hdiluted} is finite.

\begin{prop}\label{finit2} If there exists a measurable function $q: S \times G \to \R$ satisfying $\inf_{\gamma_x \in S \times G} q(\gamma_x) \geq 1$ and such that
\begin{equation}\label{wdiluted}
\alpha_q:=\sup_{\gamma_x \in S\times G} \left[\frac{e^{-\Delta E}}{q(\gamma_x)} \int_{I(\{\gamma_x\})} q(\tilde{\gamma}_y)d\nu(\tilde{\gamma}_y)\right] < +\infty
\end{equation} then $\mathcal{A}^{[t_1,t_2]}(\Lambda \times G)$ is finite for every $t_1 < t_2 \in \R$ and $\Lambda \in \B^0_{S}$ almost surely.
\end{prop}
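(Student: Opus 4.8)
## Proof proposal for Proposition \ref{finit2}

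The plan is to control the growth of the clan of ancestors $\mathcal{A}^{[t_1,t_2]}(\Lambda \times G)$ by a branching-process comparison, exactly analogous to the standard subcriticality argument for loss networks, but with the size function $q$ providing the right weighting. First I would fix $t_1 < t_2$ and $\Lambda \in \mathcal{B}^0_S$. The key point is that each cylinder $C$ in the free process $\Pi_{[t_1,t_2]}$ has a set of potential first-generation ancestors $\mathcal{P}(C)$, and by the definition of the impact relation $\rightharpoonup$ and of $I(\{\gamma_x\})$, the bases of those ancestors must lie in $I(\{basis(C)\})$. Moreover, conditionally on $C=(\gamma_x,t,s)$, the cylinders of $\Pi$ that are first-generation ancestors of $C$ are those with basis in $I(\{\gamma_x\})$ whose birth-death interval straddles $t$; since $\Pi$ is Poisson with intensity $\phi_\nu = \nu \times e^{-\Delta E}\mathcal{L}\times \mathcal{E}^1$, the expected number of such ancestors weighted by $q$ is
\begin{equation*}
e^{-\Delta E}\int_{I(\{\gamma_x\})} q(\tilde\gamma_y)\,d\nu(\tilde\gamma_y) \;\le\; \alpha_q\, q(\gamma_x),
\end{equation*}
by \eqref{wdiluted}. (Here one uses that the expected total lifetime contribution integrates to $1$ against $\mathcal{E}^1$, together with the Mecke/Campbell formula for Poisson processes; the factor from the $\mathcal{L}$-marginal is absorbed because a birth at time $t$ with lifetime $s$ covers a unit-length time window in expectation.)

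Next I would set up the comparison rigorously. Define, for $n \ge 0$, the weighted size of the $n$-th generation restricted to births after $t_1$,
\begin{equation*}
W_n \;:=\; \sum_{C \in \mathcal{A}^{[t_1,t_2]}_n(\Lambda\times G)} q(basis(C)),
\end{equation*}
where $\mathcal{A}^{[t_1,t_2]}_n$ denotes the $n$-th generation of the clan defined in \eqref{clanbounded}, and $\mathcal{A}^{[t_1,t_2]}_0(\Lambda\times G)$ consists of the cylinders alive at time $t_2$ with basis in $\Lambda\times G$ and birth $\ge t_1$. The zeroth generation has finite weighted size in expectation: $\mathbb{E}(W_0) \le \left(\sup_{\gamma_x}q(\gamma_x)\right)$-weighted Poisson count over $(\Lambda\times G)\times[t_1,t_2]\times\R^+$, which is finite by the locally finite allocation assumption — more carefully, $\mathbb{E}(W_0) = e^{-\Delta E}\int_{\Lambda\times G} q\,d\nu \cdot (t_2-t_1)$ after integrating out lifetimes, hence finite by Assumption \ref{assump}(1) and integrability of $q$ over $\Lambda\times G$ (which, if $q$ is not assumed $\nu$-integrable on bounded sets, can be arranged since we only ever need $q$ on $I(\Lambda\times G)$ and iterating \eqref{wdiluted} from a bounded seed keeps everything finite). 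Then, conditioning on generation $n$ and applying the ancestor-counting bound above to each cylinder independently (the relevant portions of $\Pi$ are disjoint, or one uses the strong Markov / restriction property of Poisson processes), one gets
\begin{equation*}
\mathbb{E}(W_{n+1}\mid W_n) \;\le\; \alpha_q\, W_n,
\qquad\text{hence}\qquad
\mathbb{E}(W_n) \;\le\; \alpha_q^{\,n}\,\mathbb{E}(W_0).
\end{equation*}

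Finally I would conclude. Since $\inf q \ge 1$, the unweighted cardinality $\#\mathcal{A}^{[t_1,t_2]}_n(\Lambda\times G) \le W_n$, so $\mathbb{E}\big(\#\mathcal{A}^{[t_1,t_2]}_n(\Lambda\times G)\big) \le \alpha_q^n\,\mathbb{E}(W_0)$. This is a finite number for every $n$, which already shows each generation is almost surely finite; but to get that the clan spans only finitely many generations I need a little more. Here I would argue that in the bounded time window $[t_1,t_2]$, an ancestor chain $C_0 \leftharpoondown C_1 \leftharpoondown \cdots$ has strictly decreasing birth times $b_{C_0} > b_{C_1} > \cdots \ge t_1$, and each step requires the birth of $C_{k+1}$ to fall inside the lifetime of $C_{k+1}$ which straddles $b_{C_k}$; since $\Pi$ restricted to $(I(\Lambda\times G) \text{ built up over generations})\times[t_1,t_2]\times\R^+$ has, almost surely, only finitely many cylinders whose bases are reachable from $\Lambda$ by finitely many impact steps with positive $\nu$-measure... — actually the cleanest route is: $\sum_n \mathbb{E}(\#\mathcal{A}^{[t_1,t_2]}_n) \le \mathbb{E}(W_0)\sum_n \alpha_q^n$, which is $+\infty$ when $\alpha_q \ge 1$, so summability fails and this easy route does not close the argument. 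The main obstacle, then, is precisely this: when $1 \le \alpha_q < +\infty$ the expected total clan size is infinite, so finiteness of the clan must come not from a first-moment bound but from the structural fact that within a fixed finite time window $[t_1,t_2]$ only finitely many births occur in any region that can be reached from $\Lambda$ through the (locally finite) ancestor relation. I would make this precise by the same entrance-time / empty-configuration trick as in Proposition \ref{localfinit}, but applied space-by-space: the clan $\mathcal{A}^{[t_1,t_2]}(\Lambda\times G)$ is contained in $\Pi$ restricted to $J_\Lambda \times [t_1,t_2]\times\R^+$ where $J_\Lambda = \bigcup_{k\ge 0} I^{(k)}(\Lambda\times G)$ is the set of bases reachable in any number of impact steps; although $J_\Lambda$ may be unbounded, one shows via the above moment bound combined with Borel–Cantelli that almost surely no ancestor chain starting from $\Lambda$ explores cylinders arbitrarily far back in time, and since each cylinder has only finitely many first-generation ancestors (a local Poisson count over $I(\{\gamma_x\})$, finite by Assumption \ref{assump}(4)) and births are bounded below by $t_1$, König's lemma forces the clan to terminate after finitely many generations. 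I expect the careful handling of this last point — reconciling the possibly-infinite first-moment with the almost-sure finiteness — to be the delicate part of the write-up, and I would model the argument closely on the corresponding step in \cite{FFG1}.
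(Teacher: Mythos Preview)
Your branching comparison is the right framework, but you discard precisely the factor that makes the argument close. When computing the expected $q$-weighted number of first-generation ancestors of a cylinder $C=(\gamma_x,t,s)$ \emph{with births restricted to $[t_1,t_2]$}, the time integral is not $1$ but
\[
\int_{t_1}^{t} e^{-(t-r)}\,dr \;=\; 1-e^{-(t-t_1)} \;\le\; 1-e^{-(t_2-t_1)},
\]
so the correct recursion is $\E(W_{n+1}\mid \B_n,\dots,\B_0)\le \tilde\alpha_q\, W_n$ with $\tilde\alpha_q := \alpha_q\bigl(1-e^{-(t_2-t_1)}\bigr)$. Since $\alpha_q<+\infty$, one can choose $\delta>0$ so that $\tilde\alpha_q<1$ whenever $t_2-t_1<\delta$; then $\sum_n W_n<\infty$ a.s.\ by exactly the first-moment argument you wanted (conditioning on $\B_0$ as in the proof of Proposition~\ref{finit1}, so no integrability of $q$ over $\Lambda\times G$ is needed). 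This is the missing idea, and it is what the paper does.

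Your K\"onig's-lemma patch does not close the gap: an infinite ancestor chain has strictly decreasing birth times bounded below by $t_1$, but nothing prevents those times from accumulating while the bases escape through the iterated interaction ranges $I^{(k)}(\Lambda\times G)$, so no contradiction arises from the Poisson structure alone. The paper instead establishes the short-window case as above and then extends to arbitrary $t_1<t_2$ by an induction in time: if $\mathcal A^{[s,t]}(\Lambda\times G)$ is a.s.\ finite, its bases lie in some random $\Lambda'\times G$; by integrable local interaction range there is a further $\Lambda''\in\B^0_S$ containing the bases of all cylinders of $\Pi$ in $I(\Lambda'\times G)\times[s-h,s)\times\R^+$, and one checks
\[
\mathcal A^{[s-h,t]}(\Lambda\times G)\ \subseteq\ \mathcal A^{[s,t]}(\Lambda\times G)\ \cup\ \mathcal A^{[s-h,s]}(\Lambda''\times G),
\]
the second piece being finite by the short-window case. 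Iterating covers any finite window.
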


\begin{defi} Whenever \eqref{wdiluted} holds we say that $\nu$ and $H$ satisfy the (F2)-diluteness condition with size function $q$ and that the associated model is \textit{well diluted}.
\end{defi}

Let us observe that due to Proposition \ref{finit2} we have that whenever a model is well diluted it is possible to define the FFG dynamics as a
forward time evolution on $\R^+$ for any initial condition $\sigma \in \mathcal{N}(S \times G)$. Indeed, given any particle configuration $\sigma \in \mathcal{N}(S \times G)$ and a family $(L_{(\gamma_x,i)})_{(\gamma_x,i)\in [\sigma]}$
of i.i.d. exponential random variables of parameter 1 independent of $\Pi$ we may set
$$
\overline{\Pi}^\sigma = \overline{\Pi} \cup \{(\gamma_x,0, L_{(\gamma_x,i)},0)  : (\gamma_x,i) \in [\sigma] \}
$$ and define $(\mathcal{K}^\sigma_t)_{t \geq 0}$ by the formula
\begin{equation}\label{keptformula3}
\mathcal{K}^\sigma = \{ (\gamma_x,t,s) \in \Pi^\sigma : F(\gamma_x,t,s) \leq M(\gamma_x | \mathcal{K}_{t^-}) \}
\end{equation}
where $\Pi^\sigma$ denotes the projection of $\overline{\Pi}^\sigma$ onto $\mathcal{C}^+:= (S\times G) \times \R^+ \times \R^+$ and by convention we set $\mathcal{K}_{0^-}\equiv \emptyset$. Notice that, even though by Proposition \ref{finit2} we have that
$\A^{[0,t]}(\Lambda \times G)$ is finite for every $t \geq 0$ and $\Lambda \in \B^0_S$ almost surely, the clan of ancestors associated to these forward dynamics contains also cylinders corresponding to the initial configuration
$\sigma$ and therefore it may not be finite (unless $\sigma$ has a finite local interaction range). Nonetheless, under the (F2)-diluteness condition it will always span over a finite number of generations and so $\mathcal{K}^\sigma$ is ultimately well defined.
Furthermore, since we have assigned a $0$ flag value to every particle in the initial condition $\sigma$, we get that the initial condition is always kept in $\mathcal{K}^\sigma$ even if $\sigma$ is a particle configuration forbidden by $H$.
We prove Proposition \ref{finit2} in the next section.

\section{Finiteness criteria for the clan of ancestors}\label{ancestors}

The aim of this section is to give the proofs of Propositions \ref{finit1} and \ref{finit2}, and to investigate some of their consequences as well. In both proofs we shall make use of the crucial fact that each
clan of ancestors can be contained in the offspring of some branching process. This is the content of the following lemma.

\begin{lema}[\textbf{Domination lemma}]\label{domilema} Given $\Lambda \in \B^0_S$ and $t \in \R$ there exists a family of random sets $(\B_n)_{n \in \N_0} \subseteq \mathcal{C}$ such that
\begin{enumerate}
\item [i.] $\A^t_0(\Lambda \times G) = \B_0$
\item [ii.] $\displaystyle{\bigcup_{i=0}^{n} \A^t_{i}(\Lambda \times G) \subseteq \bigcup_{i=0}^{n} \B_i}$ for every $n \in \N$
\item [iii.] Conditional on $(\B_i)_{0\leq i \leq n}$, $\B_{n+1}$ is a Poisson process with intensity measure $\displaystyle{\sum_{C \in \B_n} \phi_{\mathcal{P}(C)}.}$
\end{enumerate}
\end{lema}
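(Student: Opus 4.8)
The plan is to construct the dominating family $(\B_n)_{n \in \N_0}$ explicitly by keeping track of all the cylinders in $\Pi$ that \emph{could possibly} be ancestors, without checking whether they are actually kept in $\mathcal{K}$. First I would set $\B_0 := \A^t_0(\Lambda \times G) = \{ C \in \Pi : basis(C) \in \Lambda \times G \text{ , } b_C \leq t < b_C + l_C \}$, which immediately gives (i). The point is that $\B_0$ is obtained from $\Pi$ by restricting to the measurable set $\{ (\gamma_x,r,l) \in \mathcal{C} : \gamma_x \in \Lambda \times G \text{ , } r \leq t < r+l \}$, so by the restriction property of Poisson processes $\B_0$ is itself a Poisson process on $\mathcal{C}$ with a known intensity. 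Then, inductively, given $\B_0,\dots,\B_n$ I would define
$$
\B_{n+1} := \bigcup_{C \in \B_n} \Pi_{\mathcal{P}(C)},
$$
i.e.\ the collection of \emph{all} cylinders of the free process $\Pi$ that are first-generation ancestors (in the sense of Definition \ref{defiances}) of some cylinder in $\B_n$ --- again without any thinning.

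The verification of (ii) is a straightforward induction on $n$. For the base case, $\A^t_0(\Lambda\times G)=\B_0$ by definition. For the inductive step, recall that $\A^t_{n+1}(\Lambda\times G) = \bigcup_{C \in \A^t_0(\Lambda \times G)} \A_{n+1}(C)$ and that $\A_{n+1}(C) = \bigcup_{\tilde C \in \A_n(C)} \A_1(\tilde C)$ with $\A_1(\tilde C) = \Pi_{\mathcal{P}(\tilde C)}$; since every cylinder of $\mathcal{K}$ (hence of any $\A_n(C)$) is in particular a cylinder of $\Pi$, and by the inductive hypothesis $\bigcup_{i \leq n} \A^t_i(\Lambda\times G) \subseteq \bigcup_{i \leq n}\B_i$, every first-generation ancestor of a cylinder counted up to generation $n$ lies in $\Pi_{\mathcal{P}(C')}$ for some $C' \in \bigcup_{i\leq n}\B_i$, and these are all collected into $\bigcup_{i \leq n+1}\B_i$. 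Here one uses that $\mathcal{P}(\cdot)$ and the ancestor relation $\rightharpoonup$ on cylinders depend only on the bases, birth times and lifespans, so the ancestor sets of a cylinder are the same whether we view it inside $\mathcal{K}$ or inside $\Pi$.

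The heart of the matter is (iii), the branching structure. The key observation is that for a fixed cylinder $C \in \mathcal{C}$, the set $\mathcal{P}(C) \subseteq \mathcal{C}$ is a \emph{deterministic} function of $C$ alone (its basis, birth and lifespan), and the regions $\{\mathcal{P}(C) : C \in \B_n\}$ involve cylinders with birth times strictly less than the births of the cylinders in $\B_n$. I would make this rigorous by exposing the free process $\Pi$ in a time-decreasing fashion: conditionally on $\B_0,\dots,\B_n$ --- equivalently, on the restriction of $\Pi$ to the (random, but $\sigma(\B_0,\dots,\B_n)$-measurable) time-region already examined --- the restriction of $\Pi$ to the complementary region is still a Poisson process with the original intensity $\phi_\nu$, by the independence/restriction property of Poisson processes on disjoint sets. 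Since $\mathcal{P}(C)$ for $C \in \B_n$ lies strictly below the already-examined region in the birth-time coordinate (because $b_{\tilde C} < b_C$ for any $\tilde C \in \mathcal{P}(C)$), $\B_{n+1} = \Pi_{\bigcup_{C\in\B_n}\mathcal{P}(C)}$ is the restriction of this conditionally-Poisson process to $\bigcup_{C \in \B_n}\mathcal{P}(C)$, and hence --- after accounting for the overlaps among the $\mathcal{P}(C)$, which only reduce the counting but preserve the Poisson law on the union --- is a Poisson process with intensity $\sum_{C \in \B_n}\phi_{\mathcal{P}(C)}$ conditional on $(\B_i)_{0\leq i\leq n}$. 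The main obstacle I anticipate is the bookkeeping needed to make ``expose $\Pi$ in decreasing birth-time order'' fully precise: one must check that the region examined in computing $\B_0,\dots,\B_n$ is genuinely disjoint from (or can be taken disjoint from, modulo a $\pi^\nu$-null set) $\bigcup_{C\in\B_n}\mathcal{P}(C)$, and handle the fact that the $\mathcal{P}(C)$ for different $C \in \B_n$ may overlap --- which is exactly why the statement uses the summed intensity $\sum_{C\in\B_n}\phi_{\mathcal{P}(C)}$ rather than the intensity restricted to the union. I would phrase the disjointness via a stopping-line argument: the family $\B_0,\dots,\B_n$ determines a lower boundary in the $(x,\text{time})$ picture, below which $\Pi$ has not yet been looked at, and $\B_{n+1}$ reads off exactly the cylinders of $\Pi$ immediately below this boundary that hang down into the $\mathcal{P}(C)$-cylinders.
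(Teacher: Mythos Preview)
Your construction $\B_{n+1} := \bigcup_{C \in \B_n} \Pi_{\mathcal{P}(C)}$ does not satisfy (iii), and the difficulty is not just bookkeeping. With this definition one actually has $\B_n = \A^t_n(\Lambda\times G)$ for every $n$, so you have not dominated the clan of ancestors by anything simpler --- you have just rewritten it. The claim that $\B_{n+1}$ is conditionally Poisson with intensity $\sum_{C\in\B_n}\phi_{\mathcal{P}(C)}$ fails for two reasons. First, the regions $\mathcal{P}(C)$ for different $C\in\B_n$ can overlap, and $\Pi$ restricted to their \emph{union} has intensity $\phi_\nu$ restricted to that union, which is strictly smaller than the sum whenever overlaps occur; your parenthetical ``which only reduce the counting but preserve the Poisson law'' concedes exactly this, contradicting the very statement you are trying to prove. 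Second, the stopping-line picture breaks down across generations: if $C'\in\B_0$ and $C\in\B_1\subseteq\Pi_{\mathcal{P}(C')}$, then any cylinder $\tilde C$ with $b_{\tilde C}<b_C<b_{C'}<b_{\tilde C}+l_{\tilde C}$ whose basis impacts both $basis(C)$ and $basis(C')$ lies in $\mathcal{P}(C)\cap\mathcal{P}(C')$. Thus the region you must inspect to form $\B_2$ overlaps with the region already read to form $\B_1$, and conditioning on $\B_0,\B_1$ genuinely constrains $\Pi$ on $\bigcup_{C\in\B_1}\mathcal{P}(C)$.

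The paper resolves this by enlarging $\B$ using an \emph{array} $(\Pi^{(n,k)})_{n,k}$ of independent copies of $\Pi$ (with $\Pi^{(n,1)}=\Pi$). For each $C_i$ in generation $n$, the ``fresh'' part $\mathcal{P}_\zeta(C_i)$ of $\mathcal{P}(C_i)$ --- disjoint from everything examined so far --- is read from $\Pi$, while the overlapping remainder $\mathcal{P}(C_i)\setminus\mathcal{P}_\zeta(C_i)$ is filled in from the independent copy $\Pi^{(n+1,i)}$. This decoupling makes the offspring sets $\B_\zeta(C_i)$ independent Poisson processes with intensities $\phi_{\mathcal{P}(C_i)}$, so their superposition has the summed intensity. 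The domination (ii) then becomes a genuine inclusion rather than an equality, because the independent copies can only add points. This injection of extra randomness is the missing idea in your proposal.
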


\begin{proof} Consider the space $\M_t$ of particle configurations $\zeta$ on $\mathcal{C}_t:=(S\times G)\times (-\infty,t] \times \R^+$ such that
\begin{enumerate}
\item [$\bullet$] $\zeta$ is finite
\item [$\bullet$] No two cylinders in $\zeta$ have the same time of birth.
\end{enumerate} For $\zeta \in \M_t$ we shall set $\A_0(\zeta):=\zeta$ and for $n \in \N$ write
$$
\displaystyle{\A_n(\zeta) := \bigcup_{C \in \zeta} \A_n(C)}.
$$ Furthermore, suppose that we have ordered the elements of $\zeta$ in some particular way. Then, if $C_1 \preceq \dots \preceq C_k$ denote the ordered elements of $\zeta$, for each $i=1,\dots,k$ we define
$$
\mathcal{P}_\zeta(C_i) = \mathcal{P}(C_i) - \bigcup_{j=1}^{i-1} [C_j \cup\mathcal{P}(C_j)].
$$To define the family $(\B_n)_{n \in \N_0}$ first we shall fix $\zeta \in \M_t$ and construct a collection of sets $(\B_n(\zeta))_{n \in \N_0}$ such that for every $n \in \N_0$ the following properties are satisfied:
\begin{enumerate}
\item [i.] $\B_n(\zeta)$ belongs to $\M_t$ almost surely.
\item [ii.] Conditional on $\B_0(\zeta),\dots,\B_n(\zeta)$, the random set $\B_{n+1}(\zeta)$ is a Poisson process on $\C_t$ with intensity measure $\sum_{C \in \B_n(\zeta)} \phi_{\mathcal{P}(C)}.$
\item [iii.]$\displaystyle{\bigcup_{i=0}^{n} \A_i(\zeta) \subseteq \bigcup_{i=0}^{n} \B_i(\zeta).}$
\end{enumerate}
We start by setting $\B_0(\zeta):=\zeta$ and now proceed with the construction of the set $\B_1(\zeta)$. First we order the elements of $\zeta$ according to their times of birth, i.e. $\zeta=\{C_1,\dots,C_k\}$
where $0 \leq b_{C_1} < \dots < b_{C_k} \leq t$. Then continue by considering a collection $\Pi^{(1,1)},\dots,\Pi^{(1,k)}$ of independent Poisson processes on $\C$
of intensity measure $\phi$ such that $\Pi^{(1,1)}=\Pi$ and defining for each $i=1,\dots,k$
$$
\B_\zeta(C_i) := \Pi^{(1,i)}_{\mathcal{P}(C_i) - \mathcal{P}_\zeta(C_i)} \cup \Pi_{\mathcal{P}_\zeta(C_i)}.
$$ If we set $\displaystyle{\B_1(\zeta):=\bigcup_{i=1}^k \B_\zeta(C_i)}$ then $\B_1(\zeta)$ satisfies the properties stated above. Indeed:

\begin{enumerate}
\item [(1)] Each $\B_\zeta(C_i)$ is a Poisson process with intensity measure $\phi_{\mathcal{P}(C_i)}$ by virtue of the \mbox{independence of }$\Pi^{(1,1)},\dots,\Pi^{(1,k)}$ and the disjointness of $\mathcal{P}(C_i) - \mathcal{P}_\zeta(C_i)$ and $\mathcal{P}_\zeta(C_i)$.
\item [(2)] The independence of $\B_\zeta(C_1),\dots,\B_\zeta(C_k)$ follows from the independence of the $\Pi^{(1,i)}$ and the fact that $\mathcal{P}_\zeta(C_i) \cap \mathcal{P}_\zeta(C_j)=\emptyset$ for $i \neq j$. Together with (1) this gives (ii).
\item [(3)] Property (iii) follows upon noticing that for $i=1,\dots,k$
$$
\A_1(C_i) - \bigcup_{j=1}^{i-1} \A_1(C_j) \subseteq \Pi_{\mathcal{P}_\zeta(C_i)}.
$$
\item [(4)] Property (i) is also a consequence of (1) and (2) since for each $i=1,\dots,k$
$$
\phi_\nu(\mathcal{P}(C_i)) = e^{-\Delta E} \int_{I(basis(C_i))} \int_{-\infty}^{b_{C_i}} \int_{b_{C_i} - t}^{+\infty} e^{-s}ds dt d\nu < +\infty.
$$
\end{enumerate}
Having constructed $\B_1(\zeta)$, we now define the next generations in an inductive manner. For this we shall need to consider an array of $\N \times \N$ Poisson processes on $\C$ such that:
\begin{enumerate}
\item [$\bullet$] $\Pi^{(n,k)}$ is a Poisson process with intensity measure $\phi$ for every $n,k \in \N$
\item [$\bullet$] $\Pi^{(n,1)}=\Pi$ for every $n \in \N$
\item [$\bullet$] The processes $\{ \Pi^{(n,k)} : n \in \N , k \geq 2\}$ are independent of each other and also of $\Pi$.
\end{enumerate}
Suppose now that we have constructed the first $n$ generation of sets $\B_1(\zeta),\dots,\B_n(\zeta)$ and let us construct the next generation, $\B_{n+1}(\zeta)$. Order each of the constructed generations separately by their times of birth and write for each $j=1,\dots,n$
$$
\B_j(\zeta) = \{ C^{(j,1)}, \dots, C^{(j,k_j)} \}.
$$ Now let us consider the joint configuration $\zeta^{(n)}=\{ C^{(j,i)} : 1 \leq j \leq n \text{ and }1 \leq i \leq k_j \}$ ordered by the dictionary order, i.e. $C^{(j,i)} \preceq C^{(j',i')}$ if either $j < j'$ or $j=j'$ and $i \leq i'$. We then define
$$
\B_{n+1}(\zeta): = \bigcup_{i=1}^{k_n} \B_{\zeta^ {(n)}}(C^{(n,i)})
$$ where
$$
\B_{\zeta^{(n)}}(C^{(n,i)}) = \Pi^{(n+1,i)}_{\mathcal{P}(C^{(n,i)}) - \mathcal{P}_\zeta(C^{(n,i)})} \cup \Pi_{\mathcal{P}_\zeta(C^{(n,i)})}.
$$ Following a similar argument to the one given above it is possible show by inductive hypothesis that $\B_{n+1}(\zeta)$ satisfies properties (i), (ii) and (iii). Finally, having defined the collection $(\B_n(\zeta))_{n \in \N_0}$ for each $\zeta \in \M_t$, for each $n \in \N_0$ we set
$$
\B_n := \B_n\left( \mathcal{A}^t_0(\Lambda \times G)\right).
$$ One can check that, by construction of $(\B_n(\zeta))_{n \in \N_0}$, the family $(\B_n)_{n \in \N}$ satisfies all the desired properties. This concludes the proof.
\end{proof}

\begin{proof}[Proof of Proposition \ref{finit1}] Let us first fix $\Lambda \in \B^0_S$ and $t \in \R$ and consider the family of random sets $(\B_n)_{n \in \N_0}$ satisfying the conditions in the Domination lemma. By condition (ii) of this lemma we see that if we wish to show that $\A^t(\Lambda \times G)$ is almost surely finite it will suffice to prove that $\sum_{n \in \N_0} |\B_n| <+ \infty$ almost surely. But this will follow immediately once we show that for every $n \in \N_0$
\begin{equation}\label{cotabranching}
\E\left( \sum_{C \in \B_n} q(basis(C)) \bigg| \B_0 \right) \leq \left(\sum_{C \in \B_0} q(basis(C))\right)\alpha^n_q.
\end{equation}Indeed, if \eqref{cotabranching} holds then since $\inf_{\gamma_x \in S \times G} q(\gamma_x) \geq 1$ we have
\begin{align*}
P\left( \sum_{n \in \N_0} |\B_n| = +\infty \bigg| \B_0 \right) & \leq \lim_{k \rightarrow +\infty} P\left( \sum_{n \in \N_0} \sum_{C \in \B_n} q(basis(C)) > k \bigg| \B_0 \right) \\
\\
& \leq \lim_{k \rightarrow +\infty} \frac{\sum_{n \in \N_0} \E( \sum_{C \in \B_n} q(basis(C)) | \B_0 )}{k} \\
\\
& \leq \lim_{k \rightarrow +\infty} \frac{\sum_{C \in \B_0} q(basis(C))}{k(1-\alpha_q)} = 0
\end{align*}
since $P( |\B_0| < +\infty ) = 1$. From this we get that the unconditional probability is also null. Thus, in order to obtain \eqref{cotabranching} we first notice that by (iii) and \eqref{poisson} a simple calculation yields for every $n \in \N_0$
\begin{equation}\label{tower}
 \E\left( \sum_{C \in \B_{n+1}} q(basis(C)) \Bigg| \B_n,\dots,\B_0\right) \leq \left(\sum_{C \in \B_{n}} q(basis(C))\right) \alpha_q.
\end{equation} Now, since \eqref{cotabranching} clearly holds for $n=0$, the validity for every $n \in \N_0$ follows upon induction by applying \eqref{tower} and the tower property of conditional expectation. Finally, to show that with probability one this holds for every $\Lambda \in \B^0_S$ and $t \in \R$ simultaneously, we take $(\Lambda_n)_{n \in \N} \subseteq \B^0_S$ such that $\Lambda_n \nearrow S$ and observe that (with probability one) given $\Lambda \in \B^0_S$ and $t \in \R$ there exists $n_0 \in \N$ and $r \in \mathbb{Q}$ such that
$$
\A^t(\Lambda \times G) \subseteq \A^r(\Lambda_{n_0} \times G).
$$ Since there are only countable possibilities for $n_0$ and $r$ and we have shown that for every fixed pair $n,r$ the set $\A^r(\Lambda_{n} \times G)$ is finite almost surely, this yields the result.
\end{proof}

\begin{proof}[Proof of Proposition \ref{finit2}] By the same reasoning as in the proof of Proposition \ref{finit1} it will suffice to show that for each $r < l \in \mathbb{Q}$ and $n \in \N$ the random set $\A^{[r,l]}(\Lambda_n \times G)$ is almost surely finite, where $(\Lambda_n)_{n \in \N} \subseteq \B^0_S$ is such that $\Lambda_n \nearrow S$. But we can show this by performing an inductive procedure once we manage to prove the following two facts:
\begin{enumerate}
\item There exists $\delta > 0$ such that if $0 < t-s < \delta$ then $\A^{[s,t]}(\Lambda \times G)$ is \mbox{finite almost surely.}
\item If $|h| < \delta$ and $\A^{[s,t]}(\Lambda \times G)$ is finite almost surely then $\A^{[s-h,t]}(\Lambda \times G)$ is also finite almost surely.
\end{enumerate} To establish (1) we fix $\Lambda \in \B^0_S$, $s < t$ and construct similarly to the Domination lemma a family of random sets $(\B_n)_{n \in \N}$ satisfying
\begin{enumerate}
\item [i.] $\A^{[s,t]}_0(\Lambda \times G) = \B_0$
\item [ii.] $\displaystyle{\bigcup_{i=0}^{n} \A^{[s,t]}_{i}(\Lambda \times G) \subseteq \bigcup_{i=0}^{n} \B_i}$ for every $n \in \N$
\item [iii.] Conditional on $(\B_i)_{0 \leq i \leq n}$, $\B_{n+1}$ is a Poisson process with intensity $\sum_{C \in \B_n} (\phi_{[s,t]})_{\mathcal{P}(C)}$
\end{enumerate}where for $n \in \N$ we set $\A^{[s,t]}_{n}(\Lambda \times G) = \A^{t}_{n}(\Lambda \times G) \cap \A^{[s,t]}(\Lambda \times G)$. By performing a similar computation to the one yielding \eqref{tower} we obtain
\begin{equation}\label{tower2}
 \E\left( \sum_{C \in \B_{n+1}} q(basis(C)) \Bigg| \B_n,\dots,\B_0\right) \leq \left(\sum_{C \in \B_{n}} q(basis(C))\right) \tilde{\alpha}_q.
\end{equation}where $\tilde{\alpha}_q := \alpha_q (1- e^{-(t-s)})$. Since $\alpha_q < +\infty$, we may take $t-s$ small enough so as to guarantee that $\tilde{\alpha}_q < 1$. From this one obtains (1) by proceeding as in Proposition \ref{finit1}.

To see (2) we first notice that if $\A^{[s,t]}(\Lambda \times G)$ is finite then there exists a (random) set $\Lambda' \in \B^0_S$ such that the basis of every cylinder in $\A^{[s,t]}(\Lambda \times G)$ belongs to $\Lambda' \times G$. Furthermore, since $\nu(I(\Lambda'\times G)) < +\infty$ we have that there exists another (random) set $\Lambda'' \in \B^0_S$ such that the basis of every cylinder in $\Pi_{I(\Lambda'\times G) \times [s-h,s)\times \R^+}$ belongs to $\Lambda'' \times G$. Then, it is not hard to see that
\begin{equation}\label{ancesinduc}
\A^{[s-h,t]}(\Lambda \times G) \subseteq A^{[s,t]}(\Lambda \times G) \cup A^{[s-h,s]}(\Lambda'' \times G)
\end{equation}
Together with (1) (for $\Lambda''$ instead of $\Lambda$) , \eqref{ancesinduc} implies (2), which concludes the proof.
\end{proof}

When dealing with a heavily diluted model, the finiteness of every clan of ancestors forces the FFG dynamics to exhibit a loss of memory property. In particular, we obtain the convergence of the forward dynamics to the invariant measure of the stationary dynamics. More precisely, we have the following proposition.

\begin{prop}\label{uni}
Let us suppose that $\nu$ and $H$ satisfy the (F1)-diluteness condition. Then for any initial particle configuration $\sigma \in \mathcal{N}_H(S\times G)$ as $t \rightarrow +\infty$ we have
$$
\mathcal{K}^\sigma_t \overset{loc}{\longrightarrow} \mathcal{K}_0.
$$
\end{prop}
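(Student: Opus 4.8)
## Proof Proposal for Proposition \ref{uni}

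The plan is to exploit the coupling provided by the single free process $\Pi$ (together with its flags): both $\mathcal{K}^\sigma$ and the stationary process $\mathcal{K}$ are constructed as thinnings of $\Pi^\sigma$ and $\Pi$ respectively, driven by the \emph{same} Poisson cylinders and flags. The key observation is that whether a cylinder $C = (\gamma_x, t, s) \in \Pi$ with $t > 0$ belongs to $\mathcal{K}^\sigma$ or to $\mathcal{K}$ is decided entirely by inspecting its clan of ancestors: if none of the ancestors of $C$ reaches back to time $0$ (i.e. if the clan $\mathcal{A}(C)$ does not contain any cylinder alive at time $0$), then the fate of $C$ is computed from exactly the same flags in both processes, because the boundary/initial data play no role. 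Hence $\mathcal{K}^\sigma$ and $\mathcal{K}$ agree on all such cylinders.

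First I would make this precise. Fix $\Lambda \in \mathcal{B}^0_S$ and a bounded local function $f$ with measurability support contained in $\Lambda \times G$; it suffices to show $\mathbb{E}(f(\mathcal{K}^\sigma_t)) \to \mathbb{E}(f(\mathcal{K}_0))$. By time-translation invariance of the stationary construction, $\mathcal{K}_0 \overset{d}{=} \mathcal{K}_{t}$, so it is enough to compare $\mathcal{K}^\sigma_t$ with $\mathcal{K}_t$ for the same $\Pi$. Consider the clan of ancestors $\mathcal{A}^t(\Lambda \times G)$ of $\Lambda \times G$ at time $t$ in the free process $\Pi$ (not $\Pi^\sigma$). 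By Proposition \ref{finit1}, under the (F1)-diluteness condition this clan is almost surely finite, hence all its cylinders have time of birth bounded below by some (random) $\tau > -\infty$. Define the event
$$
E_t = \{ \text{every cylinder in } \mathcal{A}^t(\Lambda \times G) \text{ has time of birth} > 0 \}.
$$
On $E_t$, the value of $f$ evaluated on the time-$t$ sections of $\mathcal{K}^\sigma$ and of $\mathcal{K}$ coincide: indeed, since $\sigma \in \mathcal{N}_H(S \times G)$ has finite local interaction range, the extra cylinders in $\Pi^\sigma$ coming from $\sigma$ are born at time $0$, and on $E_t$ none of them is an ancestor of any cylinder contributing to $\mathcal{K}_t$ inside $\Lambda \times G$; therefore the recursive thinning rule \eqref{keptformula2}–\eqref{keptformula3} produces the same decisions from the same flags. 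Thus $f(\mathcal{K}^\sigma_t) = f(\mathcal{K}_t)$ on $E_t$, and
$$
\left| \mathbb{E}(f(\mathcal{K}^\sigma_t)) - \mathbb{E}(f(\mathcal{K}_0)) \right| \leq 2\|f\|_\infty \, P(E_t^c).
$$

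Next I would show $P(E_t^c) \to 0$ as $t \to +\infty$. Write $\mathcal{A}^t(\Lambda \times G)$ in terms of the stationary process: by time-translation invariance, $\mathcal{A}^t(\Lambda \times G)$ has the same law as $\mathcal{A}^0(\Lambda \times G)$ shifted by $t$, so $P(E_t^c) = P(\mathcal{A}^0(\Lambda \times G) \text{ contains a cylinder born before } -t)$. Since $\mathcal{A}^0(\Lambda \times G)$ is a.s.\ finite, the infimum of birth times over its cylinders is an a.s.\ finite random variable $T_\Lambda > -\infty$, and $P(E_t^c) = P(T_\Lambda < -t) \to 0$ as $t \to +\infty$. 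This gives the claimed convergence for $f$ supported in $\Lambda \times G$, and since every bounded local function has such a support for some $\Lambda \in \mathcal{B}^0_S$, we conclude $\mathcal{K}^\sigma_t \overset{loc}{\longrightarrow} \mathcal{K}_0$.

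The main obstacle is the careful bookkeeping in the coupling argument: one must verify rigorously that on $E_t$ the thinning decisions for cylinders relevant to $\Lambda \times G$ at time $t$ genuinely depend only on flags of cylinders born after $0$, and in particular that the cylinders of $\Pi^\sigma$ attached to the initial condition $\sigma$ (which carry flag value $0$ and are always kept) do not ``leak'' into the relevant clan of ancestors. This requires using the integrable local interaction range from Assumptions \ref{assump} to control $I(\Lambda \times G)$, the identification of the clan of ancestors in $\Pi^\sigma$ with that in $\Pi$ augmented by the finitely many $\sigma$-cylinders intersecting $I(\cdot)$ (finite precisely because $\sigma \in \mathcal{N}_H(S \times G)$), and then invoking Proposition \ref{finit1} to bound the generation depth. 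A clean way to organize this is to run the backward recursion of Section \ref{localdynamics} simultaneously on both constructions and argue by induction on generations that the kept/discarded status agrees on $E_t$.
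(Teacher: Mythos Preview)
Your overall strategy---coupling $\mathcal{K}^\sigma$ and the stationary $\mathcal{K}$ through the same free process and comparing at time $t$---matches the paper's, but the claim that on $E_t$ one has $f(\mathcal{K}^\sigma_t) = f(\mathcal{K}_t)$ is false as stated. Even when every cylinder of $\mathcal{A}^t(\Lambda\times G)$ (computed in $\Pi$) has birth time $>0$, a $\sigma$-cylinder $(\gamma_x,0,L_{(\gamma_x,i)})$ can still be a first-generation ancestor in $\Pi^\sigma$ of some $C$ in that clan: it suffices that $\gamma_x \rightharpoonup basis(C)$ and $L_{(\gamma_x,i)} > b_C$. Such a cylinder carries flag $0$ and is always kept, so its presence at time $b_C^-$ changes $M(basis(C)\,|\,\cdot)$ in the $\mathcal{K}^\sigma$ construction, and the thinning decisions may genuinely diverge from those in $\mathcal{K}$. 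Your last paragraph flags exactly this leakage as ``the main obstacle'', but the ingredients you list there (finitely many relevant $\sigma$-particles, Proposition~\ref{finit1}) do not by themselves rule it out on $E_t$; what is missing is control of the $\sigma$-lifetimes relative to the clan's earliest birth time.

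The paper handles this cleanly by a time shift: it realizes $\mathcal{K}^\sigma_t$ as the time-$0$ state of a process started at time $-t$ with the \emph{same} $\overline{\Pi}$, so that the relevant clan is the fixed $\mathcal{A}^0(\Lambda\times G)$, with random but $t$-independent spatial extent $\Lambda'$ and temporal depth $t^*$. Since $\sigma\in\mathcal{N}_H(S\times G)$, only finitely many $\sigma$-particles lie in $I(\Lambda'\times G)$, and their maximal lifetime $t^\sigma_\Lambda$ is a.s.\ finite; for $t > t^*+t^\sigma_\Lambda$ the shifted $\sigma$-cylinders (now born at $-t$) die before any clan cylinder is born, so the two constructions agree on $\Lambda\times G$. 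To salvage your argument without shifting you would have to enlarge $E_t$ to also demand that every $\sigma$-particle impacting the random spatial extent of $\mathcal{A}^t(\Lambda\times G)$ has lifetime below the clan's minimal birth time, and then show this enlarged event has probability tending to $1$---doable, but noticeably messier than the shift.
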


\begin{proof} Given a particle configuration $\sigma \in \mathcal{N}_H(S\times G)$ the idea is to construct a coupling of $\mathcal{K}^\sigma_t$ for each $t \geq 0$ together with $\mathcal{K}_0$ where the local convergence can be easily verified. To achieve this, recall that the forward FFG dynamics are built from a set $(L_{\gamma_x,i})_{(\gamma_x,i) \in [\sigma]}$ of exponential random variables of parameter 1 and a Poisson process $\overline{\Pi}$ on $\mathcal{C} \times [0,1]$ with intensity measure $\nu \times \mathcal{L}\times \mathcal{E}_1 \times \mathcal{L}_{[0,1]}$. More precisely, for each $\Lambda \in \B^0_S$ there exists a measurable function $\psi_\Lambda$ such that for each $t > 0$
$$
(\mathcal{K}^\sigma_t)_{\Lambda \times G} = \psi_\Lambda \left( \mathcal{A}^{[0,t]}_\sigma(\Lambda \times G), F(\mathcal{A}^{[0,t]}_\sigma(\Lambda \times G))\right)
$$ where $\mathcal{A}^{[0,t]}_\sigma(\Lambda \times G)$ denotes the clan of ancestors of $\Lambda \times G$ defined as in \eqref{clanbounded} but using $\Pi^{\sigma}$ instead of $\Pi$ and $F(\mathcal{A}^{[0,t]}_\sigma(\Lambda \times G))$ denotes its corresponding set of flags. Furthermore, both the clan of ancestors and its flags are determined by the evolution of the process $\overline{\Pi}^\sigma$ in the time interval $[0,t]$, i.e., there exists a second measurable function $\theta$ such that
$$
(\mathcal{K}^\sigma_t)_{\Lambda \times G}= \psi_\Lambda \circ \theta \left(\left(\overline{\Pi}^{\sigma}_s\right)_{s \in [0,t]}\right)
$$
Similarly, the stationary FFG process is defined for each $t \in \R$ and $\Lambda \in \B^0_S$ by the formula
\begin{equation}\label{statproc}
(\mathcal{K}_t)_{\Lambda \times G} = \psi_\Lambda \circ \theta \left( \left(\overline{\Pi}_s\right)_{s \in (-\infty,t]}\right).
\end{equation} We shall construct our coupling by taking the Poisson process $\overline{\Pi}$ together with the family $(L_{\gamma_x,i})_{(\gamma_x,i) \in [\sigma]}$ of independent exponential random variables of parameter 1 and for each $t > 0$ defining the $t$-shifted free process $\overline{\Pi}^{\sigma,(t)}$ with initial condition $\sigma$ by the formula $\overline{\Pi}^{\sigma,\,(t)} = \overline{\Pi}^{\sigma}_0 \cup \overline{\Pi}^{\,(t)}$ where
$$
\overline{\Pi}^{\,(t)} = \{ (\gamma_x, r + t , s, u) \in \mathcal{C} \times [0,1] : (\gamma_x, r , s, u) \in \overline{\Pi} \,,\, r > - t \}.
$$ We then define for each $t > 0$ the random particle configuration $X_t$ by the formula
$$
(X_t)_{\Lambda \times G} := \psi_\Lambda \circ \theta \left( \left(\overline{\Pi}^{\,\sigma,\,(t)}_s\right)_{s \in [0,t]} \right)
$$ for every $\Lambda \in \B^0_S$ and set $X_\infty := \mathcal{K}_0$, where $\mathcal{K}$ is defined exactly as in \eqref{statproc}, \mbox{i.e. using $\overline{\Pi}$.}
In other words, $X_t$ is the current state of the FFG process started at time $-t$ with initial condition $\sigma$ and underlying free process $\overline{\Pi}$, after having evolved for a time period of length $t$. Let us observe that each $X_t$ has the same distribution as $\mathcal{K}^\sigma_t$ by the time translational invariance of $\overline{\Pi}$. Moreover, this construction possesses a crucial property: the free processes $\overline{\Pi}^{\,\sigma,\,(t)}$ are ``coupled backwards'' with $\overline{\Pi}$, i.e. for $t > 0$ we have
\begin{equation}\label{coupleback}
\overline{\Pi}^{\,\sigma,\,(t)}_{s} = \overline{\Pi}_{s-t}.
\end{equation}
Using this property we shall prove that for any given $\Lambda \in \mathcal{B}^0_{S}$ one has $(X_t)_{\Lambda \times G} = (X_\infty)_{\Lambda \times G}$ for every $t$ sufficiently large, a fact from which we immediately obtain the validity of (i). Indeed, since the model is heavily diluted we know that $\mathcal{A}^0(\Lambda \times G)$ is finite almost surely.
In particular, there exist (random) $t^*$ and $\Lambda' \in \mathcal{B}^0_{S}$ such that
$$
\mathcal{A}^0(\Lambda \times G) \subseteq (\Lambda' \times G) \times (-t^*,0] \times \R^+.
$$Moreover, since the initial condition $\sigma$ has a locally finite interaction range with respect to $H$ we have that $\sigma(I(\Lambda'\times G)) < +\infty$ and so $t^\sigma_\Lambda := \sup_{(\gamma_x,i) \in [\sigma|_{I(B_\Lambda)}]} L_{\gamma_x,i} < +\infty$ as well. Therefore, if $t > t^* + t^\sigma_\Lambda$ then by \eqref{coupleback} we have that
$$
\mathcal{A}^{[0,t]}_\sigma(\Lambda \times G) = \{ (\gamma_x,r+t,s) : (\gamma_x,r,s) \in \mathcal{A}^{0}(\Lambda \times G)\}.
$$ i.e., all the cylinders in the initial configuration $\sigma$ which could possibly interact with the clan of ancestors of the cylinders in $(\Pi^{\,\sigma,\,(t)}_t)_{\Lambda \times G}$ in the forward dynamics die out before reaching it and, as a consequence, this clan coincides with the ancestors of cylinders in $(\Pi_0)_{\Lambda \times G}$ in the stationary dynamics modulo some appropriate time shift. In particular, we get that $(X_t)_{\Lambda \times G} = (X_\infty)_{\Lambda \times G}$ if $t > t^*+t^\sigma_\Lambda$ as we wanted to show.
\end{proof}

\section{Reversible measures for the FFG dynamics}

The purpose of this section is to study the relationship between invariant measures for the FFG dynamics and Gibbs measures of the associated diluted model. More precisely, we will show that Gibbs measures are reversible for the corresponding FFG dynamics. Together with Proposition \ref{uni}, this will imply the existence of a unique Gibbs measure in all heavily diluted models. We begin our task by introducing the global and local evolution semigroups for the dynamics. However, in order to define the global semigroup we need the infinite-volume forward dynamics to be well defined. Hence, for the rest of this section we assume that the model under consideration is well diluted.

\begin{defi} Given $t > 0$, a bounded subset $\Lambda \in \mathcal{B}^0_{S}$ and a particle configuration $\eta \in \mathcal{N}(S \times G)$ we define the operators $S_t$ and $S^{\Lambda|\eta}_t$ on the class of bounded functions $f:\mathcal{N}(S \times G) \to \R$ by the formulas
$$
S_t(f) (\sigma) = \E\left(f(\mathcal{K}_t^\sigma)\right) \hspace{2cm}\text{ and }\hspace{2cm}S_t^{\Lambda|\eta} (f) (\sigma) = \E ( f(\mathcal{K}^{\Lambda,\,\sigma,\,\eta}_t) )
$$ where $\mathcal{K}^\sigma$ denotes the infinite-volume forward process with initial condition $\sigma$ and $\mathcal{K}^{\Lambda,\,\sigma,\,\eta}$ is the forward process on $\Lambda$ with boundary condition $\eta$ and initial condition $\sigma_{\Lambda \times G} \cdot \eta_{\Lambda^c \times G}$.\footnote{The forward dynamics on a finite volume are defined in the natural manner, following the approach of Section \ref{forwarddynamics}.}

The families of operators $(S_t)_{t \geq 0}$ and $(S_t^{\Lambda|\eta})_{t \geq 0}$ are called the \textit{global evolution semigroup} and \textit{local evolution semigroup on $\Lambda$ with boundary condition $\eta$}, respectively.
\end{defi}

\begin{obs} Both families $(S_t)_{t \geq 0}$ and $(S_t^{\Lambda|\eta})_{t \geq 0}$ satisfy the semigroup property, i.e.
$$
S_{t} \circ S_s = S_{t+s} \hspace{2cm}\text{ and }\hspace{2cm}S_t^{\Lambda|\eta} \circ S_s^{\Lambda|\eta} = S_{t+s}^{\Lambda|\eta}
$$ for every $t,s \geq 0$, and hence their name.
\end{obs}

\begin{defi} Let $\mu$ be a measure on $\mathcal{N}(S \times G)$ and $\Lambda \in \mathcal{B}^0_{S}$.
\begin{enumerate}
\item [$\bullet$] We say that $\mu$ is \textit{invariant} for the global FFG dynamics if for every $t \geq 0$ we have
$$
\int_{\mathcal{N}(S \times G)} S_t(f)(\sigma) d\mu(\sigma) = \int_{\mathcal{N}(S \times G)} f(\sigma) d\mu(\sigma)
$$ for every bounded local function $f: \mathcal{N}(S \times G) \to \R$.
\item [$\bullet$] We say that $\mu$ is \textit{invariant} for the local FFG dynamics on $\Lambda$ with boundary condition $\eta \in \mathcal{N}(S \times G)$ if
\begin{enumerate}
\item [i.] $\mu( \{ \xi \in \mathcal{N}(\R^d \times G) : \xi_{\Lambda^c \times G}= \eta_{\Lambda^c \times G}\} ) = 1$
\item [ii.] For every $t \geq 0$ we have
$$
\int_{\mathcal{N}(S \times G)} S_t^{\Lambda|\eta}(f)(\sigma) d\mu(\sigma) = \int_{\mathcal{N}(S \times G)} f(\sigma) d\mu(\sigma)
$$ for every bounded $\mathcal{F}_{\Lambda \times G}$-measurable function $f: \mathcal{N}(S \times G) \to \R$.
\end{enumerate}
\item [$\bullet$] We say that $\mu$ is \textit{reversible} for the global FFG dynamics if it also satisfies
\begin{equation}\label{rever}
\int_{\mathcal{N}(S \times G)} g(\sigma) S_t(f)(\sigma) d\mu(\sigma) = \int_{\mathcal{N}(S \times G)} S_t(g)(\sigma) f(\sigma) d\mu(\sigma)
\end{equation} for every $t \geq 0$ and bounded local functions $f,g: \mathcal{N}(S \times G) \to \R$.
\item [$\bullet$] We say that $\mu$ is \textit{reversible} for the local FFG dynamics on $\Lambda$ with boundary condition $\eta \in \mathcal{N}(S \times G)$ if it is invariant and also satisfies for all $t \geq 0$
$$
\int_{\mathcal{N}(S \times G)} g(\sigma) S_t^{\Lambda|\eta}(f)(\sigma) d\mu(\sigma) = \int_{\mathcal{N}(S \times G)} S_t^{\Lambda|\eta}(g)(\sigma)f(\sigma) d\mu(\sigma)
$$ for every $t \geq 0$ and bounded $\mathcal{F}_{\Lambda \times G}$-measurable functions $f,g: \mathcal{N}(S \times G) \to \R$.
\end{enumerate}

\end{defi}

Our first step in studying the invariant measures for the FFG dynamics will be to derive an explicit formula for the generator of the local evolution semigroup valid for a sufficiently wide class of functions. Recall that, in general, the generator of a semigroup $(T_t)_{t \geq 0}$ is defined as
$$
L(f)(\sigma)= \frac{ dT_t(f)(\sigma) }{dt}\bigg|_{t=0} = \lim_{h \rightarrow 0^+} \frac{ T_{h}(f)(\sigma) - f(\sigma) }{h}
$$ whenever $f$ is such that the limit exists for every particle configuration $\sigma \in \mathcal{N}(S \times G)$.

\begin{prop}\label{generador} For every $\Lambda \subseteq \mathcal{B}^0_{S}$ and $\eta \in \mathcal{N}(S \times G)$ the local evolution semigroup $(S^{\Lambda|\eta}_t)_{t \geq 0}$ has generator $L_{\Lambda|\eta}$ defined for any bounded $\F_{\Lambda \times G}$-measurable $f: \mathcal{N}(S \times G) \to \R$ by the formula
$$
L_{\Lambda|\eta}(f)(\sigma) = D_{\Lambda|\eta}(f)(\sigma) + B_{\Lambda|\eta}(f)(\sigma)
$$ where
$$
D_{\Lambda|\eta}(f)(\sigma)=\sum_{ \gamma_x \in \langle \sigma_{\Lambda \times G} \rangle } \sigma(\gamma_x) \left(f( \sigma - \delta_{\gamma_x}) - f(\sigma)\right)
$$ and
$$
B_{\Lambda|\eta}(f)(\sigma)=\int_{\Lambda \times G} e^{-\Delta E_{\sigma_{\Lambda \times G} \,\cdot\, \eta_{\Lambda^c \times G}}(\gamma_x)}\left(f( \sigma + \delta_{\gamma_x}) - f(\sigma)\right) d\nu(\gamma_x).
$$
\end{prop}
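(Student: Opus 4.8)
\textbf{Proof plan for Proposition \ref{generador}.}

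The plan is to compute the derivative at $t = 0$ of $S_t^{\Lambda|\eta}(f)(\sigma)$ directly from the graphical construction of the local FFG process, by conditioning on what happens in the free process $\Pi$ during the short time window $[0,h]$ (restricted to $\Lambda \times G$). Fix $\Lambda \in \B^0_S$, $\eta \in \mathcal{N}(S \times G)$, a bounded $\F_{\Lambda \times G}$-measurable $f$, and an initial configuration $\sigma$ whose restriction to $\Lambda \times G$ consists of finitely many particles (which is all that matters, since $f$ is $\F_{\Lambda \times G}$-measurable and the dynamics inside $\Lambda$ only ever sees $\eta$ through $\eta_{\Lambda^c \times G}$). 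In the interval $[0,h]$ there are finitely many competing exponential clocks: each of the $\sigma(\gamma_x)$ particles of $\sigma$ sitting at a given $\gamma_x \in \langle \sigma_{\Lambda\times G}\rangle$ carries an independent rate-$1$ death clock, and the free process proposes births in $\Lambda \times G$ at total rate $e^{-\Delta E}\nu(\Lambda\times G) < +\infty$ (finite by the locally finite allocation assumption). Standard facts about Poisson processes then give, as $h \to 0^+$, that with probability $1 - O(h)$ nothing happens, with probability $\sigma(\gamma_x) h + o(h)$ exactly one of the particles at $\gamma_x$ dies and nothing else happens, with probability $e^{-\Delta E}\,d\nu(\gamma_x)\cdot h + o(h)$ a birth is proposed in an infinitesimal neighbourhood of $\gamma_x$ and nothing else happens, and with probability $o(h)$ two or more events occur.

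The key step is to identify the contribution of a proposed birth. If a cylinder with basis $\gamma_x$ is proposed for birth at a time in $[0,h]$ and no other event occurs in $[0,h]$, then at its (near-zero) birth time the current configuration is still (essentially) $\sigma_{\Lambda\times G}\cdot\eta_{\Lambda^c\times G}$, so by \eqref{keptformula1} the birth succeeds with probability $M(\gamma_x \mid \sigma_{\Lambda\times G}\cdot\eta_{\Lambda^c\times G}) = e^{-(\Delta E_{\sigma_{\Lambda\times G}\cdot\eta_{\Lambda^c\times G}}(\gamma_x) - \Delta E)}$; on success the configuration becomes $\sigma + \delta_{\gamma_x}$ and otherwise it stays $\sigma$. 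Multiplying the proposal rate $e^{-\Delta E}\,d\nu(\gamma_x)$ by the success probability cancels the $e^{\Delta E}$ and produces exactly the birth kernel $e^{-\Delta E_{\sigma_{\Lambda\times G}\cdot\eta_{\Lambda^c\times G}}(\gamma_x)}\,d\nu(\gamma_x)$ appearing in $B_{\Lambda|\eta}$. A death of one of the particles at $\gamma_x$ produces the jump $f(\sigma - \delta_{\gamma_x}) - f(\sigma)$ weighted by $\sigma(\gamma_x)$, which is precisely $D_{\Lambda|\eta}$. Assembling these,
\begin{align*}
\frac{S_h^{\Lambda|\eta}(f)(\sigma) - f(\sigma)}{h}
&= \sum_{\gamma_x \in \langle \sigma_{\Lambda\times G}\rangle} \sigma(\gamma_x)\bigl(f(\sigma - \delta_{\gamma_x}) - f(\sigma)\bigr) \\
&\quad + \int_{\Lambda\times G} e^{-\Delta E_{\sigma_{\Lambda\times G}\cdot\eta_{\Lambda^c\times G}}(\gamma_x)}\bigl(f(\sigma + \delta_{\gamma_x}) - f(\sigma)\bigr)\,d\nu(\gamma_x) + o(1),
\end{align*}
and letting $h \to 0^+$ gives the claimed formula.

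The main obstacle is the rigorous justification that the $o(h)$ and $o(1)$ error terms are genuinely negligible and that the dominated-convergence / interchange-of-limit-and-integral steps are valid. Concretely, one must argue: (i) the probability of two or more clock events in $[0,h]$ is $O(h^2)$ since all relevant rates are finite — here the finiteness of $e^{-\Delta E}\nu(\Lambda\times G)$ and of $\sum_{\gamma_x}\sigma(\gamma_x)$ is what is used; (ii) $f$ being bounded controls the magnitude of every jump, so the $O(h^2)$-probability events contribute $O(h^2)$ to $S_h^{\Lambda|\eta}(f)(\sigma) - f(\sigma)$; (iii) on the "one birth proposal, nothing else" event the discrepancy between "configuration at the birth time" and $\sigma$ is itself an $O(h)$-probability event (a second clock would have to fire), so replacing the true success probability $M(\gamma_x\mid \mathcal{K}^{\Lambda|\eta}_{t^-})$ by $M(\gamma_x\mid \sigma_{\Lambda\times G}\cdot\eta_{\Lambda^c\times G})$ costs only $o(h)$; and (iv) the birth integral over $\Lambda \times G$ is finite and depends measurably on $\sigma$ by the bounded energy loss bound $\Delta E > -\infty$ (so $e^{-\Delta E_{\cdot}(\gamma_x)} \le e^{-\Delta E} < +\infty$ uniformly) together with the interaction-measurability of $\Delta E_{\cdot}(\gamma_x)$ from Assumptions \ref{assump}. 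Once these estimates are in place the computation above is rigorous, and the restriction to $\F_{\Lambda\times G}$-measurable $f$ ensures all expressions depend only on the finitely many particles inside $\Lambda$, so no issues of infinite sums arise.
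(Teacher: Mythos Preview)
Your proposal is correct and follows essentially the same approach as the paper: decompose according to the number of events (birth proposals and deaths of initial particles) in $[0,h]$, identify the single-death and single-accepted-birth contributions as $D_{\Lambda|\eta}$ and $B_{\Lambda|\eta}$ respectively, and bound all multi-event terms by $O(h^2)$ using boundedness of $f$ and finiteness of the total rate $\sigma(\Lambda\times G) + e^{-\Delta E}\nu(\Lambda\times G)$. The paper's proof organizes the decomposition slightly differently (first by the number $k$ of birth proposals, then by the order statistics of the death clocks within each $B_k$) and carries out the $k=1$ birth computation explicitly via the Poisson integral formula \eqref{poisson}, but the substance is the same as what you outline.
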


\begin{proof} Given a bounded $\F_{\Lambda \times G}$-measurable function $f: \mathcal{N}(S \times G) \to \R$ we must show that for every particle configuration $\sigma \in \mathcal{N}(S \times G)$ we have
$$
\lim_{h \rightarrow 0^+} \frac{ \E( f(\mathcal{K}^{\Lambda,\,\sigma,\,\eta}_h) - f(\sigma)) }{h} = L(f)(\sigma).
$$ If we write $B = \Lambda \times G$ then notice that we have the decomposition
$$
\frac{ \E( f(\mathcal{K}^{\Lambda,\,\sigma,\,\eta}_h) -f(\sigma))}{h}= \sum_{k=0}^\infty \frac{ \E(( f(\mathcal{K}^{\Lambda,\,\sigma,\,\eta}_h) -f(\sigma))\mathbbm{1}_{B_k})}{h}
$$ where $B_k = \{ \Pi( B \times (0,h] \times \R_+) = k \}$ for each $k \in \N$. We shall deal with each of these terms separately. If for every $j=1,\dots,\sigma(B)$ we write $L^{(j)}_B$ for the $j$-th order statistic of the family $(L_{(\gamma_x,i)})_{(\gamma_x,i)\in [\sigma_B]}$ then the term with $k=0$ can be decomposed into two parts
$$
\frac{ \E(( f(\mathcal{K}^{\Lambda,\,\sigma,\,\eta}_h) -f(\sigma))\mathbbm{1}_{B_0})}{h} = \frac{ \E( (f(\mathcal{K}^{\Lambda,\,\sigma,\,\eta}_h) -f(\sigma))(\mathbbm{1}_{\{ L^{(1)}_B \leq h < L^{(2)}_B\} \cap B_0}+ \mathbbm{1}_{\{ L^{(2)}_B \leq h \}\cap B_0}))}{h}
$$ since $f(\mathcal{K}^{\Lambda,\,\sigma,\,\eta}_h)=f(\sigma)$ on $\{ L^{(1)}_B > h\}\cap B_0$. Let us observe that due to the independence between $(L_{(\gamma_x,i)})_{(\gamma_x,i)\in [\sigma_B]}$ and $\Pi$, the first term in the right hand side can be rewritten as
$$
\sum_{(\gamma_x,i) \in [\sigma_B]} (f(\sigma - \delta_{\gamma_x}) - f(\sigma)) \frac{(1-e^{-h})e^{-(\sigma(B)-1 + \nu(B))h}}{h}
$$ where $\nu(B) < +\infty$ by hypothesis. Thus, we obtain that
$$
\lim_{h \rightarrow 0^+} \frac{ \E(( f(\mathcal{K}^{\Lambda,\,\sigma,\,\eta}_h) -f(\sigma))\mathbbm{1}_{\{ L^{(1)}_B \leq h < L^{(2)}_B\} \cap B_0})}{h} = D_{\Lambda|\eta}(f)(\sigma).
$$ On the other hand, for the second term in the right hand side we have that
$$
\left| \frac{ \E(( f(\mathcal{K}^{\Lambda,\,\sigma,\,\eta}_h) -f(\sigma))\mathbbm{1}_{\{ L^{(2)}_B \leq h \}\cap B_0})}{h}\right| \leq \frac{2\|f\|_\infty \binom{\sigma(B)}{2} (1-e^{-h})^2}{h} \longrightarrow 0
$$ which establishes the case $k=0$. Now, to deal with the case $k=1$ notice that
$$
\frac{ \E(( f(\mathcal{K}^{\Lambda,\,\sigma,\,\eta}_h) -f(\sigma))\mathbbm{1}_{B_1})}{h} = \frac{ \E(( f(\mathcal{K}^{\Lambda,\,\sigma,\,\eta}_h) -f(\sigma))(\mathbbm{1}_{\{ L^{(1)}_B \leq h \} \cap B_1}+ \mathbbm{1}_{\{ L^{(1)}_B > h \}\cap B_1}))}{h}
$$ where the first term in the right hand side satisfies
$$
\Bigg|\frac{ \E(( f(\mathcal{K}^{\Lambda,\,\sigma,\,\eta}_h) -f(\sigma))\mathbbm{1}_{\{ L^{(1)}_B \leq h \} \cap B_1})}{h}\Bigg| \leq 2 \|f\|_\infty (1-e^{-\sigma(B)h})e^{-\nu(B) h}\nu(B) \longrightarrow 0
$$ and the second one equals
\begin{equation}\label{generaequ}
\frac{ \E(( f( \sigma + (\Pi_h)_{B}) -f(\sigma))\mathbbm{1}_{\{ L^{(1)}_B > h , \Pi( C(B,h,\,\sigma|_{\Lambda \times G} \,\cdot\, \eta|_{\Lambda^c \times G}) ) = 1 \} \cap B_1})}{h}
\end{equation} where for $\xi \in \mathcal{N}(S \times G)$ we set
$$
C(B,h,\xi) = \{ (\gamma_x,t,s) \in \mathcal{C} : \gamma_x \in B , F(\gamma_x,t,s) \leq M(\gamma_x | \xi) , 0 < t \leq h , t+s > h \}.
$$ By \eqref{poisson} the expression on \eqref{generaequ} can be rewritten as
$$
\frac{e^{-(\sigma(B)+\nu(B))h}}{h}  \int_0^h \int_{h-t}^\infty \left(\int_B e^{-\Delta E_{\sigma_{\Lambda \times G} \,\cdot\, \eta_{\Lambda^c \times G}}(\gamma_x)}\left(f( \sigma + \delta_{\gamma_x}) - f(\sigma)\right) d\nu(\gamma_x)\right) e^{-s} ds dt
$$ from where a simple calculation yields
$$
\lim_{h \rightarrow 0^+} \frac{ \E(( f(\mathcal{K}^{\Lambda,\,\sigma,\,\eta}_h) -f(\sigma))\mathbbm{1}_{\{ L^{(1)}_B > h \} \cap B_1})}{h} = B_{\Lambda|\eta}(f)(\sigma).
$$ Finally, to deal with the cases when $k > 1$ let us observe that
$$
\left|\sum_{k=2}^\infty \frac{ \E(( f(\mathcal{K}^{\Lambda,\,\sigma,\,\eta}_h) -f(\sigma))\mathbbm{1}_{B_k})}{h}\right| \leq \frac{2 \|f\|_\infty P( \Pi( B \times (0,h] \times \R_+) \geq 2 )}{h} \longrightarrow 0.
$$ Together with the previous cases, this allows us to conclude the proof.
\end{proof}

Having established a proper formula for the generator of the local evolution semigroups we now show that the Boltzmann-Gibbs distributions are reversible for the local dynamics. We will do so with the aid of the following two lemmas.

\begin{lema}\label{inv0} Let $\Lambda \in \B^0_S$ and $\eta \in \mathcal{N}(S \times G)$. Then for for every pair $f,g$ of bounded $\F_{\Lambda \times G}$-measurable functions the Boltzmann-Gibbs distribution $\mu_{\Lambda|\eta}$ satisfies
\begin{equation}\label{genera}
\int g L_{\Lambda|\eta}(f) d\mu_{\Lambda|\eta} = \int f L_{\Lambda|\eta}(g) d\mu_{\Lambda|\eta}.
\end{equation}
\end{lema}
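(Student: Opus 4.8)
The plan is to establish the reversibility identity \eqref{genera} by a direct computation using the explicit formula for the generator $L_{\Lambda|\eta} = D_{\Lambda|\eta} + B_{\Lambda|\eta}$ from Proposition \ref{generador}, together with the integration formula \eqref{poisson} for the restricted Poisson measure. The key algebraic fact behind the whole argument is the elementary relation
$$
e^{-H_{\Lambda|\eta}(\sigma + \delta_{\gamma_x})} = e^{-H_{\Lambda|\eta}(\sigma)} e^{-\Delta E_{\sigma_{\Lambda \times G} \cdot \eta_{\Lambda^c \times G}}(\gamma_x)},
$$
which is precisely (ii) of the consistent Hamiltonian property in Assumptions \ref{assump}. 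This is what will let a ``birth'' term on one side of the identity cancel against a ``death'' term on the other.

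First I would reduce everything to an integral against the Poisson measure $\pi^\nu_\Lambda$ on $\mathcal{N}(\Lambda \times G)$: since $\mu_{\Lambda|\eta} = \omega_{\Lambda|\eta} \times \delta_{\eta_{\Lambda^c}}$ and $d\omega_{\Lambda|\eta} = Z_{\Lambda|\eta}^{-1} e^{-H_{\Lambda|\eta}} d\pi^\nu_\Lambda$, the claim \eqref{genera} is equivalent to
$$
\int \left(g L_{\Lambda|\eta}(f) - f L_{\Lambda|\eta}(g)\right) e^{-H_{\Lambda|\eta}} \, d\pi^\nu_\Lambda = 0.
$$
Splitting $L_{\Lambda|\eta}$ into its death part $D_{\Lambda|\eta}$ and birth part $B_{\Lambda|\eta}$, it suffices to show that
$$
\int \left( g\, B_{\Lambda|\eta}(f) + g\, D_{\Lambda|\eta}(f) \right) e^{-H_{\Lambda|\eta}} d\pi^\nu_\Lambda = \int \left( f\, B_{\Lambda|\eta}(g) + f\, D_{\Lambda|\eta}(g) \right) e^{-H_{\Lambda|\eta}} d\pi^\nu_\Lambda,
$$
and in fact I would prove the stronger symmetric statement that the ``birth of $f$ against $g$'' term equals the ``death of $g$ against $f$'' term (and symmetrically), i.e.
$$
\int g(\sigma) \left[\int_{\Lambda \times G} e^{-\Delta E_{\sigma_{\Lambda\times G}\cdot \eta_{\Lambda^c\times G}}(\gamma_x)} f(\sigma + \delta_{\gamma_x}) d\nu(\gamma_x)\right] e^{-H_{\Lambda|\eta}(\sigma)} d\pi^\nu_\Lambda(\sigma) = \int f(\sigma) \left[\sum_{\gamma_x \in \langle \sigma_{\Lambda\times G}\rangle} \sigma(\gamma_x) g(\sigma - \delta_{\gamma_x})\right] e^{-H_{\Lambda|\eta}(\sigma)} d\pi^\nu_\Lambda(\sigma).
$$
The $-f(\sigma)$ and $-g(\sigma)$ pieces in $B_{\Lambda|\eta}$ and $D_{\Lambda|\eta}$ produce, after this manipulation, the symmetric pair $\int fg \,(\text{total birth rate}) e^{-H} d\pi^\nu$ and $\int fg\, (\text{total death rate}) e^{-H} d\pi^\nu$, which must themselves be shown to coincide; this is the detailed-balance bookkeeping and comes out of the same identity.

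The technical heart is the Mecke-type (or Georgii--Nguyen--Zessin) integration-by-parts formula for the Poisson measure: expanding $\pi^\nu_\Lambda$ via \eqref{poisson} as a sum over $n$ of integrals over $(\Lambda\times G)^n$, the death term $\sum_{\gamma_x} \sigma(\gamma_x)(\cdots)$ applied to an $n$-particle configuration $\sum_{i=1}^n \delta_{\gamma_x^i}$ becomes a sum over which particle is removed, relating the $n$-th term to the $(n-1)$-th; the factor $e^{-\nu(\Lambda\times G)}/n!$ versus $e^{-\nu(\Lambda\times G)}/(n-1)!$ supplies the missing $\nu$-integration, and the $\delta_{\gamma_x^n}$-insertion on the other side exactly matches the birth integral $\int_{\Lambda\times G} d\nu(\gamma_x)$. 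Feeding in $e^{-H_{\Lambda|\eta}(\sigma + \delta_{\gamma_x})} = e^{-H_{\Lambda|\eta}(\sigma)}e^{-\Delta E_{\sigma_{\Lambda\times G}\cdot\eta_{\Lambda^c\times G}}(\gamma_x)}$ converts the weight appearing after removal/insertion into the $e^{-\Delta E}$ factor present in $B_{\Lambda|\eta}$, completing the cancellation. I expect the main obstacle to be purely organizational rather than conceptual: carefully tracking the combinatorial reindexing between the $n$- and $(n-1)$-particle strata of \eqref{poisson}, making sure the $\F_{\Lambda\times G}$-measurability of $f$ and $g$ is used correctly so that $f(\sigma)=f(\sigma_{\Lambda\times G}\cdot\eta_{\Lambda^c\times G})$ throughout, and checking that all interchanges of sum and integral are justified by boundedness of $f,g$ and the finiteness $\nu(\Lambda\times G)<+\infty$ from Assumptions \ref{assump} (which also guarantees $Z_{\Lambda|\eta} \geq e^{-\nu(\Lambda\times G)}>0$, so dividing by $Z_{\Lambda|\eta}$ is harmless). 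Once \eqref{genera} is in hand, reversibility of $\mu_{\Lambda|\eta}$ for the local semigroup $(S_t^{\Lambda|\eta})_{t\geq 0}$ will follow in the usual way from the generator being symmetric in $L^2(\mu_{\Lambda|\eta})$, which is presumably the content of the lemma following this one.
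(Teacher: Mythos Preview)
Your proposal is correct and follows essentially the same route as the paper: reduce to showing the ``cross'' identity between the birth term for one function and the death term for the other, expand both sides via the Poisson integration formula \eqref{poisson}, and match the $n$-th stratum of one side with the $(n-1)$-th of the other using Fubini and the consistent Hamiltonian property (ii). One small point: the diagonal pieces $-\int fg\,(\text{total death rate})\,e^{-H}d\pi^\nu_\Lambda$ and $-\int fg\,(\text{total birth rate})\,e^{-H}d\pi^\nu_\Lambda$ do \emph{not} need to be shown to coincide---they appear identically on both sides of \eqref{genera} (since $fg=gf$) and cancel outright, so the cross identity alone already finishes the proof.
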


\begin{proof} By Proposition \ref{generador} and symmetry it suffices to show that the two integrals
\begin{equation}\label{inta}
\int_{\mathcal{N}(S \times G)}\left[\sum_{ \gamma_x \in \langle \sigma_{\Lambda \times G} \rangle} \sigma(\gamma_x)g(\sigma) f( \sigma - \delta_{\gamma_x})\right]d\mu_{\Lambda|\eta}(\sigma)
\end{equation} and
\begin{equation}\label{intb}
\int_{\mathcal{N}(S\times G)} \left[\int_{\Lambda \times G} e^{-\Delta E_{\sigma_{\Lambda \times G} \,\cdot\, \eta_{\Lambda^c \times G}}(\gamma_x)}g(\sigma + \delta_{\gamma_x})f(\sigma)d\nu(\gamma_x)\right]d\mu_{\Lambda|\eta}(\sigma)
\end{equation} coincide.

A simple calculation using \eqref{poisson} yields that \eqref{inta} equals
$$
\frac{1}{Z_{\Lambda|\eta}}\sum_{n=1}^\infty \frac{e^{-\nu(B)}}{(n-1)!} \int_{B^n} g(\sigma^{(n)})f\left(\sigma^{(n)} - \delta_{\gamma^1_x}\right) e^{-H_{\Lambda|\eta}(\sigma^{(n)})} d\nu^n\left(\gamma_x^{(n)}\right)
$$ and \eqref{intb} equals
$$
\frac{1}{Z_{\Lambda|\eta}}\sum_{n=0}^\infty \frac{e^{-\nu(B)}}{n!} \int_{B^n}e^{-H_{\Lambda|\eta}(\sigma^{(n)})} f(\sigma^{(n)})\left(\int_{B} e^{-\Delta E_{\sigma_{B} \,\cdot\, \eta_{B^c}}(\tilde{\gamma}_y)}g(\sigma^{(n)} + \delta_{\tilde{\gamma}_y})d\nu\left(\tilde{\gamma}_y\right)\right) d\nu^{n}\left(\gamma_x^{(n)}\right)
$$ where we write $B=\Lambda \times G$, $\gamma_x^{(n)}=(\gamma_x^1,\dots,\gamma_x^n)$ and $\sigma^{(n)}= \sum_{i=1}^n \delta_{\gamma^i_x}$. The equality between \eqref{inta} and \eqref{intb} now follows upon a change of index in \eqref{inta} as a consequence of the Fubini-Tonelli theorem and (ii) in the consistent Hamiltonian property.
\end{proof}

\begin{lema}\label{inv1} Let $f:\mathcal{N}(S \times G) \to \R$ be a bounded local function. Then for each particle configuration $\eta \in \mathcal{N}_H(S \times G)$ and $t \geq 0$ we have
$$
S^{\Lambda|\eta}_t(f) \longrightarrow S_t(f)
$$ pointwise on $\mathcal{N}_H(S \times G)$ as $\Lambda \nearrow S$ for each $t \geq 0$.
\end{lema}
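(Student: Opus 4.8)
\textbf{Proof proposal for Lemma \ref{inv1}.}

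The plan is to realize both $S^{\Lambda|\eta}_t(f)(\sigma)$ and $S_t(f)(\sigma)$ on a single probability space, via the same underlying marked Poisson process $\overline{\Pi}$ and the same family $(L_{(\gamma_x,i)})_{(\gamma_x,i)\in[\sigma]}$ of i.i.d.\ unit exponentials, and to show that for every fixed $\sigma\in\mathcal{N}_H(S\times G)$ and $t\ge 0$ the two coupled configurations $(\mathcal{K}^{\Lambda,\sigma,\eta}_t)$ and $(\mathcal{K}^\sigma_t)$ agree on $\Lambda_f\times G$ once $\Lambda$ is large enough. Since $f$ is local, say $\F_{\Lambda_f\times G}$-measurable with $\Lambda_f\in\B^0_S$, this gives $S^{\Lambda|\eta}_t(f)(\sigma)=S_t(f)(\sigma)$ for all $\Lambda\supseteq\Lambda^*$ (with $\Lambda^*$ random but almost surely finite), and then dominated convergence — both quantities are bounded by $\|f\|_\infty$ — yields the pointwise convergence of the expectations.

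First I would fix $\sigma\in\mathcal{N}_H(S\times G)$ and describe the clan of ancestors governing $(\mathcal{K}^\sigma_t)_{\Lambda_f\times G}$: exactly as in the proof of Proposition \ref{uni}, there is a measurable map $\psi_{\Lambda_f}\circ\theta$ expressing $(\mathcal{K}^\sigma_t)_{\Lambda_f\times G}$ as a function of $\overline{\Pi}^\sigma$ restricted to the clan $\mathcal{A}^{[0,t]}_\sigma(\Lambda_f\times G)$ together with its flags. By Proposition \ref{finit2} (we are in the well-diluted regime for this section) the part of this clan coming from the Poisson process is finite, and since $\sigma\in\mathcal{N}_H(S\times G)$ the part coming from the initial configuration is finite as well: indeed, the cylinders of $\sigma$ that can enter the clan are those whose basis lies in $I(\Lambda'\times G)$ for the (random, finite) region $\Lambda'$ containing all bases of the Poisson part of the clan, and $\sigma(I(\Lambda'\times G))<+\infty$. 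Hence there is a random finite set $\Lambda^*\in\B^0_S$ containing the bases of every cylinder in $\mathcal{A}^{[0,t]}_\sigma(\Lambda_f\times G)$.

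The key step is then that for $\Lambda\supseteq\Lambda^*$ the finite-volume clan $\mathcal{A}^{[0,t],\Lambda}_\sigma(\Lambda_f\times G)$ — the analogous object for the dynamics on $\Lambda$ with boundary condition $\eta$ — coincides with $\mathcal{A}^{[0,t]}_\sigma(\Lambda_f\times G)$. This is because the only difference between the two dynamics is whether births outside $\Lambda$ are allowed; once $\Lambda$ contains all bases appearing in the infinite-volume clan, no such births are ever consulted when deciding the fate of the cylinders over $\Lambda_f\times G$, and the thinning rule in \eqref{keptformula3} uses the same flags and the same birth-rate function $e^{-\Delta E}$ and $M(\gamma_x|\cdot)$ in both cases (the boundary condition $\eta$ enters only through $\Delta E_\eta$ evaluated on cylinders whose interaction range reaches $\Lambda^c$, none of which are in $\Lambda^*$). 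Therefore $\psi_{\Lambda_f}\circ\theta$ returns the same configuration, so $(\mathcal{K}^{\Lambda,\sigma,\eta}_t)_{\Lambda_f\times G}=(\mathcal{K}^\sigma_t)_{\Lambda_f\times G}$ and $f$ takes the same value on both.

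The main obstacle I anticipate is making precise and rigorous the claim that "enlarging $\Lambda$ beyond $\Lambda^*$ does not alter the clan or the thinning decisions." One has to argue by backward induction on generations (exactly as in the good-definition discussion preceding Proposition \ref{localfinit}): the last generation of ancestors over $\Lambda_f\times G$ is decided by flags alone and the value $M(\gamma_x|\eta_{\Lambda^c\times G})$, which equals $M(\gamma_x|\emptyset)$ for cylinders with no surviving ancestors and bases far inside $\Lambda$; proceeding inductively down the (finitely many) generations, each decision depends only on cylinders whose bases lie in $\Lambda^*$, so enlarging $\Lambda$ changes nothing. Care is needed to check that $\Lambda^*$ can indeed be chosen to swallow the interaction ranges needed at every generation — this follows from the integrable local interaction range assumption applied finitely many times, together with the almost-sure finiteness of $\mathcal{A}^{[0,t]}_\sigma(\Lambda_f\times G)$ just established. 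Once this is in place, the convergence is immediate and no estimates beyond boundedness of $f$ are required.
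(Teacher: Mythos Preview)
Your approach is essentially the paper's: couple both dynamics via the same $\overline{\Pi}$ and lifetimes, use finiteness of $\mathcal{A}^{[0,t]}_\sigma(\Lambda_f\times G)$ (Proposition~\ref{finit2} together with $\sigma\in\mathcal{N}_H(S\times G)$) to find a bounded $\Lambda'$ containing all bases of the clan, enlarge $\Lambda$, and conclude by dominated convergence.

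One point needs sharpening. Your treatment of how the boundary condition $\eta$ drops out is not quite right: you claim that for cylinders with bases in $\Lambda^*$ the interaction range does not reach $\Lambda^c$, but under the mere \emph{integrable} local interaction range assumption $I(\Lambda^*\times G)$ need not be spatially bounded, so one cannot in general force $I(\Lambda^*\times G)\subseteq\Lambda\times G$ just by enlarging $\Lambda$. The paper instead uses the hypothesis $\eta\in\mathcal{N}_H(S\times G)$ directly: since $\eta(I(\Lambda'\times G))<+\infty$, the finitely many particles of $\eta$ lying in $I(\Lambda'\times G)$ are contained in some bounded region, and one enlarges $\Lambda$ so that additionally $\eta\bigl(I(\Lambda'\times G)\cap(\Lambda^c\times G)\bigr)=0$. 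Then $\eta_{\Lambda^c\times G}$ contributes no particle that can influence any thinning decision in the clan, and $(\mathcal{K}^{\Lambda,\sigma,\eta}_t)_{\Lambda_f\times G}=(\mathcal{K}^\sigma_t)_{\Lambda_f\times G}$ follows. With this correction your argument goes through.
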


\begin{proof}%Let $\Lambda_0 \in \mathcal{B}^0_{S}$ be such that $f$ is $\F_{\Lambda \times G}$-measurable.
For $\Lambda \in \mathcal{B}^0_{S}$ consider the coupling of $\mathcal{K}^\sigma_t$ and $\mathcal{K}^{\Lambda,\,\sigma,\,\eta}_t$ obtained by constructing these random particle configurations using the same Poisson process $\overline{\Pi}$ and exponential lifetimes $(L_{(\gamma_x,i)})_{(\gamma_x,i) \in [\sigma_{\Lambda \times G}]}$ for particles in $\sigma_{\Lambda \times G}$. Recall that by Proposition \ref{finit2} we know that if $\sigma$ has a finite local interaction range then $\mathcal{A}^{[0,t]}_\sigma(\Lambda_f \times G)$ is almost surely finite and thus there exists $\Lambda' \in \B^0_S$ such that the basis of every cylinder in $\mathcal{A}^{[0,t]}_\sigma (\Lambda_f \times G)$ is contained in $\Lambda' \times G$. Furthermore, since $\eta$ is also of finite local interaction range, by taking any $\Lambda \in \mathcal{B}^0_{S}$ sufficiently large so that $\Lambda' \subseteq \Lambda$ and $\eta( I(\Lambda' \times G) \cap (\Lambda^c \times G) ) = 0$ we have that $(\mathcal{K}^{\Lambda,\,\sigma,\,\eta}_t)_
{\Lambda_f \times G} = (\mathcal{K}^{\sigma}_t)_{\Lambda_f \times G}$ and thus $f(\mathcal{K}^{\Lambda,\,\sigma,\,\eta}_t) = f(\mathcal{K}^{\sigma}_t)$. In particular, we conclude that $f(\mathcal{K}^{\Lambda,\,\sigma,\,\eta}_t)$ converges almost surely to $f(\mathcal{K}^{\sigma}_t)$ as $\Lambda \nearrow S$. The assertion now follows at once from the dominated convergence theorem since $f$ is bounded.
\end{proof}

We are now ready to show the reversibility of Gibbs measures for the FFG dynamics.

\begin{teo}\label{teoreversibilidad} Any Gibbs measure $\mu$ for the diluted model $(\nu,H)$ is reversible for the global dynamics.
\end{teo}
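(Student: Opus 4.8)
The strategy is to reduce the global reversibility statement to the already-established facts about the local dynamics, namely the reversibility encoded in Lemma \ref{inv0} together with the approximation Lemma \ref{inv1}. The first step is to pass from the generator identity \eqref{genera} to the corresponding identity for the local evolution semigroups: for every $\Lambda \in \B^0_S$, every $\eta \in \mathcal{N}(S \times G)$, every $t \geq 0$ and every pair $f,g$ of bounded $\F_{\Lambda \times G}$-measurable functions,
$$
\int g\, S_t^{\Lambda|\eta}(f)\, d\mu_{\Lambda|\eta} = \int S_t^{\Lambda|\eta}(g)\, f\, d\mu_{\Lambda|\eta}.
$$
The way I would do this is the standard semigroup argument: since the local dynamics $\mathcal{K}^{\Lambda|\eta}$ is an interacting birth and death process on the \emph{bounded} region $\Lambda \times G$ with bounded total birth rate $\nu(\Lambda\times G) < +\infty$ and unit per-particle death rate, the associated semigroup $(S_t^{\Lambda|\eta})_{t\geq 0}$ acting on bounded $\F_{\Lambda\times G}$-measurable functions is a strongly continuous (indeed uniformly continuous, the generator $L_{\Lambda|\eta}$ from Proposition \ref{generador} being a bounded operator on this space once one notes $e^{-\Delta E_{\cdot}(\gamma_x)} \le e^{-\Delta E}$ is bounded) contraction semigroup. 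Hence $S_t^{\Lambda|\eta} = e^{t L_{\Lambda|\eta}}$, and the symmetry of $L_{\Lambda|\eta}$ in $L^2(\mu_{\Lambda|\eta})$ given by Lemma \ref{inv0} propagates to each $e^{t L_{\Lambda|\eta}}$ by expanding the exponential series (or, more carefully, by differentiating $t\mapsto \int g S_t^{\Lambda|\eta}(f)d\mu_{\Lambda|\eta} - \int S_t^{\Lambda|\eta}(g) f d\mu_{\Lambda|\eta}$ and using $\frac{d}{dt}S_t^{\Lambda|\eta}(f) = S_t^{\Lambda|\eta}(L_{\Lambda|\eta} f) = L_{\Lambda|\eta}(S_t^{\Lambda|\eta} f)$ to show this function is constant equal to its value $0$ at $t=0$). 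Taking $f\equiv 1$ also yields that $\mu_{\Lambda|\eta}$ is invariant for the local dynamics.

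The second step is to lift this to the global dynamics. Fix bounded local functions $f,g$, say both $\F_{\Lambda_0\times G}$-measurable for some $\Lambda_0\in\B^0_S$, and fix $t\ge 0$. By Proposition \ref{lfir}, $\mu$ is supported on $\mathcal{N}_H(S\times G)$, so it suffices to establish \eqref{rever} with both integrals restricted to $\mathcal{N}_H(S\times G)$. Using the Gibbs (DLR) property of $\mu$ with respect to a bounded region $\Lambda \supseteq \Lambda_0$ we may write, for any such $\Lambda$,
$$
\int g(\sigma)\, S_t^{\Lambda|\sigma}(f)(\sigma)\, d\mu(\sigma) = \int \left( \int g(\xi)\, S_t^{\Lambda|\eta}(f)(\xi)\, d\mu_{\Lambda|\eta}(\xi) \right) d\mu(\eta),
$$
where I have used that $g$ and $S_t^{\Lambda|\eta}(f)$ are $\F_{\Lambda\times G}$-measurable (the latter because the forward local dynamics on $\Lambda$ only involves the birth-death mechanism inside $\Lambda$ and the frozen boundary $\eta_{\Lambda^c\times G}$, which here equals $\xi_{\Lambda^c\times G}$ under $\mu_{\Lambda|\eta}$). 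Applying the local reversibility identity from step one to the inner integral and then undoing the DLR representation for the other side, we get
$$
\int g(\sigma)\, S_t^{\Lambda|\sigma}(f)(\sigma)\, d\mu(\sigma) = \int S_t^{\Lambda|\sigma}(g)(\sigma)\, f(\sigma)\, d\mu(\sigma)
$$
for every $\Lambda \supseteq \Lambda_0$ in $\B^0_S$.

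The final step is to pass to the limit $\Lambda \nearrow S$ on both sides. Here is where Lemma \ref{inv1} enters: for $\mu$-a.e. $\sigma \in \mathcal{N}_H(S\times G)$ we have $S_t^{\Lambda|\sigma}(f)(\sigma) \to S_t(f)(\sigma)$ and $S_t^{\Lambda|\sigma}(g)(\sigma) \to S_t(g)(\sigma)$ as $\Lambda\nearrow S$. Since $\|S_t^{\Lambda|\sigma}(f)\|_\infty \le \|f\|_\infty$ and similarly for $g$, and $f,g$ are bounded, the dominated convergence theorem applies to each side, yielding exactly \eqref{rever}. Invariance of $\mu$ for the global dynamics follows by the same argument with $g\equiv 1$ (or is already implied by reversibility), so $\mu$ is reversible for the global FFG dynamics, as claimed. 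The main obstacle, and the step deserving the most care, is the second step: one has to make sure the global forward process $S_t$ is well defined (which is guaranteed by the standing assumption that the model is well diluted, Proposition \ref{finit2}), that the measurability claim "$\xi \mapsto S_t^{\Lambda|\eta}(f)(\xi)$ is $\F_{\Lambda\times G}$-measurable" holds so the DLR property can legitimately be inserted (this is where one uses that the local dynamics on $\Lambda$ truly depends only on $\eta_{\Lambda^c\times G}$, together with interaction measurability of $H$ from Assumptions \ref{assump}), and that the exceptional $\mu$-null set on which Lemma \ref{inv1} might fail does not interfere — all of which is routine but needs to be spelled out.
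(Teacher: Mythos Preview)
Your overall architecture matches the paper's exactly: reduce \eqref{rever} to the local identity \eqref{rever2} via the DLR property, Lemma \ref{inv1}, Proposition \ref{lfir} and dominated convergence, and then derive \eqref{rever2} from the generator symmetry of Lemma \ref{inv0}. Steps two and three of your plan are essentially the paper's own reduction.

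The gap is in your first step. The generator $L_{\Lambda|\eta}$ is \emph{not} a bounded operator on bounded $\F_{\Lambda\times G}$-measurable functions: the bound $e^{-\Delta E_\cdot(\gamma_x)}\le e^{-\Delta E}$ controls only the birth part $B_{\Lambda|\eta}$, whereas the death part satisfies $|D_{\Lambda|\eta}(f)(\sigma)|\le 2\|f\|_\infty\,\sigma(\Lambda\times G)$, which blows up with the particle number. So the exponential-series argument is not available, and your parenthetical alternative (differentiate $t\mapsto\int g\,S_t^{\Lambda|\eta}(f)\,d\mu_{\Lambda|\eta}$) hides the real issue: one must justify differentiating under the integral when the integrand's derivative is only dominated by a $\sigma(\Lambda\times G)$-dependent quantity. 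The paper confronts this head-on. It discretizes time, writing $S_h(u)=u+hL(u)+R_h(u)$, and iterates the identity (using Lemma \ref{inv0} at each step) to obtain
\[
\int g\,S_t(f)\,d\mu_{\Lambda|\eta}=\int S_t(g)\,f\,d\mu_{\Lambda|\eta}+\int R_{t,h}(g,f)\,d\mu_{\Lambda|\eta},
\]
with $R_{t,h}(g,f)$ a telescoping sum of $\lceil t/h\rceil$ error terms. From the computations in Proposition \ref{generador} one extracts the pointwise bound $|R_h(u)(\sigma)|\le C\|u\|_\infty\bigl(1+\sigma^2(\Lambda\times G)\bigr)h^2$, and then uses the crude estimate $\int\sigma^2(\Lambda\times G)\,d\mu_{\Lambda|\eta}\le e^{\nu(\Lambda\times G)}\bigl(\nu(\Lambda\times G)+\nu(\Lambda\times G)^2\bigr)<\infty$ to conclude that the integrated remainder is $O(h)$ and hence vanishes as $h\to 0$. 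This second-moment bound is the piece your sketch is missing; without it, neither the exponential formula nor the naive differentiation-under-the-integral argument goes through.
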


\begin{proof} Let $f$ and $g$ be bounded local functions and consider $\Lambda_0 \in \mathcal{B}^0_{S}$ such that $f$ and $g$ are both $\F_{\Lambda_0 \times G}$-measurable. Notice that by (i) in Lemma \ref{inv1}, Proposition \ref{lfir} and the dominated convergence theorem we have that \eqref{rever} will hold if we show that
\begin{equation}\label{rever1}
\int_{\mathcal{N}(S \times G)} g(\sigma) S^{\Lambda,\,\sigma}_t(f)(\sigma) d\mu(\sigma) = \int_{\mathcal{N}(S \times G)} f(\sigma) S^{\Lambda,\,\sigma}_t(g)(\sigma) d\mu(\sigma)
\end{equation} is satisfied for every $\Lambda \in \mathcal{B}^0_{S}$ with $\Lambda_0 \subseteq \Lambda$. Moreover, since $\mu$ is a Gibbs measure then we can rewrite \eqref{rever1} as
$$
\int \int g(\sigma) S_t^{\Lambda|\eta}(f)(\sigma) d\mu_{\Lambda|\eta}(\sigma) d\mu(\eta)= \int \int f(\sigma) S_t^{\Lambda|\eta}(g)(\sigma) d\mu_{\Lambda|\eta}(\sigma) d\mu(\eta)
$$ so that \eqref{rever} will follow if we prove that for each $\eta \in \mathcal{N}(S \times G)$
\begin{equation}\label{rever2}
\int_{\mathcal{N}(S \times G)}  g(\sigma) S_t^{\Lambda|\eta}(f)(\sigma) d\mu_{\Lambda|\eta}(\sigma) = \int_{\mathcal{N}(S \times G)}  f(\sigma) S_t^{\Lambda|\eta}(g)(\sigma) d\mu_{\Lambda|\eta}(\sigma)
\end{equation} holds for every $t \geq 0$ and $\Lambda \in \mathcal{B}^0_{S}$ containing $\Lambda_0$. In order to simplify the notation ahead, we shall fix $\Lambda \in \mathcal{B}^0_{S}$ containing $\Lambda_0$, $\eta \in \mathcal{N}(S\times G)$ and through the rest of the proof write $S_t := S^{\Lambda,\eta}_t$ for each $t \geq 0$ and $L:=L_{\Lambda | \eta}$. Now, to establish \eqref{rever2}, given $0 < h \leq t$ let us begin by rewriting the left-hand side of \eqref{rever2} as
\begin{equation}\label{rever3}
h \int  g L(S_{t-h}(f))d\mu_{\Lambda|\eta} + \int  g\left[R_h(S_{t-h}(f)) + S_{t-h}(f)\right] d\mu_{\Lambda|\eta}
\end{equation} where for any bounded $\F_{\Lambda \times G}$-measurable function $u:\mathcal{N}(S \times G) \to \R$ the \textit{error term} $R_h(u): \mathcal{N}(S \times G) \to \R$ is defined by the formula
$$
R_h(u)(\sigma) = S_h(u)(\sigma) - u(\sigma) - L(u)(\sigma)h.
$$ By Lemma \ref{inv0} we obtain that \eqref{rever3} equals
\begin{equation}\label{rever4}
h \int  L(g) S_{t-h}(f)d\mu_{\Lambda|\eta} + \int  g\left[R_h(S_{t-h}(f)) + S_{t-h}(f)\right] d\mu_{\Lambda|\eta}
\end{equation}which, upon performing computations analogous to those made to obtain \eqref{rever3} but in reverse order, turns into
\begin{equation}\label{rever5}
\int  S_h(g) S_{t-h}(f)d\mu_{\Lambda|\eta} + \int  \left[ gR_h(S_{t-h}(f)) - R_h(g) S_{t-h}(f)\right]d\mu_{\Lambda|\eta}.
\end{equation}
By iterating this procedure we ultimately obtain
\begin{equation}\label{reverfinal}
\int  g S_t(f) d\mu_{\Lambda|\eta} = \int S_t(g) f d\mu_{\Lambda|\eta} + \int R_{t,h}(g,f)d\mu_{\Lambda|\eta}
\end{equation} where
$$
R_{t,h}(g,f) = \sum_{k=1}^{\lceil\frac{t}{h}\rceil} \left( S_{(k-1)h} (g) R_h(S_{t-kh}(f)) - R_h(S_{(k-1)h} (g)) S_{t-kh}(f)\right).\footnote{There is a slight abuse of notation in the last term of the sum. The term corresponding to $k=\lceil\frac{t}{h}\rceil$ is actually $\left[S_{[\frac{t}{h}]h} (g) R_{t-[\frac{t}{h}]h}(f) - R_{t-[\frac{t}{h}]h}(S_{[\frac{t}{h}]h} (g)) f\right]$.}
$$
Now, let us observe that from the proof of Proposition \ref{generador} we get that for each bounded $\F_{\Lambda \times G}$-measurable function $u:\mathcal{N}(S \times G) \to \R$ and $\sigma \in \mathcal{N}(S \times G)$ there exists a positive constant $C$ depending only on $\nu(\Lambda \times G)$ such that for any $\eta \in \mathcal{N}(S \times G)$ and $0 < h < 1$
$$
|R_h(u)(\sigma)| \leq C\|u\|_\infty (1+\sigma^2(\Lambda \times G))  h^2.
$$ But since
\begin{align*}
\int \sigma^2(\Lambda \times G) d\mu_{\Lambda|\eta}(\sigma) &\leq \frac{1}{Z_{\Lambda|\eta}}\int [\sigma(\Lambda \times G)]^2 d\pi^\nu_{\Lambda \times G}(\sigma) \\
\\
& \leq \frac{ \nu(\Lambda \times G) + \nu^2(\Lambda \times G)}{\pi^\nu ( N_{\Lambda \times G} = 0)} = e^{\nu(\Lambda \times G)}(\nu(\Lambda \times G) + \left(\nu(\Lambda \times G)\right)^2) < +\infty
\end{align*} we obtain that there exists another positive constant $\hat{C}_t$, this time depending only on $\nu(\Lambda \times G)$ and $t$, such that for $0 < h < \min\{t,1\}$ we have
$$
\left|\int R_{t,h}(g,f)d\mu_{\Lambda|\eta}\right| \leq \hat{C}_t \|g\|_\infty \|f\|_\infty h \underset{h \to \,0}{\longrightarrow} 0.
$$ Since the left-hand side of \eqref{reverfinal} does not depend on $h$, by letting $h \rightarrow 0$ we conclude the proof.
\end{proof}

As a consequence of Theorem \ref{teoreversibilidad} we obtain the following important result.

\begin{teo}[Uniqueness of Gibbs measures in heavily diluted models] \label{teounigibbs} $\\$
Let $(\nu,H)$ be a heavily diluted model on $\mathcal{N}(S \times G)$ and let $\mu$ be the invariant measure of the associated stationary global dynamics. Then the following holds:
\begin{enumerate}
\item [i.] For each $\Lambda \in \mathcal{B}^0_{S}$ and $\eta \in \mathcal{N}(S \times G)$ the Boltzmann-Gibbs distribution $\mu_{\Lambda|\eta}$ is the unique invariant measure of the local dynamics on $\Lambda$ with boundary condition $\eta$.
\item [ii.] For any $\eta \in \mathcal{N}_H(S \times G)$ we have $\mu_{\Lambda|\eta} \overset{loc}{\longrightarrow} \mu$ as $\Lambda \nearrow S$.
\item [iii.] $\mu$ is the unique Gibbs measure for the model.
\end{enumerate}
\end{teo}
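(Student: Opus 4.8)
The three assertions will be deduced essentially for free from the machinery already assembled: Proposition \ref{localfinit} (finiteness of clans of ancestors in finite volume), Proposition \ref{uni} (loss of memory of the forward dynamics in heavily diluted models), Proposition \ref{limitegibbs} (local limits of Boltzmann–Gibbs distributions are Gibbs), Lemma \ref{inv0} (the local generator is $\mu_{\Lambda|\eta}$-symmetric) and Theorem \ref{teoreversibilidad} (Gibbs measures are reversible for the global dynamics). The only genuinely new ingredient is an abstract uniqueness argument for invariant measures, which I would isolate at the start.

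First I would prove (i). Fix $\Lambda \in \mathcal{B}^0_S$ and $\eta \in \mathcal{N}(S\times G)$. By Lemma \ref{inv0}, taking $g\equiv 1$ we get $\int L_{\Lambda|\eta}(f)\,d\mu_{\Lambda|\eta}=\int f L_{\Lambda|\eta}(1)\,d\mu_{\Lambda|\eta}=0$ for every bounded $\F_{\Lambda\times G}$-measurable $f$, and differentiating the semigroup identity (using that $\mu_{\Lambda|\eta}$ is concentrated on $\{\xi_{\Lambda^c\times G}=\eta_{\Lambda^c\times G}\}$, so the finite-volume generator formula of Proposition \ref{generador} applies) this yields $\frac{d}{dt}\int S_t^{\Lambda|\eta}(f)\,d\mu_{\Lambda|\eta}=0$, hence invariance. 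In fact Lemma \ref{inv0} gives reversibility. For uniqueness, note that the local FFG process is, by Proposition \ref{localfinit}, an irreducible positive-recurrent jump process on the (locally finite) configurations supported on $\Lambda\times G$ with fixed boundary $\eta$ — it regenerates at the a.s.\ finite entrance times to $\{\xi(\Lambda\times G)=0\}$. Any invariant probability measure must therefore coincide with the one built from these regeneration cycles, which is precisely $\mu_{\Lambda|\eta}$; alternatively one invokes the standard fact that an irreducible positive-recurrent Markov jump process has a unique invariant probability. This is the step I expect to require the most care: the state space is not countable (it is $\mathcal{N}(\Lambda\times G)$, a continuum), so ``irreducible positive recurrent $\Rightarrow$ unique invariant measure'' must be justified via the regeneration structure rather than quoted from the countable-chain theory. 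Concretely I would argue: for any invariant $\mu'$ and bounded $\F_{\Lambda\times G}$-measurable $f$, condition on the last visit to the empty configuration before time $0$ (a.s.\ finite by Proposition \ref{localfinit} applied to the stationary two-sided process) and use the strong Markov property to express $\int f\,d\mu'$ as an average over a renewal cycle started from $\emptyset$; since the law of the cycle does not depend on $\mu'$, neither does $\int f\,d\mu'$, forcing $\mu'=\mu_{\Lambda|\eta}$.

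Next, (ii) is immediate: $\mu$ is, by definition, the law of $\mathcal{K}_0$ for the stationary global dynamics, so by Proposition \ref{uni}, for any $\eta\in\mathcal{N}_H(S\times G)$ the forward process $\mathcal{K}^{\eta}_t$ converges locally to $\mathcal{K}_0$ as $t\to+\infty$. On the other hand, starting the finite-volume dynamics on $\Lambda$ with boundary $\eta$ from initial condition $\eta_{\Lambda\times G}\cdot\eta_{\Lambda^c\times G}=\eta$, part (i) gives that $\mathcal{K}^{\Lambda,\eta,\eta}_t\to\mu_{\Lambda|\eta}$ locally as $t\to+\infty$, and Lemma \ref{inv1} gives $S^{\Lambda|\eta}_t(f)\to S_t(f)$ pointwise on $\mathcal{N}_H(S\times G)$ as $\Lambda\nearrow S$. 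Interchanging the two limits (justified because, as in the proof of Lemma \ref{inv1}, the coupling makes $(\mathcal{K}^{\Lambda,\eta,\eta}_t)_{\Lambda_f\times G}$ agree with $(\mathcal{K}^{\eta}_t)_{\Lambda_f\times G}$ for all $\Lambda$ large depending only on the a.s.\ finite clan $\mathcal{A}^{[0,t]}_\eta(\Lambda_f\times G)$, so the convergence in $t$ is uniform in $\Lambda$ large) yields $\mu_{\Lambda|\eta}\overset{loc}{\longrightarrow}\mu$.

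Finally, (iii): by (ii) and Proposition \ref{limitegibbs} (a heavily diluted model is in particular well diluted, hence the forward dynamics and the hypotheses of that proposition are in force; bounded local interaction range is used there, and in the unbounded case one replaces it by the interaction-measurability argument already spelled out in its proof, which only needs $\xi\mapsto\mu_{\Delta|\xi}(A)$ to be $\F_{(\Lambda_A\times G)\cup I(\Delta\times G)}$-measurable and $\mu$ to be supported on $\mathcal{N}_H(S\times G)$ by Proposition \ref{lfir}), the measure $\mu$ is a Gibbs measure. Conversely, let $\mu'$ be any Gibbs measure. By Theorem \ref{teoreversibilidad}, $\mu'$ is reversible, in particular invariant, for the global dynamics: $\int S_t(f)\,d\mu'=\int f\,d\mu'$ for all bounded local $f$ and all $t\ge 0$. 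By Proposition \ref{lfir}, $\mu'$ is supported on $\mathcal{N}_H(S\times G)$, so we may apply Proposition \ref{uni} $\mu'$-almost surely: $S_t(f)(\sigma)=\E(f(\mathcal{K}^\sigma_t))\to\E(f(\mathcal{K}_0))=\int f\,d\mu$ as $t\to+\infty$ for $\mu'$-a.e.\ $\sigma$, and $|S_t(f)|\le\|f\|_\infty$, so by dominated convergence $\int f\,d\mu'=\lim_{t\to+\infty}\int S_t(f)\,d\mu'=\int f\,d\mu$. Since bounded local functions are measure-determining on $\mathcal{N}(S\times G)$, $\mu'=\mu$. This proves uniqueness of the Gibbs measure and completes the proof. $\hfill\qed$
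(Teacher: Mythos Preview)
Your overall architecture is right --- especially the uniqueness part of (iii), which matches the paper exactly --- but your treatment of (i), (ii) and the ``$\mu$ is Gibbs'' half of (iii) diverges from the paper's and in two places is not fully justified.

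For (i) and (ii) the paper avoids both your regeneration argument and your interchange of limits by working directly with the \emph{stationary} backward construction. For (i): since Proposition~\ref{localfinit} guarantees that the (restricted) clan of ancestors is always finite in finite volume, one can simply mimic the proof of Proposition~\ref{uni} locally to get $S_t^{\Lambda|\eta}(f)(\sigma)\to\E f(\mathcal{K}^{\Lambda|\eta}_0)$ for every $\sigma$; this immediately forces any invariant measure to integrate $f$ to the same constant, so uniqueness is free and there is no need to set up renewal cycles on a continuum state space. For (ii): by (i) the law of the stationary local process $\mathcal{K}^{\Lambda|\eta}_0$ is $\mu_{\Lambda|\eta}$, so one couples $\mathcal{K}^{\Lambda|\eta}_0$ and $\mathcal{K}_0$ using the same $\overline{\Pi}$. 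The finite backward clan $\mathcal{A}^0(\Lambda_f\times G)$ (Proposition~\ref{finit1}) lives in some random $\Lambda'\times G$; once $\Lambda\supseteq\Lambda'$ and $\eta(I(\Lambda'\times G)\cap(\Lambda^c\times G))=0$ (possible since $\eta\in\mathcal{N}_H$), the two time-$0$ configurations agree on $\Lambda_f\times G$. This is a single $\Lambda\to S$ limit, no $t$ involved. Your limit-interchange argument is not justified as written: the random $\Lambda'$ containing the forward clan $\mathcal{A}^{[0,t]}_\eta(\Lambda_f\times G)$ depends on $t$ and typically grows without bound as $t\to\infty$, so you do not get the uniformity you claim.

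For the ``$\mu$ is Gibbs'' part of (iii), Proposition~\ref{limitegibbs} as stated assumes bounded local interaction range, and your parenthetical does not actually close the gap. The function $g_{\Delta,A}(\xi)=\mu_{\Delta|\xi}(A)$ is $\F_{(\Lambda_A\times G)\cup I(\Delta\times G)}$-measurable, but $I(\Delta\times G)$ need not be bounded, so (ii) alone does not give $g_{\Delta,A}(\mathcal{K}^{\Lambda|\eta}_0)\to g_{\Delta,A}(\mathcal{K}_0)$. The paper supplies the missing step: by the integrable local interaction range assumption, $\Pi_0(I(\Delta\times G))<\infty$ a.s., so the cylinders of $\Pi_0$ with basis in $I(\Delta\times G)$ are contained in some random bounded $\Lambda^*\times G$; one then applies the finite backward-clan argument to $(\Lambda_A\cup\Lambda^*)\times G$ to obtain $(\mathcal{K}^{\Lambda|\emptyset}_0)_{(\Lambda_A\times G)\cup I(\Delta\times G)}=(\mathcal{K}_0)_{(\Lambda_A\times G)\cup I(\Delta\times G)}$ for $\Lambda$ large. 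This is the extra work needed beyond Proposition~\ref{limitegibbs}.
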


\begin{obs} Statements (i) and (ii) in Theorem \ref{teounigibbs} were obtained in \cite{FFG1} in \mbox{the case of} the Ising contours model, while (iii) is a new result which we present here. Notice that, in general, (iii) is not a direct consequence of (ii) since the diluted model might not be of bounded local interaction range (see Proposition \ref{limitegibbs}).
\end{obs}

\begin{proof} Notice that, since the clans of ancestors are always finite for the local dynamics over any bounded region $\Lambda \in \mathcal{B}^0_{S}$, we can mimic the proof of Proposition \ref{uni} to show that, for any $\eta \in \mathcal{N}(S \times G)$ and any bounded local function $f$, the local evolution $S^{\Lambda,\, \eta}_t (f)$ converges pointwise as $t \rightarrow +\infty$ to $\E( f( \mathcal{K}^{\Lambda|\eta}_0) )$, where $\mathcal{K}^{\Lambda|\eta}$ is the stationary local process on $\Lambda$ with boundary condition $\eta$. In particular, this implies that there is a unique invariant measure for the local dynamics on $\Lambda$ with boundary condition $\eta$ which coincides with the distribution of $\mathcal{K}^{\Lambda|\eta}$. Since by \eqref{rever2} in the proof of Theorem \ref{teoreversibilidad} we know that $\mu_{\Lambda|\eta}$ is invariant for these dynamics, we conclude (i).

To see (ii), let us fix $\eta \in \mathcal{N}_H(S \times G)$ and given $\Lambda \in \mathcal{B}^0_{S}$ consider the coupling of $\mu$ and $\mu_{\Lambda|\eta}$ obtained by constructing the stationary local process $\mathcal{K}^{\Lambda|\eta}$ and the stationary global process $\mathcal{K}$ (which is well defined due to the heavy diluteness condition) using the same underlying process $\overline{\Pi}$. Now, given some bounded local function $f$, Proposition \ref{finit1} gives that $\mathcal{A}^0(\Lambda_f \times G)$ is almost surely finite and thus there exists $\Lambda' \in \B^0_S$ such that the basis of every cylinder in $\mathcal{A}^0(\Lambda_f \times G)$ is contained in $\Lambda' \times G$. Furthermore, since $\eta$ is also of finite local interaction range, by taking any $\Lambda \in \mathcal{B}^0_{S}$ sufficiently large so that $\Lambda' \subseteq \Lambda$ and $\eta( I(\Lambda' \times G) \cap (\Lambda^c \times G) ) = 0$ we obtain that $(\mathcal{K}^{\Lambda|\eta}_0)_{\Lambda_f \times G} = (\mathcal{K}_0)_
{\Lambda_f \times G}$. Thus, we get that $f(\mathcal{K}^{\Lambda|\eta}_0)$ converges almost surely to $f(\mathcal{K}_0)$ as $\Lambda \nearrow S$ and, since $f$ is bounded, by the dominated convergence theorem and (i) we obtain (ii).

\mbox{To establish (iii)}, let us first show that $\mu$ is a Gibbs measure. Given $\Delta \in \B^0_S$ and a \mbox{local event} $A \in \F$, consider the mapping $g_{\Delta,A}:\mathcal{N}(S\times G) \rightarrow [0,1]$ given by the formula
$$
g_{\Delta,A}(\xi)=\mu_{\Delta|\xi}(A).
$$ By the proof of Proposition \ref{limitegibbs} we know that $g_{\Delta,A}$ is $\F_{(\Lambda_A \times G) \cup I(\Delta \times G)}$-measurable. Thus, to see that $\mu$ is a Gibbs measure it suffices to show that for any $\Lambda \in \B^0_S$ sufficiently large we have that $(\mathcal{K}^{\Lambda|\eta}_0)_{(\Lambda_A \times G) \cup I(\Delta \times G)} = (\mathcal{K}_0)_{(\Lambda_A \times G) \cup I(\Delta \times G)}$ holds for some $\eta \in \mathcal{N}_H(S \times G)$. Indeed, if this is the case then
$$
\lim_{\Lambda \nearrow S} g_{\Delta,A} (\mathcal{K}^{\Lambda|\eta}_0) = g_{\Delta,A}(\mathcal{K}_0)
$$ and so by (ii), the consistency of the Boltzmann-Gibbs distributions and the dominated convergence theorem we have that
$$
\mu(A) = \lim_{\Lambda \nearrow S} \mu_{\Lambda|\eta}(A) = \lim_{\Lambda \nearrow S} \E(g_{\Delta,A} (\mathcal{K}^{\Lambda|\eta}_0)) = \E(g_{\Delta,A}(\mathcal{K}_0)) = \int \mu_{\Delta|\xi}(A) d\mu(\xi)
$$
which proves that $\mu$ is a Gibbs measure. Now, since $\Pi_0( I(\Delta \times G) )< +\infty$ by the integrable local interaction range assumption, there exists $\Lambda^* \in \B^0_S$ such that $(\Pi_0)_{I(\Delta \times G)}$ is contained in $\Lambda^* \times G$. Hence, since $\mathcal{A}^0((\Lambda_A \cup \Lambda^*) \times G)$ is almost surely finite, by taking any $\Lambda \in \mathcal{B}^0_{S}$ sufficiently large so that the basis of every cylinder in $\mathcal{A}^0((\Lambda_A \cup \Lambda^*) \times G)$ is contained in $\Lambda \times G$ we conclude that $(\mathcal{K}^{\Lambda,\,\emptyset}_0)_{(\Lambda_A \times G) \cup I(\Delta \times G)} = (\mathcal{K}_0)_{(\Lambda_A \times G) \cup I(\Delta \times G)}$ as we wished to see. Having shown that $\mu$ is a Gibbs a measure, it only remains to show that it is unique. But if $\tilde{\mu}$ is a Gibbs measure for the model then by Proposition \ref{uni} and assertion (ii) of Proposition \ref{lfir} for any bounded local function $f$ we have that
$$
\lim_{t \rightarrow +\infty} \int_{\mathcal{N}(S \times G)} S_t (f) (\sigma) d\tilde{\mu} (\sigma) = \int_{\mathcal{N}(S \times G)} f(\sigma) d\mu(\sigma).
$$ Since $\tilde{\mu}$ must also be an invariant measure for the global dynamics by Theorem \ref{teoreversibilidad}, from this we obtain $\tilde{\mu}=\mu$ and thus we conclude (iii).
\end{proof}

\begin{obs} Notice that in Theorem \ref{teounigibbs} we have actually showed that $\mu$ is the only invariant measure for the global dynamics which is supported on $\mathcal{N}_H(S \times G)$.
\end{obs}

\section{Exponential mixing of Gibbs measures}

Our next goal is to study mixing properties for Gibbs measures of translation invariant heavily diluted models. Let us begin by settling what we understand by mixing properties and translation invariant models in this context.
Throughout this section we assume that the allocation space is endowed with an operation $ + : S \times S \rightarrow S$ such that $(S,+)$ is a commutative group.

\begin{defi} We say that a diluted model $(\nu,H)$ is \textit{translation invariant} if it satisfies the following properties:
\begin{enumerate}
\item [i.] $\nu$ is translation invariant: for every $a \in S$ we have
\begin{equation}\label{tidm}
\nu = \nu \circ \tau_a^{-1}
\end{equation} where $\tau_a : S \times G \rightarrow S \times G$ is defined as $\tau_a(\gamma_x) = \gamma_{x+a}$.
\item [ii.] $H$ is translation invariant: for every $\Lambda \in \B^0_{S}$, $\eta \in \mathcal{N}(S \times G)$ and $a \in S$
$$
H_{\Lambda|\eta}( \tau_{-a} (\sigma )) = H_{\Lambda + a| \tau_a(\eta)}(\sigma)
$$ for all $\sigma \in \mathcal{N}\left((\Lambda + a) \times G\right)$ with $\Lambda+a:=\{ x + a : x \in \Lambda\}$, where for any given particle configuration $\xi$ we define $\tau_a ( \xi)$ through the standard representation
$$
\tau_a(\xi) = \sum_{\gamma_x \in Q_\xi} m_\xi(\gamma_x) \delta_{\tau_a(\gamma_x)}.
$$
\end{enumerate}
\end{defi}
Notice that as a straightforward consequence of Theorem \ref{teounigibbs} we conclude that if $(\nu,H)$ is a translation invariant heavily diluted model then its unique Gibbs measure is translation invariant in the sense of \eqref{tidm}. Also, observe that all models introduced in Section \ref{examples} are translation invariant.

\begin{defi} A measure $\mu$ is said to satisfy the \textit{exponential mixing property} when there exist $c_1,c_2 > 0$ such that for any pair of bounded local functions $f,g : \mathcal{N}(S \times G) \rightarrow \R$ with $d_S(\Lambda_f,\Lambda_g)$ sufficiently large (depending only on $c_1$) one has that
\begin{equation}\label{eqmixing}
\left|\int f(\eta)g(\eta) d\mu(\eta) - \int f(\eta) d\mu(\eta)\int g(\eta) d\mu(\eta)\right| \leq \| f\|_\infty \|g\|_\infty e^{-c_1 d_S(\Lambda_f, \Lambda_g ) + c_2(\nu(\Lambda_f) + \nu(\Lambda_g))}.
\end{equation}
\end{defi}

Our purpose in this section is to show that Gibbs measures of translation invariant heavily diluted models satisfy the exponential mixing property. To achieve this, however, the size function $q$ will need to satisfy some further conditions. The necessary requirements are contained in Definition \ref{defigoodsize} below.

\begin{defi} Consider the space $\M$ of particle configurations $\zeta$ on $\mathcal{C}$ such that
\begin{enumerate}
\item [$\bullet$] $\zeta$ is finite
\item [$\bullet$] No two cylinders in $\zeta$ have the same time of birth.
\end{enumerate} Let us order the cylinders of $\zeta$ by time of birth, $\zeta = \{ C_1 , \dots, C_k \}$ with $b_{C_k} < \dots < b_{C_1}$. We say that $\zeta$ is an \textit{ancestor family} if for every $j=2,\dots,k$ there exists $i < j$ such that $C_j$ is a first generation ancestor of $C_i$. The cylinder $C_1$ shall be referred to \mbox{as the \textit{root} of $\zeta$:} all cylinders of $\zeta$ are ancestors of $C_1$.
\end{defi}

\begin{defi}\label{defigoodsize} Given a diluted model on $\mathcal{N}(S \times G)$ we say that \mbox{$q : S \times G \rightarrow \R$} is a \textit{good size function} for the model if it satisfies the following properties:
\begin{enumerate}
\item [i.] $\inf_{\gamma_x \in S \times G} q(\gamma_x) \geq 1$.
\item [ii.] Given two ancestor families $\zeta, \zeta' \in \M$ if
$$
\sum_{C \in \zeta} q(basis(C)) + \sum_{C' \in \zeta'} q(basis(C')) < d_S\left( \pi_S\left(basis(C_1)\right),\pi_S\left(basis(C'_1)\right)\right)
$$ then none of the bases of $\zeta$ have an impact on any of the bases of $\zeta'$ and viceversa.
\mbox{Here $d_S$} denotes the metric in $S$ and $\pi_S : S \times G \rightarrow S$ is the projection onto $S$.
\item [iii.] There exist $b_1,b_2 > 0$ such that for any $\Lambda \in \B_S^0$
$$
\E \left( e^{b_1 \sum_{C \in \mathcal{A}^0(\Lambda \times G)} q(C)} \right) \leq e^{b_2 \nu(\Lambda \times G)}.
$$
\end{enumerate}
\end{defi}

The idea behind property (ii) is that, if $q$ is a good size function for the model, given an ancestor family $\zeta \in \M$ the quantity $\sum_{C \in \zeta} q(basis(C))$ should represent in some way the ``reach'' (or size) of the family. Hence, it is natural to ask that whenever the combined sizes of two ancestor families cannot overcome the distance between their roots then neither of the families has an impact on the other. On the other hand, notice that properties (i) and (iii) imply that the model under consideration admits exactly one Gibbs measure (see proof of Proposition \ref{finit1}).

\begin{teo}\label{teomixing} If $(\nu,H)$ is a translation invariant heavily diluted model with \mbox{respect to} a good size function $q$ then its unique Gibbs measure is exponentially mixing.
\end{teo}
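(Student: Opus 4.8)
The plan is to realize the unique Gibbs measure $\mu$ as the law of $\mathcal{K}_0$, the time–zero section of the stationary FFG process (Theorem \ref{teounigibbs}), and to reduce exponential mixing to a tail estimate on the spatial reach of the clans of ancestors. Fix bounded local functions $f,g$ with measurability supports $\Lambda_f,\Lambda_g$ and write $d:=d_S(\Lambda_f,\Lambda_g)$. By the construction of Section \ref{localdynamics} there are measurable maps $\psi_f,\psi_g$ with $f(\mathcal{K}_0)=\psi_f\!\left(A_f,F(A_f)\right)$ and $g(\mathcal{K}_0)=\psi_g\!\left(A_g,F(A_g)\right)$, where $A_f:=\mathcal{A}^0(\Lambda_f\times G)$, $A_g:=\mathcal{A}^0(\Lambda_g\times G)$ and $F(\cdot)$ denotes the corresponding flags. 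Associated with $A_f$ is the space–time region $W_f\subseteq\mathcal{C}$ scanned by the backward exploration that builds $A_f$, namely the union of $\{C\in\mathcal{C}:\ basis(C)\in\Lambda_f\times G,\ b_C\le 0<b_C+l_C\}$ and of the sets $\mathcal{P}(C)$, $C\in A_f$, from Definition \ref{defiances}; one checks that $\overline{\Pi}\cap W_f=A_f$, that $W_f$ is determined by $(A_f,F(A_f))$, and — by the heavy diluteness hypothesis together with Proposition \ref{finit1} — that $A_f$ is finite almost surely.

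Next I would decorrelate $f$ and $g$ by resampling the free process off $W_f$. Let $\overline{\Pi}'$ be obtained from $\overline{\Pi}$ by keeping it on the complement of $W_f$ and replacing it on $W_f$ by an independent free process, and let $g(\mathcal{K}_0')=\psi_g(A_g',F(A_g'))$ be the value of $g$ built from $\overline{\Pi}'$ (so $A_g'$ is the clan $\mathcal{A}^0(\Lambda_g\times G)$ computed from $\overline{\Pi}'$). Since $W_f$ is a stopping set for the Poisson process $\overline{\Pi}$, the process $\overline{\Pi}'$ is again a free process, and conditionally on $(A_f,F(A_f))$ it has its unconditional law; hence $g(\mathcal{K}_0')$ has, conditionally on $(A_f,F(A_f))$, the unconditional law of $g(\mathcal{K}_0)$, so that
$$
\E\!\left(f(\mathcal{K}_0)\,g(\mathcal{K}_0')\right)=\E\!\left(f(\mathcal{K}_0)\,\E\!\left(g(\mathcal{K}_0')\mid A_f,F(A_f)\right)\right)=\E(f)\,\E(g).
$$
Now set $Q_f:=\sum_{C\in A_f}q(basis(C))$, $Q_g:=\sum_{C\in A_g}q(basis(C))$ and $\tilde E:=\{Q_f+Q_g<d\}$. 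On $\tilde E$, property (ii) of the good size function forces the spatial reach of $A_g$ to be disjoint from $W_f$, so the exploration of $A_g$ sees only the part of the free process where $\overline{\Pi}'=\overline{\Pi}$, whence $g(\mathcal{K}_0)=g(\mathcal{K}_0')$; decomposing $\E(fg)=\E(fg\,\I_{\tilde E})+\E(fg\,\I_{\tilde E^c})$ and using $\E(fg'\I_{\tilde E})=\E(fg')-\E(fg'\I_{\tilde E^c})$ one obtains
$$
\left|\int fg\,d\mu-\int f\,d\mu\int g\,d\mu\right|\le 2\|f\|_\infty\|g\|_\infty\,P(\tilde E^c).
$$

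It then remains to estimate $P(\tilde E^c)=P(Q_f+Q_g\ge d)$. Note that, since $q\ge 1$, the $q$–size of any ancestor family contained in $A_f$ (respectively $A_g$) is bounded by $Q_f$ (respectively $Q_g$), and its root lies over $\Lambda_f\times G$ (respectively $\Lambda_g\times G$) at distance at least $d$ from the other region; this is precisely what lets property (ii) be applied on $\tilde E$ as claimed above. By Markov's inequality and property (iii) of Definition \ref{defigoodsize}, with $b_1,b_2>0$ as provided there,
$$
P(Q_f\ge t)\le e^{-b_1 t}\,\E\!\left(e^{b_1 Q_f}\right)\le e^{-b_1 t+b_2\nu(\Lambda_f\times G)},
$$
and likewise for $Q_g$; consequently $P(\tilde E^c)\le 2e^{-b_1 d/2+b_2(\nu(\Lambda_f\times G)+\nu(\Lambda_g\times G))}$. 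Combining this with the previous display and absorbing the multiplicative constant into the exponent — which merely requires $d=d_S(\Lambda_f,\Lambda_g)$ to be large in terms of $b_1$ — gives \eqref{eqmixing}, e.g. with $c_1=b_1/4$ and $c_2=b_2$.

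The main obstacle is the geometry of the exploration region $W_f$: one must show that $W_f$ is genuinely a stopping set for $\overline{\Pi}$ (so that resampling off $W_f$ yields a free process which, conditionally on $(A_f,F(A_f))$, is unconditioned), that $A_f=\overline{\Pi}\cap W_f$ is finite almost surely, and — the most delicate point — that on $\{Q_f+Q_g<d\}$ the backward exploration determining $A_g$ cannot reach $W_f$. This last step amounts to promoting the purely spatial statement in property (ii) of the good size function to the space–time setting, by keeping track of the birth times and lifespans of the cylinders involved and of the fact that any encounter between the two explorations must involve actual members of the two clans, hence members of ancestor families to which property (ii) applies. The argument is in the spirit of the coupling used in the proof of Proposition \ref{uni}, but requires more care.
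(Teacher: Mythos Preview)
Your overall strategy matches the paper's: realize $\mu$ via the stationary FFG process, couple so that an independent copy $g(\mathcal{K}_0')$ coincides with $g(\mathcal{K}_0)$ whenever the two clans do not interact, and bound the interaction probability via properties (ii) and (iii) of the good size function. The tail estimate and the final constants are essentially those of the paper. The gap is in the coupling itself.

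You claim that on $\tilde E=\{Q_f+Q_g<d\}$ the exploration for $A_g$ stays out of $W_f$, so that $A_g'=A_g$. Property (ii) does ensure that no \emph{actual} cylinder of $A_g$ lies in $W_f$ (any such cylinder would also belong to $\Pi\cap W_f=A_f$, forcing an impact between members of the two clans, which (ii) forbids). But the exploration \emph{region} $W_g=\{C:basis(C)\in\Lambda_g\times G,\ b_C\le0<b_C+l_C\}\cup\bigcup_{C'\in A_g}\mathcal P(C')$ can still meet $W_f$ on $\tilde E$: nothing rules out a hypothetical $\hat C\in\mathcal P(C)\cap\mathcal P(C')$ with $C\in A_f$, $C'\in A_g$, i.e.\ a cylinder whose basis impacts both $basis(C)$ and $basis(C')$. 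On that overlap $\overline\Pi'$ equals the fresh process, which may place a cylinder there; the exploration from $\Lambda_g$ using $\overline\Pi'$ then picks it up, giving $A_g'\ne A_g$. Property (ii) cannot exclude this, because applying it to the ancestor family obtained by appending $\hat C$ would require a bound on $q(basis(\hat C))$, and $\hat C$ belongs to neither clan, so $q(basis(\hat C))$ is not controlled by $Q_f+Q_g$. The paper sidesteps the problem with a different coupling (taken from \cite{FFG1}, in the spirit of Lemma~\ref{domilema}): build $A_f$ and $A_g$ from the same $\Pi$, then obtain $\tilde A_g$ by replacing only those cylinders of $A_g$ that impact or are impacted by $A_f$, together with their ancestors, by cylinders from an independent free process. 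With that surgery $\tilde A_g=A_g$ on $\{A_f\sim A_g\}$ is immediate, and no comparison of exploration regions is needed.
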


\begin{proof} Recall that for any $\Lambda \in \B^0_S$ there exists a measurable function $\psi_\Lambda$ such that
$$
\left(\mathcal{K}_0\right)_{\Lambda \times G} = \psi_\Lambda\left( \mathcal{A}^0_F(\Lambda) \right)
$$ where we use the notation $\mathcal{A}^0_F(\Lambda):= \left( \mathcal{A}^0(\Lambda \times G), F(  \mathcal{A}^0(\Lambda \times G) ) \right).$ Keeping this in mind, the left hand side of \eqref{eqmixing} can be rewritten as
\begin{equation}\label{eqmixing2}
\left| \E( f( \psi_{\Lambda_f}( \mathcal{A}^0_F(\Lambda_f))) g( \psi_{\Lambda_g}(\mathcal{A}^0_F(\Lambda_g))) - f( \psi_{\Lambda_f}( \mathcal{A}^0_F(\Lambda_f)))g( \psi_{\Lambda_g}(\tilde{\mathcal{A}}^0_F(\Lambda_g))) ) \right|
\end{equation} for $\tilde{\mathcal{A}}^0_F(\Lambda_g)$ carrying the same distribution as $\mathcal{A}^0_F(\Lambda_g)$ while being independent of $\mathcal{A}^0_F(\Lambda_f)$.
Furthermore, if we construct the triple so as to also verify that
$$
\mathcal{A}^0_F(\Lambda_f) \sim \mathcal{A}^0_F(\Lambda_g) \Longrightarrow \mathcal{A}^0_F(\Lambda_g) = \tilde{\mathcal{A}}^0_F(\Lambda_g),
$$ where $\mathcal{A}^0_F(\Lambda_f) \sim \mathcal{A}^0_F(\Lambda_g)$ means that none of the bases of $\mathcal{A}^0_F(\Lambda_f)$ have \mbox{an impact on any of} the bases of $\mathcal{A}^0_F(\Lambda_g)$ and viceversa, then we obtain the bound
\begin{equation}\label{eqmixing3}
\left|\int f(\eta)g(\eta) d\mu(\eta) -\int f(\eta) d\mu(\eta)\int g(\eta) d\mu(\eta)\right| \leq 2 \| f \|_\infty \| g \|_\infty P( \mathcal{A}^0_F(\Lambda_f) \not \sim \mathcal{A}^0_F(\Lambda_g)).
\end{equation} The construction of this triple is similar in spirit to the one in the \mbox{Domination Lemma}. The idea is to construct the families $\mathcal{A}^0_F(\Lambda_f)$ and $\mathcal{A}^0_F(\Lambda_g)$ using the same free process $\Pi$ and then to obtain $\tilde{\mathcal{A}}^0_F(\Lambda_g)$ by replacing those cylinders in $\mathcal{A}^0_F(\Lambda_g)$ which have an impact on $\mathcal{A}^0_F(\Lambda_f)$ (or receive an impact from $\mathcal{A}^0_F(\Lambda_f)$) and their ancestors with cylinders belonging to an independent \mbox{free process $\Pi'$.} We refer to \cite{FFG1}.

Therefore, it suffices to produce a suitable bound for the right hand side of \eqref{eqmixing3}.
Now, since $q$ is a good size function we have
\begin{align*}
P( \mathcal{A}^0_F(\Lambda_f) \not \sim \mathcal{A}^0_F(\Lambda_g) ) & \leq P \left(  \sum_{C \in \mathcal{A}^0(\Lambda_f \times G)} q(C)  + \sum_{C \in \mathcal{A}^0(\Lambda_g \times G)} q(C) \geq d_S( \Lambda_f, \Lambda_g ) \right)\\
\\
& \leq P\left( \sum_{C \in \mathcal{A}^0(\Lambda_f \times G)} q(C) \geq \frac{d_{f,g}}{2} \right) + P\left( \sum_{C \in \mathcal{A}^0(\Lambda_g \times G)} q(C)\geq \frac{d_{f,g}}{2} \right)
\end{align*} where we use the notation $d_{f,g}:= d_S(\Lambda_f,\Lambda_g)$. Now, since $q$ is a good size function, by the exponential Tchebychev inequality we have that there exist $b_1,b_2 > 0$ such that for any bounded local function $h:\mathcal{N}(S \times G)$ one has the estimate
\begin{equation}\label{eqmixing4}
P\left(\sum_{C \in \mathcal{A}^0(\Lambda_h \times G)} q(C) \geq r \right) \leq e^{- b_1 r  + b_2 \nu(\Lambda_h)}
\end{equation} for every $r > 0$, which yields the bound
$$
P( \mathcal{A}^0_F(\Lambda_f)\not \sim \mathcal{A}^0_F(\Lambda_g) ) \leq e^{- \frac{b_1}{2} d_S(\Lambda_f,\Lambda_g)}( e^{b_2 \nu(\Lambda_f)} + e^{b_2 \nu(\Lambda_g)}) \leq 2 e^{- \frac{b_1}{2} d_S(\Lambda_f,\Lambda_g) + b_2(\nu(\Lambda_f)+\nu(\Lambda_g))}
$$ Thus, by taking $c_1=\frac{b_1}{3}$, $c_2=b_2$ and $d_S(\Lambda_f,\Lambda_g)$ sufficiently large we conclude \eqref{eqmixing}.
\end{proof}

\begin{obs}\label{obsmixing}Even though Theorem \ref{teounigibbs} ensures that finding any size function $q$ which satisfies the (F1)-diluteness condition will be enough to conclude that the corresponding model is heavily diluted, if one wishes to obtain further properties of the unique Gibbs measure such as \eqref{eqmixing}, then it is important for the size function $q$ to be of geometrical relevance within the context of the model, for example as (ii) in Definition \ref{defigoodsize} suggests.
\end{obs}
We would like to point out that whenever checking if a certain size function is indeed a good size function for a \mbox{given model}, condition (iii) will be in general the \mbox{hardest to verify.} The following result \mbox{proved in \cite[Theorem~2.1]{NSS}} will be of much aid to us in this matter.

\begin{defi} A \textit{Galton-Watson} process is a family $Z=(Z_n)_{n \in \N_0}$ of random variables taking values in $\N_0$ which satisfy for every $n \in \N_0$ the recurrence formula
$$
Z_{n+1} = \sum_{i=1}^{Z_n} X^{(n+1)}_i
$$ where $(X^{(n+1)}_i)_{(i,n) \in \N \times \N_0}$ is a sequence of i.i.d. random variables taking values in $\N_0$. \mbox{The distribution of} $Z_0$ is called the \textit{initial distribution} of the Galton-Watson process $Z$, while the distribution of the random variables $X^{(n)}_i$ is called its \textit{offspring distribution}.
\end{defi}

\begin{teo}\label{expbranching} Let $Z$ be a Galton-Watson process with some offspring distribution $X$. If $Z_0 \equiv 1$ and $\E(X) < 1$ then there exists $b > 0$ which satisfies $\E( e^{b \sum_{n \in \N_0} Z_n} ) < +\infty$ \mbox{if and only if} \mbox{there exists $s > 0$} such that $\E( e^{s X} ) < +\infty$.
\end{teo}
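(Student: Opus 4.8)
The plan is to work with the \emph{total progeny} $W := \sum_{n \in \N_0} Z_n$ and to translate both implications into statements about its moment generating function $\phi(b):=\E(e^{bW}) \in [1,+\infty]$. First I would dispose of the trivial implication: since $Z_0 \equiv 1$ and $Z_1 = X$ one has $W \ge 1 + X$ pointwise, whence $\E(e^{bW}) \ge e^{b}\,\E(e^{bX})$; so if $\E(e^{bW})<+\infty$ for some $b>0$ then $\E(e^{sX})<+\infty$ with $s = b$. The substance of the theorem is the reverse direction.

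For that direction, assume $\E(e^{s X}) < +\infty$ for some $s>0$; equivalently the offspring probability generating function $h(t) := \E(t^{X})$ is finite (hence smooth, increasing and convex) on $[1,t_0]$, where $t_0 := e^{s} > 1$. The key structural input is the branching recursion for truncated progeny: writing $W^{(n)} := \sum_{k=0}^{n} Z_k$ and decomposing the tree at the root gives the distributional identity $W^{(n+1)} \overset{d}{=} 1 + \sum_{i=1}^{X} W_i^{(n)}$, with the $W_i^{(n)}$ i.i.d.\ copies of $W^{(n)}$ independent of $X$. Setting $\phi_n(b):=\E(e^{bW^{(n)}})$ and $\Psi_b(t):= e^{b} h(t)$, this yields $\phi_0(b)=e^{b}$ together with the iteration $\phi_{n+1}(b)=\Psi_b(\phi_n(b))$, valid (and with $\phi_{n+1}(b)<+\infty$) as long as $\phi_n(b)\le t_0$. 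Since $W^{(n)}\uparrow W$, monotone convergence gives $\phi_n(b)\uparrow \phi(b)$, so it suffices to produce a single $b>0$ and a constant $M<+\infty$ with $\phi_n(b)\le M$ for all $n$.

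This I would obtain by a self-map argument for $\Psi_b$ near $t=1$. One computes $\Psi_b(1)=e^{b}$ and $\Psi_b'(1)=e^{b}\E(X)=e^{b}m$ with $m<1$ by hypothesis, so $\Psi_b(t)-t = (e^{b}-1) + (e^{b}m - 1)(t-1) + o(t-1)$ as $t\downarrow 1$. Fix $\varepsilon>0$ small enough that $1+\varepsilon<t_0$; the coefficient $e^{b}m-1$ tends to $m-1<0$ as $b\downarrow 0$, so one can choose $b>0$ small enough that simultaneously $\Psi_b(1+\varepsilon)\le 1+\varepsilon$ and $e^{b}=\Psi_b(1)\le 1+\varepsilon$. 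Since $\Psi_b$ is increasing on $[1,1+\varepsilon]$, the interval $[1,1+\varepsilon]$ is then invariant under $\Psi_b$, and an immediate induction (base $\phi_0(b)=e^{b}\le 1+\varepsilon$, step $\phi_{n+1}(b)=\Psi_b(\phi_n(b))\le\Psi_b(1+\varepsilon)\le 1+\varepsilon$) gives $\phi_n(b)\le M:=1+\varepsilon$ for all $n$. Hence $\E(e^{b\sum_n Z_n})=\phi(b)\le 1+\varepsilon<+\infty$, which is what we want.

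The delicate point — the main obstacle — is keeping the invariant interval $[1,M]$ inside the strip $[1,t_0]$ on which $h$ is finite while at the same time arranging that it is genuinely mapped into itself; this is precisely what forces $b$ to be taken small and is exactly where subcriticality $\E(X)<1$ is used (in the critical or supercritical case no such invariant interval exists, and the total progeny has only polynomial or no exponential tails). A secondary, more bookkeeping-type issue is to state and justify the distributional recursion for $W^{(n)}$ carefully, since one must know a priori that each $\phi_n(b)$ is finite before iterating $\phi_{n+1}=\Psi_b\circ\phi_n$; but that finiteness is delivered automatically by the same induction that produces the uniform bound.
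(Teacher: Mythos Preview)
The paper does not actually prove this theorem: it is stated and attributed to \cite[Theorem~2.1]{NSS}. So there is no ``paper's own proof'' to compare against; your proposal supplies more than the text does.

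Your argument is the standard and correct one. The trivial direction via $W\ge 1+Z_1$ is fine. For the converse, the recursion $W^{(n+1)}\overset{d}{=}1+\sum_{i=1}^{X}W_i^{(n)}$ and the resulting functional iteration $\phi_{n+1}(b)=e^{b}h(\phi_n(b))$ are exactly right, and the invariant-interval fixed-point argument is the natural way to bound $\sup_n\phi_n(b)$.

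One small point of order in your write-up: the sentence ``Fix $\varepsilon>0$ small enough that $1+\varepsilon<t_0$; the coefficient $e^{b}m-1$ tends to $m-1<0$ as $b\downarrow 0$, so one can choose $b>0$ small enough that simultaneously $\Psi_b(1+\varepsilon)\le 1+\varepsilon$\dots'' is not quite complete as stated. Knowing the linear coefficient is negative does not by itself control $\Psi_b(1+\varepsilon)$, because $h$ is convex and the quadratic remainder could push $h(1+\varepsilon)$ back above $1+\varepsilon$ for moderate $\varepsilon$. The clean fix is to choose $\varepsilon$ small enough that \emph{both} $1+\varepsilon<t_0$ and $h(1+\varepsilon)<1+\varepsilon$ (possible since $h(1)=1$ and $h'(1)=m<1$), and \emph{then} take $b>0$ small so that $e^{b}h(1+\varepsilon)\le 1+\varepsilon$ and $e^{b}\le 1+\varepsilon$, which follows by continuity in $b$. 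With that adjustment the induction and the monotone-convergence step go through exactly as you wrote.
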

The following corollary illustrates the use of Theorem \ref{expbranching} in this context.

\begin{cor}\label{corbranching} Let $(\nu,H)$ be a diluted model satisfying the following properties:
\begin{enumerate}
\item [$\bullet$] $\nu(I(\{ \gamma_x \})) = \nu(I(\{ \tilde{\gamma}_y \}))$ for every pair $\gamma_x,\tilde{\gamma}_y \in S \times G$.
\item [$\bullet \bullet$] $\nu(I(\{ \gamma_x\}) < e^{\Delta E}$ for every $\gamma_x \in S \times G$.
\end{enumerate} Then there exist $b_1,b_2 > 0$ such that for any $\Lambda \in \B^0_S$
\begin{equation}\label{corbranching1}
\E \left( e^{b_1 \#(\mathcal{A}^0(\Lambda \times G))} \right) \leq e^{b_2 \nu(\Lambda \times G)}.
\end{equation}In particular, any bounded size function for $(\nu,H)$ satisfies (iii) in Definition \ref{defigoodsize}.
\end{cor}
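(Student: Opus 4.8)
The plan is to combine the Domination lemma with Theorem \ref{expbranching}. Fix $\Lambda \in \B^0_S$ and recall from the proof of Lemma \ref{domilema} that for any cylinder $C \in \mathcal{C}$ the set $\mathcal{P}(C)$ of first generation ancestors has
$$
\phi_\nu(\mathcal{P}(C)) = e^{-\Delta E}\int_{I(\{basis(C)\})}\int_{-\infty}^{b_C}\int_{b_C - t}^{+\infty}e^{-s}\,ds\,dt\,d\nu = e^{-\Delta E}\,\nu(I(\{basis(C)\})).
$$
Hypothesis ($\bullet$) makes the right-hand side independent of $C$, and hypothesis ($\bullet\bullet$) makes it equal to a constant $m := e^{-\Delta E}\nu(I(\{\gamma_x\})) \in [0,1)$. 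Next I would take the family $(\mathcal{B}_n)_{n \in \N_0}$ provided by Lemma \ref{domilema} at time $t=0$, so that $\#\mathcal{A}^0(\Lambda \times G) \le \sum_{n \in \N_0}\#\mathcal{B}_n$ by item (ii) of that lemma. By item (iii), conditional on $(\mathcal{B}_i)_{0 \le i \le n}$ the set $\mathcal{B}_{n+1}$ is a Poisson process whose intensity $\sum_{C\in\mathcal{B}_n}\phi_{\mathcal{P}(C)}$ has total mass $m\,\#\mathcal{B}_n$; hence, conditionally on $\#\mathcal{B}_0$, the integer-valued sequence $(\#\mathcal{B}_n)_{n\in\N_0}$ evolves as a Galton--Watson process with offspring distribution a Poisson law of mean $m$. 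Moreover, since the free process $\Pi$ is stationary with invariant measure $\pi^\nu$, the root count $\#\mathcal{B}_0 = \#\mathcal{A}^0_0(\Lambda\times G) = \Pi_0(\Lambda\times G)$ is a Poisson random variable of mean $\nu(\Lambda\times G)$.

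The second step feeds this into Theorem \ref{expbranching}. A Poisson law of mean $m$ has $\E(e^{sX}) = e^{m(e^s - 1)} < +\infty$ for every $s > 0$ and mean $m < 1$, so Theorem \ref{expbranching} produces a constant $b_1 > 0$ for which the total progeny $T = \sum_{n\in\N_0}Z_n$ of a single-rooted Galton--Watson process with this offspring law satisfies $C^\ast := \E(e^{b_1 T}) < +\infty$; note $C^\ast \ge e^{b_1} > 1$ because $T \ge 1$. By the branching property, conditionally on $\#\mathcal{B}_0 = k$ the total population $\sum_n\#\mathcal{B}_n$ is a sum of $k$ independent copies of $T$, whence
$$
\E\left(e^{b_1\sum_{n}\#\mathcal{B}_n}\,\Big|\,\#\mathcal{B}_0 = k\right) = (C^\ast)^k .
$$
Averaging over the Poisson variable $\#\mathcal{B}_0 = \Pi_0(\Lambda\times G)$ and using its probability generating function $z\mapsto e^{\nu(\Lambda\times G)(z-1)}$ gives
$$
\E\left(e^{b_1\#\mathcal{A}^0(\Lambda\times G)}\right) \le \E\left(e^{b_1\sum_{n}\#\mathcal{B}_n}\right) = e^{(C^\ast - 1)\nu(\Lambda\times G)},
$$
which is precisely \eqref{corbranching1} with $b_2 := C^\ast - 1 > 0$. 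For the final assertion, if $q$ is a bounded size function with $M := \sup_{\gamma_x\in S\times G}q(\gamma_x) < +\infty$, then $\sum_{C\in\mathcal{A}^0(\Lambda\times G)}q(basis(C)) \le M\,\#\mathcal{A}^0(\Lambda\times G)$, and substituting this into the estimate just obtained shows that property (iii) of Definition \ref{defigoodsize} holds with the constants $b_1/M$ and $b_2$.

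I do not anticipate a genuine difficulty here, since the corollary is essentially a quantitative strengthening of Proposition \ref{finit1}; the step demanding the most care is the bookkeeping in passing from the single-rooted exponential moment furnished by Theorem \ref{expbranching} to the bound for the whole clan — one must correctly identify the root count with the Poisson variable $\Pi_0(\Lambda\times G)$ (so that it is $\nu(\Lambda\times G)$, and not $\nu(I(\Lambda\times G))$, that appears on the right of \eqref{corbranching1}) and then combine the branching property of the dominating process with the Poisson generating function. One should also check along the way, using Assumptions \ref{assump} and formula \eqref{poisson}, that every Poisson intensity invoked above has finite total mass, so that all conditional expectations are well defined; this follows from the integrable local interaction range assumption together with $\nu(\Lambda\times G) < +\infty$.
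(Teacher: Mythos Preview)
Your argument is essentially identical to the paper's: dominate the clan by the branching family $(\mathcal{B}_n)_{n\ge 0}$ from the Domination lemma, observe that each root generates an independent subcritical Galton--Watson tree with Poisson$(m)$ offspring where $m=e^{-\Delta E}\nu(I(\{\gamma_x\}))<1$, apply Theorem~\ref{expbranching} to get a single-root exponential moment $C^\ast=\E(e^{b_1 T})<+\infty$, and then average over the Poisson number of roots using the probability generating function.

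There is one small slip. You identify $\#\mathcal{B}_0=\Pi_0(\Lambda\times G)$ as Poisson with mean $\nu(\Lambda\times G)$, citing that the invariant measure of the free process is $\pi^\nu$. A direct computation of $\phi_\nu(\mathcal{P}(\Lambda\times G))$ (exactly as you did for $\phi_\nu(\mathcal{P}(C))$) gives mean $e^{-\Delta E}\nu(\Lambda\times G)$, and this is the value the paper uses in its proof. Consequently the correct constant is $b_2=(C^\ast-1)e^{-\Delta E}$ rather than $C^\ast-1$; when $\Delta E<0$ your stated $b_2$ would be too small. The structure of the argument is unaffected.
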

\begin{proof} Given $\Lambda \in \B^0_S$, let us consider the family $\B$ constructed in the Domination lemma. By definition of $\B=(\B_n)_{n \in \N_0}$, for any $b_1 > 0$ we have that
$$
\E \left( e^{b_1 \#(\mathcal{A}^0(\Lambda \times G))} \right) \leq \E \left( e^{b_1 \# \B} \right) = \E \left( \prod_{C \in \B_0} \E( e^{b_1 \#\B(C)} ) \right).
$$ Now, by the assumptions on the pair $(\nu,H)$, we have that for every $C \in \mathcal{C}$ the family $Z^C=(Z_n^C)_{n \in \N_0}$ defined by the formula $Z_n^C = \# \B_n(C)$ is a Galton-Watson process whose distribution does not depend on $C$. Indeed, $Z^C$ has initial value 1 for all $C \in \mathcal{C}$ and Poisson offspring distribution with mean $e^{- \Delta E}\nu( I(\{basis(C)\}) )$ which does not depend on $C$.
Furthermore, since $\nu(I(\{ basis(C)\}) < e^{\Delta E}$ by assumptions and the Poisson distribution has well defined exponential moments, for $b_1 > 0$ sufficiently small Theorem \ref{expbranching} yields the existence of $\tilde{b}_2 > 1$ such that for all $C \in \C$ we have
$$
\tilde{b}_2:=\E( e^{b_1 \# \B(C)} ) < +\infty.
$$
\newpage
Hence, we obtain that
$$
\E \left( e^{b_1 \#(\mathcal{A}^0(\Lambda \times G))} \right) \leq \E( \tilde{b}_2^{\# \B_0} ) = e^{(\tilde{b}_2 - 1) e^{-\Delta E}\nu(\Lambda \times G)}
$$ since $\# \B_0$ has Poisson distribution with mean $e^{-\Delta E}\nu(\Lambda \times G)$. By taking $b_2 = (\tilde{b}_2 - 1)e^{-\Delta E}$ we conclude the proof.
\end{proof}

The hypotheses in Corollary \ref{corbranching} may seem restrictive, but in fact they can be relaxed: conditions ($\bullet$) and ($\bullet \bullet$) can be replaced by the weaker condition
\begin{enumerate}
\item [($*$)] \begin{center}$\sup_{\gamma_x \in S \times G} \nu(I(\{\gamma_x\})) < e^{\Delta E}.$\end{center}
\end{enumerate}
Indeed, if $(*)$ holds then one can obtain \eqref{corbranching1} by enlarging $\B$ so that each individual in the enlarged  process has Poisson offspring distribution with mean $e^{-\Delta E}\sup_{\gamma_x \in S \times G} \nu( I(\{\gamma_x\}))$. We leave the details to the reader.

\section{Applications}\label{teounigibssexamples}

In the following we investigate which conditions are implied by Theorems \ref{teounigibbs} and \ref{teomixing} for the existence of a unique Gibbs measure and its exponential mixing property in some of the models introduced in Section \ref{examples}. Bearing Remark \ref{obsmixing} in mind, in each of the examples we proceed as follows: first we propose a function $q$ satisfying (i) and (ii) in Definition \ref{defigoodsize} and then we investigate under which choice of parameters in the model are the (F1)-diluteness condition and (iii) in Definition \ref{defigoodsize} also satisfied.

\subsection{Widom-Rowlinson model with generalized interactions}

%Consider the Widom-Rowlinson model given by the respective repulsion functions $h$ and $j$.
Let us suppose that we have supp($h$)$=[0,m_h]$ and supp($j$)$=[0,m_j]$ for some $m_h,m_j > 0$. Then a proper choice of size function for this model would be $q(\gamma_x) = \max \{ 1 , m_h, m_j\}$. Now, by \eqref{wru} and the fact that $h$ and $j$ are both nonnegative we obtain that $\Delta E = 0$, which implies that
$$
\alpha_{WR}(\lambda^{\pm}, h,j) = 2^d \max \{ \lambda_+ m_j^d + \lambda_- \max\{ m_h^d , m_j^d \} , \lambda_- m_j^d + \lambda_+ \max\{ m_h^d , m_j^d \} \}.
$$ Furthermore, since $q$ is constant we see that if $\alpha_{WR}(\lambda^{\pm}, h,j) < 1$ then ($*$) holds so that $q$ is a good size function for the model. Thus, we arrive at the following result.

\begin{teo}\label{teounigibbswrm}For $\alpha_{WR}(\lambda^{\pm}, h,j) < 1$ the Widom-Rowlinson model with \mbox{fugacities $\lambda^\pm$} and generalized interactions given by the pair $(h,j)$ admits a unique Gibbs measure. Furthermore, this Gibbs measure satisfies the exponential mixing property.
\end{teo}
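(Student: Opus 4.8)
The plan is to recognize the Widom--Rowlinson model with generalized interactions as a translation invariant heavily diluted model for which the constant function $q\equiv\max\{1,m_h,m_j\}$ (here $\mathrm{supp}(h)=[0,m_h]$ and $\mathrm{supp}(j)=[0,m_j]$) is a good size function in the sense of Definition \ref{defigoodsize}; once this is established, existence and uniqueness of the Gibbs measure follows from Theorem \ref{teounigibbs} and its exponential mixing from Theorem \ref{teomixing}.

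First I would record the elementary properties of the model. Since $h$ and $j$ are nonnegative, $\Delta E_\eta(\gamma_x)\ge 0$ for all $\eta$ and $\gamma_x$, with equality whenever the added particle lies at $\ell^\infty$-distance $>m_h$ from every particle of $\eta$ of the opposite type and $>m_j$ from every particle of $\eta$ of either type; hence $\Delta E=0$. The impact relation is $\tilde{\gamma}_y\rightharpoonup\gamma_x$ iff $\|x-y\|_\infty\le m_j$, or $\gamma\ne\tilde{\gamma}$ and $\|x-y\|_\infty\le m_h$, so that for a $(+)$-particle $\nu(I(\{+_x\}))=2^d(\lambda_+m_j^d+\lambda_-\max\{m_h^d,m_j^d\})$ and symmetrically for $(-)$-particles, giving $\sup_{\gamma_x}\nu(I(\{\gamma_x\}))=\alpha_{WR}(\lambda^\pm,h,j)$. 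Because $q$ is constant, $\alpha_q=\sup_{\gamma_x}\frac{e^{-\Delta E}}{q(\gamma_x)}\int_{I(\{\gamma_x\})}q\,d\nu=\sup_{\gamma_x}\nu(I(\{\gamma_x\}))=\alpha_{WR}(\lambda^\pm,h,j)<1$, so the (F1)-diluteness condition holds and the model is heavily diluted; Theorem \ref{teounigibbs} then already yields a unique Gibbs measure $\mu$.

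Next I would verify that $q$ is a good size function. Property (i) is trivial. For property (iii), since $q$ is constant and $\alpha_{WR}(\lambda^\pm,h,j)<1=e^{\Delta E}$, condition $(*)$ of the remark following Corollary \ref{corbranching} holds; thus there are $b_1,b_2>0$ with $\E(e^{b_1\#\mathcal{A}^0(\Lambda\times G)})\le e^{b_2\nu(\Lambda\times G)}$ for every $\Lambda\in\B^0_S$, and since $\sum_{C\in\mathcal{A}^0(\Lambda\times G)}q(C)=q\cdot\#\mathcal{A}^0(\Lambda\times G)$, rescaling $b_1$ gives (iii). The only genuinely geometric point is property (ii). Given two ancestor families $\zeta=\{C_1,\dots,C_k\}$ and $\zeta'=\{C'_1,\dots,C'_{k'}\}$ (ordered by birth time, with roots $C_1,C'_1$) such that $\sum_{C\in\zeta}q(basis(C))+\sum_{C'\in\zeta'}q(basis(C'))<d_S(\pi_S(basis(C_1)),\pi_S(basis(C'_1)))$, I would argue that no base of $\zeta$ impacts a base of $\zeta'$. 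In an ancestor family each cylinder after the root is a first-generation ancestor of an earlier one, so the locations of its bases form a chain of jumps of $\ell^\infty$-length at most $\max\{m_h,m_j\}\le q$; hence every base of $\zeta$ lies within $d_S$-distance $(k-1)q$ of $\pi_S(basis(C_1))$, and likewise for $\zeta'$. If some $C\in\zeta$ satisfied $basis(C)\rightharpoonup basis(C')$ for some $C'\in\zeta'$, then the corresponding locations would be at $\ell^\infty$-distance $\le q$, and the triangle inequality would give $d_S(\pi_S(basis(C_1)),\pi_S(basis(C'_1)))\le(k-1)q+q+(k'-1)q=(k+k'-1)q<\sum_{C\in\zeta}q(basis(C))+\sum_{C'\in\zeta'}q(basis(C'))$, a contradiction.

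Having checked all of this, together with the evident translation invariance of $(\nu^{\lambda_+,\lambda_-},H)$, the statement follows from Theorems \ref{teounigibbs} and \ref{teomixing}. The main obstacle, though still routine, is property (ii) of Definition \ref{defigoodsize}: one must keep the geometric bookkeeping of the ancestor chains straight and be mindful that the metric $d_S$ on the allocation space is (or is comparable to) the $\ell^\infty$ norm used in the interaction, so that the constant size function genuinely dominates the reach of the impact relation.
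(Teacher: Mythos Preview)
Your proposal is correct and follows essentially the same approach as the paper: choose the constant size function $q\equiv\max\{1,m_h,m_j\}$, observe $\Delta E=0$ and compute $\alpha_q=\sup_{\gamma_x}\nu(I(\{\gamma_x\}))=\alpha_{WR}(\lambda^\pm,h,j)<1$, then invoke Theorems~\ref{teounigibbs} and~\ref{teomixing} after checking $q$ is a good size function via condition $(*)$ and Corollary~\ref{corbranching}. The only difference is that you spell out the verification of property~(ii) of Definition~\ref{defigoodsize} (the tree-path triangle-inequality argument), which the paper leaves implicit when it simply asserts that the proposed $q$ satisfies (i) and (ii).
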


Let us notice that for the original Widom-Rowlinson model with equal fugacities \mbox{we get} a simpler expression for $\alpha_{WR}$:
\begin{equation}\label{wrdc1}
\alpha_{WR}(\lambda,r)= \lambda (2r)^d.
\end{equation} For the discrete Widom-Rowlinson model we obtain an analogue of Theorem \ref{teounigibbswrm}, but the (F1)-diluteness condition in this context is given by the coefficient
\begin{equation}\label{wrdc2}
\alpha_{WR}^{discrete}(\lambda,r) = \lambda (2r)^d + \lambda,
\end{equation} where the extra term is due to the exclusion of equal-type particles with the same position.

\subsection{Thin rods model}

The size function in this model is given by the length of the rods, $q(\gamma_{x}):=\max\{1,2l\}$.
Once again, the structure of the Hamiltonian implies that $\Delta E = 0$ and, since for every $\gamma_x \in \R^2 \times S^1_*$ we have
$$
I(\gamma_x) \subseteq \{ \tilde{\gamma}_y \in \R^2 \times S^1_* : \|x-y\|_2 \leq 2l \},
$$ we obtain the bound
$$
\alpha_{TR}(\lambda,l) \leq 4 \lambda l^2 \sigma_2
$$ where $\sigma_2$ denotes the Lebesgue measure of the $2$-dimensional unit ball in the $\| \cdot \|_2$ norm.
Let us notice that, although this bound is valid for any choice of orientation measure $\rho$, it can be improved considerably provided that one has some knowledge on $\rho$. In any case, regardless of the particular choice of $\rho$ one may have, since $q$ is constant we have that ($*$) holds whenever $4\lambda l^2 < \frac{1}{\sigma_2}$. Thus, we obtain the following result.

\begin{teo}\label{teounigibbstrm} If $4 \lambda l^2 < \frac{1}{\sigma_2}$ then the thin rods model with fugacity $\lambda$ (and arbitrary orientation measure $\rho$) admits a unique Gibbs measure. Furthermore, this Gibbs measure satisfies the exponential mixing property.
\end{teo}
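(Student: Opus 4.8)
The plan is to deduce this statement from the general theory of diluted models, namely from Theorem~\ref{teounigibbs} and Theorem~\ref{teomixing}, following the scheme outlined at the beginning of Section~\ref{teounigibssexamples}. The thin rods model is one of the diluted models introduced in Section~\ref{examples}, so it satisfies Assumptions~\ref{assump}, and it is translation invariant: the intensity $\nu^\lambda=\lambda\mathcal{L}^2\times\rho$ is invariant under spatial shifts of $S=\R^2$, and the pair interaction \eqref{tru} depends only on the relative position of the two rods. Hence it will suffice to exhibit a good size function $q$ in the sense of Definition~\ref{defigoodsize} which also makes the model heavily diluted; the natural candidate, already suggested in the excerpt, is the constant function $q(\gamma_x):=\max\{1,2l\}$, the common length of all rods enlarged so as to be at least $1$.

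For the diluteness estimate I would argue as follows. Since the interaction \eqref{tru} takes only the values $0$ and $+\infty$ and vanishes on the empty configuration, one has $\Delta E_\eta(\gamma_x)\ge 0$ for every $\eta$ and $\gamma_x$, while $\Delta E_\emptyset(\gamma_x)=0$; therefore $\Delta E=0$ and $e^{-\Delta E}=1$. Next, $\tilde\gamma_y\rightharpoonup\gamma_x$ forces the segments $(L_{\tilde\gamma}+y)$ and $(L_\gamma+x)$ to intersect, which is impossible unless $\|x-y\|_2\le 2l$; hence $I(\{\gamma_x\})\subseteq\{\tilde\gamma_y:\|x-y\|_2\le 2l\}$ and, using that $\rho$ is a probability measure,
\[ \nu^\lambda\bigl(I(\{\gamma_x\})\bigr)\ \le\ \lambda\,\mathcal{L}^2\bigl(\{y\in\R^2:\|x-y\|_2\le 2l\}\bigr)\ =\ 4\lambda l^2\sigma_2. \]
Because $q$ is constant the $q$-factors cancel in \eqref{hdiluted}, so $\alpha_q=\sup_{\gamma_x}\nu^\lambda(I(\{\gamma_x\}))\le 4\lambda l^2\sigma_2<1$ by hypothesis; thus the model is heavily diluted and Theorem~\ref{teounigibbs}(iii) already gives the existence and uniqueness of the Gibbs measure. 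The same bound shows that condition $(*)$ from the remark following Corollary~\ref{corbranching} holds, so that (iii) in Definition~\ref{defigoodsize} is satisfied by every bounded size function, in particular by our $q$.

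It then remains to check (i) and (ii) of Definition~\ref{defigoodsize}. Property (i) is immediate since $q\ge 1$. For (ii) I would exploit the tree structure of an ancestor family: writing $\zeta=\{C_1,\dots,C_k\}$ ordered by birth time with root $C_1$, each $C_j$ admits a chain $C_j=C^{(0)}\rightharpoonup C^{(1)}\rightharpoonup\cdots\rightharpoonup C^{(m)}=C_1$ of first-generation ancestors, and each step displaces the rod centre by at most $2l\le q(\cdot)$. Telescoping along the chain and noting that $C^{(1)},\dots,C^{(m)}$ are distinct elements of $\zeta$, none of them equal to $C_j$, gives $d_S\bigl(\pi_S(basis(C_1)),\pi_S(basis(C_j))\bigr)\le\sum_{C\in\zeta}q(basis(C))-q(basis(C_j))$. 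Now if some basis of $\zeta$ had an impact on some basis of $\zeta'$, say $basis(C)$ on $basis(C')$ with $C\in\zeta$ and $C'\in\zeta'$, then $\|\pi_S(basis(C))-\pi_S(basis(C'))\|_2\le 2l\le q(basis(C))$, and combining this with the two telescoped bounds (applied to $\zeta$ at $C$ and to $\zeta'$ at $C'$) through the triangle inequality yields $d_S\bigl(\pi_S(basis(C_1)),\pi_S(basis(C'_1))\bigr)\le\sum_{C\in\zeta}q(basis(C))+\sum_{C'\in\zeta'}q(basis(C'))$, which is precisely the contrapositive of (ii); the symmetry of the intersection relation handles the ``viceversa''. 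With Definition~\ref{defigoodsize} fully verified, Theorem~\ref{teomixing} gives the exponential mixing of the unique Gibbs measure, completing the proof.

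The only steps requiring some care are the verification that the thin rods model genuinely fulfils all of Assumptions~\ref{assump} (measurability of the interaction range, the consistent Hamiltonian identities, and finiteness of $\nu^\lambda(I(\Lambda\times G))$) and the bookkeeping in the telescoping argument above — in particular making sure the slack introduced by the impact step is absorbed by the term $q(basis(C))$ that the telescoped bound omits. I expect this geometric bookkeeping to be the main point to get right, though it involves nothing deeper than elementary planar geometry.
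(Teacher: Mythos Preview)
Your proposal is correct and follows essentially the same approach as the paper: the same constant size function $q(\gamma_x)=\max\{1,2l\}$, the same inclusion $I(\{\gamma_x\})\subseteq\{\tilde\gamma_y:\|x-y\|_2\le 2l\}$ giving $\alpha_q\le 4\lambda l^2\sigma_2<1$, and the same appeal to Theorems~\ref{teounigibbs}, \ref{teomixing} and condition~$(*)$ after Corollary~\ref{corbranching}. The paper simply asserts that this $q$ satisfies (i) and (ii) of Definition~\ref{defigoodsize} without spelling out the telescoping argument you give, so your write-up is in fact more detailed than the paper's own treatment; the bookkeeping you worry about is fine, since the impact step contributes at most $2l\le q(basis(C))$, which exactly fills the gap left by omitting $q(basis(C))$ from the telescoped sum for~$\zeta$.
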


\subsection{Ising contours model}\label{ffgisingc1}

Since the Ising contours model does not satisfy Assumptions \ref{assump}, one needs to be careful when defining its FFG dynamics. We proceed as follows:
\begin{enumerate}
\item [i.] We replace $\Delta E$ in the construction of the local and global dynamics \mbox{respectively with}
$$
\Delta E^* := \inf_{\substack{ \eta \in \mathcal{N}(S \times G) \\ \gamma_x \in S \times G }} \Delta E^*_{\eta} (\gamma_x) \hspace{1cm}\text{ and }\hspace{1cm} \Delta E_{\Lambda} := \inf_{\substack{ \eta \in \mathcal{N}(S \times G) \\ \gamma_x \in S \times G }} \Delta E_{\Lambda|\eta} (\gamma_x).
$$
\item [ii.] We define the local dynamics on $\Lambda$ by replacing $M(\gamma_x|\xi)$ in the construction with
$$
M_\Lambda(\gamma_x|\xi):=e^{- (\Delta E_{\Lambda|\xi}(\gamma_x)-\Delta E_{\Lambda})}.
$$
\item [iii.] We define the global FFG dynamics by replacing $M(\gamma_x|\xi)$ in the construction with
$$
M^*(\gamma|\xi):=e^{-(\Delta E^*_{\xi}(\gamma_x)-\Delta E^*)}.
$$
\end{enumerate} One can easily see that, by replacing the original energy leap functions with their localized versions, one recovers (ii) in the consistent Hamiltonian property of Assumptions \ref{assump}. Furthermore, from the proof of Lemma \ref{inv0} it is clear that this property alone is enough to show (i) of Theorem \ref{teounigibbs}. Finally, from this and \eqref{elising} it is straightforward to also obtain (ii) of Theorem \ref{teounigibbs} in this particular context. Thus, the (F1)-diluteness condition for the Ising contours model implies the existence of an infinite-volume limit measure.
Since the natural choice of size function in this model is the size of \mbox{a contour, $q(\gamma_x) := |\gamma_x|$}, then \eqref{imic} yields
$$
\alpha_{IC}(\beta)= \sup_{\gamma_x \in (\Z^d)^* \times G} \left[ \frac{1}{|\gamma_x|} \sum_{\tilde{\gamma}_y : \tilde{\gamma}_y \not \sim \gamma_x} |\tilde{\gamma}_y| e^{-\beta|\tilde{\gamma}_y|}\right].
$$
In particular, for $\alpha_{IC}< 1$ there exists a probability measure $\mu$ on $\mathcal{N}((\Z^d)^* \times G)$ which is the infinite-volume limit of Boltzmann-Gibbs distributions with empty boundary condition, i.e.
$$
\mu = \lim_{\Lambda \nearrow (\Z^d)^*} \mu_{\Lambda|\emptyset}.
$$
In fact, Theorem \ref{teounigibbs} produces a coupling $( (\mathcal{K}^{\Lambda|\emptyset}_0)_{\Lambda \subseteq (\Z^d)^*}, \mathcal{K}_0)$ of these measures satisfying the following property: for any $\Lambda_0 \in \B^0_{(\Z^d)^*}$ there exists (a random) $\Delta \in \B^0_{(\Z^d)^*}$ such that for all $\Lambda \subseteq (\Z^d)^*$ with $\Delta \subseteq \Lambda$ one has
\begin{equation}\label{isconvc}
(\mathcal{K}^{\Lambda|\emptyset}_0)_{\Lambda_0 \times G} = \left(\mathcal{K}_0\right)_{\Lambda_0 \times G}.
\end{equation}
Now, let us observe that the condition $\alpha_{IC} < 1$ implies that almost surely there exist only finitely many contours in $\Pi_0$ surrounding each point in the lattice $\Z^d$. Indeed, if $\gamma_x$ is a contour surrounding a given point in the lattice, for example the origin $0$, then $\gamma_x$ contains a plaquette which intersects the $x_1$-axis on some negative value $l(\gamma_x)$ which is at a distance not \mbox{greater than $|\gamma_x|$} from the origin.  With this, a straightforward argument using the translational invariance of the model gives the bound
$$
\sum_{\gamma_x : 0 \in \text{Int}(\gamma_x)} P( \gamma_x \in \Pi_0 )  \leq \sum_{\gamma_x : 0 \in \text{Int}(\gamma_x)} e^{-\beta |\gamma_x|} \leq \sum_{\gamma_x : \,p_0 \not \sim \,\gamma_x} |\gamma_x|e^{-\beta|\gamma_x|}=: \alpha^0_{IC}(\beta)
$$ where the expression $0 \in \text{Int}(\gamma_x)$ means that $\gamma_x$ is a contour surrounding $0$, $p_0$ denotes a fixed plaquette in the dual lattice and the expression $p_0 \not \sim \gamma_x$ means that $p_0$ is adjacent to some plaquette in $\gamma_x$. Notice that the value of $\alpha^0_{IC}$ does not depend on the \mbox{choice of $p_0$.} It is not hard to check that for any $\beta > 0$ one has the inequalities
$$
\alpha_{IC}(\beta) \leq \alpha^0_{IC}(\beta) \leq 2d \alpha_{IC}(\beta).
$$ Thus, whenever $\alpha_{IC}(\beta) < 1$ we see that $\alpha^0_{IC}(\beta)$ is finite so that the Borel-Cantelli Lemma implies that almost surely $\Pi_0$ has only finitely many contours surrounding the origin. \mbox{By translational invariance} we conclude that the same conclusion must hold almost surely for \textit{all} sites in the lattice $\Z^d$ simultaneously. Observe that whenever this holds it is possible to conduct an infinite-volume $(+)$-alignment $\sigma^+_{\mathcal{K}_0}$ of $\mathcal{K}_0$ as explained in Section \ref{exampleisingc}. This spin configuration $\sigma^+_{\mathcal{K}_0}$ satisfies what is known as the $(+)$-\textit{sea with islands picture}: there are always finitely many contours around each point in $\Z^d$ and thus there is no percolation of the minority spin $(-)$. In accordance to the discussion in the Introduction of Part II, we see that $\sigma^+_{\mathcal{K}_0}$ can thus be regarded as a small random perturbation of the constant $(+)$-configuration, where this small
perturbation consists of finite islands on which $\sigma^+_{\mathcal{K}_0}$ disagrees with the $(+)$-configuration. Together with \eqref{isconvc}, the $(+)$-sea with islands picture implies that for any sequence $(\Lambda_n)_{n \in \N} \subseteq \B^0_{\Z^d}$ of simply connected sets with $\Lambda_n \nearrow \Z^d$ one has the $(+)$-alignment of the finite-volume dynamics $\mathcal{K}^{\Lambda_n^*|\emptyset}_0$ converging to the infinite-volume $(+)$-alignment, i.e.
$$
\sigma^+_{\mathcal{K}^{\Lambda_n^*|\emptyset}_0} \overset{loc}{\rightarrow} \sigma^+_{\mathcal{K}_0}.\footnote{Here local convergence is defined in analogy with Definition \ref{localconvergence}, i.e. convergence of the expectation of bounded local functions, where a function $f: \{+,-\}^{\Z^d} \rightarrow \R$ is said to be \textit{local} if it depends only on the spin values inside some bounded set $\Lambda \in \B^0_{\Z^d}$.}
$$ Thus, by \eqref{dualidadising} there exists a probability measure $\mu^+$ on $\{+,-\}^{\Z^d}$ \mbox{(the distribution of $\sigma^+_{\mathcal{K}_0}$)} such that
$$
\mu^+_{\Lambda_n} \overset{loc}{\rightarrow} \mu^+.
$$ Using Proposition \ref{limitegibbs} we conclude that $\mu^+$ is a Gibbs measure for the Ising model on $\Z^d$ (the diluteness condition is not required throughout the proof). By symmetry we obtain that there exists another Gibbs measure $\mu^-$, which is realized as the \mbox{$(-)$-alignment of $\mathcal{K}_0$}, i.e. $\sigma^-_{\mathcal{K}_0} := - \sigma^+_{\mathcal{K}_0}$. However, since both ``sea with islands'' pictures cannot be satisfied simultaneously, we must have $\mu^+ \neq \mu^-$. Thus, we see that whenever $\beta$ is sufficiently large so as to guarantee that $\alpha_{IC}(\beta) < 1$ then the Ising model exhibits a phase transition.

Now, with respect to establishing mixing properties for these measures, the exponential mixing property for the Ising contours model was studied in \cite{FFG1}. However, the true objects of interest here are the aligned measures $\mu^+$ and $\mu^-$, whose mixing properties cannot be immediately deduced from those of their underlying contour model. Indeed, local information on the original Ising model such as the spin at a given site in the lattice depends on the total amount of contours surrounding this site, which is highly non-local information in terms of contours. In general, given a bounded set $\Lambda \in \B^0_{\Z^d}$, the spin values of the configuration $\sigma^+_{\mathcal{K}_0}$ inside $\Lambda$ depend on the contour configuration $\mathcal{K}_0$ inside the set
$$
r^*(\Lambda) = \{ \gamma_x \in (\Z^d)^* \times G : p(\Lambda^*) \cap \text{V}(\gamma_x) \neq \emptyset\}
$$ where $p(\Lambda^*)$ denotes the set of plaquettes with vertices in $\Lambda^*$ and $\text{V}(\gamma_x)$ denotes the \textit{volume} of the contour $\gamma_x$, i.e. the set of points in $(\Z^d)^*$ lying outside the only infinite component of $(\Z^d)^* - \text{supp}(\gamma_x)$. In more precise terms, there exists a $\F_{r^*(\Lambda)}$-measurable function $\phi_{\Lambda}^+ : \mathcal{N}((\Z^d)^*\times G) \rightarrow \{+,-\}^{\Z^d}$ such that
$$
(\sigma^+_{\mathcal{K}_0})_{\Lambda} = \phi_{\Lambda}^+ (\mathcal{K}_0).
$$ Thus, for every pair of bounded local functions $f,g : \{+,-\}^{\Z^d} \rightarrow \R$ we have that
\begin{equation}\label{eqmixingicm}
\left|\mu^+(fg) - \mu^+(f)\mu^+(g)\right| = \left| \mu( (f\circ \phi_{\Lambda_f}^+)(g \circ \phi_{\Lambda_g}^+) ) -\mu( f\circ \phi_{\Lambda_f}^+)\mu(g \circ \phi_{\Lambda_g}^+)\right|
\end{equation} where we have used the standard integral notation
$$
\vartheta(f):= \int f(\eta)d\vartheta(\eta).
$$ The problem with \eqref{eqmixingicm} is that the functions on its right hand side are not local as functions on $\mathcal{N}((\Z^d)^*\times G)$. Nevertheless, if for each $\Lambda \in \B^0_{\Z^d}$ we had $\nu(r^*(\Lambda))< +\infty$ then we could construct a coupling between $\mathcal{A}^0_F(r^*(\Lambda_f))$ and $\mathcal{A}^0_F(r^*(\Lambda_g))$ as explained in the proof of Theorem \ref{teomixing} and use it to obtain the bound
\begin{equation}\label{eqmixingicm2}
\left|\mu^+(fg) - \mu^+(f)\mu^+(g)\right| \leq 2\|f\|_\infty\|g\|_\infty P(\mathcal{A}^0_F(r^*(\Lambda_f)) \not \sim \mathcal{A}^0_F(r^*(\Lambda_g))).
\end{equation} Fortunately, when $\alpha_{IC}(\beta) < 1$ this happens to be the case. Indeed, one has the estimate
$$
\nu(r^*(\Lambda)) \leq (\#\Lambda + 2d\#\p \Lambda) \alpha_{IC}^0
$$ where $\p \Lambda := \{ x \in \Lambda : d(x,\Lambda^c) = 1 \}$, which follows upon noticing that if $\gamma_x \in r^*(\Lambda)$ then $\gamma_x$ is either surrounding a point in $\Lambda$ or containing a plaquette whose associated bond connects $\Lambda$ with $\Lambda^c$. Now, to estimate the probability in the right hand side of \eqref{eqmixingicm2}, by definition of $r^*(\Lambda)$ one can check that
$$
P( \mathcal{A}^0_F(r^*(\Lambda_f)) \not \sim \mathcal{A}^0_F(r^*(\Lambda_g)) ) \leq P \left(  \sum_{C \in \mathcal{A}^0(r^*(\Lambda_f))} q(C)  + \sum_{C \in \mathcal{A}^0(r^*(\Lambda_g))} q(C) \geq d_S( \Lambda_f, \Lambda_g ) \right).
$$ However, the right hand side cannot be bounded as in the proof of Theorem \ref{teomixing} by using Corollary \ref{corbranching} directly: neither the size function $q$ is bounded nor is condition ($*$) satisfied. To fix this problem, for each $\Lambda \in \B^0_S$ we consider the branching process $\B$ dominating $\mathcal{A}^0(r^*(\Lambda))$ which can be constructed as in the proof of the \mbox{Domination lemma.} In this process $\B$, the individuals are the different contours, each having an independent number of offspring which has Poisson distribution with mean proportional to their size. We wish to enlarge this branching process $\B$, so that the enlarged process $\overline{\B}$ satisfies:
\begin{enumerate}
\item [i.] $\sum_{C \in \mathcal{A}^0(r^*(\Lambda))} q(C) \leq \# \overline{\B}$,
\item [ii.] All individuals in $\overline{\B}$ have the same offspring distribution.
\end{enumerate}If we manage to do this, then we can proceed as in the proof of Corollary \ref{corbranching} to bound the right hand side of \eqref{eqmixingicm2}, provided that the offspring distribution in $\overline{\B}$ has mean less than one. The way in which to achieve this is to enlarge $\B$ by considering plaquettes as individuals instead of whole contours. More precisely, the initial individuals in $\overline{\B}$ will be those plaquettes conforming the initial contours in $\B$ and for a given plaquette $p$ we define its offspring as follows: we first draw an independent number of contours containing $p$ with Poisson distribution of mean $\alpha_{IC}^0(\beta)$ and then regard all the plaquettes which constitute these contours as the offspring of $p$. The detailed construction of this enlarged process is similar to the one in the Domination Lemma, so we omit the details. Thus, a straightforward computation using Theorem \ref{expbranching} yields that, whenever $\alpha^0_{IC}(\beta) < 1$, \mbox{there
exists} $b_1 > 0$ sufficiently small (depending on $\beta$) such that
$$
\E( e^{b_1 \sum_{C \in \mathcal{A}^0(r^*(\Lambda))} q(C)} ) \leq e^{ b_1 \# \overline{B} } = e^{\nu^{\tilde{\beta}}(r^*(\Lambda)) - \nu^\beta(r^*(\Lambda))}
$$ where we write the dependence of $\nu$ on the inverse temperature explicitly, and furthermore set $\tilde{\beta}:= \beta  - \log \tilde{b}_2$ for a certain constant
$$
\tilde{b}_2:= \E( e^{b_1 \# \overline{\B}(p)} ) < +\infty
$$ which is strictly larger than one and depends only on $b_1$ and $\beta$. Let us notice that $\tilde{\beta} < \beta$ and also that:
\begin{enumerate}
\item [$\bullet$] $\tilde{\beta}$ is a increasing function of $\beta$ satisfying $\tilde{\beta} \rightarrow +\infty$ as $\beta \rightarrow +\infty$,
\item [$\bullet$] $\tilde{\beta}$ is a increasing function of $b_1$ satisfying $\beta - \tilde{\beta} \rightarrow 0$ as $b_1 \rightarrow 0$.
\end{enumerate} Thus, if we set
\begin{equation}\label{betas}
\beta^*= \inf\{ \beta > 0 : \alpha^0_{IC}(\beta) < 1 \}\hspace{1cm}\text{ and }\hspace{1cm}\beta^{**}=\inf\{ \beta > 0 : \alpha^0_{IC}(\beta) < +\infty \}
\end{equation} then if $\beta > \beta^*$ we have that for each $\tilde{\beta} \in (\beta^{**},\beta)$ there exists a constant $c > 0$ such that
\begin{equation}\label{eqmixingicm3}
\left|\mu^+(fg) - \mu^+(f)\mu^+(g)\right| \leq \|f\|_\infty\|g\|_\infty e^{- c d_S(\Lambda_f,\Lambda_g) + \nu^{\tilde{\beta}}(r^*(\Lambda_f)) + \nu^{\tilde{\beta}}(r^*(\Lambda_g))}.
\end{equation}for $d_S(\Lambda_f,\Lambda_g)$ sufficiently large (depending only on $c$). Furthermore, if we take $\tilde{\beta} \in (\beta^*,\beta)$ then we obtain the simpler (yet weaker) formula
\begin{equation}\label{eqmixingicm4}
\left|\mu^+(fg) - \mu^+(f)\mu^+(g)\right| \leq \|f\|_\infty\|g\|_\infty e^{- c d_S(\Lambda_f,\Lambda_g) + (2d+1)(\# \Lambda_f + \#\Lambda_g)}.
\end{equation}

Clearly, the analogous conclusion also remains valid for the other Gibbs measure, $\mu^-$. \mbox{We summarize our} analysis in the following theorem.

\begin{teo}\label{teounigibbsising} If $\beta > 0$ is sufficiently large so as to satisfy $\alpha_{IC}(\beta) < 1$ then:
\begin{enumerate}
\item [i.] The Ising model on $\Z^d$ admits two distinct Gibbs measures, $\mu^+$ and $\mu^-$.
\item [ii.] The measures $\mu^{+}$ and $\mu^{-}$ can be obtained as the local limits
$$
\mu^{+} := \lim_{n \rightarrow +\infty} \mu^{+}_{\Lambda_n} \hspace{2cm}\text{ and }\hspace{2cm}\mu^{-} := \lim_{n \rightarrow +\infty} \mu^{-}_{\Lambda_n}
$$ for any sequence $(\Lambda_n)_{n \in \N} \subseteq \B^0_{\Z^d}$ of simply connected sets with $\Lambda_n \nearrow \Z^d$.
\item [iii.] $\mu^+$ and $\mu^-$ satisfy the sea with islands picture for the $(+)$ and $(-)$ spins, respectively.
\item [iv.] If also $\beta > \beta^*$ where $\beta^*$ is defined in \eqref{betas}, then both $\mu^+$ and $\mu^-$ also satisfy the exponential mixing property in the sense of \eqref{eqmixingicm3} and \eqref{eqmixingicm4}.
\end{enumerate}
\end{teo}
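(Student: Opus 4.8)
The plan is to extract all four assertions from the FFG construction for the Ising contours model carried out in Section~\ref{ffgisingc1}, since once the contour dynamics are in place the rest follows from the duality relation \eqref{dualidadising} together with results already established. First I would set up the contour dynamics using the localized energy leap functions $\Delta E_{\Lambda|\eta}$ and $\Delta E^*_\eta$ in place of $\Delta E_\eta$, as in items (i)--(iii) of Section~\ref{ffgisingc1}; this restores part (ii) of the consistent Hamiltonian property, which is all that is needed for the proof of Lemma~\ref{inv0} and hence for parts (i)--(ii) of Theorem~\ref{teounigibbs} in this setting. Choosing the size function $q(\gamma_x):=|\gamma_x|$, the (F1)-diluteness condition becomes exactly $\alpha_{IC}(\beta)<1$, so Proposition~\ref{finit1} applies and yields the stationary process $\mathcal{K}$ together with the coupling $((\mathcal{K}^{\Lambda|\emptyset}_0)_\Lambda,\mathcal{K}_0)$ satisfying \eqref{isconvc}.

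Next I would establish the sea with islands picture. From $\alpha_{IC}(\beta)<1$ and $\alpha^0_{IC}(\beta)\le 2d\,\alpha_{IC}(\beta)$ one gets $\alpha^0_{IC}(\beta)<+\infty$, and the translation-invariant Borel--Cantelli estimate $\sum_{\gamma_x:\,0\in\mathrm{Int}(\gamma_x)}P(\gamma_x\in\Pi_0)\le\alpha^0_{IC}(\beta)$ shows that almost surely only finitely many contours of $\Pi_0$ (hence of $\mathcal{K}_0$) surround a fixed site; by countability this holds for all sites simultaneously. Thus the infinite-volume $(+)$-alignment $\sigma^+_{\mathcal{K}_0}$ is well defined and has the $(+)$-sea with islands picture. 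Combining \eqref{isconvc} with this picture gives, for any increasing sequence $(\Lambda_n)$ of simply connected sets exhausting $\Z^d$, that $\sigma^+_{\mathcal{K}^{\Lambda_n^*|\emptyset}_0}\overset{loc}{\to}\sigma^+_{\mathcal{K}_0}$; via \eqref{dualidadising} this translates into $\mu^+_{\Lambda_n}\overset{loc}{\to}\mu^+$ with $\mu^+$ the law of $\sigma^+_{\mathcal{K}_0}$, and Proposition~\ref{limitegibbs} (whose proof does not use the diluteness condition) shows $\mu^+$ is a Gibbs measure for the Ising model. The symmetric argument produces $\mu^-$ as the law of $-\sigma^+_{\mathcal{K}_0}$, and since the two sea with islands pictures are mutually exclusive, $\mu^+\ne\mu^-$. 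This settles (i), (ii) and (iii).

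For the exponential mixing in (iv), the subtlety --- and the main obstacle --- is that $\sigma^+_{\mathcal{K}_0}$ read inside a bounded $\Lambda$ depends on the contour configuration inside the \emph{unbounded} region $r^*(\Lambda)=\{\gamma_x:\ p(\Lambda^*)\cap\mathrm{V}(\gamma_x)\neq\emptyset\}$, so $f\circ\phi^+_{\Lambda_f}$ and $g\circ\phi^+_{\Lambda_g}$ are not local functions of $\mathcal{K}_0$ and Theorem~\ref{teomixing} does not apply off the shelf. The resolution is: first note $\nu^\beta(r^*(\Lambda))\le(\#\Lambda+2d\#\partial\Lambda)\,\alpha^0_{IC}(\beta)<+\infty$, so one may still build the coupling of $\mathcal{A}^0_F(r^*(\Lambda_f))$ and $\mathcal{A}^0_F(r^*(\Lambda_g))$ as in the proof of Theorem~\ref{teomixing}, obtaining $|\mu^+(fg)-\mu^+(f)\mu^+(g)|\le 2\|f\|_\infty\|g\|_\infty\,P(\mathcal{A}^0_F(r^*(\Lambda_f))\not\sim\mathcal{A}^0_F(r^*(\Lambda_g)))$. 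Since $q=|\cdot|$ is unbounded and $(*)$ fails, Corollary~\ref{corbranching} cannot be used directly on contours; instead I would dominate $\mathcal{A}^0(r^*(\Lambda))$ by an enlarged branching process $\overline{\B}$ whose individuals are \emph{plaquettes}, each plaquette $p$ producing an independent Poisson$(\alpha^0_{IC}(\beta))$ number of contours through $p$ and counting all their plaquettes as offspring, so that $\sum_{C\in\mathcal{A}^0(r^*(\Lambda))}q(C)\le\#\overline{\B}$ and all individuals share the same offspring distribution. When $\alpha^0_{IC}(\beta)<1$, Theorem~\ref{expbranching} gives $b_1>0$ with $\E(e^{b_1\#\overline{\B}(p)})=:\tilde b_2\in(1,\infty)$ depending only on $b_1,\beta$, whence $\E(e^{b_1\sum_{C\in\mathcal{A}^0(r^*(\Lambda))}q(C)})\le e^{\nu^{\tilde\beta}(r^*(\Lambda))-\nu^\beta(r^*(\Lambda))}$ with $\tilde\beta=\beta-\log\tilde b_2$. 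Plugging this into the exponential Tchebychev bound for the right-hand side yields \eqref{eqmixingicm3} for any $\tilde\beta\in(\beta^{**},\beta)$ with $\beta>\beta^*$, and the cruder \eqref{eqmixingicm4} for $\tilde\beta\in(\beta^*,\beta)$; the same reasoning applies verbatim to $\mu^-$, completing (iv).
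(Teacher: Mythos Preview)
Your proposal is correct and follows essentially the same approach as the paper: the theorem is a summary of the analysis carried out in Section~\ref{ffgisingc1}, and you have reproduced that analysis faithfully, including the setup of the modified dynamics, the Borel--Cantelli argument via $\alpha^0_{IC}$ for the sea with islands picture, the passage to the spin Gibbs measures through \eqref{dualidadising} and Proposition~\ref{limitegibbs}, and the plaquette-level enlargement of the branching process to handle the non-local dependence through $r^*(\Lambda)$ in part (iv).
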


\begin{obs} Much sharper conditions than the one obtained here are known for the occurrence of phase transition in the Ising model. However, the argument presented here is of considerable relevance, since it shall be repeated when studying the applications of the FFG dynamics to the Pirogov-Sinai theory, where in general sharper conditions than the one given by Theorem \ref{teounigibbs} are not known. As for the exponential mixing property, the range of validity provided by standard cluster expansion methods is strictly smaller than the one obtained here: these methods show that the exponential mixing property is satisfied as soon as $\beta > \beta'$, where
$$
\beta' = \inf\left\{ \beta > 0 : \sum_{\gamma_x : \,p_0 \not \sim \,\gamma_x} e^{q(\gamma_x)} e^{- \beta|\gamma_x|} < 1\right\}.
$$ The coefficient $\beta'$ can be improved (see \cite{LM} for example), although these methods are not capable of getting rid of the exponential dependence in $q$.
\end{obs}

\section{A remark on perfect simulation of Gibbs measures}\label{perfectsimulation}

One of the advantages of the FFG dynamics is that not only do they yield a criterion for the existence and uniqueness of the equilibrium measure, but they also provide a way \mbox{in which to perfectly sample from it.}
Indeed, if given a certain diluted model one wishes to obtain a perfect sample of its unique Gibbs measure on a finite volume $\Lambda \in \B^0_S$, then all one has to do is to obtain a perfect sample of the clan of ancestors $\mathcal{A}^0(\Lambda \times G)$ and afterwards perform the deleting procedure discussed in Section \ref{localdynamics}. But by the definition of $\mathcal{A}^0(\Lambda \times G)$, one has that:
\begin{enumerate}
\item [$\bullet$] $\mathcal{A}^0_0(\Lambda \times G)$ is a Poisson process on $\mathcal{C}$ with intensity measure $\phi_{\mathcal{P}(\Lambda \times G)}$, where
$$
\mathcal{P}(\Lambda \times G):=\{ C \in \mathcal{C} : basic(C) \in \Lambda \times G, b_C \leq 0 < b_C + l_C \},
$$
\item [$\bullet$] Conditional on $\mathcal{A}^0_0(\Lambda \times G), \dots, \mathcal{A}^0_{n}(\Lambda \times G)$, the set $\mathcal{A}^0_{n+1}(\Lambda \times G) - \bigcup_{i=0}^n \mathcal{A}^0_{i}(\Lambda \times G)$ is a Poisson process on $\mathcal{C}$ with intensity measure $\phi$ restricted to the set
    $$
    \bigcup_{C \in \mathcal{A}^0_{n}(\Lambda \times G)} \mathcal{P}(C) - \left( \mathcal{P}(\Lambda \times G) \cup \bigcup_{i=1}^{n-1} \bigcup_{C \in \mathcal{A}^0_{i}(\Lambda \times G)} \mathcal{P}(C)\right).
    $$
\end{enumerate}
Thus, to obtain a perfect sample of $\mathcal{A}^0(\Lambda \times G)$ one may proceed as follows:
\begin{enumerate}
\item [i.] Sample $\mathcal{A}^0_0(\Lambda \times G)$ from a Poisson process on $\mathcal{P}(\Lambda \times G)$ with intensity measure $\phi$.
\item [ii.] Having sampled $\mathcal{A}^0_0(\Lambda \times G), \dots, \mathcal{A}^0_{n}(\Lambda \times G)$, obtain $\mathcal{A}^0_{n+1}(\Lambda \times G)$ by sampling from a Poisson process on $\bigcup_{C \in \mathcal{A}^0_{n}(\Lambda \times G)} \mathcal{P}(C)$ and discarding all those cylinders which are ancestors of cylinders in generations lesser than $n$.
\end{enumerate}
Since we are dealing with a diluted model, by the results on Section \ref{ancestors} we have that eventually we will reach a step in which no new ancestors are added. Once that happens, the algorithm stops and the ancestor family constructed until that moment constitutes the perfect sample of $\mathcal{A}^0(\Lambda \times G)$. Upon conducting the deleting procedure on it as explained in Section \ref{localdynamics}, all the kept cylinders with bases in $\Lambda \times G$ which are alive at time $0$ constitute a perfect sample of the unique Gibbs measure on the finite volume $\Lambda$. We refer to \cite{FFG1,FFG2} for alternative sampling algorithms, results on speed of convergence and comments on the user-impatience bias in the simulation scheme.

\newpage
\section{Resumen del Capítulo 9}

Aquí introducimos la dinámica de Fernández-Ferrari-Garcia desarrollada originalmente en \cite{FFG1} para el modelo de contornos de Ising y mostramos que se encuentra bien definida para la clase más general de modelos diluidos. Dado un par $(\nu,H)$ satisfaciendo las condiciones en \ref{assump}, podríamos resumir la dinámica de Fernández-Ferrari-Garcia asociada como sigue:
\begin{enumerate}
\item [$\bullet$] A tasa $e^{-\Delta E}$ se propone el nacimiento de nuevas partículas con intensidad $\nu$.
\item [$\bullet$] Cada partícula $\gamma_x$ propuesta para nacer lo hará efectivamente con probabilidad $e^{-(\Delta E_{\eta}(\gamma_x)-\Delta E)}$, donde $\eta$ es el estado del sistema al momento en que el nacimiento de la partícula $\gamma_x$ es propuesto.
\item [$\bullet$] Cada partícula que ha nacido efectivamente tiene un tiempo de vida aleatorio con distribución exponencial de parámetro 1.
\item [$\bullet$] Luego de que su tiempo de vida haya expirado, cada partícula muere y desaparece de la configuración.
\end{enumerate}
Estudiamos primero la dinámica sobre volúmenes finitos para luego dar condiciones que garanticen la existencia de la dinámica en el volumen infinito. Bajo estas condiciones, mostramos que toda medida de Gibbs para el modelo dado por $(\nu,H)$ es una medida invariante para la dinámica. Más aún, verificamos que bajo una condición adicional (la generalización de la originalmente propuesta en \cite{FFG1} para el modelo de contornos de Ising) la dinámica en volumen infinito posee una única medida invariante, de donde se deduce que bajo dicha condición existe una única medida de Gibbs y que ésta coincide con la medida invariante de la dinámica.

Para probar esto, primero mostramos que la unicidad de medida invariante está garantizada por la ausencia de percolación en un proceso particular de percolación orientada dependiente. Luego, mostramos que dicho proceso puede ser dominado por otro de percolación independiente y que, bajo la condición propuesta en \cite{FFG1}, éste resulta ser subcrítico. A partir de esto se concluye que hay ausencia de percolación en el proceso original, lo cual implica la unicidad buscada. Por último, probamos que, bajo la condición de unicidad, la medida invariante es límite local de distribuciones de Boltzmann-Gibbs y, por lo tanto, resulta ser también una medida de Gibbs para el modelo. Por el razonamiento anterior es, además, la única que existe.

Estudiamos también cómo se traduce dicha condición de unicidad a algunos de los modelos introducidos en el Capítulo 8. Además, mostramos cómo este tipo de resultados pueden utilizarse en el modelo de contornos de Ising para probar la existencia de múltiples medidas de Gibbs en el modelo de Ising original a baja temperatura.

También investigamos bajo qué condiciones adicionales la única medida invariante posee la propiedad de mixing exponencial; en el caso del modelo de contornos de Ising, adaptamos este análisis para obtener la propiedad de mixing exponencial para cada una de las medidas de Gibbs extremales en el régimen de baja temperatura. Para terminar, damos un algoritmo de simulación perfecta para la medida invariante (bajo la condición de unicidad) basado en la construcción hacia el pasado de la dinámica.

\chapter{Continuity of Gibbs measures}\label{chapterconvabs}

In this chapter we show the continuity of the Gibbs measures for heavily diluted models with respect to their intensity measure and Hamiltonian in the \mbox{absolutely continuous case.} This scenario typically includes continuity with respect to the parameters of the model such as fugacity of particles, inverse temperature and interaction range among others.
The main result is contained in Theorem \ref{convabs} below. One important aspect to point out is that we not only obtain the local convergence of the corresponding Gibbs measures, but in fact in the proof of \mbox{Theorem \ref{convabs}} we construct a coupling between these measures in which a rather strong form of almost sure convergence takes place: given a finite volume $\Lambda \in \B^0_S$, all realizations of these measures are \textit{identical} on $\Lambda \times G$ for parameter values which are sufficiently close to the limit values. This is a distinctive feature of our approach since, in general, other methods used to establish these type of results (i.e. cluster expansion or disagreement percolation methods) are unable to obtain such a strong form of convergence, at least in the continuum setting.

\section{A general continuity result}

\begin{teo}\label{convabs} Let $(\nu^\varepsilon,H^\varepsilon)_{\varepsilon \geq 0}$ be a family of diluted models such that
\begin{enumerate}
\item [$\bullet$] There exists a heavily diluted model $(\nu,H)$ satisfying
\begin{enumerate}
\item [i.] For every $\varepsilon \geq 0$ the intensity measure $\nu^\varepsilon$ is absolutely continuous with respect to $\nu$ with density $\frac{d \nu^\varepsilon}{d \nu}$ such that
\begin{equation}\label{controldensidad}
0 \leq \frac{d \nu^\varepsilon}{d \nu} \leq 1.
\end{equation}
\item [ii.] For every $\varepsilon \geq 0$ and $\gamma_x \in S \times G$ we have $I^{H^\varepsilon}(\{\gamma_x\}) \subseteq I^H (\{\gamma_x\})$.
\item [iii.]
\begin{equation}
\Delta E^H \leq \inf_{\varepsilon \geq 0} \left[ \inf_{\substack{ \eta \in \mathcal{N}(S \times G) \\ \gamma_x \in S \times G }} \Delta \tilde{E}^{H^\varepsilon}_\eta (\gamma_x)  \right]
\end{equation}\label{controldensidad2} where for each $\eta \in \mathcal{N}(S \times G)$ we define
$$
\Delta \tilde{E}^{H^\varepsilon}_\eta := \Delta E^{H^\varepsilon}_\eta - \log \left( \frac{d\nu^\varepsilon}{d \nu}\right).
$$
\end{enumerate}
\item [$\bullet \bullet$] $\lim_{\varepsilon \rightarrow 0^+} \Delta \tilde{E}^{H^\varepsilon}_\eta (\gamma_x) = \Delta \tilde{E}^{H^0}_\eta (\gamma_x)$ for every $\eta \in \mathcal{N}(S \times G)$ and $\gamma_x \in S \times G$.
\end{enumerate} Then for each diluted model $(\nu^\varepsilon,H^\varepsilon)$ admits exactly one Gibbs measure $\mu^\varepsilon$ and as $\varepsilon \rightarrow 0^+$
$$
\mu^{\varepsilon} \overset{loc}{\longrightarrow} \mu^0.
$$ The model $(\nu,H)$ is called a \textit{majorant model} for $(\nu^\varepsilon,H^\varepsilon)_{\varepsilon \geq 0}$.%\footnote{Throughout the statement and proof of this Theorem we have added superscripts in the notation with the purpose of specifying to which Hamiltonian we are referring in each case.}
\end{teo}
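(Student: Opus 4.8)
The plan is to reduce every model $(\nu^\varepsilon,H^\varepsilon)$ to a common intensity measure, namely the one of the majorant model $(\nu,H)$, by absorbing the Radon--Nikodym densities into the Hamiltonians, and then to realise all of the associated Fern\'andez--Ferrari--Garc\'ia dynamics on a single probability space built out of the free process of $(\nu,H)$. \textbf{Step 1 (reduction to a common intensity measure).} For $\Lambda\in\B^0_S$, the bound \eqref{controldensidad} together with the formula in \eqref{poisson} shows that $\pi^{\nu^\varepsilon}_\Lambda\ll\pi^\nu_\Lambda$ with
$$\frac{d\pi^{\nu^\varepsilon}_\Lambda}{d\pi^\nu_\Lambda}(\sigma) = e^{\nu(\Lambda\times G)-\nu^\varepsilon(\Lambda\times G)}\prod_{(\gamma_x,i)\in[\sigma]}\frac{d\nu^\varepsilon}{d\nu}(\gamma_x).$$
Hence, setting $\tilde H^\varepsilon_{\Lambda|\eta}(\sigma):=H^\varepsilon_{\Lambda|\eta}(\sigma)-\sum_{(\gamma_x,i)\in[\sigma]}\log\frac{d\nu^\varepsilon}{d\nu}(\gamma_x)$, the Boltzmann--Gibbs distribution of $(\nu^\varepsilon,H^\varepsilon)$ on $\Lambda$ with any boundary condition coincides with that of $(\nu,\tilde H^\varepsilon)$, so the two models have exactly the same Gibbs measures in the sense of Definition \ref{defibgd} and what follows it. Since the extra term is a single-site sum, I would check directly that $(\nu,\tilde H^\varepsilon)$ satisfies Assumptions \ref{assump}, that its impact relation agrees with that of $H^\varepsilon$ (so $I^{\tilde H^\varepsilon}(\{\gamma_x\})=I^{H^\varepsilon}(\{\gamma_x\})\subseteq I^H(\{\gamma_x\})$), and that its energy leap function is precisely $\Delta\tilde E^{H^\varepsilon}$ as defined in the theorem.

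\textbf{Step 2 (heavy dilution and uniqueness).} Let $q$ be a size function realising $\alpha_q^{(H)}<1$ for the majorant. The lower-bound hypothesis on $\Delta\tilde E^{H^\varepsilon}$ gives $\Delta(\tilde H^\varepsilon):=\inf\Delta\tilde E^{H^\varepsilon}\ge\Delta E^H$, and the inclusion of interaction ranges gives $I^{\tilde H^\varepsilon}(\{\gamma_x\})\subseteq I^H(\{\gamma_x\})$; since $q\ge 1$, these two facts imply
$$\alpha_q^{(\varepsilon)}=\sup_{\gamma_x}\frac{e^{-\Delta(\tilde H^\varepsilon)}}{q(\gamma_x)}\int_{I^{\tilde H^\varepsilon}(\{\gamma_x\})}q(\tilde\gamma_y)\,d\nu(\tilde\gamma_y)\ \le\ \sup_{\gamma_x}\frac{e^{-\Delta E^H}}{q(\gamma_x)}\int_{I^H(\{\gamma_x\})}q(\tilde\gamma_y)\,d\nu(\tilde\gamma_y)=\alpha_q^{(H)}<1.$$
Thus each $(\nu,\tilde H^\varepsilon)$ is heavily diluted, and Theorem \ref{teounigibbs} yields a unique Gibbs measure $\mu^\varepsilon$, realised as the law of $\mathcal{K}^{(\varepsilon)}_0$, where $\mathcal{K}^{(\varepsilon)}$ is the stationary FFG process of $(\nu,\tilde H^\varepsilon)$. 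Because $(\nu^\varepsilon,H^\varepsilon)$ has the same Gibbs measures, $\mu^\varepsilon$ is also its unique Gibbs measure; this already proves the first assertion of the theorem.

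\textbf{Step 3 (common coupling).} Since $\Delta(\tilde H^\varepsilon)\ge\Delta E^H$, the quantity $M^{(\varepsilon)}(\gamma_x\mid\eta):=e^{-(\Delta\tilde E^{H^\varepsilon}_\eta(\gamma_x)-\Delta E^H)}$ lies in $[0,1]$. I would fix a single marked Poisson process $\overline\Pi$ on $\mathcal{C}\times[0,1]$ with intensity $(\nu\times e^{-\Delta E^H}\mathcal{L}\times\mathcal{E}^1)\times\mathcal{L}_{[0,1]}$ --- the free process of the majorant together with its flags --- and build $\mathcal{K}^{(\varepsilon)}$ from it by the keeping rule $\mathcal{K}^{(\varepsilon)}=\{(\gamma_x,t,s)\in\Pi:F(\gamma_x,t,s)<M^{(\varepsilon)}(\gamma_x\mid\mathcal{K}^{(\varepsilon)}_{t^-})\}$. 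A thinning computation as in Proposition \ref{generador} shows this gives the correct birth rate $e^{-\Delta\tilde E^{H^\varepsilon}_\eta(\gamma_x)}\,d\nu(\gamma_x)$, and since $I^{\tilde H^\varepsilon}(\{\gamma_x\})\subseteq I^H(\{\gamma_x\})$, every clan of ancestors $\mathcal{A}^{(\varepsilon),t}(\Lambda\times G)$ for the $\varepsilon$-dynamics is contained in the corresponding clan $\mathcal{A}^{(H),t}(\Lambda\times G)$ of the majorant, which is finite almost surely by Proposition \ref{finit1}. Hence all the processes $\mathcal{K}^{(\varepsilon)}$, $\varepsilon\ge 0$, are well defined on this one space, with $\mathcal{K}^{(\varepsilon)}_0\sim\mu^\varepsilon$.

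\textbf{Step 4 (convergence) and the main obstacle.} To get $\mu^\varepsilon\overset{loc}{\to}\mu^0$ it suffices, for a bounded local $f$ with $\Lambda_f\in\B^0_S$, to show $\E(f(\mathcal{K}^{(\varepsilon)}_0))\to\E(f(\mathcal{K}^{(0)}_0))$. Work on the almost sure event that $\mathcal{A}^{(H),0}(\Lambda_f\times G)$ is finite, and list its cylinders $C_1,\dots,C_m$ in increasing order of birth time. Processing them in this order, the configuration $\mathcal{K}^{(\varepsilon)}_{b_{C_k}^-}$ restricted to $I^{\tilde H^\varepsilon}(\{basis(C_k)\})$ is determined by the fates of $C_1,\dots,C_{k-1}$ only (any other cylinder impacting $C_k$ would lie in the clan and be born earlier), and the sole dependence on $\varepsilon$ is through the thresholds $M^{(\varepsilon)}$. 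By the pointwise-convergence hypothesis, $M^{(\varepsilon)}(\gamma_x\mid\eta)\to M^{(0)}(\gamma_x\mid\eta)$ for each fixed $\eta,\gamma_x$; and since each flag $F(C_k)$ is uniform on $[0,1]$ and independent of $\Pi$ and of all other flags, almost surely $F(C_k)$ is not equal to the (random) limiting threshold of $C_k$ in the $0$-dynamics. An induction on $k$ then shows that, for $\varepsilon$ below a strictly positive random threshold, the fate of every $C_k$ under the $\varepsilon$-dynamics matches its fate under the $0$-dynamics, so $(\mathcal{K}^{(\varepsilon)}_0)_{\Lambda_f\times G}=(\mathcal{K}^{(0)}_0)_{\Lambda_f\times G}$ for all such $\varepsilon$; bounded convergence along arbitrary sequences $\varepsilon\to 0^+$ finishes the proof. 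The delicate point is exactly this last step: the threshold used for $C_k$ depends on the configuration already produced by the $\varepsilon$-dynamics, which itself varies with $\varepsilon$, so the pointwise convergence of thresholds is not directly usable; it is the chronological processing of the \emph{finite} clan, combined with the genericity of the flags relative to the limiting thresholds, that makes the inductive stabilisation go through (and also incidentally yields the strong almost sure identity of the coupled measures on bounded volumes advertised before the statement).
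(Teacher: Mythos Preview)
Your proposal is correct and follows essentially the same route as the paper: absorb the densities into modified Hamiltonians $\tilde H^\varepsilon$ so that all models share the intensity $\nu$, verify heavy dilution via the majorant, couple all stationary FFG processes through the single free process $\overline\Pi$ of $(\nu,H)$, and use finiteness of the $H$-clan $\mathcal{A}^{0,H}(\Lambda\times G)$ to show by induction that the kept cylinders stabilise for small $\varepsilon$. The only cosmetic difference is that you induct on birth-time order of the cylinders in the clan while the paper inducts on generation index (starting from the last generation, where the relevant configuration is $\emptyset$); both orderings exploit exactly the same acyclicity and lead to the same random $\varepsilon_0$.
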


\begin{proof} Let us start by noticing that, since $\nu^\varepsilon \ll \nu$, for every $\Lambda \in \B^0_S$ we have that $\pi^{\nu^\varepsilon}_\Lambda \ll \pi^{\nu}_\Lambda$ with density given by
\begin{equation}\label{density}
\frac{d\pi^{\nu^\varepsilon}_\Lambda}{d\pi^{\nu}_\Lambda} (\sigma) = e^{-( \nu^\varepsilon(\Lambda \times G) - \nu(\Lambda \times G))} \prod_{\gamma_x \in [\sigma]} \frac{d\nu^\varepsilon}{d \nu}(\gamma_x),
\end{equation} a fact which can be deduced from \eqref{poisson}. In particular, the Boltzmann-Gibbs distributions $\mu^\varepsilon_{\Lambda |\eta}$ specified by the pair $(\nu^\varepsilon,H^\varepsilon)$ are also be specified by $(\nu,\tilde{H}^\varepsilon)$, where $\tilde{H}^\varepsilon$ is given by the formula
$$
\tilde{H}^\varepsilon_{\Lambda|\eta} (\sigma) = H^\varepsilon_{\Lambda|\eta}(\sigma) - \sum_{\gamma_x \in [\sigma]} \log\left( \frac{d \nu^\varepsilon}{d \nu}(\gamma_x)\right).
$$ It is not difficult to check that for each $\varepsilon \geq 0$ the pair $(\nu,\tilde{H}^\varepsilon)$ satisfies Assumptions \ref{assump}. Moreover, since for every $\gamma_x \in S \times G$ it is possible to verify that $I^{\tilde{H}^\varepsilon}(\{\gamma_x\}) = I^{H^\varepsilon}(\{\gamma_x\})$, we have that the pair $(\nu,\tilde{H}^\varepsilon)$ also satisfies the (F1)-diluteness condition for every $\varepsilon \geq 0$, which guarantees that each diluted model $(\nu^\varepsilon, H^\varepsilon)$ admits exactly one Gibbs measure.

To establish the local convergence we shall couple all the measures $\mu^\varepsilon$ simultaneously. For this purpose we consider the infinite-volume stationary FFG processes $\mathcal{K}^\varepsilon$ constructed by taking a Poisson process $\overline{\Pi}$ with intensity measure $\nu \times e^{-\Delta E^H} \mathcal{L} \times \mathcal{L}_{\R^+} \times \mathcal{U}[0,1]$ and setting
\begin{equation}\label{keptscaled}
\mathcal{K}^\varepsilon = \{ (\gamma_x,t,s) \in \Pi : F(\gamma_x,t,s) < \tilde{M}^\varepsilon(\gamma_x | \mathcal{K}^\varepsilon_{t^-}) \}
\end{equation} where for each $\gamma_x \in S \times G$ and $\xi \in \mathcal{N}(S \times G)$ we define
$$
\tilde{M}^\varepsilon (\gamma_x |\xi) := e^{- (\Delta \tilde{E}^{H^\varepsilon}_\xi(\gamma_x) - \Delta E^H)}.
$$ By the arguments given in the previous sections we see that for each $\varepsilon \geq 0$ the process $\mathcal{K}^\varepsilon$ is stationary with invariant measure $\mu^\varepsilon$. Thus it will suffice to show that as $\varepsilon \rightarrow 0^+$
$$
\mathcal{K}^\varepsilon_0 \overset{loc}{\longrightarrow} \mathcal{K}^0_0.
$$ Let us take then $\Lambda \in \B^0_S$ and consider the clan of ancestors of $\Lambda \times G$ ancestors at time 0 $\A^{0,H} ( \Lambda \times G)$ with respect to $H$. Notice that, following this notation, for every $\varepsilon \geq 0$ we have the inclusion
\begin{equation}\label{ancesinc0}
A^{0,H^\varepsilon}(\Lambda \times G) \subseteq \A^{0,H} ( \Lambda \times G).
\end{equation} Furthermore, recall that $\A^{0,H}( \Lambda \times G)$ is finite almost surely since $(\nu,H)$ is a heavily diluted model. Now, since $\lim_{\varepsilon \rightarrow 0^+} \Delta \tilde{E}^{H^\varepsilon}_\eta (\gamma_x) = \Delta \tilde{E}^{H^0}_\eta (\gamma_x)$ for every $\eta \in \mathcal{N}(S \times G)$ and $\gamma_x \in S \times G$, it follows that there exists (random) $\varepsilon_0 > 0$ such that if $0 \leq \varepsilon < \varepsilon_0$ then
\begin{equation}\label{ancesinc1}
\mathcal{K}^\varepsilon_{\A^{0,H} ( \Lambda \times G)} = \mathcal{K}^0_{\A^{0,H} ( \Lambda \times G)}.
\end{equation} Indeed, if (random) $N \in \N$ is such that $\A^{0,H}_n(\Lambda \times G) = \emptyset$ for every $n > N$ almost surely then for every cylinder $C \in \A^{0,H}_N(\Lambda \times G)$ and $\varepsilon \geq 0$ we have that
$$
C \in \mathcal{K}^\varepsilon \Longleftrightarrow F(C) < \tilde{M}^\varepsilon( basis(C) | \emptyset )
$$ from which we immediately obtain that for $\varepsilon$ (randomly) small enough
$$
\mathcal{K}^\varepsilon_{\A^{0,H}_N ( \Lambda \times G)} = \mathcal{K}^0_{\A^{0,H}_N( \Lambda \times G)}
$$ and one may proceed with the succeeding generations by induction. But \eqref{ancesinc0} and \eqref{ancesinc1} together imply that for $0 \leq \varepsilon < \varepsilon_0$ we have
$$
(\mathcal{K}^\varepsilon_0)_{\Lambda \times G} = (\mathcal{K}^0_0)_{\Lambda \times G}
$$ which establishes the local convergence and concludes the proof.
\end{proof}

We would like to point out that although the condition of the existence of a majorant model may seem restrictive at first, in practice all heavily diluted models admit such a majorant. Indeed, as we will see on Section \ref{examplescont} below, most majorant models can be obtained by slightly increasing the fugacity (or decreasing the inverse temperature) and/or the interaction range of the limit model $(\nu^0,H^0)$. Since this limit model is heavily diluted by assumption, performing such an operation will yield once again a heavily diluted model.

Finally, let us notice that the hypothesis in Theorem \ref{convabs} can be relaxed a little bit. Indeed, \eqref{controldensidad} needs to hold $\nu$-almost surely since under this condition we can always choose a version of $\frac{d\nu^\varepsilon}{d\nu}$ satisfying \eqref{controldensidad} for every $\gamma_x \in S \times G$. Similarly, the convergence of the energy leap functions $\Delta \tilde{E}^{H^\varepsilon}_\eta$ for every $\eta \in \mathcal{N}(S \times G)$ can also be somewhat relaxed. The next definition explains how to do so.

\begin{defi}\label{diset} A measurable set $N \subseteq \mathcal{N}(S \times G)$ is said to be \textit{dynamically impossible} for an intensity measure $\nu$ on $S \times G$ if it satisfies the following properties:
\begin{enumerate}
\item [i.]$\pi^\nu(N)=0$
\item [ii.] $\eta \in N^c \Longrightarrow \xi \in N^c$ for every $\xi \preceq \eta$, i.e. for every $\xi \in \mathcal{N}(S \times G)$ such that its standard representation satisfies $Q_\xi \subseteq Q_\eta$ and $m_\xi(\gamma_x) \leq m_\eta(\gamma_x)$ for every $\gamma_x \in Q_\xi$.
\item [iii.]If $X \in \mathcal{N}( \mathcal{C} )$ satisfies $X_t \in N$ for some $t \in \R$ then there exists $h > 0$ such that $X_s \in N$ for every $s \in [t,t+h)$.
\end{enumerate}
\end{defi}

Let us notice that if $N$ is a dynamically impossible set for the intensity measure $\nu$ then the corresponding Poisson process $\Pi^{\phi_\nu}$ on $\mathcal{C}$ satisfies
$$
P( \Pi_t^{\phi_\nu} \in N \text{ for some }t \in \R )=0.
$$
Indeed, we have that
\begin{align*}
P( \Pi_t^{\phi_\nu} \in N \text{ for some }t \in \R ) & = P( \Pi_r^{\phi_\nu} \in N \text{ for some }r \in \mathbb{Q} )\\
\\
& \leq \sum_{r \in \mathbb{Q}} P( \Pi_r^{\phi_\nu} \in N ) = \sum_{r \in \mathbb{Q}} \pi^{\nu}(N) = 0.
\end{align*} If follows from (ii) in Definition \ref{diset} and the proof of Theorem \ref{convabs} that condition ($\bullet \bullet$) in the statement of Theorem \ref{convabs} may be replaced with the following weaker condition:
\begin{enumerate}
\item [($\bullet \bullet^*$)] There exists a dynamically impossible set $N$ for the intensity measure $\nu$ such that
$$
\lim_{\varepsilon \rightarrow 0^+} \Delta \tilde{E}^{H^\varepsilon}_\eta (\gamma_x) = \Delta \tilde{E}^{H^0}_\eta (\gamma_x)
$$ for all $\eta \in \mathcal{N}(S \times G)$ and $\gamma_x \in S \times G$ satisfying $\eta, \eta + \delta_{\gamma_x} \in N^c$.
\end{enumerate}

\section{Applications}\label{examplescont}

In the following we show how Theorem \ref{convabs} may be applied to the models in Section \ref{examples}.

\subsection{Continuity in the inverse temperature for the Ising model}

Consider $\beta_0 > 0$ such that $\alpha_{IC}(\beta_0) < 1$ and a sequence $(\beta_\varepsilon)_{\varepsilon > 0} \subseteq \R^+$ \mbox{converging to $\beta_0$.} For each $\varepsilon \geq 0$ we may consider the Ising contours model with inverse temperature $\beta_\varepsilon$, which is specified by the intensity measure $\nu^{\beta_\varepsilon}$ as in \eqref{imic} and Hamiltonian $H^\varepsilon$ as in \eqref{hic}, the latter being independent of $\varepsilon$.
By Theorem \ref{convabs} we have that for $\varepsilon \geq 0$ sufficiently small the corresponding Ising contours model admits an infinite-volume Boltzmann-Gibbs distribution $\mu^\varepsilon$ and, furthermore, that
$$
\mu^\varepsilon \overset{loc}{\rightarrow} \mu^0.
$$ Indeed, it suffices to check that these models satisfy the hypothesis of Theorem \ref{convabs} \mbox{(with the energy leap} functions $\Delta E_\eta$ replaced by their modified versions $\Delta E^*_\eta$ defined in \eqref{elising} and the interaction ranges $I(\{\gamma_x\})$ given by incompatibility, see Section \ref{ffgisingc1}). But notice that if for $0 < \beta^* < \beta_0$ such that $\alpha_{IC}(\beta^*) < 1$ we consider the Ising contours model with inverse temperature $\beta^*$ given by the pair $(\nu^{\beta^*},H)$ then we see that:
\begin{enumerate}
\item [$\bullet$] For $\varepsilon \geq 0$ such that $\beta^* < \beta_\varepsilon$ we have that $\nu^\varepsilon \ll \nu^{\beta^*}$ with density given by
\begin{equation}\label{controldensidad3}
\frac{d \nu^\varepsilon}{d \nu^{\beta^*}} (\gamma_x) = e^{- (\beta_\varepsilon - \beta^*)|\gamma_x|}
\end{equation} which satisfies $0 \leq \frac{d \nu^\varepsilon}{d \nu^{\beta^*}} \leq 1$.
\item [$\bullet$] The validity of (ii) and (iii) in the hypothesis of the theorem follows at once from the fact that the Hamiltonian is the same in all the contour models under consideration. Furthermore, since $\beta_\varepsilon \rightarrow \beta_0$ it also follows that $\lim_{\varepsilon \rightarrow 0^+} \Delta \tilde{E^*}^{H^\varepsilon}_\eta (\gamma_x) = \Delta \tilde{E^*}^{H^0}_\eta (\gamma_x)$ \mbox{for every} $\eta \in \mathcal{N}(\Z^d \times G)$ and $\gamma_x \in \Z^d \times G$, so that $(\nu^{\beta^*},H)$ \mbox{acts as a majorant model.}
\end{enumerate}

Finally, let us notice that since we are always under the heavily diluted regime, all the FFG processes under consideration have well defined $(+)$-alignments and $(-)$-alignments. As a direct consequence we obtain the following result.

\begin{teo}\label{convabsic} For any $\beta_0 > 0$ such that $\alpha_{IC}(\beta_0) < 1$ we have that
$$
\lim_{\beta \rightarrow \beta_0} \mu^{+,\beta} = \mu^{+,\beta_0} \hspace{2cm}\text{ and }\hspace{2cm}\lim_{\beta \rightarrow \beta_0} \mu^{-,\beta} = \mu^{-,\beta_0}
$$ where for $\beta > 0$ the measures $\mu^{+,\beta}$ and $\mu^{-,\beta}$ are respectively defined as the weak limits
$$
\mu^{+,\beta} := \lim_{n \rightarrow +\infty} \mu^{+,\beta}_{\Lambda_n} \hspace{2cm}\text{ and }\hspace{2cm}\mu^{-,\beta} := \lim_{n \rightarrow +\infty} \mu^{-,\beta}_{\Lambda_n}
$$ for any increasing sequence $(\Lambda_n)_{n \in \N} \subseteq \B^0_{\Z^d}$ of simply connected sets with $\bigcup_{n \in \N} \Lambda_n = \Z^d$.
\end{teo}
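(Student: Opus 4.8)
The plan is to upgrade the contour-level convergence already obtained to the level of spin configurations, by combining the \emph{strong} coupling produced in the proof of Theorem~\ref{convabs} with the ``sea with islands'' picture valid in the heavily diluted regime. First I fix $\beta^\ast<\beta_0$ close enough to $\beta_0$ that $\alpha_{IC}(\beta^\ast)<1$ (possible since $\alpha_{IC}$ is monotone in $\beta$ and $\alpha_{IC}(\beta_0)<1$), so that $(\nu^{\beta^\ast},H)$ is a heavily diluted majorant model for the family of Ising contours models at inverse temperatures $\beta$ ranging over a neighbourhood of $\beta_0$, with densities $\tfrac{d\nu^{\beta}}{d\nu^{\beta^\ast}}(\gamma_x)=e^{-(\beta-\beta^\ast)|\gamma_x|}\in(0,1]$, exactly as in the subsection preceding this statement (using the localized energy leap functions $\Delta E^\ast_\eta$ and the incompatibility interaction ranges of Section~\ref{ffgisingc1}). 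Realising all the stationary FFG processes $\mathcal K^{\beta}$ on one common underlying Poisson process $\overline\Pi$ with intensity $\nu^{\beta^\ast}\times e^{-\Delta E^\ast}\mathcal L\times\mathcal L_{\R^+}\times\mathcal U[0,1]$, the argument in the proof of Theorem~\ref{convabs} gives: for every fixed $\Delta\in\B^0_{(\Z^d)^\ast}$ there is an a.s.\ positive random $\varepsilon_\Delta>0$ such that $(\mathcal K^{\beta}_0)_{\Delta\times G}=(\mathcal K^{\beta_0}_0)_{\Delta\times G}$ whenever $|\beta-\beta_0|<\varepsilon_\Delta$; and $\mathcal K^{\beta}_0$ is distributed according to $\mu^{\beta}$, the infinite-volume Boltzmann--Gibbs distribution of the contour model at inverse temperature $\beta$.

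Next I pass to the spin picture. Since $\alpha_{IC}(\beta)\le\alpha_{IC}(\beta^\ast)<1$ for $\beta\ge\beta^\ast$, Section~\ref{ffgisingc1} and Theorem~\ref{teounigibbsising} guarantee that each $\mathcal K^{\beta}_0$ almost surely has only finitely many contours around every site, that its $(+)$-alignment $\sigma^{+}_{\mathcal K^{\beta}_0}$ is well defined and realises the $(+)$-sea-with-islands picture, and that its law is precisely $\mu^{+,\beta}=\lim_n\mu^{+,\beta}_{\Lambda_n}$ for any increasing sequence of simply connected $\Lambda_n\nearrow\Z^d$ (this is also where one invokes \eqref{dualidadising} and Proposition~\ref{limitegibbs}). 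Fix now $\Lambda\in\B^0_{\Z^d}$ and a bounded function $f:\{+,-\}^{\Z^d}\to\R$ measurable with respect to the spins in $\Lambda$. Then $(\sigma^{+}_{\mathcal K^{\beta}_0})_{\Lambda}=\phi^{+}_{\Lambda}(\mathcal K^{\beta}_0)$ is determined by the restriction of $\mathcal K^{\beta}_0$ to the set $r^{\ast}(\Lambda)=\{\gamma_x:\,p(\Lambda^\ast)\cap\mathrm V(\gamma_x)\neq\emptyset\}$, which is \emph{not} bounded. However, since $\nu^{\beta^\ast}(r^{\ast}(\Lambda))\le(\#\Lambda+2d\#\partial\Lambda)\,\alpha^0_{IC}(\beta^\ast)<+\infty$ (here $\alpha^0_{IC}(\beta^\ast)\le 2d\,\alpha_{IC}(\beta^\ast)<\infty$ is used), the configuration $\Pi_0\cap r^{\ast}(\Lambda)$ consists almost surely of finitely many contours, whose supports are therefore contained in some \emph{random} bounded set $\Delta_0\in\B^0_{(\Z^d)^\ast}$ that does not depend on $\beta$; and $\mathcal K^{\beta}_0\subseteq\Pi_0$ for every $\beta$.

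Combining the two steps: on $\{|\beta-\beta_0|<\varepsilon_{\Delta_0}\}$ one has $\mathcal K^{\beta}_0\cap r^{\ast}(\Lambda)=(\mathcal K^{\beta}_0)_{\Delta_0\times G}\cap r^{\ast}(\Lambda)=(\mathcal K^{\beta_0}_0)_{\Delta_0\times G}\cap r^{\ast}(\Lambda)=\mathcal K^{\beta_0}_0\cap r^{\ast}(\Lambda)$, whence $(\sigma^{+}_{\mathcal K^{\beta}_0})_{\Lambda}=(\sigma^{+}_{\mathcal K^{\beta_0}_0})_{\Lambda}$ and thus $f(\sigma^{+}_{\mathcal K^{\beta}_0})=f(\sigma^{+}_{\mathcal K^{\beta_0}_0})$. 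Since $\varepsilon_{\Delta_0}>0$ almost surely, $f(\sigma^{+}_{\mathcal K^{\beta}_0})\to f(\sigma^{+}_{\mathcal K^{\beta_0}_0})$ almost surely as $\beta\to\beta_0$, and as $f$ is bounded the dominated convergence theorem yields $\int f\,d\mu^{+,\beta}=\E\big(f(\sigma^{+}_{\mathcal K^{\beta}_0})\big)\to\E\big(f(\sigma^{+}_{\mathcal K^{\beta_0}_0})\big)=\int f\,d\mu^{+,\beta_0}$. Hence $\mu^{+,\beta}\overset{loc}{\longrightarrow}\mu^{+,\beta_0}$, and local convergence implies convergence in distribution; the identical argument applied to the $(-)$-alignments $\sigma^{-}_{\mathcal K^{\beta}_0}=-\sigma^{+}_{\mathcal K^{\beta}_0}$ gives $\mu^{-,\beta}\to\mu^{-,\beta_0}$.

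The main obstacle is exactly the non-locality, in the contour representation, of the spin observables: the strong coupling of Theorem~\ref{convabs} is stated on bounded regions $\Delta\times G$, while a local spin function of $\Lambda$ reads off $\mathcal K^{\beta}_0$ on the unbounded set $r^{\ast}(\Lambda)$. The remedy is the almost sure finiteness of $\Pi_0\cap r^{\ast}(\Lambda)$, and this is precisely where the hypothesis $\alpha_{IC}(\beta_0)<1$ is essential (it forces $\nu^{\beta^\ast}(r^{\ast}(\Lambda))<+\infty$); care must be taken to choose $\beta^\ast$, hence the neighbourhood of $\beta_0$, once and for all so that every model in play is simultaneously heavily diluted and enjoys the sea-with-islands picture. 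The remaining verifications — that $(\nu^{\beta^\ast},H)$ satisfies Assumptions~\ref{assump} with the localized conventions, and that $\Delta\tilde E^{H^\beta}_\eta(\gamma_x)\to\Delta\tilde E^{H^{\beta_0}}_\eta(\gamma_x)$ since $\beta\to\beta_0$ — are routine and were already carried out in the discussion preceding the theorem.
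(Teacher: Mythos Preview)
Your proposal is correct and follows essentially the same route as the paper: set up the majorant contour model $(\nu^{\beta^\ast},H)$ with $\alpha_{IC}(\beta^\ast)<1$, invoke the strong coupling from the proof of Theorem~\ref{convabs} to get $(\mathcal K^\beta_0)_{\Delta\times G}=(\mathcal K^{\beta_0}_0)_{\Delta\times G}$ for $\beta$ close to $\beta_0$, and then pass to the $(\pm)$-alignments. The paper is considerably more terse here---it simply remarks that in the heavily diluted regime all FFG processes under consideration have well-defined alignments and declares the theorem a ``direct consequence''---whereas you make explicit the one genuinely delicate point, namely that $\phi^+_\Lambda$ is \emph{not} a local function of contours but is nonetheless determined by the a.s.\ finite set $\Pi_0\cap r^\ast(\Lambda)$, allowing you to localise to a random bounded $\Delta_0$ and then apply the coupling there.
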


\newpage
\begin{obs} Let us observe that this continuity result is well known (even for arbitrary values of $\beta_0$) and is a direct consequence of the monotonicity properties of the Ising model.
The advantage of the approach presented here is that it can be applied in the same manner to other contour models lacking these properties. We will do this in Chapter \ref{chapterpirogovsinai}.
\end{obs}

\subsection{Widom-Rowlinson model with generalized interactions}

For the Widom-Rowlinson model with generalized interactions we obtain the next result.

\begin{teo}\label{convabswr} Let us consider for $\varepsilon \geq 0$ the Widom-Rowlinson model with fugacities $\lambda^+_\varepsilon$ and $\lambda^-_\varepsilon$, interspecies repulsion function $h^\varepsilon$ and type-independent repulsion function $j^\varepsilon$. Let us assume that the following conditions hold:
\begin{enumerate}
\item [i.] $\lim_{\varepsilon \rightarrow 0^+} \lambda^+_\varepsilon = \lambda^+_0$ and $\lim_{\varepsilon \rightarrow 0^+} \lambda^-_\varepsilon = \lambda^-_0$.
\item [ii.] $\lim_{\varepsilon \rightarrow 0^+} h^\varepsilon(r) = h^0(r)$ and $\lim_{\varepsilon \rightarrow 0^+} j^\varepsilon(r) = j^0(r)$ for every $r \geq 0$.
\item [iii.] $\lim_{\varepsilon \rightarrow 0^+} m_{h^\varepsilon} = m_{h^0}$ and $\lim_{\varepsilon \rightarrow 0^+} m_{j^\varepsilon} = m_{j^0}$ where $m_{h^\varepsilon}$ and $m_{j^\varepsilon}$ are defined for each $\varepsilon \geq 0$ through the relation supp($h^\varepsilon$)$=[0,m_{h^\varepsilon}]$ and supp($j^\varepsilon$)$=[0,m_{j^\varepsilon}]$.
\item [iv.] $\alpha_{WR}(\lambda^+_0,\lambda^-_0,h^0,j^0) < 1$.
\end{enumerate} Then for $\varepsilon \geq 0$ sufficiently small there exists a unique Gibbs measure $\mu^\varepsilon$ of the associated Widom-Rowlinson model. Furthermore, we have the convergence
$$
\mu^\varepsilon \overset{loc}{\rightarrow} \mu^0.
$$
\end{teo}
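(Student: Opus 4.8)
The plan is to deduce Theorem \ref{convabswr} from the general continuity result Theorem \ref{convabs}, so that the entire argument reduces to exhibiting a single heavily diluted majorant model for (a tail of) the family $(\nu^\varepsilon,H^\varepsilon)_{\varepsilon\ge 0}$ and verifying the four hypotheses of that theorem. First I would observe that, since every $H^\varepsilon$ is built from the nonnegative pair interactions $U_{h^\varepsilon}$ and $j^\varepsilon$, one has $\Delta E^{H^\varepsilon}=0$ for all $\varepsilon$; consequently the formula for $\alpha_{WR}$ recorded just before Theorem \ref{teounigibbswrm} gives, with $M_0:=\max\{m_{h^0},m_{j^0}\}$, the lower bound $\alpha_{WR}(\lambda^+_0,\lambda^-_0,h^0,j^0)\ge 2^d M_0^d\max\{\lambda^+_0,\lambda^-_0\}$, so hypothesis (iv) forces $2^d M_0^d\max\{\lambda^+_0,\lambda^-_0\}<1$. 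By continuity of $(\bar\lambda,\bar m)\mapsto 2^d\bar m^d\bar\lambda$ I can then fix constants $\bar\lambda^\pm>\lambda^+_0\vee\lambda^-_0$ and $\bar m>M_0$ with $2^d\bar m^d\max\{\bar\lambda^+,\bar\lambda^-\}<1$, and let $(\nu,H)$ be the hardcore Widom--Rowlinson model with fugacities $\bar\lambda^\pm$ and exclusion radius $\bar m$. By the computation behind \eqref{wrdc1} (extended to unequal fugacities) this model is heavily diluted, with constant good size function $q\equiv\max\{1,\bar m\}$.

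Next I would restrict the index set to $[0,\varepsilon_1)$ for $\varepsilon_1>0$ small enough that $\lambda^\pm_\varepsilon\le\bar\lambda^\pm$ and $m_{h^\varepsilon}\vee m_{j^\varepsilon}<\bar m$ for all $\varepsilon<\varepsilon_1$; hypotheses (i), (ii) and (iii) of the present theorem guarantee such an $\varepsilon_1$ exists. For this sub-family I would check the hypotheses of Theorem \ref{convabs} with $(\nu,H)$ as majorant: (i) $\nu^\varepsilon\ll\nu$ with density $\tfrac{d\nu^\varepsilon}{d\nu}(\gamma_x)=\lambda^{\gamma}_\varepsilon/\bar\lambda^{\gamma}\in[0,1]$, so \eqref{controldensidad} holds; (ii) $I^{H^\varepsilon}(\{\gamma_x\})\subseteq B_\infty(x,\bar m)\times\{+,-\}=I^H(\{\gamma_x\})$ by the choice of $\bar m$; (iii) since each $H^\varepsilon$ has nonnegative interactions and $-\log\tfrac{d\nu^\varepsilon}{d\nu}\ge 0$, one has $\Delta\tilde E^{H^\varepsilon}_\eta(\gamma_x)\ge 0=\Delta E^H$ uniformly in $\varepsilon,\eta,\gamma_x$; and ($\bullet\bullet$): for fixed $\eta$ and $\gamma_x$ the energy leap $\Delta E^{H^\varepsilon}_\eta(\gamma_x)$ is the finite sum $\sum_{\tilde\gamma_y\in\langle\eta\rangle,\ \|x-y\|_\infty\le\bar m}\bigl[h^\varepsilon(\|x-y\|_\infty)\mathbbm{1}_{\{\gamma\ne\tilde\gamma\}}+j^\varepsilon(\|x-y\|_\infty)\bigr]$ over the finitely many particles of $\eta$ in a bounded ball, each summand converging in $[0,+\infty]$ by hypothesis (ii), and adding the convergent correction $-\log(\lambda^{\gamma}_\varepsilon/\bar\lambda^{\gamma})$ from (i) yields $\Delta\tilde E^{H^\varepsilon}_\eta(\gamma_x)\to\Delta\tilde E^{H^0}_\eta(\gamma_x)$.

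Granting these verifications, Theorem \ref{convabs} immediately delivers the conclusion: for every $\varepsilon<\varepsilon_1$ the model $(\nu^\varepsilon,H^\varepsilon)$ has a unique Gibbs measure $\mu^\varepsilon$, and $\mu^\varepsilon\overset{loc}{\longrightarrow}\mu^0$ as $\varepsilon\to 0^+$ (uniqueness at $\varepsilon=0$ also following from (iv) and Theorem \ref{teounigibbswrm}). The hard part will be the bookkeeping in the case where $h^0$ or $j^0$ attains the value $+\infty$, i.e.\ where a genuine hardcore exclusion survives in the limit: one must be sure that the finite-sum convergence above is legitimate in the extended reals and that no spurious $+\infty$ values are manufactured along the sequence. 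The stated pointwise hypothesis (ii) makes this automatic, since $\lim_\varepsilon h^\varepsilon(r)=h^0(r)$ already forbids $h^\varepsilon(r)=+\infty$ for small $\varepsilon$ whenever $h^0(r)<+\infty$; if one wished to weaken (ii) to convergence away from a null set, one would instead invoke the relaxed condition ($\bullet\bullet^*$) with $N$ the ($\pi^\nu$-null, and dynamically impossible in the sense of Definition \ref{diset}) set of configurations containing a pair of particles whose $\|\cdot\|_\infty$-distance lies in the at most countable set consisting of the common discontinuity points of $h^0$ and $j^0$ together with $\{m_{h^0},m_{j^0}\}$.
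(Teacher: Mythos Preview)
Your strategy is the same as the paper's --- apply Theorem \ref{convabs} by exhibiting a heavily diluted majorant --- but the specific majorant you chose does not work. The hardcore Widom--Rowlinson model you take for $(\nu,H)$ has \emph{only} interspecies repulsion, so for a $(+)$-particle at $x$ one has
\[
I^{H}(\{+_x\}) \;=\; \{\, -_y : \|x-y\|_\infty \le \bar m\,\},
\]
not the full set $B_\infty(x,\bar m)\times\{+,-\}$ as you claim. On the other hand, because $j^\varepsilon\not\equiv 0$ the models $H^\varepsilon$ do have same-type interactions: adding a $(+)$-particle at $y$ with $0<\|x-y\|_\infty\le m_{j^\varepsilon}$ changes $\Delta E^{H^\varepsilon}_\eta(+_x)$ by $j^\varepsilon(\|x-y\|_\infty)>0$, so $+_y\in I^{H^\varepsilon}(\{+_x\})$. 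Hence the inclusion $I^{H^\varepsilon}(\{\gamma_x\})\subseteq I^H(\{\gamma_x\})$ required in condition (ii) of Theorem \ref{convabs} fails, and the dominating clan of ancestors you need simply does not contain the relevant $\varepsilon$-clans.

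The repair is to make the majorant carry a type-independent repulsion as well. The paper does this by taking $h:=\sup_{\varepsilon<\varepsilon_L} h^\varepsilon$ and $j:=\sup_{\varepsilon<\varepsilon_K} j^\varepsilon$ (with support thresholds $L>m_{h^0}$, $K>m_{j^0}$ chosen close enough that, together with fugacities $\lambda^\pm>\lambda^\pm_0$, one still has $\alpha_{WR}(\lambda^+,\lambda^-,h,j)<1$). A minimal fix in the spirit of your argument is to pick \emph{two} radii $\bar m_h>m_{h^0}$ and $\bar m_j>m_{j^0}$ and use a generalized WR majorant with interspecies hardcore at $\bar m_h$ and type-independent hardcore at $\bar m_j$; by continuity of the explicit formula for $\alpha_{WR}$ in $(\lambda^\pm,m_h,m_j)$ this is heavily diluted for $\bar m_h,\bar m_j,\lambda^\pm$ close enough to the limit values, and now (ii) holds since $I^H(\{\gamma_x\})$ contains all particles of either type within range $\bar m_j$ and opposite-type particles within range $\bar m_h$. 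Your lower bound $\alpha_{WR}\ge 2^d M_0^d\max\{\lambda^+_0,\lambda^-_0\}$ is correct but does not by itself suffice, since inflating the type-independent radius all the way up to $M_0$ can push $\alpha_{WR}$ above $1$ when $m_{j^0}<m_{h^0}$. Once the majorant is fixed, your verification of the remaining hypotheses (densities, $\Delta E^H=0$, and the finite-sum convergence of energy leaps) goes through as written.
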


\begin{proof}It suffices to see that this family of models is under the hypothesis of Theorem \ref{convabs}. For this purpose, consider $L > m_{h^0}$ and define the interspecies repulsion function
$$
h:= \sup_{0 \leq \varepsilon < \varepsilon_L} h^\varepsilon
$$ where $\varepsilon_L > 0$ is such that $m_{h^\varepsilon} < L$ for all $0 \leq \varepsilon < \varepsilon_L$. Next, take $K > m_{j^0}$ and define the type independent repulsion function $j$ in the analogous manner.
Let us observe that $h$ and $j$ are both monotone decreasing, have bounded support and also satisfy $h^\varepsilon \leq h$ and $j^\varepsilon \leq j$ for every $\varepsilon \geq 0$ sufficiently small. Furthermore, we may take $L$ and $K$ sufficiently close to $m_{h^0}$ and $m_{j^0}$ respectively so as to guarantee that there exist $\lambda^+ > \lambda^+_0$, $\lambda^- > \lambda^-_0$ such that
$$
\alpha_{WR}(\lambda^+,\lambda^-,h,j) < 1.
$$ Finally, if we consider the Widom-Rowlinson model with fugacities $\lambda^+$ and $\lambda^-$, interspecies repulsion function $h$ and type-independent repulsion function $j$ then for $\varepsilon \geq 0$ sufficiently small this model acts as a majorant. Indeed, in the notation of Theorem \ref{convabs} we have:
\begin{enumerate}
\item [$\bullet$] $\nu^\varepsilon \ll \nu$ with density given by
$$
\frac{d\nu^\varepsilon}{d\nu} (\gamma_x) = \frac{\lambda^+_\varepsilon}{\lambda^+}\mathbbm{1}_{\{\gamma = +\}} + \frac{\lambda^-_\varepsilon}{\lambda^-}\mathbbm{1}_{\{\gamma = -\}}
$$ which satisfies $0 \leq \frac{d\nu^\varepsilon}{d\nu} \leq 1$ for $\varepsilon \geq 0$ sufficiently small.
\item [$\bullet$] (ii) is a direct consequence from the fact that $h^\varepsilon \leq h$ and $j^\varepsilon \leq j$ for every $\varepsilon \geq 0$ sufficiently small.
\item [$\bullet$] (iii) follows from the fact that $\Delta E^H = 0$ and $\Delta E^{H^\varepsilon}=0$ for all $\varepsilon \geq 0$ since all the interactions are repulsive.
\item [$\bullet$] $\lim_{\varepsilon \rightarrow 0^+} \Delta \tilde{E}^{H^\varepsilon}_\eta (\gamma_x) = \Delta \tilde{E}^{H^0}_\eta (\gamma_x)$ for all $\eta \in \mathcal{N}( \R^d \times \{+,-\})$ and $\gamma_x \in  \R^d \times \{+,-\}$ such that both $\eta$ and $\eta + \delta_{\gamma_x}$ are outside the dynamically impossible set
    $$
    N=\{ \xi \in \mathcal{N}(\R^d \times \{+,-\}) : \exists\,\, \gamma_x \neq \tilde{\gamma}_y \in \langle \xi \rangle \text{ such that } \|x-y\|_\infty \in \{ m_h^0 , m_j^0 \} \}
    $$ by assumptions (i), (ii) and (iii) in the statement of the theorem.
\end{enumerate}By Theorem \ref{convabs} this concludes the proof.
\end{proof}

Observe that Theorem \ref{convabswr} shows, in the particular example of the heavily diluted Widom-Rowlinson model, that Gibbs measures of softcore \mbox{models} converge as the \mbox{repulsion} force tends to infinity towards the Gibbs measure of its corresponding hardcore analogue. It is clear from the proof of Theorem \ref{convabs} that this behavior also holds for other systems in a similar situation.

\subsection{Thin rods model}

For the thin rods model Theorem \ref{convabs} yields the following result.

\begin{teo}\label{convabstr} Let us consider for each $\varepsilon \geq 0$ the thin rods model with fugacity $\lambda^\varepsilon$, rod length $2l^\varepsilon$ and orientation measure $\rho^\varepsilon$.
Assume that the following conditions hold:
\begin{enumerate}
\item [i.] $\lim_{\varepsilon \rightarrow 0^+} \lambda^\varepsilon = \lambda^0$.
\item [ii.] $\lim_{\varepsilon \rightarrow 0^+} l^\varepsilon = l^0$.
\item [iii.] There exists a probability measure $\rho$ on the circle such that $\rho^\varepsilon \ll \rho$ for every $\varepsilon \geq 0$ with density $\frac{d \rho^\varepsilon}{d \rho}$ satisfying $0     \leq \frac{d \rho^\varepsilon}{d \rho} \leq 1$.
\item [iv.] $4\lambda^0 (l^0)^2 \sigma_2 < 1$.
\end{enumerate} Then for $\varepsilon \geq 0$ sufficiently small there exists a unique Gibbs measure $\mu^\varepsilon$ of the associated thin rods model. Furthermore, we have the convergence
$$
\mu^\varepsilon \overset{loc}{\rightarrow} \mu^0.
$$
\end{teo}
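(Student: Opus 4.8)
The plan is to obtain Theorem \ref{convabstr} as a direct application of the general continuity result Theorem \ref{convabs}, by exhibiting a heavily diluted majorant model for the family of thin rods models $(\nu^\varepsilon,H^\varepsilon)_{\varepsilon\geq 0}$. Since $4\lambda^0(l^0)^2\sigma_2<1$ and the map $(\lambda,l)\mapsto 4\lambda l^2\sigma_2$ is continuous, I would first fix $\lambda>\lambda^0$ and $l>l^0$, both sufficiently close to $\lambda^0$ and $l^0$, so that $4\lambda l^2\sigma_2<1$, and take as majorant the thin rods model $(\nu,H)$ with fugacity $\lambda$, rod length $2l$ and orientation measure $\rho$, where $\rho$ is the probability measure provided by hypothesis (iii). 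By the bound on the interaction range of a single rod and the computation of the diluteness coefficient carried out in Section \ref{teounigibssexamples} in the proof of Theorem \ref{teounigibbstrm}, the (F1)-diluteness coefficient of $(\nu,H)$ relative to the size function $q(\gamma_x)=\max\{1,2l\}$ is at most $4\lambda l^2\sigma_2<1$, so $(\nu,H)$ is heavily diluted.

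Next I would verify, for $\varepsilon\geq 0$ small, the three structural conditions in the hypothesis of Theorem \ref{convabs}. For (i): $\nu^\varepsilon=\lambda^\varepsilon\mathcal{L}^2\times\rho^\varepsilon$ is absolutely continuous with respect to $\nu=\lambda\mathcal{L}^2\times\rho$, with
\begin{equation*}
\frac{d\nu^\varepsilon}{d\nu}(\gamma_x)=\frac{\lambda^\varepsilon}{\lambda}\,\frac{d\rho^\varepsilon}{d\rho}(\gamma),
\end{equation*}
which lies in $[0,1]$ once $\lambda^\varepsilon\leq\lambda$ (true for small $\varepsilon$, as $\lambda^\varepsilon\to\lambda^0<\lambda$) since $0\leq d\rho^\varepsilon/d\rho\leq 1$. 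For (ii): the segment of half-length $l^\varepsilon$ through $x$ in direction $\gamma$ is contained in the segment of half-length $l\geq l^\varepsilon$ through $x$ in direction $\gamma$, so any two rods that intersect at length $2l^\varepsilon$ also intersect at length $2l$; hence $\tilde\gamma_y\rightharpoonup\gamma_x$ for $H^\varepsilon$ implies the same for $H$, i.e. $I^{H^\varepsilon}(\{\gamma_x\})\subseteq I^H(\{\gamma_x\})$. For (iii): all interactions in \eqref{tru} are hardcore, whence $\Delta E^H=0$ and $\Delta E^{H^\varepsilon}_\eta\geq 0$, and together with $-\log(d\nu^\varepsilon/d\nu)\geq 0$ this gives $\Delta\tilde{E}^{H^\varepsilon}_\eta(\gamma_x)\geq 0=\Delta E^H$.

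The substantive step is the convergence of the modified energy leap functions, which I would establish in the relaxed form ($\bullet\bullet^*$) by means of a dynamically impossible set. The natural candidate is the set $N$ of configurations $\xi$ for which some pair of the length-$2l^0$ rods determined by distinct points of $\xi$ touch without crossing transversally (an endpoint of one rod lying on another, or two collinear rods meeting exactly at endpoints) — precisely the configurations at which the hardcore indicator in \eqref{tru} fails to be locally constant in the rod length near $l^0$. One checks that $N$ has $\pi^\nu$-measure zero (the offending set of quadruples $(x,\gamma,y,\tilde\gamma)$ is of positive codimension), that $N$ is decreasing under $\preceq$, and that, by right-continuity of the birth–death trajectories together with local finiteness, a trajectory entering $N$ remains there for a positive time; hence $N$ is dynamically impossible in the sense of Definition \ref{diset}. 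For $\eta,\eta+\delta_{\gamma_x}\in N^c$, since $l^\varepsilon\to l^0$ the quantity $\Delta E^{H^\varepsilon}_\eta(\gamma_x)$ — which records only whether the length-$2l^\varepsilon$ rod $\gamma$ at $x$ meets some rod of $\eta$ — eventually coincides with $\Delta E^{H^0}_\eta(\gamma_x)$; combined with $\log(\lambda^\varepsilon/\lambda)\to\log(\lambda^0/\lambda)$ and the pointwise convergence of the orientation densities $d\rho^\varepsilon/d\rho\to d\rho^0/d\rho$ (the analogue in this setting of the pointwise convergence of the interaction functions used in Theorem \ref{convabswr}, which should be read into hypothesis (iii)), one obtains $\Delta\tilde{E}^{H^\varepsilon}_\eta(\gamma_x)\to\Delta\tilde{E}^{H^0}_\eta(\gamma_x)$. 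With ($\bullet\bullet^*$) verified, Theorem \ref{convabs} applies and yields the uniqueness of the Gibbs measure $\mu^\varepsilon$ for $\varepsilon$ small together with $\mu^\varepsilon\overset{loc}{\longrightarrow}\mu^0$. The main obstacle is this last step: the careful identification of the dynamically impossible set and the verification of all three clauses of Definition \ref{diset} for it (especially clause (iii), stability under the forward dynamics); everything else is a routine specialization of the general theorem.
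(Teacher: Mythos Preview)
Your approach is exactly the one the paper intends: the authors omit the proof, saying it ``is similar to that of Theorems \ref{convabsic} and \ref{convabswr}'', and your proposal reproduces precisely that template --- choose a majorant thin rods model with slightly inflated fugacity $\lambda>\lambda^0$, rod length $l>l^0$ and orientation measure $\rho$, verify the three structural hypotheses of Theorem \ref{convabs}, and handle the convergence of energy leaps via a dynamically impossible set capturing the degenerate ``just touching'' rod configurations, exactly as in the Widom--Rowlinson case. Your observation that pointwise convergence of the orientation densities $d\rho^\varepsilon/d\rho\to d\rho^0/d\rho$ appears to be needed but is not explicitly stated in hypothesis (iii) is well taken; the paper does not address this either, and it is indeed a tacit assumption one must make for the argument to go through.
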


The proof of this result is similar to that of Theorems \ref{convabsic} and \ref{convabswr} so we omit it here. Nonetheless, we would like to point out that, just as in Section \ref{teounigibssexamples}, condition (iv) in the statement of Theorem \ref{convabstr} may be relaxed provided that we have further knowledge on the measures $(\rho^\varepsilon)_{\varepsilon \geq 0}$.

\newpage
\section{Resumen del Capítulo 10}

En este capítulo mostramos la continuidad de las medidas de Gibbs de modelos altamente diluidos, i.e. bajo la condición de unicidad dada en el Capítulo 9, con respecto a su medida de intensidad $\nu$ y Hamiltoniano $H$ en el caso absolutamente continuo. Este escenario típicamente incluye continuidad con respecto a los parámetros del modelo como pueden ser la densidad de partículas, la temperatura inversa y el rango de interacción entre otros.

El resultado principal está contenido en el Teorema \ref{convabs} arriba. Esencialmente, éste garantiza continuidad con respecto a pequeños cambios en $(\nu,H)$ que sean absolutamente continuos en $\nu$ bajo la existencia de un modelo mayorante (ver Teorema \ref{convabs}) que sea altamente diluido. La demostración consiste en acoplar de manera conveniente las medidas de Gibbs del modelo original y el modificado mediante la construcción hacia el pasado de la dinámica de Fernández-Ferrari-Garcia. Es para poder construir este acoplamiento de manera exitosa que se requiere la existencia de un modelo mayorante.

Un aspecto importante a destacar de este resultado es que prueba la continuidad con respecto a la convergencia local de medidas de probabilidad, mientras que hasta ahora sólo era conocida la continuidad con respecto a la convergencia en distribución que es, al menos en el contexto continuo, más débil que la local.

Por último, discutimos algunas aplicaciones de este resultado. Mostramos que en la práctica los modelos mayorantes siempre existen bajo la condición de unicidad, y que típicamente se pueden obtener mediante un ligero incremento en la densidad de partículas (o disminución de la temperatura inversa) y/o del rango de interacción del modelo $(\nu,H)$. Obtenemos así, entre otros resultados, la continuidad de la medida de Gibbs con respecto a la densidad de partículas y rango de exclusión en el modelo de Widom-Rowlinson (tanto continuo como discreto), continuidad con respecto a la temperatura inversa para cada medida de Gibbs extremal en el modelo de Ising a baja temperatura y con respecto a la medida de orientación en el modelo de las varas finas.

\chapter{Discretization of Gibbs measures}\label{secdis}

In this chapter we establish the continuity of Gibbs measures in heavily diluted models with respect to discretization procedures. We begin by introducing a formal definition of discretization and then move on to establish a general continuity result in this scenario. Finally, we conclude with some examples and applications.

\section{A general discretization result}

\begin{defi} A metric space $X$ is called \textit{absolutely locally compact} if for any $\delta > 0$ the closed balls $\overline{B}(x,\delta)$ are compact for all $x \in X$.
\end{defi}

\begin{obs}$\,$
\begin{itemize}
\item If $X$ is an absolutely locally compact metric space then for any $\delta > 0$ and compact set $K \subseteq G$ the $\delta$-neighborhood of $K$ denoted by $K^{(\delta)}$ has compact closure.
\item If $X$ and $Y$ are absolutely locally compact metric spaces then so is $X \times Y$.
\end{itemize}
\end{obs}

\begin{defi} A metric space $X$ is called a \textit{discretizable} if it is complete, separable and absolutely locally compact.
\end{defi}

\begin{obs} If $X$ and $Y$ are discretizable metric spaces then so is the product $X \times Y$.
\end{obs}

Throughout the rest of the section we shall with diluted models where both $S$ and $G$ are discretizable metric spaces so that $S \times G$ remains a discretizable metric space under the product metric $d_{S\times G}:= d_S + d_G$.

\begin{defi} \label{discdefi}
A family $(D_\varepsilon)_{\varepsilon > 0}$ of measurable applications $D_\varepsilon : S \times G \to S\times G$ is called a \textit{discretization family} if for every $\varepsilon > 0$ and $\gamma_x \in S \times G$ one has
\begin{equation}\label{rho}
d_{S \times G}( D_\varepsilon(\gamma_x), \gamma_x ) \leq \varepsilon.
\end{equation} The application $D_\varepsilon$ shall be called the $\varepsilon$-discretization operator.
\end{defi}

\newpage
\begin{ejem}$\,$
\begin{enumerate}
\item [$\bullet$] \textit{Spatial discretization}. Let us consider $S = \R^d$ and $G=\{1,\dots,q\}$ for some $q \in \N$. For each $\varepsilon > 0$ we define $\varepsilon$-discretization operator $D_\varepsilon$ by the formula
$$
D_\varepsilon ( x, \gamma ) = ( x_\varepsilon , \gamma )
$$ where if $x=(x_1,\dots,x_d) \in \R^d$ we set
\begin{equation}\label{spatialdiscret}
x_\varepsilon := \left( \varepsilon \left[\frac{x_1}{\varepsilon}\right],\dots,\varepsilon \left[\frac{x_d}{\varepsilon}\right]\right).
\end{equation}
\item [$\bullet$] \textit{Spin discretization}. Let us consider $S = \R^2$ and $G = S^1_* := [0,\pi)$.
For each $\varepsilon > 0$ we define $\varepsilon$-discretization operator $D_\varepsilon$ by the formula
\begin{equation}\label{spindiscret}
D_\varepsilon ( x, \gamma ) = \left( x , \varepsilon \left[\frac{\theta}{\varepsilon}\right] \right)
\end{equation}
\end{enumerate}
\end{ejem}

Notice that, in order to remain faithful to the idea of discretization, in Definition \ref{discdefi} it would be natural to also require the image of $D_\varepsilon$ to be countable for every $\varepsilon > 0$.
However, this extra assumption is not needed for our results and so we leave \mbox{it out of the definition.} From now onwards, to simplify the notation we shall write $\gamma_x^\varepsilon$ instead of $D_\varepsilon (\gamma_x)$.

Now, let us consider some fixed discretization family $(D_\varepsilon)_{\varepsilon > 0}$ on the space $S \times G$. Given a Poisson process $\overline{\Pi}$ on $\mathcal{C} \times [0,1]$ with intensity measure $\overline{\phi}_\nu$ we may define for each $\varepsilon > 0$ the $\varepsilon$-discretized process $\overline{\Pi}^\varepsilon$ (or simply $\varepsilon$-process) by the formula
\begin{equation}\label{piepsilon}
\overline{\Pi}^\varepsilon := \{ ( \gamma_x^\varepsilon, t, s, u ) \in \mathcal{C} \times [0,1] : (\gamma_x, t, s, u ) \in \overline{\Pi}\}.
\end{equation}Let us observe that $\overline{\Pi}^\varepsilon$ is a Poisson process on $\mathcal{C} \times [0,1]$ with intensity measure $\overline{\phi}_{\nu_\varepsilon}$ where $\nu_\varepsilon$ denotes the $\varepsilon$-\textit{discretized intensity measure} defined by the formula
$$
\nu_\varepsilon := \nu \circ D_\varepsilon^{-1}.
$$ Furthermore, \eqref{piepsilon} establishes a one-to-one correspondence between cylinders of \mbox{$\Pi$ and $\Pi^\varepsilon$.} With this in mind, we shall write $C_\varepsilon$ to denote the $\varepsilon$-cylinder in $\Pi^\varepsilon$ which corresponds to the cylinder $C \in \Pi$, i.e.,
if $C=(\gamma_x,t,s)$ then we shall set $C_\varepsilon = (\gamma_x^\varepsilon,t,s )$.

\begin{prop}\label{convaga} For each $t \in \R$ we have $\Pi^\varepsilon_t \overset{as}{\longrightarrow} \Pi_t$ as $\varepsilon \rightarrow 0^+$ with the vague topology.
\end{prop}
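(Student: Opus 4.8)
The plan is to construct an explicit, deterministic matching between $\Pi_t$ and $\Pi^\varepsilon_t$ using the one‑to‑one correspondence $C\leftrightarrow C_\varepsilon$ between the cylinders of $\Pi$ and of $\Pi^\varepsilon$ recorded above: this correspondence leaves the time of birth and the lifespan untouched and moves the basis, by \eqref{rho}, a distance at most $\varepsilon$. The only genuinely probabilistic ingredient is the almost sure finiteness of $\Pi_t$ over bounded regions. First I would fix the $t\in\R$ of the statement and observe that, for $\Lambda\in\B^0_S$, the expected number of cylinders of $\Pi$ alive at time $t$ with basis in $\Lambda\times G$ equals
$$
\phi_\nu\big(\{(\gamma_x,t_0,s_0)\in\mathcal{C} : \gamma_x\in\Lambda\times G,\ t_0\le t<t_0+s_0\}\big)=e^{-\Delta E}\,\nu(\Lambda\times G)\int_{-\infty}^{t}e^{-(t-t_0)}\,dt_0=e^{-\Delta E}\nu(\Lambda\times G),
$$
which is finite by the bounded energy loss and locally finite allocation conditions in Assumptions \ref{assump}. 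Exhausting $S$ by a countable sequence $\Lambda_n\nearrow S$ in $\B^0_S$ yields a full‑probability event $\Omega_t$ on which $\Pi_t(\Lambda\times G)<+\infty$ for every $\Lambda\in\B^0_S$; since any compact $K\subseteq S\times G$ satisfies $K\subseteq\pi_S(K)\times G$ with $\pi_S(K)\in\B^0_S$, on $\Omega_t$ the section $\Pi_t$ is an honest element of $\mathcal{N}(S\times G)$. The same holds for each $\Pi^\varepsilon_t$: from \eqref{rho}, $\gamma_x^\varepsilon\in\Lambda\times G$ forces $\gamma_x\in\Lambda^{(\varepsilon)}\times G$, and $\Lambda^{(\varepsilon)}\in\B^0_S$ by absolute local compactness of $S$, so $\Pi^\varepsilon_t(\Lambda\times G)\le\Pi_t(\Lambda^{(\varepsilon)}\times G)<+\infty$ on $\Omega_t$.

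On $\Omega_t$, fix a compact set $K\subseteq S\times G$ and $\delta>0$, and take any $\varepsilon\in(0,\delta)$. The cylinder correspondence restricts to a bijection between the cylinders of $\Pi$ alive at time $t$ and those of $\Pi^\varepsilon$ alive at time $t$, hence to a multiplicity‑preserving bijection $\Phi\colon[\Pi_t]\to[\Pi^\varepsilon_t]$. For the first required embedding, $(\Pi_t)_K\preceq_\delta\Pi^\varepsilon_t$, I would take $p$ to be $\Phi$ restricted to $[(\Pi_t)_K]$: it is injective into $[\Pi^\varepsilon_t]$ and sends a basis $\gamma_x$ to $\gamma_x^\varepsilon$, which by \eqref{rho} lies within $d_{S\times G}$‑distance $\varepsilon<\delta$ of $\gamma_x$. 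For the second embedding, $(\Pi^\varepsilon_t)_K\preceq_\delta\Pi_t$, I would symmetrically take $p=\Phi^{-1}$ restricted to $[(\Pi^\varepsilon_t)_K]$, again injective into $[\Pi_t]$ and displacing bases by at most $\varepsilon<\delta$. Therefore $\Pi^\varepsilon_t\in(\Pi_t)_{K,\delta}$ for all $\varepsilon\in(0,\delta)$.

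Since the basic neighborhoods $(\Pi_t)_{K,\delta}$, as $K$ ranges over compacts and $\delta$ over positive reals, form a neighborhood basis at $\Pi_t$ for the vague topology, the previous paragraph shows that on $\Omega_t$ one has $\Pi^\varepsilon_t\to\Pi_t$ vaguely as $\varepsilon\to 0^+$; as $P(\Omega_t)=1$, this is precisely the assertion. I expect the only delicate points to be the verification of the almost sure finiteness of $\Pi_t$ over bounded sets (handled by the intensity computation above) together with the routine topological bookkeeping ($\Lambda^{(\varepsilon)}\in\B^0_S$ and the fact that the $(\Pi_t)_{K,\delta}$ form a neighborhood basis); once these are in place the matching argument itself is elementary and deterministic, so there is no serious obstacle.
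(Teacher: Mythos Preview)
Your proof is correct and follows essentially the same approach as the paper: the paper deduces the proposition from a deterministic lemma (Lemma~\ref{lemaconvaga}) stating that for any $\xi\in\mathcal{N}(S\times G)$ the discretized configurations $\xi^\varepsilon$ converge vaguely to $\xi$, proved by exhibiting the two $\delta$-embeddings exactly as you do. Your version works directly with the realization $\Pi_t$ and exploits the cylinder correspondence $C\leftrightarrow C_\varepsilon$ to obtain the bijection, which lets you take the simpler threshold $\varepsilon<\delta$ in place of the paper's separation-based $\varepsilon_0$.
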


\begin{proof} Straightforward consequence of the following lemma.
\end{proof}

\begin{lema}\label{lemaconvaga} Let $\xi \in \mathcal{N}(S \times G)$ and for each $\varepsilon > 0$ consider the configuration $\xi^\varepsilon$ defined by the standard representation
$$
\xi^\varepsilon = \sum_{\gamma_x \in Q_\xi} m(\gamma_x)\delta_{\gamma_x^\varepsilon}.\footnote{The fact that for every $\varepsilon > 0$ the configuration $\xi^\varepsilon$ is indeed \textit{locally finite} follows from the absolute local compactness of $S \times G$.}
$$ Then with respect to the vague topology in $\mathcal{N}(S \times G)$ we have $\lim_{\varepsilon \rightarrow 0^+} \xi^\varepsilon = \xi.$
\end{lema}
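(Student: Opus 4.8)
The plan is to unwind the definition of the vague topology on $\mathcal{N}(S\times G)$ and to show directly that, for every compact $K\subseteq S\times G$ and every $\delta>0$, one has $\xi^\varepsilon\in(\xi)_{K,\delta}$ as soon as $0<\varepsilon<\delta$. Since the vague topology is metrizable and the family $\{(\xi)_{K,\delta}: K\subseteq S\times G\text{ compact},\ \delta>0\}$ is a neighborhood basis at $\xi$ (which one checks directly from the definitions together with the triangle inequality), this is exactly what is needed to conclude $\xi^\varepsilon\to\xi$ as $\varepsilon\to 0^+$.

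The key preliminary step I would carry out is to exhibit a canonical bijection $\Phi\colon[\xi]\to[\xi^\varepsilon]$ with the property that $\Phi(\gamma_x,i)$ always lies over the location $\gamma_x^\varepsilon=D_\varepsilon(\gamma_x)$, so that by \eqref{rho}
$$d_{S\times G}\big(\pi_{S\times G}(\gamma_x,i),\,\pi_{S\times G}(\Phi(\gamma_x,i))\big)=d_{S\times G}(\gamma_x,\gamma_x^\varepsilon)\le\varepsilon\qquad\text{for every }(\gamma_x,i)\in[\xi].$$
Such a $\Phi$ is built atom by atom: for each $\zeta\in\langle\xi^\varepsilon\rangle$ the set $D_\varepsilon^{-1}(\zeta)\cap Q_\xi$ is finite (it is contained in the closed ball of radius $\varepsilon$ around $\zeta$, which is compact by absolute local compactness, and $Q_\xi$ is locally finite), and the copies of $\xi$ sitting over its elements number $\sum_{\gamma_x\in D_\varepsilon^{-1}(\zeta)\cap Q_\xi}m(\gamma_x)=m_{\xi^\varepsilon}(\zeta)$, so they can be matched bijectively with the $m_{\xi^\varepsilon}(\zeta)$ copies of $\xi^\varepsilon$ at $\zeta$; taking the union over $\zeta$ gives $\Phi$. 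Along the way I would also verify that $\xi^\varepsilon$ is indeed of locally finite allocation, hence a genuine element of $\mathcal{N}(S\times G)$: if $\gamma_x^\varepsilon\in\Lambda\times G$ then $\gamma_x\in\Lambda^{(\varepsilon)}\times G$, and $\overline{\Lambda^{(\varepsilon)}}$ is compact by absolute local compactness, so $\xi^\varepsilon(\Lambda\times G)\le\xi(\overline{\Lambda^{(\varepsilon)}}\times G)<+\infty$; the same estimate with $\Lambda\times G$ replaced by a bounded Borel set $B$ shows $\xi^\varepsilon(B)<+\infty$, and being a sum of multiplicities it lies in $\N_0$.

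With $\Phi$ available, both halves of the membership $\xi^\varepsilon\in(\xi)_{K,\delta}$ are immediate. For $\xi_K\preceq_\delta\xi^\varepsilon$ I would take $p:=\Phi|_{[\xi_K]}\colon[\xi_K]\to[\xi^\varepsilon]$: it is injective as the restriction of a bijection, and its displacement is at most $\varepsilon<\delta$. For $\xi^\varepsilon_K\preceq_\delta\xi$ I would take $q:=\Phi^{-1}|_{[\xi^\varepsilon_K]}\colon[\xi^\varepsilon_K]\to[\xi]$: it is injective, and if $q(\zeta,j)=(\gamma_x,i)$ then $\zeta=\gamma_x^\varepsilon$, so its displacement is $d_{S\times G}(\zeta,\gamma_x)\le\varepsilon<\delta$. (Both $[\xi_K]$ and $[\xi^\varepsilon_K]$ are finite because $K$ is compact and $\xi,\xi^\varepsilon$ are locally finite, so these maps are well defined.) This gives $\xi^\varepsilon\in(\xi)_{K,\delta}$ for all $0<\varepsilon<\delta$ and finishes the argument.

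I do not anticipate a real obstacle; the only delicate point is the construction of $\Phi$, where the locations may collide ($\gamma_x^\varepsilon=\gamma_y^\varepsilon$ for distinct $\gamma_x,\gamma_y\in Q_\xi$) so that the naive location-wise correspondence need not be well defined — the regrouping described above handles this, and absolute local compactness of $S\times G$ is precisely what guarantees that only finitely many atoms of $\xi$ can collide (and, as noted, that $\xi^\varepsilon$ is locally finite in the first place).
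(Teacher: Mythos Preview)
Your argument is correct and in fact slightly cleaner than the paper's. The paper also proves $\xi^\varepsilon\in(\xi)_{K,\delta}$ for small $\varepsilon$, but instead of building a global bijection $\Phi\colon[\xi]\to[\xi^\varepsilon]$ it first chooses
\[
\varepsilon_0:=\tfrac12\min\{d_{S\times G}(\gamma_x,\tilde\gamma_y):\gamma_x\neq\tilde\gamma_y\in[\xi_{K_\delta}]\}\wedge\delta,
\]
so that for $\varepsilon<\varepsilon_0$ no two atoms of $\xi$ in $K_\delta$ can collide under $D_\varepsilon$; then the naive maps $(\gamma_x,i)\mapsto(\gamma_x^\varepsilon,i)$ and $(\gamma_x^\varepsilon,i)\mapsto(D_\varepsilon^{-1}(\gamma_x^\varepsilon),i)$ are well defined and injective without further work. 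Your regrouping construction of $\Phi$ avoids this minimum-distance step entirely and yields $\xi^\varepsilon\in(\xi)_{K,\delta}$ for every $\varepsilon<\delta$, at the modest cost of handling the collision case explicitly. Either route is fine; yours buys a uniform threshold, the paper's buys simpler injections.
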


\begin{proof} It suffices to show that for each compact set $K \subseteq S \times G$ and $\delta > 0$ there exists $\varepsilon_0 > 0$ small enough such that $\xi^\varepsilon \in (\xi)_{K,\delta}$ for all $0 <\varepsilon < \varepsilon_0$.
Notice that if we take
$$
\varepsilon_0 := \left(\frac{1}{2}\min\{ d_{S \times G}\left(\gamma_x,\tilde{\gamma}_y\right) : \gamma_x\neq \tilde{\gamma}_y \in [\xi_{K_\delta}]\}\right)\wedge \delta > 0
$$ where $K_\delta = \{ \gamma_x \in S \times G : d_{S \times G} (\gamma_x, K) < \delta \}$ is the $\delta$-neighborhood of $K$ then for every $0 < \varepsilon < \varepsilon_0$ we have that $\xi^\varepsilon \in (\xi)_{K,\delta}$ since:
    \begin{enumerate}
    \item [i.] By the mere definition of $D_\varepsilon$ to each point of $\xi^\varepsilon_K$ we can assign at least one point of $\xi$ at a distance smaller than $\varepsilon$ (without any regard for their respective multiplicities). Moreover, since $\varepsilon < \varepsilon_0$ we have that there is at most one point of $\xi$ in these conditions so that the multiplicity must be preserved. Thus we may define $p: [\xi^\varepsilon_K] \to [\xi]$ by the formula
        $$
        p(\gamma_x^\varepsilon, i)=\left(D_\varepsilon^{-1}(\gamma_x^\varepsilon),i\right)
        $$ which is clearly injective. This shows that $\xi^\varepsilon_K \preceq_\delta \xi$.
    \item [ii.] Once again, by definition of $D_\varepsilon$ to each point of $\xi_K$ we can assign a point of $\xi^\varepsilon$ at a distance smaller than $\varepsilon$ (without any regard for their respective multiplicities). Moreover, since $0 < \varepsilon < \varepsilon_0$ this assignation is injective. Hence, if $p:[\xi_K] \to [\xi^\varepsilon]$ is defined by the formula
    $$
    p(\gamma_x, i)=\left(\gamma_x^\varepsilon,i\right).
    $$ then $p$ is injective for $0 < \varepsilon < \varepsilon_0$, which shows that $\xi_K \preceq_\delta \xi^\varepsilon$.
\end{enumerate}
\end{proof}

\begin{teo}\label{convdis} Let $(D_\varepsilon)_{\varepsilon > 0}$ be a discretization family and consider a family of diluted models $(\nu^\varepsilon,H^\varepsilon)_{\varepsilon \geq 0}$ such that
\begin{enumerate}
\item [$\bullet$] There exists a heavily diluted model $(\nu,H)$ satisfying
\begin{enumerate}
\item [i.] For every $\varepsilon \geq 0$ the intensity measure $\nu^\varepsilon$ satisfies
$$
\nu^\varepsilon = \nu \circ D_\varepsilon^{-1}
$$ where $D_0$ is set as the identity operator, i.e. $\nu^0 = \nu$.
\item [ii.] For every $\varepsilon \geq 0$ and $\gamma_x \in S \times G$ we have that $D_\varepsilon^{-1}\left(I^{H^\varepsilon}(\{\gamma^\varepsilon_x\})\right) \subseteq I^H (\{\gamma_x\})$, i.e. if $\tilde{\gamma}^\varepsilon_y \rightharpoonup_{H^\varepsilon} \gamma_x^\varepsilon$ for some $\varepsilon \geq 0$ then $\tilde{\gamma}_y \rightharpoonup_{H} \gamma_x$.
\item [iii.] $\Delta E^H \leq \inf_{\varepsilon \geq 0} \Delta E^{H^\varepsilon}.$
\end{enumerate}
\item [$\bullet \bullet$] $\lim_{\varepsilon \rightarrow 0^+} \Delta E^{H^\varepsilon}_{\eta^\varepsilon} (\gamma^\varepsilon_x) = \Delta E^{H^0}_\eta (\gamma_x)$ for every $\eta \in \mathcal{N}(S \times G)$ and $\gamma_x \in S \times G$.
\end{enumerate} Then each diluted model $(\nu^\varepsilon,H^\varepsilon)$ admits exactly one Gibbs measure $\mu^\varepsilon$ and as $\varepsilon \rightarrow 0^+$
$$
\mu^{\varepsilon} \overset{d}{\longrightarrow} \mu^0.
$$ The model $(\nu,H)$ is called a \textit{majorant model} for $(\nu^\varepsilon,H^\varepsilon)_{\varepsilon \geq 0}$.
\end{teo}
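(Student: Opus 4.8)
\emph{The plan is to carry over the argument of Theorem~\ref{convabs}, replacing the absolute-continuity coupling by a coupling through the discretization operators $(D_\varepsilon)_{\varepsilon>0}$.}

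\textbf{Uniqueness of $\mu^\varepsilon$.} First I would check that each $(\nu^\varepsilon,H^\varepsilon)$ admits exactly one Gibbs measure. For $\varepsilon=0$ this is immediate: since $\nu^0=\nu$, $I^{H^0}(\{\gamma_x\})\subseteq I^H(\{\gamma_x\})$ and $\Delta E^{H^0}\geq\Delta E^H$, the size function $q$ witnessing the (F1)-diluteness of the majorant $(\nu,H)$ also yields $\alpha_q^{(\nu^0,H^0)}\leq\alpha_q^{(\nu,H)}<1$, so $(\nu^0,H^0)$ is heavily diluted and Theorem~\ref{teounigibbs} applies directly. For $\varepsilon>0$ the intensity measure is $\nu^\varepsilon=\nu\circ D_\varepsilon^{-1}$; here I would use the canonical bijection $C\mapsto C_\varepsilon$ between the cylinders of $\Pi$ and those of $\Pi^\varepsilon$ and observe that, by condition (ii), whenever $\tilde\gamma_y^\varepsilon\rightharpoonup_{H^\varepsilon}\gamma_x^\varepsilon$ one has $\tilde\gamma_y\rightharpoonup_{H}\gamma_x$, so every clan of ancestors of the $\varepsilon$-dynamics is contained, as a set of cylinders identified via the bijection, in a clan of ancestors of the majorant dynamics. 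The latter are finite almost surely since $(\nu,H)$ is heavily diluted, and this almost sure finiteness is exactly what the proofs of Propositions~\ref{finit1}, \ref{uni} and Theorem~\ref{teounigibbs} require; hence $(\nu^\varepsilon,H^\varepsilon)$ has a unique Gibbs measure $\mu^\varepsilon$, coinciding with the invariant law of its stationary FFG dynamics.

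\textbf{The coupling and the stabilization step.} Following \eqref{keptscaled}, I would fix a single Poisson process $\overline{\Pi}$ on $\mathcal{C}\times[0,1]$ with intensity measure $\nu\times e^{-\Delta E^H}\mathcal{L}\times\mathcal{L}_{\R^+}\times\mathcal{U}[0,1]$, let $\overline{\Pi}^\varepsilon$ be its $\varepsilon$-discretization (with $\overline{\Pi}^0=\overline{\Pi}$), and construct from $\overline{\Pi}^\varepsilon$ the stationary FFG process $\mathcal{K}^\varepsilon$ using the thinning threshold $\widetilde M^\varepsilon(\gamma_x^\varepsilon\mid\xi):=e^{-(\Delta E^{H^\varepsilon}_\xi(\gamma_x^\varepsilon)-\Delta E^H)}$. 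Each $\mathcal{K}^\varepsilon$ is stationary with invariant measure $\mu^\varepsilon$, so it suffices to prove $\mathcal{K}^\varepsilon_0\overset{d}{\longrightarrow}\mathcal{K}^0_0$, and for this I would prove the stronger statement that $\mathcal{K}^\varepsilon_0\to\mathcal{K}^0_0$ almost surely in the vague topology. Fix a compact $K\subseteq S\times G$ and $\delta>0$ and pick $\Lambda\in\B^0_S$ with $K^{(1)}\subseteq\Lambda\times G$. The clan $\mathcal{A}^{0,H}(\Lambda\times G)$ of the majorant is finite a.s., so it spans only $N<\infty$ generations, and by condition (ii) and the cylinder bijection, for $\varepsilon\leq1$ the clan $\mathcal{A}^{0,H^\varepsilon}(K\times G)$ of the $\varepsilon$-dynamics is contained in $\{C_\varepsilon:C\in\mathcal{A}^{0,H}(\Lambda\times G)\}$. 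By condition $(\bullet\bullet)$, $\widetilde M^\varepsilon(\gamma_x^\varepsilon\mid\eta^\varepsilon)\to\widetilde M^0(\gamma_x\mid\eta)$ as $\varepsilon\to0^+$ for every fixed $\gamma_x$ and $\eta$; since $\mathcal{A}^{0,H}(\Lambda\times G)$ involves only finitely many cylinders, and the flag $F(C)$ of each such $C$ is, conditionally on the cylinders born strictly before $b_C$, uniform on $[0,1]$ and hence almost surely distinct from the corresponding limiting threshold, a backward induction over the generations $N,N-1,\dots,0$ --- formally identical to the one in the proof of Theorem~\ref{convabs} --- produces a random $\varepsilon_0>0$ such that for $0\leq\varepsilon<\varepsilon_0$ and every $C\in\mathcal{A}^{0,H}(\Lambda\times G)$ one has $C_\varepsilon\in\mathcal{K}^\varepsilon$ if and only if $C\in\mathcal{K}^0$. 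The induction simultaneously shows that the configuration seen by the $\varepsilon$-dynamics over the relevant interaction range at each birth time is precisely the $\varepsilon$-discretization of the one seen by the limit dynamics, which is what makes $(\bullet\bullet)$ applicable there.

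\textbf{Conclusion and main obstacle.} For $0<\varepsilon<\min\{\varepsilon_0,\delta,1\}$ the kept cylinders that are alive at time $0$ and whose basis projects into $\Lambda\times G$ are, via the bijection, exactly the same for $\mathcal{K}^\varepsilon$ and $\mathcal{K}^0$, each basis being moved by at most $\varepsilon$; since $K^{(1)}\subseteq\Lambda\times G$, no particle enters or leaves $K$ unaccounted for, so $\mathcal{K}^\varepsilon_0\in(\mathcal{K}^0_0)_{K,\delta}$. As $K$ and $\delta$ are arbitrary this gives $\mathcal{K}^\varepsilon_0\to\mathcal{K}^0_0$ almost surely in the vague topology, and therefore $\mu^\varepsilon\overset{d}{\longrightarrow}\mu^0$. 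The delicate points are twofold. The first is uniqueness for $\varepsilon>0$: the size function of the majorant does not transport verbatim to the discretized intensity measure $\nu^\varepsilon$, so one must instead base uniqueness on the almost sure finiteness of clans inherited from the majorant through condition (ii). The second is that, unlike in Theorem~\ref{convabs}, the discretization displacements prevent the restrictions $(\mathcal{K}^\varepsilon_0)_{K\times G}$ from being literally equal to $(\mathcal{K}^0_0)_{K\times G}$, so only vague --- not local --- convergence survives; this is precisely why the statement asserts $\overset{d}{\longrightarrow}$ rather than $\overset{loc}{\longrightarrow}$.
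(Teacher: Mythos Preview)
Your proposal is correct and follows essentially the same route as the paper: couple all $\mathcal{K}^\varepsilon$ through a single free process $\overline{\Pi}$ and its discretizations, use condition (ii) to embed the $\varepsilon$-clans into the (finite) majorant clan over a slightly enlarged region, run the same backward induction over generations as in Theorem~\ref{convabs} to get $C_\varepsilon\in\mathcal{K}^\varepsilon\iff C\in\mathcal{K}^0$ for small $\varepsilon$, and conclude vague (hence distributional) rather than local convergence because of the $\varepsilon$-displacements. Your explicit remark that uniqueness for $\varepsilon>0$ must be argued via the inherited finiteness of clans (rather than by transporting the size function) and your observation that the flags are a.s.\ distinct from the limiting thresholds are both implicit in the paper's argument.
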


\begin{proof} Let us start by showing that each model $(\nu^\varepsilon,H^\varepsilon)$ admits exactly one Gibbs measure for every $\varepsilon \geq 0$. To do this let us consider a Poisson process $\overline{\Pi}$ on $\mathcal{C} \times [0,1]$ with intensity measure $\nu \times e^{-\Delta E^H} \mathcal{L} \times \mathcal{L}_{\R^+} \times \mathcal{U}[0,1]$ and and its corresponding discretizations $(\overline{\Pi}^\varepsilon)_{\varepsilon > 0}$. By the the proof of Theorem \ref{teounigibbs} we see that it suffices to show that for each $\varepsilon \geq 0$ and $\Lambda \in \B^0_S$ the clan of ancestors at time 0 with respect to the Hamiltonian $H^\varepsilon$ and underlying free process $\Pi^\varepsilon$ is finite almost surely. But this follows from the heavy diluteness of $(\nu,H)$ since
\begin{equation}\label{ancesincdis}
\mathcal{A}^{0,H^\varepsilon}(\Lambda \times G) \subseteq D_\varepsilon \left( \A^{0,H}(\overline{\Lambda_{\varepsilon}} \times G)\right)
\end{equation} where $\overline{\Lambda_{\varepsilon}}$ denotes the closed $\varepsilon$-neighborhood of $\Lambda$ and for $\Gamma \subseteq \mathcal{C}$ we set
$$
D_\varepsilon (\Gamma) = \{ (\gamma^\varepsilon_x , t,s ) \in \mathcal{C} : (\gamma_x,t,s) \in \Gamma \}.
$$ This settles the first statement.

To establish the local convergence, we shall proceed as in the proof of Theorem \ref{convabs}. We couple all measures $\mu^\varepsilon$ simultaneously by considering the infinite-volume stationary FFG processes $\mathcal{K}^\varepsilon$ defined as
\begin{equation}\label{keptscaled2}
\mathcal{K}^\varepsilon = \{ (\gamma_x^\varepsilon,t,s) \in \Pi : F(\gamma_x^\varepsilon,t,s) < M^\varepsilon(\gamma_x^\varepsilon | \mathcal{K}^\varepsilon_{t^-}) \}
\end{equation} where for each $\gamma_x \in S \times G$ and $\xi \in \mathcal{N}(S \times G)$ we define
$$
M^\varepsilon (\gamma_x |\xi) := e^{- (\Delta E^{H^\varepsilon}_\xi(\gamma_x) - \Delta E^H)}.
$$ Just as in the proof of Theorem \ref{convaga}, for each $\varepsilon \geq 0$ the process $\mathcal{K}^\varepsilon$ is stationary with invariant measure $\mu^\varepsilon$ and thus it will suffice to show that as $\varepsilon \rightarrow 0^+$
$$
\mathcal{K}^\varepsilon_0 \overset{as}{\longrightarrow} \mathcal{K}^0_0.
$$ Let us take then a compact set $K \in \B^0_{S\times G}$ and $\Lambda \in \B^0_S$ such that $K \subseteq \Lambda \times G$. Now, since $\lim_{\varepsilon \rightarrow 0^+} \Delta E^{H^\varepsilon}_{\eta^\varepsilon} (\gamma^\varepsilon_x) = \Delta E^{H^0}_\eta (\gamma_x)$ for every $\eta \in \mathcal{N}(S \times G)$ and $\gamma_x \in S \times G$, it follows that there exists (random) $\varepsilon_0 > 0$ such that if $0 \leq \varepsilon < \varepsilon_0$ then
\begin{equation}\label{ancesincdisc1}
\mathcal{K}^\varepsilon_{D_\varepsilon(\A^{0,H} ( \Lambda_1 \times G))} = D_\varepsilon \left(\mathcal{K}^0_{\A^{0,H} ( \Lambda_1 \times G)}\right),
\end{equation} where $\Lambda_1$ denotes the $1$-neighborhood of $\Lambda$. Indeed, if (random) $N \in \N$ is such that $\A^{0,H}_n(\Lambda_1 \times G) = \emptyset$ for every $n > N$ almost surely then for every cylinder $C \in \A^{0,H}_N(\Lambda_1 \times G)$ and $\varepsilon \geq 0$ we have that
$$
C^\varepsilon \in \mathcal{K}^\varepsilon \Longleftrightarrow F(C) < M^\varepsilon( basis(C^\varepsilon) | \emptyset )
$$ from which we immediately obtain that for $\varepsilon$ (randomly) small enough
$$
\mathcal{K}^\varepsilon_{D_\varepsilon(\A^{0,H}_N ( \Lambda_1 \times G))} = D_\varepsilon \left(\mathcal{K}^0_{\A^{0,H}_N( \Lambda_1 \times G)}\right)
$$ and one may proceed with the succeeding generations by induction using inclusion \eqref{ancesincdis}.
From \eqref{ancesincdisc1} and the inclusion
$$
D^{-1}_\varepsilon (K) \subseteq \Lambda_1 \times G
$$ valid for every $0 <\varepsilon < 1$ one can show as in the proof of Lemma \ref{lemaconvaga} that given $\delta > 0$ for $\varepsilon$ (randomly) small enough we have $\mathcal{K}^\varepsilon_0 \in (\mathcal{K}^0_0)_{K,\delta}$. This establishes the almost sure convergence and thus concludes the proof.
\end{proof}

Just as it was the case for Theorem \ref{convabs}, majorant models in this context are fairly easy to obtain, and one can generally do so by slightly ``inflating'' the interaction of the corresponding limit model in some appropriate sense. Also, let us notice that the proof of \mbox{Theorem \ref{convdis}} does not only yield convergence in distribution but in fact provides a coupling between the corresponding Gibbs measures in which the convergence takes place in the stronger almost sure sense. Once again, we stress this fact since other methods used to obtain these type of results (i.e. cluster expansion or disagreement percolation methods) in general cannot produce such a coupling. Finally, condition ($\bullet \bullet$) in the statement of the theorem may be replaced by the following weaker condition:
\begin{enumerate}
\item [($\bullet \bullet^*$)] There exists a dynamically impossible set $N$ for the intensity measure $\nu$ such that
$$
\lim_{\varepsilon \rightarrow 0^+} \Delta E^{H^\varepsilon}_{\eta^\varepsilon} (\gamma^\varepsilon_x) = \Delta E^{H^0}_\eta (\gamma_x)
$$ for all $\eta \in \mathcal{N}(S \times G)$ and $\gamma_x \in S \times G$ satisfying $\eta, \eta + \delta_{\gamma_x} \in N^c$.
\end{enumerate}

\section{Applications}

We now discuss two applications of Theorem \ref{convdis} related to models in Section \ref{examples}.

\subsection{Widom-Rowlinson model}

As a direct consequence of Theorem \ref{convdis} one obtains that Gibbs measures in discrete heavily diluted models converge, when properly rescaled, towards the Gibbs measure of the analogous continuum model.
As an example we study the particular case of the Widom-Rowlinson model. Other models may be handled in the same fashion.

\begin{teo}\label{diswr} For $\lambda^0,r^0 > 0$ such that $\lambda^0 (2r^0)^d < 1$ we have the following:
\begin{enumerate}
\item [i.] The continuum Widom-Rowlinson model on $\mathcal{N}(\R^d \times \{+,-\})$ with fugacity $\lambda^0$ and exclusion radius $r^0$ admits exactly one Gibbs measure, which we shall denote by $\mu^0$.
\item [ii.] For $ 0 < \varepsilon < \sqrt[d]{\frac{1}{\lambda^0} - (2r^0)^d}$ the discrete Widom-Rowlinson model on $\mathcal{N}(\Z^d \times \{+,-\})$ with fugacity $\varepsilon^d\lambda^0$
and exclusion radius $\frac{r^0}{\varepsilon}$ admits exactly one Gibbs measure $\tilde{\mu}^\varepsilon$.
\item [iii.] Provided $0 < \varepsilon < \sqrt[d]{\frac{1}{\lambda^0} - (2r^0)^d}$ as $\varepsilon \rightarrow 0^+$ we have
$$
\tilde{\mu}^\varepsilon \circ i_\varepsilon^{-1} \overset{d}{\longrightarrow} \mu^0.
$$ where for each $\varepsilon > 0$ we define the \textit{shrinking map} $i_\varepsilon : \Z^d \times \{+,-\} \to \R^d \times \{+,-\}$ by the formula
$$
i_\varepsilon ( x , \gamma ) = (\varepsilon\cdot x, \gamma).
$$
\end{enumerate}
\end{teo}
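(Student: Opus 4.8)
The plan is to obtain (i) and (ii) directly from the heavy--diluteness criterion of Chapter \ref{chapterffg}, and (iii) as an application of the general discretization result Theorem \ref{convdis}. For (i), by \eqref{wrdc1} the continuum Widom--Rowlinson model with equal fugacities $\lambda^0$ and exclusion radius $r^0$ has $\alpha_{WR}(\lambda^0,r^0)=\lambda^0(2r^0)^d<1$, hence it is heavily diluted with constant size function $q\equiv 1$, and Theorem \ref{teounigibbs} (equivalently Theorem \ref{teounigibbswrm}) provides the unique Gibbs measure $\mu^0$. For (ii), by \eqref{wrdc2} the discrete model with fugacity $\varepsilon^d\lambda^0$ and exclusion radius $r^0/\varepsilon$ has diluteness coefficient $\alpha_{WR}^{discrete}(\varepsilon^d\lambda^0,r^0/\varepsilon)=\lambda^0(2r^0)^d+\varepsilon^d\lambda^0$, which is strictly less than $1$ exactly in the range $0<\varepsilon<\sqrt[d]{\tfrac{1}{\lambda^0}-(2r^0)^d}$; for such $\varepsilon$ the discrete model is again heavily diluted and admits its unique Gibbs measure $\tilde\mu^\varepsilon$.

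For (iii) I would first recast the claim so that Theorem \ref{convdis} applies. Take $S=\R^d$, $G=\{+,-\}$, let $(D_\varepsilon)_{\varepsilon>0}$ be the spatial discretization family \eqref{spatialdiscret} (which sends $(x,\gamma)$ to $(\varepsilon\lfloor x/\varepsilon\rfloor,\gamma)$, the floor taken coordinatewise, so that $d_{S\times G}(D_\varepsilon(\gamma_x),\gamma_x)<\varepsilon$), and let $\nu^0$ be the intensity measure of the continuum model with fugacity $\lambda^0$. Put $\nu^\varepsilon:=\nu^0\circ D_\varepsilon^{-1}$, which places mass $\lambda^0\varepsilon^d$ at every point of $(\varepsilon\Z^d)\times\{+,-\}$, and let $H^\varepsilon$ be the Hamiltonian on $\mathcal{N}(\R^d\times\{+,-\})$ which, transported through the bijection $i_\varepsilon:\Z^d\times\{+,-\}\to(\varepsilon\Z^d)\times\{+,-\}$, coincides with the Hamiltonian of the discrete model in (ii): opposite-type exclusion at $\ell^\infty$-distance $\le r^0$ (since $\|\varepsilon k-\varepsilon l\|_\infty\le r^0\iff\|k-l\|_\infty\le r^0/\varepsilon$) together with the single-occupancy term $V$; for $\varepsilon=0$ set $D_0=\mathrm{id}$ and $H^0$ equal to the continuum Widom--Rowlinson Hamiltonian with radius $r^0$. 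An elementary change of variables along $i_\varepsilon$, using the Poisson formula \eqref{poisson} and the consistency of Boltzmann--Gibbs distributions, shows that each $(\nu^\varepsilon,H^\varepsilon)$ is heavily diluted with Gibbs measure $\mu^\varepsilon=\tilde\mu^\varepsilon\circ i_\varepsilon^{-1}$, so (iii) becomes $\mu^\varepsilon\overset{d}{\longrightarrow}\mu^0$.

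To invoke Theorem \ref{convdis} I would fix a small $\varepsilon^*>0$ and apply it to the family $(\nu^\varepsilon,H^\varepsilon)_{0\le\varepsilon<\varepsilon^*}$ with majorant $(\nu,H)$ given by $\nu=\nu^0$ and $H$ the (purely technical) continuum Hamiltonian forbidding \emph{all} pairs at $\ell^\infty$-distance $\le\varepsilon^*$ and forbidding opposite-type pairs at $\ell^\infty$-distance $\le R:=r^0+2\varepsilon^*$. Since $\lambda^0(2r^0)^d<1$, choosing $\varepsilon^*$ small makes $\nu(I^H(\{\gamma_x\}))\le 2\lambda^0(2\varepsilon^*)^d+\lambda^0(2R)^d<1$, so $(\nu,H)$ is heavily diluted. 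Condition (i) of Theorem \ref{convdis}, $\nu^\varepsilon=\nu\circ D_\varepsilon^{-1}$, holds by construction; condition (iii), $\Delta E^H=0\le\inf_\varepsilon\Delta E^{H^\varepsilon}$, is immediate as all interactions are repulsive; condition (ii), $D_\varepsilon^{-1}(I^{H^\varepsilon}(\{\gamma^\varepsilon_x\}))\subseteq I^H(\{\gamma_x\})$, holds because discretizing moves each coordinate by less than $\varepsilon\le\varepsilon^*$, so an opposite-type link at discretized $\ell^\infty$-distance $\le r^0$ becomes a link at true distance $<r^0+2\varepsilon\le R$, while a single-occupancy link (same discretized cell) becomes a same-cell link of distance $<\varepsilon\le\varepsilon^*$, both present in $I^H$. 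Finally, $\lim_{\varepsilon\to0^+}\Delta E^{H^\varepsilon}_{\eta^\varepsilon}(\gamma^\varepsilon_x)=\Delta E^{H^0}_\eta(\gamma_x)$ for all $\eta$ and $\gamma_x$ with $\eta,\eta+\delta_{\gamma_x}$ outside the dynamically impossible set
$$
N=\{\xi\in\mathcal{N}(\R^d\times\{+,-\}):\exists\,\gamma_x\ne\tilde\gamma_y\in\langle\xi\rangle \text{ with } \|x-y\|_\infty=r^0 \text{ or } x=y\},
$$
so the weakened hypothesis $(\bullet\bullet^*)$ of Theorem \ref{convdis} is met, and we conclude $\mu^\varepsilon\overset{d}{\longrightarrow}\mu^0$, i.e. $\tilde\mu^\varepsilon\circ i_\varepsilon^{-1}\overset{d}{\longrightarrow}\mu^0$.

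The step I expect to be the main obstacle is precisely the verification of hypothesis (ii) of Theorem \ref{convdis}, i.e. producing a legitimate majorant. One cannot use the continuum model with exclusion radius exactly $r^0$ (discretization can pull two points of opposite type within distance $r^0$ whose originals lie slightly farther), and one cannot ignore the discrete model's single-occupancy constraint, which has no continuum analogue; the remedy above — enlarging the radius to $R=r^0+2\varepsilon^*$ and inserting an auxiliary short-range exclusion of \emph{every} pair at scale $\varepsilon^*$ into the majorant, at the cost of restricting to $\varepsilon<\varepsilon^*$ — resolves this while keeping the majorant heavily diluted, at the mild price that the convergence is stated only as $\varepsilon\to0^+$. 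The remaining ingredients (the change of variables identifying $\tilde\mu^\varepsilon\circ i_\varepsilon^{-1}$ with $\mu^\varepsilon$, and the check that $N$ above is dynamically impossible in the sense of Definition \ref{diset}, which uses only that $\nu$ is absolutely continuous and that the spatial configuration is piecewise constant in time) are routine.
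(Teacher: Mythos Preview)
Your proposal is correct and follows essentially the same route as the paper: (i)--(ii) via the diluteness coefficients \eqref{wrdc1}--\eqref{wrdc2} and Theorem \ref{teounigibbs}, and (iii) by recasting the shrunk discrete measures as $(\nu^\varepsilon,H^\varepsilon)$ with $\nu^\varepsilon=\nu\circ D_\varepsilon^{-1}$ under the spatial discretization \eqref{spatialdiscret}, identifying $\mu^\varepsilon=\tilde\mu^\varepsilon\circ i_\varepsilon^{-1}$, and applying Theorem \ref{convdis} with a majorant built by slightly inflating the exclusion radius and adding a short-range all-type exclusion to absorb the single-occupancy constraint. The paper's majorant uses parameters $r^0+\delta$ and $\delta$ (requiring $\varepsilon<\delta/2$) where you use $r^0+2\varepsilon^*$ and $\varepsilon^*$, and its dynamically impossible set is written as a union $N_1\cup N_2$ matching your $N$; these are only cosmetic differences.
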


\begin{proof} The first two statements are a direct consequence of \mbox{Theorem \ref{teounigibbs}, \eqref{wrdc1} and \eqref{wrdc2}.} To show (iii), first we consider the spatial discretization family $(D_\varepsilon)_{\varepsilon \geq 0}$ given by \eqref{spatialdiscret} and for each $\varepsilon \geq 0$ set the intensity measure $\nu^\varepsilon$ as
$$
\nu^\varepsilon := \nu \circ D_\varepsilon^{-1}
$$ where
$$
\nu := \left( \lambda \mathcal{L}^d \times \delta_+ \right) +  \left(\lambda \mathcal{L}^d \times \delta_-\right).
$$ Then we set the Hamiltonian $H^0$ as in Section \ref{examples}, i.e.
$$
H_{\Lambda|\eta}(\sigma)= \sum_{(\gamma_x ,\tilde{\gamma}_y) \in e_{\Lambda}(\sigma|\eta)} U( \gamma_x , \tilde{\gamma}_y )
$$ where
\begin{equation}\label{wrud}
U(\gamma_x,\tilde{\gamma}_y) := \left\{ \begin{array}{ll} +\infty &\text{if }\gamma \neq \tilde{\gamma}\text{ and }\|x-y\|_\infty \leq r^0\\ 0 &\text{otherwise.}\end{array}\right.
\end{equation}
Finally, for every $\varepsilon > 0$ we consider the Hamiltonian $H^\varepsilon$ defined for each $\Lambda \in \B^0_{\R^d}$ and \mbox{$\eta \in \mathcal{N}(\R^d \times \{+,-\})$} by the formula
$$
H^\varepsilon_{\Lambda|\eta}(\sigma) := \sum_{(\gamma_x ,\tilde{\gamma}_y) \in e^\varepsilon_{\Lambda}(\sigma|\eta)} U( \gamma_x , \tilde{\gamma}_y ) + \sum_{x_\varepsilon \in \Lambda} V_{x_\varepsilon}(\sigma)
$$ where
$$
e^\varepsilon_{\Lambda}(\sigma|\eta) := \{ (\gamma_x ,\tilde{\gamma}_y) \in \langle \sigma^\varepsilon_{\Lambda \times G} \cdot \eta_{\Lambda^c \times G} \rangle^2 : x \in \Lambda \},
$$ the pair interaction $U$ is the same as in \eqref{wrud}, $x_\varepsilon$ is defined as in \eqref{spatialdiscret} and
$$
V_{x_\varepsilon}(\sigma) := \left\{ \begin{array}{ll} +\infty &\text{if } \sigma( \{x_\varepsilon\} \times \{+,-\} ) > 1 \\ 0 &\text{otherwise.}\end{array}\right.
$$ Now, the crucial observation is that for every $\varepsilon > 0$ the diluted model specified by the pair $(\nu^\varepsilon, H^\varepsilon)$ is essentially the shrunken version of the discrete Widom-Rowlinson model of fugacity $\varepsilon^d \lambda^0$ and exclusion radius $\frac{r^0}{\varepsilon}$. More precisely, for every $\Lambda \in \B^0_{\Z^d}$ and $\varepsilon > 0$ we have
\begin{equation}\label{igualdaddis}
\mu^\varepsilon_{i_\varepsilon(\Lambda)|\emptyset} = \tilde{\mu}^\varepsilon_{\Lambda|\emptyset} \circ i_\varepsilon^{-1}
\end{equation} where $\tilde{\mu}^\varepsilon_{\Lambda|\emptyset}$ is the Boltzmann-Gibbs distribution with empty boundary condition associated to the discrete Widom-Rowlinson model whereas $\mu^\varepsilon_{\Lambda|\emptyset}$ is the one associated to $(\nu^\varepsilon,H^\varepsilon)$. Thus, by taking the limit as $\Lambda \nearrow \Z^d$, Theorem \ref{teounigibbs} yields for $0 < \varepsilon < \sqrt[d]{\frac{1}{\lambda^0} - (2r^0)^d}$
$$
\mu^\varepsilon = \tilde{\mu}^\varepsilon \circ i_{\varepsilon}^{-1}
$$ where $\mu^\varepsilon$ is the unique Gibbs measure of the diluted model given by the pair $(\nu^\varepsilon, H^\varepsilon)$. Hence, it suffices to show that the family $(\nu^\varepsilon,H^\varepsilon)$ is under the hypothesis of Theorem \ref{convdis}. But notice that if for $\delta > 0$ we define the Hamiltonian $H$ by the formula
$$
H_{\Lambda|\eta}(\sigma)= \sum_{(\gamma_x ,\tilde{\gamma}_y) \in e_{\Lambda}(\sigma|\eta)} U^\delta( \gamma_x , \tilde{\gamma}_y ) + \sum_{\gamma_x \in \Lambda} V_{x}^\delta( \sigma )
$$ where
$$
U^\delta(\gamma_x,\tilde{\gamma}_y) := \left\{ \begin{array}{ll} +\infty &\text{if }\gamma \neq \tilde{\gamma}\text{ and }\|x-y\|_\infty \leq r^0 + \delta\\ 0 &\text{otherwise.}\end{array}\right.
$$
$$
V_{x}^\delta( \sigma ) := \left\{ \begin{array}{ll} +\infty &\text{if } \sigma( \{y \in \R^d : \|x - y \|_\infty \leq \delta\} \times \{+,-\} ) > 1 \\ 0 &\text{otherwise}\end{array}\right.
$$ then for $\delta > 0$ small the diluted model $(\nu,H)$ acts a majorant for $\varepsilon \geq 0$ sufficiently small. Indeed, we have that
\begin{enumerate}
\item [$\bullet$] (i) holds trivially by the choice of measures $\nu^\varepsilon$.
\item [$\bullet$] (ii) holds for all $\varepsilon < \frac{\delta}{2}$ by definition of $D_\varepsilon$.
\item [$\bullet$] (iii) holds since $\Delta E^H = 0 = \inf_{\varepsilon \geq 0} \Delta E^{H^\varepsilon}$ due to the fact that all interactions considered are repulsive.
\item [$\bullet$] $\lim_{\varepsilon \rightarrow 0^+} \Delta E^{H^\varepsilon}_{\eta^\varepsilon} (\gamma_x^\varepsilon) = \Delta E^{H^0}_\eta (\gamma_x)$ for all $\eta \in \mathcal{N}( \R^d \times \{+,-\})$ and $\gamma_x \in  \R^d \times \{+,-\}$ such that both $\eta$ and $\eta + \delta_{\gamma_x}$ are outside the dynamically impossible set $N_1 \cup N_2$ where
    $$
    N_1=\{ \xi \in \mathcal{N}(\R^d \times \{+,-\}) : \exists\,\, \gamma_x \neq \tilde{\gamma}_y \in \langle \xi \rangle \text{ such that } \|x-y\|_\infty = r^0 \}
$$ and
$$
N_2 = \{ \xi \in \mathcal{N}(\R^d \times \{+,-\}) : \sigma( \{x\} \times \{+,-\} ) > 1 \text{ for some }x \in \R^d \}.
$$
\end{enumerate}
If we take $\delta > 0$ such that $\lambda^0(2(r^0+\delta))^d < 1$ then the model $(\nu,H)$ is heavily diluted. Thus, by Theorem \ref{convdis} we obtain the result.
\end{proof}

\subsection{Thin rods model}

Another application of Theorem \ref{convdis} is to study the limit of the thin rods model when the number of possible orientations tends to infinity. As expected, under the heavily diluted regime we have the following result.

\begin{teo} Given $\lambda, l > 0$ and a probability measure $\rho$ on $S^1_*$, for each $\varepsilon > 0$ consider the thin rods model on $\mathcal{N}(\R^2 \times S^1_*)$ with fugacity $\lambda$, rod length $2l$ and orientation measure
$$
\rho^\varepsilon = \sum_{i=0}^{n^\varepsilon} w^\varepsilon(i) \delta_{i \varepsilon}
$$ where $n^\varepsilon:= \left[\frac{\pi}{\varepsilon}\right]$ and $w^\varepsilon(i):= \rho( \{\theta \in S^1_* : \left[\frac{\pi}{\varepsilon}\right]=i \} )$. If $4\lambda l^2 \sigma_2 < 1$ then for every $\varepsilon > 0$ there exists a unique Gibbs measure $\mu^\varepsilon$ of the corresponding thin rods model. Furthermore, as $\varepsilon \rightarrow 0^+$ we have
$$
\mu^\varepsilon \overset{d}{\rightarrow} \mu^0
$$ where $\mu^0$ is the unique Gibbs measure of the thin rods model with fugacity $\lambda$, \mbox{rod length $2l$} and orientation measure $\rho$.
\end{teo}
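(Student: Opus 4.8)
The plan is to apply the general discretization result, Theorem \ref{convdis}, to the family of thin rods models $(\nu^\varepsilon,H^\varepsilon)_{\varepsilon\geq 0}$ obtained by discretizing only the orientation variable. First I would record that each of these models admits exactly one Gibbs measure: by \eqref{tru} all interactions are purely repulsive, so $\Delta E^{H^\varepsilon}=0$ for every $\varepsilon\geq 0$, and one has the uniform bound $\alpha_{TR}(\lambda,l)\leq 4\lambda l^2\sigma_2$ on the (F1)-diluteness coefficient with the constant size function $q(\gamma_x)=\max\{1,2l\}$, exactly as in the proof of Theorem \ref{teounigibbstrm}. Since $4\lambda l^2\sigma_2<1$, each $(\nu^\varepsilon,H^\varepsilon)$ is heavily diluted and Theorem \ref{teounigibbs} provides the unique Gibbs measure $\mu^\varepsilon$.

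Next I would set up the discretization. Take $S=\R^2$, $G=S^1_*$, and let $(D_\varepsilon)_{\varepsilon>0}$ be the spin discretization family of \eqref{spindiscret}, $D_\varepsilon(x,\theta)=(x,\varepsilon[\theta/\varepsilon])$, with $D_0$ the identity. With $\nu=\lambda\mathcal{L}^2\times\rho$ one checks directly that $\nu\circ D_\varepsilon^{-1}=\lambda\mathcal{L}^2\times\rho^\varepsilon$, where $\rho^\varepsilon$ is precisely the discretized orientation measure in the statement, so the intensity measures are the $\varepsilon$-discretizations of a single $\nu$. The Hamiltonian $H^\varepsilon$ is the one built from the non-intersection interaction $U$ of \eqref{tru} applied to the $\varepsilon$-discretized rods (rotated by at most $\varepsilon$ from their original orientation, with centers unchanged), and $H^0$ is the Hamiltonian of the continuum model. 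Moreover, as in the proof of Theorem \ref{diswr}, one verifies that the diluted model $(\nu^\varepsilon,H^\varepsilon)$ is the natural reformulation of the thin rods model with finitely many orientations, so that $\mu^\varepsilon$ coincides with the unique Gibbs measure of the latter.

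Then I would verify the hypotheses of Theorem \ref{convdis} by producing a majorant. Fix a small $\delta>0$ with $4\lambda(l+\delta)^2\sigma_2<1$ and let $(\nu,H)$ be the thin rods model with the same $\nu$ but rod length $2(l+\delta)$; this model is heavily diluted. Condition (i) holds by construction; (iii) holds since $\Delta E^H=0=\inf_{\varepsilon\geq 0}\Delta E^{H^\varepsilon}$; and (ii) holds because two rods of length $2l$ can intersect only if their centers lie within distance $2l$, and rotating a rod by at most $\varepsilon<\delta/2$ cannot push this beyond $2(l+\delta)$, so $D_\varepsilon^{-1}\big(I^{H^\varepsilon}(\{\gamma_x^\varepsilon\})\big)\subseteq\{\tilde\gamma_y:\|x-y\|_2\leq 2l\}\subseteq I^H(\{\gamma_x\})$, recalling that $D_\varepsilon$ fixes the spatial coordinate. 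For the convergence condition, note that $\Delta E^{H^\varepsilon}_{\eta^\varepsilon}(\gamma_x^\varepsilon)$ equals $+\infty$ or $0$ according to whether the rod $L_{\varepsilon[\theta/\varepsilon]}+x$ meets some rod of $\eta^\varepsilon$ or not; since $\varepsilon[\theta/\varepsilon]\to\theta$ and the rods of $\eta^\varepsilon$ converge rod by rod to those of $\eta$, this converges to $\Delta E^{H^0}_\eta(\gamma_x)$ whenever no intersection present in $\eta+\delta_{\gamma_x}$ is merely tangential, i.e. for $\eta,\eta+\delta_{\gamma_x}$ outside the set $N$ of configurations containing two rods that touch but whose relative interiors are disjoint.

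The main obstacle, and essentially the only work beyond bookkeeping, is to check that $N$ is dynamically impossible for $\nu$ in the sense of Definition \ref{diset}. One must show: $\pi^\nu(N)=0$, which follows because tangency of two rods is a codimension-one condition on the joint positions and orientations of a finite Poisson sample, and the (absolutely continuous) law of such a sample charges no such set; $N^c$ is closed under $\preceq$, which is immediate since deleting rods cannot create a new tangency; and the time-stability property (iii) of Definition \ref{diset}, namely that a configuration in $N^c$ persists in $N^c$ for a short time (births and deaths in a Poisson-driven process are isolated in time, so between them the configuration is constant and stays non-tangential), while configurations in $N$ are transient. Granting this, condition $(\bullet\bullet^*)$ of Theorem \ref{convdis} holds with this $N$, and the theorem yields $\mu^\varepsilon\overset{d}{\longrightarrow}\mu^0$, completing the argument.
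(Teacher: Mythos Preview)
Your approach is the one the paper intends (it omits the proof, pointing to Theorem~\ref{diswr} with the spin discretization family \eqref{spindiscret}), and most of your verification is sound. However, your check of condition~(ii) in Theorem~\ref{convdis} is incorrect as written: the inclusion $\{\tilde\gamma_y:\|x-y\|_2\leq 2l\}\subseteq I^H(\{\gamma_x\})$ is false when $H$ is the thin rods Hamiltonian with rod length $2(l+\delta)$, and in fact the longer-rod majorant does not work at all. For a counterexample, take two rods with centers at $(0,0)$ and $(l,0)$ and common original orientation $\theta_1=\theta_2=\varepsilon/2$; both discretize to angle $0$, so the discretized rods are collinear horizontal segments overlapping on $[0,l]\times\{0\}$, hence intersect. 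But the original rods lie on distinct parallel lines at perpendicular distance $l\sin(\varepsilon/2)>0$, so they never intersect no matter how much you lengthen them. Thus $\tilde\gamma_y^\varepsilon\rightharpoonup_{H^\varepsilon}\gamma_x^\varepsilon$ while $\tilde\gamma_y\not\rightharpoonup_H\gamma_x$.

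The fix is to choose a different majorant. The cleanest is to take $(\nu,H)$ with the same $\nu$ but with the hard-disk Hamiltonian that forbids any two particles whose centers lie within $\|\cdot\|_2$-distance $2l$, irrespective of orientation. Then $I^H(\{\gamma_x\})=\{\tilde\gamma_y:\|x-y\|_2\leq 2l\}$, so your first inclusion already gives (ii); one still has $\Delta E^H=0$ for (iii); and the diluteness coefficient is exactly $4\lambda l^2\sigma_2<1$, the same bound used in Theorem~\ref{teounigibbstrm}. With this correction the rest of your argument, including the identification of the dynamically impossible set $N$ of configurations with a tangential or collinear-overlapping pair of rods, goes through.
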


We omit the proof of this result since it goes very much along the lines of \mbox{Theorem \ref{diswr}} but using the spin discretization family introduced in \eqref{spindiscret} instead of the spatial one.

\subsection{Some important remarks on discretization procedures}

Suppose that we have some continuum heavily diluted model and let $\mu$ denote its unique Gibbs measure. One could then ask what can be said about the discretized measures $\mu^\varepsilon := \mu \circ D_\varepsilon^{-1}$ for $\varepsilon > 0$. For example,
\begin{enumerate}
\item [i.] Is it true that $\mu^\varepsilon$ is a Gibbs measure for the corresponding discrete model?
\item [ii.] If not, is it close to the actual Gibbs measure of the discrete system?
\end{enumerate}
The examples discussed above show that we cannot expect (i) to be true. Indeed, for example in the Widom-Rowlinson model the discretized Gibbs measures $\mu^\varepsilon$ can assign positive weight to particle configurations in which particles of opposite type are within the exclusion radius; this is because certain allowed configurations in the \mbox{continuum system} may violate the exclusion radius restriction when discretized. Therefore, in general it is not enough to discretize the continuum Gibbs measure to obtain the Gibbs measure of the discrete system. What Theorem \ref{convdis} in fact shows is that obtaining the actual discrete Gibbs measure demands a more complicated procedure: one has to discretize the \mbox{continuum} free process and then do the deleting procedure all over again. Nevertheless, \eqref{ancesincdisc1} implies that $\mu^\varepsilon$ is indeed close to the Gibbs measure of the discrete system.

On a similar note, observe that when trying to simulate Gibbs measures of continuum systems using the FFG dynamics, practical limitations prevent the inclusion of all possible configurations in the simulation, and so one inevitably has to replace the original model by a discretized version of it. What Theorem \ref{convdis} also shows is that no problems arise by this replacement, since by \eqref{ancesincdisc1} the simulated discrete measure will be close to the original continuum one.

\newpage
\section{Resumen del Capítulo 11}

Mostramos aquí la continuidad de medidas de Gibbs en modelos altamente diluidos con respecto a procesos de discretización, i.e. la convergencia de modelos discretos a modelos continuos a nivel de las medidas de Gibbs correspondientes. En general, los modelos discretos a los que hacemos referencia pueden ser de dos tipos: con espacio de ubicaciones discreto (el modelo de Widom-Rowlinson discreto, por ejemplo) o con espacio de spines discreto (el modelo de varas finas con finitas orientaciones posibles).

El resultado principal está contenido en el Teorema \ref{convdis} arriba. Nuevamente, se obtiene la convergencia (en distribución) de modelos discretos a sus análogos continuos bajo la existencia un modelo mayorante altamente diluido. La demostración consiste una vez más en acoplar las medidas de Gibbs de los modelos discretos junto a la del modelo continuo límite mediante la construcción hacia el pasado de la dinámica FFG. Aquí es donde se vuelve evidente la necesidad de un marco teórico que nos permita encarar por igual la dinámica tanto en modelos discretos como continuos.

Cabe destacar que el problema de la continuidad con respecto a discretizaciones no ha sido muy estudiado hasta ahora, y que muchas de las técnicas de mayor influencia dentro de la mecánica estadística (como por ejemplo la teoría de Pirogov-Sinai) no se encuentran, en principio, preparadas para lidiar con este tipo de problemas (especialmente para discretizaciones en el espacio de spins). No obstante, en nuestro contexto este tipo de problemas pueden plantearse y resolverse de manera natural.

Luego, a manera de aplicación mostramos que bajo el régimen de unicidad el modelo de Widom-Rowlinson discreto apropiadamente escalado converge, cuando la densidad de partículas tiende a cero y el radio de exclusión tiende a infinito, al correspondiente modelo de Widom-Rowlinson continuo. También mostramos que el modelo de varas finas con $n$ orientaciones converge cuando $n \rightarrow +\infty$ al modelo con un continuo de orientaciones, nuevamente bajo el régimen de unicidad.

Por último, discutimos algunas conclusiones que pueden sacarse a partir del resultado probado. En primer lugar, la demostración del Teorema \ref{convdis} muestra que al discretizar una medida de Gibbs en un modelo continuo no se obtiene, en general, una medida de Gibbs del correspondiente modelo discreto pero que, sin embargo, el resultado se encuentra razonablemente próximo de esta última. Por otro lado, el Teorema \ref{convdis} también garantiza que para simular numéricamente medidas de Gibbs de modelos continuos bajo el régimen de unicidad es razonable simular medidas de Gibbs para modelos que sean aproximaciones discretas de los mismos, ya que éstas serán una buena aproximación de las verdaderas medidas de interés.

\chapter{Applications to Pirogov-Sinai theory}\label{chapterpirogovsinai}

In this final chapter we combine the ideas of previous chapters with the framework of Pirogov-Sinai theory to show that some of the typical results in this theory can be obtained without the use of cluster expansions.
Moreover, we show that this allows us to enlarge the traditional range of validity of the theory in some cases. This constitutes a step towards completing the approach first proposed in \cite{FFG1}.
As a byproduct, we obtain a perfect simulation algorithm for systems at the low temperature or high density regime. For simplicity, we shall discuss the framework of Pirogov-Sinai theory and its applications only in some particular cases, but the experienced reader will understand how to extend these ideas to the general setting. We follow the presentation of this theory given in \cite{Z1}.

\section{Discrete $q$-Potts model of interaction range $r$}

Consider the discrete model on $\{1,\dots,q\}^{\Z^d}$ defined in the traditional manner through the Boltzmann-Gibbs distributions given for each $\Lambda \in \B^0_{\Z^d}$ and $\eta \in \{1,\dots,q\}^{\Z^d}$ by
$$
\mu_{\Lambda}^\eta(\sigma) = \frac{\mathbbm{1}_{\{\sigma_{\Lambda^c} \equiv \eta_{\Lambda^c}\}}}{Z_{\Lambda}^\eta} e^{- \beta \sum_{B : B \cap \Lambda \neq \emptyset} \Phi_B(\sigma_{B})}
$$ with $\beta > 0$ denoting the inverse temperature, $Z_{\Lambda}^\eta$ being the normalizing constant and
\begin{equation}\label{hpotts}
\Phi_B (\sigma_B):= \left\{\begin{array}{ll} \mathbbm{1}_{\{ \sigma(x) \neq \sigma(y)\,,\,\|x-y\|_1 \leq r\}} & \text{ if $B=\{x,y\}$} \\ \\ 0 & \text{ otherwise.}\end{array}\right.
\end{equation}
This model, known as the $q$-Potts model of interaction range $r$, can be interpreted as a direct generalization of the Ising model presented in Section \ref{exampleisingc}.

We define the set $\mathcal{R}:=\{ \eta_1,\dots,\eta_q\}$ of reference configurations, where for $i=1,\dots,q$ the configuration $\eta_i$ corresponds to the $i$-aligned configuration, i.e. \mbox{$\eta_i(x) \equiv i$ for all $x \in \Z^d$.} For convenience purposes, for every $i=1,\dots,q$ we shall denote the corresponding Boltzmann-Gibbs distribution on $\Lambda \in \B^0_{\Z^d}$ simply by $\mu^i_{\Lambda}$ instead of $\mu^{\eta_i}_\Lambda$ \mbox{as we usually do.} \mbox{These reference configurations} shall be of particular interest to us since, as we will see, each of them will represent a different equilibrium state of the system at low temperature. To show this fact we shall need to introduce as in the original Ising model the notion of contour with respect to each of these reference configurations. We proceed as follows.

For $i=1,\dots,q$ we shall say that the site $x \in \Z^d$ is $i$-correct for a given configuration $\sigma \in \{1,\dots,q\}^{\Z^d}$ whenever $\sigma(y)=i$ for every $y \in \Z^d$ such that $\| x - y\|_1 \leq r$.
We label a site as \textit{incorrect} with respect to $\sigma$ if it fails to be $i$-correct for all $i=1,\dots,q$. We define the \textit{defect set} $D_\sigma$ of the configuration $\sigma$ as the set of all incorrect sites with respect to $\sigma$. The restriction of $\sigma$ to any one of the finite connected components of $D_\sigma$ will be called a \textit{contour} of $\sigma$ and the corresponding component will be called the \textit{support} of this contour.

Given a contour $\gamma$, the space $\Z^d - \text{supp}(\gamma)$ is divided into a finite number of connected components, only one of which is infinite. We call one of this components a $i$-component if its neighboring spins in $\gamma$ all have the value $i$. Notice that every component of $\Z^d - \text{supp}(\gamma)$ is an $i$-component for some $i \in \{1,\dots,q\}$. If the infinite component in $\Z^d - \text{supp}(\gamma)$ is a $i$-component we say that $\gamma$ is a $i$-contour and denote this fact by $\gamma^i$.
For any $j \in \{1,\dots,q\}$ we denote by $\text{Int}_{j}(\gamma)$ the union of all finite $j$-components of $\Z^d - \text{supp}\gamma$. Then we set
$$
\text{Int}(\gamma) := \bigcup_{j=1}^q \text{Int}_j(\gamma)\hspace{1cm}\text{V}(\gamma):= \text{supp}(\gamma) \cup \text{Int}(\gamma) \hspace{1cm}\text{Ext}(\gamma):= \Z^d - \text{V}(\gamma).
$$
See Figure \ref{fig4} for a possible configuration of Pirogov-Sinai contours in the Ising model. Finally, if $\gamma$ is a contour of a configuration $\sigma \in \{1,\dots,q\}^{\Z^d}$ then define the energy of $\gamma$ as

\begin{figure}
	\centering
	\includegraphics[width=8cm]{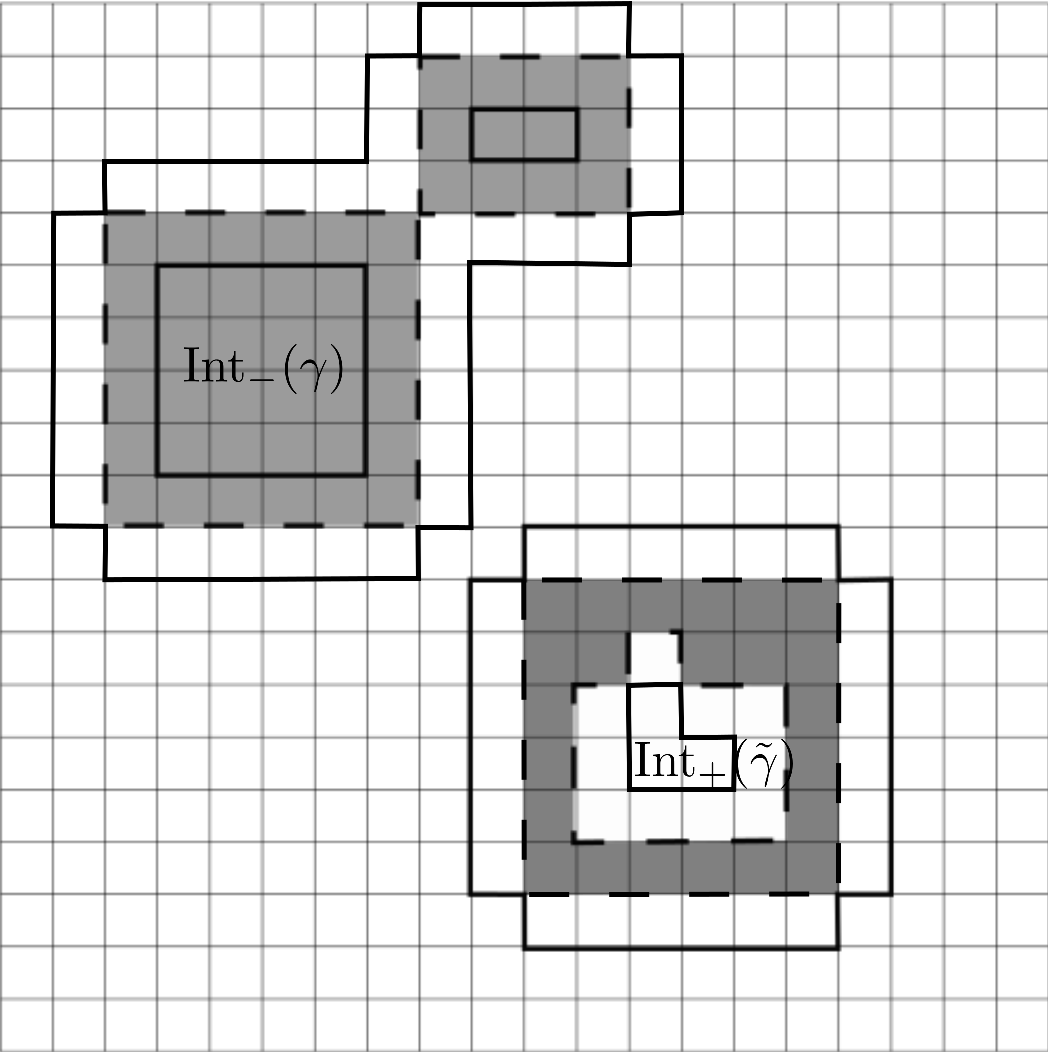}
	\caption{Pirogov-Sinai contours for the Ising model on $\Z^2$.}
	\label{fig4}
\end{figure}

$$
\Phi(\gamma) := \sum_{B \subseteq \Z^d} \frac{|B \cap \text{supp}(\gamma)|}{|B|} \Phi_B(\sigma_B).
$$ Notice that this value does not depend on the choice of $\sigma$, only on $\gamma$.

Now, notice that each configuration $\sigma \in \{1,\dots,q\}^{\Z^d}$ with a finite defect set $D_\sigma$ defines a unique family of contours $\Gamma_\sigma$ from which it can be completely recovered. Furthermore, we have the following result.

\begin{prop}\label{proprepresentacionps} For $\Lambda \in \B^0_{\Z^d}$ let us consider a configuration $\sigma$ such that $D_\sigma \subseteq \Lambda$ and $\sigma(x)=i$ for every $x \in \Z^d$ lying in the unique infinite connected component of $\Z^d - D_\sigma$. If $\Gamma_{\sigma}$ denotes the family of contours associated to $\sigma$, then we have
\begin{equation}\label{ps1}
\mu^{i}_\Lambda (\sigma) = \frac{1}{Z^{i}_\Lambda} e^{-\beta \sum_{\gamma \in \Gamma_\sigma} \Phi(\gamma)}.
\end{equation}
\end{prop}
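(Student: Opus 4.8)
The plan is to prove \eqref{ps1} by showing that, for the configuration $\sigma$ in the statement, the finite-volume Hamiltonian $\sum_{B : B \cap \Lambda \neq \emptyset} \Phi_B(\sigma_B)$ appearing in the definition of $\mu^i_\Lambda$ coincides with the contour energy $\sum_{\gamma \in \Gamma_\sigma} \Phi(\gamma)$. Once this identity is established, \eqref{ps1} is immediate: $Z^i_\Lambda$ is by definition the normalizing constant of $\mu^i_\Lambda$, and since $D_\sigma \subseteq \Lambda$ is bounded, $\Lambda^c$ lies in the infinite connected component of $\Z^d - D_\sigma$, on which $\sigma \equiv i$ by hypothesis; hence $\mathbbm{1}_{\{\sigma_{\Lambda^c} \equiv \eta_i\}} = 1$, and the claim reduces to the Hamiltonian identity.

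The first step is to reduce the bond sum and attribute each surviving bond to a single contour. By \eqref{hpotts}, $\Phi_B(\sigma_B) \neq 0$ only for bonds $B = \{x,y\}$ with $\|x-y\|_1 \leq r$ and $\sigma(x) \neq \sigma(y)$; call these the \emph{bad} bonds. If $B = \{x,y\}$ is bad, then neither $x$ nor $y$ can be $j$-correct for any $j$ (being $j$-correct at $x$ would force $\sigma(x) = \sigma(y) = j$), so $x,y \in D_\sigma \subseteq \Lambda$; in particular every bad bond is contained in $\Lambda$, and bonds $B$ with $B \cap \Lambda \neq \emptyset$ but $B \not\subseteq \Lambda$ are never bad and contribute nothing, whence $\sum_{B : B \cap \Lambda \neq \emptyset} \Phi_B(\sigma_B) = \sum_{B \subseteq \Z^d} \Phi_B(\sigma_B)$. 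Next, with the connectedness in the definition of contour supports taken with respect to the range-$r$ adjacency relation (two sites adjacent when $\|x-y\|_1 \leq r$), a bad bond $B$ has both endpoints in the same connected component of $D_\sigma$, so $B \subseteq \text{supp}(\gamma)$ for exactly one $\gamma \in \Gamma_\sigma$ and $B \cap \text{supp}(\gamma') = \emptyset$ for every other $\gamma'$. Hence $\sum_{\gamma \in \Gamma_\sigma} \tfrac{|B \cap \text{supp}(\gamma)|}{|B|} = \tfrac{|B|}{|B|} = 1$ for every bad $B$, while for the remaining bonds this is irrelevant since $\Phi_B(\sigma_B) = 0$.

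Finally, one interchanges the two sums, which is legitimate because $\Gamma_\sigma$ is finite and for each $\gamma$ only the finitely many bonds contained in $\text{supp}(\gamma)$ contribute:
\begin{align*}
\sum_{\gamma \in \Gamma_\sigma} \Phi(\gamma) = \sum_{\gamma \in \Gamma_\sigma} \sum_{B \subseteq \Z^d} \frac{|B \cap \text{supp}(\gamma)|}{|B|}\,\Phi_B(\sigma_B) = \sum_{B \subseteq \Z^d} \Phi_B(\sigma_B) \sum_{\gamma \in \Gamma_\sigma} \frac{|B \cap \text{supp}(\gamma)|}{|B|} = \sum_{B \subseteq \Z^d} \Phi_B(\sigma_B),
\end{align*}
using the previous step in the last equality. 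Combining this with the bond reduction yields $\sum_{B : B \cap \Lambda \neq \emptyset} \Phi_B(\sigma_B) = \sum_{\gamma \in \Gamma_\sigma} \Phi(\gamma)$, and hence \eqref{ps1}. The only genuinely delicate point is the bookkeeping around the connectivity convention entering the notion of a contour support: if a coarser or finer connectivity than the range-$r$ one were used, one would need the additional elementary observation that the sites joining the endpoints of a bad bond within distance $r$ are themselves incorrect, so that such a bond still lies inside a single contour support; this is where the precise conventions of \cite{Z1} come into play, and everything else is a direct rearrangement of the definitions.
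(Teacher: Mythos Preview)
Your proof is correct and follows essentially the same approach as the paper's: both establish the Hamiltonian identity $\sum_{B:B\cap\Lambda\neq\emptyset}\Phi_B(\sigma_B)=\sum_{\gamma\in\Gamma_\sigma}\Phi(\gamma)$ by observing that every bond with $\Phi_B\neq 0$ has both endpoints in $D_\sigma\subseteq\Lambda$. The paper packages this via the pointwise decomposition $\Phi_B=\sum_{\gamma}\tfrac{|B\cap\mathrm{supp}(\gamma)|}{|B|}\Phi_B+\tfrac{|B\cap(\Z^d\setminus D_\sigma)|}{|B|}\Phi_B$ and kills the last summand, which sidesteps the connectivity issue you (correctly) flag, but the content is the same.
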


\begin{proof}For each finite $B \subseteq \Z^d$ let us write
$$
\Phi_B (\sigma_B) = \sum_{x \in B} \frac{1}{|B|} \Phi_B(\sigma_B) = \sum_{\gamma \in \Gamma_\sigma} \frac{|B \cap \text{supp}(\gamma)|}{|B|} \Phi_B(\sigma_B) + \frac{|B \cap (\Z^d - D_\sigma)|}{|B|} \Phi_B(\sigma_B).
$$ Notice that $\sigma$ is necessarily constant on any $B=\{x,y\}$ such that $B \cap (\Z^d - D_\sigma) \neq \emptyset$, which implies that $\Phi_B(\sigma_B) = 0$ for such $B$. Thus, summing over all $B$ such that $B \cap \Lambda \neq \emptyset$ we immediately obtain \eqref{ps1}.
\end{proof}

Looking at \eqref{ps1}, one might be tempted to proceed as for the Ising contours model on Section \ref{exampleisingc}. However, the situation now is more complicated than it was before: it is no longer true that each family of contours with disjoint supports corresponds to some configuration in $\{1,\dots,q\}^{\Z^d}$. Indeed, besides having disjoint supports, nested contours must have matching internal and external labels for the whole family to correspond to some configuration. In particular, if one defines the $q$-Potts contour model by analogy with what was done on Section  \ref{exampleisingc}, the resulting model will violate the bounded energy loss condition, i.e. $\Delta E^* = -\infty$, so that one cannot associate an FFG dynamics to it. Indeed, there exist contour configurations which are forbidden because they carry nested contours with mismatched labels on them, but that can be turned into admissible configurations by adding a suitable contour in between. The energy leap function associated to an
addition of this sort is thus $-\infty$.
Nonetheless, the following procedure by Minlos and Sinai \cite{MS1,MS2} will help solve this problem.

Let us fix $i=1,\dots,q$ and consider the contour model on $\mathcal{N}(\Z^d \times G^i)$, where $G^i$ denotes the space of all $i$-contour shapes, given by the intensity measure
\begin{equation}\label{impottsc}
\nu^i(\gamma^i_x):= e^{-\beta \Phi(\gamma^i_x)}
\end{equation} and the Hamiltonian
\begin{equation}\label{hpottsc}
H^i_{\Lambda|\Gamma'} (\Gamma) = \left\{ \begin{array}{ll} +\infty & \text{ if either $\Gamma$ is incompatible, $\Gamma \not \sim \Gamma'_{\Lambda^c \times G}$ or $\Gamma \not \subset \Lambda$}\\ \\ 0 & \text{ otherwise.}\end{array}\right.
\end{equation}where we say that two contours $\gamma,\gamma'$ are incompatible whenever $d_1( \text{supp}(\gamma), \text{supp}(\gamma') ) \leq 1$ and the expression $\Gamma \subset \Lambda$ indicates that $d_1(V(\gamma^i), \Lambda^c) > 1 \text{ for every contour } \gamma^i \in \Gamma$.
Notice that, as it happened in the Ising contours model, this contour model will also fail to satisfy Assumptions \ref{assump}. Thus, when working with this model we will have to take the necessary precautions already described for the Ising contours model, we omit them here. Also, observe that in this model we still have that compatible families of contours will not, in general, correspond to actual configurations in $\{1,\dots,q\}^{\Z^d}$. Nonetheless, this artificial contour model no longer violates the bounded energy loss condition since its interactions are given only by intersections and \mbox{it also preserves the distribution of \textit{exterior contours}.}

\begin{defi} Given a family $\Gamma$ of contours with disjoint support we say that $\gamma \in \Gamma$ is an \textit{exterior contour} of $\Gamma$ if $\gamma$ is not contained in the interior of any other contour of $\Gamma$. The set of all exterior contours of $\Gamma$ shall be denoted by $\text{Ext}(\Gamma)$. Furthermore, given a configuration $\sigma \in \{1,\dots,q\}^{\Z^d}$ with a finite defect set $D_\sigma$, we denote the set of all exterior contours of $\Gamma_\sigma$ by $\text{Ext}(\sigma)$.
\end{defi}

\begin{prop}\label{propps} Let $\Gamma$ be a finite family of $i$-contours which are pairwise compatible. Then for any $\Lambda \in B^0_{\Z^d}$ such that $d_1(\text{V}(\gamma), \Lambda^c) > 1$ for all $\gamma \in \Gamma$ we have that
$$
\frac{ \mu^{i}_{\Lambda}( \{\sigma \in \{1,\dots,q\}^{\Z^d} : \text{Ext}(\sigma)= \Gamma \} ) }{ \mu^{i}_\Lambda ( \{ \sigma : d_1(\text{V}(\gamma), \Lambda^c) > 1 \text{ for all }\gamma \in \Gamma_\sigma\} )}= \mu_{\Lambda|\emptyset}(\{ \Gamma' \in \mathcal{N}(\Lambda \times G^i) : \text{Ext}(\Gamma') = \Gamma \})
$$ where $\mu_{\Lambda|\emptyset}$ is the Boltzmann-Gibbs distribution of the model $(\nu^i,H^i)$.
\end{prop}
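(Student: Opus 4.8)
The plan is to compute both sides of the claimed identity explicitly as ratios of restricted partition functions and then reduce the equality to the Minlos--Sinai cancellation, which in the present case is available thanks to the $q$-fold symmetry of the interaction \eqref{hpotts}.

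First I would analyze the left hand side by factorizing configurations according to their exterior contours. Fix a configuration $\sigma$ with $\text{Ext}(\sigma)=\Gamma$: outside $\bigcup_{\gamma\in\Gamma}\text{V}(\gamma)$ the configuration must be $i$-aligned, while on each connected component $V$ of $\text{Int}(\gamma)$, for $\gamma\in\Gamma$, carrying the label $k(V)$, the restriction $\sigma_V$ is an arbitrary Potts configuration with $k(V)$-aligned boundary condition whose contours stay well inside $V$. Since $\Phi(\gamma)$ does not depend on the ambient configuration and the energy of $\sigma$ splits as $\sum_{\gamma\in\Gamma_\sigma}\Phi(\gamma)=\sum_{\gamma\in\Gamma}\Phi(\gamma)+\sum_{V}\sum_{\gamma\in\Gamma_{\sigma_V}}\Phi(\gamma)$, Proposition \ref{proprepresentacionps} gives
$$\mu^{i}_{\Lambda}\!\left(\{\sigma:\text{Ext}(\sigma)=\Gamma\}\right)=\frac{N(\Gamma)}{Z^{i}_{\Lambda}},\qquad N(\Gamma):=\prod_{\gamma\in\Gamma}\Bigl(e^{-\beta\Phi(\gamma)}\prod_{V}Z^{k(V)}_{V}\Bigr),$$
where $Z^{k}_{V}$ is the Potts partition function on $V$ with $k$-aligned boundary condition restricted to configurations whose contours lie well inside $V$, and the inner product runs over the connected components $V$ of $\text{Int}(\gamma)$. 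Summing this identity over all pairwise compatible families $\Gamma$ of $i$-contours with $d_1(\text{V}(\gamma),\Lambda^c)>1$ produces the denominator of the left hand side, so that the left hand side equals $N(\Gamma)$ divided by the sum of $N(\cdot)$ over all such families.

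Next I would compute the right hand side from \eqref{Gibbs1}, \eqref{impottsc} and \eqref{hpottsc}. Since $\nu^i$ is purely atomic on $\Z^d\times G^i$ with $\nu^i(\gamma^i_x)=e^{-\beta\Phi(\gamma^i_x)}$, formula \eqref{poisson} shows that $\mu_{\Lambda|\emptyset}$ charges only the finite pairwise compatible families $\Gamma'$ of $i$-contours contained well inside $\Lambda$, each with weight proportional to $\prod_{\gamma\in\Gamma'}e^{-\beta\Phi(\gamma)}$, and that the factor $e^{-\nu^i(\Lambda\times G^i)}$ cancels against the normalisation. Decomposing such a $\Gamma'$ into its exterior contours and the compatible families of $i$-contours sitting inside each interior component, and writing $W_{V}$ for the sum of $\prod_{\gamma\in\Gamma''}e^{-\beta\Phi(\gamma)}$ over all pairwise compatible families $\Gamma''$ of $i$-contours well inside $V$, I obtain that the right hand side equals $M(\Gamma)$ divided by the sum of $M(\cdot)$ over the same index set, where $M(\Gamma)$ is defined exactly as $N(\Gamma)$ but with every $Z^{k(V)}_{V}$ replaced by $W_{V}$.

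Comparing the two expressions, and noting that they both reduce to $1$ for $\Gamma=\emptyset$, the proposition will follow once I establish the Minlos--Sinai identity $Z^{k}_{V}=W_{V}$ for every label $k$ and every region $V$ arising as a component of some $\text{Int}(\gamma^i)$. Here the structure of the Potts model is essential: invariance of \eqref{hpotts} under permutations of $\{1,\dots,q\}$ gives $Z^{k}_{V}=Z^{i}_{V}$, so it suffices to treat $k=i$; and then $Z^{i}_{V}=W_{V}$ follows by induction on $|V|$, since both sides equal $1$ when $V$ is too small to contain any contour and both satisfy the same recursion obtained by summing over the exterior contours present --- on the Potts side via Proposition \ref{proprepresentacionps} together with $Z^{k'}_{V'}=Z^{i}_{V'}$ for all labels $k'$, on the contour-model side by the very definition of $W_{V}$. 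Feeding $Z^{k}_{V}=W_{V}$ back gives $N=M$ on the relevant index set and hence the claimed equality.

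The genuinely delicate part is the contour bookkeeping rather than the algebra: one must check that the admissible interior configurations arising in the decomposition of $\{\sigma:\text{Ext}(\sigma)=\Gamma\}$ are exactly the Potts configurations on $V$ with $k(V)$-boundary condition whose contours stay at distance $>1$ from $\partial V$, paying attention to the interaction range $r$ and to the possibility that two distinct components of $\Z^d-\text{supp}(\gamma^i)$ lie close together, which forces one to argue that contours living in different components cannot interact (or to group such components appropriately). One must also check, as already done for the Ising contours model in Section \ref{exampleisingc}, that $\mu_{\Lambda|\emptyset}$ is well defined for $(\nu^i,H^i)$ despite the failure of the bounded energy loss condition --- which is immediate, since only the Boltzmann--Gibbs distribution, and not the FFG dynamics, is used here.
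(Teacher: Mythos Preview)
Your proposal is correct and follows essentially the same strategy as the paper: factorize both sides over exterior contours, invoke the $q$-Potts symmetry to identify $Z^{k}_{V}$ with $Z^{i}_{V}$, and close by induction. The only organizational difference is that the paper runs the induction on the \emph{level} (nesting depth) of $\Gamma$ and of $\Lambda$ to show numerators and denominators in \eqref{ps2} agree separately, whereas you isolate the cleaner lemma $Z^{k}_{V}=W_{V}$ and induct on $|V|$; both packagings rest on the same recursion and the same use of symmetry (your Minlos--Sinai step is exactly the paper's identity \eqref{ps4}).
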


\begin{proof} We can assume that all contours in $\Gamma$ are exterior. Thus, we have to check that
\begin{equation}\label{ps2}
\frac{\sum_{ \sigma : \text{Ext}(\sigma) = \Gamma } e^{-\beta \sum_{\gamma \in \Gamma_\sigma} \Phi(\gamma)}}{ \sum_{\sigma \in C^i(\Lambda)} e^{-\beta \sum_{\gamma \in \Gamma_\sigma} \Phi(\gamma)}}  = \frac{\sum_{ \Gamma' : \text{Ext}(\Gamma') = \Gamma } e^{-\beta \sum_{\gamma^i \in \Gamma'} \Phi(\gamma^i) }}{ \sum_{\Gamma' \in D^i(\Lambda)} e^{-\beta \sum_{\gamma^i \in \Gamma'} \Phi(\gamma^i) }}
\end{equation} where the sums in the left hand side are only over configurations $\sigma$ in the support of $\mu^{i}_{\Lambda}$, those in the right hand side are only over families $\Gamma'$ of compatible $i$-contours and, finally, where we have set
$$
C^i(\Lambda):=\{ \sigma : d_1(\text{V}(\gamma), \Lambda^c) > 1 \text{ for all }\gamma \in \Gamma_\sigma\}
$$ and
$$
D^i(\Lambda):=\{ \Gamma' : d_1(V(\gamma^i), \Lambda^c) > 1 \text{ for all } \gamma^i \in \Gamma' \}.
$$ We show that both numerators and both denominators in \eqref{ps2} are respectively identical.
We shall proceed by induction. Given a contour $\gamma$ we define its \textit{level} as the maximum $n \in \N_0$ such that there exists a sequence of contours $\gamma_0, \dots, \gamma_n$ such that $\gamma = \gamma_0$ and $\text{supp}(\gamma_{i}) \subseteq \text{Int}(\gamma_{i-1})$ for all $i=1,\dots,n$. Furthermore, define the level of a family $\Gamma$ of contours as the maximum level of any contour $\gamma \in \Gamma$ and also define the level of $\Lambda \in \B^0_{\Z^d}$ as the maximum level of any contour $\gamma$ such that $V(\gamma) \subseteq \Lambda$. Thus, we shall proceed by induction on the level of both $\Gamma$ and $\Lambda$, respectively.

If $\Gamma$ has level zero then we have that
$$
\text{Ext}(\sigma)=\Gamma \Longleftrightarrow \Gamma_\sigma = \Gamma \hspace{2cm}\text{ and }\hspace{2cm} \text{Ext}(\Gamma')= \Gamma \Longleftrightarrow \Gamma' = \Gamma.
$$ which immediately implies that both numerators in \eqref{ps2} are identical in this case. We would like to point out the importance in the previous argument of the fact that $d_1(\text{V}(\gamma), \Lambda^c) > 1$ for all $\gamma \in \Gamma$. Indeed, notice that the sum in the numerator of the left hand side of \eqref{ps2} is over all $\sigma$ in the support of $\mu^{i}_\Lambda$ and that, for arbitrary $\Lambda$, it could very well happen that there are no configurations $\sigma$ in the support of $\mu^{i}_\Lambda$ such that $\Gamma_\sigma=\Gamma$. \mbox{The latter happens whenever} there exists $\gamma \in \Gamma$ such that \mbox{$\text{Int}_j(\gamma) \cap \Lambda^c \neq \emptyset$ for some $j\neq i$}, that is, when $\Lambda$ is not simply connected and $\Gamma$ has a contour whose interior labels come into conflict with the boundary configuration $i$. If this were to be the case, then both numerators would not coincide. However, due to the assumption that $d_1(\text{V}(\gamma), \Lambda^c) > 1$ for all $\gamma \in \Gamma$, we can rule out this possibility and thus conclude as we have.

\mbox{Similarly,} if $\Lambda$ has level zero we have that
$$
\sigma \in C^i(\Lambda) \Longrightarrow \Gamma_\sigma \in D^i(\Lambda) \hspace{0.5cm}\text{ and }\hspace{0.5cm}\Gamma' \in D^i(\Lambda) \Longrightarrow \exists\,\, \sigma \in C^i(\Lambda) \text{ such that }\Gamma'=\Gamma_\sigma
$$ since for $\sigma$ in the support of $\mu^{i}_\Lambda$ we have that all contours in $\Gamma_\sigma$ must be exterior $i$-contours. Thus we conclude that both denominators in \eqref{ps2} are equal as well.

Next, if $\Gamma$ has level $n+1$ then, by reordering the sum in the left hand side by summing independently over configurations in the interior of each $\gamma \in \Gamma$, we obtain that
$$
\sum_{ \sigma : \text{Ext}(\sigma) = \Gamma } e^{-\beta \sum_{\gamma \in \Gamma_\sigma} \Phi(\gamma)} =  \prod_{\gamma^i \in \Gamma}\left[e^{-\beta \Phi(\gamma^i) } \prod_{j=1}^q \left( \sum_{\sigma \in C^j(\text{Int}_j(\gamma^i))} e^{-\beta \sum_{\gamma \in \Gamma_\sigma} \Phi(\gamma)}\right)\right]
$$ where we have used the fact that all contours involved are compatible and, furthermore, that $\text{Int}_j(\gamma)$ has simply connected components for every contour $\gamma \in \Gamma$ and $j=1,\dots,q$. Now, since $\text{Int}_j(\gamma)$ is of level no greater than $n$ for every contour $\gamma \in \Gamma$ and $j=1,\dots,q$, by inductive hypothesis we conclude that
\begin{equation}\label{ps3}
\sum_{ \sigma : \text{Ext}(\sigma) = \Gamma } e^{-\beta \sum_{\gamma \in \Gamma_\sigma} \Phi(\gamma)} = \prod_{\gamma^i \in \Gamma}\left[e^{-\beta \Phi(\gamma^i) } \prod_{j=1}^q \left( \sum_{\Gamma^j \in D^j(\text{Int}_j(\gamma^i))} e^{-\beta \sum_{\gamma^j \in \Gamma^j} \Phi(\gamma^j)}\right)\right]
\end{equation}Furthermore, by the symmetry between spins in the $q$-Potts model, for each $j=1,\dots,q$ we have that
\begin{equation}\label{ps4}
\sum_{\Gamma^j \in D^j(\text{Int}_j(\gamma^i))} e^{-\beta \sum_{\gamma^j \in \Gamma^j} \Phi(\gamma^j)} = \sum_{\Gamma^i \in D^i(\text{Int}_j(\gamma^i))} e^{-\beta \sum_{\gamma^i \in \Gamma^i} \Phi(\gamma^i)}
\end{equation} so that the sums in the right hand of \eqref{ps3} become only over $i$-contours. Then, by reversing the summation order, we obtain the numerator in the right hand side of \eqref{ps2}.

Finally, if $\Lambda$ has level $n+1$ then we decompose the sum in the denominator of the left hand side of \eqref{ps2} over all compatible families of exterior $i$-contours and use the fact that all these families have level no greater than $n+1$, so that for each of them the numerators in \eqref{ps2} coincide by the argument given above.
\end{proof}

As a consequence of Proposition \ref{propps} we have that for every \mbox{simply connected $\Lambda \in \B^0_{\Z^d}$} the measure $\mu^{i}_{\Lambda}$ can be obtained by first sampling the external contours and then sampling the spin configuration in the interior of each contour \mbox{with the corresponding} finite-volume Boltzmann-Gibbs distributions. The precise statement is given in Proposition \ref{corps} below.

\begin{defi} Given $\Lambda \in \B^0_{\Z^d}$ we define its \textit{interior boundary} $\p \Lambda$ as
$$
\p \Lambda := \{ x \in \Lambda : d_1(x,\Lambda^c) = 1\}
$$ and its $r$-\textit{interior} $\Lambda^\circ$ as
$$
\Lambda^\circ := \{x \in \Lambda : d_1(x,\p \Lambda) > r \}.
$$
\end{defi}

\begin{prop}\label{corps} For simply connected $\Lambda \in \B^0_{\Z^d}$ and $\sigma \in \{1,\dots,q\}^{\Z^d}$ with $\mu^i_{\Lambda^\circ}(\sigma) > 0$ we have
\begin{equation}\label{corpsequation}
\mu^i_{\Lambda^\circ} (\sigma) = \mu_{\Lambda|\emptyset}( \{\Gamma' : \text{Ext}(\Gamma') = \text{Ext}(\Gamma_\sigma)\}) \prod_{\gamma^i \in \text{Ext}(\Gamma_\sigma)} \prod_{j=1}^q \mu^j_{\text{Int}^\circ_j(\gamma)}(\sigma^j_{\text{Int}^\circ_j(\gamma)})
\end{equation} where, for each $j=1,\dots,q$ and $\Delta \in \B^0_{\Z^d}$, the configuration $\sigma^j_{\Delta} \in \{1,\dots,q\}^{\Z^d}$ is defined by the formula
$$
\sigma^j_{\Delta}(x) := \left\{\begin{array}{ll} \sigma(x) & \text{ if $x \in \Delta$ }\\ \\ j & \text{ if $x \notin \Delta$.}\end{array}\right.
$$
\end{prop}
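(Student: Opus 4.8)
The plan is to unfold the contour representation of Proposition \ref{proprepresentacionps} recursively along the tree of nested contours, and then to recognize the scalar factor produced by this unfolding as an exterior--contour probability for the auxiliary contour model $(\nu^i,H^i)$ via Proposition \ref{propps}. First I would check that every $\sigma$ with $\mu^i_{\Lambda^\circ}(\sigma)>0$ is admissible for Proposition \ref{proprepresentacionps} on the volume $\Lambda^\circ$: since $\sigma\equiv i$ off $\Lambda^\circ$ its defect set is finite, and the $r$--corridor in the definition of $\Lambda^\circ$, together with the fact that $\Lambda^c$ must lie in the unbounded component of $\Z^d-\text{supp}(\gamma)$ for each contour $\gamma$ of $\sigma$, forces $d_1(\text{V}(\gamma),\Lambda^c)\geq 2$ and $\sigma\equiv i$ on the infinite component of $\Z^d-D_\sigma$. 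Hence $\mu^i_{\Lambda^\circ}(\sigma)=\frac{1}{Z^i_{\Lambda^\circ}}e^{-\beta\sum_{\gamma\in\Gamma_\sigma}\Phi(\gamma)}$, with $Z^i_{\Lambda^\circ}$ the normalizing constant.

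Next I would split $\Gamma_\sigma$ into the exterior contours $\text{Ext}(\Gamma_\sigma)$ and, for each $\gamma\in\text{Ext}(\Gamma_\sigma)$ and each $j=1,\dots,q$, the subfamily of those contours of $\sigma$ supported inside $\text{Int}_j(\gamma)$. The geometric point --- which is exactly why one works with the $r$--interiors $\text{Int}^\circ_j(\gamma)$ --- is that passing from $\sigma$ to $\sigma^j_{\text{Int}^\circ_j(\gamma)}$ leaves this subfamily unchanged: no contour of $\sigma$ inside $\text{Int}_j(\gamma)$ can reach the corridor $\text{Int}_j(\gamma)-\text{Int}^\circ_j(\gamma)$ adjacent to $\text{supp}(\gamma)$, and on that corridor $\sigma$ is already $\equiv j$. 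Using additivity of $\Phi$ over the contours of a configuration one then gets
\begin{equation}
\sum_{\gamma'\in\Gamma_\sigma}\Phi(\gamma')=\sum_{\gamma\in\text{Ext}(\Gamma_\sigma)}\Phi(\gamma)+\sum_{\gamma\in\text{Ext}(\Gamma_\sigma)}\sum_{j=1}^q\sum_{\gamma'\in\Gamma_{\sigma^j_{\text{Int}^\circ_j(\gamma)}}}\Phi(\gamma'),
\end{equation}
and applying Proposition \ref{proprepresentacionps} again, now on each $\text{Int}^\circ_j(\gamma)$ with reference spin $j$ (one should also check these regions are simply connected, so the proposition applies), converts each inner exponential into $Z^j_{\text{Int}^\circ_j(\gamma)}\,\mu^j_{\text{Int}^\circ_j(\gamma)}(\sigma^j_{\text{Int}^\circ_j(\gamma)})$. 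Collecting terms yields
\begin{equation}
\mu^i_{\Lambda^\circ}(\sigma)=\frac{e^{-\beta\sum_{\gamma\in\text{Ext}(\Gamma_\sigma)}\Phi(\gamma)}\prod_{\gamma\in\text{Ext}(\Gamma_\sigma)}\prod_{j=1}^q Z^j_{\text{Int}^\circ_j(\gamma)}}{Z^i_{\Lambda^\circ}}\prod_{\gamma\in\text{Ext}(\Gamma_\sigma)}\prod_{j=1}^q\mu^j_{\text{Int}^\circ_j(\gamma)}(\sigma^j_{\text{Int}^\circ_j(\gamma)}).
\end{equation}

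Finally I would identify the scalar prefactor with $\mu_{\Lambda|\emptyset}(\{\Gamma':\text{Ext}(\Gamma')=\text{Ext}(\Gamma_\sigma)\})$. Summing the last display over all $\sigma$ in the support of $\mu^i_{\Lambda^\circ}$ with exterior family equal to a fixed compatible family $\Gamma$ of $i$--contours collapses the remaining product back to $\prod_{\gamma\in\Gamma}\prod_j Z^j_{\text{Int}^\circ_j(\gamma)}$, so the prefactor is exactly $\mu^i_{\Lambda^\circ}(\{\text{Ext}(\sigma)=\Gamma\})$; it then remains to show this equals $\mu_{\Lambda|\emptyset}(\{\text{Ext}(\Gamma')=\Gamma\})$. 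For this I would invoke Proposition \ref{propps} (together with the symmetry among the $q$ spins, exactly as in the identities \eqref{ps4} in its proof, and the consistency of the Boltzmann--Gibbs distributions), which reduces the equality to matching the corresponding ratios of spin and contour partition functions. I expect the main obstacle of the whole argument to be precisely this boundary bookkeeping: making the additivity $(8)$ and the identification of $\Gamma_{\sigma^j_{\text{Int}^\circ_j(\gamma)}}$ with the inner subfamily fully rigorous, verifying simple connectedness of the interior regions so that Propositions \ref{proprepresentacionps} and \ref{propps} apply recursively, and controlling the discrepancy between the volume $\Lambda$ appearing in the contour model and the $r$--interior $\Lambda^\circ$ appearing on the spin side --- rather than in any of the algebraic rearrangements, which are routine once the combinatorics is set up.
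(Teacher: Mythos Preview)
Your approach is correct and reaches the same destination, but by a more computational route than the paper's. The paper avoids unpacking the explicit Boltzmann weights: it first observes that $\mu^i_{\Lambda^\circ}(\sigma)>0$ is equivalent to $\sigma\in C^i(\Lambda)$ (using simple connectedness of $\Lambda$), so that consistency of the Boltzmann--Gibbs distributions gives $\mu^i_{\Lambda^\circ}(\sigma)=\mu^i_\Lambda(\sigma)/\mu^i_\Lambda(C^i(\Lambda))$ directly. Proposition~\ref{propps} then yields the exterior-contour factor $\mu_{\Lambda|\emptyset}(\{\text{Ext}(\Gamma')=\text{Ext}(\sigma)\})$ in one stroke, leaving only the conditional probability $\mu^i_\Lambda(\sigma\mid\text{Ext}(\sigma')=\text{Ext}(\sigma))$. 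That conditional probability is identified with the product over interiors by the Markov property: fixing the exterior contours determines $\sigma$ completely outside $\text{Int}^\circ(\text{Ext}(\sigma))$, and spins in distinct interiors do not interact, so the conditional law factorizes as $\prod_{\gamma,j}\mu^j_{\text{Int}^\circ_j(\gamma)}$. Your version instead expands everything via Proposition~\ref{proprepresentacionps}, factorizes the exponential by hand, re-applies Proposition~\ref{proprepresentacionps} on each interior to rebuild the $\mu^j_{\text{Int}^\circ_j(\gamma)}$ factors, and only then recognizes the scalar prefactor as $\mu^i_{\Lambda^\circ}(\{\text{Ext}(\sigma)=\Gamma\})$ by summing out the interiors and invoking Proposition~\ref{propps}. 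This is perfectly valid --- and your concern about the $\Lambda$ versus $\Lambda^\circ$ discrepancy is exactly what the paper handles via the consistency step --- but the paper's argument is shorter precisely because it lets the DLR structure do the factorization rather than doing it manually at the level of weights.
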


\begin{proof} Let us notice the following equivalences:
$$
\mu^i_{\Lambda^\circ}(\sigma) > 0 \Longleftrightarrow \sigma(x)=i \text{ for all $x \in \Z^d - \Lambda^\circ$} \Longleftrightarrow \text{$d(\text{supp}(\gamma^i),\Lambda^c) > 1$ for all $\gamma^i \in \Gamma_\sigma.$}
$$ Since $\Lambda$ is simply connected, this implies in fact that $d(\text{V}(\gamma^i),\Lambda^c) > 1$ for all $\gamma^i \in \Gamma_\sigma$. Thus, by the consistency of Boltzmann-Gibbs distributions and Proposition \ref{propps} we have
$$
\mu^i_{\Lambda^\circ} (\sigma) = \frac{\mu^i_{\Lambda}(\sigma)}{\mu^i_{\Lambda}(C^i(\Lambda))}= \mu_{\Lambda|\emptyset}(\{ \Gamma' : \text{Ext}(\Gamma') = \text{Ext}(\sigma) \})\mu^i_{\Lambda}(\sigma | \{ \sigma' : \text{Ext}(\sigma') = \text{Ext}(\sigma)\}).
$$ Now, notice that if a configuration $\sigma'$ in the support of $\mu^i_{\Lambda}$ is such that $\text{Ext}(\sigma') = \text{Ext}(\sigma)$ then the spin values of $\sigma'$ outside $\text{Int}^\circ(\text{Ext}(\sigma)):=\bigcup_{\gamma^i \in \text{Ext}(\sigma)} \text{Int}^\circ(\gamma^i)$ are fully determined. More precisely, we have the following identity of events
$$
\{ \sigma' : \text{Ext}(\sigma') = \text{Ext}(\sigma)\} = \{ \sigma' : \sigma'_{\Z^d - \text{Int}^\circ(\text{Ext}(\sigma))} = \sigma_{\Z^d - \text{Int}^\circ(\text{Ext}(\sigma))}\}.
$$ This, combined with the fact that spins in different interiors never interact (either because they are too far apart or they have the same value), gives \eqref{corpsequation}.
\end{proof}

As a consequence of Proposition \ref{corps}, we obtain the following \mbox{simulation scheme for $\mu^i_\Lambda$.}

\begin{cor}\label{corposta} Given a simply connected set $\Lambda \in \B^0_{\Z^d}$, let $Y$ be a random $i$-contour collection distributed according to $\mu_{\Lambda|\emptyset}$ and $X=\{ X^j_{\Delta} : \Delta \subseteq \B^0_{\Z^d}, j=1,\dots,q\}$ be a family of random spin configurations satisfying:
\begin{enumerate}
\item [$\bullet$] $X$ is independent of $Y$.
\item [$\bullet$] The random elements $X^j_\Delta$ are all independent and with distribution $\mu^j_\Delta$, respectively.
\end{enumerate} If we take the random spin configuration $Z$ with external contours matching those of $Y$ and with internal spin configuration given by the corresponding random \mbox{configurations in $X$,} i.e. the random spin configuration $Z$ defined by the formula
$$
Z(x)=\left\{\begin{array}{ll} i & \text{ if }x \in \bigcap_{\gamma^i \in \text{Ext}(Y)} \text{Ext}(\gamma^i)\\ \\ Y_\Lambda(x) & \text{ if }x \in \bigcup_{\gamma^i \in \text{Ext}(Y)} \text{supp}(\gamma^i) \\ \\ j & \text{ if }x \in \text{Int}_j(\gamma^i)-\text{Int}^\circ_j(\gamma^i) \text{ for }\gamma^i \in \text{Ext}(Y) \text{ and }j=1,\dots,q \\ \\ X^j_{\text{Int}^\circ_j(\gamma^i)}(x) & \text{ if }x \in \text{Int}^\circ_j(\gamma^i) \text{ for }\gamma^i \in \text{Ext}(Y),\end{array}\right.
$$ then $Z$ is distributed according to $\mu^i_{\Lambda^\circ}$. We call $Z$ the \mbox{$i$-alignment of $Y$ with respect to $X$.}
\end{cor}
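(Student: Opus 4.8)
The plan is to show that the random spin configuration $Z$ defined in the statement has exactly the distribution $\mu^i_{\Lambda^\circ}$ by checking that it assigns the correct probability to every configuration $\sigma$ in the support of $\mu^i_{\Lambda^\circ}$, and then invoking Proposition~\ref{corps}. First I would fix a configuration $\sigma \in \{1,\dots,q\}^{\Z^d}$ with $\mu^i_{\Lambda^\circ}(\sigma)>0$; by the equivalences noted at the start of the proof of Proposition~\ref{corps}, this forces $d_1(\text{V}(\gamma^i),\Lambda^c)>1$ for every $\gamma^i\in\Gamma_\sigma$, so all exterior contours of $\sigma$ lie well inside $\Lambda$ and Proposition~\ref{propps} applies. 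The key observation is that the map $\sigma \mapsto Z=\sigma$ decomposes the event $\{Z=\sigma\}$ into independent pieces matching exactly the product structure of the right-hand side of \eqref{corpsequation}: the event that the exterior contours of $Y$ coincide with $\text{Ext}(\Gamma_\sigma)$ (which has probability $\mu_{\Lambda|\emptyset}(\{\Gamma' : \text{Ext}(\Gamma')=\text{Ext}(\Gamma_\sigma)\})$ by definition of $Y$), and, for each $\gamma^i\in\text{Ext}(\Gamma_\sigma)$ and each $j=1,\dots,q$, the event that $X^j_{\text{Int}^\circ_j(\gamma^i)}$ agrees with $\sigma$ on $\text{Int}^\circ_j(\gamma^i)$ (which has probability $\mu^j_{\text{Int}^\circ_j(\gamma)}(\sigma^j_{\text{Int}^\circ_j(\gamma)})$ by the definition of $X$ and the fact that $\mu^j_{\text{Int}^\circ_j(\gamma)}$ only sees spins inside $\text{Int}^\circ_j(\gamma)$).

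Concretely, the steps I would carry out are: (1) verify that on $\{Z=\sigma\}$ the formula defining $Z$ is consistent, i.e.\ that the four cases in the definition of $Z$ are compatible with $\sigma$ precisely when $\text{Ext}(Y)=\text{Ext}(\Gamma_\sigma)$ and each $X^j_{\text{Int}^\circ_j(\gamma^i)}$ matches $\sigma$ on its domain — here one uses that the spins of $\sigma$ on $\text{supp}(\gamma^i)$, on the collars $\text{Int}_j(\gamma^i)-\text{Int}^\circ_j(\gamma^i)$, and on the common exterior $\bigcap_{\gamma^i}\text{Ext}(\gamma^i)$ are all forced once the exterior contour family is fixed (this is the content of the ``identity of events'' at the end of the proof of Proposition~\ref{corps}); (2) use the independence of $Y$ from $X$ and the mutual independence of the $X^j_\Delta$ to factor $P(Z=\sigma)$ as the product of $\mu_{\Lambda|\emptyset}(\{\Gamma':\text{Ext}(\Gamma')=\text{Ext}(\Gamma_\sigma)\})$ and $\prod_{\gamma^i\in\text{Ext}(\Gamma_\sigma)}\prod_{j=1}^q \mu^j_{\text{Int}^\circ_j(\gamma)}(\sigma^j_{\text{Int}^\circ_j(\gamma)})$; (3) recognize this product as the right-hand side of \eqref{corpsequation} in Proposition~\ref{corps}, hence $P(Z=\sigma)=\mu^i_{\Lambda^\circ}(\sigma)$. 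Since this holds for every $\sigma$ with positive $\mu^i_{\Lambda^\circ}$-mass, and since (by construction) $Z$ is supported on such configurations, the two laws coincide.

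The main subtlety — and the step I expect to require the most care — is (1): one must check that the event $\{Z=\sigma\}$ genuinely coincides with the product event $\{\text{Ext}(Y)=\text{Ext}(\Gamma_\sigma)\}\cap\bigcap_{\gamma^i,j}\{X^j_{\text{Int}^\circ_j(\gamma^i)}=\sigma \text{ on }\text{Int}^\circ_j(\gamma^i)\}$, with no ``leftover'' randomness and no overcounting. This relies on two facts: that the exterior contours together with the interior spin configurations determine $\sigma$ completely on all of $\Z^d$ (the spins outside $\text{Int}^\circ(\text{Ext}(\sigma))$ being forced, as in Proposition~\ref{corps}), and that the interiors $\text{Int}^\circ_j(\gamma^i)$ for distinct exterior contours $\gamma^i$ and distinct labels $j$ are pairwise separated by at least distance $r$ (or occupied by forced spins), so that the corresponding $X^j_{\text{Int}^\circ_j(\gamma^i)}$ genuinely do not interact and can be sampled independently without affecting which configuration $Z$ realizes. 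Both are inherited directly from the geometric setup of Pirogov--Sinai contours and from the simple-connectedness of $\Lambda$ (which guarantees each $\text{Int}_j(\gamma^i)$ has simply connected components, so the Boltzmann--Gibbs distributions $\mu^j_{\text{Int}^\circ_j(\gamma)}$ behave as in Proposition~\ref{propps}). Once (1) is in place, steps (2) and (3) are immediate bookkeeping with the independence assumptions and an appeal to Proposition~\ref{corps}.
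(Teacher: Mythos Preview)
Your proposal is correct and matches the paper's approach: the paper states this corollary without proof, simply as an immediate consequence of Proposition~\ref{corps}, and your argument is precisely the natural way to make that derivation explicit by computing $P(Z=\sigma)$ via the independence assumptions and identifying the resulting product with the right-hand side of \eqref{corpsequation}.
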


The key advantage of this simulation scheme is that it can also be carried out in an infinite volume with the help of the FFG dynamics. Indeed, consider the coefficient
$$
\alpha_{q\text{-Potts}}(\beta) := \sup_{\gamma^i_x \in \Z^d \times G^i} \left[ \frac{1}{|\gamma^i_x|} \sum_{\tilde{\gamma}^i_y \not \sim \gamma^i_x} |\tilde{\gamma}^i_y| e^{-\beta \Phi(\tilde{\gamma}^i_y)} \right].
$$ where, for a given contour $\gamma$, we denote the cardinal of its support by  $|\gamma|$. Notice that, by the symmetry of the $q$-Potts model, the coefficient $\alpha$ does not depend on the choice of $i$.
Now, if $\alpha_{q\text{-Potts}}< 1$ then $(\nu^i, H^i)$ admits an infinite-volume Gibbs distribution $\mu$ obtained as the stationary measure of the corresponding FFG dynamics $\mathcal{K}^i$. Furthermore, it follows as in the Ising contours model that the free process at time $0$, $\Pi^i_0$, has only finitely many contours surrounding any point in $\Z^d$ and, thus, external contours in $\mathcal{K}_0^i$ are well defined. Hence, if we consider a family $X$ independent of $\mathcal{K}^i$ as in Corollary \ref{corposta}, then it is possible to conduct the $i$-alignment of $\mathcal{K}_0^i$ with respect to $X$. By repeating a similar analysis to the one carried out for the Ising contours model, one can verify that the distribution of this $i$-alignment is a Gibbs measure for the $q$-Potts model. Furthermore, by following the ideas discussed in the proof of Theorem \ref{teomixing}, it is possible to construct for any pair $f,g$ of bounded local functions a triple
$$
\left\{ \left(\mathcal{A}^0_F(r(\Lambda_f)), (X^j_\Delta)_{\Delta \not \subseteq \Lambda_f^c}\right),\left(\mathcal{A}^0_F(r(\Lambda_g)), (X^j_\Delta)_{\Delta \not \subseteq \Lambda_g^c}\right), \left(\tilde{\mathcal{A}}^0_F(r(\Lambda_g)), (\tilde{X}^j_\Delta)_{\Delta \not \subseteq \Lambda_g^c}\right) \right\}
$$ where for $\Lambda \in \B^0_{\Z^d}$ we define $r(\Lambda):=\{ \gamma_x^i \in \Z^d \times G^i : \text{V}(\gamma_x^i) \cap \Lambda \neq \emptyset\}$, such that
\begin{enumerate}
\item [$\bullet$] $\left(\mathcal{A}^0_F(r(\Lambda_f)), (X^j_\Delta)_{\Delta \not \subseteq \Lambda_f^c}\right)$ and $\left(\tilde{\mathcal{A}}^0_F(r(\Lambda_g)), (\tilde{X}^j_\Delta)_{\Delta \not \subseteq \Lambda_g^c}\right)$ are independent,
\item [$\bullet$] $\left(\mathcal{A}^0_F(r(\Lambda_g)), (X^j_\Delta)_{\Delta \not \subseteq \Lambda_g^c}\right)$ and $\left(\tilde{\mathcal{A}}^0_F(r(\Lambda_g)), (\tilde{X}^j_\Delta)_{\Delta \not \subseteq \Lambda_g^c}\right)$ have the same distribution,
\item [$\bullet$] $\mathcal{A}^0(r(\Lambda_f) \sim \mathcal{A}^0(r(\Lambda_g)) \Longrightarrow \left(\mathcal{A}^0_F(r(\Lambda_g)), (X^j_\Delta)_{\Delta \not \subseteq \Lambda_g^c}\right) =\left(\tilde{\mathcal{A}}^0_F(r(\Lambda_g)), (\tilde{X}^j_\Delta)_{\Delta \not \subseteq \Lambda_g^c}\right).$
\end{enumerate} By combining these elements as in the proof of Theorem \ref{teounigibbsising}, we get the following result.

\begin{teo}\label{teopotts}If $\beta > 0$ is sufficiently large so as to satisfy $\alpha_{q\text{-Potts}}(\beta) < 1$ then:
\begin{enumerate}
\item [i.] The $q$-Potts model on $\Z^d$ of interaction range $r$ admits $q$ distinct Gibbs measures, which we denote by $\mu^i$ for $i=1,\dots,q$.
\item [ii.] For each $i=1,\dots,q$ the measure $\mu^{i}$ can be obtained as the local limit
$$
\mu^{i} := \lim_{n \rightarrow +\infty} \mu^{i}_{\Lambda_n^\circ}
$$ for any sequence $(\Lambda_n)_{n \in \N} \subseteq \B^0_{\Z^d}$ of simply connected sets with $\Lambda_n \nearrow \Z^d$.
\item [iii.] For each $i=1,\dots,q$ the measure $\mu^i$ satisfies the $i$-sea with islands picture.
\item [iv.] If also $\beta > \beta^*$ where
$$
\beta^* := \inf \left \{ \beta > 0 \,:\, \sum_{\gamma^i_x : d_1(0, \text{supp}(\gamma^i_x)) \leq 1} |\gamma^i_x| e^{-\beta \Phi(\gamma^i_x)} < 1 \right\}
$$ then each $\mu^i$ is exponentially mixing in the sense of \eqref{eqmixingicm3} and \eqref{eqmixingicm4}.
\end{enumerate}
\end{teo}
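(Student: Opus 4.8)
The plan is to mirror, for the $q$-Potts contour model $(\nu^i,H^i)$, the argument carried out in Section \ref{ffgisingc1} for the Ising contours model, incorporating the additional randomization by the family $X=\{X^j_\Delta\}$ that is needed to recover actual spin configurations from contour configurations. First I would observe that whenever $\alpha_{q\text{-Potts}}(\beta)<1$, then choosing the size function $q(\gamma^i_x):=|\gamma^i_x|$ the quantity $\alpha_{q^i}$ in \eqref{hdiluted} equals $\alpha_{q\text{-Potts}}(\beta)<1$ (using the incompatibility impact relation and the localized energy leap functions, exactly as prescribed for the Ising contours model), so that $(\nu^i,H^i)$ satisfies the (F1)-diluteness condition and hence admits a stationary FFG process $\mathcal{K}^i$ with a well-defined unique invariant measure. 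As in Section \ref{ffgisingc1}, a Borel--Cantelli argument on the free process $\Pi^i_0$ shows that almost surely each point of $\Z^d$ is surrounded by only finitely many contours, so $\mathrm{Ext}(\mathcal{K}^i_0)$ is well defined and the $i$-alignment $Z^i$ of $\mathcal{K}^i_0$ with respect to an independent family $X$ as in Corollary \ref{corposta} makes sense. Denote its law by $\mu^i$.

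Next I would establish (ii), i.e. that $\mu^i=\lim_n\mu^i_{\Lambda_n^\circ}$ locally, from which (i) and (iii) follow. The point is the coupling of Theorem \ref{teounigibbs}(ii): building the finite-volume stationary processes $\mathcal{K}^{i,\Lambda_n^*|\emptyset}$ and $\mathcal{K}^i$ from the same underlying Poisson process $\overline\Pi$, finiteness of $\mathcal{A}^0(\Lambda\times G^i)$ forces the contour configurations to agree inside any fixed bounded set for $n$ large. One must also couple the auxiliary families $X$: take one family $X$ and use it for all volumes. Then by Corollary \ref{corposta} the $i$-alignments $\sigma^i_{\mathcal{K}^{i,\Lambda_n^*|\emptyset}_0}$ agree with $\sigma^i_{\mathcal{K}^i_0}$ on any bounded $\Lambda$ once $n$ is large enough, because a bounded set's spin values depend only on $\mathcal{K}^i_0$ restricted to the finite set $r(\Lambda)$ and on finitely many $X^j_\Delta$'s. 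Local convergence then follows by dominated convergence, and by \eqref{corpsequation} (plus Proposition \ref{limitegibbs}, which does not require the diluteness condition) the limit $\mu^i$ is a Gibbs measure for the $q$-Potts model. Distinctness of $\mu^1,\dots,\mu^q$ follows from (iii): the $i$-sea-with-islands picture says the majority spin $i$ percolates while no other spin does, and these $q$ pictures are mutually exclusive.

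For (iv), the exponential mixing, I would repeat the enlarged-branching-process argument from Section \ref{ffgisingc1}. For bounded local $f,g:\{1,\dots,q\}^{\Z^d}\to\R$, the spin values of $\sigma^i_{\mathcal{K}^i_0}$ inside $\Lambda_f$ are $\mathcal{F}_{r(\Lambda_f)}$-measurable functions of $(\mathcal{K}^i_0, X)$, where $r(\Lambda):=\{\gamma^i_x: \mathrm{V}(\gamma^i_x)\cap\Lambda\neq\emptyset\}$. One couples the triple $\{(\mathcal{A}^0_F(r(\Lambda_f)),(X^j_\Delta)_{\Delta\not\subseteq\Lambda_f^c}), (\mathcal{A}^0_F(r(\Lambda_g)),(X^j_\Delta)_{\Delta\not\subseteq\Lambda_g^c}), (\tilde{\mathcal A}^0_F(r(\Lambda_g)),(\tilde X^j_\Delta)_{\Delta\not\subseteq\Lambda_g^c})\}$ as indicated just before the statement, so that
$$
\left|\mu^i(fg)-\mu^i(f)\mu^i(g)\right|\leq 2\|f\|_\infty\|g\|_\infty\, P\!\left(\mathcal{A}^0(r(\Lambda_f))\not\sim\mathcal{A}^0(r(\Lambda_g))\right).
$$
This probability is bounded, via a good-size-function estimate, by $P(\sum_{C\in\mathcal{A}^0(r(\Lambda_f))}|C|+\sum_{C\in\mathcal{A}^0(r(\Lambda_g))}|C|\geq d_1(\Lambda_f,\Lambda_g))$. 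Since neither $q$ nor condition $(*)$ holds directly (contour sizes are unbounded and the defining constant of $\mathcal{A}^0(r(\Lambda))$ is not subcritical), one enlarges the dominating branching process of the Domination Lemma so that \emph{plaquettes} are the individuals: each plaquette spawns an independent Poisson($\alpha^0_{q\text{-Potts}}(\beta)$) number of contours through it, and all plaquettes of those contours are its offspring. Theorem \ref{expbranching} then gives $b_1>0$ with $\E(e^{b_1\sum_{C\in\mathcal{A}^0(r(\Lambda))}|C|})\leq e^{\nu^{\tilde\beta}(r(\Lambda))-\nu^\beta(r(\Lambda))}$ for a suitable $\tilde\beta<\beta$, leading to \eqref{eqmixingicm3} when $\tilde\beta\in(\beta^{**},\beta)$ and to the weaker \eqref{eqmixingicm4} when $\tilde\beta\in(\beta^*,\beta)$, with the volume-dependent factors $\nu^{\tilde\beta}(r(\Lambda))$ controlled by $(\#\Lambda+2d\#\partial\Lambda)\alpha^0_{q\text{-Potts}}$.

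The main obstacle, as in the Ising case, is the non-locality of the alignment map together with the enlarged-branching-process bookkeeping in part (iv): one must carefully verify that the offspring distribution of the plaquette-level process has mean strictly less than one precisely when $\alpha^0_{q\text{-Potts}}(\beta)<1$ (so that Theorem \ref{expbranching} applies), and that the enlarged process genuinely dominates $\sum_{C\in\mathcal{A}^0(r(\Lambda))}|C|$ and not merely $\#\mathcal{A}^0(r(\Lambda))$. The symmetry of the $q$-Potts model among the labels $1,\dots,q$, already used in Proposition \ref{propps}, is what keeps this combinatorics identical in spirit to the Ising case; the only genuinely new ingredient relative to Section \ref{ffgisingc1} is carrying the independent auxiliary family $X$ through every coupling, which is harmless because for any bounded region only finitely many $X^j_\Delta$ are consulted.
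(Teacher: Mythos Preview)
Your proposal is correct and follows essentially the same approach as the paper: the paper's own argument for this theorem is precisely the sketch given in the paragraphs preceding the statement, which instructs the reader to repeat the analysis of Section~\ref{ffgisingc1} for the contour model $(\nu^i,H^i)$ while carrying the independent auxiliary family $X$ through the couplings, and to combine the triple construction with the proof of Theorem~\ref{teounigibbsising} for part~(iv). You have spelled out exactly this route, including the Borel--Cantelli step for well-definedness of exterior contours, the coupling of finite and infinite volume via Theorem~\ref{teounigibbs}(ii), and the plaquette-level enlargement of the dominating branching process for the mixing bound.
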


\begin{obs} In principle it is not clear why for $\beta > 0$ sufficiently large one should have $\alpha_{q\text{-Potts}}(\beta) < 1$ or even $\alpha_{q\text{-Potts}}(\beta) < +\infty$ for that matter. Indeed, this will depend on the behavior of contour energies $\Phi(\gamma)$ in the limit as $|\gamma| \rightarrow +\infty$. Fortunately, it is not hard to see that all contours $\gamma$ in the $q$-Potts model satisfy the \textit{Peierls bound}
$$
\Phi(\gamma) \geq \frac{|\gamma|}{2}.
$$ This bound guarantees that for $\beta > 0$ sufficiently large one effectively has $\alpha_{q\text{-Potts}}(\beta) < 1$, so that the results of Theorem \ref{teopotts} have true meaning.
\end{obs}

One can also adapt the arguments featured in Chapter \ref{chapterconvabs} to this context and obtain the following continuity result.

\begin{teo}\label{teopotts2} For any $\beta_0 > 0$ such that $\alpha_{q\text{-Potts}}(\beta_0) < 1$ and any $i=1,\dots,q$ we have the local convergence
$$
\lim_{\beta \rightarrow \beta_0} \mu^{i,\beta} = \mu^{i,\beta_0}.
$$
\end{teo}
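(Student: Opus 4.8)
The plan is to follow the route used for the Ising model in Theorem \ref{convabsic}: first establish the continuity in $\beta$ at the level of the associated contour models $(\nu^{i,\beta},H^i)$ (intensity measure $\nu^{i,\beta}(\gamma^i_x)=e^{-\beta\Phi(\gamma^i_x)}$ as in \eqref{impottsc}, Hamiltonian $H^i$ as in \eqref{hpottsc}, which does not depend on $\beta$), and then transfer it to the $q$-Potts Gibbs measures $\mu^{i,\beta}$ through the alignment procedure of Corollary \ref{corposta}. Since $\alpha_{q\text{-Potts}}(\beta_0)<1<+\infty$ and $\beta\mapsto\alpha_{q\text{-Potts}}(\beta)$ is a decreasing, convex (hence continuous) function on the interval where it is finite — being a supremum of sums of convex decreasing exponentials — we may fix $0<\beta^*<\beta_0$ with $\alpha_{q\text{-Potts}}(\beta^*)<1$, so that the contour model $(\nu^{i,\beta^*},H^i)$ is heavily diluted; this will serve as a majorant model. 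As with the Ising contours model of Section \ref{ffgisingc1}, the pairs $(\nu^{i,\beta},H^i)$ do not satisfy Assumptions \ref{assump}, so throughout one replaces the energy leap functions by their localized versions (cf. \eqref{elising}) and the interaction ranges by incompatibility; with these substitutions the argument of Theorem \ref{convabs} applies unchanged.

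First I would verify the hypotheses of the (so adapted) Theorem \ref{convabs}, taking $q(\gamma^i_x):=|\gamma^i_x|$ as in Section \ref{ffgisingc1}. For $\beta\geq\beta^*$ the measure $\nu^{i,\beta}$ is absolutely continuous with respect to $\nu^{i,\beta^*}$ with density
\begin{equation*}
\frac{d\nu^{i,\beta}}{d\nu^{i,\beta^*}}(\gamma^i_x)=e^{-(\beta-\beta^*)\Phi(\gamma^i_x)},
\end{equation*}
which lies in $[0,1]$ because $\Phi(\gamma^i_x)\geq 0$ (indeed the Peierls bound $\Phi(\gamma^i_x)\geq\frac{|\gamma^i_x|}{2}$ holds). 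Conditions (ii) and (iii) of Theorem \ref{convabs} are immediate since $H^i$ is independent of $\beta$ and the localized energy leap function is identically $0$ on compatible configurations and $+\infty$ otherwise; in particular the density-modified energy leap function satisfies, for every admissible $\eta$ and $\gamma^i_x$,
\begin{equation*}
\Delta\tilde E^{\,i,\beta}_\eta(\gamma^i_x)=\Delta E^{*,i}_\eta(\gamma^i_x)+(\beta-\beta^*)\Phi(\gamma^i_x),
\end{equation*}
which converges to $\Delta\tilde E^{\,i,\beta_0}_\eta(\gamma^i_x)$ as $\beta\to\beta_0$. Thus Theorem \ref{convabs} yields that each $(\nu^{i,\beta},H^i)$ admits a unique infinite-volume Boltzmann--Gibbs distribution $\hat\mu^{i,\beta}$, that $\hat\mu^{i,\beta}\overset{loc}{\longrightarrow}\hat\mu^{i,\beta_0}$, and — crucially — that all these measures can be realized through a single free process $\overline{\Pi}$ in such a way that, for every $\Lambda\in\B^0_{\Z^d}$, one has $(\mathcal{K}^{i,\beta}_0)_{\Lambda\times G^i}=(\mathcal{K}^{i,\beta_0}_0)_{\Lambda\times G^i}$ for all $\beta$ sufficiently (randomly) close to $\beta_0$, using the almost sure finiteness of the clans of ancestors of the majorant.

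Next I would transfer this to the $q$-Potts model. Recall from the discussion preceding Theorem \ref{teopotts} that $\mu^{i,\beta}$ is the law of the $i$-alignment $Z^{i,\beta}$ of $\mathcal{K}^{i,\beta}_0$ with respect to an independent family $X^{(\beta)}=(X^{j,\beta}_\Delta)_{\Delta\in\B^0_{\Z^d},\,1\leq j\leq q}$ of samples from the finite-volume Boltzmann--Gibbs distributions. I would build a coupling of all $\beta$ simultaneously: the contour dynamics $\mathcal{K}^{i,\beta}$ are coupled through $\overline{\Pi}$ as above, while for each finite $\Delta$ and each $j$ one fixes a single uniform random variable and defines $X^{j,\beta}_\Delta$ via the corresponding quantile transform. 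Since $\Delta$ is finite, $\mu^{j,\beta}_\Delta$ is a probability measure on a finite set whose weights are real-analytic in $\beta$, so $\mu^{j,\beta_n}_\Delta\to\mu^{j,\beta_0}_\Delta$ in total variation and, the target space being discrete, $X^{j,\beta_n}_\Delta=X^{j,\beta_0}_\Delta$ eventually almost surely along any sequence $\beta_n\to\beta_0$. Now fix a bounded local function $f:\{1,\dots,q\}^{\Z^d}\to\R$ with support $\Lambda_f$ and a sequence $\beta_n\to\beta_0$. As in the proof of Theorem \ref{teopotts}, the spin values of $Z^{i,\beta}$ inside $\Lambda_f$ are determined by $(\mathcal{K}^{i,\beta}_0)_{r(\Lambda_f)}$ — which, by the coupling and the almost sure finiteness of $\mathcal{A}^{0,H^{i,\beta^*}}(r(\Lambda_f))$, coincides with $(\mathcal{K}^{i,\beta_0}_0)_{r(\Lambda_f)}$ for $n$ large — together with the values of finitely many of the $X^{j,\beta}_D$ on the (finitely many, finite) interior regions $D$ cut out by the external contours meeting $\Lambda_f$, which also stabilize for $n$ large. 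Hence $f(Z^{i,\beta_n})=f(Z^{i,\beta_0})$ for all large $n$, almost surely, and since $f$ is bounded the dominated convergence theorem gives $\int f\,d\mu^{i,\beta_n}\to\int f\,d\mu^{i,\beta_0}$. As $f$ and the sequence $\beta_n\to\beta_0$ were arbitrary, this proves $\mu^{i,\beta}\overset{loc}{\longrightarrow}\mu^{i,\beta_0}$.

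I expect the main obstacle to lie in the last step, namely the bookkeeping needed to make the $i$-alignment map ``continuous'' in the appropriate sense: one has to verify that the spin configuration on a fixed bounded region depends only on an almost surely finite collection of finite interior regions, on each of which the (elementary) continuity in $\beta$ of the finite-volume Boltzmann--Gibbs distributions does the work, and this forces one to reuse the structural facts from the proof of Theorem \ref{teopotts} — finiteness of the clan of ancestors, well-definedness of external contours when $\alpha_{q\text{-Potts}}(\beta)<1$, and the local nature of the alignment. The contour-level continuity, by contrast, is a direct application of the already-established Theorem \ref{convabs} once the localized energy leap functions are substituted, exactly as in the Ising case.
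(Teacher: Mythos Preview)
Your proposal is correct and follows essentially the same approach as the paper: establish contour-level continuity via the machinery of Chapter \ref{chapterconvabs} using a majorant at some $\beta^*<\beta_0$, couple the interior fillings $X^{j,\beta}_\Delta$ so that they converge almost surely (the paper invokes coupling from the past while you use a quantile transform, which is equally valid given the finite spin space), and then transfer to $\mu^{i,\beta}$ through the alignment of Corollary \ref{corposta}. Your write-up is in fact more detailed than the paper's, which simply points to these ingredients and leaves the bookkeeping you describe in your last paragraph implicit.
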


\begin{proof} Let us consider a sequence $(X^\beta)_{\beta > 0}$ of families as in Corollary \ref{corposta} (where we write the dependence in the inverse temperature explicitly) such that for each $j=1,\dots,q$ and $\Delta \subseteq \B^0_{\Z^d}$ we have the almost sure convergence
$$
\lim_{\beta \rightarrow \beta_0} X^{j,\beta}_\Delta = X^{j,\beta_0}_{\Delta}.
$$ Such a sequence can be constructed using the standard coupling from the past methods (see e.g. \cite{HA}). Notice then that, by Corollary \ref{corposta} and the discussion following it, \mbox{in order to} obtain the result it will suffice to couple the FFG dynamics $\mathcal{K}^{i,\beta}$ independently of $(X^\beta)_{\beta > 0}$ so that as $\beta \rightarrow \beta_0$ we have
$$
\mathcal{K}_0^{i,\beta} \overset{loc}{\longrightarrow} \mathcal{K}_0^{i,\beta_0}.
$$ This, however, can be done as in Chapter \ref{chapterconvabs}.
\end{proof}

Once again, we would like to point out that these results can also be obtained through standard Pirogov-Sinai theory. Nevertheless, the range of $\beta > 0$ for which these results hold under the standard theory is strictly smaller than the one obtained \mbox{with our approach:} standard methods give these results for $\beta > \beta'$ where
$$
\beta' = \inf\left\{ \beta > 0 : \sum_{\gamma^i_x : d(0, \text{supp}(\gamma^i_x)) \leq 1} e^{|\gamma^i_x|} e^{- \beta \Phi(\gamma^i_x)} < 1\right\}.
$$ Due to the fact that traditional Pirogov-Sinai theory relies on the convergence of certain \mbox{cluster expansions,} once again we obtain in the threshold value an exponential dependence in the size of contours which is only linear for our approach. Furthermore, another strength of our approach is that it provides us with a perfect simulation scheme for \mbox{the measures $\mu^i$.} We believe that such a scheme has not been developed before. It is essentially contained in Corollary \ref{corposta} and the discussion following it (see also the discussion on Section \ref{perfectsimulation}).

\section{Discrete Widom-Rowlinson model}

In this section we show how the ideas discussed for the $q$-Potts model on $\Z^d$ can be adapted to establish a phase transition in the discrete Widom-Rowlinson model on $\Z^d$ with fugacity $\lambda > 0$ and interaction range $r \in \N$. Recall that this model is traditionally defined on the configuration space $\{+,0,-\}^{\Z^d}$ through the Boltzmann-Gibbs distributions given for each $\Lambda \in \B^0_{\Z^d}$ and $\eta \in \{+,0,-\}^{\Z^d}$ by the formula
\begin{equation}\label{wrdbgdps}
\mu_{\Lambda}^\eta(\sigma) = \frac{\mathbbm{1}_{\{\sigma_{\Lambda^c} \equiv \eta_{\Lambda^c}\}}}{Z_{\Lambda}^\eta} e^{-\sum_{B : B \cap \Lambda \neq \emptyset} \Phi_B(\sigma)}
\end{equation} where for each $B \subseteq \Z^d$ the interaction $\Phi_B$ is given by
\begin{equation}\label{wrdht2}
\Phi_B(\sigma) = \left\{ \begin{array}{ll} (+\infty) \mathbbm{1}_{\{ \sigma(x) \times \sigma(y)= - \}} & \text{ if $B=\{x,y\}$ with $\|x-y\|_\infty \leq r$} \\ \\ - \mathbbm{1}_{\{\sigma(x)= \pm\}} \log \lambda & \text{ if $B=\{x\}$} \\ \\ 0 & \text{ otherwise.}\end{array}\right.
\end{equation}
We wish to show that for sufficiently large values of $\lambda > 0$ the model admits at least two different Gibbs measures. The idea is to follow the procedure for the $q$-Potts model.
Once again we fix a set of reference configurations $\mathcal{R}:=\{ \eta^+,\eta^0,\eta^- \}$, where for each $i \in \{+,0,-\}$ we let $\eta^i$ denote the configuration on $\{+,0,-\}^{\Z^d}$ with constant \mbox{spin value $i$.} Also, for each $\Lambda \in \B^0_{\Z^d}$ we let $\mu^i_\Lambda$ denote the Boltzmann-Gibbs distribution on $\Lambda$ with boundary configuration $\eta^i$. The notion of contour for this model is defined in exactly the same way as for the $q$-Potts model, except for the fact that in the definition of correct points here we use the supremum distance $d_\infty$ instead of $d_1$. However, since in this model \mbox{it is} no longer true that $\Phi_B(\eta^i)=0$ for every $\eta^i \in \mathcal{R}$, the expression for the energy of \mbox{a contour} becomes slightly different: for an $i$-contour $\gamma^i$ belonging to some \mbox{configuration $\sigma$} we define its energy $\Phi(\gamma^i)$ by the formula
$$
\Phi(\gamma^i):= \sum_{B \subseteq \Z^d} \frac{|B \cap \text{supp}(\gamma^i)|}{|B|} \Phi_B(\sigma_B) - e_i|\gamma^i|
$$ where $e_i$ is the \textit{mean energy} of the configuration $\eta^i$ defined as
$$
e_i := \sum_{B \subseteq \Z^d : 0 \in B} \frac{1}{|B|} \Phi_B(\eta^i_B)= \left\{ \begin{array}{ll} - \log \lambda & \text{ if $i=\pm$} \\ \\ 0 & \text{ if $i=0$.}\end{array}\right.
$$ By proceeding as in the proof of Proposition \ref{proprepresentacionps} one can show the following.

\begin{prop}For any $\Lambda \in \B^0_{\Z^d}$ let us consider a configuration $\sigma$ such that $D_\sigma \subseteq \Lambda$ and $\sigma(x)=i$ for every $x \in \Z^d$ lying in the unique infinite connected component of $\Z^d - D_\sigma$. If $\Gamma_{\sigma}$ denotes the family of contours associated to $\sigma$, then we have
\begin{equation}\label{wrps1}
\mu^{i}_\Lambda (\sigma) = \frac{1}{Z^{i}_\Lambda} e^{- \left(\sum_{\gamma \in \Gamma_\sigma} \Phi(\gamma) + \sum_{u \in \{+,0,-\}} e_u |\Lambda_u|\right)}
\end{equation} where for each $u \in \{+,0,-\}$ we let $\Lambda_u$ denote the set of all points in $\Lambda$ which are either $u$-correct or belong to an $u$-contour of $\sigma$.
\end{prop}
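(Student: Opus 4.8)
The plan is to mirror the proof of Proposition~\ref{proprepresentacionps} almost verbatim, the only novelty being the bookkeeping of the single-site contributions $\Phi_{\{x\}}$, which vanished in the $q$-Potts case but do not here. First I would fix $\Lambda \in \B^0_{\Z^d}$ and a configuration $\sigma$ as in the statement, with defect set $D_\sigma \subseteq \Lambda$ and with constant value $i$ on the unique infinite component of $\Z^d - D_\sigma$; in particular $\sigma$ agrees with $\eta^i$ outside $\Lambda$, so $\mu^i_\Lambda(\sigma)$ is given by \eqref{wrdbgdps} with $\sum_{B : B \cap \Lambda \neq \emptyset} \Phi_B(\sigma)$ in the exponent. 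I would then decompose this Hamiltonian exactly as in Proposition~\ref{proprepresentacionps}: for each finite $B \subseteq \Z^d$ write
$$
\Phi_B(\sigma_B) = \sum_{\gamma \in \Gamma_\sigma} \frac{|B \cap \text{supp}(\gamma)|}{|B|}\Phi_B(\sigma_B) + \frac{|B \cap (\Z^d - D_\sigma)|}{|B|}\Phi_B(\sigma_B),
$$
and sum over all $B$ with $B \cap \Lambda \neq \emptyset$.

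The key point is to identify the two resulting pieces. For the first piece, summing $\frac{|B \cap \text{supp}(\gamma)|}{|B|}\Phi_B(\sigma_B)$ over all $B$ (not just those meeting $\Lambda$, which is harmless since $\text{supp}(\gamma) \subseteq D_\sigma \subseteq \Lambda$ forces $B \cap \Lambda \neq \emptyset$ whenever $B \cap \text{supp}(\gamma) \neq \emptyset$) and then over $\gamma \in \Gamma_\sigma$, one gets $\sum_{\gamma \in \Gamma_\sigma}\big(\Phi(\gamma) + e_{i(\gamma)}|\gamma|\big)$ by the very definition of $\Phi(\gamma)$, where $i(\gamma)$ is the external label of $\gamma$. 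For the second piece, a point $x \in \Z^d - D_\sigma$ is $u$-correct for a unique $u \in \{+,0,-\}$, so unlike in the $q$-Potts model this term need not vanish: a singleton $B = \{x\}$ contributes $\Phi_{\{x\}}(\sigma) = -\mathbbm{1}_{\{\sigma(x) = \pm\}}\log\lambda = e_u$ with $u = \sigma(x)$, while any pair $B = \{x,y\}$ with $B \cap (\Z^d - D_\sigma) \neq \emptyset$ has $\sigma$ constant and equal to some $u \neq -$ on it (if one endpoint is correct and the other lies within distance $r$ it must carry the same spin, ruling out a $+/-$ clash), hence $\Phi_B(\sigma_B) = 0$. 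Thus the second piece collapses to $\sum_{x \in \Z^d - D_\sigma} e_{u(x)}$, where $u(x)$ is the label for which $x$ is correct; restricting to $B \cap \Lambda \neq \emptyset$ (equivalently $x \in \Lambda$, since $e_u = 0$ for $u = 0$ and the only nonzero contributions come from $\Lambda$ up to the fixed boundary normalization absorbed in $Z^i_\Lambda$) this equals $\sum_{u \in \{+,0,-\}} e_u \,\#\{x \in \Lambda : x \text{ is } u\text{-correct}\}$.

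Finally I would reconcile this with the claimed formula \eqref{wrps1}, in which $\Lambda_u$ is the set of points in $\Lambda$ that are either $u$-correct \emph{or} lie on an $u$-contour. The discrepancy is precisely that the contour-energy term above carries an extra $\sum_\gamma e_{i(\gamma)}|\gamma|$, i.e.\ each contour $\gamma$ contributes $e_{i(\gamma)}$ per site of its support; absorbing these $|\text{supp}(\gamma)|$ sites into the corresponding $\Lambda_{i(\gamma)}$ converts $\sum_\gamma(\Phi(\gamma) + e_{i(\gamma)}|\gamma|) + \sum_u e_u \#\{u\text{-correct points in }\Lambda\}$ into $\sum_{\gamma \in \Gamma_\sigma}\Phi(\gamma) + \sum_{u} e_u|\Lambda_u|$, which is exactly the exponent in \eqref{wrps1}. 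Dividing by $Z^i_\Lambda$ (and noting that the boundary terms outside $\Lambda$ are common to numerator and denominator) yields the result. The only genuinely delicate step is the case analysis showing $\Phi_B(\sigma_B) = 0$ for pairs $B$ touching a correct point — this uses that correct points are correct on the full $d_\infty$-ball of radius $r$ and that a $+/-$ clash requires both endpoints incorrect — but this is the same mechanism as in the $q$-Potts argument and poses no real obstacle; the bookkeeping of the $e_u|\Lambda_u|$ terms is what must be done carefully.
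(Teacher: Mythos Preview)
Your proposal is correct and follows exactly the approach the paper intends: the paper's own proof is simply ``By proceeding as in the proof of Proposition~\ref{proprepresentacionps} one can show the following,'' and you have carried out precisely that, correctly handling the extra bookkeeping from the nonzero single-site terms $\Phi_{\{x\}}$ and the subtraction of $e_i|\gamma|$ in the definition of $\Phi(\gamma)$. The reconciliation you describe---absorbing the $\sum_\gamma e_{i(\gamma)}|\gamma|$ into the count $|\Lambda_u|$ via the sites on $u$-contours---is the right way to arrive at \eqref{wrps1}.
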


The next step is to fix a spin $i \in \{+,0,-\}$ and define a contour model on $\mathcal{N}(\Z^d \times G^i)$, where $G^i$ is the space of all $i$-contour shapes, satisfying the bounded energy loss condition and preserving the distribution of exterior contours. With this purpose in mind, we first define for $\Delta \in \B^0_{\Z^d}$ and $j \in \{+,0,-\}$ the \textit{diluted partition function}
$$
Z^j(\Delta) := \sum_{\sigma \in C^j(\Delta)} e^{-\sum_{B : B \cap \Lambda} \Phi_B(\sigma)}
$$ where $C^j(\Delta):=\{ \sigma \in \text{supp}(\mu^j_\Delta) : d_1(\text{V}(\gamma), \Lambda^c) > 1 \text{ for all }\gamma \in \Gamma_\sigma\}$, and then consider the contour model with intensity measure $\nu^i$ given by the formula
\begin{equation}\label{wrimpottsc}
\nu^i(\gamma^i_x):= \left[\prod_{j \neq i} \frac{Z^j(\text{Int}_j(\gamma^i_x))}{Z^i(\text{Int}_j(\gamma^i_x))}\right]e^{-\Phi(\gamma^i_x)}
\end{equation} and Hamiltonian $H^i$ defined as
\begin{equation}\label{hpottsc2}
H^i_{\Lambda|\Gamma'} (\Gamma) = \left\{ \begin{array}{ll} +\infty & \text{ if either $\Gamma$ is incompatible, $\Gamma \not \sim \Gamma'_{\Lambda^c \times G}$ or $\Gamma \not \subset \Lambda$}\\ \\ 0 & \text{ otherwise.}\end{array}\right.
\end{equation} Notice that the intensity measure carries an additional product of partition functions which was missing in the $q$-Potts model. This is again attributed to the fact that not all  reference configurations in this model have zero mean energy. By proceeding as in the previous section we obtain for this context the analogues of Propositions \ref{propps}, \ref{corps} and Corollary \ref{corposta}. Thus, if for $i \in \{+,0,-\}$ we define the coefficient
\begin{equation}\label{dcwrps007}
\alpha_i (\lambda) = \sup_{\gamma^i_x \in \Z^d \times G^i} \left[ \frac{1}{|\gamma^i_x|} \sum_{\tilde{\gamma}^i_y \not \sim \gamma^i_x} |\tilde{\gamma}^i_y| \left[\prod_{j \neq i} \frac{Z^j(\text{Int}_j(\gamma^i_x))}{Z^i(\text{Int}_j(\gamma^i_x))}\right] e^{-\Phi_\lambda(\tilde{\gamma}^i_y)} \right]
\end{equation} then the condition $\alpha_i(\lambda) < 1$ implies the existence of a Gibbs measure $\mu^i$ associated \mbox{to $\eta^i$} which fulfills the description in Theorems \ref{teopotts} and \ref{teopotts2}. In particular, since $\alpha_+ = \alpha_-$ \mbox{by symmetry}, the condition $\alpha_+(\lambda) < 1$ already implies a phase transition for the model.
However, it is not clear why this condition should be fulfilled by any value of $\lambda$. Indeed, this will not only depend on the behavior of the energies $\Phi(\gamma^+_x)$ as $|\gamma^+_x| \rightarrow +\infty$ on \eqref{dcwrps007} but also on the growth of the additional factor
\begin{equation}\label{pwrps007}
\prod_{j \neq +} \frac{Z^j(\text{Int}_j(\gamma^+_x))}{Z^+(\text{Int}_j(\gamma^+_x))} = \frac{Z^0(\text{Int}_0(\gamma^+_x))}{Z^+(\text{Int}_0(\gamma^+_x))}.
\end{equation}
Fortunately, it can be seen that for $\lambda > 0$ sufficiently large the condition $\alpha_+(\lambda) < 1$ is satisfied. Indeed, first let us notice that a straightforward combinatorial argument yields, for any $+$-contour $\gamma^+_x$, the Peierls bound
\begin{equation}\label{wrpeierls}
\Phi(\gamma^+_x) = (+\infty)\mathbbm{1}_{\{\Phi(\gamma^+_x) = +\infty\}} + \#\{ y \in \text{supp}(\gamma^+_x) : \sigma(y) = 0\} \log \lambda \mathbbm{1}_{\{\Phi(\gamma^+_x)< +\infty\}} \geq \frac{|\gamma_x^+|}{(2r)^d}\log \lambda.
\end{equation} On the other hand, the following lemma shows that the additional factor in \eqref{pwrps007} remains bounded for large values of $\lambda$.

\begin{lema}\label{lemacontrol} There exist constants $c_1,c_2 > 0$, which depend only on $d$ and $r$, \mbox{such that} for any $\Delta \in \B^0_{\Z^d}$
\begin{equation}\label{lemacontroleq}
\frac{Z^0(\Delta)}{Z^+(\Delta)} \leq \left( \frac{2^{c_1}}{\lambda^{c_2}}\right)^{\# \p_e \Delta},
\end{equation} where $\p_e \Delta:= \{ y \in \Delta^c : d_1(y,\Delta) =1 \}$.
\end{lema}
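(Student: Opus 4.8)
The plan is to prove Lemma~\ref{lemacontrol} by a direct comparison of $Z^0(\Delta)$ with $Z^+(\Delta)$ through an explicit boundary-surgery map, with no recourse to cluster expansions. The starting point is that on admissible configurations the pair interactions in \eqref{wrdht2} are identically zero, so for any $\sigma\in C^i(\Delta)$ the weight $e^{-\sum_{B:B\cap\Delta\neq\emptyset}\Phi_B(\sigma)}$ equals $\lambda^{n_\pm(\sigma)}$, where $n_\pm(\sigma):=\#\{x\in\Delta:\sigma(x)=\pm\}$. Next, every $\sigma\in C^0(\Delta)$ is forced to vanish on the shell $N:=\{x\in\Delta:d_\infty(x,\Delta^c)\le r\}$: no contour of $\sigma$ touches $\p\Delta$ (contours lie at $d_1$-distance $>1$ from $\Delta^c$), so every $y\in\p\Delta$ is correct, and since $y$ has a neighbour in $\Delta^c$ where $\sigma\equiv 0$ it must be $0$-correct, whence $\sigma\equiv 0$ on $\bigcup_{y\in\p\Delta}\{x:d_\infty(x,y)\le r\}\supseteq N$; symmetrically, every $\sigma\in C^+(\Delta)$ equals $+$ on $N$. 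Since $N\supseteq\p\Delta$ and a double-counting of edges between $\p\Delta$ and $\p_e\Delta$ gives $\#\p\Delta\ge\tfrac1{2d}\#\p_e\Delta$, we obtain $|N|\ge\tfrac1{2d}\#\p_e\Delta$. This shell is the source of the advertised gain: it is ``worth'' a factor $\lambda^{|N|}$ in $Z^+$ but only $1$ in $Z^0$, and the whole task is to convert this heuristic into an inequality with a merely exponential-in-$\#\p_e\Delta$ combinatorial price.

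First I would record the elementary geometric facts just used, namely $\#\p\Delta\asymp_d\#\p_e\Delta$, $N\supseteq\{x\in\Delta:d_\infty(x,\Delta^c)\le r\}$ as above, and that every $O(r)$-thick collar around $\p\Delta$ contains at most $C_{d,r}\#\p_e\Delta$ sites. Then I would construct a map $T:C^0(\Delta)\to C^+(\Delta)$ as follows. Given $\tau\in C^0(\Delta)$, partition its $(\pm)$-sites into maximal clusters for the adjacency ``$d_\infty(x,y)\le r$''; each cluster is monochromatic since $\tau$ is admissible, and distinct clusters are at $d_\infty$-distance $>r$ from one another. Set $L:=\{x\in\Delta:d_\infty(x,\Delta^c)\le 2r\}$, let $R(\tau)$ be the union of $L$ with every $(\pm)$-cluster lying within $d_\infty$-distance $r$ of $L$, and define $T\tau$ to coincide with $\tau$ off $R(\tau)$, to equal $+$ on $R(\tau)$, and to equal $+$ on $\Delta^c$. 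Three verifications are needed: (i) $T\tau$ is admissible, because any untouched $(-)$-site lies in a cluster that is at $d_\infty$-distance $>r$ both from $L$ and from every absorbed cluster, hence $>r$ from the new $(+)$-region, so no $+/-$ pair is created; (ii) $T\tau\in C^+(\Delta)$, because $T\tau\equiv +$ on $L$, which contains $\{x\in\Delta:d_\infty(x,\p\Delta)\le r\}$, so every site of $\p\Delta$ is $+$-correct and every contour of $T\tau$, together with its interior, sits at $d_1$-distance $\ge 2r>1$ from $\Delta^c$; (iii) the weight accounting: the absorbed $(-)$-clusters are merely recoloured $(+)$, which leaves $n_\pm$ unchanged, so the only sites changing spin type are the $0$-sites of $R(\tau)$, which become $(\pm)$; hence $n_\pm(T\tau)=n_\pm(\tau)+\#\{x\in R(\tau):\tau(x)=0\}\ge n_\pm(\tau)+|N|$, i.e.\ $\lambda^{n_\pm(\tau)}\le\lambda^{-|N|}\lambda^{n_\pm(T\tau)}$.

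It then remains to bound the multiplicity of $T$. To reconstruct $\tau$ from $\tau':=T\tau$ one needs only $\tau|_{L\setminus N}$ (at most $3^{\#(L\setminus N)}\le 3^{C_{d,r}\#\p_e\Delta}$ choices, since $\tau\equiv0$ on $N$), together with, for each absorbed cluster --- which in $\tau'$ appears as a connected $(+)$-piece hanging off $L$ within the $r$-collar of $L$, and of which there are at most $C_{d,r}\#\p_e\Delta$ because they are pairwise $d_\infty$-$r$-separated --- a single bit telling whether it was originally $(+)$ or $(-)$. So $\#T^{-1}(\tau')\le 2^{c_1\#\p_e\Delta}$ for a constant $c_1=c_1(d,r)$. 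Summing, $Z^0(\Delta)=\sum_{\tau\in C^0(\Delta)}\lambda^{n_\pm(\tau)}\le\lambda^{-|N|}\sum_{\tau\in C^0(\Delta)}\lambda^{n_\pm(T\tau)}\le\lambda^{-|N|}2^{c_1\#\p_e\Delta}\sum_{\sigma\in C^+(\Delta)}\lambda^{n_\pm(\sigma)}=\lambda^{-|N|}2^{c_1\#\p_e\Delta}Z^+(\Delta)$, and $|N|\ge\tfrac1{2d}\#\p_e\Delta$ yields \eqref{lemacontroleq} with $c_2=\tfrac1{2d}$ (the argument applies for $\lambda\ge 1$, the regime of interest; for small $\lambda$ the bound is essentially vacuous). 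The hard part will be verifications (i)--(ii): getting the surgery to land inside $C^+(\Delta)$ without ever inserting a $0$-corridor between $(+)$ and $(-)$ regions --- such a corridor would cost a factor $\lambda^{-\Theta(\#\p_e\Delta)}$ and exactly cancel the gain coming from $N$ --- which is precisely why one must \emph{reflect} the boundary-reaching $(-)$-clusters to $(+)$ rather than separate them, and why the $d_\infty$-$r$-cluster bookkeeping (monochromaticity, $r$-separation of distinct clusters) is exactly what makes admissibility, membership in $C^+(\Delta)$, and the multiplicity bound all go through simultaneously.
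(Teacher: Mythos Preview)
Your proposal is correct and follows essentially the same strategy as the paper: a boundary cluster-flipping surgery $C^0(\Delta)\to C^+(\Delta)$ whose multiplicity is $2^{O(\#\p_e\Delta)}$, with the $\lambda$-gain coming from the forced $0$-shell near $\p\Delta$. The paper organizes the argument slightly more cleanly: it first uses the identities $Z^0(\Delta)=Z^0_{\Delta^\circ}$ and $Z^+(\Delta)=\lambda^{|\Delta\setminus\Delta^\circ|}Z^+_{\Delta^\circ}$ to strip off the shell and extract the $\lambda$-power, and only then applies the cluster flip $\sigma\mapsto\sigma^+$ on $\Delta^\circ$, which is now \emph{energy-preserving}. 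This lets the paper avoid your auxiliary layer $L\supsetneq N$ and the extra $3^{|L\setminus N|}$ factor in the multiplicity count --- only the single bit per boundary-touching cluster is needed --- but the substance is the same.
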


\begin{proof} For $\Delta \in \B^0_{\Z^d}$ we define its boundary and $r$-interior by the respective formulas
$$
\p \Delta = \{ x \in \Delta : d_1(x, \Delta^c) = 1\} \hspace{1cm}\text{ and }\hspace{1cm}\Delta^\circ = \{ x \in \Delta : d_\infty(x, \p \Delta) > r \}.
$$ Now, notice that we have the identities
$$
Z^0(\Delta) = Z^0_{\Delta^\circ} \hspace{2cm}\text{ and }\hspace{2cm}Z^+(\Delta)= \lambda^{|\Delta| - |\Delta^\circ|} Z^+_{\Delta^\circ}
$$ where $Z^0_{\Delta^\circ}$ and $Z^+_{\Delta^\circ}$ are the normalizing constants given in \eqref{wrdbgdps}. Furthermore, we have
$$
Z^0_{\Delta^\circ} = Z^+_{\Delta^\circ} + \sum_{\sigma \in A^0_+(\Delta^\circ)} e^{- \sum_{B: B \cap \Delta^\circ \neq \emptyset} \Phi_B(\sigma_B)}
$$
where $A^0_+(\Delta^\circ)$ is the set of all configurations $\sigma$ in the support of $\mu^0_{\Delta^\circ}$ such that $\sigma_{\Delta^\circ} \cdot \eta^+_{\Z^d - \Delta^\circ}$ does not belong to the support of $\mu^+_{\Delta^\circ}$. Observe that a configuration $\sigma$ in the support of $\mu^0_{\Delta^\circ}$ belongs to $A^0_+(\Delta^\circ)$ if and only if $\sigma(x)=-$ for some $x \in \Delta^\circ$ with $d_\infty(x,\p_e (\Delta^\circ)) \leq r$.

Now, for a fixed configuration $\sigma$ in the support of $\mu^0_{\Delta^\circ}$, we say that two given sites $x,y \in \text{supp}(\sigma):=\{ z \in \Delta^\circ : \sigma(z) \neq 0 \}$ are $\sigma$-\textit{connected} if $\|x-y\|_\infty \leq r$. Notice that if \mbox{$x$ and $y$} are $\sigma$-connected then $\sigma$ must assign both sites $x$ and $y$ the same spin value. \mbox{The maximal connected} components of $\text{supp}(\sigma)$ with respect to this notion of connection will be called $\sigma$-\textit{components}. Next, assign to $\sigma$ the configuration $\sigma^+$ defined by the formula
$$
\sigma^+(x):=\left\{ \begin{array}{ll} + & \text{ if either $x \in \Z^d - \Delta^\circ$ or $x$ belongs a $\sigma$-component $C \in C_\sigma(\p_e (\Delta^\circ))$ } \\ \\ \sigma(x) & \text{ otherwise,}\end{array}\right.
$$ where $C_\sigma(\p_e (\Delta^\circ))$ is the set $\sigma$-components $C$ of $\text{supp}(\sigma)$ satisfying $d_\infty(C,\p_e(\Delta^\circ))\leq r$. \mbox{In other terms,} $\sigma^+$ is obtained from $\sigma$ by flipping to $+$ the spin of all $\sigma$-components which interact with the boundary $\p_e(\Delta^\circ)$ and also the spin of sites outside $\Delta^\circ$. Notice that $\sigma^+$ belongs to the support of $\mu^+_{\Delta^\circ}$ and that both $\sigma$ and $\sigma^+$ have the same energy in $\Delta^\circ$, i.e.
$$
\sum_{B: B \cap \Delta^\circ \neq \emptyset} \Phi_B(\sigma_B) = \sum_{B: B \cap \Delta^\circ \neq \emptyset} \Phi_B(\sigma^+_B).
$$ Hence, since there are at most $2^{\# C_{\sigma^+}(\p_e (\Delta^\circ))}$ different configurations $\sigma'$ in \mbox{the support of $\mu^0_{\Delta^\circ}$} which can be assigned the same configuration $\sigma^+$ and there exists $\tilde{c}_1 > 0$ such that for any configuration $\sigma$ there can be at most $\tilde{c}_1 (\# \p_e(\Delta^\circ))$ $\sigma$-components in $C_\sigma(\p_e (\Delta^\circ))$, \mbox{we obtain the bound}
$$
\sum_{\sigma \in A^0_+(\Delta^\circ)} e^{- \sum_{B: B \cap \Delta^\circ \neq \emptyset} \Phi_B(\sigma_B)} \leq 2^{\tilde{c}_1 (\# \p_e (\Delta^\circ))} Z^+_{\Delta^\circ}
$$ which implies that
$$
\frac{Z^0(\Delta)}{Z^+(\Delta)} \leq \frac{1+2^{\tilde{c}_1 (\# \p_e (\Delta^\circ))}}{ \lambda^{|\Delta| - |\Delta^\circ|}}.
$$ From here a straightforward calculation allows us to conclude the result.
\end{proof}

It follows from the Peierls bound on the energy of contours and the previous lemma that for $\lambda > 0$ sufficiently large one has $\alpha_+(\lambda) < 1$. Thus, we obtain the following result.

\begin{teo}\label{teowrpt}If $\lambda > 0$ is sufficiently large so as to satisfy $\alpha_{+}(\lambda) < 1$ then:
\begin{enumerate}
\item [i.] The discrete Widom-Rowlinson model on $\Z^d$ with exclusion radius $r$ admits two distinct Gibbs measures, $\mu^+$ and $\mu^-$.
\item [ii.] The measures $\mu^{+}$ and $\mu^-$ can be obtained as the local limits
$$
\mu^{+}= \lim_{n \rightarrow +\infty} \mu^{+}_{\Lambda_n^\circ}\hspace{1cm}\text{ and }\hspace{1cm}\mu^{-}= \lim_{n \rightarrow +\infty} \mu^{-}_{\Lambda_n^\circ}
$$ for any sequence $(\Lambda_n)_{n \in \N} \subseteq \B^0_{\Z^d}$ of simply connected sets with $\Lambda_n \nearrow \Z^d$.
\item [iii.] The measures $\mu^+$ and $\mu^-$ satisfy the sea with islands picture for the $+$ and $-$ spins, respectively.
\item [iv.] The measures $\mu^+$ and $\mu^-$ are continuous in $\lambda$, i.e. if $\lambda_0$ is such that $\alpha_{+}(\lambda_0) < 1$ then we have the local convergence
$$
\lim_{\lambda \rightarrow \lambda_0} \mu^{\pm,\lambda} = \mu^{\pm,\lambda_0}.
$$
 \item [v.] If also $\lambda > \lambda^*$, where
$$
\lambda^* := \inf \left \{ \lambda > 0 \,:\, \sum_{\gamma^+_x : d_1(0, \text{supp}(\gamma^+_x)) \leq 1} |\gamma^+_x| \left[\frac{Z^0(\text{Int}_0(\gamma^+_x))}{Z^+(\text{Int}_0(\gamma^+_x))}\right]e^{-\Phi_\lambda(\gamma^+_x)} < 1 \right\},
$$ then both $\mu^+$ and $\mu^-$ are exponentially mixing in the sense of \eqref{eqmixingicm3} and \eqref{eqmixingicm4}.
\end{enumerate}
\end{teo}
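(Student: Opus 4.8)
The plan is to prove Theorem \ref{teowrpt} by closely mirroring the analysis carried out for the $q$-Potts model earlier in this chapter, adapting each step to account for the additional product of diluted partition functions appearing in the intensity measure \eqref{wrimpottsc}. First I would fix the reference spin $i = +$ (the case $i=-$ being symmetric) and set up the artificial contour model $(\nu^+, H^+)$ on $\mathcal{N}(\Z^d \times G^+)$ defined by \eqref{wrimpottsc} and \eqref{hpottsc2}, noting as in the $q$-Potts case that this model fails Assumptions \ref{assump} only in the ways already handled for the Ising contours model (Section \ref{exampleisingc}), so the FFG dynamics must be built using the localized energy leap functions $\Delta E^*$ and the impact relation given by incompatibility. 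The key preliminary facts I would establish are the analogues of Propositions \ref{propps} and \ref{corps} and Corollary \ref{corposta}: that for simply connected $\Lambda$, the measure $\mu^+_{\Lambda^\circ}$ factorizes as a choice of exterior $+$-contours distributed according to $\mu_{\Lambda|\emptyset}$ followed by independent internal spin configurations. The proof of the $\mu^+_{\Lambda}$-to-contour-model identity requires redoing the level-induction argument of Proposition \ref{propps}, where the extra partition function ratios in \eqref{wrimpottsc} precisely compensate for the fact that the reference configurations $\eta^\pm$ do not have zero mean energy; the symmetry step \eqref{ps4} must be replaced by the observation that $Z^j(\text{Int}_j(\gamma^+_x))$ for $j \neq +$ appearing in the denominators telescopes correctly against the numerators coming from summing over interior configurations.

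Next I would verify that the condition $\alpha_+(\lambda) < 1$ from \eqref{dcwrps007} is in fact satisfied for $\lambda$ large. This is where the two quantitative inputs enter. The first is the Peierls-type bound \eqref{wrpeierls}, $\Phi(\gamma^+_x) \geq \frac{|\gamma^+_x|}{(2r)^d}\log\lambda$, which follows from a direct combinatorial count: any $+$-contour must contain a macroscopic fraction of $0$-spins since a $-$-spin adjacent to the $+$-sea is forbidden, and each $0$-spin contributes $\log\lambda$ to the energy. The second is Lemma \ref{lemacontrol}, which controls the additional factor $Z^0(\text{Int}_0(\gamma^+_x))/Z^+(\text{Int}_0(\gamma^+_x))$ appearing in \eqref{pwrps007} by a quantity of the form $(2^{c_1}/\lambda^{c_2})^{\#\p_e\Delta}$. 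Combining these, since $\#\p_e(\text{Int}_0(\gamma^+_x))$ is bounded by a constant times $|\gamma^+_x|$, the logarithm of the additional factor is bounded by $c|\gamma^+_x|$ while $\Phi(\gamma^+_x)$ grows like $|\gamma^+_x|\log\lambda$; thus for $\lambda$ large enough the sum defining $\alpha_+(\lambda)$ converges and is less than one, exactly as the Peierls bound $\Phi(\gamma)\geq|\gamma|/2$ guarantees $\alpha_{q\text{-Potts}}(\beta)<1$ for large $\beta$.

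With $\alpha_+(\lambda) < 1$ in hand, the heavy diluteness condition holds for $(\nu^+,H^+)$, so the stationary FFG dynamics $\mathcal{K}^+$ is well defined with a unique invariant infinite-volume Boltzmann-Gibbs distribution $\mu$, and by the Borel--Cantelli argument used for the Ising contours model (Section \ref{ffgisingc1}) the free process $\Pi^+_0$ has only finitely many contours around each site. Taking a family $X$ independent of $\mathcal{K}^+$ as in Corollary \ref{corposta}, I would perform the $+$-alignment of $\mathcal{K}_0^+$ with respect to $X$ and, repeating the Ising/Potts argument, identify its distribution as a Gibbs measure $\mu^+$ for the discrete Widom-Rowlinson model satisfying the $+$-sea-with-islands picture (statement iii), obtainable as the stated local limit (statement ii); by symmetry one gets $\mu^-$, and since the two sea-with-islands pictures are incompatible, $\mu^+ \neq \mu^-$ (statement i). For statement iv, continuity in $\lambda$, I would adapt the proof of Theorem \ref{teopotts2}: couple the families $X^\lambda$ so they converge almost surely as $\lambda\to\lambda_0$ via coupling-from-the-past, couple the FFG dynamics $\mathcal{K}^{+,\lambda}$ through a common free process as in Chapter \ref{chapterconvabs} — here one must check that the intensity ratios $\nu^{+,\lambda}/\nu^{+,\lambda^*}$ are bounded by $1$ for a suitable majorant $\lambda^* < \lambda_0$, which requires showing the partition-function-ratio factor in \eqref{wrimpottsc} behaves monotonically enough — and conclude $\mathcal{K}_0^{+,\lambda} \overset{loc}{\to} \mathcal{K}_0^{+,\lambda_0}$, then pass through the alignment. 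For statement v, exponential mixing, I would follow the scheme in Section \ref{ffgisingc1}: construct the triple of clans of ancestors over the enlarged sets $r(\Lambda_f), r(\Lambda_g)$ together with the associated pieces of $X$, dominate each clan by an enlarged branching process where plaquettes (rather than whole contours) are the individuals so that the offspring distribution has mean less than one when $\lambda > \lambda^*$, and apply Theorem \ref{expbranching} to get the exponential moment bound yielding \eqref{eqmixingicm3} and \eqref{eqmixingicm4}.

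I expect the main obstacle to be the proof of Lemma \ref{lemacontrol} and, relatedly, the bookkeeping in the continuity argument (statement iv): controlling the ratio $Z^0(\Delta)/Z^+(\Delta)$ uniformly over all $\Delta \in \B^0_{\Z^d}$ requires the delicate flipping argument that turns any configuration contributing to $Z^0$ into one contributing to $Z^+$ while tracking the multiplicity of the map through the number of $\sigma$-components meeting the boundary $\p_e\Delta$, and it is precisely the interplay of this bound with the Peierls bound that must be balanced carefully to keep $\alpha_+(\lambda) < 1$. The remaining steps are essentially transcriptions of the Potts and Ising arguments already developed, though each requires verifying that the extra partition-function factor in $\nu^+$ does not spoil the relevant estimates.
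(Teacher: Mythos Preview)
Your proposal is correct and follows essentially the same approach as the paper: the theorem is stated as a direct consequence of the preceding discussion, which sets up the contour model $(\nu^+,H^+)$ via \eqref{wrimpottsc}--\eqref{hpottsc2}, establishes the analogues of Propositions \ref{propps}, \ref{corps} and Corollary \ref{corposta}, and then uses the Peierls bound \eqref{wrpeierls} together with Lemma \ref{lemacontrol} to guarantee $\alpha_+(\lambda)<1$ for large $\lambda$, after which parts (i)--(v) follow by transcribing the $q$-Potts/Ising arguments (Theorems \ref{teopotts}, \ref{teopotts2}, and Section \ref{ffgisingc1}). Your identification of Lemma \ref{lemacontrol} and the continuity bookkeeping as the main technical points is accurate, and your remark that the partition-function ratios in $\nu^+$ replace the symmetry step \eqref{ps4} is exactly the mechanism the paper relies on.
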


Even though we have established the occurrence of phase transition for large values of the fugacity $\lambda$, one could still wonder what can be said about the remaining $0$-spin. Similar combinatorial arguments to the ones given above yield, for any $0$-contour $\gamma^0_x$, the inverse Peierls bound
$$
\Phi(\gamma_x^0) \leq - \frac{|\gamma_x^0|}{(2r)^d} \log \lambda
$$ and the lower bound
$$
\frac{Z^\pm(\Delta)}{Z^0(\Delta)} \geq \left( \frac{\lambda^{c_2}}{2^{c_1}}\right)^{\# \p_e \Delta}
$$ for any $\Delta \in \B^0_{\Z^d}$, which implies that for every $\lambda > 0$ sufficiently large one actually has $\alpha_0(\lambda) = +\infty$ and thus our entire argument breaks down for the $0$-spin.
This suggests that the reference configuration $\eta^0$ is \textit{unstable} for large fugacities, in the sense that there is no Gibbs measure of the model satisfying the $0$-sea with islands picture.
The other reference configurations $\eta^+$ and $\eta^-$ are thus regarded as \textit{stable}.

As a final remark, we would like to point out that the occurrence of a phase transition in the discrete Widom-Rowlinson model can be established by other methods beside cluster expansion, such as random cluster representations (see e.g. \cite{GHM}). However, the use of such representations usually leads to a less complete picture than that of Theorem \ref{teowrpt} for very large fugacities: these cannot be used to show neither the $\pm$-sea with islands picture nor the continuity in $\lambda$, and also these representations alone are not enough to establish exponential mixing properties. Furthermore, random cluster representations in general are not robust, in the sense that they may work very well for certain models but then fail to work whenever these models are subject to slight perturbations. As an example we have the $k$-tolerant Widom-Rowlinson model, for which one expects a similar behavior to the original Widom-Rowlinson model for small values of $k$, but whose random cluster representations become
extremely more difficult to handle: each cluster of particles may not have a unique $+$ or $-$ spin value inside them, and the number of spin changes allowed inside each cluster will vary depending on its own geometry. However, these considerations do not interfere with the general argument given in this section, so that the Peierls bound in \eqref{wrpeierls} and the upper bound in Lemma \ref{lemacontrol} still remain valid for the $k$-tolerant Widom-Rowlinson model provided that $k$ is small enough to guarantee that for any incorrect point of an admissible configuration there exists at least one empty site at a distance no greater than $r$ from it. We can therefore conclude the following result without any additional difficulties.

\begin{teo}\label{tolerantpt} For $\lambda > 0$ sufficiently large and $k,r \in \N$ such that $k < (r+1)^d - 1$, \mbox{the $k$-tolerant} Widom-Rowlinson model with fugacity $\lambda$ and exclusion radius $r$ admits two distinct Gibbs measures, $\mu^+$ and $\mu^-$, each satisfying the description of Theorem \ref{teowrpt}.
\end{teo}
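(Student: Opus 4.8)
The plan is to replay, essentially line by line, the Minlos--Sinai contour construction and the FFG argument carried out in this section for the discrete Widom--Rowlinson model, now with tolerance parameter $k$ in place of $0$. Fix $i\in\{+,0,-\}$ and define, exactly as in \eqref{wrimpottsc}--\eqref{hpottsc2}, the $i$--contour model on $\mathcal N(\Z^d\times G^i)$ with intensity measure carrying the partition--function ratios $\prod_{j\neq i} Z^j(\mathrm{Int}_j(\gamma^i_x))/Z^i(\mathrm{Int}_j(\gamma^i_x))$ and the hard--core (intersection) Hamiltonian $H^i$; here ``correct'' points are those around which $\sigma$ agrees with a reference configuration on a ball of radius $r$, and the contour energies and diluted partition functions $Z^j(\Delta)$ are defined verbatim. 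The analogues of Propositions~\ref{propps} and~\ref{corps} and of Corollary~\ref{corposta} hold with the same proofs, since those arguments use only the consistency of the Boltzmann--Gibbs distributions and the combinatorics of nested contours, not the particular form of the pair interaction. Consequently the entire content of Theorem~\ref{teowrpt}, items (i)--(v), follows as soon as one shows that the coefficient $\alpha_+(\lambda)$ of \eqref{dcwrps007} satisfies $\alpha_+(\lambda)<1$ for $\lambda$ large; by symmetry $\alpha_-=\alpha_+$, and the FFG finiteness of clans, the alignment argument producing $\mu^\pm$, the exponential--mixing argument through a coupled triple, and the continuity argument of Theorem~\ref{teopotts2} then all transfer unchanged. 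Thus the proof reduces to re--establishing two model--specific inputs: the Peierls bound \eqref{wrpeierls} and the upper bound of Lemma~\ref{lemacontrol}.

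For the Peierls bound I would first prove the geometric claim that, when $k<(r+1)^d-1$, every incorrect point of an admissible configuration has an empty site within $\|\cdot\|_\infty$--distance $r$. Argue by contradiction: let $x$ be incorrect with $B(x,r):=\{z:\|z-x\|_\infty\le r\}$ entirely occupied, so $\sigma(x)\in\{+,-\}$, say $\sigma(x)=+$; since $x$ is not $+$--correct there is $y\in B(x,r)$ with $\sigma(y)=-$. The boxes $B(x,r)$ and $B(y,r)$ overlap in a box $R$ whose side lengths are $2r+1-|x_i-y_i|\ge r+1$, so $|R|\ge (r+1)^d$, and all of $R$ is occupied; writing $R=R_+\sqcup R_-$ by spin, admissibility of the $+$--particle at $x$ forces $|R_-|\le k$ and admissibility of the $-$--particle at $y$ forces $|R_+|\le k$. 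The naive form of this argument yields $(r+1)^d\le|R|\le 2k$; the point requiring care --- and the place where the precise threshold $k<(r+1)^d-1$ is extracted --- is to sharpen it so that the contradiction already appears at $k<(r+1)^d-1$, which I would do by working with a single length-$(r+1)$ subcube of $B(x,r)$ on which a particle of one colour sees all sites of the subcube carrying the other colour, counting empty sites as the slack. Once the claim is in hand, a covering argument (each empty site lies within distance $r$ of at most $(2r+1)^d$ incorrect sites) shows that the number of empty sites in the support of any admissible $+$--contour $\gamma^+_x$ is at least $|\gamma^+_x|(2r)^{-d}$, which is precisely \eqref{wrpeierls}.

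The bound of Lemma~\ref{lemacontrol} needs more than a cosmetic change, because its proof for the hard--core model rests on the fact that two $\sigma$--connected occupied sites carry the same spin, and in the $k$--tolerant model $+$ and $-$ particles may coexist within distance $r$. The fix is to rebuild the ``flip to $+$'' construction around the defect structure rather than around occupation: group the sites within distance $r$ of $\partial_e(\Delta^\circ)$ that are not $+$--correct into clusters, flip each such cluster (together with everything outside $\Delta^\circ$) to $+$, and verify --- using the guaranteed presence of an empty site near every incorrect site, so that flipping neither changes the energy inside $\Delta^\circ$ nor creates a new tolerance violation --- that at most $2^{c_1\#\partial_e\Delta}$ configurations in the support of $\mu^0_{\Delta^\circ}$ are sent to a common configuration in the support of $\mu^+_{\Delta^\circ}$. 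This reproduces $Z^0(\Delta)/Z^+(\Delta)\le (2^{c_1}/\lambda^{c_2})^{\#\partial_e\Delta}$ with $c_1,c_2$ depending only on $d$ and $r$.

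With both estimates available the conclusion is routine: in \eqref{dcwrps007} the factor $Z^0(\mathrm{Int}_0(\gamma^+_x))/Z^+(\mathrm{Int}_0(\gamma^+_x))$ is bounded above by $(2^{c_1}/\lambda^{c_2})$ raised to the surface of $\mathrm{Int}_0(\gamma^+_x)$, itself $O(|\gamma^+_x|)$, so for $\lambda$ large it does not spoil the Peierls decay $e^{-\Phi(\gamma^+_x)}\le\lambda^{-|\gamma^+_x|(2r)^{-d}}$; combined with the standard bound on the number of contours of a given size through the origin one gets $\alpha_+(\lambda)\to 0$ as $\lambda\to+\infty$, hence $\alpha_\pm(\lambda)<1$ for all sufficiently large $\lambda$. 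Feeding this into the (unchanged) proof of Theorem~\ref{teowrpt} yields the two distinct Gibbs measures $\mu^+,\mu^-$ with the $\pm$--sea--with--islands picture, the local--limit representation, continuity in $\lambda$, and exponential mixing. The main obstacle is the first geometric claim: showing that $k<(r+1)^d-1$ is exactly what guarantees an empty site near every incorrect point (the naive overlap estimate only gives roughly $k<(r+1)^d/2$), together with the attendant rebuilding of the flipping argument of Lemma~\ref{lemacontrol} in the absence of the ``same spin on connected occupied sites'' property; everything past that point is bookkeeping.
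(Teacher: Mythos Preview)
Your plan matches the paper's exactly: the paper's entire ``proof'' is the paragraph preceding the theorem, which asserts that the argument for Theorem~\ref{teowrpt} (Peierls bound \eqref{wrpeierls} plus Lemma~\ref{lemacontrol}) goes through unchanged once one knows that every incorrect point of an admissible configuration has an empty site within distance $r$, and then simply records $k<(r+1)^d-1$ as a sufficient condition for this geometric fact --- without supplying any argument for it. You have actually gone further than the paper by isolating that geometric claim and the needed adaptation of the flipping map in Lemma~\ref{lemacontrol} as the two model-specific points, and by honestly flagging that the naive overlap estimate only reaches $k<(r+1)^d/2$; the paper provides nothing beyond your outline on either count.
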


\section{Continuum Widom-Rowlinson model}

The ideas discussed in the previous section for the discrete Widom-Rowlinson model can be adapted to the continuum setting to obtain analogous results. In this section we comment briefly on how to perform such adaptation.

Given $\lambda, r > 0$, recall that the Widom-Rowlinson model on $\R^d$ with fugacity $\lambda$ and exclusion radius $r > 0$ is defined as the diluted model on $\mathcal{N}(\R^d \times \{+,-\})$ given by $(\nu,H)$, where
$$
\nu = \lambda \mathcal{L}^d \times ( \delta_+ + \delta_- )
$$ and
$$
H_{\Lambda|\eta}(\sigma)= \sum_{(\gamma_x ,\tilde{\gamma}_y) \in e_{\Lambda}(\sigma|\eta)} U( \gamma_x , \tilde{\gamma}_y )
$$ with
\begin{equation}\label{wrups}
U(\gamma_x,\tilde{\gamma}_y) := \left\{ \begin{array}{ll} +\infty &\text{if }\gamma \neq \tilde{\gamma}\text{ and }\|x-y\|_\infty \leq r\\ 0 &\text{otherwise.}\end{array}\right.
\end{equation}Now, consider the tiling $\mathcal{T}$ of $\R^d$ given by the cubic cells $(\mathcal{Q}_x)_{x \in \Z^d}$, where \mbox{for each $x \in \Z^d$}
$$
\mathcal{Q}_x := l \cdot (x + [0,1)^d ),
$$ where $l > 0$ is sufficiently small so that particles in any two adjacent cells are \mbox{forbidden} by the Hamiltonian $H$ to be of opposite type. Here we understand adjacent cells \mbox{in the sense} of the graph $\Z^d$ with the supremum distance, i.e. we say that two cells $\mathcal{Q}_x, \mathcal{Q}_y$ are adjacent if $d_\infty(\mathcal{Q}_x,\mathcal{Q}_y)=0$ where $d_\infty$ is the supremum distance on $\R^d$.

Given a configuration $\sigma \in \mathcal{N}(\R^d \times G)$ we assign it the configuration $s_\sigma \in \{+,0,-,*\}^{\Z^d}$ defined by the formula
$$
s_\sigma (x) = \left\{\begin{array}{ll}+ & \text{ if there are only particles of type $(+)$ in $\mathcal{Q}_x$}\\ \\
- & \text{ if there are only particles of type $(-)$ in $\mathcal{Q}_x$}\\ \\
0 & \text{ if there are no particles in $\mathcal{Q}_x$}\\ \\
* & \text{ if there are particles of both types in $\mathcal{Q}_x$.}\end{array}\right.
$$ We call $s_\sigma$ the \textit{skeleton} of $\sigma$. Notice that for configurations $\sigma$ which are not forbidden \mbox{by $H$} we have in fact that $s_\sigma(x) \neq *$ for all $x \in \Z^d$. Next, fix a set of reference configurations
$$
\mathcal{R}=\{\eta^+, \eta^0,\eta^-\} \subseteq \mathcal{N}(\R^d \times \{+,-\})
$$ such that for each $i \in \{+,0,-\}$ the configuration $\eta^i$ satisfies $s_{\eta^i}(x)=i$ for all $x \in \Z^d$.
Then, for $i \in \{+,0,-\}$ say that a site $x \in \Z^d$ is $i$-correct with respect to the configuration $s_\sigma \in \{+,0,-,i\}^{\Z^d}$ if $s_\sigma(y)=i$ for all $y \in \Z^d$ with $\|x-y\|_\infty \leq \frac{r+1}{l}$ and label a site as incorrect if it is not $i$-correct for any $i \in \{+,0,-\}$. Finally, we proceed to define the contours of the configuration $s_\sigma$ by analogy with the previous sections. Observe that contours are not defined for particle configurations on $\mathcal{N}(\R^d \times \{+,-\})$ but rather for their skeletons. Now, given an $i$-contour $\gamma^i$ belonging to some skeleton configuration, \mbox{we define its} energy $\Phi(\gamma^i)$ \mbox{by the formula}
$$
\Phi(\gamma^i) := \left(- \log \int_{\mathcal{N}(\text{supp}(\gamma^i) \times \{+,-\})} \mathbbm{1}_{\{s_\sigma = \gamma^i\}} e^{-H_{\text{supp}(\gamma^i)|\emptyset}(\sigma)} d\pi^\nu_{\text{supp}(\gamma^i)}(\sigma)\right) - e_i |\text{supp}(\gamma^i)|
$$ where
$$
e_i = - \log \int_{\mathcal{N}(\mathcal{Q}_0 \times \{+,-\})} \mathbbm{1}_{\{s_\sigma = i \}} d\pi^\nu_{\text{supp}(\gamma^i)}(\sigma) = \left\{\begin{array}{ll} - \log (1-e^{-\lambda l^d}) + \lambda l^d & \text{ if $i= \pm$} \\ \\ 2\lambda l^d & \text{ if $i=0$.}\end{array}\right.
$$ We can write Boltzmann-Gibbs distributions in terms of the energy of contours as follows.

\begin{prop} For any $\Lambda \in \B^0_{\Z^d}$ let us consider a skeleton configuration $s'$ such that $D_{s'} \subseteq \Lambda$ and $s'(x)=i$ for every $x \in \Z^d$ lying in the infinite component of $\Z^d - D_{s'}$. If $\Gamma_{s'}$ denotes the family of contours associated to $s'$, then we have
\begin{equation}\label{wrcps1}
\mu^{\eta^i}_{\Lambda_l} (\{ \sigma \in \mathcal{N}(\R^d \times \{+,-\}) : s_\sigma = s' \}) = \frac{1}{Z_{\Lambda|\eta^i}} e^{- \left(\sum_{\gamma \in \Gamma_{s'}} \Phi(\gamma) + \sum_{u \in \{+,0,-\}} e_u |\Lambda(u)|\right)}
\end{equation} where
$$
\Lambda_l :=\bigcup_{x \in \Lambda} \mathcal{Q}_x
$$ and for each $u \in \{+,0,-\}$ we let $\Lambda(u)$ denote the set of all points in $\Lambda$ which are either $u$-correct or belong to an $u$-contour of $s^*$.
\end{prop}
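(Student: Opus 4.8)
The plan is to mimic the proof of Proposition~\ref{proprepresentacionps} (and its discrete Widom-Rowlinson analogue), the only new feature being that the Boltzmann-Gibbs weight of a skeleton class is an integral over all particle configurations with that skeleton rather than a single weight. First I would fix $\Lambda \in \B^0_{\Z^d}$ and a skeleton $s'$ as in the statement, and write out
$$
\mu^{\eta^i}_{\Lambda_l}(\{ \sigma : s_\sigma = s'\}) = \frac{1}{Z_{\Lambda_l|\eta^i}} \int_{\mathcal{N}(\Lambda_l \times \{+,-\})} \mathbbm{1}_{\{s_\sigma = s'\}} e^{-H_{\Lambda_l|\eta^i}(\sigma)} d\pi^\nu_{\Lambda_l}(\sigma),
$$
using that $\mu^{\eta^i}_{\Lambda_l} = \omega_{\Lambda_l|\eta^i} \times \delta_{\eta^i_{\Lambda_l^c}}$ and that $\{s_\sigma = s'\}$ already forces agreement with $\eta^i$ outside $\Lambda_l$ on the skeleton level. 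The key point is that the event $\{s_\sigma = s'\}$ factorizes over the cells $(\mathcal{Q}_x)_{x \in \Z^d}$: it is the intersection over $x \in \Z^d$ of the cylinder events prescribing the skeleton value $s'(x)$ inside $\mathcal{Q}_x$. Since $\pi^\nu_{\Lambda_l}$ is a product of Poisson measures over disjoint cells, the integral splits as a product of per-cell integrals, once we show the Hamiltonian also splits appropriately along cells.

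Next I would handle the Hamiltonian. Because $l$ is chosen so that particles in non-adjacent cells cannot interact and particles in adjacent cells of opposite type are forbidden, on the event $\{s_\sigma = s'\}$ — where no cell carries particles of both types — the pair interaction $U(\gamma_x, \tilde\gamma_y)$ vanishes identically: either $x,y$ lie in the same or adjacent cells with the \emph{same} skeleton label (so $\gamma = \tilde\gamma$) or they lie too far apart. Hence $H_{\Lambda_l|\eta^i}(\sigma) = 0$ for every $\sigma$ in the support of the restricted measure, exactly as in the hardcore Ising case. This reduces the per-cell integrals to $\int_{\mathcal{N}(\mathcal{Q}_x \times \{+,-\})} \mathbbm{1}_{\{s_\sigma(x) = s'(x)\}} d\pi^\nu_{\mathcal{Q}_x}(\sigma)$, which for a correct site equals $e^{-e_u l^{-d}\cdot l^d}$-type factors, and for a site inside the support of a contour gives precisely the integral appearing in the definition of $\Phi(\gamma^i)$. (Here I should be careful: in the continuum setting the ``energy of a contour'' is defined via an integral over $\mathcal{N}(\mathrm{supp}(\gamma^i)\times\{+,-\})$, so I need to regroup the product of per-cell integrals over cells belonging to a common contour support into that single integral, using once more that cells inside a fixed contour support do not interact with cells outside it.)

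Then I would assemble the pieces exactly as in Proposition~\ref{proprepresentacionps}: decompose $\Z^d$ into the incorrect sites (partitioned into contour supports) and the $u$-correct sites for each $u \in \{+,0,-\}$, factor the product accordingly, insert and remove the reference mean energies $e_u$ to pass from per-cell weights to $e^{-\Phi(\gamma)}$ on contour supports and to $e^{-e_u|\Lambda(u)|}$ on the correct region, and fold the remaining multiplicative constants into $Z_{\Lambda|\eta^i}$. This yields \eqref{wrcps1}. The main obstacle I expect is purely bookkeeping rather than conceptual: making precise the claim that the product of cell integrals over the cells constituting a single contour support equals the contour-energy integral $\int \mathbbm{1}_{\{s_\sigma = \gamma^i\}} e^{-H_{\mathrm{supp}(\gamma^i)|\emptyset}} d\pi^\nu_{\mathrm{supp}(\gamma^i)}$ — this requires verifying that the Hamiltonian restricted to $\mathrm{supp}(\gamma^i)$ with empty boundary condition genuinely sees no interaction with the exterior on the relevant event, which follows from the choice of $l$ and the definition of contour support (a buffer of correct sites separates distinct contours and separates a contour from the boundary), but must be stated carefully so that the normalizing constants match up and $\Phi(\gamma)$ depends only on the contour shape, not on the ambient $\sigma$.
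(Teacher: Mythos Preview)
Your overall architecture is right and matches the paper's sketch (which invokes the consistent Hamiltonian property together with the observation that particles in correct cells do not interact with particles in other cells, and particles in incorrect cells do not interact with particles in correct cells). But one concrete step is wrong and breaks the per-cell factorization you propose.

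You write that ``$l$ is chosen so that particles in non-adjacent cells cannot interact.'' That is the opposite of what is assumed: $l$ is taken \emph{small}, so that particles in adjacent cells of opposite type are always forbidden. With $l\le r/2$ the interaction reaches cells at $\Z^d$-distance up to roughly $r/l+1$, far beyond nearest neighbours. Consequently your claim ``$H_{\Lambda_l|\eta^i}(\sigma)=0$ on $\{s_\sigma=s'\}$'' is false: a skeleton can have a $+$-cell and a $-$-cell at $\Z^d$-distance $2$ or $3$ (both incorrect, inside the same contour support), and then whether a given $\sigma$ with $s_\sigma=s'$ is forbidden depends on the exact particle positions inside those cells. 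So $e^{-H}$ does not drop out cell by cell, and the per-cell integrals you write down do not reproduce the contour energy --- note that $\Phi(\gamma^i)$ is defined via $\int\mathbbm{1}_{\{s_\sigma=\gamma^i\}}e^{-H_{\mathrm{supp}(\gamma^i)|\emptyset}}\,d\pi^\nu$, with the Hamiltonian still present.

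The repair is exactly what you hint at in your last paragraph: do not factorize over individual cells but over the blocks $\{\text{supp}(\gamma)\}_{\gamma\in\Gamma_{s'}}$ together with the individual correct cells. The two non-interaction facts above (which follow from the radius $(r+1)/l$ in the definition of correctness) give that $e^{-H_{\Lambda_l|\eta^i}}\mathbbm{1}_{\{s_\sigma=s'\}}$ factorizes over these blocks; one also checks that two incorrect cells carrying opposite types within interaction range are joined by a path of incorrect cells, so different contour supports do not interact either. The factor over a correct $u$-cell is $e^{-e_u}$, and the factor over each contour support is precisely $e^{-\Phi(\gamma)-e_i|\mathrm{supp}(\gamma)|}$ by definition, which after regrouping yields \eqref{wrcps1}.
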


\begin{proof} This follows from the consistent Hamiltonian property on Assumptions \ref{assump} by noticing that particles lying inside correct cells do not interact with particles \mbox{in other cells} and also that particles inside incorrect cells do not interact with particles in correct cells. We omit the details.
\end{proof}

We then fix a spin $i \in \{+,0,-\}$ and consider the contour model on $\mathcal{N}(\Z^d \times G^i)$, where $G^i$ is the space of $i$-contour shapes, with Hamiltonian as in \eqref{hpottsc2} and intensity measure
$$
\nu^i(\gamma^i_x):= \left[\prod_{j \neq i} \frac{Z^j(\text{Int}_j(\gamma^i_x))}{Z^i(\text{Int}_j(\gamma^i_x))}\right]e^{-\Phi(\gamma^i_x)}
$$ where for each $\Delta \in \B^0_{\Z^d}$ and $j \in \{+,0,-\}$ we set
$$
Z^j(\Delta) := \sum_{s_\sigma \in C^j(\Delta)} e^{- \left(\sum_{\gamma \in \Gamma_{s_\sigma}} \Phi(\gamma) + \sum_{u \in \{+,0,-\}} e_u |\Delta (u)|\right)}
$$ for $C^j(\Delta):=\{ s_\sigma : s_\sigma(x) = j \text{ for all $x \in \Z^d - \Delta$ and } d_1(\text{V}(\gamma), \Delta^c) > 1 \text{ for all }\gamma \in \Gamma_{s_\sigma}\}$. It is easy to verify that this contour model satisfies the bounded energy loss condition and that it also preserves the distribution of exterior contours in skeleton configurations.
Thus, in order to repeat the analysis of the previous section, it only remains to show an analogue of Corollary \ref{corposta}, i.e. we need to show how to conduct an $(i)$-alignment of contour configurations such that, whenever these are distributed according to $(\nu^i,H^i)$, the resulting particle configurations carry the correct Boltzmann-Gibbs distributions.
\mbox{Given a simply connected} set $\Lambda \in \B^0_{\Z^d}$ and $i \in \{+,-\}$, we proceed as follows:

\begin{enumerate}
\item [i.] Consider a contour configuration $Y$ with distribution $\mu_{\Lambda|\emptyset}$ specified by $(\nu^i,H^i)$.
\item [ii.] Place an $i$-particle independently on each cell belonging to the set $\Lambda^\circ \cap \bigcap_{\gamma^i \in \text{Ext}(Y)}$, so that each of these cells carries at least one $(i)$-particle. Here we let $\Lambda^\circ$ denote the $\frac{r+1}{l}$-interior of $\Lambda$, as in the previous section. Then, on top of these particles place a Poisson process of $(i)$-particles on $(\Lambda^\circ \cap \bigcap_{\gamma^i \in \text{Ext}(Y)})_l$ with fugacity $\lambda$, which will account for the remaining $(i)$-particles in each of these cells.
\item [iii.] Proceed as in (ii) for cells in $\text{Int}_j(\gamma^i)-\text{Int}^\circ_j(\gamma^i) \text{ for }\gamma^i \in \text{Ext}(Y) \text{ and } j \in \{+,0,-\}$.
\item [iv.] For every contour $\gamma^i \in \text{Ext}(Y)$ place a particle independently on each cell belonging to $\text{supp}(\gamma^i)$ of the type $\gamma^i$ indicates by the acceptance-rejection method, so that the resulting configuration of particles inside the support of $\gamma^i$ is not forbidden by $H$. \mbox{If the contour} $\gamma^i$ assigns a $0$-spin to some cell then place no particle in that cell. Then, place the remaining particles of the cells belonging to $\text{supp}(\gamma^i)$ according to the FFG dynamics on the finite volume $\left(\text{supp}(\gamma^i)\right)_l$ with empty boundary condition, but keeping the particles of the acceptance-rejection method alive for all times $t \in \R$.
\item [v.] For each contour $\gamma^i \in \text{Ext}(Y)$ place the particles inside the cells belonging to $\text{Int}(\gamma^i)$ with the FFG dynamics on the finite volume $\left(\text{Int}(\gamma^i)\right)_l$ with boundary condition given by the portion of particle configuration constructed in (iii).
\item [vi.] Finally, place particles in the remaining cells according to $\eta^i$.
\end{enumerate} It can be seen that this configuration carries the distribution $\mu_{(\Lambda^\circ)_l|\eta^i}$ specified by $(\nu,H)$. We may then proceed as in the previous section to show a phase transition for large $\lambda$. The Peierls bound for the energy of contours and Lemma \ref{lemacontrol} still hold in this setting; the latter being a consequence of the argument given in Lemma \ref{lemacontrol} combined with \eqref{poisson} and the Fubini theorem for the \mbox{product measure $\nu$.} Thus, we obtain the following result.

\begin{teo}\label{wrptps} Given $\lambda, r > 0$ let us suppose that $\lambda > 0$ is sufficiently large so that
$$
\alpha_+ (\lambda,r) = \sup_{\gamma^+_x \in \Z^d \times G^+} \left[ \frac{1}{|\gamma^+_x|} \sum_{\tilde{\gamma}^+_y \not \sim \gamma^+_x} |\tilde{\gamma}^+_y| \left[\frac{Z^0(\text{Int}_0(\gamma^+_x))}{Z^+(\text{Int}_0(\gamma^+_x))}\right] e^{-\Phi_{\lambda,r}(\tilde{\gamma}^+_y)} \right] < 1.
$$Then:
\begin{enumerate}
\item [i.] The continuum Widom-Rowlinson model on $\R^d$ with exclusion radius $r$ admits two distinct Gibbs measures, $\mu^+$ and $\mu^-$.
\item [ii.] The measures $\mu^{+}$ and $\mu^-$ can be obtained as the local limits
$$
\mu^{+}= \lim_{n \rightarrow +\infty} \mu_{(\Lambda_n^\circ)_l| \eta^+}\hspace{1cm}\text{ and }\hspace{1cm}\mu^{-}= \lim_{n \rightarrow +\infty} \mu_{(\Lambda_n^\circ)_l|\eta^-}
$$ for any sequence $(\Lambda_n)_{n \in \N} \subseteq \B^0_{\Z^d}$ of simply connected sets with $\Lambda_n \nearrow \Z^d$.
\item [iii.] The measures $\mu^+$ and $\mu^-$ satisfy the sea with islands picture for the $+$ and $-$ spins, respectively.
\item [iv.] The measures $\mu^+$ and $\mu^-$ are continuous in $\lambda$ and $r$, i.e. if $\lambda_0,r_0 > 0$ are such that $\alpha_{+}(\lambda_0,r_0) < 1$ then we have the local convergence
$$
\lim_{(\lambda,r) \rightarrow (\lambda_0,r_0)} \mu^{\pm,\lambda,r} = \mu^{\pm,\lambda_0,r_0}.
$$
 \item [v.] If also $\lambda > \lambda^*$, where
$$
\lambda^* := \inf \left \{ \lambda > 0 \,:\, \sum_{\gamma^+_x : d_1(0, \text{supp}(\gamma^i_x)) \leq 1} |\gamma^+_x| \left[\frac{Z^0(\text{Int}_0(\gamma^+_x))}{Z^+(\text{Int}_0(\gamma^+_x))}\right]e^{-\Phi_{\lambda,r}(\gamma^+_x)} < 1 \right\},
$$ then both $\mu^+$ and $\mu^-$ are exponentially mixing in the sense of \eqref{eqmixingicm3} and \eqref{eqmixingicm4}.
\end{enumerate}
\end{teo}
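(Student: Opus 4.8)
The plan is to transfer the Pirogov--Sinai contour machinery developed for the discrete Widom--Rowlinson model to the continuum setting via the skeleton construction, and then to run the FFG dynamics exactly as in the proof of Theorem~\ref{teowrpt}. First I would fix the cell size $l>0$ small enough that particles in adjacent cells of the tiling $\mathcal{T}$ are forbidden by $H$ to be of opposite type; since in part (iv) the exclusion radius $r$ is allowed to vary near a limit value $r_0$, I would in fact fix $l$ small enough to work simultaneously for all $r$ in a neighbourhood of $r_0$. With the skeleton map $\sigma\mapsto s_\sigma$ and the contours of skeletons in place, the representation formula for the Boltzmann--Gibbs distributions in terms of contour energies (the continuum analogue of Proposition~\ref{proprepresentacionps}, already recorded above) reduces everything to the artificial contour model $(\nu^i,H^i)$ on $\mathcal{N}(\Z^d\times G^i)$, whose intensity carries the partition-function ratio factor $\prod_{j\neq i} Z^j(\mathrm{Int}_j(\gamma^i_x))/Z^i(\mathrm{Int}_j(\gamma^i_x))$. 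As noted in the text, this model satisfies the bounded energy loss condition and preserves the distribution of exterior skeleton contours, so the analogues of Propositions~\ref{propps} and~\ref{corps} and of Corollary~\ref{corposta} hold with the alignment procedure (i)--(vi) described above.

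Granting $\alpha_+(\lambda,r)<1$, the size function $q(\gamma^i_x):=|\gamma^i_x|$ makes $(\nu^i,H^i)$ heavily diluted, so by Theorem~\ref{teounigibbs} the associated FFG dynamics $\mathcal{K}^i$ has a unique stationary measure $\mu$, and --- arguing as for the Ising contours model in Section~\ref{ffgisingc1} via Borel--Cantelli --- the free process $\Pi^i_0$ has only finitely many contours around each point of $\Z^d$, so exterior contours of $\mathcal{K}^i_0$ are well defined almost surely. Taking a family $X$ of independent finite-volume samples as in Corollary~\ref{corposta}, independent of $\mathcal{K}^i$, I would perform the $i$-alignment of $\mathcal{K}^i_0$ with respect to $X$; the continuum analogue of Corollary~\ref{corposta} together with Proposition~\ref{limitegibbs} identifies its law $\mu^i$ as a Gibbs measure of the continuum Widom--Rowlinson model, obtained as the local limit of the finite-volume alignments $\mu_{(\Lambda_n^\circ)_l\mid\eta^i}$. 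By the symmetry $\alpha_+=\alpha_-$ this yields both $\mu^+$ and $\mu^-$, and they are distinct because the $(+)$- and $(-)$-sea-with-islands pictures cannot both hold; this proves (i), (ii) and (iii).

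For (iv) I would adapt the coupling argument of Chapter~\ref{chapterconvabs}: construct the FFG processes $\mathcal{K}^{i,\lambda,r}$ from a single underlying free process together with a single family $X^{\lambda,r}$ obtained by coupling-from-the-past so that $X^{\lambda,r}\to X^{\lambda_0,r_0}$ almost surely, choosing a majorant contour model by slightly decreasing $\lambda$ and/or increasing $r$ (which, by the Peierls bound, keeps the model heavily diluted). The only genuinely new ingredient is the continuity in $(\lambda,r)$ of the ratios $Z^0(\Delta)/Z^+(\Delta)$, which follows by dominated convergence using the uniform bound of the continuum analogue of Lemma~\ref{lemacontrol} and the Poisson integration formula \eqref{poisson}; since the clans of ancestors are then almost surely stable for parameters near $(\lambda_0,r_0)$ and the alignment is a local functional of $(\mathcal{K}^i_0,X)$, local convergence follows. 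For (v) I would repeat the argument behind Theorem~\ref{teounigibbsising}: spins inside $\Lambda$ depend on $\mathcal{K}^i_0$ through $r(\Lambda):=\{\gamma^i_x:\mathrm{V}(\gamma^i_x)\cap\Lambda\neq\emptyset\}$, so I would bound $\nu^i(r(\Lambda))$ and dominate the clan of ancestors of $r(\Lambda)$ by an enlarged Galton--Watson process whose individuals are cells rather than whole contours, applying Theorem~\ref{expbranching} to obtain the exponential decay \eqref{eqmixingicm3}--\eqref{eqmixingicm4} once $\lambda>\lambda^*$.

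The main obstacles I anticipate are two. The first is verifying that the infinite-volume $i$-alignment of $\mathcal{K}^i_0$ really carries the correct Gibbs distribution --- that is, promoting the finite-volume identity of Corollary~\ref{corposta}, which in the continuum rests on the delicate procedure (i)--(vi) mixing Poisson placements, acceptance--rejection on contour supports, and FFG dynamics with frozen boundary particles, to the infinite volume; this hinges on the almost sure finiteness of the relevant clans of ancestors and on $\Pi^i_0$ carrying only finitely many contours around each site. The second is ensuring $\alpha_+(\lambda,r)<1$ for large $\lambda$: here the continuum Peierls bound $\Phi(\gamma^+_x)\gtrsim |\gamma^+_x|\log\lambda$ and the control $Z^0(\Delta)/Z^+(\Delta)\le (2^{c_1}/\lambda^{c_2})^{\#\partial_e\Delta}$ of the continuum analogue of Lemma~\ref{lemacontrol} must be combined to beat the extra partition-function factor in $\nu^i$, with the Fubini/Poisson identity \eqref{poisson} for the product intensity $\nu$ doing the book-keeping.
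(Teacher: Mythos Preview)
Your proposal is correct and follows essentially the same approach as the paper. The paper does not give a separate formal proof of this theorem; it is stated as the outcome of the discussion immediately preceding it (the skeleton construction, the contour model $(\nu^i,H^i)$, the alignment procedure (i)--(vi), and the remark that the Peierls bound and Lemma~\ref{lemacontrol} carry over to the continuum via \eqref{poisson} and Fubini), together with the short paragraph following the statement explaining that continuity in $r$ comes from fixing $l$ so that the notion of correct point is unchanged for $r$ near $r_0$ --- all of which you reproduce accurately, including the identification of the two genuine obstacles.
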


Notice that Theorem \ref{wrptps} gives also continuity with respect to the exclusion radius. This is due to the fact that given $r_0 > 0$ one can choose the size $l$ of the tiling so that for any $r > 0$ sufficiently close to $r_0$ the definition of correct point in the skeleton configuration remains unaltered. From this observation, a straightforward argument analogous to the one in Chapter \ref{chapterconvabs} gives the result. Moreover, since the Peierls bound and the estimation in Lemma \ref{lemacontrol} still hold in this continuum setting, one can also obtain an analogue of Theorem \ref{tolerantpt} showing a phase transition in the $k$-tolerant Widom-Rowlinson model for sufficiently small values of $k$. Unfortunately, for the Widom-Rowlinson model with generalized interactions one can see that a more careful argument is needed than the one given here, we shall not pursue it here.
Finally, we observe that the exact same analysis given here carries over to show a phase transition in the thin rods model for evenly spaced angles. The condition of evenly spaced is important, since it gives the symmetry between orientations which was used for the Widom-Rowlinson model. The result is the following.

\begin{teo} Given $n \geq 2$ consider the thin rods model of fugacity $\lambda > 0$, rod length $2r > 0$ and orientation measure $\rho$ given by
$$
\rho = \frac{1}{n} \sum_{i=0}^{n-1} \delta_{\frac{i}{n} \pi}.
$$ Then, for $\lambda > 0$ sufficiently large (depending on $n$ and $r$) the model admits $n$ distinct Gibbs measures, $(\mu^i)_{i=1,\dots,n}$, each of which satisfies the description on Theorem \ref{wrptps}.
\end{teo}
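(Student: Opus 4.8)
The plan is to reduce the thin rods model with $n$ evenly spaced orientations to the discrete Widom-Rowlinson setting treated in the preceding section, so that the Pirogov-Sinai contour machinery developed for the continuum Widom-Rowlinson model (Theorem \ref{wrptps}) applies essentially verbatim. First I would fix the tiling $\mathcal{T}$ of $\R^2$ by cubic cells $(\mathcal{Q}_x)_{x \in \Z^2}$ with side length $l > 0$ chosen small enough that any two rods of distinct orientations whose midpoints lie in adjacent cells are forbidden by the Hamiltonian \eqref{tru} to coexist; such an $l$ exists since two rods of length $2r$ with distinct orientations among the finitely many $\{\frac{i}{n}\pi : 0 \leq i \leq n-1\}$ intersect whenever their midpoints are sufficiently close, uniformly over the finite orientation set. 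Then, given a configuration $\sigma \in \mathcal{N}(\R^2 \times S^1_*)$, I would assign to it its skeleton $s_\sigma \in \{0,1,\dots,n,*\}^{\Z^2}$ where $s_\sigma(x) = i$ if $\mathcal{Q}_x$ contains only rods of orientation $\frac{i-1}{n}\pi$, $s_\sigma(x) = 0$ if $\mathcal{Q}_x$ is empty, and $s_\sigma(x) = *$ otherwise, exactly as in the continuum Widom-Rowlinson discussion but now with $n$ ``colors'' instead of two. The set of reference configurations is $\mathcal{R} = \{\eta^1,\dots,\eta^n,\eta^0\}$ with $s_{\eta^i} \equiv i$, and the symmetry among orientations — which is the whole point of requiring evenly spaced angles — gives $\alpha_1 = \dots = \alpha_n$, so establishing $\alpha_1(\lambda) < 1$ for large $\lambda$ already yields $n$ distinct stable phases.

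Next I would define contours of a skeleton configuration with respect to $i$-correctness (a site $x$ being $i$-correct if $s_\sigma(y) = i$ for all $y$ with $\|x-y\|_\infty \leq \frac{2r+1}{l}$), introduce the contour energies $\Phi(\gamma^i)$ with the mean-energy subtraction $e_i|\mathrm{supp}(\gamma^i)|$ where $e_i = -\log(1 - e^{-\lambda l^2/n}) + \lambda l^2(1 - \tfrac1n)$ for $i \neq 0$ and $e_0 = \lambda l^2$, and verify the representation of Boltzmann-Gibbs distributions in terms of contour energies — the analogue of \eqref{wrcps1} — which follows from the consistent Hamiltonian property in Assumptions \ref{assump} since rods inside correct cells do not interact with rods in other cells. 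Then, fixing $i \in \{1,\dots,n\}$, I would pass to the auxiliary contour model on $\mathcal{N}(\Z^2 \times G^i)$ with Hamiltonian \eqref{hpottsc2} and intensity measure $\nu^i(\gamma^i_x) = \big[\prod_{j \neq i} \tfrac{Z^j(\mathrm{Int}_j(\gamma^i_x))}{Z^i(\mathrm{Int}_j(\gamma^i_x))}\big] e^{-\Phi(\gamma^i_x)}$, built from the diluted partition functions $Z^j(\Delta)$ as in the previous section; this model satisfies the bounded energy loss condition and preserves the distribution of exterior contours, so the FFG dynamics applies to it. The $i$-alignment procedure (steps (i)--(vi) in the continuum Widom-Rowlinson section) carries over, with the acceptance-rejection step now placing a rod of the orientation dictated by the contour and the infinite-volume $i$-alignment of $\mathcal{K}^i_0$ producing a Gibbs measure $\mu^i$ satisfying the $i$-sea-with-islands picture.

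The two quantitative inputs needed to make $\alpha_i(\lambda) < 1$ for large $\lambda$ are the Peierls bound and the partition-function ratio bound. For the Peierls bound, a combinatorial argument shows that for any $i$-contour $\gamma^i_x$ one has $\Phi(\gamma^i_x) \geq c |\gamma^i_x| \log \lambda$ for some $c = c(n,r) > 0$: each cell in $\mathrm{supp}(\gamma^i_x)$ is incorrect, hence either empty or containing a rod of orientation different from the local reference value, and in the latter case the conflict with a nearby cell forces either emptiness somewhere or a ``wrong-color'' cell, each of which carries a weight $O(e^{-\lambda l^2/n})$ relative to the reference; the constant degradation $1/(2r/l+1)^2$ accounts for the resolution of the coarse-graining. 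For the ratio bound, I would reprove Lemma \ref{lemacontrol} in this setting, obtaining $Z^0(\Delta)/Z^i(\Delta) \leq (2^{c_1}/\lambda^{c_2})^{\#\partial_e\Delta}$ (and similarly $Z^j(\Delta)/Z^i(\Delta)$ bounded) by the same flip-to-$i$ argument combined with formula \eqref{poisson} and the Fubini theorem for the product measure $\nu = \lambda\mathcal{L}^2 \times \rho$; here the finiteness of $\rho$'s support is essential so that the number of ``$\sigma$-components'' interacting with $\partial_e\Delta$ is $O(\#\partial_e\Delta)$. Combining these two bounds makes $\alpha_i(\lambda) < 1$ for $\lambda$ large, after which Theorem \ref{teopotts} (or rather its proof) gives items (i)--(iii), the continuity in $\lambda$ follows as in Theorem \ref{teopotts2} using a coupling-from-the-past construction of the alignment families, and exponential mixing follows as in Theorem \ref{teounigibbsising} via the enlarged branching process on plaquettes.

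The main obstacle I anticipate is not the contour combinatorics per se but the \emph{geometric} verification that an appropriate tiling size $l$ exists with the required property — namely that rods of distinct (evenly spaced) orientations with midpoints in adjacent cells are always incompatible, and simultaneously that this $l$ can be chosen independent of $r$ in a neighborhood of a given $r_0$ so that continuity in $r$ goes through. For a single fixed $n$ this is an elementary but slightly delicate elementary-geometry estimate: one needs a uniform lower bound, over the $\binom{n}{2}$ pairs of distinct orientations, on how close two rods' midpoints must be to guarantee intersection, and then $l$ must be taken smaller than (a fraction of) that bound. The subtlety is that as $n$ grows the minimal angular gap $\pi/n$ shrinks, so this critical $l$ — and hence the threshold fugacity — degrades with $n$, which is exactly why the statement only claims existence of a large-enough $\lambda$ depending on $n$ and $r$ rather than a uniform one. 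Once $l$ is fixed, everything else is a transcription of the discrete and continuum Widom-Rowlinson arguments with ``two types'' replaced by ``$n$ orientations plus vacancy'', and I expect no new difficulties there.
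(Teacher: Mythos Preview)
Your proposal is correct and follows exactly the approach the paper intends: the paper's own ``proof'' consists solely of the remark that ``the exact same analysis given here carries over to show a phase transition in the thin rods model for evenly spaced angles,'' with the evenly-spaced condition supplying the spin symmetry that plays the role of the $\pm$ symmetry in Widom-Rowlinson. You have fleshed out precisely this transcription---tiling, skeleton with $n+1$ colors, contour model $(\nu^i,H^i)$, Peierls bound, and the analogue of Lemma~\ref{lemacontrol}---and correctly identified the only genuinely new ingredient, namely the elementary-geometry choice of $l$ so that rods of distinct orientations in adjacent cells must intersect, with the attendant dependence of the threshold fugacity on $n$.
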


As a final remark we point out that this same analysis can be carried out in the same manner for more general models. However, for this analysis to be fruitful, one needs to guarantee a Peierls bound on the energy contours
and a suitable control on the ratios of diluted partition functions like the one given in Lemma \ref{lemacontrol}. While in general the Peierls bound will not bring much problems, an estimate as in \eqref{lemacontroleq} is a priori not easy to obtain. Cluster expansion methods offer further tools to control these terms \mbox{(see \cite{Z1}),} although at the cost of narrowing the range of validity of the results. Nevertheless, even if we cannot control the ratios of diluted partition functions directly and we must rely on cluster expansion methods to obtain any results, the approach discussed here still guarantees that in that smaller range one can still perform perfect simulation of the corresponding equilibrium measures.

\newpage
\section{Resumen del Capítulo 12}

En este último capítulo combinamos las ideas en los capítulos anteriores con el marco de la teoría de Pirogov-Sinai para obtener algunos resultados bajo el régimen de no unicidad.
Concretamente, la teoría de Pirogov-Sinai nos provee de un procedimiento sistemático para trasladar el estudio de las propiedades macroscópicas (existencia y propiedades de medidas de Gibbs) en sistemas bajo el régimen de no unicidad, i.e. alta densidad de partículas o baja temperatura, al estudio de un modelo de contornos adecuado a baja densidad que puede ser tratado con las técnicas desarrolladas en los capítulos previos. De hecho, aprovechando estas técnicas nos es posible extender el rango de validez de algunos resultados con respecto al ofrecido por la teoría de Pirogov-Sinai clásica.

Por simplicidad, discutimos estas ideas primero para el modelo de Potts con $q$ spines ($q \geq 2$), en donde la simetría entre spines dada por el modelo facilita el análisis y nos evita algunas cuestiones técnicas.
Para este modelo, mostramos que a temperaturas bajas existe para cada spin $i=1,\dots,q$ una medida de Gibbs $\mu^i$ que verifica un escenario de mar con islas para el spin $i$, análogo al descrito en la Introducción para el modelo de Ising. En particular, las medidas $\mu^i$ son singulares entre sí, lo cual implica una transición de fase para el modelo a bajas temperaturas. Además, probamos que cada medida $\mu^i$ es exponencialmente mixing y continua como función de la temperatura (con la noción de convergencia local).

Luego, tratamos el caso del modelo de Widom-Rowlinson discreto, en donde la falta de simetría entre los spins introduce algunas complicaciones en el análisis. No obstante, conseguimos resultados análogos a los obtenidos para el modelo de Potts: si la densidad de partículas es suficientemente alta entonces existen dos medidas de Gibbs $\mu^+$ y $\mu^-$ con las propiedades descritas arriba (para los spins $+$ y $-$, respectivamente). Por otro lado, por la falta de simetría entre los spins $(\pm)$ y el $0$, puede verse que no existe una medida con estas características asociada al spin $0$ restante.

A continuación, mencionamos cómo el análisis realizado en los dos casos anteriores puede generalizarse a modelos discretos generales. También mostramos cómo pueden aplicarse estas ideas en modelos continuos (con espacio de spines finito y con simetría entre spins), obteniendo así los mismos resultados para el modelo de Widom-Rowlinson continuo y el modelo de varas finas con $n$ orientaciones equidistantes.

Por último, en todos estos casos mostramos cómo puede ser aprovechada la dinámica FFG para simular perfectamente las distintas medidas bajo consideración. Cabe destacar que hasta ahora sólo se conocían métodos de simulación perfecta a baja temperatura para el modelo de Ising original. Nuestro análisis extiende estos métodos a una gama mucho más amplia de modelos, tanto discretos como continuos.

\appendix

\chapter{}

\section{Comparison principle}

\begin{teo} Let $f$ and $g$ be Lipschitz functions on $\R$ and, given $u,v \in C([0,1])$, consider $U^u$ and $U^v$ the solutions of the equation
$$
\p_t U = \p^2_{xx} U + f(U) + g(U)\dot{W}
$$ with initial data $u$ and $v$, respectively, and boundary conditions satisfying
$$
P( U(t,\cdot)|_{\p [0,1]} \geq V(t,\cdot)|_{\p [0,1]} \text{ for all $t \geq 0$}) = 1.
$$ Then, if $u \geq v$ we have that
$$
P\left( U^u(t,x) \geq U^v(t,x) \text{ for all }t \geq 0, x \in [0,1] \right)=1.
$$
\end{teo}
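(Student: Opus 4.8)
The plan is to establish the comparison principle by the standard method of reducing to a deterministic (random) PDE and applying the maximum principle, combined with a regularization of the nonsmooth nonlinearity. First I would observe that, as in the analysis of $Z^{u,\varepsilon}$ carried out in Chapter 2, it suffices to work pathwise: for a fixed realization of the Brownian sheet $W$, set $N := U^u - \varepsilon \int_0^t\int_0^1 \Phi g(\cdots) dW$-type correction — more precisely, introduce the auxiliary processes $Z^u := U^u - L$ and $Z^v := U^v - L$ where $L$ solves the linear stochastic heat equation $\p_t L = \p^2_{xx} L + g(\cdot)\dot W$ in a suitable linearized sense (or, more robustly, simply subtract a common mild solution driven by the same noise). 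The difference $D := Z^u - Z^v = U^u - U^v$ then satisfies, for each $\omega$, a deterministic parabolic PDE of the form $\p_t D = \p^2_{xx} D + c(t,x) D$ where $c(t,x)$ is the (bounded, by the Lipschitz hypothesis) difference quotient $c = \tfrac{f(U^u) - f(U^v) + (g(U^u)-g(U^v))\dot W}{U^u - U^v}$ — but since $\dot W$ is only a distribution this quotient must be handled more carefully, which is why the genuinely clean route is the pathwise one where the noise enters only through $L$ and not through a coefficient multiplying $D$.

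With that reduction in place, the key steps in order are: (i) fix $\omega$ outside a null set on which all solutions are well-defined and continuous up to their explosion times; (ii) note $D(0,\cdot) = u - v \geq 0$ and, by hypothesis, $D(t,\cdot)|_{\p[0,1]} \geq 0$ for all $t$; (iii) since $f$ is Lipschitz with constant $K$, write $f(U^u) - f(U^v) = b(t,x) D$ with $\|b\|_\infty \leq K$, so that $D$ is a supersolution of $\p_t D = \p^2_{xx} D + b(t,x) D$ with nonnegative initial and boundary data; (iv) apply the parabolic maximum principle (e.g. via the substitution $\tilde D := e^{-Kt} D$, which satisfies $\p_t \tilde D = \p^2_{xx}\tilde D + (b - K)\tilde D$ with $b - K \leq 0$, killing the zeroth-order term's bad sign) to conclude $D \geq 0$ on $[0,T] \times [0,1]$ for every $T$ before explosion; (v) since this holds for almost every $\omega$, take the intersection over a countable dense set of times and use continuity of paths to get the almost-sure statement. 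For the globally Lipschitz case both $U^u$ and $U^v$ are globally defined (Lemma \ref{weaksol}), so no stopping-time localization is needed; in general one would run the argument up to $\tau^{(n)} \wedge \tilde\tau^{(n)}$ and let $n \to \infty$.

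The main obstacle I expect is making step (iii) rigorous when the noise coefficient $g$ is genuinely present and nonconstant: the quantity $(g(U^u) - g(U^v))\dot W$ cannot be absorbed into a bounded coefficient times $D$ because $\dot W$ is not a function. The honest fix is to couple the two solutions through the \emph{same} Brownian sheet and compare via the mild (integral) formulations, estimating $\E[(D^-(t,x))^2]$ or $\E\|D^-(t,\cdot)\|_{L^2}^2$ directly: one writes the integral equation for $D$, applies the heat-kernel positivity, splits the drift term using Lipschitzness of $f$ and the noise term using Lipschitzness of $g$ together with the It\^o isometry, and runs a Gronwall argument on $\phi(t) := \E\|D^-(t,\cdot)\|_{L^2}^2$ to get $\phi \equiv 0$. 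This is the standard Donati-Martin--Pardoux / Kotelenez comparison argument, and it is where the Lipschitz hypotheses on both $f$ and $g$ are essential; the nonsmoothness of our actual source $g(u) = u|u|^{p-1}$ (locally but not globally Lipschitz) is circumvented exactly as in Section \ref{trunca} by first proving the statement for the globally Lipschitz truncations $g_n$ and then passing to the limit, using that the truncated solutions agree with the true ones up to the escape from $B_n$.
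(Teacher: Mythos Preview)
The paper does not prove this theorem at all: it simply records the statement and cites \cite[p.~130]{DNKXM} for the proof, noting that the case $g\equiv 0$ recovers the deterministic comparison principle. So there is no in-paper argument to compare against beyond the reference.

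As for your proposal itself: you correctly diagnose that the pathwise maximum-principle route of your first two paragraphs breaks down when $g$ is nonconstant, because the term $(g(U^u)-g(U^v))\dot W$ cannot be treated as a bounded coefficient times $D$. The ``honest fix'' you land on in the third paragraph --- controlling $\phi(t)=\E\|D^-(t,\cdot)\|_{L^2}^2$ via the mild formulation, heat-kernel positivity, Lipschitz bounds on $f$ and $g$, the It\^o isometry, and Gronwall --- is exactly the standard argument (Kotelenez, Donati-Martin--Pardoux) that the cited reference implements. One small remark: your closing comment about truncations is extraneous here, since the theorem is already stated for globally Lipschitz $f,g$; the passage to the locally Lipschitz source via $g_n$ is how the \emph{paper applies} the theorem, not part of its proof.
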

A proof of this theorem can be found on \cite[p.~130]{DNKXM}. Notice that if $g \equiv 0$ one obtains a comparison principle for deterministic partial differential equations.

\section{Growth and regularity estimates}

\begin{prop}\label{G.1}Given a bounded set $B \subseteq C_D([0,1])$ there exists $t_B > 0$ such that
\begin{itemize}
\item [$\bullet$] $\tau^u > t_B$ for any $u \in B$
\item [$\bullet$] There exists $b: [0,t_B] \rightarrow \R^+$ such that $\lim_{t \rightarrow 0^+} b(t) = 0$ and for any $t \in [0,t_B]$
$$
\sup_{u \in B} \| U^u(t,\cdot) - u \|_\infty \leq b(t).
$$
\end{itemize}
\end{prop}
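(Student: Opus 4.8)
The statement to prove (Proposition \ref{G.1}) concerns the deterministic equation \eqref{MainPDE}: given a bounded set $B \subseteq C_D([0,1])$, solutions starting in $B$ survive for a uniform amount of time $t_B$ and stay uniformly close to their initial data on $[0,t_B]$, with a modulus $b(t) \to 0$.

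\textbf{Plan of proof.} The natural approach is to use the mild (integral) formulation of \eqref{MainPDE}, writing $U^u(t,\cdot) = e^{t\Delta_D} u + \int_0^t e^{(t-s)\Delta_D} g(U^u(s,\cdot))\, ds$, where $e^{t\Delta_D}$ is the Dirichlet heat semigroup on $[0,1]$ and $g(v) = v|v|^{p-1}$. Set $M := \sup_{u \in B} \|u\|_\infty < +\infty$ since $B$ is bounded. First I would fix a radius, say $R := 2M + 1$, and work inside the ball $B_R = \{ v : \|v\|_\infty \leq R\}$; on this ball $g$ is Lipschitz with some constant $L_R$ and bounded by some $C_R := R^p$. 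Because $e^{t\Delta_D}$ is a contraction semigroup on $C_D([0,1])$ (indeed $\|e^{t\Delta_D} w\|_\infty \leq \|w\|_\infty$), a standard Picard iteration / Banach fixed point argument in $C([0,T]; B_R)$ shows that for $T$ small enough — precisely, $T$ such that $C_R T < 1$ and $L_R T < 1$, which depends only on $R$ and hence only on $M$ — there is a unique mild solution on $[0,T]$ taking values in $B_R$ for every $u \in B$. This gives $t_B := T$ with $\tau^u > t_B$ for all $u \in B$, establishing the first bullet.

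\textbf{The modulus estimate.} For the second bullet, for $u \in B$ and $t \leq t_B$ I would split
$$
\| U^u(t,\cdot) - u \|_\infty \leq \| e^{t\Delta_D} u - u \|_\infty + \left\| \int_0^t e^{(t-s)\Delta_D} g(U^u(s,\cdot))\, ds \right\|_\infty.
$$
The second term is bounded by $\int_0^t \| g(U^u(s,\cdot)) \|_\infty\, ds \leq C_R\, t$ using the semigroup contraction and $U^u(s,\cdot) \in B_R$; this is already a modulus vanishing as $t \to 0$, uniformly in $u \in B$. The first term is the delicate one: $\| e^{t\Delta_D} u - u \|_\infty \to 0$ as $t \to 0^+$ for each fixed $u$, but I need this uniform over $u \in B$. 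Uniform convergence of $e^{t\Delta_D} u \to u$ over a merely \emph{bounded} set $B$ need not hold — it requires some equicontinuity of the family $B$. So the clean way is to define directly
$$
b(t) := \sup_{u \in B} \| e^{t\Delta_D} u - u \|_\infty + C_R\, t,
$$
and then the real content is the claim $\lim_{t\to 0^+}\sup_{u\in B}\|e^{t\Delta_D}u - u\|_\infty = 0$.

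\textbf{Main obstacle.} This uniform-in-$u$ continuity of the heat semigroup at $t=0$ is the crux, and it is \emph{not} true for arbitrary bounded $B$ — I suspect the intended reading is that $B$ is understood to be such that this holds (e.g. a relatively compact set, or the statement is applied only to compact orbits, or Proposition \ref{A.2} in the Appendix supplies the needed compactness/equicontinuity). Assuming compactness of $\overline{B}$ in $C_D([0,1])$: the map $(t,u) \mapsto e^{t\Delta_D} u$ is continuous on $[0,1] \times \overline{B}$, equals $u$ at $t=0$, and $[0,1]\times \overline{B}$ is compact, so by a standard Dini-type / uniform continuity argument $\sup_{u \in \overline{B}} \|e^{t\Delta_D}u - u\|_\infty \to 0$ as $t \to 0^+$. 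Alternatively, without compactness, one can exploit smoothing: for $t \in (0, t_B]$ the trajectory immediately enters a bounded set of more regular functions (bounded in $H_0^1$ or in a Hölder norm), and then the semigroup acts equicontinuously; this route would give the modulus for $t$ bounded away from $0$ and handle small $t$ by absorbing the initial jump into $b$. Either way, once $\lim_{t\to 0^+}\sup_{u\in B}\|e^{t\Delta_D}u-u\|_\infty=0$ is secured, the definition of $b$ above finishes the proof, and I would also remark that by shrinking $t_B$ if necessary one keeps $b(t) \leq 1$ on $[0,t_B]$, consistent with the solution staying in $B_R$.
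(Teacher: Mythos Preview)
Your route via the mild formulation differs from the paper's: the paper works with the truncated system $U^{(n),u}$ (choosing $n$ with $B\subseteq B_{n-1}$), bounds the truncated nonlinearity $|g_n|\le\|g_n\|_\infty$, and then invokes the comparison principle against the sub/supersolutions solving $\partial_t V^{\pm}=\partial^2_{xx}V^{\pm}\pm\|g_n\|_\infty$, $V^{\pm}(0,\cdot)=u$. That comparison yields
\[
\bigl|U^{(n),u}(t,x)-u(x)\bigr|\le \bigl|(e^{t\Delta_D}u-u)(x)\bigr|+\|g_n\|_\infty\,\phi(t,x),
\]
with $\|\phi(t,\cdot)\|_\infty\le t$, so up to packaging the paper arrives at exactly the same two pieces you isolated: a Duhamel-type term of order $t$ and the semigroup term $\|e^{t\Delta_D}u-u\|_\infty$. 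The comparison argument buys a slightly cleaner path to the a priori bound $\|U^{(n),u}(t,\cdot)\|_\infty\le (n-1)+\|g_n\|_\infty t$, which immediately gives the first bullet without a fixed-point iteration, but for the second bullet the two proofs coincide.

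Where you see an obstacle, the paper simply asserts the existence of the modulus $b$ without further comment. Your diagnosis is correct: $\sup_{u\in B}\|e^{t\Delta_D}u-u\|_\infty\to 0$ fails for a merely norm-bounded $B\subseteq C_D([0,1])$ (take tent functions of height $1$ and shrinking width). So the second bullet, read literally for arbitrary bounded $B$, is not provable by either argument; the paper glosses over precisely the point you flagged. In the paper's applications the proposition is invoked either for the first bullet alone (uniform survival time and a uniform orbit bound, which your argument and the paper's both establish cleanly), or on sets that are in fact relatively compact after one step of the flow (cf.\ Proposition~\ref{A.2}), where your compactness fix goes through. Your instinct to supply equicontinuity or compactness of $\overline{B}$ is exactly the missing hypothesis.
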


\begin{proof} Let $n \in \N$ be such that $B \subseteq B_{n -1}$. Notice that for $u \in B$ the truncated system $U^{(n),u}$ verifies
$$
\partial^2_{xx} U^{(n),u} - \| g_n \|_\infty \leq \p_t U^{(n),\varepsilon} \leq \partial^2_{xx} U^{(n),u} + \| g_n \|_\infty
$$ and so, by the comparison principle there exists $b : \R^+ \rightarrow \R^+$ such that $\lim_{t \rightarrow 0^+} b(t) = 0$ and for any $t > 0$
$$
\sup_{u \in B} \| U^{(n),u}(t,\cdot) - u \|_\infty \leq b(t).
$$ Now, if we take $t_B > 0$ such that $b(t) < 1$ for all $t \in [0,t_B]$ then for any such $t$ we have that $U^{(n),u}(t,\cdot) \in B_n^\circ$ and so $U^{(n),u}(t,\cdot)$ coincides with $U^{u}(t,\cdot)$. In particular, we see that $\tau^u > t$ and also that
$$
\sup_{u \in B} \| U^u(t,\cdot) - u \|_\infty \leq b(t)
$$ which concludes the proof.
\end{proof}

\begin{prop}\label{G.2}The following local and pointwise growth estimates hold:
\begin{enumerate}
\item [i.] Given a bounded set $B \subseteq C_D([0,1])$ there exist $C_B, t_B > 0$ such that
\begin{itemize}
\item [$\bullet$] $\tau^u > t_B$ for any $u \in B$
\item [$\bullet$] For any pair $u,v \in B$ and $t \in [0,t_B]$
$$
\|U^u(t,\cdot) - U^v(t,\cdot) \|_\infty \leq e^{C_B t} \| u - v\|_\infty.
$$
\end{itemize}
\item [ii.] Given $u \in C_D([0,1])$ and $t \in [0,\tau^u)$ there exist $C_{u,t}, \delta_{u,t} > 0$ such that
\begin{itemize}
\item [$\bullet$] $\tau^v > t$ for any $v \in B_{\delta_{u,t}}(u)$
\item [$\bullet$] For any $v \in B_{\delta_{u,t}}(u)$ and $s \in [0,t]$
$$
\|U^u(s,\cdot) - U^v(s,\cdot) \|_\infty \leq e^{C_{u,t} s} \|u - v\|_\infty.
$$
\end{itemize}
\end{enumerate}
\end{prop}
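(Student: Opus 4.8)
The plan is to obtain (i) from Proposition \ref{G.1} together with a Gronwall estimate in the mild (Duhamel) formulation of \eqref{MainPDE}, and then to deduce (ii) from (i) by a continuation argument along the compact deterministic orbit of $u$.

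For (i), fix $n\in\N$ with $B\subseteq B_{n-1}$ and recall that the truncated source $g_n=(G^{(n)})'$ agrees with $g$ on $[-n,n]$ and is globally Lipschitz on $\R$, with some constant $L_n>0$ (e.g.\ $L_n=pn^{p-1}$). By Proposition \ref{G.1} there is $t_B>0$ such that $\tau^u>t_B$ for every $u\in B$ and $\sup_{u\in B}\|U^u(t,\cdot)-u\|_\infty\le b(t)$ on $[0,t_B]$ with $b(t)\to0$; shrinking $t_B$ if necessary we may assume $b(t_B)<1$, so that $U^u(t,\cdot)\in B_n$ for all $u\in B$ and $t\in[0,t_B]$. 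On this time window $U^u$ therefore solves the equation with source $g_n$ and admits the representation
$$
U^u(t,x)=\int_0^1\Phi(t,x,y)u(y)\,dy+\int_0^t\int_0^1\Phi(t-s,x,y)\,g_n(U^u(s,y))\,dy\,ds ,
$$
where $\Phi$ is the Dirichlet heat kernel of the excerpt, whose associated semigroup is a contraction for $\|\cdot\|_\infty$ since $\int_0^1\Phi(t,x,y)\,dy\le1$. Subtracting the representations for $U^u$ and $U^v$, taking sup norms and using the Lipschitz bound for $g_n$ gives
$$
\|U^u(t,\cdot)-U^v(t,\cdot)\|_\infty\le\|u-v\|_\infty+L_n\int_0^t\|U^u(s,\cdot)-U^v(s,\cdot)\|_\infty\,ds,
$$
so Gronwall's inequality yields the estimate with $C_B:=L_n$.

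For (ii), fix $u\in C_D([0,1])$ and $t<\tau^u$. The orbit $\{U^u(s,\cdot):s\in[0,t]\}$ is the continuous image of a compact interval, hence bounded, so $U^u(s,\cdot)\in B_{m-1}$ for all $s\in[0,t]$ for some $m\in\N$. Let $C_{u,t}:=C_{B_m}$ be the constant furnished by (i) for the ball $B=B_m$, and choose $\delta_{u,t}>0$ with $\delta_{u,t}<e^{-C_{u,t}\,t}$ (in particular $\delta_{u,t}\le1$). Given $v\in B_{\delta_{u,t}}(u)$, set
$$
T:=\sup\bigl\{\,s\in[0,t]\ :\ s<\tau^v\ \text{ and }\ \|U^v(r,\cdot)-U^u(r,\cdot)\|_\infty<1\ \text{ for all }r\in[0,s]\,\bigr\}.
$$
On $[0,T)$ both solutions exist and satisfy $\|U^v(s,\cdot)\|_\infty<m$ and $\|U^u(s,\cdot)\|_\infty\le m-1$, so on that interval $U^v$ solves the equation with source $g_m$, and the same mild-formulation plus Gronwall argument as in (i) gives $\|U^v(s,\cdot)-U^u(s,\cdot)\|_\infty\le e^{C_{u,t}\,s}\|u-v\|_\infty\le e^{C_{u,t}\,t}\delta_{u,t}<1$ for all $s\in[0,T)$. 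If $T<t$ then either $T=\tau^v$, which is impossible because $\|U^v(s,\cdot)\|_\infty<m$ for $s<T$ contradicts the blow-up alternative of Theorem \ref{descomp1} (namely $\|U^v(s,\cdot)\|_\infty\to+\infty$ as $s\uparrow\tau^v$ when $\tau^v<+\infty$); or $T<\tau^v$, and then continuity of $U^v$ at $T$ forces $\|U^v(T,\cdot)-U^u(T,\cdot)\|_\infty<1$, contradicting the maximality of $T$. Hence $T=t$, which simultaneously gives $\tau^v>t$ and the asserted estimate $\|U^v(s,\cdot)-U^u(s,\cdot)\|_\infty\le e^{C_{u,t}\,s}\|u-v\|_\infty$ on $[0,t]$.

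The argument is essentially routine; the only point needing care is the continuation step in (ii), i.e.\ ruling out a priori that $U^v$ explodes before time $t$. This is precisely what the stopping-time $T$ handles: one controls $\|U^v\|_\infty$ through its distance to the bounded orbit of $U^u$ and invokes the blow-up dichotomy of Theorem \ref{descomp1} to exclude $T=\tau^v$. I expect no other obstacles, the remaining inputs (contractivity of the Dirichlet heat semigroup, local Lipschitz continuity of $g$, uniqueness and continuity of solutions) being standard and available from the preliminaries.
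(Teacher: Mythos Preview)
Your proof is correct and follows essentially the same strategy as the paper: truncate so that the source becomes globally Lipschitz, apply a Gronwall estimate, and for (ii) choose $\delta_{u,t}<e^{-C_n t}$ so that the perturbed solution stays within the truncation level. The only cosmetic difference is in the packaging of (ii): the paper works directly with the globally defined truncated solution $U^{(n),v}$, for which the Gronwall bound \eqref{G.2eq2} holds for all $s\ge 0$ without any stopping time, and then simply observes that this forces $U^{(n),v}(s,\cdot)\in B_n$ on $[0,t]$, hence $U^{(n),v}\equiv U^v$ there; this sidesteps the continuation/blow-up dichotomy argument you give, but the content is the same.
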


\begin{proof} Given a bounded set $B \subseteq C_D([0,1])$ let us take $n \in \N$ and $t_B > 0$ as in the proof of Proposition \ref{G.1}. Since $g_n$ globally Lipschitz, there exists a constant $C_n > 0$ such that
for any pair $u,v \in C_D([0,1])$ and $t \geq 0$
\begin{equation}\label{G.2eq1}
\|U^{(n),u}(t,\cdot) - U^{(n),v}(t,\cdot) \|_\infty \leq e^{C_n t} \| u - v\|_\infty.
\end{equation} In particular, since for every $u \in B$ both $U^{(n),u}(t,\cdot)$ and $U^{u}(t,\cdot)$ coincide for all $t \in [0,t_B]$, we see that if we take $C_B := C_n$ then from \eqref{G.2eq1} and Proposition \ref{G.1} we obtain (i).

On the other hand, given $u \in C_D([0,1])$ and $t < t^u$ there exists $n \in \N$ such that $\sup_{s \in [0,t]} \|U^u(s,\cdot)\|_\infty \leq n-1$ and a constant $C_n > 0$ such that for any pair $u,v \in C_D([0,1])$ and $s \geq 0$
\begin{equation}\label{G.2eq2}
\|U^{(n),u}(s,\cdot) - U^{(n),v}(s,\cdot) \|_\infty \leq e^{C_n s} \| u - v\|_\infty.
\end{equation} Then by taking $C_{u,t}:=C_n$ and $\delta_{u,t} < e^{-C_n t}$ from \eqref{G.2eq2} we see that for any $v \in B_{\delta_{u,t}}$ both $U^{(n),v}(s,\cdot)$ and $U^{v}(s,\cdot)$ coincide for all $s \in [0,t]$ and so (ii) immediately follows.
\end{proof}

\begin{prop}\label{A.1} If $u \in C_D([0,1])$ then $\partial^2_{xx} U^{u}$ exists for any $t \in (0,\tau^u)$. Furthermore, for any bounded set $B \subseteq C_D([0,1])$ there exists a time $t_B > 0$ such that
\begin{enumerate}
\item [$\bullet$] $\tau^u > t_B$ for any $u \in B$
\item [$\bullet$] For any $t \in (0,t_B)$ we have $\sup_{u \in B} \| \partial^2_{xx} U^{u} (t,\cdot) \|_{\infty} < + \infty.$
\end{enumerate}
\end{prop}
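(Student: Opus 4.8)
Since the assertion is local in time, it suffices to establish the following: for each bounded $B\subseteq C_D([0,1])$ there is $t_B>0$ with $\tau^u>t_B$ for all $u\in B$, such that for every fixed $t\in(0,t_B)$ the derivatives $\p_x U^u(t,\cdot)$ and $\p^2_{xx}U^u(t,\cdot)$ exist and are continuous, with $\sup_{u\in B}\|\p^2_{xx}U^u(t,\cdot)\|_\infty<+\infty$. A general $t_*\in(0,\tau^u)$ is then reduced to this case by restarting the equation from $U^u(s_0,\cdot)$ with $s_0<t_*$ close to $t_*$: the orbit segment $\{U^u(s,\cdot):s\in[t_*/2,t_*]\}$ is a bounded subset of $C_D([0,1])$, so the statement applies to it and $U^u(t_*,\cdot)=U^{U^u(s_0,\cdot)}(t_*-s_0,\cdot)$ inherits the regularity. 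To prove the displayed claim I would first localize exactly as in the proof of Proposition~\ref{G.1}: fix $n\in\N$ with $B\subseteq B_{n-1}$ and let $t_B>0$ be given by Proposition~\ref{G.1}, so that $\|U^u(t,\cdot)-u\|_\infty<1$ for $u\in B$, $t\in[0,t_B]$, whence $U^u(t,\cdot)\in B_n$ and $U^u$ coincides on $[0,t_B]$ with the truncated solution $V^u:=U^{(n),u}$, which is globally defined, obeys $\sup_{u\in B}\sup_{t\in[0,t_B]}\|V^u(t,\cdot)\|_\infty\le n$, and solves $\p_t V^u=\p^2_{xx}V^u+g_n(V^u)$ with the bounded $C^\infty$ source $g_n$. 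It is therefore enough to prove the estimate for $V^u$, starting from the mild representation $V^u(t,\cdot)=\Phi(t)u+\int_0^t\Phi(t-s)\,g_n(V^u(s,\cdot))\,ds$ (the $I_H+I_N$ decomposition of Section~1.3 with $\varepsilon=0$), where $\Phi(t)$ denotes the Dirichlet heat semigroup on $[0,1]$ with kernel $\Phi(t,x,y)$.

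The analytic input is a pair of smoothing estimates derived from the explicit formula for $\Phi(t,x,y)$: for $f\in C_D([0,1])$ one has $\|\p_x\Phi(t)f\|_\infty\le Ct^{-1/2}\|f\|_\infty$; and for $w\in C^1([0,1])$ with $w(0)=w(1)=0$ one has the boundary identity $\p_x\Phi(t)w=\Phi_N(t)\p_x w$, where $\Phi_N(t)$ is the Neumann heat semigroup — this holds because $v:=\Phi(t)w$ solves the heat equation with $v=0$ on $\p[0,1]$, so $\p^2_{xx}v=\p_t v=0$ there, i.e. $\p_x v$ satisfies Neumann conditions — together with $\|\p_x\Phi_N(t)h\|_\infty\le Ct^{-1/2}\|h\|_\infty$ for $h\in C([0,1])$. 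Combining these gives $\|\p^2_{xx}\Phi(t)w\|_\infty\le Ct^{-1/2}\|\p_x w\|_\infty$ for $w\in C^1$ vanishing at the boundary, and, splitting $\Phi(t)=\Phi(t/2)\Phi(t/2)$, $\|\p^2_{xx}\Phi(t)f\|_\infty\le Ct^{-1}\|f\|_\infty$ for $f\in C_D([0,1])$.

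With these estimates the proof is a two-step bootstrap. Step one: differentiating the Duhamel formula once under the integral sign, which is legitimate since $\|\p_x\Phi(t-s)g_n(V^u(s,\cdot))\|_\infty\le C(t-s)^{-1/2}\|g_n\|_\infty$ is integrable in $s$, yields $V^u(t,\cdot)\in C^1$ for $t\in(0,t_B)$ with $\sup_{u\in B}\|\p_x V^u(t,\cdot)\|_\infty\le C_B\,t^{-1/2}$ (using $\|\p_x\Phi(t)u\|_\infty\le Ct^{-1/2}n$ and $\int_0^t(t-s)^{-1/2}\,ds<\infty$); since $g_n\in C^1$, it follows that $x\mapsto g_n(V^u(s,x))$ is $C^1$ with $\|\p_x g_n(V^u(s,\cdot))\|_\infty\le\|g_n'\|_\infty C_B\,s^{-1/2}$. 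Step two: fix $t_0\in(0,t_B)$ and, for $t\in(t_0,t_B)$, rewrite the representation with base time $t_0/2$,
$$V^u(t,\cdot)=\Phi(t-t_0/2)\,V^u(t_0/2,\cdot)+\int_{t_0/2}^{t}\Phi(t-s)\,g_n(V^u(s,\cdot))\,ds.$$
Now $V^u(t_0/2,\cdot)$ and $g_n(V^u(s,\cdot))$ for $s\ge t_0/2$ all lie in $C^1([0,1])$, vanish at $\{0,1\}$, and have $C^1$-norms bounded uniformly in $u\in B$ (with a bound depending on $t_0$ through $(t_0/2)^{-1/2}$). Applying $\|\p^2_{xx}\Phi(\tau)w\|_\infty\le C\tau^{-1/2}\|\p_x w\|_\infty$ to each term — the first with $\tau=t-t_0/2\ge t_0/2$ bounded below, the integral converging since $\int_{t_0/2}^{t}(t-s)^{-1/2}\,ds<\infty$ — permits differentiation twice under the integral sign and gives that $\p^2_{xx}V^u(t,\cdot)$ exists, is continuous, and satisfies $\sup_{u\in B}\|\p^2_{xx}V^u(t,\cdot)\|_\infty<+\infty$. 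Since $V^u=U^u$ on $[0,t_B]$, this is exactly the claim.

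I expect the main obstacle to be the boundary-layer behaviour of the Dirichlet semigroup: the estimate $\|\p^2_{xx}\Phi(t)w\|_\infty\lesssim t^{-1/2}\|\p_x w\|_\infty$ fails if one differentiates the kernel naively, since $\int_0^1\Phi(t,x,y)\,dy$ is not constant in $t$ (mass leaks through $\{0,1\}$), so $\int_0^1\p^2_{xx}\Phi(t,x,y)\,dy\neq0$ and is largest within $O(\sqrt t)$ of the endpoints; this is precisely why the Dirichlet-to-Neumann commutation identity $\p_x\Phi(t)w=\Phi_N(t)\p_x w$ is needed and must be justified carefully for $C^1$ data vanishing on $\p[0,1]$. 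The uniformity in $u\in B$, by contrast, should come for free: it is inherited from the bound $\|V^u(s,\cdot)\|_\infty\le n$ on $[0,t_B]$ and the $u$-independent constants in the heat-kernel estimates.
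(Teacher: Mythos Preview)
Your proof is correct and follows essentially the same two-step bootstrap as the paper: localize via the truncation $U^{(n),u}$, use the Duhamel representation to get $\|\p_x U^{(n),u}(t,\cdot)\|_\infty\lesssim t^{-1/2}$, then restart the equation from an intermediate time (the paper uses $t/2$, you use $t_0/2$) where the data are now $C^1$ and differentiate once more. The one technical difference is the handling of the boundary layer you flagged: the paper sidesteps it entirely by passing to the odd $2$-periodic extension $\overline{u}$ and working with the whole-line heat kernel, so all differentiations are done on $\R$ with no boundary terms; your Dirichlet-to-Neumann commutation $\p_x\Phi(t)w=\Phi_N(t)\p_x w$ achieves the same end by a more semigroup-theoretic route.
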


\begin{proof} Given $u \in C_D([0,1])$ and $t \in (0,\tau^u)$ let us take $n \in \N$ such that
$$
\sup_{s \in [0,t]} \|U^u(s,\cdot)\|_\infty \leq n.
$$ It follows from this choice of $n$ that $U^u$ and $U^{(n),u}$ coincide on the interval $[0,t]$, so that it suffices to show that $\partial^2_{xx} U^{(n),u}(t,\cdot)$ exists. For this purpose, recall that $U^{(n),u}$ satisfies the integral equation
\begin{equation}\label{A.1eq1}
U^{(n),u}(t,x) = \int_0^1 \Phi(t,x,y)u(y)dy + \int_0^{t} \int_0^1 \Phi(t-s,x,y)g_n(U^{(n),u})(s,y))dyds
\end{equation} where $\Phi$ denotes the fundamental solution of the heat equation with Dirichlet boundary conditions given by the formula
$$
\Phi(t,x,y) = \frac{1}{\sqrt{4\pi t}} \sum_{n \in \Z} \left[ \exp\left( - \frac{(2n+y -x)^2}{4t} \right) - \exp\left( - \frac{(2n+y +x)^2}{4t} \right)\right].
$$
The equation \eqref{A.1eq1} can be rewritten as
$$
U^{(n),u}(t,x) = \int_\R \frac{e^{-\frac{(y-x)^2}{4t}}}{\sqrt{4\pi t}} \overline{u}(y)dy + \int_0^{t} \int_\R \frac{e^{-\frac{(y-x)^2}{4(t-s)}}}{\sqrt{4\pi (t-s)}} \overline{g_n(U^{(n),u})}(s,y)dyds
$$ where, for $v \in C_D([0,1])$, $\overline{v}$ denotes its odd $2$-periodic extension to the whole real line and, for $\varphi \in C_D([0,T] \times [0,1])$, we set $\overline{\varphi}$ as $\overline{\varphi}(t,x) := \overline{\varphi(t,\cdot)}(x)$ for all $(t,x) \in [0,T]\times [0,1]$. Since $g_n$ and $u$ are both bounded it follows that the spatial derivative $\p_x U^{(n),u}(t,\cdot)$ exists and satisfies
\begin{equation}\label{A.1eq2}
\p_x U^{(n),u}(t,x) = \int_\R \frac{\xi e^{-\xi^2}}{\sqrt{\pi t}} \overline{u}(x + \sqrt{4t} \xi)d\xi + \int_0^{t} \int_\R \frac{\eta e^{-\eta^2}}{\sqrt{\pi (t-s)}}\overline{g_n(U^{(n),u})}(s,x+\sqrt{4(t-s)}\eta)d\eta ds
\end{equation} where we have performed the changes of variables
$$
\xi = \frac{y-x}{\sqrt{4t}}\hspace{2cm}\text{ and }\hspace{2cm}\eta = \frac{y-x}{\sqrt{4(t-s)}}.
$$ Consequently, we obtain the bound
$$
\| \p_x U^{(n),u}(t,\cdot)\|_\infty \leq \left( \frac{1}{\sqrt{\pi t}} + \frac{\sqrt{t}}{\sqrt{\pi}}\right)(\|u\|_\infty + \|g_n\|_\infty).
$$ Let us observe that if we set $\psi := U^{(n),u}(\frac{t}{2},\cdot)$ then $U^{(n),u}(t,\cdot) = U^{(n),\psi}(\frac{t}{2},\cdot)$ and by \eqref{A.1eq2} applied to the time $\frac{t}{2}$ we obtain that $\frac{d\psi}{dx}$ exists and satisfies
$$
\left\|\frac{d\psi}{dx}\right\|_\infty \leq \left( \frac{\sqrt{2}}{\sqrt{\pi t}} + \frac{\sqrt{t}}{\sqrt{2\pi}}\right)(\|u\|_\infty + \|g_n\|_\infty).
$$ Furthermore, since $g_n'$ is bounded and for any $s \in (0,t)$ we have
$$
\| \p_x U^{(n),u}(s,\cdot)\|_\infty \leq \left( \frac{1}{\sqrt{\pi s}} + \frac{\sqrt{s}}{\sqrt{\pi}}\right)(\|u\|_\infty + \|g_n\|_\infty)
$$ then it follows that $\p^2_{xx} U^{(n),u}(t,\cdot)$ exists and satisfies
\begin{align*}
\p^2_{xx} U^{(n),u}(t,x) &= \p^2_{xx} U^{(n),\psi}\left(\frac{t}{2},x\right) = \int_\R \frac{\sqrt{2}\xi e^{-\xi^2}}{\sqrt{\pi t}} \frac{d \overline{\psi}}{dx}(x + \sqrt{2t} \xi)d\xi\\
\\
&+ \int_0^{\frac{t}{2}} \int_\R \frac{\sqrt{2}\eta e^{-\eta^2}}{\sqrt{\pi (t-2s)}}\left(\overline{g_n'(U^{(n),u})}\cdot\overline{\p_x U^{(n),\psi}}\right)(s,x+\sqrt{2(t-2s)}\eta)d\eta ds.
\end{align*}Consequently, we obtain the bound
$$
\|\p^2_{xx} U^{(n),u}(t,\cdot)\|_\infty \leq \left( \left( \frac{2}{\pi t} + \frac{1}{\pi} \right) + \|g'_n\|_\infty \int_0^{\frac{t}{2}} \frac{\frac{1}{\sqrt{\pi s}} + \frac{\sqrt{s}}{\sqrt{\pi}}}{\sqrt{\pi(\frac{t}{2}-s)}} ds \right)(\|u\|_\infty + \|g_n\|_\infty).
$$ By observing that
$$
\int_0^{\frac{t}{2}} \frac{\frac{1}{\sqrt{s}} + \sqrt{s}}{\sqrt{\frac{t}{2}-s}} ds \leq 4 + t
$$ we conclude that
\begin{equation}\label{A.1eq3}
\|\p^2_{xx} U^{(n),u}(t,\cdot)\|_\infty \leq \frac{1}{\pi} \left( 1 + \frac{2}{t} + \|g'_n\|_\infty(4+t)\right)(\|u\|_\infty + \|g_n\|_\infty).
\end{equation} Finally, given a bounded set $B \subseteq C_D([0,1])$ then the uniformity of the bound in $B$ follows from \eqref{A.1eq3} since by Proposition \ref{G.1} we may take $n \in \N$ such that
$$
\sup_{u \in B, t \in [0,t_B]} \| U^{u}(t,\cdot)\|_\infty \leq n
$$ for some $t_B > 0$ such that $\tau^u > t_B$ for all $u \in B$.
\end{proof}

\begin{prop}\label{A.2} For any bounded set $B \subseteq C_D([0,1])$ there exists $t_B > 0$ such that
\begin{enumerate}
\item [$\bullet$] $\tau^u > t_B$ for any $u \in B$
\item [$\bullet$] For any $t \in (0,t_B)$ there exist $R_t , N_t > 0$ such that for every $u \in B$ the function $U^{u}(t,\cdot)$ belongs to the compact set
$$
\gamma_{R_t,N_t} = \{ v \in C_D([0,1]) : \|v\|_\infty \leq R_t \,,\, |v(x)-v(y)| \leq N_t |x-y| \text{ for all }x,y \in [0,1]\}.
$$
\end{enumerate}
\end{prop}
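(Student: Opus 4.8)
The plan is to combine Propositions \ref{G.1} and \ref{A.1} with the Arzel\`a--Ascoli theorem. First I would apply both propositions to the bounded set $B$ and let $t_B > 0$ be smaller than the two times they produce, so that simultaneously $\tau^u > t_B$ for every $u \in B$, the orbits stay uniformly bounded on $[0,t_B]$ via the estimate $\sup_{u \in B}\|U^u(s,\cdot) - u\|_\infty \le b(s)$ with $b(s) \to 0$ as $s \to 0^+$, and $M_t := \sup_{u \in B}\|\p^2_{xx}U^u(t,\cdot)\|_\infty < +\infty$ for each $t \in (0,t_B)$. This settles the first bullet, and it gives the uniform sup-norm bound $\|U^u(s,\cdot)\|_\infty \le R := \sup_{u \in B}\|u\|_\infty + 1$ for all $u \in B$ and $s \in [0,t_B]$, so we may take $R_t := R$.

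Next, for a fixed $t \in (0,t_B)$ I would turn the second-derivative bound into a Lipschitz bound. Since $U^u(t,0) = U^u(t,1) = 0$, the mean value theorem yields a point $\xi_u \in (0,1)$ with $\p_x U^u(t,\xi_u) = 0$; integrating $\p^2_{xx}U^u(t,\cdot)$ from $\xi_u$ then gives $\|\p_x U^u(t,\cdot)\|_\infty \le M_t$ uniformly over $u \in B$, whence $|U^u(t,x) - U^u(t,y)| \le M_t|x-y|$ for all $x,y \in [0,1]$. Setting $N_t := M_t$ we conclude $U^u(t,\cdot) \in \gamma_{R_t,N_t}$ for every $u \in B$. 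If one prefers not to pass through the interior estimate of Proposition \ref{A.1}, the bound $\|\p_x U^u(t,\cdot)\|_\infty \le \big(\tfrac{1}{\sqrt{\pi t}} + \tfrac{\sqrt t}{\sqrt\pi}\big)(R + \|g_n\|_\infty)$ can be read off directly from the integral representation \eqref{A.1eq2} appearing in the proof of that proposition, and this is the route I would actually take to keep the argument self-contained. Finally, I would record that $\gamma_{R_t,N_t}$ is indeed compact in $C_D([0,1])$: it is bounded, its elements are equi-Lipschitz hence equicontinuous, and it is closed under uniform convergence, so Arzel\`a--Ascoli applies.

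The only subtlety — a mild one — is that the constants $M_t$, $N_t$ deteriorate like $1/t$ as $t \to 0^+$, so there is no hope of a single compact set serving all small $t$; this is, however, exactly what the statement permits, since $R_t$ and $N_t$ are allowed to depend on $t$, and for small times $U^u(t,\cdot)$ is merely close to $u$, which itself need not be Lipschitz. Thus no uniformity in $t$ is needed, and the genuine content of the proof reduces to the bookkeeping of constants across the two cited propositions.
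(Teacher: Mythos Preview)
Your proposal is correct and, in the route you ultimately choose, coincides exactly with the paper's argument: the paper invokes Proposition \ref{G.1} for $t_B$ and the sup-norm bound $R_t$, then reads the uniform first-derivative estimate directly from \eqref{A.1eq2} in the proof of Proposition \ref{A.1} to set $N_t := \big(\tfrac{1}{\sqrt{\pi t}} + \tfrac{\sqrt t}{\sqrt\pi}\big)(R_t + \|g_{R_t}\|_\infty)$, and concludes via the mean value theorem. Your alternative path through the second-derivative bound $M_t$ plus Rolle's theorem also works but is a slight detour, since \eqref{A.1eq2} already gives $\|\p_x U^u(t,\cdot)\|_\infty$ directly.
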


\begin{proof}By Proposition \ref{G.1} there exists $t_B > 0$ such that $\tau^u > t_B$ for every $u \in B$ and for each $t \in (0,t_B)$ there exists $R_t > 0$ such that
$$
\sup_{u \in B, s \in [0,t]} \| U^u(s,\cdot) \|_\infty \leq R_t.
$$ It then follows from the proof of Proposition \ref{A.1} that
$$
\sup_{u \in B} \| \p_x U^{u}(t,\cdot)\|_\infty \leq \left( \frac{1}{\sqrt{\pi t}} + \frac{\sqrt{t}}{\sqrt{\pi}}\right)( R_t + \|g_{R_t}\|_\infty):=N_t
$$ which by the mean value theorem implies that $U^{u}(t,\cdot) \in \gamma_{R_t,N_t}$ for all $u \in B$.
\end{proof}

\begin{prop}\label{A.7} The following local and pointwise growth estimates hold:
\begin{enumerate}
\item [i.] Given a bounded set $B \subseteq C_D([0,1])$ there exists $t_B > 0$ such that
\begin{itemize}
\item [$\bullet$] $\tau^u > t_B$ for any $u \in B$
\item [$\bullet$] For any $t \in (0,t_B)$ there exists $C_{t,B} > 0$ such that for all $u,v \in B$
$$
\| \p_x U^{u}(t,\cdot) - \p_x U^v (t,\cdot) \|_\infty \leq C_{t,B} \| u - v\|_\infty.
$$
\end{itemize}
\item [ii.] Given $u \in C_D([0,1])$ and $t \in (0,\tau^u)$ there exist $C_{u,t}, \delta_{u,t} > 0$ such that
\begin{itemize}
\item [$\bullet$] $\tau^v > t$ for any $v \in B_{\delta_{u,t}}(u)$
\item [$\bullet$] For any $v \in B_{\delta_{u,t}}(u)$
$$
\| \p_x U^{u}(t,\cdot) - \p_x U^v (t,\cdot) \|_\infty \leq C_{u,t} \| u - v\|_\infty.
$$
\end{itemize}
\end{enumerate}
\end{prop}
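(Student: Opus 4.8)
The plan is to mimic the proof of Proposition \ref{A.1}, but now taking \emph{differences} in the Duhamel representation of $\p_x U^u$ rather than differentiating a single solution. As usual, everything reduces to the truncated systems, for which the source term is globally Lipschitz and the growth estimates of Proposition \ref{G.2} apply. For part (i): given a bounded set $B$, Proposition \ref{G.1} provides $n \in \N$ and $t_B > 0$ such that $\tau^u > t_B$ and $U^u = U^{(n),u}$ on $[0,t_B]$ for every $u \in B$, so it suffices to bound $\|\p_x U^{(n),u}(t,\cdot) - \p_x U^{(n),v}(t,\cdot)\|_\infty$ for $u,v \in B$ and $t \in (0,t_B)$. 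For part (ii): given $u$ and $t \in (0,\tau^u)$, I would choose $n$ so that $\sup_{s \in [0,t]}\|U^u(s,\cdot)\|_\infty \leq n-1$; the pointwise growth estimate (ii) of Proposition \ref{G.2} then yields $\delta_{u,t} > 0$ such that for $v \in B_{\delta_{u,t}}(u)$ the orbit $U^v$ stays in $B_n$ on $[0,t]$, hence $\tau^v > t$ and $U^v = U^{(n),v}$ there, and again everything reduces to the difference of the truncated derivatives at a common truncation level.

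The core estimate is then obtained from \eqref{A.1eq2}: for $t$ in the admissible range one has
$$\p_x U^{(n),w}(t,x) = \int_\R \frac{\xi e^{-\xi^2}}{\sqrt{\pi t}}\,\overline{w}(x + \sqrt{4t}\,\xi)\,d\xi + \int_0^{t} \int_\R \frac{\eta e^{-\eta^2}}{\sqrt{\pi(t-s)}}\,\overline{g_n(U^{(n),w})}(s, x + \sqrt{4(t-s)}\,\eta)\,d\eta\, ds.$$
Subtracting this identity for $w = u$ and $w = v$, and using that the odd $2$-periodic extension preserves the supremum norm together with $\int_\R |\xi| e^{-\xi^2}\,d\xi < +\infty$, the first term is controlled in supremum norm by $C\,t^{-1/2}\|u - v\|_\infty$. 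For the second term, $g_n$ is globally Lipschitz with constant $L_n := \|g_n'\|_\infty$, so the integrand is pointwise bounded by $L_n\|U^{(n),u}(s,\cdot) - U^{(n),v}(s,\cdot)\|_\infty$; invoking \eqref{G.2eq1} (that is, $\|U^{(n),u}(s,\cdot) - U^{(n),v}(s,\cdot)\|_\infty \leq e^{C_n s}\|u - v\|_\infty$) and $\int_0^t (t-s)^{-1/2}\,ds = 2\sqrt{t}$, this term is bounded by $2 C L_n e^{C_n t}\sqrt{t}\,\|u - v\|_\infty$. Combining,
$$\|\p_x U^{(n),u}(t,\cdot) - \p_x U^{(n),v}(t,\cdot)\|_\infty \leq \left(\frac{C}{\sqrt{t}} + 2 C L_n e^{C_n t}\sqrt{t}\right)\|u - v\|_\infty,$$
and transferring back to $U^u, U^v$ via the identifications above gives (i) with $C_{t,B}$ the coefficient attached to the truncation level/time window associated to $B$, and (ii) with $C_{u,t}$ the analogous coefficient for the level $n$ chosen from $u$ and $t$.

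I do not expect a genuine obstacle here. The only points requiring care are the integrable time-singularities $t^{-1/2}$ and $(t-s)^{-1/2}$ produced by differentiating the heat kernel — which is exactly why the constant blows up as $t \to 0^+$, consistent with the statement only asserting the bound for $t \in (0,\tau^u)$ rather than uniformly down to $t=0$ — and ensuring that $u$ and $v$ are compared at one and the same truncation level so that the $g_n$-Lipschitz bound and \eqref{G.2eq1} can be applied together. Both are taken care of precisely by Propositions \ref{G.1} and \ref{G.2} as indicated, so the argument should be short and essentially computational once the reduction to the truncated systems is in place.
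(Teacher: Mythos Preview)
Your proposal is correct and follows essentially the same approach as the paper: reduce to a common truncation level via Propositions \ref{G.1} and \ref{G.2}, subtract the Duhamel representation \eqref{A.1eq2}, and control the nonlinear term using the Lipschitz bound on $g_n$ together with the growth estimate \eqref{G.2eq1}. The paper presents the computation a bit more tersely (and states the resulting constant in a slightly looser form), but your argument is the intended one.
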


\begin{proof} Notice that if $t_B > 0$ is such that $\tau^u > t_B$ for every $u \in B$ and for each $t \in (0,t_B)$ there exists $R_{B,t} > 0$ such that
$$
\sup_{u \in B, s \in [0,t]} \| U^u(s,\cdot) \|_\infty \leq R_{B,t},
$$
then it follows from \eqref{A.1eq2} that
$$
\| \p_x U^{u}(t,\cdot) - \p_x U^v (t,\cdot) \|_\infty \leq \left[\left( \frac{1}{\sqrt{\pi t}} + \frac{\sqrt{t}}{\sqrt{\pi}}\right)(1 + \|g'_{R_t}\|_\infty)\right] \| u - v\|_\infty.
$$ which shows (i). Now, (ii) follows in the same way upon noticing that by Proposition \ref{G.2} given $t \in (0,\tau^u)$ there exist $R_{u,t}, \delta_{u,t} > 0$ such that $\tau^v > t$ for any $v \in B_{\delta_{u,t}}(u)$ and
$$
\sup_{v \in B_{\delta_{u,t}}(u), s \in [0,t]} \| U^v(s,\cdot) \|_\infty \leq R_{u,t}.
$$
\end{proof}

\begin{prop}\label{A.3} For any equilibrium point $w$ of the deterministic system let us consider its stable manifold $\mathcal{W}^w$ defined as
$$
\mathcal{W}^{w}:=\{ u \in C_D([0,1]) : U^{u} \text{ is globally defined and }U^{u}(t,\cdot) \underset{t \rightarrow +\infty}{\longrightarrow} w\}.
$$ Notice that $\mathcal{W}^{\mathbf{0}}=\mathcal{D}_{\mathbf{0}}$. Then for any bounded set $B \subseteq \mathcal{W}^w$ there exists $t_B > 0$ such that for any $t_0 \in [0,t_B]$ and $r>0$ we have
$$
\sup_{u \in B} \left[ \inf \{ t \geq t_0 : d( U^u(t,\cdot), w) \leq r \} \right] < +\infty
$$ whenever one of the following conditions hold:
\begin{enumerate}
\item [$\bullet$] $w \neq \mathbf{0}$
\item [$\bullet$] $w = \mathbf{0}$ and $B$ is at a positive distance from $\mathcal{W} := \bigcup_{n \in \Z - \{0\}} \mathcal{W}^{z^{(n)}}$.
\end{enumerate} Furthermore, if $B \subseteq \mathcal{D}_e$ is a bounded set at a positive distance from $\mathcal{W}$ then for any $n \in \N$ we have
$$
\sup_{u \in B} \tau^{(n),u} < +\infty.
$$
\end{prop}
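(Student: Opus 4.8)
The plan is to prove both assertions by the same mechanism: flow the bounded set $B$ for a short time so as to land in a compact set, and then combine continuity of the deterministic flow in the initial datum (Proposition~\ref{G.2}) with the relevant convergence or blow-up property and a finite-subcover argument over that compact set.

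For the hitting-time statement, fix an equilibrium $w$ and a bounded $B\subseteq\mathcal W^w$. By Proposition~\ref{G.1} there is $t_B>0$ with $\tau^u>t_B$ for all $u\in B$ and $\sup_{u\in B}\|U^u(t,\cdot)-u\|_\infty\le b(t)$ with $b(t)\to0$; shrinking $t_B$ (also so that Proposition~\ref{A.2} applies at time $t_B$) we may moreover assume $b(t_B)<\tfrac12\,d(B,\mathcal W)$ in the case $w=\mathbf 0$. Since $\mathcal W^w$ is invariant under the flow, $B_{t_B}:=\{U^u(t_B,\cdot):u\in B\}\subseteq\mathcal W^w$, and by Proposition~\ref{A.2} the closure $\mathcal K':=\overline{B_{t_B}}$ is compact. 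The first key point is that $\mathcal K'\subseteq\mathcal W^w$. When $w=\mathbf 0$ this holds because $B_{t_B}\subseteq\mathcal D_{\mathbf 0}$ lies at distance $\ge\tfrac12 d(B,\mathcal W)$ from $\mathcal W=\partial\mathcal D_{\mathbf 0}$, so $\mathcal K'\subseteq\overline{\mathcal D_{\mathbf 0}}$ is still at positive distance from $\mathcal W$, hence $\mathcal K'\subseteq\overline{\mathcal D_{\mathbf 0}}\setminus\mathcal W=\mathcal D_{\mathbf 0}=\mathcal W^{\mathbf 0}$. For $w\ne\mathbf 0$ one argues similarly, using that $\mathcal W^w$ is (locally) closed and the Lyapunov property of $S$ (Proposition~\ref{Lyapunov}): any limit point of $B_{t_B}$ lies in the closed set $\mathcal W$, hence its solution converges to some $z^{(m)}$, and a monotonicity-of-$S$ argument along the flow together with the local stable-manifold picture near $z^{(n)}$ coming from Proposition~\ref{descomp3} forces $m=n$. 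Granting $\mathcal K'\subseteq\mathcal W^w$: for each $v\in\mathcal K'$ we have $U^v(t,\cdot)\to w$, so there is $T_v$ with $U^v(T_v,\cdot)\in B_{r/2}(w)$, and since $U^v$ is globally defined, (ii) in Proposition~\ref{G.2} provides $\delta_v>0$ with $\|U^u(T_v,\cdot)-U^v(T_v,\cdot)\|_\infty<r/2$ for all $u\in B_{\delta_v}(v)$, hence $d(U^u(T_v,\cdot),w)<r$. The balls $(B_{\delta_v}(v))_{v\in\mathcal K'}$ cover the compact set $\mathcal K'$; extracting a finite subcover and setting $T^\ast:=\max_i T_{v_i}$, every $v\in B_{t_B}$ satisfies $\inf\{s\ge0:d(U^v(s,\cdot),w)\le r\}\le T^\ast$. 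Finally, for any $t_0\in[0,t_B]$ we have $\{t\ge t_B\}\subseteq\{t\ge t_0\}$, so $\inf\{t\ge t_0:d(U^u(t,\cdot),w)\le r\}\le t_B+T^\ast$ for all $u\in B$, which is the assertion.

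For the escape-time statement, let $B\subseteq\mathcal D_e$ be bounded with $\rho:=d(B,\mathcal W)>0$ and fix $n\in\N$. As above, by Proposition~\ref{G.1} choose $t_B>0$ with $\tau^u>t_B$ on $B$, $b(t_B)<\rho/2$, and Proposition~\ref{A.2} applicable at time $t_B$. By invariance of $\mathcal D_e$, $B_{t_B}=\{U^u(t_B,\cdot):u\in B\}\subseteq\mathcal D_e$ lies at distance $\ge\rho/2$ from $\mathcal W=\partial\mathcal D_e$; by Proposition~\ref{A.2} its closure $\mathcal K'$ is compact, and being contained in $\overline{\mathcal D_e}$ and at positive distance from $\mathcal W$ we get $\mathcal K'\subseteq\mathcal D_e$. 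For each $v\in\mathcal K'$, Theorem~\ref{descomp1} gives $\tau^v<+\infty$ with $\|U^v(t,\cdot)\|_\infty\to+\infty$ as $t\to(\tau^v)^-$, so there is $T_v<\tau^v$ with $\|U^v(T_v,\cdot)\|_\infty\ge n+1$; by (ii) in Proposition~\ref{G.2} there is $\delta_v>0$ with $\tau^u>T_v$ and $\|U^u(T_v,\cdot)\|_\infty\ge n$ for all $u\in B_{\delta_v}(v)$, whence $\tau^{(n),u}\le T_v$. A finite subcover of $\mathcal K'$ by such balls yields $T^\ast<+\infty$ with $\tau^{(n),v}\le T^\ast$ for all $v\in B_{t_B}$, and therefore $\tau^{(n),u}\le t_B+T^\ast$ for every $u\in B$ (the solution either leaves $B_n$ before time $t_B$, or it does so within time $T^\ast$ after $t_B$).

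The step I expect to be the main obstacle is the inclusion $\mathcal K'\subseteq\mathcal W^w$ in the hitting-time statement when $w\ne\mathbf 0$: unlike $\mathcal D_{\mathbf 0}=\mathcal W^{\mathbf 0}$, the stable manifold of a saddle point need not be open, so one cannot simply invoke ``positive distance from the boundary''. This is where one must use that such stable manifolds are locally closed embedded submanifolds together with the gradient structure of the flow, and the fact that near $z^{(n)}$ the restriction to rays given by Proposition~\ref{descomp3} exhibits hyperbolic behavior, so that the entrance time into a small neighborhood of $z^{(n)}$ along the local stable manifold, and thereafter the time to reach $\overline{B_r(w)}$, are uniformly controlled. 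We note that in Chapters~4 and~5 only the case $w=\mathbf 0$ is actually invoked, and for that case the argument above is complete.
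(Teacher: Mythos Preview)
Your approach coincides with the paper's: flow $B$ forward a short time via Propositions~\ref{G.1} and~\ref{A.2} to land in a compact set contained in $\mathcal W^w$, then exploit that the hitting time of the open ball $B_r(w)$ is finite and upper semicontinuous in the initial datum (your finite-subcover argument is exactly this); the $\mathcal D_e$ statement is handled the same way in both.

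On the point you flag as the obstacle: for $w\neq\mathbf 0$ the paper simply asserts that $\mathcal W^w$ is closed and gives no further justification, so the compactified set $\overline{B_{t_B}}$ lands in $\mathcal W^w$ automatically. Your concern that a saddle's stable manifold need not be open (hence the ``positive distance'' trick is unavailable) is legitimate, but the paper does not resolve it either; it treats closedness as a fact about this particular system. A partial argument in its favor, along the lines you sketch: if $u_k\in\mathcal W^{z^{(n)}}$ and $u_k\to u_\infty$, then by Proposition~\ref{S.1} one has $S(U^{u_\infty}(t,\cdot))=\lim_k S(U^{u_k}(t,\cdot))\ge S(z^{(n)})$ for every $t>0$, which already rules out $u_\infty\in\mathcal D_{\mathbf 0}$ and forces the limiting equilibrium to have potential at least $S(z^{(n)})$.

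One correction to your closing remark: the case $w\neq\mathbf 0$ \emph{is} invoked later, namely in the proof of Lemma~\ref{cotasuplema0}, where the path $\varphi^u$ follows the deterministic flow from a point $v_1\in\mathcal W$ until time $T_2^u=\inf\{t\ge T_0:d(U^{v_1}(t,\cdot),z^{(n)})\le r\}$, and the uniform bound on that flowing time appeals precisely to this proposition with $w=z^{(n)}$.
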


\begin{proof} Let us suppose first that $w \neq \mathbf{0}$. Then, since $\mathcal{W}^w$ is a closed set, by Proposition \ref{A.2} we have that the family $\{ U^u(t_B ,\cdot) : u \in B \}$ is contained in a compact set $B' \subseteq \mathcal{W}^{w}$ for some suitably small $t_B > 0$. Hence, we obtain that
$$
\sup_{u \in B} \left[ \inf \{ t \geq t_0 : d( U^u(t,\cdot), w) \leq r \} \right] \leq t_B + \sup_{v \in B'} \left[ \inf \{ t \geq 0 : d( U^v(t,\cdot), w) < r \} \right]
$$ Since the application $v \mapsto \inf \{ t \geq 0 : d( U^v(t,\cdot), w) < r \}$ is upper semicontinuous and finite on $\mathcal{W}^w$, we conclude that the right hand side is finite and thus the result follows in this case.
Now, if $w = \mathbf{0}$ then once again by Proposition \ref{A.2} we have that the family $\{ U^u(t_B ,\cdot) : u \in B \}$ is contained in a compact set $B' \subseteq \mathcal{D}_{\mathbf{0}}$ but this time by Proposition \ref{G.1} we may choose $t_B > 0$ sufficiently small so as to guarantee that $B'$ is at a positive distance from $\mathcal{W}$. From here we conclude the proof as in the previous case. Finally, the last statement of the proposition is proved in a completely analogous fashion.
\end{proof}

\section{Properties of the potential $S$}

\begin{prop}\label{Lyapunov} The mapping $t \mapsto S( U^u(t,\cdot) )$ is monotone decreasing and continuous for any $u \in H^1_0((0,1))$.
\end{prop}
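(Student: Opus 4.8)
The plan is to verify that the functional $S$ acts as a strict Lyapunov function for the gradient flow \eqref{formalPDE}, which is precisely the content of the formulation $\p_t U = -\tfrac{\partial S}{\partial \varphi}(U)$. The natural strategy is to compute $\tfrac{d}{dt} S(U^u(t,\cdot))$ along a classical solution and identify it as $-\int_0^1 (\p_t U)^2$, which is manifestly nonpositive; continuity of the map $t \mapsto S(U^u(t,\cdot))$ then follows from the smoothing properties of the equation together with this monotonicity.

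First I would reduce to the regime where everything is classical. For fixed $u \in H^1_0((0,1))$, by Proposition \ref{A.1} the second spatial derivative $\p^2_{xx} U^u(t,\cdot)$ exists for every $t \in (0,\tau^u)$, so $U^u(t,\cdot) \in H^1_0((0,1))$ for such $t$ and the source term $g(U^u) = U^u|U^u|^{p-1}$ is continuous in $(t,x)$; standard parabolic regularity then gives that $U^u$ is smooth in $(t,x)$ for $t > 0$. Since $u \in H^1_0((0,1))$ by hypothesis, the energy is also finite and continuous at $t = 0$ by the lower semicontinuity of $S$ combined with the continuity $U^u(t,\cdot) \to u$ in $C_D([0,1])$ given by Proposition \ref{G.1} (one also needs $\|\p_x U^u(t,\cdot)\|_{L^2} \to \|u'\|_{L^2}$, which follows from the energy identity below by integrating and letting $t \to 0^+$, or alternatively from a direct $H^1_0$-continuity estimate). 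So it suffices to work on a fixed interval $[t_1, t_2] \subseteq (0,\tau^u)$.

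On such an interval I would carry out the core computation. Writing $v(t,x) = U^u(t,x)$, differentiate under the integral sign:
\begin{align*}
\frac{d}{dt} S(v(t,\cdot)) &= \int_0^1 \left[ \p_x v \,\p_x(\p_t v) - |v|^{p-1}v \,\p_t v \right] dx \\
&= \int_0^1 \left[ -\p^2_{xx} v - |v|^{p-1}v \right] \p_t v \, dx = - \int_0^1 (\p_t v)^2 \, dx \le 0,
\end{align*}
where the integration by parts in $x$ uses the homogeneous Dirichlet boundary conditions ($\p_t v$ vanishes at $x=0,1$), and the last equality uses the PDE $\p_t v = \p^2_{xx} v + g(v)$. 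This shows $t \mapsto S(v(t,\cdot))$ is $C^1$ and nonincreasing on $(0,\tau^u)$, hence monotone decreasing on all of $[0,\tau^u)$ by the continuity at $t=0$ established above. The justification of differentiation under the integral sign (dominating the integrand uniformly on compact time subintervals) is routine given the smoothness of $v$ for $t>0$; the uniform bounds on $\p^2_{xx} v$ and $\p_x v$ from Propositions \ref{A.1} and \ref{A.7} handle this cleanly.

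The main obstacle, and the only genuinely delicate point, is the behavior at the initial time $t = 0$: one must show both that $S(U^u(t,\cdot)) \to S(u)$ as $t \to 0^+$ and that the monotonicity extends down to $t=0$. For a general $u \in H^1_0((0,1))$ (as opposed to a smoother datum) the energy identity only holds a priori for $t > 0$, and $S$ is merely lower semicontinuous, not continuous, on $C_D([0,1])$. The way around this is to use the energy \emph{equality} $S(v(t_1,\cdot)) = S(v(t_2,\cdot)) + \int_{t_1}^{t_2}\int_0^1 (\p_t v)^2\,dx\,dt$, send $t_1 \to 0^+$, and argue that $\liminf_{t_1 \to 0^+} S(v(t_1,\cdot)) \le S(u)$ (by a weak-$H^1_0$ compactness/mild-solution continuity argument showing $v(t_1,\cdot) \rightharpoonup u$ in $H^1_0$, whence lower semicontinuity of the $L^2$-norm of the gradient gives one inequality) together with the reverse inequality $\liminf_{t_1\to 0^+} S(v(t_1,\cdot)) \ge S(u)$ coming from the lower semicontinuity of $S$ and strong $C_D([0,1])$-convergence — these combine to give $S(v(t_1,\cdot)) \to S(u)$ and then the full monotonicity and continuity on $[0,\tau^u)$.
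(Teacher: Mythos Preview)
Your core computation is identical to the paper's: differentiate $S(U^u(t,\cdot))$ for $t>0$, integrate by parts using the Dirichlet conditions, and identify the derivative as $-\int_0^1(\p_t U^u)^2\le 0$. That part is fine and is exactly what the paper does.

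The difference lies entirely in the treatment of continuity at $t=0$, and your argument there has a genuine directional error. You claim that weak convergence $v(t_1,\cdot)\rightharpoonup u$ in $H^1_0$ together with weak lower semicontinuity of the gradient norm yields $\liminf_{t_1\to 0^+} S(v(t_1,\cdot))\le S(u)$. It does not: weak lower semicontinuity gives $\|u'\|_{L^2}\le\liminf\|\p_x v(t_1,\cdot)\|_{L^2}$, which combined with the $L^{p+1}$-convergence of the nonlinear term produces $S(u)\le\liminf S(v(t_1,\cdot))$ --- the same inequality you already obtained from Proposition~\ref{A.4}. So both of your ``two inequalities'' are actually the same one, and the upper bound $\limsup_{t_1\to 0^+} S(v(t_1,\cdot))\le S(u)$ is left unproven. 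Extracting it from the energy identity alone is circular, since a priori you do not know $\int_0^{t_2}\!\int_0^1(\p_t v)^2<\infty$.

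The paper avoids this entirely by citing a known regularity fact (from \cite{QS}): for $u\in H^1_0((0,1))$ the map $t\mapsto U^u(t,\cdot)$ is continuous into $H^1_0((0,1))$ (and into $L^{p+1}$ for general $u\in C_D([0,1])$). Continuity of $S$ along the flow at $t=0$ is then immediate, since $S$ is continuous on $H^1_0$. You actually gesture at this route yourself (``or alternatively from a direct $H^1_0$-continuity estimate''); that is the correct shortcut, and it is what the paper takes.
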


\begin{proof}First, observe that a direct calculation shows that for any $u \in C_D([0,1])$ and $t_0 > 0$
\begin{align*}
\frac{d S( U^u(t,\cdot) )}{dt}(t_0) &= \int_0^1 \left(\partial_x U^u(t_0,\cdot) \partial^2_{xt}  U^u(t_0,\cdot) - g(U^u(t_0,\cdot)) \partial_t U^u(t_0,\cdot)\right) \\
\\
& = - \int_0^1 \left( \partial^2_{xx} U^u(t_0,\cdot) + g(U^u(t_0,\cdot)) \right) \partial_t U^u(t_0,\cdot) \\
\\
&= - \int_0^1 \left(\partial_t U^u(t_0,\cdot)\right)^2 \leq 0.
\end{align*} On the other hand, it is well known (see \cite{QS} p.75) that the mapping $t \mapsto U^{u}{(t,\cdot)}$ is continuous on $H^1_0((0,1))$ whenever $u \in H^1_0((0,1))$ and on $L^{p+1}([0,1])$ when $u \in C_D([0,1])$. In particular, we see that $t \mapsto S( U^u(t,\cdot) )$ is continuous at $t_0=0$ and so, by the previous calculation, we conclude the result.
\end{proof}

\begin{prop}\label{A.4} The potential $S$ is lower semicontinuous.
\end{prop}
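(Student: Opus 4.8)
The plan is to prove the sequential version of the statement, which suffices since $C_D([0,1])$ carries the metric induced by $\|\cdot\|_\infty$: given $v_n \to v$ uniformly on $[0,1]$, I want to show $S(v) \le \liminf_{n} S(v_n)$. If the right-hand side is $+\infty$ there is nothing to prove, so I may choose a subsequence (not relabeled) along which $S(v_n)$ converges to a finite limit $L=\liminf_n S(v_n)$ and $S(v_n)<+\infty$ for every $n$; the latter means $v_n \in H^1_0((0,1))$ and
\[
S(v_n) = \frac12 \int_0^1 |v_n'|^2 - \frac{1}{p+1}\int_0^1 |v_n|^{p+1}.
\]

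First I would dispose of the lower-order term. Since $v_n \to v$ uniformly, the sequence $(\|v_n\|_\infty)_n$ is bounded and $|v_n|^{p+1} \to |v|^{p+1}$ uniformly on $[0,1]$, hence $\int_0^1 |v_n|^{p+1} \to \int_0^1 |v|^{p+1}$. Consequently $\frac12\int_0^1 |v_n'|^2 = S(v_n) + \frac{1}{p+1}\int_0^1 |v_n|^{p+1}$ converges to $M := L + \frac{1}{p+1}\int_0^1 |v|^{p+1} < +\infty$; in particular $\sup_n \|v_n'\|_{L^2} < +\infty$. Together with $\sup_n \|v_n\|_{L^2}<+\infty$ (immediate from uniform convergence) this shows that $(v_n)_n$ is bounded in the Hilbert space $H^1_0((0,1))$. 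Next I would extract a further subsequence converging weakly in $H^1_0((0,1))$ to some $w$; by the Rellich compact embedding it converges strongly in $L^2$ to $w$, while it also converges to $v$ in $L^2$ (from uniform convergence), so $w=v$ a.e. In particular $v \in H^1_0((0,1))$ and $S(v)$ is finite. The functional $u \mapsto \frac12\int_0^1 |u'|^2$ is convex and norm-continuous on $H^1_0((0,1))$, hence weakly lower semicontinuous, so along this subsequence $\frac12\int_0^1 |v'|^2 \le \liminf_n \frac12\int_0^1 |v_n'|^2 = M$. Combining with the convergence of the $L^{p+1}$ terms yields
\[
S(v) = \frac12\int_0^1 |v'|^2 - \frac{1}{p+1}\int_0^1 |v|^{p+1} \le M - \frac{1}{p+1}\int_0^1 |v|^{p+1} = L = \liminf_n S(v_n),
\]
which is the desired inequality.

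The only genuinely delicate point is the passage to weak limits: one must check that the weak $H^1_0$ limit of the subsequence coincides with the uniform limit $v$, and that the Dirichlet energy is weakly lower semicontinuous. Both are standard facts about Hilbert--Sobolev spaces, so I expect no real obstruction, only the need to state them carefully.

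An alternative I would record as a remark, which avoids weak compactness altogether, is the duality formula
\[
\frac12\int_0^1|v'|^2 = \sup_{\psi \in C_c^\infty((0,1))}\Big(-\int_0^1 v\psi' - \frac12\int_0^1 \psi^2\Big),
\]
valid for every $v \in C_D([0,1])$ with the convention that the left side is $+\infty$ when $v \notin H^1_0((0,1))$; the identity follows from integration by parts, the density of $C_c^\infty((0,1))$ in $L^2((0,1))$, and (for the case of an infinite supremum) the Riesz representation theorem. Then
\[
S(v) = \sup_{\psi \in C_c^\infty((0,1))}\Big(-\int_0^1 v\psi' - \frac12\int_0^1 \psi^2 - \frac{1}{p+1}\int_0^1 |v|^{p+1}\Big),
\]
and since for each fixed $\psi$ the functional $v \mapsto -\int_0^1 v\psi' - \frac12\int_0^1\psi^2 - \frac{1}{p+1}\int_0^1 |v|^{p+1}$ is continuous with respect to uniform convergence of $v$, the functional $S$ is a supremum of continuous functions, hence lower semicontinuous. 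I would present the first argument as the main proof and mention this duality identity as a shortcut.
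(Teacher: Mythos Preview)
Your main argument is correct and essentially identical to the paper's proof: both reduce to a subsequence with bounded $S(v_n)$, use uniform convergence to handle the $L^{p+1}$ term, extract a weakly convergent subsequence in $H^1_0$ via boundedness of the Dirichlet energy, identify its limit with $v$ through $L^2$ convergence, and conclude by weak lower semicontinuity of $u\mapsto\|u'\|_{L^2}^2$. The duality alternative you record as a remark is a clean shortcut not present in the paper, and it is also correct.
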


\begin{proof}Let $(v_k)_{k \in \N} \subseteq C_D([0,1])$ be a sequence converging to some limit $v_\infty \in C_D([0,1])$. We must check that
\begin{equation}\label{continferiors}
S(v_\infty) \leq \liminf_{k \rightarrow +\infty} S(v_k).
\end{equation} Notice that since $(v_k)_{k \in \N}$ is convergent in the supremum norm we have, in particular, that
\begin{equation}\label{cotalp}
\sup_{k \in \N} \| v_k \|_{L^{p+1}} < +\infty
\end{equation} and therefore that $\liminf_{k \rightarrow +\infty} S(v_k) > -\infty$. Hence, by passing to an subsequence if necessary, we may assume that the limit in \eqref{continferiors} exists and is finite so that, in particular, we have that the sequence $(S(v_k))_{k \in \N}$ remains bounded. This implies that $v_k$ is absolutely continuous for every $k \in \N$ and, furthermore, by \eqref{cotalp} we conclude that the sequence $(v_k)_{k \in \N}$ is bounded in $H^1_0((0,1))$. Hence, there exists a subsequence $(v_{k_j})_{j \in \N}$ which is weakly convergent in $H^1_0((0,1))$ and strongly convergent in $L^2([0,1])$ to some limit $v_\infty^*$. Notice that since $(v_k)_{k \in \N}$ converges in the supremum norm to $v_\infty$, it also converges in $L^q$ for every $q \geq 1$. In particular, we have that $v^*_\infty = v_\infty$ and thus, by the lower semicontinuity of the $H^1_0$-norm with respect in the weak topology we conclude that
$$
\| \p_x v_\infty \|_{L^2} \leq \liminf_{j \rightarrow +\infty} \| \p_x v_{k_j}\|_{L^2}.
$$ Finally, since $(v_k)_{k \in \N}$ converges to $v_\infty$ in $L^{p+1}$ and we have $S(u) = \frac{1}{2} \| \p_x u \|_{L^2}^2 - \frac{1}{p+1} \| u \|_{L^{p+1}}^{p+1}$ for all $u \in H^1_0$, we obtain \eqref{continferiors}.
\end{proof}

\begin{prop}\label{S.1} Given $u \in C_D([0,1])$ and $t \in (0,\tau^u)$ there exist $C_{u,t}, \delta_{u,t} > 0$ such that
\begin{itemize}
\item [$\bullet$] $\tau^v > t$ for any $v \in B_{\delta_{u,t}}(u)$
\item [$\bullet$] For any $v \in B_{\delta_{u,t}}(u)$ one has
$$
\| S(U^{u}(t,\cdot)) - S(U^v (t,\cdot) ) \|_\infty \leq C_{u,t} \| u - v\|_\infty.
$$
\end{itemize}
\end{prop}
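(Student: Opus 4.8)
The plan is to reduce the assertion to the local Lipschitz dependence of the solution and of its spatial derivative on the initial datum, both of which are already available (Propositions \ref{G.2} and \ref{A.7}), combined with the fact that on bounded sets of initial data the corresponding solutions evaluated at a fixed positive time stay uniformly bounded in $C^1$. Note first that the right hand side should be read as $|S(U^u(t,\cdot)) - S(U^v(t,\cdot))|$, since $S$ takes real values; so what we must produce is the Lipschitz continuity near $u$ of the map $v \mapsto S(U^v(t,\cdot))$.

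First I would fix $u \in C_D([0,1])$ and $t \in (0,\tau^u)$ and combine Propositions \ref{G.2} and \ref{A.7}: taking $\delta_{u,t} > 0$ small enough one has $\tau^v > t$ for every $v \in B_{\delta_{u,t}}(u)$, and there are constants $C_1, C_2 > 0$ (depending only on $u$ and $t$) with
\[
\| U^u(t,\cdot) - U^v(t,\cdot) \|_\infty \le C_1 \| u - v\|_\infty, \qquad \| \p_x U^u(t,\cdot) - \p_x U^v(t,\cdot) \|_\infty \le C_2 \| u - v\|_\infty .
\]
Shrinking $\delta_{u,t}$ if necessary, $B_{\delta_{u,t}}(u)$ is a bounded set, so by Proposition \ref{A.2} (and the proof of Proposition \ref{A.1}) there is $R > 0$ with $\| U^v(t,\cdot)\|_\infty \le R$ and $\| \p_x U^v(t,\cdot)\|_\infty \le R$ for all such $v$, and moreover $\p^2_{xx} U^v(t,\cdot)$ exists, so that $U^v(t,\cdot) \in H^1_0((0,1))$ and
\[
S(U^v(t,\cdot)) = \frac12 \int_0^1 \big(\p_x U^v(t,x)\big)^2\, dx \;-\; \frac{1}{p+1}\int_0^1 |U^v(t,x)|^{p+1}\, dx
\]
is finite and given by this integral formula (and likewise for $u$). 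This last point is where a small amount of care is needed: $S$ is $+\infty$ off $H^1_0$, so one must invoke the regularization at positive time before manipulating $S$ at all.

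Next I would estimate the difference termwise. For the gradient term, factoring $(\p_x U^u)^2 - (\p_x U^v)^2 = (\p_x U^u - \p_x U^v)(\p_x U^u + \p_x U^v)$ gives
\[
\left| \int_0^1 \big[(\p_x U^u(t,x))^2 - (\p_x U^v(t,x))^2\big]\, dx \right| \le 2R\, \| \p_x U^u(t,\cdot) - \p_x U^v(t,\cdot) \|_\infty \le 2R C_2 \| u - v\|_\infty .
\]
For the nonlinear term, since $p > 1$ the map $s \mapsto |s|^{p+1}$ is $C^1$ on $\R$ with derivative $(p+1)|s|^{p-1}\operatorname{sgn}(s)$, hence Lipschitz on $[-R,R]$ with constant $(p+1)R^p$; therefore
\[
\left| \int_0^1 \big[ |U^u(t,x)|^{p+1} - |U^v(t,x)|^{p+1}\big]\, dx \right| \le (p+1) R^p \| U^u(t,\cdot) - U^v(t,\cdot) \|_\infty \le (p+1) R^p C_1 \| u - v\|_\infty .
\]
Adding the two bounds yields $|S(U^u(t,\cdot)) - S(U^v(t,\cdot))| \le \big(R C_2 + R^p C_1\big)\| u - v\|_\infty$, which is the claimed inequality with $C_{u,t} := R C_2 + R^p C_1$.

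The only genuinely delicate aspects are organizational rather than analytic: ensuring that the neighbourhood radii and constants furnished by Propositions \ref{G.2}, \ref{A.7}, \ref{A.1} and \ref{A.2} can all be taken compatible on one ball $B_{\delta_{u,t}}(u)$ (a matter of taking the minimum of finitely many radii and the maximum of finitely many constants), and checking that for $t>0$ the solution lies in $H^1_0$ so that $S$ admits its integral representation. Everything else is an elementary mean-value-theorem estimate, so I do not expect any real obstacle here.
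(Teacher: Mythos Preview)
Your proof is correct and follows exactly the approach the paper takes: the paper's proof consists of the single sentence ``This is a direct consequence of Propositions \ref{A.7} and \ref{G.2},'' and you have simply spelled out the routine computation behind that sentence. The additional invocations of Propositions \ref{A.1} and \ref{A.2} to secure the uniform $C^1$ bound $R$ and the $H^1_0$-regularity at positive times are appropriate and implicit in the paper's one-line argument.
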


\begin{proof} This is a direct consequence of Propositions \ref{A.7} and \ref{G.2}.
\end{proof}

\section{Properties of the quasipotential $V$}

\begin{prop}\label{A.5} The mapping $u \mapsto V(\mathbf{0},u)$ is lower semicontinuous on $C_D([0,1])$.
\end{prop}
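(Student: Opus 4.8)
The plan is to prove that whenever $u_k \to u_\infty$ in $C_D([0,1])$ one has $V(\mathbf{0},u_\infty) \le L := \liminf_k V(\mathbf{0},u_k)$. If $L = +\infty$ there is nothing to prove, so I would assume $L < +\infty$ and, after passing to a subsequence, assume $V(\mathbf{0},u_k) \to L$ with $V(\mathbf{0},u_k) \le L+1$ for all $k$. By Proposition \ref{costo} this forces $S(u_k) \le S(\mathbf{0}) + \tfrac{L+1}{2} < +\infty$, so each $u_k \in H^1_0((0,1))$. A preliminary observation is that the limit is automatically regular: if $u_\infty \notin H^1_0((0,1))$, then since $u_k \to u_\infty$ uniformly while $\|u_k\|_{L^{p+1}}$ stays bounded, a weak-compactness argument in $H^1_0$ rules out $\sup_k\|\partial_x u_k\|_{L^2} < \infty$ (a weakly convergent subsequence would force $u_\infty\in H^1_0$), hence $S(u_k)\to+\infty$ and, by Proposition \ref{costo} again, $L=+\infty$, a contradiction. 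Thus I may assume $u_\infty \in H^1_0((0,1))$, which is exactly what makes the construction below possible.

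The heart of the proof is to build, for each $\delta>0$, a path from $\mathbf{0}$ to $u_\infty$ of rate at most $L+\delta$ by splicing together four pieces. Fix a small $\eta>0$ (chosen at the end). Since $\overline{\{u_k\}_k\cup\{u_\infty\}}$ is bounded, Propositions \ref{G.1}, \ref{A.1} and \ref{G.2} give that, for $\eta$ small, $\tau^v>\eta$ for all these $v$, the family $\{U^v(\eta,\cdot)\}$ is uniformly bounded both in $C_D([0,1])$ and in $W^2_2([0,1])$, and $U^{u_k}(\eta,\cdot)\to U^{u_\infty}(\eta,\cdot)$ in $C_D([0,1])$. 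Write $w_k = U^{u_k}(\eta,\cdot)$, $w_\infty = U^{u_\infty}(\eta,\cdot)$. The path is: (i) a near-optimal path $\psi_k$ from $\mathbf{0}$ to $u_k$ with $I(\psi_k)\le V(\mathbf{0},u_k)+\tfrac1k$; (ii) the deterministic flow $t\mapsto U^{u_k}(t,\cdot)$ on $[0,\eta]$, joining $u_k$ to $w_k$ with zero rate (it is a genuine solution, and it lies in $W^{1,2}_2$ because $u_k\in H^1_0$, so that $\int_0^\eta\|\partial_t U^{u_k}(s,\cdot)\|^2_{L^2}\,ds = S(u_k)-S(w_k)<\infty$ and $\|\partial_x U^{u_k}(s,\cdot)\|_\infty$ is square-integrable near $s=0$ by the proof of Proposition \ref{A.1}); (iii) a unit-speed linear interpolation from $w_k$ to $w_\infty$, whose rate is at most $C\|w_k-w_\infty\|_\infty$ by the very estimate proved in Lemma \ref{cotasuplema0} — here the uniform $W^2_2$ bound on $w_k,w_\infty$ is used, and the truncation level $n_0$ is taken above the uniform sup-norm bound so that $g_{n_0}=g$ along the interpolation; and (iv) the reverse deterministic flow $t\mapsto U^{u_\infty}(\eta-t,\cdot)$ on $[0,\eta]$, joining $w_\infty$ back to $u_\infty$. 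A computation identical to the one in the proof of Proposition \ref{Lyapunov} shows that the rate of piece (iv) equals $2\bigl(S(u_\infty)-S(U^{u_\infty}(\eta,\cdot))\bigr)$, which tends to $0$ as $\eta\to0$ by continuity of $t\mapsto S(U^{u_\infty}(t,\cdot))$ on $H^1_0$ (Proposition \ref{Lyapunov}).

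Summing the rates, the spliced path has rate at most $V(\mathbf{0},u_k)+\tfrac1k+C\|w_k-w_\infty\|_\infty+2\bigl(S(u_\infty)-S(U^{u_\infty}(\eta,\cdot))\bigr)$. Given $\delta>0$, I would first choose $\eta$ so small that the last term is $<\delta/2$, and then choose $k$ so large that $V(\mathbf{0},u_k)+\tfrac1k+C\|w_k-w_\infty\|_\infty< L+\delta/2$ (possible since $V(\mathbf{0},u_k)\to L$, $\tfrac1k\to0$ and $\|w_k-w_\infty\|_\infty\to0$ for the fixed $\eta$). This yields a path from $\mathbf{0}$ to $u_\infty$ of rate $<L+\delta$, hence $V(\mathbf{0},u_\infty)\le L+\delta$; letting $\delta\downarrow0$ gives $V(\mathbf{0},u_\infty)\le L$, i.e. lower semicontinuity. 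Note that Lemma \ref{compacidad}, the only place the restriction $p<5$ is used, plays no role here, so the statement holds for all $p>1$.

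The step I expect to need the most care is piece (iv) (and the reduction to $u_\infty\in H^1_0$): one must check that the reverse-flow path genuinely belongs to $W^{1,2}_2$ near $t=0$, where the $H^2$-norm of $U^{u_\infty}(s,\cdot)$ may blow up. This works precisely because $\int_0^\eta\|\partial_t U^{u_\infty}(s,\cdot)\|^2_{L^2}\,ds = S(u_\infty)-S(U^{u_\infty}(\eta,\cdot))$ is finite exactly when $u_\infty\in H^1_0$, and because the bound on $\|\partial_x U^{u_\infty}(s,\cdot)\|_\infty$ from the proof of Proposition \ref{A.1} is square-integrable in $s$ near $0$; granting these, the value of the rate follows from the identity $\partial_t\varphi-\partial^2_{xx}\varphi-g(\varphi)=2\partial_t\varphi$ for the reverse flow together with $\tfrac{d}{ds}S(U^{u_\infty}(s,\cdot))=-\|\partial_t U^{u_\infty}(s,\cdot)\|^2_{L^2}$. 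Everything else is the routine additivity of the rate along concatenations and the already-established growth and regularity estimates of the Appendix.
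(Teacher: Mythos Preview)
Your proof is correct and essentially identical to the paper's: the same four-piece splice (near-optimal path to $u_k$, forward flow for a short time $t_0=\eta$, linear interpolation at the regularized endpoints using the estimate from Lemma~\ref{cotasuplema0}, reverse flow back to $u_\infty$ with cost $2(S(u_\infty)-S(U^{u_\infty}(\eta,\cdot)))\to 0$), together with the reduction to $S(u_\infty)<+\infty$ via Proposition~\ref{costo} and the lower semicontinuity of $S$.

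One minor slip in your last paragraph: the pointwise bound $\|\partial_x U^{u_\infty}(s,\cdot)\|_\infty\lesssim s^{-1/2}$ from Proposition~\ref{A.1} is \emph{not} square-integrable near $s=0$. To verify $\partial_x\varphi\in L^2([0,\eta]\times[0,1])$ for piece~(iv), use instead the $L^2$ identity $\|\partial_x U^{u_\infty}(s,\cdot)\|_{L^2}^2=2S(U^{u_\infty}(s,\cdot))+\tfrac{2}{p+1}\|U^{u_\infty}(s,\cdot)\|_{L^{p+1}}^{p+1}$, which is uniformly bounded on $[0,\eta]$ because $u_\infty\in H^1_0$ and $S$ decreases along the flow; $\partial^2_{xx}\varphi\in L^2$ then follows from $\partial^2_{xx}U^{u_\infty}=\partial_t U^{u_\infty}-g(U^{u_\infty})$ and your own estimate $\int_0^\eta\|\partial_t U^{u_\infty}\|_{L^2}^2\,ds<\infty$.
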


\begin{proof} Let $(u_k)_{k \in \N} \subseteq C_D([0,1])$ be a sequence converging to some limit $u_\infty \in C_D([0,1])$. We must check that
\begin{equation}\label{continferiorv}
V(\mathbf{0},u_\infty) \leq \liminf_{k \rightarrow +\infty} V(\mathbf{0},v_k).
\end{equation} If $S(u_\infty)=+\infty$ then by Proposition \ref{costo} we see that $V(\mathbf{0},u_\infty)=+\infty$ and thus by the lower semicontinuity of $S$ we conclude that $\lim_{v \rightarrow u} V(\mathbf{0},v)=+\infty$ which establishes \eqref{continferiorv} in this particular case. Now, if $S(u_\infty)< +\infty$ then, by the lower semicontinuity of $S$ and the continuity in time of the solutions to \eqref{MainPDE}, given $\delta > 0$ there exists
$t_0 > 0$ sufficiently small such that $S(U^{u_\infty}(t_0,\cdot)) > S(u_\infty) - \frac{\delta}{2}$. Moreover, by Proposition \ref{G.2} we may even assume that $t_0$ is such that
$$
\| U^{u_k}(t_0,\cdot) - U^{u_\infty}(t_0,\cdot) \|_\infty \leq 2 \| u_k - u_\infty\|_\infty
$$ for any $k \in \N$ sufficiently large. Thus, given $k$ sufficiently large and a path $\varphi_k$ \mbox{from $\mathbf{0}$ to $u_k$} we construct a path $\varphi_{k,\infty}$ from $\mathbf{0}$ to $u_\infty$ by the following steps:
\begin{enumerate}
\item [i.] We start from $\mathbf{0}$ and follow $\varphi_k$ until we reach $u_k$.
\item [ii.] From $u_k$ we follow the deterministic flow $U^{u_k}$ until time $t_0$.
\item [iii.] We then join $U^{u_k}(t_0,\cdot)$ and $U^{u_\infty}(t_0,\cdot)$ by a linear interpolation of speed one.
\item [iv.] From $U^{u_\infty}(t_0,\cdot)$ we follow the reverse deterministic flow until we reach $u_\infty$.
\end{enumerate}
By the considerations made in the proof of Lemma \ref{cotasuplema0} it is not hard to see that there exists $C > 0$ such that for any $k \in \N$ sufficiently large we have
$$
I(\varphi_{k,\infty}) \leq I(\varphi_k) + C \| u_k - u_\infty\|_\infty + \delta
$$ so that we ultimately obtain
$$
V(\mathbf{0},u_\infty) \leq \liminf_{k \rightarrow +\infty} V(\mathbf{0},u_k) + \delta.
$$ Since $\delta > 0$ can be taken arbitrarily small we conclude \eqref{continferiorv}.
\end{proof}

\begin{prop}\label{A.6} For any $u,v \in C_D([0,1])$ the map $t \mapsto V\left(u,U^v(t,\cdot)\right)$ is decreasing.
\end{prop}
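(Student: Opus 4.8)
The plan is to exploit that \emph{following the deterministic flow \eqref{MainPDE} costs nothing} in terms of the rate functional $I$, so that any near‑optimal path reaching $U^v(t_1,\cdot)$ may be prolonged along the deterministic trajectory until it reaches $U^v(t_2,\cdot)$ without increasing its rate. Fix $0\le t_1\le t_2<\tau^v$. If $V(u,U^v(t_1,\cdot))=+\infty$ there is nothing to prove, so assume it is finite and fix $\varepsilon>0$ together with a path $\varphi\in C_{D}([0,T]\times[0,1])$ from $u$ to $w:=U^v(t_1,\cdot)$ with $I(\varphi)\le V(u,U^v(t_1,\cdot))+\varepsilon$; in particular $\varphi\in W^{1,2}_2$. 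The first observation is that then $w\in H^1_0((0,1))$: this is immediate from Proposition \ref{A.1} when $t_1>0$, and for $t_1=0$ it follows from the fact that the time‑$T$ trace of a $W^{1,2}_2([0,T]\times[0,1])$ function lies in $H^1_0((0,1))$ (equivalently $S(w)<+\infty$; see \cite{QS}). I will also use the semigroup identity $U^{w}(t_2-t_1,\cdot)=U^v(t_2,\cdot)$, which is just uniqueness for \eqref{MainPDE}.

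Next I would glue $\varphi$ with the deterministic trajectory. Set $\chi(s,\cdot):=U^{w}(s,\cdot)$ for $s\in[0,t_2-t_1]$; since $w\in H^1_0((0,1))$, parabolic smoothing for \eqref{MainPDE} gives $\chi\in W^{1,2}_2([0,t_2-t_1]\times[0,1])$ (away from $s=0$ this is also contained in Proposition \ref{A.1}). Define the path
$$
\psi(s,\cdot)=\begin{cases}\varphi(s,\cdot),& 0\le s\le T,\\[2pt] \chi(s-T,\cdot),& T\le s\le T+(t_2-t_1).\end{cases}
$$
Then $\psi$ is continuous (both pieces equal $w$ at $s=T$), vanishes on the spatial boundary, satisfies $\psi(0,\cdot)=u$ and $\psi(T+(t_2-t_1),\cdot)=U^v(t_2,\cdot)$, so it is a path from $u$ to $U^v(t_2,\cdot)$; and since its two pieces lie in $W^{1,2}_2$ with coinciding time‑$T$ traces, $\psi\in W^{1,2}_2([0,T+(t_2-t_1)]\times[0,1])$.

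Now I would compute the rate of $\psi$. Pick $n\in\N$ with $n>\|\psi\|_\infty$, so that $I(\psi)=I^{(n),u}_{T+(t_2-t_1)}(\psi)=\tfrac12\int\!\!\int|\p_t\psi-\p^2_{xx}\psi-g_n(\psi)|^2$. On $[0,T]$ the integrand equals that of $\varphi$, and since $n>\|\varphi\|_\infty$ the corresponding contribution is exactly $I(\varphi)$. On $[T,T+(t_2-t_1)]$ the function $\psi$ solves $\p_t\psi=\p^2_{xx}\psi+g(\psi)$, and $g_n$ agrees with $g$ along this trajectory because $\psi$ stays bounded by $n$, so the integrand vanishes there. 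Hence $I(\psi)=I(\varphi)$, and by definition of the quasipotential
$$
V(u,U^v(t_2,\cdot))\le I(\psi)=I(\varphi)\le V(u,U^v(t_1,\cdot))+\varepsilon .
$$
Letting $\varepsilon\to 0$ gives $V(u,U^v(t_2,\cdot))\le V(u,U^v(t_1,\cdot))$, i.e. the map is decreasing.

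The only non‑routine points are the two regularity facts used in the gluing step: that an endpoint of a finite‑rate path automatically lies in $H^1_0((0,1))$, and that the deterministic solution emanating from an $H^1_0$ datum belongs to $W^{1,2}_2$ on compact time intervals within its existence time (so that the concatenation remains in $W^{1,2}_2$ and the deterministic piece contributes zero rate). Both are standard parabolic‑regularity/trace statements; I expect the written proof to spend most of its length justifying them, the concatenation bookkeeping itself being elementary.
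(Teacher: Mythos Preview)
Your argument is correct and follows exactly the same approach as the paper: concatenate a near-optimal path from $u$ to $U^v(t_1,\cdot)$ with the deterministic flow, which contributes zero rate, and take the infimum. The paper's proof is far terser than you anticipated, however: it simply asserts that ``following the deterministic flow'' extends the path at no additional cost and takes the infimum, without discussing any of the regularity points (endpoint in $H^1_0$, $W^{1,2}_2$-membership of the concatenation) that you carefully flagged.
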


\begin{proof}Given $0 \leq s < t$ and a path $\varphi$ from $u$ to $U^{u}(s,\cdot)$ we may extend $\phi$ to a path $\tilde{\varphi}$ from $u$ to $U^{u}(t,\cdot)$ simply by following the deterministic flow afterwards.
It follows that
$$
V\left(u,U^v(t,\cdot)\right) \leq I(\tilde{\varphi}) = I(\varphi)
$$ which, by taking infimum over all paths from $u$ to $U^{u}(s,\cdot)$, yields the \mbox{desired monotonicity.}
\end{proof}

\bibliographystyle{plain}
\bibliography{bibliografia}

\end{document}